\theoremstyle{plain}                 
\newtheorem{thm}{Theorem}[subsection]
\newtheorem{lem}[thm]{Lemma}
\newtheorem{prop}[thm]{Proposition}
\newtheorem{cor}[thm]{Corollary}
\theoremstyle{definition}           
\newtheorem{defi}[thm]{Definition}
\newtheorem{ex}[thm]{Example}
\newtheorem{rem}[thm]{Remark}
\numberwithin{equation}{section}
\begin{document}
\thispagestyle{empty}
{
\begin{center}
\includegraphics[width=60pt]{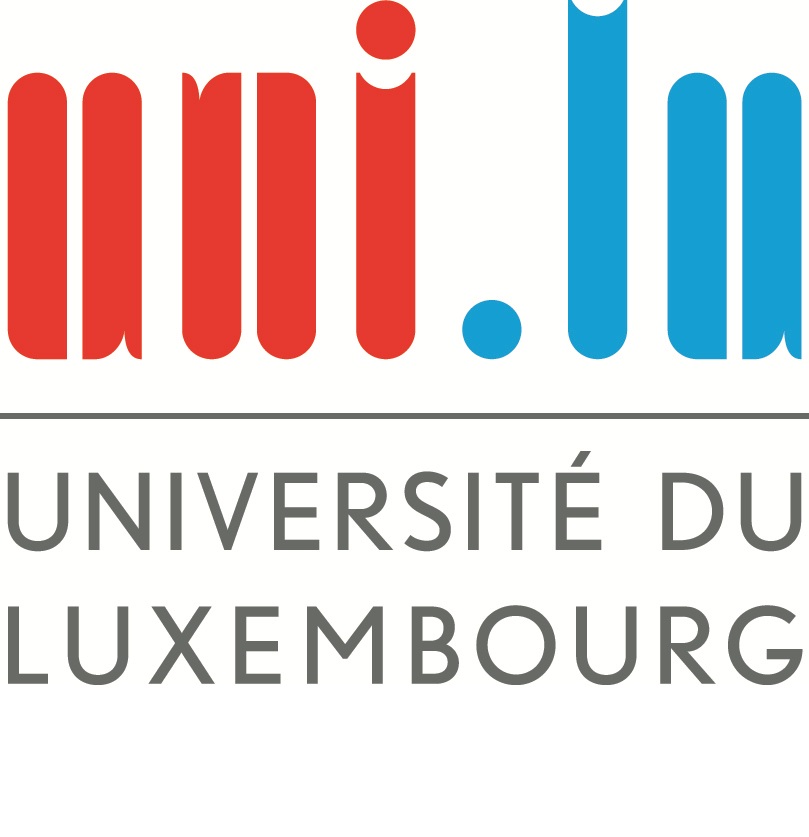}\\
PhD-FSTC-2013-22\\
The Faculty of Sciences, Technology and Communication\\
\vspace{60 pt}
\huge
DISSERTATION\\
\vspace{10 pt}
\normalsize
Presented on 11/09/2013 in Luxembourg\\
\vspace{5 pt}
to obtain the degree of \\
\vspace{20pt}
\huge
DOCTEUR~DE~L'UNIVERSIT\'E~DU~LUXEMBOURG\\
EN MATH\'EMATIQUES\\
\normalsize
\vspace{10 pt}
by\\
\vspace{10 pt}
\LARGE
\baselineskip=1pt David Khudaverdyan\\ \vspace{3 pt}
{\tiny Born on 12 October 1986 in Yerevan (Armenia)}\\\vspace{5 pt}
\huge
\textbf{Higher Lie and Leibniz algebras}\\

\end{center}
\vspace{3.5cm}
{\large
Doctoral committee}\\\\\\\\\\\\\\\\\\\\\\\\
{\normalsize Prof. Dr. Martin Schlichenmaier, Chairman}\\\baselineskip=1pt {\tiny\emph{University of Luxembourg, Luxembourg}}\\\\\\\\\\\\\\
Prof. Dr. Dimitry Leites, Deputy chairman\\
{\tiny\emph{University of Stockholm, Sweden} } \\\\\\\\\\\\\\
Prof. Dr. Norbert Poncin, Supervisor\\
{\tiny\emph{University of Luxembourg, Luxembourg}}\\\\\\\\\\\\\\
Prof. Dr. Valentin Ovsienko, Referee\\
{\tiny\emph{CNRS, Institute Camille Jordan, University Claude Bernard Lyon 1, France}}\\\\\\\\\\\
Dr. Vladimir Dotsenko, Referee \\
{\tiny\emph{Trinity College Dublin, Ireland}}

}
\newpage
\
\thispagestyle{empty}
\newpage
\thispagestyle{empty}
\section*{Acknowledgements}
First and foremost I would like to thank my Ph.D. advisor Norbert Poncin for giving generously of his time and energy to help me learn Mathematics. Special thanks also to Vladimir Dotsenko for reading the survey part of my manuscript and making valuable suggestions. I would like to thank Professors Martin Schlichenmaier, Dimitry Leites, and Valentin Ovsienko for accepting to be members of my Dissertation Committee. Thanks to the University of Luxembourg for welcoming me in the warmest way, as well as for financial support.

\newpage
\
\thispagestyle{empty}
\newpage
\tableofcontents
\newpage
\section{Introduction}
Higher structures -- infinity algebras and other objects up to homotopy, categorified algebras, `oidified' concepts, operads, higher categories, higher Lie theory, higher gauge theory $...$ -- are currently intensively investigated in Mathematics and Physics. \medskip

The present thesis deals with abstractions and flexibilizations of algebraic structures, and more precisely, with algebraic operads, homotopification and categorification.

\subsection{Algebraic operads}

Operads can be traced back to the fifties and sixties -- let us mention the names of Boardman, MacLane, Stasheff, Vogt $...$ They were formally introduced by J. P. May in \cite{May72}, who also suggested the denomination `operad' (contraction of `operation' and `monad'). Operads are to algebras, what algebras are to matrices, or, better, to representations. More precisely, an operad encodes a type of algebra. It heaves the algebraic operations of the considered type, their symmetries, their compositions, as well as the specific relations they verify, on a more abstract and universal level, which is best thought of by viewing a universal abstract operation as a kind of tree with a finite number of leaves (or inputs) and one root (or output). To `any' type of algebra one can associate an operad; the representations of this operad form a category, which is equivalent to the category of algebras of the considered type.\medskip

Operads were initially studied as a tool in homotopy theory, but found some thirty years later interest in a number of other domains like homological algebra, category theory, algebraic geometry, mathematical physics $...$ Among various powerful aspects of operads, let us mention that the operadic language simplifies not only the formulation of the mathematical results but also their proofs, that it allows to gain a more conceptual and deeper insight into classical theorems and to extend them to other types of algebra $...$ ; e.g. if some construction is possible `mutatis mutandis' for several types of algebra, it is a very enriching challenge to prove that it goes through for operads.

\subsection{Homotopy algebras}

Homotopy, sh, or infinity algebras \cite{Sta63} are homotopy invariant extensions of differential graded algebras. They appear for instance when examining whether a compatible algebraic structure on some chain complex can be transferred to a homotopy equivalent complex $(V,d_V)$. For differential graded associative or Lie algebras, the naturally constructed product on $V$ is no longer associative or Lie, but verifies the associativity or Jacobi condition up to homotopy. More precisely, we obtain a sequence $m_n:V^{\times n}\to V$, $n\in \mathbb{N}^*$, of multilinear maps on the graded vector space $V$ that (have degree $n-2$ and possibly some symmetry properties) verify a whole sequence of defining relations. We refer to these data as a (an associative or Lie) homotopy algebra structure on $V$.\medskip

Their homotopy invariance, explains the origin of infinity algebras in {\small BRST} formalism of closed string field theory. One of the prominent applications of Lie infinity algebras ($L_{\infty}$-algebras) \cite{LS93} is their appearance in Deformation Quantization of Poisson manifolds. The deformation map can be extended from differential graded Lie algebras ({\small DGLA}s) to $L_{\infty}$-algebras and more precisely to a functor from the category {\tt L}$_{\infty}$ to the category {\tt Set}. This functor transforms a weak equivalence into a bijection. When applied to the {\small DGLA}s of polyvector fields and polydifferential operators, the latter result, combined with the formality theorem, provides the 1-to-1 correspondence between Poisson tensors and star products.\medskip

Homotopy Lie and Leibniz algebras are central concepts of the present text.

\subsection{Categorified algebras}

- Categorification \cite{CF94}, \cite{Cra95} is characterized by the replacement of sets (resp., maps, equations) by categories (resp., functors, natural transformations). Rather than considering two maps as equal, one details a way of identifying them. Categorified Lie algebras were introduced by J. Baez and A. Crans \cite{BC04} under the name of Lie 2-algebras. Such an algebra is made up by a category $L$, a functor $[-,-]:L\times L\to L$, and a natural transformation $J_{x,y,z}:[x,[y,z]]\to [[x,y],z]+[y,[x,z]]$ that verifies some coherence law. The generalization of Lie 2-algebras, weak Lie 2-algebras, was studied by D. Roytenberg \cite{Roy07}. Other generalizations, Leibniz 2-algebras and Lie 3-algebras, are part of the present work.\medskip

Categorification is thus a sharpened viewpoint that leads to astonishing results in {\small TFT}, bosonic string theory $...$ Considerable effort with convincing output has been made by J. Baez and his school to show that categorification leads to a deeper understanding of Physics.\medskip

- The preceding `vertical' categorification has a `horizontal' counterpart: if a notion can be interpreted as some category with a single object, it might be generalized, horizontally categorified, or oidified, by considering many object categories of the same type. Well-known examples of this process are e.g. Lie groupoids and Lie algebroids. The concept of groupoid was introduced in 1926 by H. Brandt; topological and differential groupoids go back to C. Ehresmann and are transitive Lie groupoids in the sense of A. Kumpera and A. Weinstein. Lie algebroids were first considered by J. Pradines \cite{Pr}, following works by C. Ehresmann and P. Libermann.\medskip

Lie groupoids and algebroids are of importance in particular in view of their relations with Poisson Geometry and the theory of connections. A generalization of Lie algebroids to the Leibniz case can be found in this work.

\subsection{Structure and main results}

This dissertation is divided into five parts: a survey work on the operadic approach to homotopy algebras, three research papers, as well as an Appendix containing additional information.

\subsubsection*{Operadic approach to homotopy algebras}

After the introduction of homotopy algebras, it was understood quite quickly that the maps $m_n$ of a homotopy associative or Lie algebra on a space $V$ can be viewed as the corestrictions of a coderivation on the free graded associative or commutative coalgebra over the suspended space $sV$ and that, more surprisingly, the above-mentioned sequence of defining relations can be encrypted in the unique requirement that this coderivation be a codifferential: a (an associative or Lie) homotopy algebra can be interpreted as a codifferential of an appropriate coalgebra.\medskip

In their famous paper on `Koszul duality for operads' \cite{GK94}, V. Ginzburg and M. Kapranov gave a conceptual approach to a broad family of homotopy algebras and extended the preceding interpretation to any type of homotopy algebra whose corresponding algebra type can be encoded in a so-called Koszul operad. This operadic approach to homotopy algebras is the main aspect of the survey part of this thesis. An extensive treatment of algebraic operads is provided by a recent textbook \cite{LodayVallette}; a large part of background section follows selected parts of that textbook.

\subsubsection*{Higher categorified algebras versus bounded homotopy algebras}

The work `Higher categorified algebras versus bounded homotopy algebras' (vertical categorification and homotopification of Lie algebras) was transmitted by J. Stasheff to the journal 'Theory and Applications of Categories' -- Theo. Appl. Cat., 25(10) (2011), 251-275.\medskip

A categorical definition of Lie 3-algebras is given and their 1-to-1 correspondence with 3-term Lie infinity algebras whose bilinear and trilinear maps vanish in degree $(1,1)$ and in total degree 1, respectively, is proven. Further, an answer to a question of Roytenberg pertaining to the use of the nerve and normalization functors in the study of the relationship between categorified algebras and truncated sh algebras, is provided.\medskip

We refer the reader to the introduction of the paper for more details -- see page~\pageref{HigherCatAlgVSBoundHomAlg}.

\subsubsection*{The Supergeometry of Loday Algebroids}

The paper `The Supergeometry of Loday Algebroids' (horizontal categorification of Leibniz (Loday) algebras) has recently been published in the `Journal of Geometric Mechanics' -- J. Geo. Mech., 5(2) (2013), 185-213 (American Institute of Mathematical Sciences).\medskip

A new concept of Loday algebroid (and its pure algebraic version -- Loday pseudoalgebra) is proposed and discussed in comparison with other similar structures present in the literature. The structure of a Loday pseudoalgebra and its natural reduction to a Lie pseudoalgebra are studied. Further, Loday algebroids are interpreted as homological vector fields on a `supercommutative manifold' associated with a shuffle product, and the corresponding Cartan calculus is introduced. Several examples, including Courant algebroids, Grassmann-Dorfman and twisted Courant-Dorfman brackets, as well as algebroids induced by Nambu-Poisson structures, are given.\medskip

For further details, we refer the reader to the introduction of this paper -- see page~\pageref{SupergeomLodAlebroids}.

\subsubsection*{On the infinity category of homotopy Leibniz algebras}

The third article `On the infinity category of homotopy Leibniz algebras' (homotopification, vertical categorification, higher category theory) is being published in the ArXiv preprint database and submitted for publication in a peer-reviewed international journal.\medskip

Various concepts of $\infty$-homotopies, as well as the relations between them (focussing on the Leibniz type) are discussed. In particular $\infty$-$n$-homotopies appear as the $n$-simplices of the nerve of a complete Lie ${\infty}$-algebra. In the nilpotent case, this nerve is known to be a Kan complex \cite{Get09}. The authors argue that there is a quasi-category of $\infty$-algebras and show that for truncated $\infty$-algebras, i.e. categorified algebras, this $\infty$-categorical structure projects to a strict 2-categorical one. The paper contains a shortcut to $(\infty,1)$-categories, as well as a review of Getzler's proof of the Kan property. The latter is made concrete by applying it to the 2-term $\infty$-algebra case, thus recovering the concept of homotopy of \cite{BC04}, as well as the corresponding composition rule \cite{SS07Structure}. An answer to a question of \cite{BS07} about composition of $\infty$-homotopies of $\infty$-algebras is provided.\medskip

Again the reader is referred to the introduction of the paper for additional information -- see page~\pageref{InfCatHomLeibAlg}.

\subsubsection*{Appendix}

In the appendix, we give proofs that were considered too technical to be part of a research paper.

\newpage
\section{Operadic approach to homotopy algebras}

A good set-up for working with algebraic structures involves operads. This means that one considers all $n$-ary operations that can be obtained from some elementary `building blocks', with all possible structures this collection naturally has. In the setting of homotopical algebra, identities between those operations only hold up to a hierarchy of coherent homotopies. Determining that hierarchy is an important problem, and there are several ways to approach it. One method that works in a large class of examples is described in the paper on Koszul duality for operads, by Ginzburg and Kapranov, which puts many disjoint ideas of homotopy theory in a uniform context.\\

We start our survey recalling the concepts of algebra and coalgebra. Then we introduce the bar and cobar constructions. We build the bar-cobar resolution of an algebra. Then we restrict ourselves to quadratic algebras, introduce the notions of Koszul (co)algebra and Koszul dual (co)algebra, and construct a `smaller' resolution -- the minimal model.\\

We give the classical and functorial definitions of an operad. The functorial one is similar to that of associative algebras. We adapt Koszul duality theory to this operadic framework. For each Koszul operad $P$, we define the $P_\infty$ operad as the cobar construction of the Koszul dual cooperad of $P$. Finally we explain the Ginzburg-Kapranov theorem, which gives a compact characterization of  $P_\infty$-algebras by codifferentials on certain coalgebras.\\


\subsection{Associative algebras and coalgebras}
\subsubsection{Notations and sign convention. Graded vector spaces.}
Here we fix notations and give some basic definitions. We work over a field $K$ of characteristic zero.
\begin{defi}
A vector space $V$ is {\it $\mathbb{Z}-$graded (or just graded)} if there exists a collection of vector spaces $\{V_i\},i\in\mathbb{Z}$, such that
\begin{equation*}
V=\bigoplus\limits_{i\in\mathbb{Z}} V_i.
\end{equation*}
 Any element $a\in V_i$ is called a {\it homogeneous element of degree $i$}. The degree of this element is denoted by $\overline{a}$ or $|a|$.
\end{defi}
The tensor product of two graded vector spaces has a natural grading. The degree of the tensor product of two homogeneous elements is equal to the sum of their degrees:
$$
\overline{v\otimes w}=\overline{v}+\overline{w},\quad v\in V_i, w\in W_j.
$$
We dualize the graded vector space degree by degree:
$$
V^*:=\bigoplus\limits_{i\in\mathbb{Z}} V^*_i.
$$
To the elements of $V^*_i$ we assign the degree $-i$. This kind of the dualization is not the `honest' dualization $\mathrm{Hom}(V,K)$, but in the case if $V$ is finite dimensional $V^*\simeq \mathrm{Hom}(V,K)$.
\begin{defi}
The {\it tensor module} over $V$ is, by definition, a direct sum
$$
T(V):=K\oplus V\oplus V^{\otimes 2}\oplus V^{\otimes 3}\oplus...
$$
and the {\it reduced tensor module} over $V$ is defined as follows:
$$
\overline{T}(V):=V\oplus V^{\otimes 2}\oplus V^{\otimes 3}\oplus...
$$
\end{defi}
The vectors of the (reduced) tensor module which lie in the $n$-th tensor power of $V$: $v_1\otimes...\otimes v_n\in V^{\otimes n}$ are said to be of {\it weight} $n$. The vectors $v_1\otimes...\otimes v_n$ we usually denote by $v_1...v_n$, if it doesn't cause an ambiguity.
\begin{defi}
A linear map $f$ between graded vector spaces $V$ and $W$ is called a {\it homogeneous map} of a degree  $\overline{f}=k$, if it adds $k$ to the degree of any homogeneous element: $fV_i\in W_{i+k}$.
\end{defi}
For any homogeneous graded maps $f:V\to W$ and $f':V'\to W'$ their tensor product $f\otimes f':V\otimes V'\to W\otimes W'$ is defined in the following way:
\begin{equation}\label{KoszulSign1}
(f\otimes f')(v\otimes v')=(-1)^{\overline{f'}\cdot\overline{v}}fv\otimes f'v'.
\end{equation}
The transposed map $f^*: W^*{\longrightarrow}V^*$ defined as follows:
\begin{equation}\label{KoszulSign2}
\begin{array}{l}
(f^*\alpha)v=(-1)^{\overline{f}\cdot\overline{\alpha}}\alpha (fv) \qquad  v\in V,\alpha \in W^*.\\
\end{array}
\end{equation}
Note that the dual spaces $V^*$ and $W^*$ are dualized, by definition, degree by degree, that is in general $V^*$ and $W^*$ are not isomorphic to $\mathrm{Hom}(V,K)$ and, respectively, $\mathrm{Hom}(W,K)$, nevertheless the map $f^*$ is well defined. It follows from the fact that the map $f$ has a certain degree $k$, and the map  $f^*$ sends all vectors from the vector space $(V_i)^*$ to the  $(V_{i+k})^*$.\\
The way how the signs chosen in formulas (\ref{KoszulSign1}) and  (\ref{KoszulSign2}) is called {\it the Koszul sign convention}.
From this sign convention immediately follows that the transposition of composed maps
\begin{equation}\label{rule_for_transposition_of_comp}
(fg)^*=(-1)^{\overline{f}\cdot\overline{g}}g^*f^*
\end{equation}
and no sign appears in the transposition of tensor product:
\begin{equation}\label{rule_for_transposition_of_tens}
(f\otimes g)^*=f^*\otimes g^*.
\end{equation}
Note that in the last two formulas the argument's degree doesn't appear in the sign. That is one of the reasons that the Koszul sign convention helps to decrease the amount of signs in computations.
\paragraph*{Chain and cochain complexes}
\begin{defi}
A {\it chain complex} $(V,d)$ is a graded vector space $V$ together with a linear map $d$ of degree $-1$ and satisfying $d^2=0$, called {\it a differential of the chain complex}.
$$
\xymatrix@C=1.3pc{
...&V_{-1}\ar@{->}_{d}[l]&V_0\ar@{->}_{d}[l]&V_1\ar@{->}_{d}[l]&...\ar@{->}_{d}[l]
}
$$
\end{defi}
\begin{defi}
A {\it  cochain complex} $(V,d)$ is a graded vector space $V$ together with a linear map $d$ of degree $1$ and satisfying $d ^2=0$, called {\it a differential of the cochain complex}.
$$
\xymatrix@C=1.3pc{
...&V^{-1}\ar@{<-}_{d}[l]&V^0\ar@{<-}_{d}[l]&V^1\ar@{<-}_{d}[l]&...\ar@{<-}_{d}[l]
}
$$
\end{defi}
\begin{defi}
A {\it morphism of (co)chain complexes} $(V,d_V)$ and $(W,d_W)$ is a linear map $f:V\to W$ of degree 0, which commutes with differentials, that is $d_W\circ f=f\circ d_V$.
\end{defi}
Considering a chain complex $(V,d)$, it is sometimes useful to denote the differential $d$, in a more explicit way, by $d_n:V_n\to V_{n-1}$. Note that  $d^2=0$ explicitly reads as $d_n\circ d_{n+1}$, and thus $\mathrm{im}d_{n+1}\subset \mathrm{ker}d_n$. Elements of $\mathrm{ker}d_n$ are called {\it cycles} and elements of $\mathrm{im}d_{n+1}$ --- {\it boundaries}.
The {\it $n$-th  homology group} is by definition
$$
H_n:=\mathrm{ker}d_n\slash\mathrm{im}d_{n+1}.
$$
We denote $H_{\bullet}(V,d):=\bigoplus\limits_{n\in\mathbb{Z}} H_n(V,D).$\medskip

Similarly for the cochain complex $(V,d)$, where we denote the differential, in more explicit way, by $d^{n}:V^n\to V^{n+1}$. The {\it $n$-th  cohomology group} is by definition
$$
H^n:=\mathrm{ker}d^n\slash\mathrm{im}d^{n-1}.
$$
We denote $H^{\bullet}(V,d):=\bigoplus\limits_{n\in\mathbb{Z}} H^n(V,D).$\medskip

A (co)chain complex is called {\it acyclic} if its (co)homology is 0 everywhere. Note that a chain map $f:V\to W$ induces a linear map $f_\bullet$ in homology. If the map $f_\bullet$ is an isomorphism, we say that $f:V\xrightarrow{\sim}W$ is a {\it quasi-isomorphism.} Similarly one can define the notion of quasi-isomorphism for cochain maps.
\medskip
\begin{defi}
A {\it (co)chain homotopy} between two (co)chain maps $f,g: (V,d_V)\to (W,d_W)$ is a map $\eta: V\to W$ of degree 1 (respectively -1), such that $\eta d+d\eta=g-f$,
$$
\mbox{chain complex}\qquad\qquad\qquad\qquad\qquad\qquad\qquad\mbox{cochain complex}$$ $$
\xymatrix@C=1.3pc@R=2pc{
...\ar@{->}_{\eta}[dr]&V_{-1}\ar@{->}_{d}[l]\ar@{->}_{\eta}[dr]&V_0\ar@{->}_{d}[l]\ar@{->}_{\eta}[dr]&V_1\ar@{->}_{d}[l]\ar@{->}_{\eta}[dr]&...\ar@{->}_{d}[l]\\
...&W_{-1}\ar@{->}_{d}[l]&W_0\ar@{->}_{d}[l]&W_1\ar@{->}_{d}[l]&...\ar@{->}_{d}[l]\\
}\qquad\qquad
\xymatrix@C=1.3pc@R=2pc{
...&V^{-1}\ar@{<-}_{d}[l]\ar@{->}_{\eta}[dl]&V^0\ar@{<-}_{d}[l]\ar@{->}_{\eta}[dl]&V^1\ar@{->}_{d}[l]\ar@{->}_{\eta}[dl]&...\ar@{->}_{d}[l]\ar@{->}_{\eta}[dl]\\
...&W^{-1}\ar@{->}_{d}[l]&W^0\ar@{<-}_{d}[l]&W^1\ar@{<-}_{d}[l]&...\ar@{<-}_{d}[l]\\
}
$$
\end{defi}

If two (co)chain maps are homotopic, the induced maps in (co)homology coincide.
\subsubsection{Associative algebras}
In this section we give  definitions of associative algebras and coalgebras. The definitions are somehow dual to each other. It means that any associative algebra on a
finite dimension vector space $V$ gives rise to the coassociative coalgebra on $V^*$ and vice versa. We will work in the graded framework.
\begin{defi}
{\it A graded associative algebra $A$ over a field $K$} is a graded vector space $A$ endowed with a $K$-linear 0-degree map
\begin{equation*}
\mu: A\otimes A\rightarrow A,
\end{equation*}
which called a product (depending on a context the product $\mu$  denoted by a dot ($\cdot$), star ($*$), wedge ($\wedge$),...), such that the associativity condition satisfies:
\begin{equation*}
(a\cdot b)\cdot c=a\cdot(b\cdot c)\quad\Leftrightarrow\quad \mu(\mu\otimes \mathrm{id})=\mu(\mathrm{id}\otimes \mu),\qquad  a,b,c\in A.
\end{equation*}
\end{defi}
\begin{defi}
A graded associative algebra $A$ is called \\
\begin{itemize}
\item {\it unital}, if there exists an element $1_A\in A$, such that for any $a\in A$:
\begin{equation*}
1_A\cdot a=a\cdot 1_A=a,
\end{equation*}
\item {\it commutative}, if for any homogeneous elements $a,b\in A$:
 $$a\cdot b= (-1)^{\overline{a}\cdot \overline{b}} b\cdot a,$$
\item {\it anticommutative}, if for any homogeneous elements $a,b\in A$:
 $$a\cdot b= -(-1)^{\overline{a}\cdot \overline{b}} b\cdot a.$$
\end{itemize}
\end{defi}
One can show that if there exists the unit then it is unique. Indeed, let us assume that there exists another unit then $1'_A$, then
\begin{equation*}
1'_A=1'_A\cdot 1_A=1_A.
\end{equation*}

Sometimes it is useful to express the associativity and unital properties in terms of commutative diagrams. Now we reformulate the axioms of associative algebras in this language. One of the advantages of the diagram approach
that it doesn't refer to objects of an algebra.
\\

{\it associativity axiom}:
\begin{equation}\label{AlgebraAssociativityDiagramm}
\begin{array}{l}
\xymatrix@C=4pc@R=4pc{
A\otimes A \otimes A \ar@{->}^{\mathrm{id}\otimes\mu}[r]\ar@{->}^{\mu\otimes \mathrm{id}}[d]&A\otimes A\ar@{->}^\mu[d]\\
A\otimes A \ar@{->}^{\mu}[r]&A
}
\end{array}
{
\quad\Leftrightarrow\quad
\mu(\mu\otimes \mathrm{id})=\mu(\mathrm{id}\otimes \mu).
}
\end{equation}\medskip

The existence of unit in the algebra $A$ is equivalent to the existence
of $K$-linear map $u:K\rightarrow A$, such that for any
$k\in K$ and for any $a\in A$
\begin{equation}\label{algebraUnitalIdentity}
ka=u(k)\cdot a=a\cdot u(k),
\end{equation}
which can be expressed in the following commutative diagram:\medskip

{\it unital axiom}:
\begin{equation}\label{algebraUnitdiagramm}
\begin{array}{l}
\xymatrix@R=4pc{
K\otimes A\ar@{->}^{u\otimes \mathrm{id}}[r]\ar@{->}_{\backsimeq}[rd]&A\otimes A\ar@{->}^{\mu}[d]&A\otimes K\ar@{->}_{\mathrm{id}\otimes  u}[l]\ar@{->}^{\backsimeq}[ld]\\
&A&
}
\end{array}\quad\Leftrightarrow\quad \mu (u\otimes \mathrm{id})=\mathrm{id}=\mu(\mathrm{id}\otimes u).
\end{equation}
\begin{defi}
{\it An algebra homomorphism} between two graded associative algebras $A$ and $A'$ is a 0-degree linear map $F:A\rightarrow A'$ that respects the algebra structures:
\begin{equation*}
F(a\cdot b)=F(a)\cdot F(b).
\end{equation*}
If algebras are unital then the map $F$ is required to send the unit to unit:
\begin{equation*}
F(1_A)=1_{A'}.
\end{equation*}
\end{defi}
\begin{defi}
Algebras $A$ and $A'$ are called {\it isomorphic} if there exists the homomorphism $F:A\to A'$ that is bijective. The map $F$ is called an {\it algebra isomorphism}.
\end{defi}\bigskip
If $F:A\to A'$ is an algebra isomorphism then the map $F^{-1}$ is also an algebra isomorphism. Indeed,
\begin{equation*}
F^{-1}(a\cdot b)=F^{-1}(FF^{-1}a\cdot FF^{-1}b)=F^{-1}F(F^{-1}a\cdot F^{-1}b)=F^{-1}a\cdot F^{-1}b.
\end{equation*}
\begin{defi}
A unital algebra $A$ is called {\it augmented} if there exists the subalgebra $\overline{A}\subset A$, such that the algebra $A$ splits into two components
$$
A=1_AK\oplus \overline{A}.
$$
\end{defi}
The definition of the augmented algebra can be reformulated in the functorial spirit. One can show that the unital algebra is augmented if and only if there exists an algebra homomorphism $\varepsilon: A\to K$. This map is called the {\it augmentation map}.
\begin{ex}
The {\it algebra of polynomials} $K [x_1,...,x_n]$ is a commutative associative unital algebra.
\end{ex}
\begin{ex}\label{tensorAlgebraExample}
The {\it tensor algebra} over the tensor module
\begin{equation*}
T(V)=K\oplus V\oplus V^{\otimes 2}\oplus V^{\otimes 3}\oplus...\\
\end{equation*}
which is given by the concatenation multiplication:
\begin{equation*}
(v_1\otimes...\otimes v_p)\cdot(v_{p+1}\otimes...\otimes v_{p+q})=v_1\otimes...\otimes v_{p+q}.
\end{equation*}
The tensor algebra $(T(V),\cdot)$ is an associative unital algebra.
\end{ex}
\begin{ex}
The {\it reduced tensor algebra} over the reduced tensor module
\begin{equation*}
\overline{T}(V)=V\oplus V^{\otimes 2}\oplus V^{\otimes 3}\oplus...
\end{equation*}
with the concatenation product is an associative algebra.
\end{ex}
\begin{ex}\label{symmetricAlgebraExample}
The {\it symmetric algebra} is by definition
\begin{equation*}
S(V):=T(V)/I,
\end{equation*}
where the two-sided ideal $I$ is generated by all differences of homogeneous elements' products:\break $v\otimes w-(-1)^{\overline{v}\cdot\overline{w}}w\otimes v$ and the product descends from the tensor algebra's product. The reduced symmetric algebra is defined similarly:
$$
\overline{S}(V):=\overline{T}(V)/I.
$$
The symmetric algebra $S(V)$ is a graded commutative associative unital algebra. The  reduced symmetric algebra $\overline{S}(V)$ is a graded commutative associative  algebra. The (reduced) symmetric algebra can be decomposed in a direct sum of vector spaces:
\begin{equation*}
S(V)=\bigoplus\limits_{n=0}^\infty S^n(V),\quad \overline{S}(V)=\bigoplus\limits_{n=1}^\infty S^n(V),
\end{equation*}
 where $S^n(V)_{n\geqslant 0}$ is spanned by all elements of $S(V)$ of the weight $n$.
If the vector space $V$ is finite dimensional and has a dimension $k$ then the symmetric algebra $S(V)$ is isomorphic to the
algebra of polynomials $K [x_1,...,x_k].$
 \end{ex}
\begin{defi}
For any collection of homogeneous vectors $v_1,...,v_n\in S(V)$ and for any permutation $\sigma\in S(n)$ the {\it Koszul sign} $\varepsilon(\sigma,v_1,...,v_n)$ is defined by the commutativity relation
\begin{equation*}
\varepsilon(\sigma,v_1,...,v_n)v_{\sigma(1)}\cdot v_{\sigma(2)}\cdot...\cdot v_{\sigma(n)}=v_1\cdot v_2\cdot...\cdot v_n.
\end{equation*}
If it doesn't cause the confusions we will denote it just by $\varepsilon(\sigma)$.
\end{defi}
 \begin{ex}
The exterior algebra is by definition
\begin{equation*}
\Lambda(V):=T(V)/I,
\end{equation*}
where the two-sided ideal $I$ is generated by all sums of homogeneous elements' products $v\otimes w+\break+(-1)^{\overline{v}\cdot\overline{w}}w\otimes v$ and the product descends from the tensor algebra's product.\\
The reduced exterior algebra is defined similarly:
$$
\overline{\Lambda}(V):=\overline{T}(V)/I.
$$
The exterior algebra is anticommutative associative unital algebra. The reduced exterior algebra is anticommutative associative algebra. These algebras can be decomposed in a direct sum of vector spaces:
\begin{equation*}
\Lambda(V)=\bigoplus\limits_{n=0}^\infty \Lambda^n(V),\quad \overline{\Lambda}(V)=\bigoplus\limits_{n=1}^\infty \Lambda^n(V),
\end{equation*}
where $\Lambda^n(V)$ is spanned by all elements of $S(V)$ of the weight $n$.
The anticommutativity implies that for any homogeneous vectors $v_1,...,v_n \in \Lambda(V)$ and for any
permutation $\sigma\in S_n$ holds
\begin{equation*}
v_1\wedge v_2\wedge...\wedge v_n = \mathrm{sign}(\sigma)\cdot\varepsilon(\sigma,v_1,...,v_n)v_{\sigma(1)}\wedge v_{\sigma(2)}\wedge...\wedge v_{\sigma(n)}.
\end{equation*}
In the case if the vector $V$ is finite dimensional and possesses only 0-degree vectors then the exterior algebra is isomorphic to the algebra of linear forms over $V^*$ with standard exterior product.
 \end{ex}

\subsubsection{Coassociative coalgebras}
The general idea to give definitions in the `coalgebraic' world is to take the `algebraic' definitions and revert all arrows in diagrams.
\begin{defi}
{\it A graded coassociative coalgebra $C$ over a field $K$} is a graded vector space $C$ endowed with a $K$-linear 0-degree map
\begin{equation*}
\Delta: C\to C\otimes C,
\end{equation*}
such that the it satisfies the coassociativity relation:
\begin{equation}\label{CoalgebraCossociativityDiagramm}
\begin{array}{l}
\xymatrix@C=4pc@R=4pc{
C\otimes C \otimes C \ar@{<-}^{\mathrm{id}\otimes\Delta}[r]\ar@{<-}^{\Delta\otimes \mathrm{id}}[d]&C\otimes C\ar@{<-}^\Delta[d]\\
C\otimes C \ar@{<-}^{\Delta}[r]&C
}
\end{array}
\quad\Leftrightarrow\quad (\Delta\otimes \mathrm{id})\Delta=(\mathrm{id}\otimes \Delta)\Delta.
\end{equation}
The coalgebra $C$ is called
\begin{itemize}
\item  {\it counital} if there exists a map $\varepsilon: C\to K$ such that
\begin{equation}\label{CoalgebraCounitDiagramm}
\begin{array}{l}
\xymatrix@R=4pc{
K\otimes C\ar@{<-}^{\varepsilon\otimes \mathrm{id}}[r]\ar@{<-}_{\backsimeq}[rd]&C\otimes C\ar@{<-}^{\Delta}[d]&C\otimes K\ar@{<-}_{\mathrm{id}\otimes  \varepsilon}[l]\ar@{<-}^{\backsimeq}[ld]\\
&C&
}
\end{array}\quad\Leftrightarrow\quad  (\varepsilon\otimes \mathrm{id})\Delta=\mathrm{id}=(\mathrm{id}\otimes \varepsilon)\Delta.
\end{equation}
\end{itemize}
\end{defi}
\begin{rem}
Consider the finite dimensional associative unital algebra $(A,\mu,u)$. The multiplication map $\mu$ and the unity map $u:K\to A$ satisfies respectively associativity and unitality conditions (\ref{AlgebraAssociativityDiagramm}) and (\ref{algebraUnitdiagramm}). Consider the transposed maps $\mu^*:A^*\to (A\otimes A)^*$ and $u^*:A^*\to K$. Since the vector space $A$ is finite dimensional  $(A\otimes A)^*\simeq A^*\otimes A^*$; the transposed associativity and unitality conditions will looks like:
$$
\begin{array}{l}
(\mu^*\otimes \mathrm{id})\mu^*=(\mathrm{id}\otimes \mu^*)\mu^*,\\[5mm]
(u^*\otimes \mathrm{id})\mu^*=\mathrm{id}=(\mathrm{id}\otimes u^*)\mu^*.
\end{array}
$$
We see that if the associative unital algebra $(A,\mu,u)$ is finite dimensional then the datum
$(A^*,\mu^*,u^*)$ form a coassociative counital coalgebra. Similarly any finite dimensional coalgebra $(C,\Delta,\varepsilon)$ gives rise to the finite dimensional algebra $(C^*,\Delta^*,\varepsilon^*)$.
\end{rem}
\begin{rem}\label{RemInfDimCoalgAlg}
In infinite-dimensional case the situation is more tricky. The spaces  $(V\otimes W)^*$ and $V^*\otimes W^*$ are not isomorphic, but there exists an obvious embedding $V^*\otimes W^*\hookrightarrow (V\otimes W)^*$ defined by the formula $(\alpha\otimes \beta)(x\otimes y)\stackrel{def}{=}(-1)^{|x|\cdot|\beta|} \alpha(x)\beta(y)$.
So for infinite-dimensional algebra $(A,\mu)$ the transposed map $\mu^*$ defines the coalgebra structure on $A^*$ if and only if $\mathrm{Im}(\mu^*)\subset \mathrm{Im}(i)$:
\begin{equation}
\xymatrix@R=0.5pc{
&A \otimes A \ar@{->}^-{\mu}[r]& A\\
A^*\otimes A^*\ar@{^{(}->}^{i}[r]&(A\otimes A)^*\ar@{<-}^-{\mu^*}[r]&A^*.
}
\end{equation}
The situation for coalgebras is  better. The transposed map $\Delta^*$ always defines a product on $C^*\otimes C^*$. Indeed,
\begin{equation}
\xymatrix@R=0.5pc{
C \ar@{->}^-{\Delta}[r]& C \otimes C&\\
C^*\ar@{<-}^{\Delta^*}[r]&(C\otimes C)^*&C^*\otimes C.^*\ar@{_{(}->}_-{i}[l]
}
\end{equation}

\end{rem}
\begin{defi}
{\it A coalgebra homomorphism} between two graded associative coalgebras $C$ and $C'$ is a 0-degree linear map $F:C\rightarrow C'$ that respects the coalgebra structures:
\begin{equation}
\Delta F=(F\otimes F)\Delta.
\end{equation}
If the coalgebras are counital then the map $F$ is required to be compatible with counit maps:
\begin{equation*}
\varepsilon F=\varepsilon'.
\end{equation*}
\end{defi}
\begin{ex}
If $G$ is a finite group and $C(G)$ is a linear space of functions: $G\to K$. There is a natural coalgebra structure on the vector space $C(G)$. The coproduct
$$
\Delta: C(G)\to C(G)\otimes C(G)\simeq C(G\times G)
$$
 is given by the following formula:
$$
(\Delta f)(g_1,g_2)=f(g_1\cdot g_2),\ g_1,g_2\in G,
$$
the counit map is given by
$$
\varepsilon f=f(1_G).
$$
The datum $(C(G),\Delta,\varepsilon)$ form a coassociative counital coalgebra
\end{ex}
\begin{ex}
{\it The tensor coalgebra $T^c(V)$} as a vector space coincides with the tensor algebra $T(V)$. For any element $v_1...v_p\in V^{\otimes p}\subset T^c(V)$ the coproduct $\Delta$ acts in the following way:
$$
\Delta(v_1...v_p)=1_K\otimes v_1...v_p+\sum\limits_{i=1}^{p-1}v_1...v_i\otimes v_{i+1}...v_{p}+
v_1...v_p\otimes 1_K.
$$
The coproduct $\Delta$ is called a {\it deconcatenation}. The counit map $\varepsilon$ is the trivial projection $T^c(V)\to K$.
the datum $(T^c(V),\Delta,\varepsilon)$ form a coassociative counital coalgebra. If the vector space $V$ is finite dimensional then the tensor coalgebra $T^c(V)$ is dual to the tensor algebra, in the sense that  the datum $(T(V^*),\Delta^*,\varepsilon^*)$ form a tensor algebra (with the concatenation product).\medskip

Similarly {\it the reduced tensor coalgebra} $\overline{T}^c(V)$ as a vector space coincides with the reduced tensor coalgebra $\overline{T}^c(V)$ and the coproduct $\overline{\Delta}$ is defined as follows:
$$
\overline{\Delta}(v_1...v_p)=\sum\limits_{i=1}^{p-1}v_1...v_i\otimes v_{i+1}...v_{p}.
$$
\end{ex}
\begin{defi}
A coalgebra $C$ is called {\it coaugmented} if there exists the subcoalgebra $\overline{C}\subset C$, such that the coalgebra $C$ splits into two components
$$
C=1_CK\oplus \overline{C}.
$$
\end{defi}
Similarly to the algebraic case this definition can be encrypted in the existence of the coalgebra homomorphism $u: K\to C$, which called the {\it coaugmentation map}.

\begin{defi}\label{unshuffledPermutationDef}
To any finite set of natural numbers $i_1,...,i_p$ one can correspond the set of {\it $(i_1,...,i_p)$-unshuffled permutations $Sh(i_1,...,i_p)$}, which is a set of all $(i_1+...+i_p)$-permutations, such that for any
permutation $\sigma\in Sh(i_1,...,i_p)$ the following  conditions are required to satisfy:
$$
\begin{array}{l}
\sigma(1)<...<\sigma(i_1),\\
\sigma(i_1+1)<...<\sigma(i_1+i_2),\\
 ...\\[2mm]
\sigma(i_1+...+i_{p-1}+1)<...<\sigma(i_1+...+i_p).
\end{array}
$$
If, in addition to this, the last elements in each group are ordered:
$$
\sigma(i_1)<\sigma(i_1+i_2)<...<\sigma(i_1+...+i_p),
$$
then we say that the permutation $\sigma$ is an {\it $(i_1,...,i_p)$-half-unshuffled permutation}. The set of such elements we denote by $Hsh(i_1,...,i_p)$.
\end{defi}
\begin{ex}
{\it The symmetric coalgebra $S^c(V)$} as a vector space coincides  with the symmetric algebra $S(V)$. For any element $v_1...v_p\in S^c(V)$ the coproduct acts in the following way:
\begin{align*}
\Delta(v_1...v_p)&=1_K\otimes(v_1...v_p)+\sum\limits_{i=1}^{p-1}\sum\limits_{\sigma\in Sh(i,p-i)}\varepsilon(\sigma,v_1,...,v_p)(v_{\sigma(1)}...v_{\sigma(i)})\otimes(v_{\sigma(i+1)}...v_{\sigma(p)})\\
&+(v_1...v_p)\otimes 1_K,
\end{align*}
where $\varepsilon(\sigma,v_1,...,v_p)$ is the Koszul sign. The counit map $\varepsilon$ (don't confuse with the Koszul sign that is denoted by the same symbol) is the trivial projection $S^c(V)\to K$.
The datum $(S^c(V),\Delta,\varepsilon)$ form a coassociative counital coalgebra. If the vector space $V$ is finite dimensional then the symmetric coalgebra $S^c(V)$ is dual to the symmetric algebra, in the sense that the datum $(S^c(V^*),\Delta^*,\varepsilon^*)$ form the symmetric algebra.
\end{ex}\bigskip
\subsubsection{Differential graded associative algebras and coalgebras}
\begin{defi}
{\it A differential graded associative (DGA) algebra}  $(A,\mu, d)$ is an associative algebra $(A,\mu)$ equipped with $-1$-degree map $d$, called {\it the differential}, which satisfies the following identities:
$$
\begin{array}{l}
\bullet\ d(u\cdot v)=du\cdot v+(-1)^{\overline{u}}\cdot dv \quad\Leftrightarrow\quad d\mu=\mu(d\otimes \mathrm{id}+\mathrm{id}\otimes d)\quad \mbox{-- }\ d \mbox{ is a derivation,}\\[2ex]
\bullet\ d^2=0.
\end{array}
$$
\end{defi}

\begin{defi}
{\it A differential graded associative (DGA) coalgebra} $(C,\Delta, D)$ is an coassociative coalgebra $(C,\Delta)$ equipped with $-1$-degree map $D$, called {\it the codifferential}, that satisfies the following identities:
$$
\begin{array}{l}
\bullet\ \Delta D=(D\otimes \mathrm{id}+\mathrm{id}\otimes D)\Delta \quad \mbox{-- }\ D \mbox{ is a coderivation,}\\[2ex]
\bullet\ D^2=0.
\end{array}
$$
\end{defi}
\begin{defi}
{\it A DGA algebra homomorphism} between DGA algebras $A$ and $A'$ is a graded algebra homomorphism $F:A\to A'$ that additionally respects the differential structure of algebras:
$$
Fd-dF=0.
$$
\end{defi}
\begin{defi}
{\it A DGA coalgebra homomorphism} between DGA coalgebras $C$ and $C'$ is a graded coalgebra homomorphism $F:C\to C'$ that additionally respects the codifferential structure of coalgebras:
$$
FD-DF=0.
$$
\end{defi}

Often (DGA) (co)algebras are imposed by additional non-negative grading, called {\it the weight}:
$$
A:=A^{(0)}\oplus A^{(1)}\oplus ...
$$
If the weight is preserved by the (co)multiplication of the (co)algebra then we say that we deal with a {\it WGDA (weight-graded differential associative) (co)algebra}. For example, the tensor module $T(V)$ besides the homological grading that comes from the grading of $V$, carries the weight grading. The weight of each element in $V^{\otimes n}\subset T(V)$ is equal to $n$.
\begin{defi}
The WGDA (co)algebra $A$ is called {\it connected} in the weight decomposition $A^{(0)}=K$.
\end{defi}

\subsubsection*{Symmetric product}
Consider maps $f,g:S(V)\to S(W)$. The vector spaces $S(V)$ and $S(W)$
may be endowed with symmetric product $\mu$ (symmetric algebra) as well as with coproduct $\Delta$ (symmetric coalgebra). The symmetric product $f\odot g: S(V)\to S(W)$ is defined as follows:
$$
f\odot g:=\mu\circ(f\otimes g)\circ\Delta.
$$
For example, if the vector $v_1...v_p\in S^P(V)$ then
$$
(f\odot g)(v_1...v_{p})=\sum\limits_{i+j=p}\ \sum\limits_{\sigma \in Sh(i,j)}\varepsilon(\sigma)\cdot (-1)^{\overline g\cdot(\overline{v}_{\sigma(1)}+...+\overline{v}_{\sigma(i)})}f(v_{\sigma(1)}...v_{\sigma(i)})\cdot g(v_{\sigma(i+1)}...v_{\sigma(i+j)}).
$$
The symmetric product is graded commutative:
$$
f\odot g=(-1)^{\overline{f}\cdot\overline{g}}g\odot f.
$$
The transposition
$$
(f\odot g)^*=(\mu\circ(f\otimes g)\circ\Delta)^*=\Delta^*\circ(f^*\otimes g^*)\circ\mu^*=\mu\circ(f^*\otimes g^*)\circ\Delta=f^*\odot g^*.
$$
Similarly, the symmetric product of $n$ maps $f_1,f_2...,f_n$ is defined by
$$
f_1\odot...\odot f_n:=\mu^{n-1}\circ(f_1\otimes...\otimes f_n)\circ\Delta^{n-1},
$$
where the $n$-th powers of multiplication and comultiplication are well defined because of their
associativity. This also implies that the symmetric product is associative.

\subsubsection*{Suspension maps}
\begin{defi}
For any graded vector space $V$  {\it the shifted by n vector space $s^nV$ (or V[n])}
  is vector space, such that $ V_{i}\simeq (s^nV)_{i+n}$.
\end{defi}\medskip
In other words, there exists an isomorphism $s^n: V\rightarrow s^nV$. The linear map $s^n$ does nothing but increases degrees of vectors by n, so it has a degree n. The map $s$ is called a {\it suspension map} and the $s^{-1}$ is a {\it desuspension map}.\\
The dual of shifted vector space
$(s^nV)^*\simeq s^{-n}V^*.$\\
For the $n$-th tensor power of (de)suspension maps:
$
\xymatrix{
V^{\otimes n}\ar@<1ex>^{s^{\otimes n}}[r]&(sV)^{\otimes n}\ar@<1ex>^{(s^{-1})^{\otimes n}}[l]
}
$,
according to the Koszul sign rule, their composition satisfy the following relations:
\begin{equation}\label{identity_formula}
 s^{\otimes n}\circ {(s^{-1})}^{\otimes n}={(s^{-1})}^{\otimes n}\circ s^{\otimes n}=(-1)^{\frac{n(n-1)}{2}}\mathrm{id}^{\otimes n}.
\end{equation}
For any map $f: (sV)^{\otimes n}\rightarrow (sW)^{\otimes k}$ one can define the suspended map $f^{\mathrm{susp}}: V^{\otimes n}\rightarrow W^{\otimes k}$ via
the following diagram:
$$
\begin{array}{l}
\xymatrix{
V\otimes...\otimes V \ar@{->}^-{f^{\mathrm{susp}}}[r]\ar@{->}_-{s^{\otimes n}}[d]&W\otimes...\otimes W\\
sV\otimes...\otimes sV\ar@{->}^-{f}[r]&sW\otimes...\otimes sW\ar@{->}_-{(s^{-1})^{\otimes k}}[u]
}
\end{array}\quad\Leftrightarrow\quad
   f^{\mathrm{susp}}:={(s^{-1})^{\otimes k}\circ f\circ \left(s\right)}^{\otimes n}.
$$
We see that $\overline{f^{\mathrm{susp}}}=\overline{f}+n-k.$\medskip

One can show that for the permutation map $\sigma_n: V^{\otimes n}\to V^{\otimes n}$ the following identity holds:
\begin{equation}\label{suspension_of_permutation}
\sigma_n^{\mathrm{susp}}=(-1)^{\frac{n(n-1)}{2}}\mathrm{sign}(\sigma_n)\cdot\sigma_n.
\end{equation}
\begin{prop}
For any graded vector space $V$ the exterior algebra $\Lambda(V)$ is isomorphic to $\bigoplus\limits_{n=0}^{\infty}(s^{-1})^{\otimes n}S^n(sV)$.
\end{prop}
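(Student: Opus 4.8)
The expression $(s^{-1})^{\otimes n}\,S^{n}(sV)$ on the right-hand side is best read with $S^{n}(sV)$ realised as the subspace of (graded-)symmetric tensors inside $(sV)^{\otimes n}$, namely the image of the symmetrizer, which is legitimate since $K$ has characteristic zero; then $(s^{-1})^{\otimes n}$ acts on it literally (equivalently, one checks that $(s^{-1})^{\otimes n}$ descends to the quotient defining $S^{n}(sV)$). So the plan is to prove the isomorphism weight by weight, i.e.\ to establish a degree-$0$ isomorphism $\Lambda^{n}(V)\cong (s^{-1})^{\otimes n}\bigl(S^{n}(sV)\bigr)$ for every $n\geq 0$ and then to take the direct sum; this is the ``décalage'' isomorphism, and the whole thing rests on formula (\ref{suspension_of_permutation}). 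Concretely, on $V^{\otimes n}$ put $\mathrm{Alt}_{n}:=\frac{1}{n!}\sum_{\sigma}\mathrm{sign}(\sigma)\,\sigma_{n}$ and on $(sV)^{\otimes n}$ put $\mathrm{Sym}_{n}:=\frac{1}{n!}\sum_{\sigma}\sigma_{n}$, where $\sigma$ runs over the permutations of $n$ letters and $\sigma_{n}$ denotes the corresponding permutation map, carrying on each space its own Koszul signs. In characteristic zero these are idempotents with $\mathrm{Im}(\mathrm{Sym}_{n})\cong S^{n}(sV)$ and $\mathrm{Im}(\mathrm{Alt}_{n})\cong\Lambda^{n}(V)$, canonically.

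The heart of the matter is to conjugate $\mathrm{Sym}_{n}$ by the suspension. By definition of the suspended map, $(s^{-1})^{\otimes n}\circ\sigma_{n}\circ s^{\otimes n}=\sigma_{n}^{\mathrm{susp}}$, so summing formula (\ref{suspension_of_permutation}) over all $\sigma$ gives
\[
(s^{-1})^{\otimes n}\circ\mathrm{Sym}_{n}\circ s^{\otimes n}=(-1)^{\frac{n(n-1)}{2}}\,\mathrm{Alt}_{n}.
\]
Since $s^{\otimes n}$ is bijective, taking images on both sides yields $(s^{-1})^{\otimes n}\bigl(\mathrm{Im}\,\mathrm{Sym}_{n}\bigr)=\mathrm{Im}\,\mathrm{Alt}_{n}$, that is $(s^{-1})^{\otimes n}\bigl(S^{n}(sV)\bigr)=\Lambda^{n}(V)$ inside $V^{\otimes n}$, and in particular $(s^{-1})^{\otimes n}$ restricts to a linear isomorphism $S^{n}(sV)\xrightarrow{\ \sim\ }\Lambda^{n}(V)$. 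A degree count confirms that it preserves the homological grading: the element $sv_{1}\cdots sv_{n}$ has degree $\sum_{i}|v_{i}|+n$, and $(s^{-1})^{\otimes n}$ lowers degrees by $n$, matching $\deg(v_{1}\wedge\cdots\wedge v_{n})=\sum_{i}|v_{i}|$; the weight is obviously preserved.

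Summing over $n$ then gives $\Lambda(V)=\bigoplus_{n\geq 0}\Lambda^{n}(V)=\bigoplus_{n\geq 0}(s^{-1})^{\otimes n}\bigl(S^{n}(sV)\bigr)$, which is the claim. If one wishes to upgrade this to an isomorphism of exterior algebras, it is enough to transport the wedge product along these isomorphisms; on the right it becomes concatenation followed by $\bigoplus_{n}\mathrm{Alt}_{n}$, a verification I would merely indicate rather than carry out here.

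I expect the only genuinely delicate part to be the sign bookkeeping around the identifications — in particular, verifying the well-definedness of the descent, i.e.\ that $(s^{-1})^{\otimes n}$ carries the graded-symmetric relations in $(sV)^{\otimes n}$ exactly onto the graded-antisymmetric relations in $V^{\otimes n}$. For a single transposition the Koszul-sign rule (\ref{KoszulSign1}) gives
\[
(s^{-1})^{\otimes 2}\bigl(sv\otimes sv'-(-1)^{|sv|\cdot|sv'|}\,sv'\otimes sv\bigr)=(-1)^{|v|+1}\bigl(v\otimes v'+(-1)^{|v|\cdot|v'|}\,v'\otimes v\bigr),
\]
and the general case follows by inserting this into longer words, absorbing the extra Koszul signs produced by the surrounding factors. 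Beyond this, and the use of $\mathrm{char}\,K=0$ to split $\Lambda^{n}$ and $S^{n}$ off as images of idempotents, the argument is entirely formal.
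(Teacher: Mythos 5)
Your argument is correct, but it takes a genuinely different route from the paper's. The paper keeps $S^n(sV)=(sV)^{\otimes n}/I$ as a quotient and verifies directly that $(s^{-1})^{\otimes n}$ preserves the equivalence classes, carrying out the computation only for $n=2$ (essentially the transposition check you relegate to your final paragraph) and then asserting that the general case is similar. You instead realize $S^n(sV)$ and $\Lambda^n(V)$ as images of the idempotents $\mathrm{Sym}_n$ and $\mathrm{Alt}_n$ and conjugate by the suspension, so that formula (\ref{suspension_of_permutation}) yields $(s^{-1})^{\otimes n}\circ\mathrm{Sym}_n\circ s^{\otimes n}=(-1)^{\frac{n(n-1)}{2}}\,\mathrm{Alt}_n$ in one stroke; taking images then settles all $n$ at once, with the sign harmless and injectivity of the restriction immediate since $(s^{-1})^{\otimes n}$ is invertible on tensor powers. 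What your approach buys is a uniform treatment of arbitrary $n$ -- precisely the ``similarly one can show'' step of the paper is replaced by the summed conjugation identity -- at the modest price of invoking $\mathrm{char}\,K=0$ to split off the (anti)symmetric parts as images of projectors, an assumption the paper makes globally anyway; the paper's quotient-and-representatives check, by contrast, would work in any characteristic but is only written out for $n=2$. Your degree count and the explicit transposition computation are both correct, so the two arguments dovetail rather than conflict.
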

\begin{proof}
We should prove that the map $(s^{-1})^{\otimes n}: S^n(sV)\to \Lambda(V)$ is well defined on the $S^n(sV)=(sV)^{\otimes n}/I$, that is it preserves the equivalence classes. We check this for the case when $n=2$.\\
Take the representative element $sv_1\otimes sv_2$ in the equivalence class $sv_1\cdot sv_2\in S^2(sV)$:
$$
[(s^{-1}\otimes s^{-1})(sv_1\otimes sv_2)]=[(-1)^{v_1+1}v_1\otimes v_2]= (-1)^{v_1+1}v_1\wedge v_2.
$$
Take the other element $(-1)^{(v_1+1)(v_2+1)}(sv_2\otimes sv_1)$ which belongs to the same class $sv_1\cdot sv_2$. The map $(s^{-1}\otimes  s^{-1})$ sends this element to the same equivalence class $(-1)^{v_1+1}[v_1\wedge v_2]$. Indeed,
$$
[(s^{-1}\otimes s^{-1})(-1)^{(v_1+1)(v_2+1)}(sv_2\otimes sv_1)]=[(-1)^{v_1v_2+1}\cdot (-1)^{v_1+1}v_2\otimes v_1]= (-1)^{v_1+1}v_1\wedge v_2.
$$
Similarly one can show that the map $(s^{-1})^{\otimes n}$ is well defined for any $n$. Thus we prove the space isomorphism.
\end{proof}\medskip
For any map $f: S^n(sV)\rightarrow S^k(sW)$ one can define the suspended map $f^{\mathrm{susp}}: \Lambda^n(V)\rightarrow \Lambda^k(W)$ via
the following diagram:
$$
\begin{array}{l}
\xymatrix{
\Lambda^n(V) \ar@{->}^-{f^{\mathrm{susp}}}[r]\ar@{->}_-{s^{\otimes n}}[d]&\Lambda^k(W)\\
S^n(sV)\ar@{->}^-{f}[r]&S^k(sW)\ar@{->}_-{(s^{-1})^{\otimes k}}[u]
}
\end{array}\quad\Leftrightarrow\quad
   f^{\mathrm{susp}}\stackrel{def}{=}{(s^{-1})^{\otimes k}\circ f\circ \left(s\right)}^{\otimes n}.
$$
We see that $\overline{f^{\mathrm{susp}}}=\overline{f}+n-k.$\medskip
 \subsection{Koszul theory on associative algebras and coalgebras}
This section deals with twisting and Koszul morphisms for associative algebras and coalgebras. Moreover, we take a special interest in the bar and cobar construction, which will finally provide a model (given by the bar-cobar resolution) of the considered differential graded associative algebra. Then we restrict ourselves to specific type of (co)algebras, namely `quadratic Koszul (co)algebras' and construct for them a `smaller' model.
\subsubsection{Twisting morphisms and twisted tensor complexes}
In this section $A$ is a augmented unital DGA algebra and $C$ is a counital coaugmented  DGA coalgebra.

The {\it convolution algebra} in the space of linear maps  $\mathrm{Hom}(C,A)$ is  given by the product
$$
f\star g=\mu\circ (f\otimes g)\circ\Delta.
$$
The unit of the product $\star$ is given by $u\circ \varepsilon\in \mathrm{Hom}(C,A)$. The associativity of the product  follows from the associativity of the product and coproduct on $A$ and $C$.
\begin{prop}
The map
$$\partial f=d_A\circ f-(-1)^{\overline{f}}f\circ d_C$$
is a differential on the convolution algebra $(\mathrm{Hom}(C,A),\star)$.
\end{prop}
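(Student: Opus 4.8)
The plan is to check that $(\mathrm{Hom}(C,A),\star,\partial)$ is a differential graded algebra, i.e.\ that $\partial$ is homogeneous of degree $-1$, that $\partial^2=0$, and that $\partial$ is a graded derivation of the convolution product $\star$. The degree count is immediate: since $d_A$ and $d_C$ both have degree $-1$, each of $d_A\circ f$ and $f\circ d_C$ has degree $\overline{f}-1$, hence $\overline{\partial f}=\overline{f}-1$. The two substantive points are $\partial^2=0$ and the Leibniz rule, and both reduce to careful bookkeeping of Koszul signs together with the (co)derivation properties $d_A\mu=\mu(d_A\otimes\mathrm{id}+\mathrm{id}\otimes d_A)$ and $\Delta d_C=(d_C\otimes\mathrm{id}+\mathrm{id}\otimes d_C)\Delta$ and the relations $d_A^2=d_C^2=0$.

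For $\partial^2=0$ I would apply $\partial$ to the homogeneous map $g:=\partial f$, remembering that the sign appearing in $\partial g$ is governed by $\overline{g}=\overline{f}-1$ and not by $\overline{f}$; using $-(-1)^{\overline{f}-1}=(-1)^{\overline{f}}$ one obtains
\[
\partial^2 f=d_A\circ d_A\circ f-(-1)^{\overline{f}}\,d_A\circ f\circ d_C+(-1)^{\overline{f}}\,d_A\circ f\circ d_C-f\circ d_C\circ d_C .
\]
The two middle terms cancel, and the first and last vanish because $d_A^2=0$ and $d_C^2=0$.

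For the Leibniz identity $\partial(f\star g)=\partial f\star g+(-1)^{\overline{f}}\,f\star\partial g$ I would expand $f\star g=\mu\circ(f\otimes g)\circ\Delta$ and slide $d_A$ through $\mu$ and $d_C$ through $\Delta$ using their (co)derivation properties. Commuting $d_A$ past $f$ inside $\mathrm{id}\otimes d_A$, and commuting $d_C$ past $g$ inside $d_C\otimes\mathrm{id}$, produces via the Koszul sign rule (\ref{KoszulSign1}) the identities
\[
d_A\circ(f\star g)=(d_A\circ f)\star g+(-1)^{\overline{f}}\,f\star(d_A\circ g),\qquad (f\star g)\circ d_C=(-1)^{\overline{g}}(f\circ d_C)\star g+f\star(g\circ d_C).
\]
Substituting these into $\partial(f\star g)=d_A\circ(f\star g)-(-1)^{\overline{f}+\overline{g}}(f\star g)\circ d_C$, and separately expanding $\partial f\star g+(-1)^{\overline{f}}f\star\partial g$ from the definitions of $\partial f$ and $\partial g$, one sees the four resulting terms coincide (here $(-1)^{\overline{f}+2\overline{g}}=(-1)^{\overline{f}}$ is used to reconcile the coefficients of $(f\circ d_C)\star g$).

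The computations themselves are routine; the only real obstacle is sign discipline, specifically (i) using $\overline{g}=\overline{f}-1$ rather than $\overline{f}$ in the second application of $\partial$, and (ii) the signs picked up when $d_A$ and $d_C$ cross the factors of $f\otimes g$, both of which are dictated entirely by (\ref{KoszulSign1}). Note that neither the augmentation of $A$ nor the counit of $C$ enters this argument; only the (co)derivation property and $d_A^2=d_C^2=0$ are needed.
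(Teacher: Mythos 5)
Your proof is correct and follows essentially the same route as the paper's: the Leibniz rule for $\partial$ with respect to $\star$ is obtained by pushing $d_A$ through $\mu$ and $d_C$ through $\Delta$ via their (co)derivation properties and the Koszul sign rule, and $\partial^2=0$ follows by direct expansion using $d_A^2=d_C^2=0$ together with the degree shift $\overline{\partial f}=\overline{f}-1$. Your explicit degree count and the remark that the (co)augmentations play no role are harmless additions, not deviations.
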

\begin{proof}
Take any homogeneous elements $f,g\in \mathrm{Hom}(C,A)$
\begin{align*}
\partial(f\star g)&=d_A\circ(f\star g)-(-1)^{\overline{f}+\overline{g}}(f\star g)\circ d_C\\
&=d_A\circ\mu\circ(f\otimes g)\circ\Delta-(-1)^{\overline{f}+\overline{g}}\mu\circ(f\otimes g)\circ\Delta\circ d_C\\
&=\mu\circ(d_Af-(-1)^{\overline{f}}fd_C\otimes g)\circ\Delta+(-1)^{\overline{f}}\mu\circ(f\otimes d_Ag-(-1)^{\overline{g}}gd_C)\circ\Delta\circ d_C\\
&=\partial f\star g+(-1)^{\overline{f}}f\star\partial g.
\end{align*}
This shows that the operation $\partial$ is a derivation of degree -1 on the convolution algebra. The derivation $\partial$ also satisfies the condition $\partial^2=0$, indeed
\begin{align*}
\partial(\partial f)&=\partial(d_Af-(-1)^{\overline{f}}fd_C)\\
&=d^2_Af-{(-1)^{\overline{f}+1}d_Afd_C}-{(-1)^{\overline{f}}d_Afd_C}-fd^2_C=0.
\end{align*}
So the derivation $\partial$ is a differential.
\end{proof}
\begin{defi}
The {\it twisting morphism} $\alpha\in\mathrm{Tw}(C,A)$ is a morphism $\alpha\in \mathrm{Hom}_K(C,A)$ of degree $-1$, that satisfies the {\it Maurer-Cartan equation}
\begin{equation}\label{MCequation}
\partial\alpha+\alpha\star\alpha=0,
\end{equation}
and which is null when composed with the augmentation of $A$ and also when composed with the coaugmentation of $C$.
\end{defi}
The last condition is of technical purpose and can be formulated as
$$
\alpha\circ u=0\Leftrightarrow \alpha(1_C)=0 \quad\mbox{ and }\quad \varepsilon\circ\alpha=0\Leftrightarrow \alpha(\overline{C})\subset\overline{A}.
$$
where $u:K\to C$ denotes the coaugmentation map and $\varepsilon: A\to K$ the augmentation map.\\

Consider the tensor complex $(C\otimes A,d_{C\otimes A})$, where $d_{C\otimes A}=d_C\otimes \mathrm{id}+\mathrm{id}\otimes d_A$. Moreover, consider a morphism $\alpha\in\mathrm{Hom}_K(C,A)$ and  $\overline{d}_\alpha:C\otimes A\to C\otimes A$ defined by the following diagram:
$$
\xymatrix@C=4pc{
C\otimes A\ar@{->}^-{\Delta\otimes \mathrm{id}}[r]&C\otimes C\otimes A\ar@{->}^{\mathrm{id}\otimes\alpha\otimes \mathrm{id}}[r]&C\otimes A\otimes A\ar@{->}^-{\mathrm{id}\otimes\mu}[r]&C\otimes A.
}
$$
\begin{prop}
For any map $\alpha:C\to A$ of degree -1 the condition $d^2_\alpha=0$ (where $d_\alpha=d_{C\otimes A}+\overline{d}_\alpha$) holds true if and only if the map $\alpha$ satisfies the Maurer-Cartan equation (\ref{MCequation}).
\end{prop}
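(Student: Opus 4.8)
The plan is to expand, using bilinearity of composition,
\[ d_\alpha^{\,2} \;=\; d_{C\otimes A}^{\,2}\;+\;\bigl(d_{C\otimes A}\circ\overline{d}_\alpha+\overline{d}_\alpha\circ d_{C\otimes A}\bigr)\;+\;\overline{d}_\alpha^{\,2}, \]
and to identify each of the three summands. For a linear map $\gamma\colon C\to A$, let $\overline{d}_\gamma:=(\mathrm{id}_C\otimes\mu)\circ(\mathrm{id}_C\otimes\gamma\otimes\mathrm{id}_A)\circ(\Delta\otimes\mathrm{id}_A)$, so that $\overline{d}_\alpha$ is the instance of the displayed diagram; the strategy is to show that each summand has the form $\overline{d}_\gamma$ for a suitable $\gamma\colon C\to A$, and then to invoke injectivity of $\gamma\mapsto\overline{d}_\gamma$. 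The first summand vanishes: writing $d_{C\otimes A}=d_C\otimes\mathrm{id}_A+\mathrm{id}_C\otimes d_A$ and squaring, the two pure terms die because $d_C^2=0=d_A^2$, and the two mixed terms are opposite because moving $d_A$ past $d_C$ contributes the Koszul sign $(-1)^{\overline{d_C}\cdot\overline{d_A}}=-1$ of (\ref{KoszulSign1}).

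For the last summand I would show $\overline{d}_\alpha^{\,2}=\overline{d}_{\alpha\star\alpha}$. Unwinding $\overline{d}_\alpha\circ\overline{d}_\alpha$ one applies the coproduct twice; coassociativity (\ref{CoalgebraCossociativityDiagramm}) of $\Delta$ lets one replace the outcome by the unambiguous iterated coproduct $c\mapsto\sum c_{(1)}\otimes c_{(2)}\otimes c_{(3)}$, associativity of $\mu$ collapses the two successive products into $\alpha(c_{(2)})\cdot\alpha(c_{(3)})$ acting on the $A$-slot, and the Koszul signs accumulated along the way are precisely the ones appearing in $\alpha\star\alpha=\mu\circ(\alpha\otimes\alpha)\circ\Delta$; so $\overline{d}_\alpha^{\,2}=\overline{d}_{\alpha\star\alpha}$, with coefficient $1$, in accordance with the shape $\partial\alpha+\alpha\star\alpha=0$ of (\ref{MCequation}).

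For the middle summand I would show $d_{C\otimes A}\circ\overline{d}_\alpha+\overline{d}_\alpha\circ d_{C\otimes A}=\overline{d}_{\partial\alpha}$, splitting $d_{C\otimes A}$ into its $d_C$- and $d_A$-parts. In the $d_C$-part, the coderivation identity $\Delta d_C=(d_C\otimes\mathrm{id}_C+\mathrm{id}_C\otimes d_C)\Delta$ produces two summands: the one where $d_C$ hits the first tensor factor is cancelled, with the right sign, by $(d_C\otimes\mathrm{id}_A)\circ\overline{d}_\alpha$, while the one where $d_C$ hits $c_{(2)}$ survives as $\overline{d}_{\alpha\circ d_C}$. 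In the $d_A$-part, the Leibniz rule for $d_A$ splits $d_A(\alpha(c_{(2)})\cdot a)$ into a term giving $\overline{d}_{d_A\circ\alpha}$ and a term acting on $a$, the latter cancelling $\overline{d}_\alpha\circ(\mathrm{id}_C\otimes d_A)$. Adding up, the middle summand equals $\overline{d}_{d_A\alpha}+\overline{d}_{\alpha d_C}=\overline{d}_{\,d_A\alpha-(-1)^{\overline\alpha}\alpha d_C}=\overline{d}_{\partial\alpha}$, using $\overline\alpha=-1$ and the definition of $\partial$ from the convolution-algebra proposition above.

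Collecting the three pieces gives $d_\alpha^{\,2}=\overline{d}_{\,\partial\alpha+\alpha\star\alpha}$. It remains to note that $\gamma\mapsto\overline{d}_\gamma$ is injective: composing $\overline{d}_\gamma$ on the left with $\varepsilon_C\otimes\mathrm{id}_A$ and evaluating on $c\otimes 1_A$ returns $\gamma(c)$, by the counit axiom (\ref{CoalgebraCounitDiagramm}) for $C$ together with unitality of $A$. Hence $d_\alpha^{\,2}=0$ if and only if $\partial\alpha+\alpha\star\alpha=0$. The one genuinely delicate point is the sign bookkeeping: one must verify that the leftover terms in the middle summand (where $d_C$ hits $c_{(1)}$, resp.\ $d_A$ hits $a$) actually cancel between the two orders of composition, and that $\overline{d}_\alpha^{\,2}$ reproduces $\overline{d}_{\alpha\star\alpha}$ with coefficient exactly $1$; all of this is dictated by the Koszul convention (\ref{KoszulSign1})--(\ref{KoszulSign2}), but it is where a slip is easiest.
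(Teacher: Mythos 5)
Your proposal is correct and follows essentially the same route as the paper: the same expansion $d_\alpha^{\,2}=d_{C\otimes A}^{\,2}+\overline{d}_\alpha^{\,2}+\bigl(d_{C\otimes A}\overline{d}_\alpha+\overline{d}_\alpha d_{C\otimes A}\bigr)$, the identifications $\overline{d}_\alpha^{\,2}=\overline{d}_{\alpha\star\alpha}$ and $d_{C\otimes A}\overline{d}_\alpha+\overline{d}_\alpha d_{C\otimes A}=\overline{d}_{\partial\alpha}$, and the converse via $(\varepsilon\otimes\mathrm{id})\circ\overline{d}_\gamma\circ(\mathrm{id}\otimes u)=\gamma$. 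The sign bookkeeping you flag is handled at the same level of detail in the paper itself, so nothing essential is missing.
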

\begin{proof}
$$d^2_\alpha=(d_{C\otimes A}+\overline{d}_\alpha)^2=d_{C\otimes A}^2+d^2_\alpha+d_{C\otimes A}\circ\overline{d}_\alpha+\overline{d}_\alpha\circ d_{C\otimes A},$$
where $d_{C\otimes A}^2=0$, because $d_{C\otimes A}$ is a differential of the complex $C\otimes A$.\\
The next term $\overline{d}^2_\alpha$ is equal to $\overline{d}_{\alpha\star\alpha}$, indeed
\begin{align*}
\overline{d}^2_\alpha&=(\mathrm{id}\otimes\mu)\circ(\mathrm{id}\otimes\alpha\otimes \mathrm{id})\circ(\Delta\otimes \mathrm{id})\circ(\mathrm{id}\otimes\mu)\circ(\mathrm{id}\otimes\alpha\otimes \mathrm{id})\circ(\Delta\otimes \mathrm{id})\\
&=(\mathrm{id}\otimes\mu)\circ(\mathrm{id}\otimes\alpha\otimes \mathrm{id})\circ(\mathrm{id}^{\otimes 2}\otimes\mu)\circ(\Delta\otimes \mathrm{id}^{\otimes 2})\circ(\mathrm{id}\otimes\alpha\otimes \mathrm{id})\circ(\Delta\otimes \mathrm{id})\\
&=(\mathrm{id}\otimes\mu)\circ(\mathrm{id}\otimes\mu\otimes \mathrm{id})\circ(\mathrm{id}\otimes\alpha\otimes\alpha\otimes \mathrm{id})\circ(\mathrm{id}\otimes\Delta\otimes \mathrm{id})\circ(\Delta\otimes \mathrm{id})\\
&=(\mathrm{id}\otimes\mu)\circ(\mathrm{id}\otimes\alpha\star\alpha\otimes \mathrm{id})\circ(\Delta\otimes \mathrm{id})=\overline{d}_{\alpha\star\alpha}.
\end{align*}
 Finally for the last two terms $d_{C\otimes A}\circ\overline{d}_\alpha+\overline{d}_\alpha\circ d_{C\otimes A}=\overline{d}_{\partial(\alpha)}$, indeed
\begin{align*}
&d_{C\otimes A}\circ\overline{d}_\alpha+\overline{d}_\alpha\circ d_{C\otimes A}=\\
&=(d_C\otimes \mathrm{id}+\mathrm{id}\otimes d_A)\circ(\mathrm{id}\otimes\mu)\circ(\mathrm{id}\otimes\alpha\otimes \mathrm{id})\circ(\Delta\otimes \mathrm{id})+\\
&+(\mathrm{id}\otimes\mu)\circ(\mathrm{id}\otimes\alpha\otimes \mathrm{id})\circ(\Delta\otimes \mathrm{id})\circ(d_C\otimes \mathrm{id}+\mathrm{id}\otimes d_A)\\
&=(\mathrm{id}\otimes\mu)\circ(d_C\otimes \mathrm{id}^{\otimes 2}+\mathrm{id}\otimes d_A\otimes \mathrm{id}+ \mathrm{id}^{\otimes 2}\otimes d_A)\circ(\mathrm{id}\otimes\alpha\otimes \mathrm{id})\circ(\Delta\otimes \mathrm{id})+\\
&+(\mathrm{id}\otimes\mu)\circ(\mathrm{id}\otimes\alpha\otimes \mathrm{id})\circ(d_C\otimes \mathrm{id}^{\otimes 2}+\mathrm{id}\otimes d_C\otimes \mathrm{id}+ \mathrm{id}^{\otimes 2}\otimes d_A)\circ(\Delta\otimes \mathrm{id})\\
&=(\mathrm{id}\otimes\mu)\circ(\mathrm{id}\otimes d_A\alpha+\alpha d_C\otimes \mathrm{id})\circ(\Delta\otimes \mathrm{id})=\overline{d}_{\partial(\alpha)}.\\
\end{align*}
We  see that
$$
d^2_\alpha=\overline{d}_{\alpha\star\alpha+\partial(\alpha)}.
$$
If the map $\alpha$ satisfies the Maurer-Cartan equation (\ref{MCequation}) then $\overline{d}_{\alpha\star\alpha+\partial(\alpha)}=0$.\\
The converse statement follows from the fact that for any map $f:C\to A$ the restriction of the map $\overline{d}_f$ on $C\otimes K\to K\otimes A$ is equal to $f$, indeed
\begin{align*}
(\varepsilon\otimes \mathrm{id})\circ\overline{d}_f\circ(\mathrm{id}\otimes u)&=
(\varepsilon\otimes \mathrm{id})\circ(\mathrm{id}\otimes\mu)\circ(\mathrm{id}\otimes f\otimes \mathrm{id})\circ(\Delta\otimes \mathrm{id})\circ(\mathrm{id}\otimes u)\\
&=\underbrace{(\mathrm{id}\otimes\mu)\circ(\mathrm{id}^{\otimes 2}\otimes u)}_{\mathrm{id}\otimes\simeq}\circ(\mathrm{id}\otimes f\otimes \mathrm{id})\circ\underbrace{(\varepsilon\otimes \mathrm{id}^{\otimes 2})\circ(\Delta\otimes \mathrm{id})}_{\simeq\otimes \mathrm{id}}\\
&=f.
\end{align*}
\end{proof}
Defined in the preceding proposition differential $d_\alpha$ is called the {\it perturbation of the differential $d_{C\otimes A}$}, and the complex $C\otimes_\alpha A:=(C\otimes A,d_\alpha)$ is called the {\it twisted tensor complex}.
\begin{lem}[Comparison lemma for twisted tensor complexes]
Let $g:A\to A'$ be a morphism of weight-DGA connected algebras and $f:C\to C'$ be a morphism of weight-DGA connected coalgebras. Let $\alpha: C\to A$ and $\alpha'\to C'\to A'$ be two twisting morphisms, such that $f$ and $g$ are compatible with $\alpha$ and $\alpha'$, that is the following diagram commutes:
$$
\xymatrix{
C\ar@{->}^{\alpha}[r]\ar@{->}_-{f}[d]&A\ar@{->}^-{g}[d]\\
C'\ar@{->}^{\alpha'}[r]&A'\\
}
$$
If two morphisms among $f,g$ and $f\otimes g:C\otimes_\alpha A\to C'\otimes_{\alpha'} A'$ are quasi-isomorphisms, then so is the third one.
\end{lem}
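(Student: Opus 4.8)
The plan is to compare the two twisted complexes through their weight filtrations and to run the classical comparison theorem for the associated spectral sequences. First I would record the shape of the twisted differential: $d_\alpha=d_{C\otimes A}+\overline d_\alpha$, where, writing $\Delta c=\sum c_{(1)}\otimes c_{(2)}$, one has $\overline d_\alpha(c\otimes a)=\sum\pm\,c_{(1)}\otimes\big(\alpha(c_{(2)})\cdot a\big)$. Since $\alpha$ kills the coaugmentation, $\alpha(1_C)=0$, only summands with $c_{(2)}\in\overline C$ survive, and as the coproduct respects the weight, $\overline d_\alpha$ \emph{strictly lowers} the weight of the $C$-factor while $d_{C\otimes A}$ preserves it. Hence the increasing filtration $F_p(C\otimes_\alpha A):=\bigoplus_{n\leq p}C^{(n)}\otimes A$ is a filtration by subcomplexes; it is exhaustive and, by connectedness ($C^{(0)}=K$), bounded below, with $F_{-1}=0$; the same holds for $C'\otimes_{\alpha'}A'$. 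All three maps $f$, $g$, $f\otimes g$ are compatible with these filtrations, since $f$ preserves the weight.

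Next I would identify the first page. On the associated graded one has $F_p/F_{p-1}\cong\big(C^{(n)}\otimes A,\ d_C\otimes\mathrm{id}\pm\mathrm{id}\otimes d_A\big)$ with $f\otimes g$ inducing $f^{(n)}\otimes g$; because $K$ is a field, Künneth gives $E^1\cong H_\bullet(C,d_C)\otimes H_\bullet(A,d_A)$, graded by the weight, with induced morphism $H_\bullet(f)\otimes H_\bullet(g)$. The filtrations being bounded below and exhaustive, both spectral sequences converge to the homologies of the twisted complexes, and $f\otimes g$ is a morphism of convergent spectral sequences compatible with the morphism on abutments.

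If $f$ and $g$ are quasi-isomorphisms, then $H_\bullet(f)$ and $H_\bullet(g)$ are isomorphisms, so the morphism is already an isomorphism on $E^1$; by the comparison theorem for bounded-below exhaustive spectral sequences it is then an isomorphism on $E^\infty$, whence $f\otimes g$ is a quasi-isomorphism. Equivalently one may induct on $p$ using the five lemma for $0\to F_{p-1}\to F_p\to F_p/F_{p-1}\to 0$ and pass to the union $C\otimes_\alpha A=\bigcup_p F_p$, since homology commutes with such colimits.

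The two remaining directions — recover $f$ from $\{g,\,f\otimes g\}$, or $g$ from $\{f,\,f\otimes g\}$ — are the delicate point, since one cannot invoke the comparison theorem in reverse. Here I would use that the bottom step of the filtration is $F_0\cong(A,d_A)$, on which $f\otimes g$ restricts (via $C^{(0)}=K$, $f^{(0)}=\mathrm{id}$) to $g$, while the top associated-graded quotient carries $f$; exploiting that the filtration is bounded below — and in the connected/truncated situations of interest even finite in each total weight — an induction along the filtration degree forces the missing $E^1$-isomorphisms, hence the third quasi-isomorphism. A cleaner packaging of the same idea: apply the first case to $\mathrm{Cone}(f\otimes g)$, which inherits a two-step filtration with graded pieces quasi-isomorphic to (shifts of) $C\otimes\mathrm{Cone}(g)$ and $\mathrm{Cone}(f)\otimes A'$; if either of $\mathrm{Cone}(f)$, $\mathrm{Cone}(g)$ is acyclic then so is $\mathrm{Cone}(f\otimes g)$ by Künneth, and conversely. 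I expect the main obstacle to be exactly this last point: extracting a quasi-isomorphism of a factor from a quasi-isomorphism of the twisted tensor product; the rest is bookkeeping with Koszul signs and with the convergence hypotheses for the spectral sequence.
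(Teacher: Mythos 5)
The paper itself states this lemma without proof (it is quoted from the Koszul-duality machinery of Loday--Vallette, where it is proved by exactly the kind of weight-filtration/spectral-sequence analysis you set up), so I am judging your argument on its own terms. Your treatment of the first case is correct and complete: the weight filtration $F_p=\bigoplus_{n\le p}C^{(n)}\otimes A$ is a filtration by subcomplexes because $\alpha$ kills the coaugmentation and $\Delta$ preserves weight, it is exhaustive and bounded below by connectedness, $E^1\cong H_\bullet(C)\otimes H_\bullet(A)$ by K\"unneth over a field, and the classical comparison theorem (or your five-lemma induction plus colimit) gives that $f\otimes g$ is a quasi-isomorphism when $f$ and $g$ are.

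The genuine gap is that the other two implications of the ``two out of three'' statement are only gestured at, and the sketches you give do not close it. Saying that an induction along the filtration ``forces the missing $E^1$-isomorphisms'' is not an argument: the spectral sequence comparison theorem does not run backwards, and recovering an isomorphism on one tensor factor of $E^1$ from isomorphisms on the abutment and the other factor is precisely a Zeeman-type comparison statement, whose proof is a lowest-weight induction using $C^{(0)}=K$, $A^{(0)}=K$ -- this is exactly where the connectedness hypotheses enter, and it is the content you would need to supply. Your ``cleaner packaging'' via cones has the same hole plus an extra one: factoring $f\otimes g=(f\otimes\mathrm{id})\circ(\mathrm{id}\otimes g)$, the relevant graded pieces are the \emph{twisted} complexes $C\otimes_{g\alpha}\mathrm{Cone}(g)$ and $\mathrm{Cone}(f)\otimes_{\alpha'}A'$ (the twisting only uses the left module structure, so these make sense), not ordinary tensor products, so ``by K\"unneth'' does not apply to them directly -- you would have to rerun the case-one filtration argument with a module in place of the algebra; and even granting that, the phrase ``and conversely'' is exactly the hard direction: acyclicity of a twisted product such as $\mathrm{Cone}(f)\otimes_{\alpha'}A'$ does not formally imply acyclicity of $\mathrm{Cone}(f)$. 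The missing idea in all variants is the same: assume the third map is not a quasi-isomorphism, look at the \emph{lowest weight} in which its cone has nonzero homology, and show -- using that the twisted part of the differential strictly lowers weight and that the weight-zero columns are $K$ -- that this lowest-weight class survives to $E^\infty$ and contradicts the acyclicity of $\mathrm{Cone}(f\otimes g)$. Until that induction (or an appeal to Zeeman's comparison theorem with its connectivity hypotheses) is written out, only one third of the lemma is proved.
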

\subsubsection{Bar and cobar complexes}
Consider an arbitrary DGA coalgebra $C$ with codifferential $D:C\to C$ and comultiplication $\Delta:C\to C\otimes C$. The idea behind the cobar construction is to encode the information about the codifferential $D$ and the comultiplication $\Delta$ in a derivation of the tensor algebra $T(C)$ (up to suspensions). A derivation on $T(C)$ is characterized by its restriction on $C$. Up to suspension we define it to be $\Delta+D:C\to T(C)$. The coassociativity of $\Delta$ and the fact that $D$ is a codifferential imply that the constructed derivation is a differential.\\
Let us be more precise, consider an augmented conilpotent DGA coalgebra $(C=1_CK\oplus \overline{C},\Delta,d_C)$ and the tensor algebra $T(s^{-1}\overline{C})$. Any derivation $d$ on $T(s^{-1}\overline{C})$ is given by its restriction on $s^{-1}\overline{C}$.
$$
d|_{s^{-1}\overline{C}}=d_1+d_2+...,\mbox{ where } d_k: s^{-1}\overline{C}\in \left(s^{-1}\overline{C}\right)^{\otimes k}.
$$
For any element $v_1...v_n\in T(s^{-1}\overline{C})$ its derivation
$$
d(v_1...v_n)=\sum_{i=1}^n(\mathrm{id}^{(i-1)}\otimes d\otimes \mathrm{id}^{(n-i)})(v_1...v_n)=\sum_{i=1}^n\sum^{\infty}_{k=1}(\mathrm{id}^{(i-1)}\otimes d_{k}\otimes \mathrm{id}^{(n-i)})(v_1...v_n).
$$
\begin{defi}
The {\it cobar complex} $\Omega C=(T(s^{-1}\overline{C}),d_{\Omega C})$ is defined by the -1 degree derivation $d_{\Omega C}$, which is given by restrictions
\begin{align*}
d_1&=-s^{-1} d_C s\quad (s^{-1}\overline{C}\to s^{-1}\overline{C}),\\
d_2&=-(s^{-1}\otimes s^{-1})\Delta s\quad (s^{-1}\overline{C}\to \left(s^{-1}\overline{C}\right)^{\otimes 2}),\\
d_k&=0,\mbox{ for other }k.
\end{align*}
\end{defi}
\begin{prop}
Defined above derivation $d_{\Omega C}$ is a differential, that is $d^2_{\Omega C}=0$.
\end{prop}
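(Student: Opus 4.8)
The plan is to reduce the identity $d_{\Omega C}^2=0$ from all of $T(s^{-1}\overline C)$ to the generating subspace $s^{-1}\overline C$. First I would record the standard fact that the square of an odd derivation is a derivation: since $d_{\Omega C}$ is a degree $-1$ derivation of the tensor algebra $T(s^{-1}\overline C)$, the composite $d_{\Omega C}^2=\frac{1}{2}[d_{\Omega C},d_{\Omega C}]$ (using $\mathrm{char}\,K=0$) is again a derivation, now of degree $-2$, of the same algebra. A derivation of a free algebra is entirely determined by its restriction to the generators, so $d_{\Omega C}^2=0$ on $T(s^{-1}\overline C)$ follows as soon as $d_{\Omega C}^2$ kills every element of $s^{-1}\overline C$.

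Next I would compute $d_{\Omega C}^2$ on a generator $x\in s^{-1}\overline C$ and sort the output by tensor length. Writing $d_{\Omega C}|_{s^{-1}\overline C}=d_1+d_2$ with $d_1=-s^{-1}d_Cs$ landing in $s^{-1}\overline C$ and $d_2=-(s^{-1}\otimes s^{-1})\Delta s$ landing in $(s^{-1}\overline C)^{\otimes 2}$, and using that $d_{\Omega C}$ acts on a length-$k$ tensor by $\sum_i\mathrm{id}^{(i-1)}\otimes(d_1+d_2)\otimes\mathrm{id}^{(k-i)}$ (with the Koszul signs of \eqref{KoszulSign1} whenever an odd map is moved past a factor), the vector $d_{\Omega C}^2(x)$ splits into a length-one, a length-two and a length-three component. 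The length-one component is $d_1\circ d_1(x)$; since $s\circ s^{-1}=\mathrm{id}$ on a single factor by \eqref{identity_formula}, this equals $s^{-1}d_C^2\,s\,(x)$ and vanishes because $d_C$ is a codifferential. The length-two component gathers $d_2\circ d_1(x)$ together with $(d_1\otimes\mathrm{id}+\mathrm{id}\otimes d_1)\circ d_2(x)$, and it vanishes exactly because $d_C$ is a coderivation, $\Delta d_C=(d_C\otimes\mathrm{id}+\mathrm{id}\otimes d_C)\Delta$, once the suspension maps are matched up. The length-three component is $(d_2\otimes\mathrm{id}+\mathrm{id}\otimes d_2)\circ d_2(x)$; here coassociativity $(\Delta\otimes\mathrm{id})\Delta=(\mathrm{id}\otimes\Delta)\Delta$ forces it to zero, the two summands cancelling rather than adding up because of the Koszul sign incurred when $d_2$ hops over the first tensor factor.

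I expect the only genuine difficulty to be the sign bookkeeping, not the conceptual structure. One must track three sources of signs at once: the factor $(-1)^{n(n-1)/2}$ from \eqref{identity_formula} relating $s^{\otimes n}$ and $(s^{-1})^{\otimes n}$, the Koszul signs of \eqref{KoszulSign1} produced whenever the odd maps $d_1,d_2$ are commuted past tensor factors or past (de)suspensions, and the explicit minus signs placed in front of $d_1$ and $d_2$ in the definition of $d_{\Omega C}$. Those last signs are calibrated precisely so that the three components above are honestly zero rather than nonzero multiples of $d_C^2$, of the coderivation defect of $d_C$, and of the coassociativity defect of $\Delta$. A tidy way to organize the computation is to fix once and for all the desuspended expressions for $d_1$ and $d_2$, record how each $\mathrm{id}^{\otimes a}\otimes d_i\otimes\mathrm{id}^{\otimes b}$ composes with them on a generator, and then read off the three cancellations termwise from $d_C^2=0$, the coderivation identity, and coassociativity respectively. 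Conilpotence of $C$ guarantees that all the sums involved are finite and that $d_2$ lands in the reduced part, so no convergence issue arises.
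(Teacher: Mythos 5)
Your proposal is correct and follows essentially the same route as the paper's proof: reduce $d_{\Omega C}^2=0$ to the generators $s^{-1}\overline{C}$ (using that the square of the odd derivation is again a derivation), then split $d_{\Omega C}^2$ on a generator by tensor length and cancel the three components via $d_C^2=0$, the coderivation property of $d_C$, and coassociativity of $\Delta$, respectively. The only additions are your explicit justification that $d_{\Omega C}^2=\tfrac12[d_{\Omega C},d_{\Omega C}]$ is a derivation and the more careful discussion of the Koszul and suspension signs, which the paper leaves implicit.
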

\begin{proof}
Since $d^2_{\Omega C}$ is a derivation, it is determined by its action on generators:
$$
d^2_{\Omega C}(v)=d_{\Omega C}\circ(d_1+d_2)(v)=(d_1^2+(\mathrm{id}\otimes d_1)d_2+(d_1\otimes \mathrm{id})d_2+d_2d_1+(d_2\otimes \mathrm{id})d_2+(\mathrm{id}\otimes d_2)d_2)(v).
$$
The right-hand side of the last equation splits into three parts:\\
\begin{enumerate}
\item $d_1^2=(s^{-1}d_Cs)^2=s^{-1}d^2_Cs=0$ ($d_C$ - is a differential),
\item $(\mathrm{id}\otimes d_1)d_2+(d_1\otimes \mathrm{id})d_2+d_2d_1=(s^{-1}\otimes s^{-1})(-(\mathrm{id}\otimes d_C+d_C\otimes \mathrm{id})\Delta+\Delta d_C)s=0$ ($d_C$ is a derivation for the coproduct $\Delta$),
\item $(d_2\otimes \mathrm{id})d_2+(\mathrm{id}\otimes d_2)d_2=(s^{-1}\otimes s^{-1}\otimes s^{-1})((\Delta\otimes \mathrm{id})\Delta-(\mathrm{id}\otimes \Delta)\Delta)s=0$ (coassociativity of the coproduct $\Delta$).
\end{enumerate}
Finally  we see that the $d^2_{\Omega C}=0$, so the $d_{\Omega C}$ is a differential.
\end{proof}
Similarly, in the dual case consider an augmented DGA algebra $(A=1_AK\oplus\overline{A},\mu,d_A)$ and the tensor coalgebra $T^c(s\overline A)$. Any coderivation D on $T^c(s\overline A)$ is given by its corestriction's $D_n:(s\overline A)^{\otimes n}\to s\overline{A}$:
$$
D(v_1...v_n)=\sum_{i=1}^n\sum^{n-i+1}_{k=1}(\mathrm{id}^{(i-1)}\otimes D_{k}\otimes \mathrm{id}^{(n-i-k+1)})(v_1...v_n).
$$
\begin{defi}
The {\it bar complex} $\mathrm{B}A=(T^c(s\overline A),D_{\mathrm{B}A})$ is defined by the -1 degree coderivation $D_{\mathrm{B}A}$, which is given by corestrictions
\begin{align*}
D_1&=-sd_As^{-1}\quad (s\overline{A}\to s\overline{A}),\\
D_2&=-s\mu (s^{-1}\otimes s^{-1})\quad ((s\overline{A})^{\otimes 2}\to s\overline{A}),\\
D_k&=0,\mbox{ for other }k.
\end{align*}
\end{defi}
\begin{prop}
Defined above coderivation $d_{\mathrm{B}A}$ is a codifferential, that is $d^2_{\mathrm{B}A}$.
\begin{proof}
The proof is similar to the one for cobar complex.
\end{proof}
\end{prop}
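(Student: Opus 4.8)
The plan is to dualize, line for line, the three-step verification that $d_{\Omega C}^2=0$ given just above: the bar differential is built from two pieces, one encoding $d_A$ and one encoding $\mu$, and its square splits into three groups of terms which are killed respectively by $d_A^2=0$, by the Leibniz rule for $d_A$, and by the associativity of $\mu$.

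First I would record two structural facts about the conilpotent tensor coalgebra $T^c(s\overline{A})$. The bracket $[\Phi,\Psi]=\Phi\Psi-(-1)^{\overline{\Phi}\,\overline{\Psi}}\Psi\Phi$ of two coderivations is again a coderivation; since $D_{\mathrm{B}A}$ has degree $-1$ and we work over a field of characteristic zero, $D_{\mathrm{B}A}^2=\tfrac12[D_{\mathrm{B}A},D_{\mathrm{B}A}]$ is a coderivation on $T^c(s\overline{A})$. Moreover, a coderivation on $T^c(s\overline{A})$ is uniquely determined by its corestrictions $(s\overline{A})^{\otimes n}\to s\overline{A}$ (the dual of the statement that a derivation of a free algebra is determined by its restriction to generators). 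Because $D_{\mathrm{B}A}$ is assembled only from $D_1$, which preserves the weight, and $D_2$, which lowers it by one, the composite $D_{\mathrm{B}A}^2$ lowers the weight by at most two; hence it suffices to show that the three corestrictions of $D_{\mathrm{B}A}^2$ with target $s\overline{A}$, namely those out of weight $1$, $2$ and $3$, all vanish.

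Next I would compute these three corestrictions, exactly mirroring the cobar case. Write $D_{\mathrm{B}A}=D^{(1)}+D^{(2)}$ for the coderivation extensions of $D_1$ and $D_2$. Out of weight $1$ one only gets $D_1^2=(-sd_As^{-1})^2=sd_A^2s^{-1}=0$, since $d_A^2=0$. Out of weight $2$ one gets $D_1D_2+D_2(D_1\otimes\mathrm{id}+\mathrm{id}\otimes D_1)$; substituting $D_1=-sd_As^{-1}$ and $D_2=-s\mu(s^{-1}\otimes s^{-1})$ and using the Koszul sign rule (\ref{KoszulSign1}) together with the suspension identity (\ref{identity_formula}), this collapses to a nonzero scalar multiple of $s\bigl(d_A\mu-\mu(d_A\otimes\mathrm{id}+\mathrm{id}\otimes d_A)\bigr)(s^{-1}\otimes s^{-1})$, which is $0$ because $d_A$ is a derivation of $\mu$. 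Out of weight $3$ one gets $D_2(D_2\otimes\mathrm{id}+\mathrm{id}\otimes D_2)$, which after the same substitutions collapses to a scalar multiple of $s\bigl(\mu(\mu\otimes\mathrm{id})-\mu(\mathrm{id}\otimes\mu)\bigr)(s^{-1})^{\otimes 3}$, which is $0$ by the associativity axiom (\ref{AlgebraAssociativityDiagramm}) of $(A,\mu)$. Thus each of the three corestrictions is the ``suspended'' form of one of the defining identities of the DGA algebra $(A,\mu,d_A)$, and all three vanish; therefore $D_{\mathrm{B}A}^2=0$.

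The only delicate point — and the reason the minus signs are inserted in front of $D_1$ and $D_2$ in the definition of the bar differential — is the sign bookkeeping. One must track, via (\ref{KoszulSign1}), the Koszul signs produced by the tensor factors $D_1\otimes\mathrm{id}$, $\mathrm{id}\otimes D_1$, $D_2\otimes\mathrm{id}$, $\mathrm{id}\otimes D_2$ acting on homogeneous elements of $s\overline{A}$, and combine them with the powers of $-1$ coming from (\ref{identity_formula}), so that within each of the three groups the individual terms carry a common sign and genuinely assemble into the single bracketed expression displayed above (note in particular the sign flip that turns the $+$ between the two weight-$3$ summands into the $-$ of associativity). I expect this sign chase to be the main — and essentially the only — obstacle; once it is carried out, the argument is verbatim the dual of the cobar computation.
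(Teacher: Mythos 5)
Your proposal is correct and is exactly what the paper intends: the paper's proof consists of the single remark that the argument is ``similar to the one for cobar complex,'' and you have carried out precisely that dualization, reducing $D_{\mathrm{B}A}^2$ to its three corestrictions and identifying them with $d_A^2=0$, the Leibniz rule for $d_A$ with respect to $\mu$, and the associativity of $\mu$. No genuinely different route is taken, and your attention to the Koszul-sign bookkeeping is the right place to focus the remaining verification.
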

\begin{prop}\label{twistingMorphismProposition}
There is an isomorphism of spaces:
$$
\mathrm{Hom}_{DGAA}(\Omega C,A)\simeq\mathrm Tw(C,A)\simeq\mathrm{Hom}_{DGAC}(C,BA).
$$
\end{prop}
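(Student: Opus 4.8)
The plan is to recognise this statement as the twisting-morphism incarnation of the bar--cobar adjunction: I will produce a natural bijection between each of the two outer $\mathrm{Hom}$-sets and the middle set $\mathrm{Tw}(C,A)$, and then compose. Throughout, $\mathrm{Hom}_{DGAA}$ and $\mathrm{Hom}_{DGAC}$ are understood as morphisms of \emph{augmented} DGA algebras and of \emph{coaugmented} DGA coalgebras, and $C$ is taken conilpotent so that the relevant universal properties hold.

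For the left bijection $\mathrm{Hom}_{DGAA}(\Omega C,A)\simeq\mathrm{Tw}(C,A)$, I use that $\Omega C=(T(s^{-1}\overline C),d_{\Omega C})$ is, as a graded algebra, the free associative algebra on $s^{-1}\overline C$ (the tensor algebra, Example~\ref{tensorAlgebraExample}). Hence a morphism of augmented graded algebras $\Phi\colon T(s^{-1}\overline C)\to A$ is the same datum as a degree-$0$ linear map $s^{-1}\overline C\to\overline A$, and, precomposing with the desuspension, the same datum as a degree-$(-1)$ linear map $\alpha\colon\overline C\to\overline A$; extending by $\alpha(1_C)=0$ this is precisely a degree-$(-1)$ element of $\mathrm{Hom}_K(C,A)$ satisfying the two technical side conditions $\alpha\circ u=0$ and $\varepsilon_A\circ\alpha=0$ of a twisting morphism. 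It then remains to check that $\Phi$ commutes with differentials exactly when $\alpha$ satisfies the Maurer--Cartan equation~(\ref{MCequation}). Here I observe that $\Phi\circ d_{\Omega C}$ and $d_A\circ\Phi$ are both $\Phi$-derivations $T(s^{-1}\overline C)\to A$, hence coincide iff they coincide on the generators $s^{-1}\overline C$; on generators $d_{\Omega C}$ restricts to $d_1+d_2$, and pushing $\Phi$ forward, the linear part $d_1=-s^{-1}d_Cs$ contributes $d_A\alpha+\alpha d_C=\partial\alpha$ (using $\overline\alpha=-1$) while the quadratic part $d_2=-(s^{-1}\otimes s^{-1})\Delta s$ contributes $\mu\circ(\alpha\otimes\alpha)\circ\Delta=\alpha\star\alpha$, so that commutation with differentials is precisely $\partial\alpha+\alpha\star\alpha=0$.

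The right bijection $\mathrm{Tw}(C,A)\simeq\mathrm{Hom}_{DGAC}(C,\mathrm{B}A)$ is the exact dual. Now $\mathrm{B}A=(T^c(s\overline A),D_{\mathrm{B}A})$ is the cofree conilpotent coalgebra on $s\overline A$, so a morphism of coaugmented graded coalgebras $\Psi\colon C\to T^c(s\overline A)$ is uniquely determined by and freely reconstructed from its corestriction to the cogenerators, a degree-$0$ map $C\to s\overline A$, equivalently a degree-$(-1)$ map $\beta\colon C\to\overline A$ with $\beta(1_C)=0$ and $\varepsilon_A\circ\beta=0$; and $\Psi\circ d_C$ and $D_{\mathrm{B}A}\circ\Psi$ are $\Psi$-coderivations, hence equal iff equal after projection to the cogenerators, where the contribution of $D_1=-sd_As^{-1}$ rebuilds $\partial\beta$ and that of $D_2=-s\mu(s^{-1}\otimes s^{-1})$ rebuilds $\beta\star\beta$. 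Composing the two bijections, and noting that they are manifestly natural in $C$ and $A$, yields the stated isomorphism (so that, in fact, $\Omega$ is left adjoint to $\mathrm{B}$).

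The structural inputs --- freeness of $T$, cofreeness of the conilpotent $T^c$, and the principle that a $\Phi$-derivation out of a free algebra (resp.\ a $\Psi$-coderivation into a cofree conilpotent coalgebra) is determined by its restriction to the generators (resp.\ corestriction to the cogenerators) --- are routine. The only place an error could hide is the sign bookkeeping in the middle step: one must carry the degree-$1$ suspension $s$ and degree-$(-1)$ desuspension $s^{-1}$ through the two tensor slots, invoking $s^{\otimes n}\circ(s^{-1})^{\otimes n}=(-1)^{n(n-1)/2}\,\mathrm{id}^{\otimes n}$ from~(\ref{identity_formula}) together with the Koszul rule~(\ref{KoszulSign1}), and check that the minus signs built into $d_1,d_2$ (resp.\ $D_1,D_2$), the sign $-(-1)^{\overline\alpha}=+1$ appearing in $\partial\alpha$, and the signs hidden inside $\mu\circ(\alpha\otimes\alpha)\circ\Delta$ combine to produce the Maurer--Cartan equation on the nose rather than merely up to an overall sign.
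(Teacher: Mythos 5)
Your proposal is correct and takes essentially the same route as the paper: freeness of $T(s^{-1}\overline C)$ (dually, cofreeness of the conilpotent coalgebra $T^c(s\overline A)$) reduces an augmentation-preserving DGA (co)algebra morphism to its (co)restriction on (co)generators, and the chain-map condition, checked on generators through the $d_1$/$d_2$ (resp.\ $D_1$/$D_2$) components, is exactly the Maurer--Cartan equation $\partial\alpha+\alpha\star\alpha=0$. The paper writes out only the first bijection and declares the second ``similar,'' which is precisely the dual argument you spell out.
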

\begin{proof}
We will prove only the first isomorphism, the  proof for the second one will be similar.  Any DGA algebra homomorphism $F: \Omega(C)\simeq T(s^{-1}\overline{C})\to A$ is given by its restriction
$f:s^{-1}\overline{C}\to A$. The condition $Fd-dF=0$ on the generators reads as
\begin{align*}
&F(d_1+d_2)-d_Af=0,\\
&-fs^{-1}d_Cs-F(s^{-1}\otimes s^{-1})d_cs-d_Afs^{-1}s=0,\\
&fs^{-1}d_C+\mu(fs^{-1}\otimes fs^{-1})d_c+df_As^{-1}=0,\\
&\partial\alpha+\alpha\star\alpha=0,
\end{align*}
where the restriction of the map $\alpha:C\to A$ on the $\overline{C}$ is $\alpha|_{\overline{C}}:=fs^{-1}$ and $\alpha(1_C):=0$. Note that $\alpha$ satisfies the Maurer-Cartan equation. Since the $F$ is the homomorphism of augmented algebras, that is it respects the augmentation structure, $\alpha(\overline{C})\subset\overline{A}$. We see that the map $\alpha\in\mathrm{Tw}(C,A)$. \\
\end{proof}
\begin{rem}
Note that the spaces $\mathrm{Hom}_{DGAA}(\Omega C,A)$ and $\mathrm{Hom}_{DGAC}(C,BA)$ are the spaces of homomorphisms of the (co)augmented (co)algebras, which means, by definition, that they respect the (co)augmentation structures. This corresponds to the fact that twisting morphisms have to  vanish on units and counits.
\end{rem}
\subsubsection{Koszul morphisms and bar-cobar resolution}
\begin{defi}
A twisting morphism $\alpha\in\mathrm{Tw}(C,A)$ is called a {\it Koszul morphism}, if the twisted tensor complex $C\otimes_{\alpha}A$ is acyclic. The set of Koszul morphisms from $C$ to $A$ is denoted by $\mathrm{Kos}(C,A)$
\end{defi}
\begin{thm}[Fundamental theorem of twisting morphisms]\label{fundamentalTheoremTwistingMorphisms}
Let $A$ be a connected WGDA algebra and $C$ be a connected WGDA conilpotent coalgebra. For any twisting morphism $\alpha\in\mathrm{Tw}(C,A)$ and constructed from it homomorphisms $f_\alpha\in\mathrm{Hom}_{DGAA}(\Omega C, A)$ and $g_\alpha\in\mathrm{Hom}_{DGAC}(C,BA)$ the following propositions are equivalent:
\begin{enumerate}
\item $\alpha\in\mathrm{Kos}(C,A)$, that is the twisted complex $C\otimes_\alpha A$ is acyclic,
\item $f_\alpha$ is a quasi-isomorphism,
\item $g_\alpha$ is a quasi-isomorphism.
\end{enumerate}
\begin{equation}\label{QisoKoszulCorrespondence}
\mathrm{QIso}_{DGAA}(\Omega C,A)\simeq\mathrm{Kos}(C,A)\simeq\mathrm{QIso}_{DGAC}(C,BA).
\end{equation}
\end{thm}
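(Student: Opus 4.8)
The plan is to reduce the entire statement to the Comparison Lemma for twisted tensor complexes, the only genuinely new ingredient being a \emph{base case}: that two canonical twisting morphisms are Koszul. To set this up I would first introduce those morphisms. Applying Proposition~\ref{twistingMorphismProposition} with target algebra $\Omega C$, let $\iota\in\mathrm{Tw}(C,\Omega C)$ be the twisting morphism corresponding to $\mathrm{id}_{\Omega C}$; explicitly $\iota$ is the projection $C\twoheadrightarrow\overline{C}$ followed by $s^{-1}$ and the canonical inclusion $s^{-1}\overline{C}\hookrightarrow T(s^{-1}\overline{C})=\Omega C$. Dually, applying Proposition~\ref{twistingMorphismProposition} with source coalgebra $BA$, let $\pi\in\mathrm{Tw}(BA,A)$ correspond to $\mathrm{id}_{BA}$; explicitly $\pi$ is the projection $T^c(s\overline{A})\twoheadrightarrow s\overline{A}$ followed by $s^{-1}\colon s\overline{A}\to A$. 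Unwinding the isomorphism of Proposition~\ref{twistingMorphismProposition}, one checks that the homomorphisms attached to $\alpha\in\mathrm{Tw}(C,A)$ are precisely characterized by $f_\alpha\circ\iota=\alpha$ and $\pi\circ g_\alpha=\alpha$, so that the squares
$$
\xymatrix@C=3pc@R=3pc{
C\ar@{->}^{\iota}[r]\ar@{->}_{\mathrm{id}_C}[d]&\Omega C\ar@{->}^{f_\alpha}[d]\\
C\ar@{->}^{\alpha}[r]&A
}\qquad\qquad
\xymatrix@C=3pc@R=3pc{
C\ar@{->}^{g_\alpha}[r]\ar@{->}_{\alpha}[dr]&BA\ar@{->}^{\pi}[d]\\
&A
}
$$
commute and are of the type to which the Comparison Lemma applies (all four objects in each square are connected WGDA (co)algebras, and $\mathrm{id}_C,f_\alpha,g_\alpha,\mathrm{id}_A$ are morphisms of such).

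The technical heart is the base case: $\iota$ and $\pi$ are Koszul, i.e.\ $C\otimes_\iota\Omega C$ and $BA\otimes_\pi A$ are acyclic. This is the classical acyclicity of the augmented (co)bar construction. For $BA\otimes_\pi A=(T^c(s\overline{A})\otimes A,\,d_\pi)$ one exhibits a contracting homotopy $h$ that slides the $A$-tensorand into the last slot of the tensor coalgebra --- schematically $h\big((sa_1\cdots sa_n)\otimes a\big)=\pm(sa_1\cdots sa_n\,sa)\otimes 1_A$, with $h=0$ on the $A^{(0)}=K$ component --- and checks that on the associated graded for the weight filtration $d_\pi h+h\,d_\pi=\mathrm{id}$. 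Connectedness ($A^{(0)}=K$) is exactly what makes this filtration bounded, so the resulting spectral sequence converges and forces $H_\bullet(BA\otimes_\pi A)=0$; the complex $C\otimes_\iota\Omega C$ is treated by the strictly dual argument. I expect this step --- pinning the homotopy down with the correct Koszul signs and controlling the filtration --- to be the main obstacle; everything else is formal.

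Finally I would read off the three equivalences. Apply the Comparison Lemma to the left square with the pair $(\mathrm{id}_C,f_\alpha)$: since $\mathrm{id}_C$ is a quasi-isomorphism, $f_\alpha$ is one iff $\mathrm{id}_C\otimes f_\alpha\colon C\otimes_\iota\Omega C\to C\otimes_\alpha A$ is. By the base case the source is acyclic, and a chain map out of an acyclic complex is a quasi-isomorphism precisely when its target is acyclic; hence $f_\alpha$ is a quasi-isomorphism iff $C\otimes_\alpha A$ is acyclic, i.e.\ $(2)\Leftrightarrow(1)$. Symmetrically, the Comparison Lemma applied to the right square with $(g_\alpha,\mathrm{id}_A)$, together with acyclicity of the target $BA\otimes_\pi A$, shows $g_\alpha$ is a quasi-isomorphism iff $C\otimes_\alpha A$ is acyclic, i.e.\ $(3)\Leftrightarrow(1)$. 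Chaining these gives $(1)\Leftrightarrow(2)\Leftrightarrow(3)$, and the displayed correspondence (\ref{QisoKoszulCorrespondence}) is then just the restriction of the isomorphism of Proposition~\ref{twistingMorphismProposition} to the elements satisfying any one of the three equivalent conditions.
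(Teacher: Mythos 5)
Your proposal is correct and follows essentially the same route as the paper: the paper also associates to $\mathrm{id}_{\Omega C}$ the canonical twisting morphism $i\in\mathrm{Tw}(C,\Omega C)$, invokes the acyclicity of $C\otimes_i\Omega C$, and applies the Comparison Lemma to the same commuting square to get $(1)\Leftrightarrow(2)$. You are in fact somewhat more complete than the paper, which only proves $(1)\Leftrightarrow(2)$ (leaving the dual argument with $\pi\in\mathrm{Tw}(BA,A)$ and $BA\otimes_\pi A$ implicit) and asserts the base-case acyclicity without the contracting-homotopy/filtration sketch you supply.
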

\begin{proof}
We will prove the equivalence of the first and the second propositions. To the identity map $\mathrm{id}\in Hom_{DGAC}(\Omega C,\Omega C)$ one can correspond the twisting morphism $i\in\mathrm{Tw}(C,\Omega C)$ due to the Proposition \ref{twistingMorphismProposition}. Since the twisted complex $C\otimes_i\Omega C$ is acyclic, the map $\mathrm{id}\otimes f_\alpha$ is a quasi-isomorphism. Obviously the morphism $\mathrm{id}:C\to C$ is a quasi-isomorphism as well. We apply the comparison lemma for the commutative diagram:
$$
\xymatrix{
C\ar@{->}^{i}[r]\ar@{->}_-{\mathrm{id}}[d]&\Omega C\ar@{->}^-{f_\alpha}[d]\\
C\ar@{->}^{\alpha}[r]& A
}
$$
and get that $f_\alpha$ is a quasi-isomorphism.
\end{proof}
Take $BA$ for the coalgebra $C$, then one can correspond to the identity map $\mathrm{id}\in \mathrm{QIso}_{DGAC}(BA,BA)$ the map $\varepsilon\in\mathrm{QIso}_{DGAC}(\Omega BA,A)$ (due to the isomorphisms (\ref{QisoKoszulCorrespondence})). We say that $\varepsilon$ is a {\it bar-cobar resolution} and that $\Omega BA$ is a {\it model} of $A$. The model $\Omega BA$ generally is too `big'. In the next section we will study when it is possible to replace it by a smaller one.
\subsubsection{Quadratic algebras and coalgebras. Koszul dual}
\begin{defi}
{\it Quadratic data} $(V,R)$ consists of a graded vector space $V$ and a graded vector subspace $R\subset V^{\otimes 2}$
\end{defi}
\begin{defi}
The {\it quadratic algebra} $A(V,R)$ associated to the quadratic data $(V,R)$ is the quotient algebra $T(V)/(R)$, where $(R)$ denotes the two-sided ideal generated by $R$
\end{defi}\bigskip
Note that
$$
(R)=\bigoplus_{n\geqslant 2}\ \sum_{i+j=n}V^{\otimes i}\otimes R\otimes V^{\otimes j}
$$
and that
$$
A(V,R)=K\oplus V\oplus V^{\otimes 2}/R\oplus...\oplus {\left(V^{\otimes n}\Big/\sum_{i+j=n-2}V^{\otimes i}\otimes R\otimes V^{\otimes j}\right)\oplus...}=:\bigoplus_{n\geqslant0}A^{(n)}(V,R).
$$
\begin{defi}
The {\it quadratic coalgebra} $C(V,R)$ associated to the quadratic data $(V,R)$ is the coalgebra of the tensor algebra $T^c(V)$, defined on the vector space
$$
C(V,R)=K\oplus V\oplus R\oplus(V\otimes R\cap R\otimes V)\oplus...\oplus\left(\bigcap_{i+j=n-2}V^{\otimes i}\otimes R\otimes V^{\otimes j}\right)\oplus...=:\bigoplus_{n\geqslant0}C^{(n)}(V,R).
$$
\end{defi}\bigskip
The definition of quadratic coalgebra is dual to the definition of quadratic algebras, in the sense that if the vector space $V$ is finite dimensional then the coalgebra $A^*(V,R)\simeq C(V^*,R^\perp)$. Indeed,
\begin{align*}
{A^*}^{(n)}(V,R)&=\left(V^{\otimes n}\Big/\sum_{i+j=n-2}V^{\otimes i}\otimes R\otimes V^{\otimes j}\right)^*\\
&=\left(\sum_{i+j=n-2}V^{\otimes i}\otimes R\otimes V^{\otimes j}\right)^\perp
=V^{*\otimes i}\otimes R^{\perp}\otimes V^{*\otimes j}=C^{(n)}(V^*,R^\perp).
\end{align*}\bigskip
\begin{defi}
The {\it Koszul dual coalgebra} of a quadratic algebra $A(V,R)$ is
$$
A^{\text{!`}}(V,R):=C(sV,s^2R)
$$
\end{defi}
\begin{defi}
The {\it Koszul dual algebra} of a quadratic algebra $A(V,R)$ is
$$
{A^{\text{!}}}^{(n)}:=s^n(A^{\text{!`}*})^{(n)}
$$
\end{defi}
\begin{defi}
The {\it Koszul dual algebra} of a quadratic coalgebra $C(V,R)$ is
$$
C^{\text{!`}}(V,R):=A(s^{-1}V,s^{-2}R)
$$
\end{defi}
\begin{defi}
The {\it Koszul dual coalgebra} of a quadratic coalgebra $C(V,R)$ is
$$
{C^{\text{!}}}^{(n)}:=s^{-n}(C^{\text{!`}*})^{(n)}
$$
\end{defi}\bigskip
One can verify that $(A^{\text{!`}})^\text{!`}=A$ and $(C^{\text{!`}})^\text{!`}=C$ and in the finite dimensional case ${\left(A^{!}\right)}^{!}=A$, ${\left(C^{!}\right)}^{!}=C$ and
$A^{!}=A(V^*,R^\perp)$.

\begin{ex}
 If $V$ is a finite dimensional graded vector space and the vector space $R=\langle v\otimes w-(-1)^{|v||w|}w\otimes v\rangle$, then
\begin{enumerate}
\item $A(V,R)=T(V)/(R)=S(V)$,
\item $A^{\text{!`}}=C(sV,s^2R)= \Lambda^c(sV)$,
\item $A^{!}=\Lambda(V^*)$,
\item $C(V,R)=\Lambda^c(V)$,
\item $C^{\text{!`}}=A(s^{-1}V,s^{-2}R)=S(s^{-1}V)$,
\item $C^{!}=S^c(V^*)$.
\end{enumerate}
\end{ex}
\begin{ex}
 If $V$ is a finite dimensional graded vector space and the vector space $R=\langle v\otimes w+(-1)^{|v||w|}w\otimes v\rangle$, then
\begin{enumerate}
\item $A(V,R)=T(V)/(R)=\Lambda(V)$,
\item $A^{\text{!`}}=C(sV,s^2R)= S^c(sV)$,
\item $A^{!}=S(V^*)$,
\item $C(V,R)=S^c(V)$,
\item $C^{\text{!`}}=A(s^{-1}V,s^{-2}R)=\Lambda(s^{-1}V)$,
\item $C^{!}=\Lambda^c(V^*)$.

\end{enumerate}
\end{ex}
\subsubsection{Koszul algebras}
We now replace for quadratic algebras, under certain conditions, the `big' resolution $\Omega B A \stackrel{\sim}{\to} A$ by a smaller one, namely $\Omega A^{\text{!`}}\stackrel{\sim}{\to} A$, which is its minimal model.\\
We define the twisting morphism by
$$
\kappa:\xymatrix{
A^\text{!`}=C(sV,s^2R)\ar@{->>}^-{p}[r]&sV\ar@{->}^{s^{-1}}[r]&V\ar@{>->}^-{i}[r]&A(V,R)=A.
}
$$
Since the DGA coalgebra $A^\text{!`}$ and the DGA algebra $A$ have zero differential, the Maurer-Cartan equation will simplify:
$\kappa\star\kappa=0$. To verify that the defined above map $\kappa$ is a twisting morphism, it is sufficient to check that $\kappa\star\kappa$ is zero on $s^2R\subset sV^{\otimes 2}$:
$$
[(\kappa\star\kappa) sr_1\otimes sr_2]=[\kappa(sr_1)\kappa(sr_2)]=[r_1r_2]=0.
$$
All other term automatically goes to the zero, since the map $\kappa$ is not zero only on $sV$.
\begin{defi}
The twisted complex $A^{\text{!`}}\otimes_{\kappa}A$ is called {\it the Koszul complex}.
If the Koszul complex is acyclic then the algebra $A$ is called a {\it Koszul algebra}.
\end{defi}
If the algebra $A$ is Koszul then due to the fundamental theorem of twisting morphisms (Theorem~\ref{fundamentalTheoremTwistingMorphisms}) the projection $p:\Omega A^{\text{!`}}\to A$ is a quasi-isomorphism, that is the $\Omega A^{\text{!`}}$ is a resolution of $A$.
\subsection{Operads}
In this section we give the classical and functorial definitions of operad. The classical definition corresponds to our intuition. The functorial one is defined in a similar way as the associative algebra is defined. This helps to translate the Koszul duality theory for associative algebras to the operadic framework.
\subsubsection{Classical definition of operad}
In this section we work in the non graded framework. The action of  the symmetric group $S_n$ is defined in the following way:
\begin{itemize}
\item the left $S_n$ action:
$$\sigma (v_1\otimes...\otimes v_n)=v_{\sigma^{-1}(1)}\otimes...\otimes v_{\sigma^{-1}(n)},$$
\item the right $S_n$ action:
$$(v_1\otimes...\otimes v_n)\sigma=v_{\sigma(1)}\otimes...\otimes v_{\sigma(n)}.$$

\end{itemize}
\begin{defi}
An {\it $S$-module $P$} is a sequence $\{P(n)\},n\in\mathbb{N}$ of vector spaces endowed with right $S_n$-module structures.
\end{defi}
\begin{defi}
{\it A symmetric operad} consists of the following:
\begin{itemize}
\item $S$-module $P=\{P(n)\}$, whose elements are called {\it $n$-ary operations},
\item {\it an identity element} $\mathbf{1}\in P(1)$,
\item for all positive integers $n,k_1,...,k_n$ a composition function
\begin{align*}
\circ: P(n)\otimes P(k_1)\otimes...\otimes P(k_n)&\rightarrow P(k_1+...+k_n)\\
(\theta,\theta_1,...\theta_n)&\rightarrow\theta\circ(\theta_1,...,\theta_n),
\end{align*}
pictorially this composition looks as follows:
$$
\begin{tikzpicture}[baseline=(current bounding box.center), level distance=10mm, sibling distance=6mm,
					level 2/.style={sibling distance=12mm},
					level 3/.style={sibling distance=6mm}]
\node {} [grow'=up]
	child {node{$\scriptstyle{\theta}$}
		child {node{$\scriptstyle{\theta_1}$}
			child
			child
			child
		}
		child {node{$\scriptstyle{\cdots}$} edge from parent[draw=none]}
        child {node{$\scriptstyle{\theta_n}$}
			child
			child
		}
	};
\end{tikzpicture}
\quad=\quad
\begin{tikzpicture}[baseline=(current bounding box.center), level distance=14mm, sibling distance=7mm]
\node {} [grow'=up]
	child {node{$\scriptstyle{\theta\circ(\theta_1,...,\theta_n)}$}
		child
		child
		child {node{$\scriptstyle{\cdots}$}edge from parent[draw=none]}
        child
		child
		child
	};
\end{tikzpicture}
$$

satisfying the following coherence axioms:
\begin{enumerate}
\item identity: $\theta\circ(\mathbf{1},...,\mathbf{1})=\theta=\mathbf{1}\circ\theta,$
\item associativity:
$$
\theta\circ\Big(\theta_1\circ(\theta_{1,1},...,\theta_{1,k_1}),...,\theta_n\circ(\theta_{n,1},...,\theta_{n,k_n})\Big)=\Big(\theta\circ(\theta_1,...,\theta_n)\Big)\circ(\theta_{1,1},...,\theta_{1,k_1},...,\theta_{n,1},...,\theta_{n,k_n}),
$$
\item equivariance: given permutations $\sigma\in S_n$ and $\pi_i\in S_{k_i}$
$$
(\theta\sigma)\circ(\theta_1,...,\theta_n)=(\theta\circ(\theta_{\sigma^{-1}(1)},...,\theta_{\sigma^{-1}(n)}))\overline{\sigma}
$$
(where $\overline{\sigma}\in S_{k_1+...+k_n}$ on RHS is permuting $n$ groups in the same way as $\sigma\in S_n$ permutes $n$ elements),
$$
\theta\circ(\theta_1\pi_1,...,\theta_n\pi_n)=(\theta\circ(\theta_{1},...,\theta_{n}))(\pi_1,...,\pi_n).
$$

\end{enumerate}
\end{itemize}
\end{defi}\bigskip
\begin{ex}
The {\it endomorphism operad} $\mathcal{E}nd(V)$ over a vector space $V$ is made up by the vector spaces $\mathcal{E}nd(V)(n)=\mathrm{Hom}(V^{\otimes n},V)$ the usual composition and the identity map $\mathrm{id}_V$. The right $S_n$ module structure on $\mathrm{Hom}(V^{\otimes n},V)$ is given by
$$
(f\sigma)(v_1\otimes...\otimes v_n):=f(\sigma(v_1\otimes...\otimes v_n))=f(v_{\sigma^{-1}(1)}\otimes...\otimes v_{\sigma^{-1}(n)})
$$
\end{ex}
\begin{rem}
Note that the associativity and equivariance axioms do not mean that the $n$-ary operations that are encoded in the operad are associative and symmetric (whatever it means). You might see it in the previous example for the operad $\mathcal{E}nd(V)$, where these axioms hold tautologically. The existence of these axioms in the definition of the operad is the shadow of the fact, that we want to have a well defined notion of a representation of any operad $P$ in $\mathcal{E}nd(V)(n)$ (see the next section the precise definition of representation).
\end{rem}
\begin{ex}
The operad of {\it labeled planar trees} $\mathcal{T}$. The vector space $\mathcal{T}(n)$ is spanned by planar trees that have a root edge and $n$ leaves labeled by integers 1 through n. For instance:
$$
\begin{tikzpicture}[baseline=(current bounding box.center), level distance=7mm, sibling distance=6mm,
					level 2/.style={sibling distance=12mm},
					level 3/.style={sibling distance=6mm}]
\node {} [grow'=up]
	child {
		child {
			child{node{$\scriptstyle{2}$}}
			child{node{$\scriptstyle{5}$}}
			child{node{$\scriptstyle{1}$}}
		}
		child {
			child{node{$\scriptstyle{4}$}}
			child{node{$\scriptstyle{3}$}}
		}
	};
\end{tikzpicture}\in\mathcal{T}(5)
$$
The composition is given by grafting on the leaves. Where the order of grafting is defined by the labels of the leaves.
\end{ex}
\begin{ex}
The {\it little $n$-cubes operad} $C_n$. The elements of the vector space $C_n(j)$ are the  ordered collections of $j$ $n$-cubes linearly embedded in the standard $n$-dimensional unit cube $I^n$ with disjoint interiors and axes parallel to those of $I^n$. The compositions are given as indicated here:
$$
\parbox{3.2cm}{\begin{tikzpicture}
\draw [black] (0,0) rectangle (3,3);
\draw [black] (0.5,0.8) rectangle (2,2.3);
\node [below] at (2,0.8) {$\scriptstyle{2}$};
\draw [black] (2.5,1.2) rectangle (2.8,1.5);
\node [below] at (2.8,1.2) {$\scriptstyle{1}$};
\draw [black] (2.2,2.2) rectangle (2.8,2.8);
\node [below] at (2.8,2.2) {$\scriptstyle{3}$};
\end{tikzpicture}}\circ
\left(\quad\mathbf{1}\quad,\quad\parbox{3cm}{\begin{tikzpicture}
\draw [black] (0,0) rectangle (3,3);
\draw [black] (2,0.2) rectangle (2.8,1);
\node [left] at (2,0.2) {$\scriptstyle{2}$};
\draw [black] (0.1,1.5) rectangle (1.5,2.9);
\node [below] at (1.5,1.5) {$\scriptstyle{1}$};
\end{tikzpicture}}\quad,\quad\mathbf{1}\quad\right)=
\parbox{3.2cm}{\begin{tikzpicture}
\draw [black] (0,0) rectangle (3,3);
\draw [dotted,black] (0.5,0.8) rectangle (2,2.3);
\draw [black] (2.5,1.2) rectangle (2.8,1.5);
\node [below] at (2.8,1.2) {$\scriptstyle{1}$};
\draw [black] (2.2,2.2) rectangle (2.8,2.8);
\node [below] at (1.9,0.9) {$\scriptstyle{3}$};
\draw [black] (1.5,0.9) rectangle (1.9,1.3);
\draw [black] (0.55,1.55) rectangle (1.25,2.25);
\node [below] at (1.25,1.55) {$\scriptstyle{2}$};
\node [below] at (2.8,2.2) {$\scriptstyle{4}$};
\end{tikzpicture}}
$$
\end{ex}
\subsubsection{Morphisms and representations of operads}
Operads are important through their representations. In order to define representations of operads, we first have to define morphisms of operads.
\begin{defi}
An {\it operad morphism} $\varphi: P\to Q$ consists of the sequence of linear maps $\varphi_n: P(n)\to Q(n)$, which for all operations $\theta\in P(n),\theta_i\in P(k_i)$
\begin{itemize}
\item respect the composition:
$$ \varphi_n(\theta\circ_P(\theta_1,...,\theta_n))=\varphi_n(\theta)\circ_Q(\varphi_{k_1}(\theta_1),...,\varphi_{k_n}(\theta_n)),$$
\item respect the unit:
$$
\varphi_1(\mathbf{1}_P)=\mathbf{1}_Q,
$$
\item for any permutation $\sigma\in S(n)$
$$
\varphi_n(\theta\sigma)=\varphi_n(\theta)\sigma.
$$
\end{itemize}
\end{defi}
\begin{defi}
A {\it representation of operad $P$ on a vector space $V$} is an operad morphism $\rho:P\to\mathcal{E}nd(V)$
\end{defi}
 \begin{rem}\label{PalgebraRemark}
 Any representation $\rho$ is made up by a family of linear maps $\rho_n: P(n)\to\mathcal{E}nd(V)(n)=\mathrm{Hom}(V^{\otimes n},V)$. The map $\rho_n$ associates to abstract $n$-ary operations $\theta\in P(n)$ concrete $n$-ary operations on $V$, that is $\rho_n(\theta)\in \mathrm{Hom}(V^{\otimes n},V)$. Therefore one can actually get an algebraic structure on $V$. A representation of the operad $P$ on a vector space $V$ endows it with corresponding algebraic structure, that called {\it $P$-algebra} or {\it algebra over $P$.}\\
Note that the linear maps $\rho_n: P(n)\to\mathcal{E}nd(V)(n)$ can also be viewed as
\begin{align*}
&\rho_n\in\mathrm{Hom}(P(n),(\mathrm{Hom}(V^{\otimes n},V))=\mathrm{Hom}(P(n)\otimes V^{\otimes n},V)\\
&\rho_n: \theta\otimes v_1\otimes...\otimes v_n \to \theta(v_1,...,v_n).
\end{align*}
From the fact that the maps $\rho_n$ respect the action of the permutation group follows that the space may be restricted to the smaller one
\begin{equation}\label{representationSpace}
\rho_n\in\mathrm{Hom}(P(n)\otimes_{S_n}V^{\otimes n},V)\subset\mathrm{Hom}(P(n)\otimes V^{\otimes n},V).
\end{equation}
\end{rem}
\begin{defi}
The morphism between two $P$-algebras over $V$ and $W$ is a linear map $f:V\to W$, such that for any $n>0$ and for any operation $\theta\in P(n)$ the following identity holds:
$$
\theta(fv_1,...,fv_n)=f\theta(v_1,...,v_n).
$$
\end{defi}\bigskip
Now we are ready to consider other examples of operads
\begin{ex}
The operad $\mathcal{C}om$ is the operad associated with commutative associative algebras. The vector spaces $\mathcal{C}om(n)$ are one dimensional for each $n>0$ and $\mathcal{C}om(0)$=0. Pictorially:
$$
\begin{tikzpicture}[baseline=(current bounding box.center), level distance=8mm, sibling distance=5mm]
\node {} [grow'=up]
	child {
		child
        child
		child
		child
	};
\end{tikzpicture}\in \mathcal{C}om(4)
$$
The composition and the action of the symmetric group are trivial. Any algebra over operad
$\mathcal{C}om$ is a commutative associative algebra.
\end{ex}
\begin{ex}
The operad $\mathcal{A}ss$ associated with associative algebras, where $\mathcal{A}ss(n)=K[S_n]$ and $\mathcal{A}ss(0)=0$. For example for $n=3$ there are six basis elements in $\mathcal{A}ss(3)$:
$$
\begin{tikzpicture}[baseline=(current bounding box.center), level distance=7mm, sibling distance=7mm]
\node {} [grow'=up]
	child {
		child {node{$\scriptstyle{\sigma(1)}$}}
		child {node{$\scriptstyle{\sigma(2)}$}}
		child {node{$\scriptstyle{\sigma(3)}$}}
	};
\end{tikzpicture}=
\begin{tikzpicture}[baseline=(current bounding box.center), level distance=7mm, sibling distance=7mm]
\node {} [grow'=up]
	child { node{$\scriptstyle{\sigma}$}
		child
		child
		child
	};
\end{tikzpicture}
$$
The composition is given as indicated here:
$$
\begin{tikzpicture}[baseline=(current bounding box.center), level distance=10mm, sibling distance=6mm,
					level 2/.style={sibling distance=12mm},
					level 3/.style={sibling distance=6mm}]
\node {} [grow'=up]
	child {node{$\scriptstyle{\sigma}$}
		child {node{$\scriptstyle{\pi_1}$}
			child
			child
			child
		}
		child {node{$\scriptstyle{\cdots}$} edge from parent[draw=none]}
        child {node{$\scriptstyle{\pi_n}$}
			child
			child
		}
	};
\end{tikzpicture}
\quad=\quad
\begin{tikzpicture}[baseline=(current bounding box.center), level distance=14mm, sibling distance=7mm]
\node {} [grow'=up]
	child { node{$\scriptstyle{(\pi_1,...,\pi_n)\circ\sigma}$}
		child
		child
		child
        child {node{$\scriptstyle{\cdots}$}edge from parent[draw=none]}
		child
		child
	};
\end{tikzpicture}
$$
Any algebra over operad $\mathcal{A}ss$ is an associative algebra.
\end{ex}

\subsubsection{S-modules. Schur functor}
To the arbitrary $S$-module $P$ on can correspond the Schur functor $\widetilde{P}:\mathrm{Vect}\to \mathrm{Vect}$:
\begin{align*}
&\widetilde{P}(V)=\bigoplus_{n\in\mathbb{N}}P(n)\otimes_{S_n}V^{\otimes n},\\
&\widetilde{P}(\ell)=\bigoplus_{n\in\mathbb{N}}\mathrm{id}\otimes_{S_n}\ell^{\otimes n}:\widetilde{P}(V)\to\widetilde{P}(W).
\end{align*}
for any vector space $V$ and any linear map $\ell:V\to W$.\\

The {\it composite} of two $S$-modules $P$ and $Q$ is the $S$-module $P\circ Q$ defined by
\begin{align*}
(P\circ Q)(n)
&:=\bigoplus_{\substack{k\geqslant 0}}P(k)\otimes_{S_k}\left(\bigoplus_{i_1+...+i_k=n}\big(Q(i_1)\otimes...\otimes Q(i_k)\big)\otimes K[S_n/S_{i_1}\times...\times S_{i_k}]\right)\\
&=\bigoplus_{\substack{k\geqslant 0}}P(k)\otimes_{S_k}\left(\bigoplus_{i_1+...+i_k=n}\big(Q(i_1)\otimes...\otimes Q(i_k)\big)\otimes Sh(i_1,...,i_k)\right).\\
\end{align*}
Where the set $Sh(i_1,...,i_k)\simeq S_n/S_{i_1}\times...\times S_{i_k}$ of unshuffled permutations is defined in Definition \ref{unshuffledPermutationDef}. We denote the elements of $(P\circ Q)(n)$  by $(\mu;\nu_1,...,\nu_k;\sigma)$, where $\mu\in P(k)$, $\nu_j\in Q(i_j)$ and $\sigma\in Sh(i_1,...,i_k)$.\\
It is easy to check that the $S$-module $I=(0,K,0,0,...)$ is the identity for the defined above composition. Note that the composition is additive only on the left factor, that is\\
$$
(P_1\oplus P_2)\circ Q=(P_1\circ Q)\oplus (P_2\circ Q),$$$$
P\circ(Q_1\oplus Q_2)\neq (P\circ Q_1)\oplus (P\circ Q_2).
$$
\begin{prop}
The Schur functor is compatible with the composition of $S$-modules, that is
$$
\widetilde{P}\circ\widetilde{Q}=\widetilde{P\circ Q}.
$$
\begin{proof}
For an arbitrary vector space $V$ the following identities holds:
\begin{align*}
\widetilde{P\circ Q}(V)
&=\bigoplus_{\substack{k,n\geqslant 0}}P(k)\otimes_{S_k}\left(\bigoplus_{i_1+...i_k=n}\Big(Q(i_1)\otimes...\otimes Q(i_k)\Big)\otimes K[S_n/S_{i_1}\times...\times S_{i_k}]\right)\otimes_{S_n}V^{\otimes n}\\
&=\bigoplus_{\substack{k,n\geqslant 0}}P(k)\otimes_{S_k}\left(\bigoplus_{i_1+...i_k=n}\Big(Q(i_1)\otimes...\otimes Q(i_k)\Big)\otimes_{S_{i_1}\times...\times S_{i_k}} K[S_n]\right)\otimes_{S_n}V^{\otimes n}\\
&=\bigoplus_{\substack{k,n\geqslant 0}}P(k)\otimes_{S_k}\left(\bigoplus_{i_1+...i_k=n}\Big(Q(i_1)\otimes...\otimes Q(i_k)\Big)\otimes_{S_{i_1}\times...\times S_{i_k}} V^{\otimes {(i_1+...+i_k)}}\right)\\
&=\bigoplus_{\substack{k,n\geqslant 0}}P(k)\otimes_{S_k}\left(\bigoplus_{i_1+...i_k=n}\left(Q(i_1)\otimes_{S_{i_1}}V^{\otimes i_1}\right)\otimes...\otimes\left(Q(i_k)\otimes_{S_{i_k}}V^{\otimes i_k}\right) \right)\\
&=\bigoplus_{\substack{k\geqslant 0}}P(k)\otimes_{S_k}\widetilde{Q}(V)^{\otimes k}=\widetilde{P}(\widetilde{Q}(V))=\widetilde{P}\circ\widetilde{Q}(V).\\
\end{align*}
\end{proof}
\end{prop}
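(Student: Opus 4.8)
The plan is to prove this identity of functors $\mathrm{Vect}\to\mathrm{Vect}$ by evaluating both sides on an arbitrary vector space $V$ and running through a chain of natural isomorphisms, essentially the chain displayed in the indicated proof. The only genuinely non-formal ingredient is the behaviour of relative tensor products over the symmetric groups, which I would isolate first as a lemma: for a finite group $G$, a subgroup $H\leqslant G$, a right $H$-module $N$ and a left $G$-module $L$, associativity of the relative tensor product together with $K[G]\otimes_{K[G]}L\cong L$ yields a natural isomorphism
\[
\big(N\otimes_{K[H]}K[G]\big)\otimes_{K[G]}L\;\cong\;N\otimes_{K[H]}L .
\]

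First I would rewrite the composite $S$-module using the induced-module description of the factor $K[Sh(i_1,\dots,i_k)]$: since $Sh(i_1,\dots,i_k)\simeq S_n/S_{i_1}\times\cdots\times S_{i_k}$ (Definition~\ref{unshuffledPermutationDef}), one has
\[
(P\circ Q)(n)=\bigoplus_{k\geqslant 0}P(k)\otimes_{S_k}\bigoplus_{i_1+\cdots+i_k=n}\big(Q(i_1)\otimes\cdots\otimes Q(i_k)\big)\otimes_{S_{i_1}\times\cdots\times S_{i_k}}K[S_n],
\]
with $K[S_n]$ viewed as an $(S_{i_1}\times\cdots\times S_{i_k},S_n)$-bimodule. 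Applying $-\otimes_{S_n}V^{\otimes n}$ and then the lemma (with $G=S_n$, $H=S_{i_1}\times\cdots\times S_{i_k}$, $N=Q(i_1)\otimes\cdots\otimes Q(i_k)$, $L=V^{\otimes n}$) turns each ``$\otimes_{S_{i_1}\times\cdots\times S_{i_k}}K[S_n]\otimes_{S_n}V^{\otimes n}$'' block into ``$\otimes_{S_{i_1}\times\cdots\times S_{i_k}}V^{\otimes n}$''.

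Next I would split $V^{\otimes n}=V^{\otimes i_1}\otimes\cdots\otimes V^{\otimes i_k}$ (for $i_1+\cdots+i_k=n$) and distribute the coinvariants over the product group,
\[
\big(Q(i_1)\otimes\cdots\otimes Q(i_k)\big)\otimes_{S_{i_1}\times\cdots\times S_{i_k}}V^{\otimes n}\;\cong\;\big(Q(i_1)\otimes_{S_{i_1}}V^{\otimes i_1}\big)\otimes\cdots\otimes\big(Q(i_k)\otimes_{S_{i_k}}V^{\otimes i_k}\big),
\]
and then sum over all $(i_1,\dots,i_k)$ with $i_1+\cdots+i_k=n$ and over $n\geqslant 0$. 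Because tensor product distributes over direct sums, this double sum is precisely the $k$-fold tensor power of $\widetilde{Q}(V)=\bigoplus_{i\geqslant 0}Q(i)\otimes_{S_i}V^{\otimes i}$. Feeding this back in gives $\widetilde{P\circ Q}(V)\cong\bigoplus_{k\geqslant 0}P(k)\otimes_{S_k}\widetilde{Q}(V)^{\otimes k}=\widetilde{P}(\widetilde{Q}(V))$, and since every step is natural in $V$ the same computation on a linear map $\ell\colon V\to W$ shows the two functors agree on morphisms as well; hence $\widetilde{P}\circ\widetilde{Q}=\widetilde{P\circ Q}$.

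The main obstacle is the bookkeeping of the symmetric-group actions in the first two steps: one must check that the residual $S_{i_1}\times\cdots\times S_{i_k}$-action left over after the induction is exactly the action under which $Q(i_1)\otimes\cdots\otimes Q(i_k)$ and $V^{\otimes i_1}\otimes\cdots\otimes V^{\otimes i_k}$ are modules, and that it is compatible with the outer $S_k$-coinvariants, so that the inner direct sum may be rewritten term by term without losing information. Once the left/right coset conventions and the role of the unshuffles are pinned down, everything else is the formal calculus of $\otimes$, $\oplus$ and coinvariants.
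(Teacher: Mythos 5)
Your proposal is correct and follows essentially the same chain of isomorphisms as the paper's proof: rewriting the shuffle factor as the induced module $\otimes_{S_{i_1}\times\cdots\times S_{i_k}}K[S_n]$, contracting with $V^{\otimes n}$, splitting $V^{\otimes n}=V^{\otimes i_1}\otimes\cdots\otimes V^{\otimes i_k}$, and distributing the coinvariants to recover $\widetilde{Q}(V)^{\otimes k}$. Isolating the induction lemma and checking naturality on morphisms are welcome clarifications, but the argument itself coincides with the paper's.
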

The composition of the Schur functors is associative, that is
$$
(\widetilde{P}\circ\widetilde{Q})\circ\widetilde{R}\simeq \widetilde{P}\circ(\widetilde{Q}\circ\widetilde{R})
$$
it implies that the composition of $S$-modules is associative too.
\begin{defi}
A $S$-module morphism  $f: P\to P'$ consists of a sequence of  linear maps $f_n: P(n)\xrightarrow{f} P'(n),n\geqslant 0$ which respect the action of the symmetric group:
$$
f_n\sigma=\sigma f_n,
$$
where $\sigma$ is an arbitrary permutation in $S_n$.
\end{defi}
Any morphism of $S$-modules $f: P\to P'$ gives rise to a natural transformation of Schur functors $\widetilde{f}: \widetilde{P}\to \widetilde{P'}$:
$$
\widetilde{f}(V): P(n)\otimes_{S_n}V^{\otimes n}\xrightarrow{f_n\otimes \mathrm{id}^{\otimes n}}P'(n)\otimes_{S_n}V^{\otimes n}.
$$
For any $S$-module maps $f:P\to P'$ and $g:Q\to Q'$ the composition $f\circ g: P\circ Q\to P'\circ Q'$ is defined as follows:
$$
(f\circ g)(P\circ Q)=fP\circ gQ
$$
The category of $S$-modules with a defined above composition $\circ$ and with a identity object $I=(0,K,0,0,...)$ form a monoidal category.\\

\subsubsection{Functorial definition of operads}
\begin{defi}
An {\it operad} $P=(P,\gamma,\eta)$ is an $S$-module P, endowed with morphsims of $S$-modules
$$
\gamma:P\circ P\to P,
$$
called {\it composition map}, and
$$
\eta: I\to P,
$$
called the {\it unit map}, such that the following diagrams commute:
\begin{equation}\label{AssiosativityUnitalityOperads}
\xymatrix@C=2pc@R=3pc{
&P\circ (P\circ P)\ar@{->}^-{\mathrm{id}\circ\gamma}[r]&P\circ P\ar@{->}^-{\gamma}[dd]\\
(P\circ P)\circ P\ar@{<->}^{\simeq}[ur]\ar@{->}^-{\gamma\circ \mathrm{id}}[d]&&\\
P\circ P\ar@{->}^-{\gamma}[rr]&&P
}\quad \mbox{and}\quad
\xymatrix@R=3pc{
I\circ P\ar@{->}^{\mathrm{id}\circ\gamma}[r]\ar@{<->}_{\simeq}[rd]&P\circ P\ar@{<-}^{\gamma\circ \mathrm{id}}[r]\ar@{->}^{\gamma}[d]& P\circ I \ar@{<->}^{\simeq}[ld]\\
&P&
}
\end{equation}
\end{defi}\bigskip
\begin{rem}
The functorial and the classical definition of operad are equivalent. Indeed, for any element $(\theta;\theta_1,...,\theta_n;\sigma)\in P\circ P$ the composition map $\gamma(\theta;\theta_1,...,\theta_n;\sigma)\in P$ corresponds to the classical composition, then acting by $\sigma$ from the right: $\theta\circ(\theta_1,...,\theta_n)\sigma$. The identity element in the classical definition is $\eta(I)\in P(1)$. The equivariance axiom in the classical definition is equivalent to the fact that the corresponding elements in the functorial definition belong to the same equivalence class in $P\circ P$. The associativity and identity axioms correspond to the commutative diagrams above.
\end{rem}
\begin{rem}
The functorial definition of operad is similar to the definition of the associative algebra. Indeed the associative algebra is a {\it monoid} in the monoidal category $(\mathrm{Vect},\otimes,K)$, that is an object $A\in\text{Vect}$ together with two morphisms: composition and identity morphism, that satisfy the associativity and unity requirements. The operad is a monoid in the monoidal category of $S$-modules $(\text{S-mod},\circ,I)$. The obstacle to make the full analogy between associative algebras and operads is the fact that the composition $\circ$ of $S$-modules is not additive on the right factor. Nevertheless it is possible transfer some results from the theory on associative algebras to operads.
\end{rem}
Now we give the functorial definition of operad morphism, which is equivalent to the classical one.
\begin{defi}
A {\it morphism of operads} from $P$ to $Q$ is a morphism of $S$-modules $\alpha:P\to Q$ which is compatible with the composition maps, that is
$$
\gamma(\alpha P\circ\alpha P)=\alpha\gamma(P\circ P),$$$$
\alpha\eta_P =\eta_Q.
$$
\end{defi}
\subsubsection{Functorial definition of $P$-algebras}
The notion of $P$-algebra can be given in the functorial language. Recall that in the classical framework a $P$-algebra is given by a representation $\rho_V$ of operad $P$ in $\mathcal{E}nd(V)$, which is given by the map $\bigoplus\limits_{n\geqslant0}P(n)\otimes_{S_n}V^{\otimes n}=\widetilde{P}(V)\xrightarrow{\rho_V} V$ (see Remark~\ref{PalgebraRemark}).
\begin{defi}
An {\it algebra over the operad $P$ (or $P$-algebra)} is a vector space $V$ equipped with a linear map $\rho_V:\widetilde{P}(V)\to V$ such that the following diagrams commute:
$$
\xymatrix@C=2pc@R=3pc{
&\widetilde{P}(\widetilde{P}(V))\ar@{->}^-{\widetilde{P}(\rho_V)}[r]&\widetilde{P}(V)\ar@{->}^-{\rho_V}[dd]\\
\widetilde{P\circ P}(V)\ar@{<->}^{\simeq}[ur]\ar@{->}^-{\widetilde{\gamma}(V)}[d]&&\\
\widetilde{P}(V)\ar@{->}^-{\rho_V}[rr]&&V
}\quad\text{and}\quad
\xymatrix{
\widetilde{I}(V)\ar@{->}[r]^{\widetilde{\eta}(V)}\ar@{->}[dr]_{\simeq}&\widetilde{P}(V)\ar@{->}[d]^{\rho_V}\\
&V
}
$$
\end{defi}\bigskip
The notion of $P$-algebra morphism easily can be translated in the functorial language.
\begin{defi}
A morphism of $P$-algebras $(V,\rho_V)$ and $(W,\rho_W)$ is a linear map $f:V\to W$ such that the following diagram commutes:
$$
\xymatrix{
\widetilde{P}(V)\ar@{->}^-{\rho_V}[r]\ar@{->}_-{\widetilde{P}(f)}[d]&V\ar@{->}^-{f}[d]\\
\widetilde{P}(W)\ar@{->}^-{\rho_W}[r]&W
}
$$
\end{defi}
\subsubsection{Free P-algebra}
\begin{defi}
In the category of $P$-algebras, a $P$-algebra $\mathcal{F}(V)$, equipped with a linear map $i:V\to\mathcal{F}(V)$ is said to be {\it free} over the vector space $V$ if for any $P$-algebra $A$ and any linear map $f:V\to A$ there is a unique $P$-algebra morphism $\widetilde{f}:\mathcal{F}(V)\to A$ such that the following diagram is commutative:
$$
\xymatrix{
V\ar@{->}^-{i}[r]\ar@{->}_-{f}[rd]& \mathcal{F}(V)\ar@{-->}^-{\widetilde{f}}[d]\\
&A
}
$$
\end{defi}\bigskip
Note that the free $P$-algebra is unique up to isomorphism. One can show that it is the vector space $\widetilde{P}(V)$ equipped with $\rho_{\widetilde{P}(V)}:\widetilde{P}(\widetilde{P}(V))\to \widetilde{P}(V)$ defined via the diagram:
$$
\xymatrix{
\widetilde{P}(\widetilde{P}(V))\ar@{->}^{\rho_{\widetilde{P}(V)}}[r]\ar@{->}_-{\simeq}[d]& \widetilde{P}(V)\\
\widetilde{P\circ P}(V)\ar@{->}_-{\widetilde{\gamma}(V)}[ur]&
}
$$
and the map $i:V\to \widetilde{P}(V)$, which is defined via the natural embedding
$$
\xymatrix{
V\ar@{->}^{i}[r]\ar@{->}_-{\simeq}[d]& \widetilde{P}(V)\\
\widetilde{I}(V)\ar@{->}_-{\widetilde{\eta}(V)}[ur]&
}
$$
\begin{ex}
For the operad $\mathcal{A}ss$, that controls associative algebras, the free $\mathcal{A}ss$-algebra over the vector space $V$ is
$$
\widetilde{\mathcal{A}ss}(V)=\bigoplus_{n>0} \mathcal{A}ss(n)\otimes_{S_n}V^{\otimes n}=\bigoplus_{n>0} K[S_n]\otimes_{S_n}V^{\otimes n}=\bigoplus_{n>0}V^{\otimes n}=\overline{T}(V),
$$
where the composition is given by concatenation.
\end{ex}
\begin{ex}
For the operad $\mathcal{C}om$, that controls commutative associative algebras, the free $\mathcal{C}om$-algebra over the vector space $V$ is
$$
\widetilde{\mathcal{C}om}(V)=\bigoplus_{n>0} K\otimes_{S_n}V^{\otimes n}=\overline{S}(V),
$$
where the composition is given by concatenation.
\end{ex}
\subsubsection{Free operad}
As for any free object, the free operad over an $S$-module $M$ is defined by means of a universal property. Namely, as being the operad $F(M)$ together with the $S$-module morphism $i:M\to F(M)$, such that for any operad $P$ and any $S$-module morphism $\varphi:M\to P$, there exists a unique morphism of operads $\overline{\varphi}:F(M)\to P$, such that the following diagram commutes:
$$
\xymatrix{
M\ar@{->}^-{i}[r]\ar@{->}_-{\varphi}[rd]& F(M)\ar@{-->}^-{\widetilde{\varphi}}[d]\\
&P
}
$$
In order to construct the free operad over $M$, we will define the sequence of $S-modules$ $\mathcal{T}_nM$ by
\begin{align*}
\mathcal{T}_0M&=I,\\
\mathcal{T}_1M&=I\oplus M,\\
\mathcal{T}_2M&=I\oplus (M\circ(I\oplus M))=I\oplus (M\circ\mathcal{T}_1M),\\
...\\
\mathcal{T}_nM&=I\oplus (M\circ\mathcal{T}_{n-1}M).
\end{align*}
Moreover, we recursively define a sequence of inclusions
\begin{align*}
&i_0:\mathcal{T}_{0}\to\mathcal{T}_{n}, \quad I\hookrightarrow I\oplus M;\\
&i_n:\mathcal{T}_{n-1}\to\mathcal{T}_{n} , \quad i_n=\mathrm{id}_I\oplus (\mathrm{id}_M\circ i_{n-1}).
\end{align*}
By definition, the {\it tree module} $\mathcal{T}M$ over the $S$-module $M$ is a union
$$
\mathcal{T}M:=\bigcup_n\mathcal{T}_nM.
$$
\begin{rem}
To get the intuition how the $S$-module $\mathcal{T}M$ is builded, it is useful to look at its elements as on formal operations:
$$
\theta(....)=\left\{a\Big(b\big(...\big)...c\big(....d(...)..\big)...\Big);\sigma\right\}.
$$
where the operations $a,b,c,d,...\in M$ and the permutation $\sigma$ permutes the elements in entries. If the `depth' of the formal operation is equal to $n$ then it belongs to $\mathcal{T}_nM$.
\end{rem}
The obvious composition of these formal operations and the obvious unit operation define a {\it free operad} $\mathcal{T}M$. \\

The construction of a free operad gives a conceptual approach for any type of algebras to define the operad that controls these algebras. Let us be more precise: any type of algebraic structure on a vector space $V$ is defined by the $n$-ry operations $A^{\otimes n}\to A$, called {\it generating operations}, that satisfy some specific multilinear relations . For example, associative algebras are given by binary multilinear map $\mu:A^{\otimes 2}\to A$ that satisfies $\mu\circ(\mu,\mathrm{id})-\mu\circ(\mu,\mathrm{id})=0$. Let $M$ be the $S$-module, whose arity $n$ spaces are generated by the $n$-ary generating operations. Since the relators are composites of these generating relations (and identity), they span a sub-$S$-module of the free operad $\mathcal{T}M$. Let $(R)$ denote the operadic ideal of $\mathcal{T}M$ generated by $R$. The precise definition of operadic ideals is given as follows:
\begin{defi}
An {\it operadic ideal} $I$ of an operad $P$ is a sub-$S$-module of $P$, such that for any family of operations $(\mu,\nu_1,...,\nu_k)$ of $P$, we have that if one of these operations is in $I$, then the composite $\mu\circ(\nu_1,...,\nu_k)$ is also in $I$.
\end{defi}
This way, we have naturally constructed the operad $\mathcal{T}M/(R)$, which controls the type of algebra defined above (operations in $M$ and relations $R$), that is any representation of $\mathcal{T}M/(R)$ in $\mathcal{E}nd(V)$ (any $\mathcal{T}M/(R)$-algebra on $V$) defines these types of algebras.
\subsection{Operadic homological algebra}
\subsubsection{Infinitesimal composite}
Recall that the composition of $S$-modules is not linear on the right factor. Here we introduce the infinitesimal composite, which is linear on two factors.\medskip

Consider the $S$-module $P\circ(I\oplus Q)$, its elements are of the form $(\mu;\nu_1,\mathrm{id},\mathrm{id},\nu_2,\mathrm{id},...;\sigma)$.
\begin{defi}
The {\it infinitesimal composite} $P\circ_{(1)}Q$ is the $S$-module, which is spanned by the elements of the `linear part' of $P\circ(I\oplus Q)$, that is made up by the terms containing $Q$ exactly once. The elements are of the form $(\mu;\mathrm{id},...,\mathrm{id},\nu,\mathrm{id},...,\mathrm{id};\sigma)$.
 \end{defi}
 Note that the infinitesimal composite $\circ_{(1)}$ is not associative.
\begin{defi}
The corresponding composite $f\circ_{(1)}g$ of two $S-module$ morphisms $f:P_1\to P_2$ and $g:Q_1\to Q_2$ is defined by
\begin{align*}
f\circ_{(1)}g: P_1\circ_{(1)}Q_1&\to P_2\circ_{(1)}Q_2\\
(\mu;\mathrm{id},...,\mathrm{id},\nu,\mathrm{id},...,\mathrm{id};\sigma)&\to (f(\mu);\mathrm{id},...,\mathrm{id},g(\nu),\mathrm{id},...,\mathrm{id};\sigma).
\end{align*}
\end{defi}\bigskip
Instead of `linearizing' the space $P\circ Q$, we can as well `linearize' the morphism $f\circ g$:
\begin{defi}
The {\it infinitesimal composite} $f\circ'g$ of two $S-module$ morphisms $f:P_1\to P_2$ and $g:Q_1\to Q_2$ is defined by
\begin{align*}
f\circ'g: P_1\circ Q_1&\to P_2\circ Q_2\\
(\mu;\nu_1,...,\nu_n;\sigma)&\to \sum_{i=1}^n(f(\mu);\nu_1,...,g(\nu_i),...,\nu_n;\sigma).
\end{align*}
\end{defi}
\subsubsection{Differential graded $S$-modules}
\begin{defi}
A {\it graded $S$-module $P$} is a $S$-module such that its components $P(n)$ are graded vector spaces and the $S_n$ action on them is degree preserving.
\end{defi}
The subspace of $P(n)$ of the elements of degree $k$ and we denote by $P_k(n)$.
\begin{defi}
A morphism $f:P\to Q$ of degree $r$ between graded $S$-modules $P$ and $Q$ is a sequence of $r$-degree $S_n$-equivariant maps $f_n:P_k(n)\to Q_{k+r}(n),\forall k$.
\end{defi}
\begin{rem}
The composite product $\circ$ can be extended to graded $S$-modules by
$$
(P\circ Q)_p(n)
:=\bigoplus_{\substack{k\geqslant 0}}P_r(k)\otimes_{S_k}\left(\bigoplus_{\substack{i_1+...+i_k=n\\s_1+...+s_k=p-r}}\big(Q_{s_1}(i_1)\otimes...\otimes Q_{s_k}(i_k)\big)\otimes K[S_n/S_{i_1}\times...\times S_{i_k}]\right).
$$
\end{rem}
\begin{defi}
A {\it DG (differential graded)  $S$-module} $(P,d)$ is a graded $S$-module $P$ endowed with a differential $d$, that is an
$S$-module morphism $d:P\to P$ of degree $-1$, such that $d^2=0$.
\end{defi}
\begin{defi}
A {\it morphism $f:(P,d_P)\to(Q,d_Q)$ of DG $S$-modules} is a graded $S$-module morphism $f:P\to Q$ of degree $0$, such that it commutes with the
differentials, that is
$$
d_Qf=fd_P.
$$
\end{defi}
\begin{defi}
The composite of two DG $S$-modules $(P,d_P)$ and $(Q,d_Q)$ is the DG $S$-module $(P\circ Q, d_{P\circ Q})$, where the differential
$$
d_{P\circ Q}:=d_{P}\circ \mathrm{id}_Q+\mathrm{id}_P\circ'd_Q.
$$
\end{defi}
\subsection{Differential graded operads and cooperads}
\begin{defi}
A {\it DG (differential graded) operad} is a DG $S$-module $(P,d_P)$ together with DG $S$-module morphisms: composition
$\gamma: P\circ P\to P$ and unit map $\eta:  I\to P$, which satisfy the associativity and unital relations~(\ref{AssiosativityUnitalityOperads}).
\end{defi}
\begin{rem}
The condition that the composition map $\gamma$ is a DG $S$-module morphism means that it is a morphism of degree 0, such that
$$
d_{P}\gamma=\gamma d_{P\circ P}=\gamma(\mathrm{id}_P\circ d_P+d_P\circ' \mathrm{id}_P).
$$
\end{rem}
\begin{defi}
A {\it DG (differential graded) cooperad} is a DG $S$-module $(C,d_C)$ together with DG $S$-module morphisms: decomposition
$\Delta: C\to C\circ C$ and counit map $\varepsilon: C\to I$, which satisfy the coassociativity and counital relations:
$$
\xymatrix@C=2pc@R=3pc{
&C\circ (C\circ C)\ar@{<-}^-{\mathrm{id}\circ\Delta}[r]&C\circ C\ar@{<-}^-{\Delta}[dd]\\
(C\circ C)\circ C\ar@{<->}^{\simeq}[ur]\ar@{<-}^-{\Delta\circ \mathrm{id}}[d]&&\\
C\circ C\ar@{<-}^-{\Delta}[rr]&&C
}\quad \mbox{and}\quad
\xymatrix@R=3pc{
I\circ C\ar@{<-}^{\mathrm{id}\circ\Delta}[r]\ar@{<->}_{\simeq}[rd]&C\circ C\ar@{->}^{\Delta\circ \mathrm{id}}[r]\ar@{<-}^{\Delta}[d]& C\circ I \ar@{<->}^{\simeq}[ld]\\
&P&
}
$$
\end{defi}
\begin{rem}
(DG) operads and cooperads are dual to each other, namely for any (DG) cooperad $C$ there exists a (DG) operad on the $S$-module $C^*$, where the
composition, unit, and differential are the transpositions of the corresponding `co-morphisms'. In the other way round, for any operad $P$, where each $P(n)$
is finite dimensional one can construct the cooperad on $P^*$ via the transposition of all structure maps. Note that to construct the operads from cooperads one not need to make the finite dimensional assumption. The explanation is similar to the explanation of the analogous statement in the algebraic world, see Remark~\ref{RemInfDimCoalgAlg}.
\end{rem}
\subsubsection{Operadic twisting morphisms}
To extend the theory of twisting morphisms to operads, we need the linearization of the composition map $\gamma:P\circ P\to P$ of an operad, and of the decomposition map $\Delta:C\to C\circ C$ of a cooperad.
\begin{defi}
The {\it infinitesimal composition map} of a (DG) operad is given by
$$
\gamma_{(1)}:\xymatrix{
P\circ_{(1)}P\ \ar@{>->}[r]&P\circ P\ar@{->}^-{\gamma}[r]&P.
}
$$
\end{defi}
\begin{defi}
The {\it infinitesimal decomposition map} of a (DG) cooperad $\Delta_{(1)}:C\to C\circ_{(1)}C$ is given by
$$
\Delta_{(1)}:\xymatrix{
C\ar@{->}^-{\Delta}[r]&C\circ C\ar@{->>}[r]&C\circ_{(1)}C.
}
$$
\end{defi}\medskip
From now on, we will require the DG operad $(P,d_P,\gamma,\eta)$ to be {\it augmented}, that is there exists a DG $S$-module morphism
$\varepsilon: P\to I$, that respects composition $\gamma$ and unit $\eta$. Similarly the DG cooperad $(C,d_C,\Delta,\varepsilon)$ to be {\it coaugmented}, that is there exists a DG $S$-module morphism $\eta: I\to C$, that respects decomposition $\Delta$ and counit $\varepsilon$.
\begin{defi}
The {\it differential convolution algebra} on the space of graded $S$-module morphisms $\mathrm{Hom}_{S}(C,P):=\bigoplus\limits_{n\geqslant 0}\mathrm{Hom}_{S_n}(C(n),P(n))$
is given by
$$
f\star g:\xymatrix{
C\ar@{->}^-{\Delta_{(1)}}[r]&C\circ_{(1)}C\ar@{->}^-{f\circ_{(1)}g}[r]& P\circ_{(1)}P\ar@{->}^-{\gamma_{(1)}}[r]&P.
}
$$
the unit of the product $\star$ is given by $\gamma\circ\varepsilon\in\mathrm{Hom}_S(C,P)$. And the differential
$$
\partial f=d_P\circ f-(-1)^{\overline{f}}f\circ d_C.
$$
\end{defi}\medskip
\begin{defi}
An operadic twisting morphism $\alpha\in \mathrm{Tw}(C,P)$ is a $-1$-degree morphism in $\mathrm{Hom}_{S}(C,P)$ which is the solution
of the Maurer-Cartan equation
$$
\partial \alpha+\alpha\star\alpha=0,
$$
and is null when composed with the augmentation of $P$ and also when with the coaugmentation of $C$.
\end{defi}\medskip
Consider the DG $S$-module $(C\circ P,d_{C\circ P})$, where the differential $d_{C\circ P}=d_C\circ \mathrm{id}_P+\mathrm{id}_C\circ' d_P$. Moreover, consider  graded $S$-module morphism $\alpha\in\mathrm{Hom}_{S}(C,P)$ and define $\overline{d}_\alpha:C\circ P\to C\circ P$ by the following diagram:
$$
\xymatrix@C=2pc{
C\circ P\ar@{->}^-{\Delta_{(1)}\circ \mathrm{id}_P}[rr]&&(C\circ_{(1)}C)\circ P\ar@{->}^-{(\mathrm{id}_C\circ_{(1)}\alpha)\circ \mathrm{id}_P}[rr]&&(C\circ_{(1)}P)\circ P\ \ar@{>->}[r]&C\circ P\circ P\ar@{->}^-{\mathrm{id}_C\circ\gamma}[rr]&&C\circ P.
}
$$
If $d_{\alpha}=d_{C\circ P}+\overline{d}_{\alpha}$ defines a differential, that is $d^2_{\alpha}=0$, which is the case if and only if $\alpha$ satisfies the Maurer-Cartan equation, then $C\circ_{\alpha}P:=(C\circ P,d_\alpha)$ is a DG S-module called {\it twisted composite complex}. The {\it comparison lemma} remains valid for twisted composite complexes.
\subsubsection{Operadic bar and cobar constructions}
These constructions are similar to the corresponding ones in the algebraic context. Let us detail the cobar construction. Consider an augmented DG cooperad $(C,\Delta,\varepsilon,d_C)$, that is, in particular, we have $C=I\oplus \overline{C}$. Consider the free operad $\mathcal{T}(s^{-1}C)$, with the differential on it given by the sum $\delta_1+\delta_2$, where $\delta_1$ extends the differential $d_C$ and $\delta_2$ extends the infinitesimal decomposition $\Delta_{(1)}$. More precisely,
$$
s^{-1}\overline{C}\xrightarrow{s}\overline{C}\xrightarrow{d_{C}}\overline{C}\xrightarrow{s^{-1}}s^{-1}\overline{C}\rightarrowtail\mathcal{T}(s^{-1}\overline{C})
$$
and
$$
s^{-1}\overline{C}\xrightarrow{s}\overline{C}\xrightarrow{\Delta_{(1)}}\overline{C}\circ_{(1)}\overline{C}\xrightarrow{s^{-1}\circ s^{-1}}s^{-1}\overline{C}\circ_{(1)}s^{-1}\overline{C}\rightarrowtail\mathcal{T}(s^{-1}\overline{C})
$$
uniquely extend, since $\mathcal{T}(s^{-1}C)$ is free, to derivations $\delta_1$ and $\delta_2$ of $\mathcal{T}(s^{-1}C)$ and form the cobar complex $\Omega C=(\mathcal{T}(s^{-1}C),\delta_1+\delta_2)$.
Similarly with the algebraic context the following isomorphisms holds:
$$
\mathrm{Hom}_{DG\ Op}(\Omega C,P)\simeq\mathrm Tw(C,P)\simeq\mathrm{Hom}_{DG\ CoOp}(C,BP).
$$
Under some weight-graded assumptions (similar to the one from associative algebras and coalgebras), we have:
$$
\mathrm{QIso}_{DG\ Op}(\Omega C,P)\simeq\mathrm{Kos}(C,P)\simeq\mathrm{QIso}_{DG\ CoOp}(C,BP),
$$
where $\alpha\in\mathrm{Kos}(C,P)\Leftrightarrow C\circ_{\alpha}P$ is acyclic.
And taking $C=BP$, we find that $\Omega BP\xrightarrow{\sim}P$.
\subsubsection{Koszul duality for operads}
We will adapt  the results of Koszul duality for algebras to operads. This will lead, for a quadratic Koszul operad $P$, to a model $P_\infty:=\Omega P^{\text{!`}}$, which then allows to define $P_\infty$-algebras (or homotopy P-algebras) as representations of this operad.
\begin{defi}
Operadic quadratic data $(E,R)$ consists of a graded $S$-module $E$ and a graded sub-$S$-module $R\in\mathcal{T}(E)^{(2)}$.
\end{defi}
Here $\mathcal{T}(E)^{(2)}$ refers to the weight 2 part of the free operad $\mathcal{T}(E)$, that is to the graded sub-$S$-module of $\mathcal{T}(E)$, which is spanned by composites of two elements of $E$.\medskip

We will use the same terminology as in the algebraic setting and refer to elements of $E$ as generating operations and to elements of $R$ as relations.
\begin{defi}
The \textit{quadratic operad} $P(E,R)$ associated to the operadic quadratic data
$(E,R)$ is the quotient operad ${\mathcal{T}(E)}/{(R)}$, where
$(R)$ denotes the operadic ideal generated by $R\subset \mathcal{T}(E)^{(2)}$.
\end{defi}

The quadratic operad $P(E,R)$ is the quotient operad of $\mathcal{T}(E)$ that is universal
among all quotient operads $\mathcal{P}$ of $\mathcal{T}(E)$, such that the composite
$$R\rightarrowtail \mathcal{T}(E) \twoheadrightarrow \mathcal{P}$$ vanishes. More precisely,
there exists a unique morphism of operads $P(E,R)\to \mathcal{P}$,
such that the following diagram commutes
$$
\xymatrix{
R\ \ar@{>->}[r]\ar@/^1.5pc/@{->}^{0}[rr]\ar@/_1pc/@{->}_{0}[rrd]&\mathcal{T}(E)\ar@{->>}[r]\ar@{->>}[dr]&P(E,R)\ar@{-->}[d] \\
 & & \mathcal{P}.
}
$$

\begin{defi}
The \textit{quadratic cooperad} $C(E,R)$ associated to the operadic quadratic data $(E,R)$ is
the subcooperad of the cofree cooperad $\mathcal{T}^c(E)$, that is universal among all subcooperads
$\mathcal{C}$ of $\mathcal{T}^c(E)$, such that the composite
$$\mathcal{C}\rightarrowtail \mathcal{T}^c(E)\twoheadrightarrow {\mathcal{T}^c(E)^{(2)}}/{R}$$
vanishes. More precisely,
there exists a unique morphism of cooperads $\mathcal{C}\to C(E,R)$,
such that the following diagram commutes
$$
\xymatrix{
C(E,R)\ \ar@{>->}[r]\ar@/^1.5pc/@{->}^{0}[rr] & \mathcal{T}^c(E) \ar@{->>}[r] & {\mathcal{T}^c(E)^{(2)}}/{R} \\
\mathcal{C}\ \ar@{-->}[u] \ar@{>->}[ur]\ar@/_2pc/@{->}_{0}[rru] & &
}.
$$

\end{defi}

Note that when we are working over graded $S$-modules, the above defined
quadratic operad (respectively cooperad) is not only endowed with an arity
grading and a weight grading (coming from the free, respectively, cofree operad),
but also with a degree.

\subsubsection{Koszul dual cooperad and operad of a quadratic operad}

\begin{defi}
The \textit{Koszul dual cooperad} of a quadratic operad $P=P(E,R)$ is
$$P^{\text{!`}} = C(sE, s^2R),$$
that is the quadratic cooperad associated to the shifted operadic quadratic data.
\end{defi}

Here $sE$ denotes the shifted $S$-module, 
obtained from $E$ by shifting the degree in each arity.

\medskip

In order to define the Koszul dual operad, we need some preliminary remarks.

First, the \textit{Hadamard product} $P \underset{\mathrm{H}}{\otimes} Q$ of
two $S$-modules is given by
$(P \underset{\mathrm{H}}{\otimes} Q) (n) = P(n) \otimes Q(n)$. The action of the symmetric group is given by the diagonal action,
i.e. $(\mu\otimes\nu)\cdot\sigma=(\mu\cdot\sigma)\otimes(\nu\cdot\sigma)$,
for any $\mu\in P(n)$, $\nu\in Q(n)$, $\sigma\in S_n$.
Moreover, the Hadamard product of operads has a natural operad structure.

Second, the suspension of an operad, obtained by suspending the underlying $S$-module,
is, in general, not an operad. Therefore, we will define an `operadic suspension'.
Let $\mathcal{S}:=\mathcal{E}nd(sK)$ be the endomorphism operad over the suspended
ground field. This means that $\mathcal{S}(n)=\mathrm{Hom}((sK)^{\otimes n}, sK)$; note that
this space contains morphisms of degree $-n+1$. The symmetric group action is given
by the signature action.
We also denote $\mathcal{S}^{-1}:=\mathcal{E}nd(s^{-1}K)$ and
$\mathcal{S}^c:=\mathcal{E}nd^c(sK)$, where $\mathcal{E}nd^c(sK)$
is the endomorphism cooperad, which is as $S$-module the same as the endomorphism
operad, but equipped with a decomposition map.

Finally, we define the \textit{operadic suspension} of an operad $P$ by
$\mathcal{S} \underset{\mathrm{H}}{\otimes} P$. The \textit{operadic desuspension}
is given by $\mathcal{S}^{-1} \underset{\mathrm{H}}{\otimes} P$.
For a cooperad $C$, the \textit{cooperadic suspension} is given by
$\mathcal{S}^c \underset{\mathrm{H}}{\otimes} C$.
The operadic suspension has the property that a vector space $V$ is
equipped with a $P$-algebra structure, if and only if the suspended
vector space $sV$ is equipped with a
$\mathcal{S} \underset{\mathrm{H}}{\otimes} P$-algebra structure.

\begin{defi}
The \textit{Koszul dual operad} of a quadratic operad $P=P(E,R)$ is defined by
$$P^{!} = {(\mathcal{S}^c \underset{\mathrm{H}}{\otimes} P^{\text{!`}})}^*.$$
\end{defi}

The dual means here that we take the linear dual in each arity.

Let us mention that the $P^{!}$ is quadratic in a certain case. More precisely,
\begin{prop}
Let $P=P(E,R)$ be a quadratic operad, generated by a reduced $S$-module $E$
which is of finite dimension in each arity. Then the Koszul dual
operad $P^{!}$ admits the quadratic presentation
$P^{!}=P(s^{-1}\mathcal{S}^{-1} \underset{\mathrm{H}}{\otimes} E^*, R^\perp)$.
\end{prop}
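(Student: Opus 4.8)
The plan is to unwind the two definitions $P^{!}=(\mathcal{S}^c\underset{\mathrm{H}}{\otimes}P^{\text{!`}})^{*}$ and $P^{\text{!`}}=C(sE,s^{2}R)$, and to show that each of the three operations involved --- forming a quadratic cooperad $C(-,-)$, Hadamard-tensoring with the one-dimensional-in-each-arity (co)operad $\mathcal{S}^c$, and taking the arity-wise linear dual --- sends quadratic presentations to quadratic presentations in a way whose effect on generators and relations can be read off. The reducedness and arity-wise finite dimensionality of $E$ guarantee that in each fixed arity and weight only finitely many trees contribute, so that all the direct sums, kernels and cokernels appearing in the constructions of $C$ and $P$ commute with dualization.

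The first ingredient is the operadic analogue of the duality $A^{*}(V,R)\cong C(V^{*},R^{\perp})$ already obtained in the quadratic-algebra section: if $F$ is reduced and finite dimensional in each arity, then the cofree cooperad $\mathcal{T}^c(F)$ has finite dimensional arity-and-weight components, its arity-wise dual is the free operad $\mathcal{T}(F^{*})$, and under the induced pairing $\mathcal{T}(F^{*})^{(2)}\cong(\mathcal{T}^c(F)^{(2)})^{*}$. Dualizing the universal property defining $C(F,S)$ then yields $C(F,S)^{*}\cong P(F^{*},S^{\perp})$, where $S^{\perp}\subset\mathcal{T}(F^{*})^{(2)}$ is the annihilator of $S\subset\mathcal{T}^c(F)^{(2)}$.

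The second ingredient is the behaviour of the (co)operadic suspension. Because $\mathcal{S}^c=\mathcal{E}nd^c(sK)$ is one dimensional in each arity, its decomposition maps are isomorphisms; composing them along a tree gives a canonical weight-compatible isomorphism $\mathcal{S}^c\underset{\mathrm{H}}{\otimes}\mathcal{T}^c(F)\cong\mathcal{T}^c(\mathcal{S}^c\underset{\mathrm{H}}{\otimes}F)$, and likewise $\mathcal{S}^{-1}\underset{\mathrm{H}}{\otimes}\mathcal{T}(G)\cong\mathcal{T}(\mathcal{S}^{-1}\underset{\mathrm{H}}{\otimes}G)$. Transporting the defining universal properties through these isomorphisms shows $\mathcal{S}^c\underset{\mathrm{H}}{\otimes}C(F,S)\cong C(\mathcal{S}^c\underset{\mathrm{H}}{\otimes}F,\ \mathcal{S}^c\underset{\mathrm{H}}{\otimes}S)$ and $\mathcal{S}^{-1}\underset{\mathrm{H}}{\otimes}P(G,T)\cong P(\mathcal{S}^{-1}\underset{\mathrm{H}}{\otimes}G,\ \mathcal{S}^{-1}\underset{\mathrm{H}}{\otimes}T)$, the twisted relation spaces sitting inside the weight-$2$ parts via those isomorphisms. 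One also checks directly, from the endomorphism (co)operad of a one-dimensional graded space, that $(\mathcal{S}^c)^{*}\cong\mathcal{S}^{-1}$ as operads: both are one dimensional in arity $n$, concentrated in degree $n-1$, with the signature action and matching composition signs.

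Assembling: $P^{!}=(\mathcal{S}^c\underset{\mathrm{H}}{\otimes}C(sE,s^{2}R))^{*}$ is first rewritten, via the suspension identity, as the dual of the quadratic cooperad $C(\mathcal{S}^c\underset{\mathrm{H}}{\otimes}sE,\ \mathcal{S}^c\underset{\mathrm{H}}{\otimes}s^{2}R)$; the duality lemma turns this into $P\big((\mathcal{S}^c\underset{\mathrm{H}}{\otimes}sE)^{*},\ (\mathcal{S}^c\underset{\mathrm{H}}{\otimes}s^{2}R)^{\perp}\big)$; and $(\mathcal{S}^c\underset{\mathrm{H}}{\otimes}sE)^{*}\cong(\mathcal{S}^c)^{*}\underset{\mathrm{H}}{\otimes}(sE)^{*}\cong\mathcal{S}^{-1}\underset{\mathrm{H}}{\otimes}s^{-1}E^{*}\cong s^{-1}\mathcal{S}^{-1}\underset{\mathrm{H}}{\otimes}E^{*}$, the last isomorphism just moving the shift across the (one-dimensional, hence sign-uniform) factor $\mathcal{S}^{-1}$. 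It then remains to identify the relation space with $R^{\perp}$: one shows that the canonical pairing between $\mathcal{T}(E)^{(2)}$ and $\mathcal{T}(s^{-1}\mathcal{S}^{-1}\underset{\mathrm{H}}{\otimes}E^{*})^{(2)}$ agrees, up to the overall nonzero scalar produced by the double desuspension $s^{-2}$ and the two copies of $\mathcal{S}^{-1}$ attached to the two internal vertices of a weight-$2$ composite, with the plain pairing $\mathcal{T}(E)^{(2)}\otimes\mathcal{T}(E^{*})^{(2)}\to K$; since this scalar does not affect orthogonal complements, $(\mathcal{S}^c\underset{\mathrm{H}}{\otimes}s^{2}R)^{\perp}$ is exactly $R^{\perp}$, giving $P^{!}=P(s^{-1}\mathcal{S}^{-1}\underset{\mathrm{H}}{\otimes}E^{*},R^{\perp})$.

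I expect the main obstacle to be precisely this last bookkeeping: verifying that the operadic (de)suspension $\mathcal{S}^{-1}$ and the shift $s^{2}$ in $s^{2}R$ are calibrated so that \emph{no residual sign} survives on weight-$2$ elements, i.e. that $R$ and $R^{\perp}$ genuinely are orthogonal complements for the induced pairing, and keeping the Koszul signs consistent when commuting the suspension maps past the tree-tensor factors in $\mathcal{S}^c\underset{\mathrm{H}}{\otimes}\mathcal{T}^c(sE)\cong\mathcal{T}^c(\mathcal{S}^c\underset{\mathrm{H}}{\otimes}sE)$. Everything else reduces to the finite-dimensional linear algebra of dualizing the universal constructions, which is routine under the stated hypotheses.
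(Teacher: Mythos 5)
First, a remark on the comparison itself: the paper states this proposition without proof (it is part of the survey following \cite{LodayVallette}), so there is no in-text argument to measure yours against; your architecture --- dualize $P^{\text{!`}}=C(sE,s^2R)$ arity-wise, commute the Hadamard factor $\mathcal{S}^c$ (resp.\ $\mathcal{S}^{-1}$) past the cofree (resp.\ free) construction, use $(\mathcal{S}^c)^*\simeq\mathcal{S}^{-1}$ and $(sE)^*\simeq s^{-1}E^*$, and invoke reducedness plus arity-wise finite dimensionality so that weight-graded components dualize termwise --- is exactly the standard route, and those three ingredient lemmas are all correct.

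The gap is in your last step, and it sits precisely at the point you yourself flag as ``the main obstacle'' and then dispose of by assertion. You claim the induced pairing between $\mathcal{T}\bigl(s^{-1}\mathcal{S}^{-1}\underset{\mathrm{H}}{\otimes}E^*\bigr)^{(2)}$ and $\mathcal{T}(E)^{(2)}$ differs from the plain pairing by an \emph{overall} nonzero scalar, so that orthogonal complements are unaffected. But the scalar produced by redistributing the (co)operadic suspension over a two-vertex tree is not uniform: it depends on the grafting position. Already for $\mathcal{S}=\mathcal{E}nd(sK)$ with generator $\lambda_2$ of degree $-1$, one has $\lambda_2\circ_2\lambda_2=-\,\lambda_2\circ_1\lambda_2$, because $(\mathrm{id}\otimes\lambda_2)$ picks up the Koszul sign $(-1)^{|\lambda_2|\,|s x|}$ while $(\lambda_2\otimes\mathrm{id})$ does not --- the same position-dependent exponents that appear in the paper's own suspension formulas such as $(-1)^{\frac{p(p-1)}{2}+k(p-1)}$. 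Hence the isomorphism $\mathcal{S}^c\underset{\mathrm{H}}{\otimes}\mathcal{T}^c(sE)\simeq\mathcal{T}^c(\mathcal{S}^c\underset{\mathrm{H}}{\otimes}sE)$ multiplies the two weight-$2$ tree shapes by \emph{different} signs, and a priori so does its counterpart on the $\mathcal{S}^{-1}$ side; whether the combined sign on each shape is the same is exactly what must be computed, and it is where the Ginzburg--Kapranov pairing acquires its characteristic relative sign between the two planar shapes in the binary case. This matters because a shape-dependent sign genuinely changes annihilators: if $R$ mixes tree shapes (as it does for $\mathcal{A}ss$, $\mathcal{L}ie$, $\mathcal{L}eib$, e.g.\ $\mu\circ_1\mu-\mu\circ_2\mu$), then the orthogonal of $R$ with respect to a pairing rescaled by $+1$ on one shape and $-1$ on another is a different subspace. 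To close the argument you must either (i) carry out the sign computation on weight-$2$ composites and show that the contributions of $\mathcal{S}^c$, $\mathcal{S}^{-1}$, $s^{2}$ and $s^{-1}$ combine to a shape-independent scalar, or (ii) make explicit that $R^\perp$ is \emph{defined} as the annihilator with respect to the induced (sign-twisted) pairing --- which also needs saying, since the paper never defines $R^\perp$ in the operadic setting. As written, the identification of the relation space, which is the only nontrivial content of the proposition, is asserted rather than proved.
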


Moreover, we have that, under the assumptions of the previous proposition,
${(P^{!})}^{!}=P$.

\subsubsection{Koszul operads and infinity algebras}

For given operadic quadratic data $(E,R)$, we have that $P(E,R)^{(1)}=E$ and $C(E,R)^{(1)}=E$,
and we can define the morphism $\kappa$ by
$$\kappa: C(sE,s^2R)\twoheadrightarrow sE \stackrel{s^{-1}}{\rightarrow} E \rightarrowtail P(E,R).$$
This morphism is clearly of degree $-1$, and verifies (for the same reasons as in the algebraic case)
$\kappa\star\kappa=0$. Therefore, $\kappa\in\mathrm{Tw}$ is an operadic twisting morphism.

This defines a \textit{Koszul complex} $P^{\text{!`}}\circ_\kappa P :=(P^{\text{!`}}\circ P, d_\kappa)$.
We thus have a sequence of chain complexes of $S_n$-modules $((P^{\text{!`}}\circ P)(n), d_\kappa)$,
called Koszul complexes in arity $n$.

A quadratic operad $P$ is called a \textit{Koszul operad} if the corresponding Koszul complex
$P^{\text{!`}}\circ_\kappa P$ is acyclic.

\medskip

Let us mention that there exists many Koszul operads, in particular
$\mathcal{A}\!ss$, $\mathcal{C}\!om$, $\mathcal{L}ie$, $\mathcal{L}eib$ and $\mathcal{P}ois$ are Koszul operads.

\medskip

Just as we have for Koszul algebras $A$, a resolution $\Omega A^{\text{!`}} \stackrel{\sim}{\rightarrow} A$,
we obtain, for Koszul operads $P$, a resolution $\Omega P^{\text{!`}} \stackrel{\sim}{\rightarrow} P$.
The operad $\Omega P^{\text{!`}}$ is the $P_\infty$-operad.
Hence, to a $P$-algebra structure on a vector space $V$, given by $P\to \mathcal{E}nd(V)$,
corresponds via
\[
\xymatrix{
P_\infty := \Omega P^{\text{!`}} \ar[r]^-{\sim} \ar[dr] & P \ar[d]\\
 & \mathcal{E}nd(V)
}
\]
a $P_\infty$-algebra (also called homotopy $P$-algebra) structure on $V$.
\begin{thm}[Ginzburg-Kapranov]\cite{GK94}.\label{GinzburgKapranovThm}
Let $P$ be a quadratic Koszul operad. A $P_\infty$-structure on a graded vector space $V$, in the sense of a representation on $V$ of the DG operad $P_{\infty}:=\Omega P^{\text{!`}}$, is equivalent (in the finite-dimensional setting) to a square-zero derivation of degree $-1$ on the free $P^{\text{!}}$-algebra over $s^{-1}V^*$
$$
P_{\infty}-\text{algebra on }V\quad\Leftrightarrow\quad d\in \mathrm{Der}_{-1}(\mathrm{Free}_{P^\text{!}}(s^{-1}V^*)),\ d^2=0.
$$
Similarly $P_\infty$-structure on $V$ (here, no finite-dimensional requirement is needed) is equivalent to the square-zero coderivation of degree $-1$ on the cofree $P^{\text{!}}$-coalgebra over $sV$
$$
P_{\infty}-\text{algebra on }V\quad\Leftrightarrow\quad D\in \mathrm{CoDer}_{-1}(\mathrm{CoFree}_{P^\text{!}}(sV)),\ D^2=0.
$$
\end{thm}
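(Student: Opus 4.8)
The plan is to derive the statement from a chain of three canonical bijections, each compatible with the relevant ``square-zero'' condition. Throughout, write $C := P^{\text{!`}}$ for the Koszul dual cooperad, so that, as a graded operad (forgetting the differential), $P_\infty = \Omega C$ is the free operad $\mathcal{T}(s^{-1}\overline C)$, equipped with the cobar differential $\delta_1+\delta_2$; here $\delta_1$ extends the differential of $C$, which vanishes since $P$ (hence $C$) is purely quadratic, so in fact the cobar differential is just $\delta_2$, the extension of the infinitesimal decomposition $\Delta_{(1)}$. Koszulness of $P$ is not actually needed for the equivalence itself; it is what makes $P_\infty$ a \emph{resolution} of $P$.

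First I would invoke the universal property of the free operad: a morphism of graded operads $\Omega C \to \mathcal{E}nd(V)$ is precisely an $S$-module morphism $s^{-1}\overline C \to \mathcal{E}nd(V)$, equivalently a degree $-1$ element $\alpha\in\mathrm{Hom}_S(C,\mathcal{E}nd(V))$ vanishing on the coaugmentation. Requiring this morphism to commute with the differentials --- $\delta_1+\delta_2$ on the source, the zero differential on $\mathcal{E}nd(V)$ (as $V$ is merely graded) --- and evaluating on the generators $s^{-1}\overline C$ turns the condition into the operadic Maurer--Cartan equation $\partial\alpha + \alpha\star\alpha = 0$ in the convolution algebra $(\mathrm{Hom}_S(C,\mathcal{E}nd(V)),\star)$: $\delta_1$ contributes $\partial\alpha$ and $\delta_2$ contributes $\alpha\star\alpha$. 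This is exactly the operadic analogue of Proposition~\ref{twistingMorphismProposition} applied to $\mathcal{E}nd(V)$, so that $P_\infty$-structures on $V$ are in bijection with operadic twisting morphisms $C \to \mathcal{E}nd(V)$; and since $C$ and $\mathcal{E}nd(V)$ carry zero differentials, such a twisting morphism is simply a degree $-1$ map $\alpha$ with $\alpha\star\alpha = 0$.

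Next I would identify $(\mathrm{Hom}_S(C,\mathcal{E}nd(V)),\star)$ with the coderivations of the cofree $C$-coalgebra over $V$. A coderivation of a cofree coalgebra is uniquely determined by its corestriction to the space of cogenerators, i.e. by a family of maps $C(n)\otimes_{S_n}V^{\otimes n}\to V$; this yields a linear isomorphism $\mathrm{Hom}_S(C,\mathcal{E}nd(V))\xrightarrow{\ \sim\ }\mathrm{CoDer}(\mathrm{CoFree}_C(V))$, $\alpha\mapsto D_\alpha$, under which $\alpha\star\alpha$ corresponds to $D_\alpha^2$ (up to the customary sign), so that $\alpha\star\alpha = 0 \iff D_\alpha^2 = 0$. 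It then remains to rewrite $\mathrm{CoFree}_C(V)$ in terms of $P^{!}$: by the cooperadic suspension, a $C$-coalgebra structure on $V$ is the same as a $\mathcal{S}^c\underset{\mathrm{H}}{\otimes}C$-coalgebra structure on $sV$, and by definition $\mathcal{S}^c\underset{\mathrm{H}}{\otimes}C = \mathcal{S}^c\underset{\mathrm{H}}{\otimes}P^{\text{!`}}$ is the cooperad dual to $P^{!}$, so $\mathrm{CoFree}_C(V)$ becomes the cofree $P^{!}$-coalgebra over $sV$. Composing with the first bijection gives the (always valid) coalgebra form of the theorem. For the finite-dimensional, derivation form I would dualise arity by arity: since $s^{-1}V^* = (sV)^*$ and $P^{!}$ is finite-dimensional in each arity, linear duality carries $\mathrm{CoFree}_{P^!}(sV)$ to $\mathrm{Free}_{P^!}(s^{-1}V^*)$, coderivations to derivations, and $D^2=0$ to $d^2=0$.

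The main obstacle I anticipate is the middle step: making the isomorphism between the operadic convolution algebra and the coderivations of a cofree coalgebra fully precise while keeping every suspension and Koszul sign under control, and in particular checking that $\alpha\star\alpha$ corresponds to $D_\alpha^2$ exactly (not its negative, not twice it) after the Hadamard twist by $\mathcal{S}^c$ that passes from $C = P^{\text{!`}}$ to $P^{!}$. The surrounding steps are formal --- the first is the universal property of the free operad together with the already-established operadic bar--cobar adjunction, and the last is routine linear duality, valid precisely because of the finite-dimensionality hypothesis.
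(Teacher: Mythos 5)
Your argument is correct, and it is the standard Ginzburg--Kapranov route; note, however, that the paper itself does not prove Theorem~\ref{GinzburgKapranovThm} at all -- it is quoted from \cite{GK94} -- so there is no in-text proof to match against. What the paper does provide is exactly the toolkit you use: the identification of operad morphisms out of a cobar construction with operadic twisting morphisms (the operadic analogue of Proposition~\ref{twistingMorphismProposition}, stated in the subsection on operadic bar and cobar constructions), the convolution Lie structure on $\mathrm{Hom}_S(C,P)$ with its Maurer--Cartan equation, and the definition of $P^{!}$ via the Hadamard twist $\mathcal{S}^c\underset{\mathrm{H}}{\otimes}P^{\text{!`}}$. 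Your three steps (free-operad universal property giving $\alpha\star\alpha=0$ since $d_{P^{\text{!`}}}=0$; the bijection $\mathrm{Hom}_S(P^{\text{!`}},\mathcal{E}nd(V))\simeq\mathrm{CoDer}(\mathrm{CoFree}_{P^{\text{!`}}}(V))$ matching $\alpha\star\alpha$ with $D_\alpha^2$; cooperadic suspension and, in finite dimension, arity-wise dualization) are precisely how this theorem is proved in the literature, and your remark that Koszulity is only needed to make $\Omega P^{\text{!`}}$ a resolution of $P$, not for the equivalence itself, is accurate. The difference in emphasis is that the paper never performs your middle step in general: instead it verifies the correspondence by direct computation in the two cases it needs, $P=\mathcal{L}ie$ (the proposition following Definition~\ref{LieInftyAlgebraDefinition}, on $\overline{S}^c(sV)$) and $P=\mathsf{Lei}$ (the Appendix, on $\mathrm{Zin}^c(sV)$), where all the suspension-sign bookkeeping you rightly identify as the delicate point is handled explicitly through identities such as (\ref{identity_formula}) and (\ref{suspension_of_permutation}). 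One small wording caveat: what the suspension twist transports is not a ``$P^{\text{!`}}$-coalgebra structure on $V$'' but the cofree coalgebra together with its coderivation complex, i.e. $\mathrm{CoDer}(\mathrm{CoFree}_{P^{\text{!`}}}(V))\simeq\mathrm{CoDer}(\mathrm{CoFree}_{\mathcal{S}^c\otimes_{\mathrm{H}}P^{\text{!`}}}(sV))$ with the degree shift that makes $D$ uniformly of degree $-1$; as phrased, your sentence conflates the two, though the intended (and correct) statement is clear.
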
\medskip
\subsection{Lie infinity algebras}
 Usual differential graded Lie  algebras can be considered as a very special case of $L_\infty$ algebras.  One may consider them as objects of the category of $L_\infty$-algebras. Thus we radically increase morphisms. This has very important applications in mathematical physics, for example it was essentially used in the famous Kontsevich's proof of the existence of deformation quantization for any Poisson manifold.\\
Lie infinity algebras can be constructed from the general operadic approach. Let us be more precise: the quadratic operad $\mathcal{L}ie$, which controls graded Lie algebras is Koszul. Its Koszul dual operad ${\mathcal{L}ie}^{\text{!}}=\mathcal{C}om$ controls graded commutative associative algebras. Lie infinity operad $\mathcal{L}ie_\infty$, by definition, is the cobar construction $\Omega\mathcal{L}ie^{\text{!`}}$. The representation of the operad $\mathcal{L}ie_\infty$ on a vector space $V$ defines $L_{\infty}$ ($\mathrm{Lie}_\infty$) algebra on $V$. Due to the Ginzburg-Kapranov theorem~(\ref{GinzburgKapranovThm}) $L_{\infty}$ algebra on $V$ is equivalent to a square-zero coderivation $D$ of degree $-1$ on the free ${\mathcal{L}ie}^{\text{!}}=\mathcal{C}om$ coalgebra over $sV$, that is the reduced symmetric coalgebra $\overline{S}^c(sV)$.\\
\subsubsection{Definition of Lie infinity algebra}
 Here we give the classical definition of $L_\infty$ algebra and then show that it coincides with the operadic one.
\begin{defi}\label{LieInftyAlgebraDefinition}
{\it $L_\infty$ algebra on a graded vector space $V$} is given by the family of multilinear maps
$l_i: V^{\otimes i}\to V$ of degrees $(i-2)$ s.t. the following conditions holds:\\
- {\it the graded antisymmetry}
\begin{equation}\label{antisimmetryConditionLieInfinity}
l_i(v_1,...,v_i)=\mathrm{sign}(\sigma)\cdot\varepsilon(\sigma)\cdot l_i(v_{\sigma(1)},...,v_{\sigma(i)}),
\end{equation}
- {\it higher Jacobi identities.} For any $n>0$:
\begin{equation}\label{higherJacobiIdentities}
\sum\limits_{i+j-1=n}\sum\limits_{\sigma\in Sh(i,j-1)}(-1)^{i(j-1)}\mathrm{sign}(\sigma)\cdot\varepsilon(\sigma)\cdot l_j(l_i(v_{\sigma(1)},...,v_{\sigma(i)}),v_{\sigma(i+1)},...,v_{\sigma(i+j-1)}).
\end{equation}
\end{defi}\bigskip
\begin{rem}
One can reformulate the definition of $L_\infty$ algebra and say that the structure maps $\{l_i\}$ are defined on the on the reduced exterior algebra $\overline{\Lambda}(V)$ and $l_i:\Lambda^i(V)\to V$. Then the graded antisymmetric conditions (\ref{antisimmetryConditionLieInfinity})  hold automatically.
\end{rem}

\begin{prop}
The operadic definition of $L_\infty$ algebra is equivalent to the  classical one. Namely $L_\infty$ algebra (in the classical sense) over a finite dimensional graded vector space $V$ is given by the differential
$d$ on the reduced symmetric algebra $\overline{S}(s^{-1}V^*)$ or dually by the codifferential $D=d^*$ on the reduced symmetric coalgebra $\overline{S}^c(sV)$. And the condition
$d^2=0$ (dually $D^2=0$) encodes the bunch of higher Jacobi identities (\ref{higherJacobiIdentities}).
\end{prop}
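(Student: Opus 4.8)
The plan is to unwind the statement through the cofree-coalgebra picture already assembled above. By the Ginzburg--Kapranov theorem (Theorem~\ref{GinzburgKapranovThm}) applied to the Koszul operad $\mathcal{L}ie$, whose Koszul dual is $\mathcal{C}om$, a $\mathcal{L}ie_\infty$-structure on $V$ is exactly a degree $-1$ square-zero coderivation $D$ on the cofree $\mathcal{C}om$-coalgebra over $sV$, i.e. on $\overline{S}^c(sV)$; since $V$ is finite dimensional, dualizing arity by arity turns this into a degree $-1$ square-zero derivation $d=D^*$ on $\overline{S}(s^{-1}V^*)$, and $D^2=0\Leftrightarrow d^2=0$. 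Hence it remains to identify, on the one hand, the bare data of such a coderivation with a family of maps $l_i$ satisfying the graded antisymmetry (\ref{antisimmetryConditionLieInfinity}), and on the other hand the condition $D^2=0$ with the higher Jacobi identities (\ref{higherJacobiIdentities}). For the first point I would use that $\overline{S}^c(sV)$ is cofree cocommutative conilpotent, so a coderivation $D$ is uniquely determined by, and freely reconstructed from, its corestrictions $D_i\colon S^i(sV)\to sV$, which are necessarily of degree $-1$; then, using the space isomorphism $\Lambda(V)\cong\bigoplus_n(s^{-1})^{\otimes n}S^n(sV)$ and the suspended-map construction for maps $S^i(sV)\to S^1(sV)$ recalled above, set $l_i:=D_i^{\mathrm{susp}}\colon\Lambda^i(V)\to V$. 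The degree count $\overline{l_i}=\overline{D_i}+i-1=i-2$ is the required one, and reading $l_i$ as a map on $V^{\otimes i}$ produces precisely the graded antisymmetry (\ref{antisimmetryConditionLieInfinity}) by (\ref{suspension_of_permutation}); the correspondence $\{D_i\}\leftrightarrow\{l_i\}$ is clearly a bijection.

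The heart of the proof is the expansion of $D^2$. Being a degree $-2$ coderivation of the cofree coalgebra $\overline{S}^c(sV)$, the operator $D^2$ vanishes if and only if its corestriction $\mathrm{pr}_{sV}\circ D^2$ does; evaluating the latter on the weight-$n$ component by feeding the explicit cocommutative coproduct of $S^c$ (a sum over $(i,n-i)$-unshuffles weighted by the Koszul signs $\varepsilon(\sigma)$) into $D\circ D$ gives, for $w_1\cdots w_n\in S^n(sV)$,
\[
(\mathrm{pr}_{sV}\circ D^2)(w_1\cdots w_n)=\sum_{i+j-1=n}\ \sum_{\sigma\in Sh(i,j-1)}\pm\,\varepsilon(\sigma)\,D_j\big(D_i(w_{\sigma(1)}\cdots w_{\sigma(i)})\,w_{\sigma(i+1)}\cdots w_{\sigma(i+j-1)}\big),
\]
the inner $\pm$ being the Koszul sign from moving $D_i$ past the first arguments. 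Putting $w_k=sv_k$ and pushing every (de)suspension inward by means of (\ref{identity_formula}) and (\ref{suspension_of_permutation}) rewrites the right-hand side, summand by summand, as
\[
\sum_{i+j-1=n}\ \sum_{\sigma\in Sh(i,j-1)}(-1)^{i(j-1)}\,\mathrm{sign}(\sigma)\,\varepsilon(\sigma)\,l_j\big(l_i(v_{\sigma(1)},\dots,v_{\sigma(i)}),v_{\sigma(i+1)},\dots,v_{\sigma(i+j-1)}\big),
\]
which is exactly the expression in (\ref{higherJacobiIdentities}). Therefore $D^2=0$ for all inputs if and only if all these sums vanish for every $n$, i.e. if and only if the higher Jacobi identities hold; together with the bijection $\{D_i\}\leftrightarrow\{l_i\}$ and $D^2=0\Leftrightarrow d^2=0$ this establishes the proposition.

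I expect the only real obstacle to be the sign reconciliation inside the last displayed rewriting: one has to check that the Koszul sign of the coproduct, the auxiliary Koszul sign from the co-Leibniz rule, the $(-1)^{k(k-1)/2}$-factors coming from (\ref{identity_formula}) when suspensions and desuspensions meet, and the $\mathrm{sign}(\sigma)$-twists coming from (\ref{suspension_of_permutation}) combine to leave exactly $(-1)^{i(j-1)}\mathrm{sign}(\sigma)\varepsilon(\sigma)$ on the $(i,j,\sigma)$-term, with no residual $n$-dependent global sign. A convenient way to organize this is to verify the identity first for $n=1,2,3$ (recovering, respectively, that $l_1$ is a differential, that $l_2$ is a chain map and graded antisymmetric, and the homotopy Jacobi relation governed by $l_3$) and then to treat the general case by induction on where one transposes the (de)suspension maps. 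Everything else --- the cofree property, the degree bookkeeping, and the passage between the $\Lambda$- and the $\otimes$-descriptions --- is formal and has essentially been prepared in the preceding subsections.
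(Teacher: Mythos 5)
Your proposal is correct and follows essentially the same route as the paper: reduce $D^2=0$ to its corestriction/weight components (the paper phrases this on the algebra side via the Leibniz rule for $d$ on $\overline{S}(s^{-1}V^*)$ and then transposes, while you argue directly that a coderivation of the cofree coalgebra vanishes iff its corestriction does), expand over $(i,j-1)$-unshuffles, and conjugate by (de)suspensions using (\ref{identity_formula}) and (\ref{suspension_of_permutation}) to produce $l_i=D_i^{\mathrm{susp}}$, the antisymmetry, and the sign $(-1)^{i(j-1)}\mathrm{sign}(\sigma)\varepsilon(\sigma)$ in (\ref{higherJacobiIdentities}). The sign reconciliation you flag as the remaining obstacle is exactly what the paper carries out explicitly (its items 1--3 listing $s^{-1}D_ps^{\otimes p}=l_p$, the $(-1)^{\frac{p(p-1)}{2}+k(p-1)}$ factor, and the $(-1)^{\frac{n(n-1)}{2}}\mathrm{sign}(\sigma)$ twist), so your plan matches the paper's computation.
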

\begin{proof}
Consider an arbitrary finite dimensional graded vector space $W$ and the differential $d$ on the reduced symmetric algebra $\overline{S}(W)$. The derivation $d$ is given by its action on the generators, that is by the action on the vector space $W$. By applying the Leibniz rule one can get the action of the derivation on the whole space $\overline{S}(W)$ knowing only its action on $W$. For any element $w_1...w_p\in S^p(W)$:
\begin{equation}\label{actionOfDerivationOnSymmetricAlgebra}
\begin{array}{l}
d(w_1...w_p)=\sum\limits_{i=1}^p(-1)^{\overline{w_1}+...+\overline{w_{i-1}}}(w_1...dw_i...w_p)=\\[4mm]
=(d\odot \underbrace{\mathrm{id}\odot...\odot \mathrm{id}}_{p-1})(w_1...w_p)=(d\odot \mathrm{id}^{\odot (p-1)})(w_1...w_p).
\end{array}
\end{equation}
If for an arbitrary -1-degree derivation $d$ on $\overline{S}(W)$ the action of $d^2$ on the generators is zero then it is zero on the whole space $\overline{S}(W)$, and therefore the derivation becomes a differential. It follows from the following identity:
$$
d^2(w\cdot v)=d^2w\cdot v+w\cdot d^2 v=0.
$$
The action of the differential $d$ on $\overline{S}(W)$ is given by its action on generators, that is the  series of linear maps
$$d_p:W\to S^p(W),\ d|_W=d_1+d_2+...$$

and the condition that $d^2=0$ reads as follows:
\begin{equation*}
\begin{array}{l}
d(dw)=\sum\limits_{p=1}^{\infty}d(d_pw)\stackrel{(\ref{actionOfDerivationOnSymmetricAlgebra})}{=}\sum\limits_{p=1}^{\infty}(d\odot \mathrm{id}^{\odot (p-1)})d_pw=\\
=\sum\limits_{p=1}^{\infty}\sum\limits_{k=1}^{\infty}(d_k\odot \mathrm{id}^{\odot (p-1)})d_pw=\sum\limits_{n=1}^{\infty}\sum\limits_{p+k-1=n}(d_k\odot \mathrm{id}^{\odot (p-1)})d_pw=0.
\end{array}
\end{equation*}
In the dual language, the transposed map $D=d^*$ is a codifferential on the reduced symmetric coalgebra $\overline{S}^c(W^*)$. And the transposition of the last identity will encode that $D^2=0$:
\begin{equation}\label{equationForCodiffereintal}
\begin{array}{l}
\sum\limits_{n=1}^{\infty}\sum\limits_{p+k-1=n}D_p(D_k\odot \mathrm{id}^{\odot (p-1)})=0,
\end{array}
\end{equation}
where the transposed maps $D_p=d^*_p: S^p(W^*)\to W^*$ called the corestrictions of the codifferential $D$.\medskip

In equation (\ref{equationForCodiffereintal}) the weight of the operator inside the sum $\sum\limits_{n=1}^{\infty}$ is equal to $2-p-k=1-n$, so it depends only on $n$, that is the last equation splits into the series of equations:
\begin{equation}\label{operatorForLieInfinity}
(D^2)_{n}=\sum\limits_{p+k-1=n}D_p(D_k\odot \mathrm{id}^{\odot (p-1)})=0,\quad\mbox{where }n>0.
\end{equation}
The condition $D^2=0$ on the reduced symmetric coalgebra $\overline{S}^c(sV)$ reads as follows:
$$
\sum\limits_{p+k-1=n}D_p(D_k\odot \mathrm{id}^{\odot (p-1)})(sv_1...sv_n)=0,\quad n>0.\\
$$
We choose the representative element $sv_1\otimes...\otimes sv_1\in sv_1...sv_n$ then the last identity becomes
$$
\sum\limits_{p+k-1=n}D_p\circ(D_k\otimes \mathrm{id}^{\otimes (p-1)})\circ\left(\sum\limits_{\sigma\in Sh(k,p-1)}\sigma\right)(sv_1\otimes...\otimes sv_n)=0,\quad n>0.\\
$$
Now we insert identities of the type $(-1)^{\frac{i(i-1)}{2}}s^{\otimes i}(s^{-1})^{\otimes i}=\mathrm{id}^{\otimes i}$ in two places:
\begin{equation}\label{LieInfinityBeforeEvaluation}
\begin{array}{l}
\sum\limits_{p+k-1=n}\overbrace{s^{-1}D_ps^{\otimes p}}^{l_p} (-1)^{\frac{p(p-1)}{2}} \overbrace{\left(s^{-1}\right)^{\otimes p}(D_k\otimes \mathrm{id}^{\otimes (p-1)})s^{\otimes n}}^{\pm l_k\otimes \mathrm{id}^{\otimes(p-1)}}\circ\\
\circ(-1)^{\frac{n(n-1)}{2}} \underbrace{\left(s^{-1}\right)^{\otimes n}\left(\sum\limits_{\sigma\in Sh(k,p-1)}\sigma\right)s^{\otimes n}}_{\sum\limits_{\sigma\in Sh(k,p-1)}\pm\sigma}=0,\quad n>0.\\
\end{array}
\end{equation}
The precise signs in the formula above are the following:
\begin{enumerate}
\item ${s^{-1}D_ps^{\otimes p}}=l_p,$
\item ${\left(s^{-1}\right)^{\otimes p}(D_k\otimes \mathrm{id}^{\otimes (p-1)})s^{\otimes (p+k-1)}}={(-1)^{\frac{p(p-1)}{2}+k(p-1)} l_k\otimes \mathrm{id}^{\otimes(p-1)}},$
\item ${\left(s^{-1}\right)^{\otimes n}\left(\sum\limits_{\sigma\in Sh(k,p-1)}\sigma\right)s^{\otimes n}}={\sum\limits_{\sigma\in Sh(k,p-1)}(-1)^{\frac{n(n-1)}{2}}\mathrm{sign}(\sigma)\cdot\sigma}.$
\end{enumerate}
We evaluate the operators (\ref{LieInfinityBeforeEvaluation}) on the elements $v_1\otimes...\otimes v_n$ and get
$$
\sum\limits_{k+p-1=n}\sum\limits_{\sigma\in Sh(k,p-1)}(-1)^{k(p-1)}\mathrm{sign}(\sigma)\cdot\varepsilon(\sigma)\cdot l_p(l_k(v_{\sigma(1)},...,v_{\sigma(k)}),v_{\sigma(k+1)},...,v_{\sigma(k+p-1)}).
$$
The structure maps $l_i=D^{susp}_i$ have a degree $i-2$, and they act on the reduced exterior coalgebra $\overline{\Lambda}^c(V)$ (as a vector space is isomorphic to the reduced exterior algebra $\overline{\Lambda}(V)$). That guarantees that the graded antisymmetric conditions~(\ref{antisimmetryConditionLieInfinity}) hold true.
\end{proof}
\begin{defi}
If the degrees of the graded vector space $V$ are concentrated from $0$ to $n-1$:
$V=V_0\oplus...\oplus V_{n-1}$ then the $L_\infty$ structure on $V$ is called {\it $n$-term $L_\infty$-algebra}.
\end{defi}
\subsubsection{Lie infinity algebra morphism}
\begin{defi}
The morphism between two $L_\infty$ algebras over $V$ and $W$ is given by the family of multilinear maps $\varphi_i:\Lambda^i(V)\to W$ of degree $(i-1)$ which for any $n>0$ satisfy the following identities:\\
\begin{equation}\label{lieInfinityAlgebraMorphismIdentities}
\begin{array}{l}
\sum\limits_{p=1}^n\ \sum\limits_{k_1+...+k_p=n}\ \sum\limits_{\sigma\in Sh(k_1,...,k_p)}(-1)^{\frac{p(p-1)}{2}+\sum\limits_{i=1}^{p-1}(p-i)k_i}\cdot(-1)^{\sum\limits_{i=2}^p(k_i-1)(\overline{v}_{\sigma(1)}+...+\overline{v}_{\sigma(k_1+...+k_{i-1})})
}\times\\
\times \mathrm{sign}(\sigma)\cdot\varepsilon(\sigma)\cdot l_p(\varphi_{k_1}(v_{\sigma(1)},...,v_{\sigma(k_1)}),...,\varphi_{k_p}(v_{\sigma(k_1+...+k_{p-1}+1)},...,v_{\sigma(k_1+...+k_p)}))=
\\[3mm]
=\sum\limits_{k+p-1=n}\sum\limits_{\sigma\in Sh(k,p-1)}(-1)^{k(p-1)}\cdot\mathrm{sign}(\sigma)\cdot\varepsilon(\sigma)\cdot \varphi_p(l_k(v_{\sigma(1)},...,v_{\sigma(k)}),v_{\sigma(k+1)},...,v_{\sigma(k+p-1)}).
\end{array}
\end{equation}
\end{defi}
\begin{prop}
$L_\infty$-algebra morphism between $L_\infty$-algebras over $V$ and $W$ is given by the differential algebra morphism $f: S(s^{-1}W^*)\to S(s^{-1}V^*)$, or dually by the codifferential coalgebra morphism $F=f^*:S^c(sV)\to S^c(sW)$. The morphism condition $fd-df=0$ (or dually $FD-DF=0$) encodes the bunch of higher identities~(\ref{lieInfinityAlgebraMorphismIdentities}).
\end{prop}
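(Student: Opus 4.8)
The strategy is to lift, one categorical level higher, the argument given for the previous proposition. The key structural input is the universal property of the cofree (conilpotent, cocommutative) coalgebra: a morphism of coalgebras $F\colon S^c(sV)\to S^c(sW)$ is completely and freely determined by its corestriction $\mathrm{pr}_{sW}\circ F=\sum_{i\ge 1}F_i$, where $F_i\colon S^i(sV)\to sW$; conversely any such family assembles to a unique coalgebra morphism whose value on $sv_1\cdots sv_n$ is the sum over all deconcatenations into ordered blocks of the corresponding $F_{k_1}\odot\cdots\odot F_{k_p}$, weighted by the Koszul signs of the shuffles. Granting this, I would first put $\varphi_i:=F_i^{\mathrm{susp}}=s^{-1}\circ F_i\circ s^{\otimes i}$. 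Since $F_i$ is a degree-$0$ map on the suspended side, $\varphi_i$ has degree $i-1$; and since $F_i$ factors through $S^i(sV)$, the identity (\ref{suspension_of_permutation}) forces $\varphi_i$ to be graded antisymmetric, i.e.\ well defined on $\Lambda^i(V)$. (One must also record that $F$ being a morphism of coaugmented coalgebras is equivalent to the $\varphi_i$ carrying no ``$\varphi_0$'' term; this is the technical vanishing condition and is immediate.) This establishes the bijection between the underlying data of the two notions, and it remains to show that $FD_V=D_WF$ is equivalent to the system (\ref{lieInfinityAlgebraMorphismIdentities}).

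For that I would corestrict the equation to the cogenerators $sW$. Because $D_W$ is a coderivation of the cofree coalgebra and $F$ is a coalgebra morphism, $\mathrm{pr}_{sW}\circ(FD_V)=\mathrm{pr}_{sW}\circ(D_WF)$ already captures the full content. Expanding and splitting by weight $n$: the left-hand composite $\mathrm{pr}_{sW}\circ F\circ D_V$ contributes, in weight $n$, exactly the terms $F_p\circ(D_{V,k}\odot\mathrm{id}^{\odot(p-1)})$ with $k+p-1=n$ -- here $D_V$, being a coderivation, ``branches once'' -- which after (de)suspension become the $\varphi_p(l_k(\dots),\dots)$ appearing on the right of (\ref{lieInfinityAlgebraMorphismIdentities}); the right-hand composite $\mathrm{pr}_{sW}\circ D_W\circ F$ contributes the terms $D_{W,p}\circ(F_{k_1}\odot\cdots\odot F_{k_p})$ with $k_1+\cdots+k_p=n$ -- here $F$, being a coalgebra morphism, ``branches into $p$ pieces'' -- which become the $l_p(\varphi_{k_1}(\dots),\dots,\varphi_{k_p}(\dots))$ on the left of (\ref{lieInfinityAlgebraMorphismIdentities}). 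Up to the signs this is the same bookkeeping as in (\ref{operatorForLieInfinity}), with $F$ playing the role of one of the two copies of $D$.

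The delicate step -- and the place where all the work is -- is the sign analysis, carried out as in the passage from (\ref{LieInfinityBeforeEvaluation}) to the final displayed formula of the previous proof: one evaluates both sides on a representative $sv_1\otimes\cdots\otimes sv_n$, writes out the shuffle sums, and inserts copies of $(-1)^{\frac{i(i-1)}{2}}\,s^{\otimes i}\circ(s^{-1})^{\otimes i}=\mathrm{id}^{\otimes i}$ (cf.\ (\ref{identity_formula})) so as to turn every $D_{V,k}$ into $l_k$, every $D_{W,p}$ into $l_p$, and every $F_{k_j}$ into $\varphi_{k_j}$. On the ``$D_W\circ F$'' side this is where the $p$-fold comultiplication of $S^c(sW)$, the conversion $D_{W,p}\rightsquigarrow l_p$ and the reordering of the desuspensions combine into the purely combinatorial sign $(-1)^{\frac{p(p-1)}{2}+\sum_{i=1}^{p-1}(p-i)k_i}$, while commuting the degree-$(k_i-1)$ operator parts of the $\varphi_{k_i}$ past the earlier blocks of arguments produces, by the Koszul rule, exactly $(-1)^{\sum_{i=2}^{p}(k_i-1)(\overline v_{\sigma(1)}+\cdots+\overline v_{\sigma(k_1+\cdots+k_{i-1})})}$. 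On the ``$F\circ D_V$'' side the single comultiplication and the conversions yield the lone factor $(-1)^{k(p-1)}$, exactly as in the $D^2$ computation. Matching signs term by term is mechanical but unavoidable, and once it is done the equation $FD_V=D_WF$ reduces, weight by weight, precisely to (\ref{lieInfinityAlgebraMorphismIdentities}).

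Finally, the ``differential algebra morphism $f\colon S(s^{-1}W^*)\to S(s^{-1}V^*)$'' formulation is obtained, when $V$ and $W$ are finite-dimensional, by transposition: using $S^c(sV)^*\simeq S(s^{-1}V^*)$, the transposition rules (\ref{rule_for_transposition_of_comp})--(\ref{rule_for_transposition_of_tens}) and the fact that $f\odot g$ transposes to $f^*\odot g^*$, a DG coalgebra morphism $F$ dualises to a DG algebra morphism $f=F^*$ and conversely, so the two formulations encode the same data.
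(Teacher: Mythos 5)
Your proposal is correct, and its combinatorial core --- splitting the morphism condition by weight and converting $D_p$, $F_{k_j}$ into $l_p$, $\varphi_{k_j}$ by inserting $(-1)^{\frac{i(i-1)}{2}}s^{\otimes i}(s^{-1})^{\otimes i}=\mathrm{id}^{\otimes i}$, with exactly the signs you list --- is the same computation the paper performs. Where you genuinely differ is the reduction step and the direction of dualization: the paper works on the algebra side first, observing that a symmetric algebra morphism is fixed by its components $f_p:U\to S^p(U')$ and that $fd-df$ satisfies the $f$-Leibniz identity $(fd-df)(u\cdot w)=(fd-df)u\cdot w+(-1)^{\overline u}u\cdot(fd-df)w$, so that vanishing on generators suffices; it then transposes to obtain the coalgebra-side weight-$n$ identities (\ref{operatorForLieInfinityMorphism}) and only afterwards does the suspension bookkeeping. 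You instead argue intrinsically on the coalgebra side, using the universal property of the cofree conilpotent coalgebra together with the fact that $FD_V-D_WF$ is a coderivation along $F$, hence vanishes as soon as its corestriction to $sW$ does, and you pass to the algebra formulation $f:S(s^{-1}W^*)\to S(s^{-1}V^*)$ by transposition only at the very end. The two reductions are precisely dual; your version has the mild advantage that finite-dimensionality of $V,W$ enters only in that final, optional dualization, while the paper's version trades your cofreeness/corestriction lemma for a completely elementary generator check. Do state that lemma explicitly --- that both $FD_V$ and $D_WF$ are $F$-coderivations and that such maps into a cofree conilpotent coalgebra are determined by their corestrictions --- since it is the entire justification of your phrase ``already captures the full content''; as written, that sentence is asserted rather than argued.
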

\begin{proof}
Consider an arbitrary graded vector spaces $U$ and $U'$. An arbitrary symmetric algebra morphism $f:S(U)\to S(U')$ is given by its action on generators:
\begin{equation}\label{actionOfMorphismOnSymmetricAlgebra}
f(u_1...u_p)=f(u_1)\cdot...\cdot f(u_p)=\underbrace{(f\odot...\odot f)}_{p}(u_1...u_p).
\end{equation}
If for the algebra morphism $f$ the condition $df-fd=0$ holds on the generators then it also holds on the whole space $S(U)$, it follows from the fact that
$$
(fd-df)(u\cdot w)=(fd-df)u\cdot w +(-1)^{\overline{u}}u\cdot (fd-df)w.
$$
The action of the differential algebra morphism is given by the series of maps $f_p: U\to S^p(U')$, and the condition that the algebra morphism $f$ is a differential algebra morphism reads as follows:
$$
\begin{array}{l}
(fd-df)u=f\sum\limits^\infty_{p=1}d_pu-d\sum\limits^\infty_{p=1}f_pu\stackrel{(\ref{actionOfDerivationOnSymmetricAlgebra}), (\ref{actionOfMorphismOnSymmetricAlgebra})}{=}\sum\limits^\infty_{p=1}f^{\odot p}d_pu-\sum\limits^\infty_{p=1}(d\odot \mathrm{id}^{\odot (p-1)})f_pu=\\
=\sum\limits_{n=1}^\infty\ \sum\limits^n_{p=1}\ \sum\limits_{k_1+...+k_p=n}(f_{k_1}\odot...\odot f_{k_p})d_pu-\sum\limits_{n=1}^\infty\sum\limits^\infty_{p+k-1=n}(d_k\odot \mathrm{id}^{\odot (p-1)})f_pu=0.
\end{array}
$$
In the dual language, the transposed map $F=f^*$ is a morphism of the reduced symmetric coalgebras $F:\overline{S}^{c}(U'^*)\to\overline{S}^{c}(U^*)$. And the transposition of the last identity  encodes that $FD-DF=0$:
\begin{equation}\label{equationForSymmetricCoalgebraMorphism}
\sum\limits_{n=1}^\infty\ \left(\sum\limits^n_{p=1}\ \sum\limits_{k_1+...+k_p=n}D_p(F_{k_1}\odot...\odot F_{k_p})-\sum\limits^\infty_{p+k-1=n}F_p(D_k\odot \mathrm{id}^{\odot (p-1)})\right)=0,
\end{equation}
where the transposed maps $F_k=f^*_k: S^k(U'^*)\to U^*$ are called the corestrictions of the morphism  $F$.\medskip

In equation (\ref{equationForSymmetricCoalgebraMorphism}) the weight of the operator inside the sum $\sum\limits_{n=1}^{\infty}$ is equal to $1-n$, so it depends only on $n$, i.e. the last equation splits into the series of equations:
\begin{equation}\label{operatorForLieInfinityMorphism}
 \sum\limits^n_{p=1}\ \sum\limits_{k_1+...+k_p=n}D_p(F_{k_1}\odot...\odot F_{k_p})-\sum\limits^\infty_{p+k-1=n}F_p(D_k\odot \mathrm{id}^{\odot (p-1)})=0,\mbox{ where }n>0.
\end{equation}
We choose the representative element such that $[sv_1\otimes...\otimes sv_1]= sv_1...sv_n\in S^n(sV)$. And
the condition $FD-DF=0:\overline{S}^c(sV)\to\overline{S}^c(sW)$  reads as follows:
$$
 \begin{array}{l}
 \sum\limits^n_{p=1}\ \sum\limits_{k_1+...+k_p=n}D_p(F_{k_1}\otimes...\otimes F_{k_p})\circ\left(\sum\limits_{\sigma\in Sh(k_1,...,k_p)}\sigma\right)(sv_1\otimes...\otimes sv_n)=\\
 \sum\limits^\infty_{p+k-1=n}F_p\circ(D_k\otimes \mathrm{id}^{\otimes (p-1)})\circ\left(\sum\limits_{\sigma\in Sh(k,p-1)}\sigma\right)(sv_1\otimes...\otimes sv_n),\ n>0.
 \end{array}
$$
Now we insert in certain places the identities of the type $(-1)^{\frac{i(i-1)}{2}}s^{\otimes i}(s^{-1})^{\otimes i}=\mathrm{id}^{\otimes i}$:
\begin{equation}\label{LieInfinityMorphismBeforeEvaluation}
 \begin{array}{l}
 \sum\limits^n_{p=1}\ \sum\limits_{k_1+...+k_p=n}\overbrace{s^{-1}D_ps^{\otimes p}}^{l_p}(-1)^{\frac{p(p-1)}{2}}\overbrace{(s^{-1})^{\otimes p}(F_{k_1}\otimes...\otimes F_{k_p})s^{\otimes n}}^{\pm \varphi_{k_1}\otimes...\otimes\varphi_{k_n}}\circ\\
 (-1)^{\frac{n(n-1)}{2}}\circ\overbrace{(s^{-1})^{\otimes n}\left(\sum\limits_{\sigma\in Sh(k_1,...,k_p)}\sigma\right)s^{\otimes n}}^{\sum\limits_{\sigma\in Sh(k_1,...,k_p)}\pm\sigma}=\\[7mm]
= \sum\limits^\infty_{p+k-1=n}\overbrace{s^{-1}F_ps^{\otimes p}}^{\varphi_p}(-1)^{\frac{p(p-1)}{2}}\overbrace{(s^{-1})^{\otimes p}(D_k\otimes \mathrm{id}^{\otimes (p-1)})s^{\otimes n}}^{\pm l_k\otimes \mathrm{id}^{\otimes(p-1)}}\circ\\
\circ(-1)^{\frac{n(n-1)}{2}}\overbrace{(s^{-1})^{\otimes n}\left(\sum\limits_{\sigma\in Sh(k,p-1)}\sigma\right)s^{\otimes n}}^{\sum\limits_{\sigma\in Sh(k,p-1)}\pm\sigma},\ n>0.
 \end{array}
\end{equation}
The precise signs in the formula above are the following:
\begin{enumerate}
\item ${s^{-1}D_ps^{\otimes p}}=l_p,$
\item $(s^{-1})^{\otimes p}(F_{k_1}\otimes...\otimes F_{k_p})s^{\otimes (k_1+...+k_p)}=(-1)^{\sum\limits_{i=1}^{p-1}(p-i)k_i}(\varphi_{k_1}\otimes...\otimes \varphi_{k_p}),$
\item ${s^{-1}F_ps^{\otimes p}}=\varphi_p,$
\item ${\left(s^{-1}\right)^{\otimes p}(D_k\otimes \mathrm{id}^{\otimes (p-1)})s^{\otimes (p+k-1)}}={(-1)^{\left[\frac{p(p-1)}{2}+k(p-1)\right]} \cdot l_k\otimes \mathrm{id}^{\otimes(p-1)}},$
\item ${\left(s^{-1}\right)^{\otimes n}\sigma s^{\otimes n}}=(-1)^{\frac{n(n-1)}{2}}\mathrm{sign}(\sigma)\cdot\sigma$.
\end{enumerate}
We evaluate the operators (\ref{LieInfinityMorphismBeforeEvaluation}) on the elements $v_1\otimes...\otimes v_n$ and get identities (\ref{lieInfinityAlgebraMorphismIdentities}).
\end{proof}

\newpage
\section{Higher categorified algebras versus bounded
homotopy algebras}\label{HigherCatAlgVSBoundHomAlg}
The following research paper was published in `Theory and Applications of Categories', 25(10) (2011), 251-275 (joint work with Ashis Mandal and Norbert Poncin).

\subsection{Introduction}

Higher structures -- infinity algebras and other objects up to
homotopy, higher categories, ``oidified'' concepts, higher Lie
theory, higher gauge theory... -- are currently intensively
investigated. In particular, higher generalizations of Lie
algebras have been conceived under various names, e.g. Lie
infinity algebras, Lie $n$-algebras, quasi-free differential
graded commutative associative algebras ({\small qfDGCAs} for
short), $n$-ary Lie algebras, see e.g. \cite{Dzh05}, crossed
modules \cite{MP09} ...\medskip

More precisely, there are essentially two ways to increase the
flexibility of an algebraic structure: homotopification and
categorification.\medskip

Homotopy, sh or infinity algebras \cite{Sta63} are homotopy
invariant extensions of differential graded algebras. This
property explains their origin in BRST of closed string field
theory. One of the prominent applications of Lie infinity algebras
\cite{LS93} is their appearance in Deformation Quantization of
Poisson manifolds. The deformation map can be extended from
differential graded Lie algebras ({\small DGLA}s) to
$L_{\infty}$-algebras and more precisely to a functor from the
category {\tt L}$_{\infty}$ to the category {\tt Set}. This
functor transforms a weak equivalence into a bijection. When
applied to the {\small DGLA}s of polyvector fields and
polydifferential operators, the latter result, combined with the
formality theorem, provides the 1-to-1 correspondence between
Poisson tensors and star products.\medskip

On the other hand, categorification \cite{CF94}, \cite{Cra95} is
characterized by the replacement of sets (resp. maps, equations)
by categories (resp. functors, natural isomorphisms). Rather than
considering two maps as equal, one details a way of identifying
them. Categorification is a sharpened viewpoint that leads to
astonishing results in TFT, bosonic string theory... Categorified
Lie algebras, i.e. Lie 2-algebras (alternatively, semistrict Lie
2-algebras) in the category theoretical sense, have been
introduced by J. Baez and A. Crans \cite{BC04}. Their
generalization, weak Lie 2-algebras (alternatively, Lie
2-algebras), has been studied by D. Roytenberg
\cite{Roy07}.\medskip

It has been shown in \cite{BC04} that categorification and
homotopification are tightly connected. To be exact, Lie
2-algebras and 2-term Lie infinity algebras form equivalent
2-categories. Due to this result, Lie $n$-algebras are often
defined as sh Lie algebras concentrated in the first $n$ degrees
\cite{Hen08}. However, this `definition' is merely a
terminological convention, see e.g. Definition 4 in
\cite{SS07Structure}. On the other hand, Lie infinity algebra
structures on an $\mathbb{N}$-graded vector space $V$ are in 1-to-1
correspondence with square 0 degree -1 (with respect to the
grading induced by $V$) coderivations of the free reduced graded
commutative associative coalgebra $S^c(s V)$, where $s$ denotes
the suspension operator, see e.g. \cite{SS07Structure} or
\cite{GK94}. In finite dimension, the latter result admits a
variant based on {\small qfDGCAs} instead of coalgebras. Higher
morphisms of free {\small DGCAs} have been investigated under the
name of derivation homotopies in \cite{SS07Structure}. Quite a
number of examples can be found in \cite{SS07Zoo}.\medskip

Besides the proof of the mentioned correspondence between Lie
2-algebras and 2-term Lie infinity algebras, the seminal work
\cite{BC04} provides a classification of all Lie infinity
algebras, whose only nontrivial terms are those of degree 0 and
$n-1$, by means of a Lie algebra, a representation and an
$(n+1)$-cohomology class; for a possible extension of this
classification, see \cite{Bae07}.

In this paper, we give an explicit categorical definition of Lie
3-algebras and prove that these are in 1-to-1 correspondence with
the 3-term Lie infinity algebras, whose bilinear and trilinear
maps vanish in degree $(1,1)$ and in total degree 1, respectively.
Note that a `3-term' Lie infinity algebra implemented by a
4-cocycle \cite{BC04} is an example of a Lie 3-algebra in the
sense of the present work.\medskip

The correspondence between categorified and bounded homotopy
algebras is expected to involve classical functors and chain maps,
like e.g. the normalization and Dold-Kan functors, the (lax and
oplax monoidal) Eilenberg-Zilber and Alexander-Whitney chain maps,
the nerve functor... We show that the challenge ultimately resides
in an incompatibility of the cartesian product of linear
$n$-categories with the monoidal structure of this category, thus
answering a question of \cite{Roy07}.\medskip

The paper is organized as follows. Section~\ref{KMPSection2} contains all relevant
higher categorical definitions. In Section~\ref{KMPSection3}, we define Lie
3-algebras. Section~\ref{KMPSection4} contains the proof of the mentioned
1-to-1 correspondence between categorified algebras and truncated
sh algebras -- the main result of this paper. A specific aspect of
the monoidal structure of the category of linear $n$-categories is
highlighted in Section~\ref{KMPSection5}. In Section~\ref{KMPSection6}, we show that this
feature is an obstruction to the use of the Eilenberg-Zilber map
in the proof of the correspondence ``bracket functor -- chain
map''.

\subsection{Higher linear categories and bounded chain complexes of vector spaces}\label{KMPSection2}

Let us emphasize that notation and terminology used in the present
work originate in \cite{BC04}, \cite{Roy07}, as well as in
\cite{Lei04}. For instance, a linear $n$-category will be an (a
strict) $n$-category \cite{Lei04} in {\tt Vect}. Categories in
{\tt Vect} have been considered in \cite{BC04} and also called
internal categories or 2-vector spaces. In \cite{BC04}, see
Sections 2 and 3, the corresponding morphisms (resp. 2-morphisms)
are termed linear functors (resp. linear natural transformations),
and the resulting 2-category is denoted by {\tt VectCat} and also
by {\tt 2Vect}. Therefore, the $(n+1)$-category made up by linear
$n$-categories ($n$-categories in {\tt Vect} or $(n+1)$-vector
spaces), linear $n$-functors... will be denoted by {\tt Vect}
$n$-{\tt Cat} or {\tt$(n+1)$Vect}.\medskip

The following result is known. We briefly explain it here as its
proof and the involved concepts are important for an easy reading
of this paper.

\begin{prop}\label{EquivCat} The categories {\tt Vect} $n$-{\tt Cat} of linear
$n$-categories and linear $n$-functors and {\tt C}$^{n+1}(${\tt Vect}$)$ of
$(n+1)$-term chain complexes of vector spaces and linear chain maps
are equivalent.\end{prop}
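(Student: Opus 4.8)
The plan is to construct an explicit pair of functors
$N\colon\mathtt{Vect}\ n\text{-}\mathtt{Cat}\to\mathtt{C}^{n+1}(\mathtt{Vect})$ and
$G\colon\mathtt{C}^{n+1}(\mathtt{Vect})\to\mathtt{Vect}\ n\text{-}\mathtt{Cat}$ and to show that they are mutually quasi-inverse. The structural fact that makes everything run is that, since all the face and degeneracy data of a linear $n$-category are $K$-linear, every source map admits a \emph{canonical} linear section, namely the identity-assigning map; hence each space of $k$-morphisms splits off, naturally, a copy of the space of $(k-1)$-morphisms, and the `genuinely new' part of the $k$-morphisms is the normalized piece that will become the degree-$k$ term of the complex.

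First I would describe the normalization functor $N$. Let $\mathcal C$ be a linear $n$-category with spaces of $k$-morphisms $\mathcal C_k$, one-step source and target maps $s_k,t_k\colon\mathcal C_k\to\mathcal C_{k-1}$, identities $\mathrm{id}_k\colon\mathcal C_{k-1}\to\mathcal C_k$, and the various $K$-linear composition maps. Put $V_0:=\mathcal C_0$ and, for $1\le k\le n$, $V_k:=\ker s_k\subset\mathcal C_k$, and let $d_k\colon V_k\to\mathcal C_{k-1}$ be the restriction of $t_k$. The globular identities $s_{k-1}s_k=s_{k-1}t_k$ and $t_{k-1}s_k=t_{k-1}t_k$ show at once that $d_k$ lands in $V_{k-1}=\ker s_{k-1}$ and that $d_{k-1}d_k=0$, so $(V_\bullet,d_\bullet)$ is an $(n+1)$-term chain complex of vector spaces. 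A linear $n$-functor commutes with all source, target and identity maps, hence restricts to a chain map; this makes $N$ a functor.

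Next I would build $G$. Given $(V_\bullet,d_\bullet)$, let the space of $k$-morphisms be $\mathcal C_k:=V_0\oplus V_1\oplus\cdots\oplus V_k$; let $s_k,t_k\colon\mathcal C_k\to\mathcal C_{k-1}$ both `forget the last summand $V_k$', except that $t_k$ also feeds $d_k$ of the last component back into the $V_{k-1}$-slot; let $\mathrm{id}_k$ be the evident inclusion; and let the composition of two $k$-morphisms that agree on their common $j$-dimensional face ($j<k$) be given by adding the components of degree $>j$. One then has to check that the associativity, unitality and interchange axioms of a strict linear $n$-category all hold for these data; after unwinding the direct-sum bookkeeping, each of them reduces to $K$-bilinearity of the operations together with the relation $d_{k-1}d_k=0$. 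A chain map visibly induces a linear $n$-functor between the associated $n$-categories, so $G$ is a functor.

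Finally I would verify that the two composites are naturally isomorphic to the identities. The composite $N\circ G$ is literally the identity: in $G(V_\bullet)$ one has $\ker s_k=V_k$ and $t_k|_{V_k}=d_k$ by construction. For $G\circ N$, the canonical splitting $\mathcal C_k\cong\mathcal C_{k-1}\oplus V_k\cong\cdots\cong V_0\oplus\cdots\oplus V_k$ induced by the identity sections (namely $f\mapsto(s_kf,\,f-\mathrm{id}_k(s_kf))$, iterated) is compatible with the source, target and identity maps, and one shows that it transports the compositions of $\mathcal C$ to those of $G(N(\mathcal C))$, naturally in $\mathcal C$. This last point is where the crucial \emph{rigidity} phenomenon enters: the unit axioms together with $K$-linearity leave no freedom whatsoever in the composition laws — for ordinary linear categories, for instance, one is forced to have $g\circ f=f+g-\mathrm{id}_{t(f)}$ once the splitting is fixed. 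I expect this rigidity claim, and in particular the need to establish it simultaneously for all $n$ levels of composition and for all the interchange laws of a strict $n$-category, to be the main obstacle; everything else is diagram chasing with direct sums and the relation $d^2=0$. (Alternatively, one may argue by induction on $n$, using that a linear $n$-category is an internal category in the additive category of linear $(n-1)$-categories, hence is classified by a $2$-term complex therein, and that a $2$-term complex of $n$-term complexes is, via the total complex, the same datum as an $(n+1)$-term complex.)
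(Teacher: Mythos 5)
Your proposal is correct and follows essentially the same route as the paper: the normalization functor $\mathfrak{N}$ via $V_k=\ker s_k$, $d_k=t_k|_{V_k}$, the reconstruction functor $\mathfrak{G}$ on $\oplus_{i=0}^k V_i$ with the same explicit structure maps, the identification $\mathfrak{N}\mathfrak{G}=\mathrm{id}$ and $\mathfrak{G}\mathfrak{N}\cong\mathrm{id}$ via the splitting induced by the identity sections. The rigidity step you single out is exactly the paper's Proposition~\ref{UniqueCatStr}, proved there by combining linearity of $\circ_p$ with the unit axioms (evaluating on $w=1^{m-p}_{t^{m-p}v}$, $v'=1^{m-p}_{s^{m-p}w'}$), which is the argument you sketch.
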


We first recall some definitions.

\begin{defi} An {\bf $\mathbf n$-globular vector space} $L$, $n\in \mathbb{N}$, is a
sequence
\begin{equation}
\label{GlobVs}L_n\stackrel{s,t}{\rightrightarrows}L_{n-1}\stackrel{s,t}{\rightrightarrows}\ldots\stackrel{s,t}{\rightrightarrows}L_0\rightrightarrows
0,
\end{equation}
of vector spaces $L_m$ and linear maps $s,t$ such that
\begin{equation}
s(s(a))=s(t(a))\;\;\mbox{and}\;\;t(s(a))=t(t(a)),\label{GlobCond}
\end{equation}
for any $a\in L_m,$ $m\in\{1,\ldots,n\}$. The maps $s,t$ are
called {\bf source map} and {\bf target map}, respectively, and
any element of $L_m$ is an {\bf m-cell}.\end{defi}

By higher category we mean in this text a {\bf strict} higher
category. Roughly, a linear $n$-category, $n\in\mathbb{N}$, is an
$n$-globular vector space endowed with compositions of $m$-cells,
$0<m\le n$, along a $p$-cell, $0\le p<m$, and an identity
associated to any $m$-cell, $0\le m<n$. Two $m$-cells $(a,b)\in
L_m\times L_m$ are composable along a $p$-cell, if
$t^{m-p}(a)=s^{m-p}(b)$. The composite $m$-cell will be denoted by
$a\circ_p b$ (the cell that `acts' first is written on the left)
and the vector subspace of $L_m\times L_m$ made up by the pairs of
$m$-cells that can be composed along a $p$-cell will be denoted by
$L_m\times_{L_p}L_m$. The following figure schematizes the
composition of two 3-cells along a 0-, a 1-, and a
2-cell.\vspace{5mm}
$$
\xymatrix@C=1.2pc@R=2pc{
&&\ar@/^1.6pc/@{->}[dd]\ar@/_1.6pc/@{->}[dd]&&&&\ar@/^1.6pc/@{->}[dd]\ar@/_1.6pc/@{->}[dd]&&\\
\bullet\ar@/^2.6pc/@{->}[rrrr]\ar@/_2.6pc/@{->}[rrrr]&\ar@{->}[rr]&&&\bullet\ar@/^2.6pc/@{->}[rrrr]\ar@/_2.6pc/@{->}[rrrr]&\ar@{->}[rr]&&&\bullet\\
&&&&&&}
\hspace{1cm}
\xymatrix@C=1.2pc@R=1pc{
&&\ar@/^1.6pc/@{->}[dd]\ar@/_1.6pc/@{->}[dd]&&\\
&\ar@{->}[rr]&&&\\
\bullet\ar@{->}[rrrr]\ar@/^3.2pc/@{->}[rrrr]\ar@/_3.2pc/@{->}[rrrr]&&\ar@/^1.6pc/@{->}[dd]\ar@/_1.6pc/@{->}[dd]&&\bullet\\
&\ar@{->}[rr]&&&\\
&&&&&&
}
\hspace{0cm}
\xymatrix@C=1.2pc@R=2pc{
&&\ar@{->}[dd]\ar@/^1.6pc/@{->}[dd]\ar@/_1.6pc/@{->}[dd]&&\\
\bullet\ar@/^2.6pc/@{->}[rrrr]\ar@/_2.6pc/@{->}[rrrr]&\ar@{->}[r]&\ar@{->}[r]&&\bullet\\
&&&&&&
}
$$

\begin{defi} A {\bf linear n-category}, $n\in\mathbb{N}$, is an $n$-globular vector space $L$ (with source and target maps $s,t$) together
with, for any $m\in\{1,\ldots,n\}$ and any $p\in\{0,\ldots,m-1\}$,
a linear composition map $\circ_p:L_m\times_{L_p}L_m\to L_m$ and,
for any $m\in\{0,\ldots,n-1\}$, a linear identity map $1:L_m\to
L_{m+1}$, such that the properties

\begin{itemize}\item for $(a,b)\in L_m\times_{L_p}L_m$, $$\mbox{if }\; p=m-1, \mbox{ then }\, s(a\circ_p b)=s(a) \mbox{ and }\; t(a\circ_pb)=t(b),$$
$$\mbox{if }\; p\le m-2, \mbox{ then }\, s(a\circ_p b)=s(a)\circ_ps(b) \mbox{ and }\; t(a\circ_pb)=t(a)\circ_pt(b),$$
\item $$s(1_a)=t(1_a)=a,$$ \item for any $(a,b),(b,c)\in
L_m\times_{L_p} L_m$, $$(a\circ_pb)\circ_p
c=a\circ_p(b\circ_pc),$$ \item
$$1^{m-p}_{s^{m-p}a}\circ_pa=a\circ_p1^{m-p}_{t^{m-p}a}=a$$
\end{itemize} are verified, as well as the compatibility
conditions
\begin{itemize}\item for $q<p$, $(a,b),(c,d)\in
L_m\times_{L_p}L_m$ and $(a,c),(b,d)\in L_m\times_{L_q}L_m$,
$$(a\circ_pb)\circ_q(c\circ_pd)=(a\circ_qc)\circ_p(b\circ_qd),$$
\item for $m<n$ and $(a,b)\in L_m\times_{L_p}L_m$,
$$1_{a\circ_pb}=1_a\circ_p1_b.$$
\end{itemize}
\end{defi}

The morphisms between two linear $n$-categories are the linear
$n$-functors.

\begin{defi} A {\bf linear n-functor} $F:L\to L'$ between two linear
$n$-categories is made up by linear maps $F:L_m\to L'_m$,
$m\in\{0,\ldots,n\}$, such that the categorical structure --
source and target maps, composition maps, identity maps -- is
respected.
\end{defi}

Linear $n$-categories and linear $n$-functors form a category {\tt
Vect} $n$-{\tt Cat}, see Proposition \ref{EquivCat}. To disambiguate this proposition, let us specify that the objects of {\tt C}$^{n+1}(${\tt Vect}$)$ are the complexes whose underlying vector space $V=\oplus_{i=0}^nV_i$ is made up by $n+1$ terms $V_i$.\medskip

The proof of Proposition \ref{EquivCat} is based upon the following result.

\begin{prop}\label{UniqueCatStr}
Let $L$ be any $n$-globular vector space with linear identity maps. If $s_m$ denotes the restriction of the source map to $L_m$, the vector spaces
$L_m$ and $L'_m:=\oplus_{i=0}^mV_i$, $V_i:=\mathrm{ker}s_i$,
$m\in\{0,\ldots,n\}$, are isomorphic. Further, the $n$-globular
vector space with identities can be completed in a unique way by
linear composition maps so to form a linear $n$-category. If we
identify $L_m$ with $L_m'$, this unique linear $n$-categorical
structure reads
\begin{equation}
\label{source}
s(v_0,\ldots,v_m)=(v_{0},\ldots,v_{m-1}),
\end{equation}
\begin{equation}
\label{target}t(v_0,\ldots,v_m)=(v_0,\ldots,v_{m-1}+tv_m),
\end{equation}
\begin{equation}\label{identity}
1_{(v_0,\ldots,v_m)}=(v_{0},\ldots,v_m,0),
\end{equation}
\begin{equation}\label{composition}
(v_0,\ldots,v_m)\circ_p(v'_0,\ldots,v'_m)=(v_0,\ldots,v_p,v_{p+1}+v_{p+1}',\ldots,v_m+v_m'),
\end{equation}
where the two $m$-cells in Equation (\ref{composition}) are
assumed to be composable along a $p$-cell.
\end{prop}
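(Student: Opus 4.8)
The plan is first to identify the underlying vector spaces, then to read off the formulas (\ref{source})--(\ref{identity}), which concern only the already-given source, target and identity maps, and finally to show that the composition maps exist and are completely forced, which will produce (\ref{composition}). For the isomorphism: the hypothesis that $L$ carries linear identity maps supplies linear maps $1\colon L_{m-1}\to L_m$ with $s\circ 1=t\circ 1=\mathrm{id}$, so each $1$ is a common section of $s$ and $t$. I would define $\Phi_m\colon L_m\to\bigoplus_{i=0}^m V_i$ inductively by $\Phi_0=\mathrm{id}_{L_0}$ and $\Phi_m(a)=(\Phi_{m-1}(s(a)),\,a-1_{s(a)})$, noting that $a-1_{s(a)}\in\ker s_m=V_m$ since $s(1_{s(a)})=s(a)$. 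Using $s\circ 1=\mathrm{id}$ one checks immediately that $\Phi_m$ is a linear isomorphism with inverse $(w,v)\mapsto v+1_{\Phi_{m-1}^{-1}(w)}$; from now on I identify $L_m$ with $\bigoplus_{i=0}^m V_i$ via $\Phi_m$, under which an element $(v_0,\dots,v_m)$ equals $v_m+1_{(v_0,\dots,v_{m-1})}$.

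With this identification in force, $s(v_0,\dots,v_m)=s(v_m)+s(1_{(v_0,\dots,v_{m-1})})=(v_0,\dots,v_{m-1})$, which is (\ref{source}); and $1_{(v_0,\dots,v_m)}$ has top component $1_a-1_{s(1_a)}=0$ and lower part $s(1_a)=a$, which is (\ref{identity}). For (\ref{target}), $t(v_0,\dots,v_m)=t(v_m)+(v_0,\dots,v_{m-1})$, and the key small observation is that $t(v_m)\in V_{m-1}$: indeed $s(t(v_m))=s(s(v_m))=0$ by the globular relation $s\circ s=s\circ t$ together with $v_m\in\ker s$. Hence $t(v_m)$ sits in the $V_{m-1}$-slot and $t(v_0,\dots,v_m)=(v_0,\dots,v_{m-1}+tv_m)$.

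The crux is the composition. I claim that any family of linear maps $\circ_p$ obeying the unit laws $1^{m-p}_{s^{m-p}a}\circ_p a=a=a\circ_p 1^{m-p}_{t^{m-p}a}$ must satisfy $a\circ_p b=a+b-1^{m-p}_c$, where $c:=t^{m-p}(a)=s^{m-p}(b)\in L_p$ is the common boundary of the composable pair. To see this, set $F(a,b):=a\circ_p b-a-b+1^{m-p}_{t^{m-p}a}$; this is linear on the vector space $L_m\times_{L_p}L_m$, and the unit laws together with $s\circ 1=t\circ 1=\mathrm{id}$ force $F(a,b)=0$ whenever $a$ or $b$ equals $1^{m-p}$ of its $(m-p)$-fold source. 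Since any composable pair decomposes, inside $L_m\times_{L_p}L_m$, as
\[
(a,b)=(a,\,1^{m-p}_{t^{m-p}a})+(1^{m-p}_{s^{m-p}b},\,b)-(1^{m-p}_c,\,1^{m-p}_c),
\]
where each of the three summands has a degenerate component, linearity of $F$ gives $F\equiv 0$. Translating through $\Phi$ with $s^{m-p}(b)=(v'_0,\dots,v'_p)$ and $1^{m-p}_{(v'_0,\dots,v'_p)}=(v'_0,\dots,v'_p,0,\dots,0)$, the identity $a\circ_p b=a+b-1^{m-p}_c$ becomes exactly (\ref{composition}).

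Finally, for existence one verifies that (\ref{source})--(\ref{composition}) genuinely define a linear $n$-category: the source/target compatibilities in both the $p=m-1$ and $p\le m-2$ cases, associativity of each $\circ_p$, the two unit laws, the interchange law for $q<p$, and $1_{a\circ_p b}=1_a\circ_p 1_b$. Each is a short coordinatewise computation from the explicit formulas; I expect the interchange law to require the most bookkeeping but no genuine difficulty. The main obstacle is therefore the uniqueness-of-composition step: one must realize that the composition is entirely rigid, and the non-routine ingredient is precisely the decomposition of an arbitrary composable pair into degenerate pairs, which upgrades the unit laws plus linearity into a complete determination of $\circ_p$; everything else is organized bookkeeping around the splitting $L_m\cong\bigoplus_{i\le m}V_i$.
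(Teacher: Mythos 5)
Your proposal is correct and follows essentially the same route as the paper: identify $L_m$ with $\oplus_{i=0}^m V_i$ via iterated identities and kernels of the source maps, read off $s$, $t$, $1$ through this identification, and pin down $\circ_p$ by combining its linearity on $L_m\times_{L_p}L_m$ with the unit laws applied to a decomposition of an arbitrary composable pair into pairs containing identity cells, leaving the converse verification as routine. Your only (cosmetic) deviations are the recursive definition of the isomorphism and packaging the uniqueness step as the coordinate-free identity $a\circ_p b=a+b-1^{m-p}_{t^{m-p}a}$ via the auxiliary map $F$, where the paper instead parameterizes the composable pair directly as $(v+v')\circ_p(w+w')$ with $w=1^{m-p}_{t^{m-p}v}$, $v'=1^{m-p}_{s^{m-p}w'}$.
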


\begin{proof} As for the first part of this proposition, if $m=2$ e.g., it suffices to observe that the linear
maps
$$\alpha_L: L_2'=V_0\oplus V_1\oplus V_2\ni (v_0,v_1,v_2)\mapsto
1^2_{v_0}+1_{v_1}+v_2\in L_2$$ and $$\beta_L: L_2\ni a\mapsto
(s^2a,s(a-1^2_{s^2a}),a-1_{s(a-1^2_{s^2a})}-1^2_{s^2a})\in
V_0\oplus V_1\oplus V_2=L_2'$$ are inverses of each other. For arbitrary $m\in\{0,\ldots,n\}$ and $a\in L_m$, we set $$\beta_La=\left(s^ma,\ldots, s^{m-i}(a-\sum_{j=0}^{i-1}1^{m-j}_{p_{j}\beta_La}),\ldots,a-\sum_{j=0}^{m-1}1^{m-j}_{p_{j}\beta_La}\right)\in V_0\oplus\ldots\oplus V_i\oplus\ldots\oplus V_m= L'_m,$$ where $p_j$ denotes the projection $p_j:L_m'\to V_j$ and where the components must be computed from left to right.\medskip

For the second claim, note that when reading the source, target
and identity maps through the detailed isomorphism, we get
$s(v_0,\ldots,v_m)=(v_{0},\ldots,v_{m-1})$,
$t(v_0,\ldots,v_m)=(v_0,\ldots,v_{m-1}+tv_m)$, and
$1_{(v_0,\ldots,v_m)}=(v_{0},\ldots,v_m,0)$. Eventually, set
$v=(v_0,\ldots,v_m)$ and let $(v,w)$ and $(v',w')$ be two pairs of
$m$-cells that are composable along a $p$-cell. The composability
condition, say for $(v,w)$, reads
$$(w_0,\ldots,w_p)=(v_0,\ldots,v_{p-1},v_p+tv_{p+1}).$$ It follows
from the linearity of $\circ_p:L_m\times_{L_p}L_m\to L_m$ that
$(v+v')\circ_p(w+w')=(v\circ_pw)+(v'\circ_pw')$. When taking
$w=1^{m-p}_{t^{m-p}v}$ and $v'=1^{m-p}_{s^{m-p}w'}$, we find
$$(v_0+w'_0,\ldots,v_p+w'_p,v_{p+1},\ldots,v_m)\circ_p(v_0+w'_0,\ldots,v_p+w'_p+tv_{p+1},w'_{p+1},\ldots,w'_m)$$
$$=(v_0+w'_0,\ldots,v_m+w'_m),$$ so that $\circ_p$ is necessarily the composition given by Equation
(\ref{composition}). It is easily seen that, conversely, Equations
(\ref{source}) -- (\ref{composition}) define a linear
$n$-categorical structure. \end{proof}

\begin{proof}[Proof of Proposition \ref{EquivCat}] We define functors $\mathfrak{N}:$ {\tt Vect} $n$-{\tt Cat} $\to $ {\tt C}$^{n+1}(${\tt Vect}$)$ and
$\mathfrak{G}:$ {\tt C}$^{n+1}(${\tt Vect}$)$ $\to$ {\tt Vect} $n$-{\tt Cat} that
are inverses up to natural isomorphisms.\medskip

If we start from a linear $n$-category $L$, so in particular from
an $n$-globular vector space $L$, we define an $(n+1)$-term chain
complex ${\mathfrak{N}}(L)$ by setting $V_m=\mathrm{ker}s_m\subset L_m$ and
$d_m=t_m|_{V_m}:V_m\to V_{m-1}$. In view of the globular space
conditions (\ref{GlobCond}), the target space of $d_m$ is actually
$V_{m-1}$ and we have $d_{m-1}d_mv_m=0.$\medskip

Moreover, if $F:L\to L'$ denotes a linear $n$-functor, the value
${\mathfrak{N}}(F):V\to V'$ is defined on $V_m\subset L_m$ by ${\mathfrak
{N}}(F)_m=F_m|_{V_{m}}:V_m\to V'_{m}$. It is obvious that ${\mathfrak{N}}(F)$ is a linear chain map.\medskip

It is obvious that ${\mathfrak{N}}$ respects the categorical structures
of {\tt Vect} $n$-{\tt Cat} and {\tt C}$^{n+1}$({\tt
Vect}).\medskip

As for the second functor $\mathfrak{G}$, if $(V,d)$,
$V=\oplus_{i=0}^nV_i$, is an $(n+1)$-term chain complex of vector
spaces, we define a  linear $n$-category ${\mathfrak{G}}(V)=L$,
$L_m=\oplus_{i=0}^mV_i$, as in Proposition \ref{UniqueCatStr}: the
source, target, identity and composition maps are defined by
Equations (\ref{source}) -- (\ref{composition}), except that
$tv_m$ in the {\small RHS} of Equation (\ref{target}) is replaced
by $dv_m$.\medskip

The definition of $\mathfrak{G}$ on a linear chain map $\phi:V\to V'$
leads to a linear $n$-functor ${\mathfrak{G}}(\phi):L\to L'$, which is
defined on $L_m=\oplus_{i=0}^mV_i$ by ${\mathfrak{G}}(\phi)_m=\oplus_{i=0}^m\phi_i$. Indeed, it is readily checked that
${\mathfrak{G}}(\phi)$ respects the linear $n$-categorical structures of
$L$ and $L'$.\medskip

Furthermore, ${\mathfrak{G}}$ respects the categorical structures of
{\tt C}$^{{n+1}}$({\tt Vect}) and {\tt Vect} $n$-{\tt
Cat}.\medskip

Eventually, there exists natural isomorphisms
$\alpha:\mathfrak{NG}\Rightarrow\mathrm{id}$ and
$\gamma:\mathfrak{GN}\Rightarrow\mathrm{id}$.\medskip

To define a natural transformation
$\alpha:\mathfrak{NG}\Rightarrow\mathrm{id}$, note that $L'=(\mathfrak{NG})(L)$
is the linear $n$-category made up by the vector spaces
$L'_m=\oplus_{i=0}^mV_i$, $V_i=\mathrm{ker}s_i$, as well as by the
source, target, identities and compositions defined from $V={\mathfrak{N}}(L)$ as in the above definition of ${\mathfrak{G}}(V)$, i.e. as in
Proposition \ref{UniqueCatStr}. It follows that $\alpha_L:L'\to L$,
defined by $\alpha_L:L_m'\ni(v_0,\ldots,v_m)\mapsto 1^m_{v_0}+\ldots
+1_{v_{m-1}}+v_m\in L_m$, $m\in\{0,\ldots,n\}$, which pulls the
linear $n$-categorical structure back from $L$ to $L'$, see
Proposition \ref{UniqueCatStr}, is an invertible linear
$n$-functor. Moreover $\alpha$ is natural in $L$.\medskip

It suffices now to observe that the composite $\mathfrak{GN}$ is the
identity functor.\end{proof}

Next we further investigate the category {\tt Vect} $n$-{\tt Cat}.

\begin{prop} The category {\tt Vect} $n$-{\tt Cat} admits finite products.\end{prop}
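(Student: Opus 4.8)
The plan is to exploit the equivalence of Proposition~\ref{EquivCat}. The category {\tt C}$^{n+1}(${\tt Vect}$)$ is additive (indeed abelian): kernels, cokernels and finite direct sums are computed degreewise, the terminal object is the zero complex, and the binary product of $(V,d)$ and $(V',d')$ is the degreewise direct sum $(V\oplus V',\,d\oplus d')$ equipped with the evident projections. Since an equivalence of categories preserves all limits that exist, {\tt Vect} $n$-{\tt Cat} admits finite products as well. Transporting the direct-sum complex back through the functor $\mathfrak{G}$ built in the proof of Proposition~\ref{EquivCat} identifies this product, for linear $n$-categories $L$ and $L'$, with the componentwise construction: the underlying $n$-globular vector space is $(L\times L')_m=L_m\times L'_m$, with source, target, identity and composition maps obtained by taking $s\times s$, $t\times t$, $1\times 1$ and $\circ_p\times\circ_p$; the projections onto $L$ and $L'$ are the componentwise linear $n$-functors; and the terminal object is the linear $n$-category concentrated in degree $0$.

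Alternatively -- and this is the route I would actually write out, since the explicit cartesian product is needed later -- one verifies directly that the componentwise construction just described is a linear $n$-category with the required universal property. First I would check that $L\times L'$, with the maps $s\times s,\ t\times t,\ 1\times 1,\ \circ_p\times\circ_p$, satisfies the globular identities~(\ref{GlobCond}) and every axiom in the definition of a linear $n$-category; the only point deserving a word is that a pair of $m$-cells of $L\times L'$ is composable along a $p$-cell exactly when each of its two components is, so that $(L\times L')_m\times_{(L\times L')_p}(L\times L')_m\cong(L_m\times_{L_p}L_m)\times(L'_m\times_{L'_p}L'_m)$ and $\circ_p\times\circ_p$ is well defined on it; all the remaining axioms are equalities of cells that hold because they hold in each factor. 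Next I would note that the two projections are linear $n$-functors by construction, and that for any linear $n$-category $M$ with linear $n$-functors $F\colon M\to L$ and $G\colon M\to L'$ the assignment $a\mapsto(F(a),G(a))$ is the unique linear $n$-functor $M\to L\times L'$ whose composites with the projections are $F$ and $G$. Finally, the zero linear $n$-category is terminal, since the only family of linear maps $M_m\to 0$ compatible with the structure is the zero family; combined with binary products, this yields all finite products.

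The computations involved are entirely routine. The one place where I would be careful -- and the only genuine obstacle, such as it is -- is the compatibility of the fibered products $L_m\times_{L_p}L_m$ with the cartesian product of the globular vector spaces, i.e. confirming that the domain of $\circ_p\times\circ_p$ is precisely the intended subspace of $(L\times L')_m\times(L\times L')_m$; this holds because the composability condition $t^{m-p}(a)=s^{m-p}(b)$ is tested coordinate by coordinate. Everything else is bookkeeping, and the equivalence-based argument of the first paragraph can be cited as a conceptual shortcut if one prefers to avoid the bookkeeping altogether.
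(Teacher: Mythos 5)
Your direct construction is exactly the paper's proof: the product is defined componentwise, $(L\times L')_m=L_m\times L'_m$ with $s\times s'$, $t\times t'$, $1\times 1'$ and $\circ_p\times\circ_p'$, and the universal property for binary products is then checked straightforwardly; the only difference is that the paper avoids the axiom bookkeeping you describe by noting that these compositions coincide with the unique compositions completing the underlying $n$-globular vector space with identities (Proposition \ref{UniqueCatStr}), so the linear $n$-category axioms come for free. Your alternative first paragraph, transporting finite products through the equivalence with {\tt C}$^{n+1}(${\tt Vect}$)$ of Proposition \ref{EquivCat}, is a correct conceptual shortcut the paper does not take, and it does identify the transported product with the same componentwise construction (with the terminal object the zero linear $n$-category, as you state in your second paragraph).
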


Let $L$ and $L'$ be two linear $n$-categories. The product linear
$n$-category $L\times L'$ is defined by $(L\times L')_m=L_m\times
L'_m$, $S_m=s_m\times s'_m$, $T_m=t_m\times t'_m$, $I_m=1_m\times
1'_m$, and $\bigcirc_p=\circ_p\times \circ'_p$. The compositions
$\bigcirc_p$ coincide with the unique compositions that complete
the $n$-globular vector space with identities, thus providing a
linear $n$-category. It is straightforwardly checked that the
product of linear $n$-categories verifies the universal property
for binary products.

\begin{prop}\label{3-CatStr} The category {\tt Vect}\,$2$-{\tt Cat} admits a 3-categorical
structure. More precisely, its 2-cells are the linear natural
2-transformations and its 3-cells are the linear 2-modifications.
\end{prop}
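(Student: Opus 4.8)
The plan is to spell out the higher cells of {\tt Vect}\,$2$-{\tt Cat} explicitly and then to verify the axioms of a strict $3$-category by direct computation, using the normal form of Proposition~\ref{UniqueCatStr} to keep the combinatorics under control. Conceptually nothing surprising happens: passing from linear $2$-categories to the next level simply adjoins natural transformations and modifications as the higher cells, exactly as in the non-linear world; the substance of the proposition is to check that in the strict linear setting this really yields a strict $3$-category and to identify its $2$-cells with linear natural $2$-transformations and its $3$-cells with linear $2$-modifications, all structure maps remaining inside {\tt Vect}.

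First I would fix the relevant notions. For linear $2$-functors $F,G\colon L\to L'$, a \emph{linear natural $2$-transformation} $\theta\colon F\Rightarrow G$ consists of a linear object-component $\theta^{(0)}\colon L_0\to L_1'$, sending $a\in L_0$ to a $1$-cell $\theta_a\colon Fa\to Ga$, together with a linear $1$-cell-component $\theta^{(1)}\colon L_1\to L_2'$, sending $f\colon a\to b$ to a $2$-cell between $Gf\circ_0\theta_a$ and $\theta_b\circ_0 Ff$; these data are subject to the evident source/target constraints and to naturality with respect to the compositions $\circ_0,\circ_1$ and to the action on $2$-cells. A \emph{linear $2$-modification} $m\colon\theta\Rrightarrow\theta'$ between two such transformations is a linear object-component $m^{(0)}\colon L_0\to L_2'$, sending $a$ to a $2$-cell $\theta_a\Rightarrow\theta'_a$ and compatible with the $1$-cell-components. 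By construction all these maps are linear and bounded, so no truncation issues arise.

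Next I would assemble the algebraic structure and check the axioms. One needs: vertical composition of natural $2$-transformations along a string $F\Rightarrow G\Rightarrow H$; whiskering of a natural $2$-transformation on either side by a linear $2$-functor; and the vertical and horizontal compositions of $2$-modifications — each defined componentwise from the composition and identity maps of the target linear $2$-category, so that linearity of every composite is immediate. Then one verifies, in order: that each composite again satisfies the naturality, resp. compatibility, conditions; that for fixed $L,L'$ the linear $2$-functors, linear natural $2$-transformations and linear $2$-modifications form a strict $2$-category (associativity and unitality of vertical composition of modifications, plus the middle-four interchange for their two compositions); and finally that horizontal composition with $1$-cells is strictly functorial and strictly associative, so that the globular identities, the interchanges of composition along $0$-, $1$- and $2$-cells, and the unit laws hold. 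Here is where Proposition~\ref{UniqueCatStr} pays off: putting every linear $2$-category in the canonical form attached to its $3$-term complex of kernels turns all structure maps into explicit sums of homogeneous components of linear maps, whereupon each required identity collapses to a transparent linear-algebra statement.

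The difficulty is organizational rather than conceptual: there are a great many interchange laws, and checking that a composite still satisfies the naturality constraints involves a fair amount of index bookkeeping. The one genuinely delicate point — and a further reason to run the computation in coordinates — is to confirm that {\tt Vect}\,$2$-{\tt Cat} is a \emph{strict} $3$-category rather than merely a tricategory: the coherence cells that would normally obstruct strict interchange are forced to be identities because every $2$-cell lives in a vector space and all structure maps are linear, and it is precisely this additive rigidity of the ambient category that has to be exploited.
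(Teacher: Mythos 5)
Your overall plan (define the higher cells, compose them componentwise in the target, check naturality, interchange and unit laws, and use the normal form of Proposition~\ref{UniqueCatStr} to reduce everything to linear algebra) is the right shape, and it is essentially what the paper leaves implicit when it says the result is the linear version of ``$2$-{\tt Cat} is a $3$-category''. The problem is your choice of $2$-cells. In the paper a linear natural $2$-transformation $\theta\colon F\Rightarrow G$ consists \emph{only} of an object-component $a\mapsto\theta_a\in{\cal D}_1$, linear in $a$, subject to the strict condition $F(\alpha)\circ_0 1_{\theta_b}=1_{\theta_a}\circ_0 G(\alpha)$ (which at $\alpha=1_f$ forces the naturality squares to commute on the nose). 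You instead equip $\theta$ with an additional $1$-cell-component $\theta^{(1)}\colon L_1\to L_2'$ assigning to each $f\colon a\to b$ a $2$-cell between $Gf\circ_0\theta_a$ and $\theta_b\circ_0 Ff$. That is the pseudo/lax notion of transformation, not the one the proposition asserts, so even if your verification went through you would be proving a different statement.

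Moreover, with those weaker $2$-cells the key strictness claim fails, and your proposed fix is not valid. For pseudonatural-type transformations the two horizontal composites of $\theta\colon F\Rightarrow G$ and $\tau\colon F'\Rightarrow G'$ differ by the components $\tau^{(1)}(\theta_a)$, so the interchange law holds only up to a modification; this is exactly why $2$-{\tt Cat} with pseudonatural transformations is a Gray-category rather than a strict $3$-category. Linearity does not rescue this: nothing in the additive structure forces the data $\theta^{(1)}(f)$ to be identity $2$-cells. Concretely, in the normal form of Proposition~\ref{UniqueCatStr} take $F=G$, $\theta_a=1_{Fa}$, and let $\theta^{(1)}$ be any nonzero linear map $L_1\to V_2'$ compatible with the coherence constraints (such maps exist whenever $V_2'\neq 0$; under the normalization correspondence they are essentially chain homotopies); then $\theta^{(1)}$ survives and strict interchange breaks. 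So the ``additive rigidity'' you invoke at the end is precisely the step that would fail. The repair is simply to use the paper's strict definitions — $2$-cells with only object-components and strict ($2$-)naturality, $3$-cells the linear $2$-modifications — after which your componentwise verification scheme does give the strict $3$-category structure, mirroring the classical argument for $2$-{\tt Cat}.
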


This proposition is the linear version (with similar proof) of the
well-known result that the category $2$-{\tt Cat} is a 3-category
with 2-categories as 0-cells, 2-functors as 1-cells, natural
2-transformations as 2-cells, and 2-modifications as 3-cells. The
definitions of $n$-categories and $2$-functors are similar to
those given above in the linear context (but they are formulated
without the use of set theoretical concepts). As for (linear)
natural $2$-transformations and (linear) 2-modifications, let us
recall their definition in the linear setting:

\begin{defi}
A {\bf linear natural 2-transformation} $\theta: F\Rightarrow G$ between two
linear 2-functors $F,G:{\cal C}\to{\cal D}$, between the same two
linear 2-categories, assigns to any $a\in{\cal C}_0$ a unique
$\theta_a:F(a)\to G(a)$ in ${\cal D}_1$, linear with respect to
$a$ and such that for any $\alpha:f\Rightarrow g$ in ${\cal C}_2$,
$f,g:a\to b$ in ${\cal C}_1,$ we have
\begin{equation}
F(\alpha)\circ_0
1_{\theta_b}=1_{\theta_a}\circ_0G(\alpha)\;.
\end{equation}
\end{defi}

If ${\cal C}=L\times L$ is a product linear 2-category, the last
condition reads $$F(\alpha,\beta)\circ_0
1_{\theta_{t^2\alpha,t^2\beta}}=1_{\theta_{s^2\alpha,s^2\beta}}\circ_0G(\alpha,\beta),$$
for all $(\alpha,\beta)\in L_2\times L_2$. As functors respect
composition, i.e. as
$$F(\alpha,\beta)=F(\alpha\circ_01^2_{t^2\alpha},1^2_{s^2\beta}\circ_0\beta)=F(\alpha,1^2_{s^2\beta})\circ_0
F(1^2_{t^2\alpha},\beta),$$ this naturality condition is verified if
and only if it holds true in case all but one of the 2-cells are
identities $1^2_{-}$, i.e. if and only if the transformation is
natural with respect to all its arguments separately.

\begin{defi} Let $\cal{C,D}$ be two linear
2-categories. A {\bf linear 2-modification} $\mu:\eta\Rrightarrow
\varepsilon$ between two linear natural 2-transformations
$\eta,\varepsilon:F\Rightarrow G$, between the same two linear 2-functors
$F,G:\cal C\to \cal D$, assigns to any object $a\in{\cal C}_0$ a
unique $\mu_a:\eta_a\Rightarrow\varepsilon_a$ in ${\cal D}_2$, which is
linear with respect to $a$ and such that, for any
$\alpha:f\Rightarrow g$ in ${\cal C}_2$, $f,g:a\to b$ in ${\cal
C}_1$, we have \begin{equation} \label{2-modification}
F(\alpha)\circ_0\mu_b=\mu_a\circ_0G(\alpha).\end{equation}\end{defi}

If ${\cal C}=L\times L$ is a product linear 2-category, it
suffices again that the preceding modification property be
satisfied for tuples $(\alpha,\beta),$ in which all but one 2-cells are
identities $1^2_{-}$. The explanation is the same as for natural
transformations.\medskip

Beyond linear $2$-functors, linear natural $2$-transformations,
and linear $2$-modifications, we use below multilinear cells.
Bilinear cells e.g., are cells on a product linear 2-category,
with linearity replaced by bilinearity. For instance,

\begin{defi} Let $L$, $L'$, and $L''$ be linear $2$-categories. A
{\bf bilinear 2-functor} $F:L\times L'\to L''$ is a $2$-functor
such that $F:L_m\times L'_m\to L''_m$ is bilinear for all
$m\in\{0,1,2\}$.\end{defi}

Similarly,

\begin{defi}
Let $L$, $L'$, and $L''$ be linear $2$-categories. A {\bf bilinear natural 2-transformation} $\theta: F\Rightarrow G$ between two
bilinear 2-functors $F,G:L\times L'\to L''$, assigns to any
$(a,b)\in L_0\times L'_0$ a unique $\theta_{(a,b)}:F(a,b)\to
G(a,b)$ in $L''_1$, which is bilinear with respect to $(a,b)$ and
such that for any $(\alpha,\beta):(f,h)\Rightarrow (g,k)$ in $L_2\times
L_2'$, $(f,h),(g,k):(a,b)\to (c,d)$ in $L_1\times L'_1,$ we have
\begin{equation}\label{2-naturality}
F(\alpha,\beta)\circ_0
1_{\theta_{(c,d)}}=1_{\theta_{(a,b)}}\circ_0G(\alpha,\beta)\;.
\end{equation}
\end{defi}

\subsection{Homotopy Lie algebras and categorified Lie algebras}\label{KMPSection3}

We now recall the definition of a Lie infinity (strongly homotopy
Lie, sh Lie, $L_{\infty}-$) algebra and specify it in the case of
a 3-term Lie infinity algebra.

\begin{defi} A {\bf Lie infinity algebra} is an $\mathrm{N}$-graded vector space
$V=\oplus_{i\in\mathrm{N}}V_i$ together with a family
$(\ell_i)_{i\in\mathrm{N}^*}$ of graded antisymmetric $i$-linear weight
$i-2$ maps on $V$, which verify the sequence of conditions
\begin{equation}\label{LieInftyCond} \sum_{i+j=n+1}\sum_{(i,n-i)\mbox{ --
shuffles }
\sigma}\chi(\sigma)(-1)^{i(j-1)}\ell_j(\ell_i(a_{\sigma_1},\ldots,a_{\sigma_i}),a_{\sigma_{i+1}},\ldots,a_{\sigma_n})=0,\end{equation}
where $n\in\{1,2,\ldots\}$, where $\chi(\sigma)$ is the product of
the signature of $\sigma$ and the Koszul sign defined by $\sigma$ and
the homogeneous arguments $a_1,\ldots,a_n\in V$.\end{defi}

For $n=1$, the $L_{\infty}$-condition (\ref{LieInftyCond}) reads
$\ell_1^2=0$ and, for $n=2$, it means that $\ell_1$ is a graded
derivation of $\ell_2$, or, equivalently, that $\ell_2$ is a chain
map from $(V\otimes V,\ell_1\otimes \mathrm{id}+\mathrm{id}\otimes\;
\ell_1)$ to $(V,\ell_1).$\medskip

In particular,

\begin{defi} A {\bf 3-term Lie infinity algebra} is a 3-term graded vector
space $V=V_0\oplus V_1\oplus V_2$ endowed with graded
antisymmetric $p$-linear maps $\ell_p$ of weight $p-2$,
\begin{equation}\begin{array}{ll} \ell_1:V_{i}\to V_{i-1}&(1\le i\le
2),\\\ell_2:V_i\times V_j\to V_{i+j}&(0\le i+j\le 2),\\
\ell_3:V_i\times V_j\times V_k\to V_{i+j+k+1}&(0\le i+j+k\le
1),\\\ell_4:V_0\times V_0\times V_0\times V_0\to V_2\end{array}\end{equation}
(all structure maps $\ell_p$, $p>4$, necessarily vanish), which
satisfy $L_{\infty}$-condition (\ref{LieInftyCond}) (that is
trivial for all $n>5$).\end{defi}

In this 3-term situation, each $L_{\infty}$-condition splits into
a finite number of equations determined by the various
combinations of argument degrees, see below.\medskip

On the other hand, we have the

\begin{defi}\label{Lie3Alg} A {\bf Lie 3-algebra} is a {\bf linear 2-category} $L$ endowed
with a {\bf bracket}, i.e. an antisymmetric bilinear 2-functor
$[-,-]:L\times L\to L$, which verifies the Jacobi identity up to a
{\bf Jacobiator}, i.e. a skew-symmetric trilinear natural
2-transformation
\begin{equation}\label{Jacobiator}J_{xyz}:[[x,y],z]\rightarrow[[x,z],y]+[x,[y,z]],\end{equation}
$x,y,z\in L_0$, which in turn satisfies the Baez-Crans Jacobiator
identity up to an {\bf Identiator}, i.e. a skew-symmetric
quadrilinear 2-modification $$\mu_{xyzu}:
[J_{x,y,z},1_u]\circ_0(J_{[x,z],y,u}+J_{x,[y,z],u})\circ_0([J_{xzu},1_y]+1)\circ_0([1_x,J_{yzu}]+1)$$
\begin{equation}\label{Identiator}\Rightarrow
J_{[x,y],z,u}\circ_0([J_{xyu},1_z]+1)\circ_0(J_{x,[y,u],z}+J_{[x,u],y,z}+J_{x,y,[z,u]}),\end{equation}
$x,y,z,u\in L_0$, required to verify the {\bf coherence law}
\begin{equation}\label{CohLaw0}\alpha_1+\alpha_4^{-1}=\alpha_3+\alpha_2^{-1},\end{equation} where
$\alpha_1$ -- $\alpha_4$ are explicitly given in Definitions
\ref{Alpha1} -- \ref{Alpha4} and where superscript $-1$ denotes the inverse for composition along a 1-cell.\end{defi}

Just as the {\it Jacobiator} is a natural transformation between
the two sides of the {\it Jacobi} identity, the {\it Identiator}
is a modification between the two sides of the Baez-Crans
Jacobiator {\it identity}.\medskip

In this definition ``skew-symmetric 2-transformation'' (resp.
``skew-symmetric 2-modification'') means that, if we identify
$L_m$ with $\oplus_{i=0}^m V_i$, $V_i=\ker s_i$, as in Proposition
\ref{UniqueCatStr}, the $V_1$-component of $J_{xyz}\in L_1$ (resp.
the $V_2$-component of $\mu_{xyzu}\in L_2$) is antisymmetric.
Moreover, the definition makes sense, as the source and target in
Equation (\ref{Identiator}) are quadrilinear natural
2-transformations between quadrilinear 2-functors from $L^{\times
4}$ to $L.$ These 2-functors are simplest obtained from the
{\small RHS} of Equation (\ref{Identiator}). Further, the
mentioned source and target actually are natural
2-transformations, since a 2-functor composed (on the left or on
the right) with a natural 2-transformation is again a
2-transformation.

\subsection{Lie 3-algebras in comparison with 3-term Lie infinity algebras}\label{KMPSection4}

\begin{rem}In the following, we systematically identify the vector spaces $L_m$, $m\in\{0,\ldots,n\}$, of a
linear $n$-category with the spaces $L_m'=\oplus_{i=0}^mV_i,$
$V_i=\ker s_i$, so that the categorical structure is given by
Equations (\ref{source}) -- (\ref{composition}). In addition, we
often substitute common, index-free notations $($e.g. $\alpha=(x,{\bf
f},{\bf a})$$)$ for our notations $($e.g. $v=(v_0,v_1,v_2)\in
L_2$$)$. \label{BasicIdentification}\end{rem}

The next theorem is the main result of this paper.

\begin{thm}\label{MainTheo} There exists a 1-to-1 correspondence between Lie 3-algebras and 3-term Lie infinity algebras $(V,\ell_p),$
whose structure maps $\ell_2$ and $\ell_3$ vanish on $V_1\times
V_1$ and on triplets of total degree 1, respectively.
\end{thm}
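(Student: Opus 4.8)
The plan is to realize the correspondence in explicit coordinates, degree by degree, and then check that each categorical axiom of a Lie $3$-algebra is exactly one $L_{\infty}$-condition (\ref{LieInftyCond}) in a fixed multidegree. First I would fix the skeleton: given a Lie $3$-algebra $(L,[-,-],J,\mu)$, apply Proposition~\ref{EquivCat} (via the normalization functor $\mathfrak{N}$) to the underlying linear $2$-category $L$ to get a $3$-term complex $V=V_0\oplus V_1\oplus V_2$ with $\ell_1:=d$, and identify $L_m$ with $\oplus_{i=0}^m V_i$ as in Proposition~\ref{UniqueCatStr}, so that source, target, identities and compositions are (\ref{source})--(\ref{composition}) (with $t$ replaced by $\ell_1$). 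This packages the $n=1$ condition $\ell_1^2=0$, and it is what makes the final bookkeeping manageable, since from now on every cell is a tuple and every categorical operation is a formula.

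Next I would decompose the bracket. Using bilinearity and compatibility with $s,t,1,\circ_p$, an antisymmetric bilinear $2$-functor $[-,-]\colon L\times L\to L$ is equivalent to a family of graded-antisymmetric bilinear maps $\ell_2\colon V_i\times V_j\to V_{i+j}$ with $0\le i+j\le 2$: the action on $0$-cells gives $\ell_2$ in degree $(0,0)$; compatibility with $s$ forces the lower-degree components of $[-,-]$ on $m$-cells to be the lower brackets, while the new $V_m$-component gives $\ell_2$ in degrees $(0,m),(m,0),\ldots$; and compatibility with $t$ becomes the $n=2$ condition, i.e. $\ell_1$ is a graded derivation of $\ell_2$. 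The one point needing care here is that bifunctoriality (the interchange axiom $[\alpha\circ_0\gamma,\beta\circ_0\delta]=[\alpha,\beta]\circ_0[\gamma,\delta]$ together with $[1_a,1_b]=1_{[a,b]}$), evaluated on suitable $2$-cells, forces the $V_1\times V_1\to V_2$ component of $\ell_2$ to vanish; this is the Eckmann--Hilton-type argument already present for Lie $2$-algebras in \cite{BC04}, and it is precisely the hypothesis ``$\ell_2$ vanishes on $V_1\times V_1$'' (for Lie $2$-algebras it is vacuous, as there is no $V_2$).

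Then I would unpack the Jacobiator and the Identiator. Since $J_{xyz}\in L_1=V_0\oplus V_1$ for $x,y,z\in L_0=V_0$, skew-symmetry and linearity in $(x,y,z)$ make $J$ equivalent to a skew-symmetric trilinear $\ell_3\colon V_0^{\times 3}\to V_1$, its $V_0$-component being $[[x,y],z]$ by the source condition (\ref{Jacobiator}); the target of (\ref{Jacobiator}) is then the $n=3$ condition in multidegree $(0,0,0)$, and naturality of $J$ with respect to $2$-cells — which, as observed after Equation (\ref{2-modification}), need only be checked when all but one argument is an identity $1^2_{-}$ — yields the $n=3$ conditions in multidegrees $(1,0,0),(0,1,0),(0,0,1)$. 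There is no categorical datum producing a component of $\ell_3$ on triplets of total degree $1$, which is exactly why that component is set to $0$ in the correspondence (and dually a general $3$-term $L_\infty$-algebra comes from a strict Lie $3$-algebra only when it vanishes). Similarly $\mu_{xyzu}\in L_2$, $x,y,z,u\in V_0$, with skew-symmetric $V_2$-component, is equivalent to a skew-symmetric $\ell_4\colon V_0^{\times 4}\to V_2$; its source/target give the $n=4$ condition in multidegree $(0,0,0,0)$, and the coherence law (\ref{CohLaw0}), once $\alpha_1,\ldots,\alpha_4$ are written through Proposition~\ref{UniqueCatStr} as whiskered composites of Jacobiators along $1$-cells, becomes the $n=5$ condition in multidegree $(0,0,0,0,0)$. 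All remaining $L_\infty$-conditions are automatic: for $n>5$ there are no structure maps, and in the other multidegrees they reduce to identities already obtained.

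Finally I would run the construction backwards: from a $3$-term $L_\infty$-algebra $(V,\ell_p)$ with $\ell_2|_{V_1\times V_1}=0$ and $\ell_3$ vanishing on total degree $1$, set $L:=\mathfrak{G}(V)$, define $[-,-]$, $J$, $\mu$ by the coordinate formulas above, and check via the same dictionary that the $L_\infty$-relations force the $2$-functor axioms for $[-,-]$, the naturality of $J$, the modification property of $\mu$, and the coherence law (\ref{CohLaw0}); the two assignments are then visibly mutually inverse, since on both sides everything is read off in the coordinates of Proposition~\ref{UniqueCatStr}. I expect the main obstacle to be the last verification in the forward direction: expanding $\alpha_1,\ldots,\alpha_4$ in the explicit model and matching the resulting identity in $V_2$ — with all Koszul signs and the $(-1)^{i(j-1)}$ factors of (\ref{LieInftyCond}) — against the single $n=5$ generalized Jacobi identity. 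Keeping that sign bookkeeping under control is the crux; the rest is a careful but routine translation.
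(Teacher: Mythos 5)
Your overall route coincides with the paper's: the same five-step dictionary (underlying complex, bracket, Jacobiator, Identiator, coherence law matched against the $L_{\infty}$-conditions $n=1,\ldots,5$, all read through the coordinates of Proposition \ref{UniqueCatStr}). However, there is a genuine flaw at precisely the step you flag as ``needing care''. The bracket of two 1-cells $\mathbf f,\mathbf g\in V_1$ lies in $L_1=V_0\oplus V_1$, so the 2-functor $[-,-]$ simply has no $V_1\times V_1\to V_2$ component; consequently no interchange/Eckmann--Hilton evaluation can ``force'' $\ell_2|_{V_1\times V_1}=0$. What the axioms actually yield is $[\mathbf f,\mathbf g]=[1_{t\mathbf f},\mathbf g]=[\mathbf f,1_{t\mathbf g}]$, see (\ref{FunComp1}), and then (\ref{Chain2}) and (\ref{Chain4}) constrain the degree $(1,1)$ part of $\ell_2$ only cohomologically: it must take values in 2-cocycles and vanish on 1-coboundaries (moreover it is graded \emph{symmetric} there, so it could not be read off the antisymmetric bracket even in principle). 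The same phenomenon occurs one level up: the values $\ell_3(x,y,\mathbf f)\in V_2$ are invisible to the Jacobiator, and the naturality conditions (\ref{3a})--(\ref{3b}) reproduce (\ref{32})--(\ref{33}) only under the analogous cohomological assumptions, not under forced vanishing.

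This is not a cosmetic point, because it is exactly where the bijectivity claim lives. The vanishing hypotheses in Theorem \ref{MainTheo} are a canonical normalization, needed because two chain maps that differ on $V_1\times V_1$ by a cocycle-valued, coboundary-killing symmetric map (or two $\ell_3$'s differing in total degree 1 in the allowed cohomological way) induce the \emph{same} bracket and Jacobiator; without restricting to the normalized representatives the functor-side assignment fails to be injective, and your ``visibly mutually inverse'' conclusion breaks down at that point. Two smaller imprecisions of the same kind: the $n=4$ condition in multidegree $(0,0,0,1)$, i.e. $\ell_4(\ell_1\mathbf f,x,y,z)=0$ (Equation (\ref{4b})), is not a source/target condition but is equivalent to the 2-modification (naturality) property of $\mu$; and in the coherence-law step it is not automatic that only the $V_2$-components matter --- one must verify that the $V_1$-components of the two sides of (\ref{CohLaw0}) agree, which uses (\ref{32}) together with the degree-1 vanishing of $\ell_3$, so this check is part of the asserted equivalence with the $n=5$ condition rather than routine bookkeeping.
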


\begin{ex} There exists a 1-to-1 correspondence between $(n+1)$-term Lie infinity algebras $V=V_0\oplus V_n$ (whose intermediate terms vanish), $n\ge 2$, and $(n+2)$-cocycles of Lie
algebras endowed with a linear representation, see \cite{BC04}, Theorem 6.7. A 3-term Lie infinity algebra implemented by a 4-cocycle can therefore be viewed as a special case of a Lie 3-algebra.\end{ex}

The proof of Theorem \ref{MainTheo} consists of five lemmas.

\subsubsection{Linear 2-category -- three term chain complex of vector spaces}

First, we recall the correspondence between the underlying structures of a Lie 3-algebra and a 3-term Lie infinity algebra.

\begin{lem} There is a bijective correspondence between linear 2-categories $L$ and 3-term chain complexes of vector spaces $(V,\ell_1)$.\end{lem}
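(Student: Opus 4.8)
The plan is to obtain this lemma as the object-level content of Proposition~\ref{EquivCat} specialized to $n=2$: I would restrict the functors $\mathfrak N$ and $\mathfrak G$ of its proof to the underlying structures (discarding the morphism part, which is irrelevant for a bijection of structures) and reduce everything nontrivial to Proposition~\ref{UniqueCatStr}.

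First I would build the map from linear $2$-categories to $3$-term complexes. Given a linear $2$-category $L$, retain only its $2$-globular vector space $L_2\rightrightarrows L_1\rightrightarrows L_0\rightrightarrows 0$, put $V_i:=\ker s_i\subseteq L_i$ for $i\in\{0,1,2\}$ (so $V_0=L_0$), and set $\ell_1:=t|_{V_i}\colon V_i\to V_{i-1}$. The globular identities (\ref{GlobCond}) force $t(V_i)\subseteq V_{i-1}$ and $\ell_1\ell_1=0$ --- exactly the computation recorded in the proof of Proposition~\ref{EquivCat} --- so $(V=V_0\oplus V_1\oplus V_2,\ell_1)=\mathfrak N(L)$ is a $3$-term chain complex of vector spaces.

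Conversely, given such a complex $(V,\ell_1)$ with $V=V_0\oplus V_1\oplus V_2$, I would apply Proposition~\ref{UniqueCatStr} with $n=2$: the $2$-globular vector space $L_m:=\oplus_{i=0}^m V_i$ equipped with the identity maps (\ref{identity}) has a \emph{unique} completion by linear composition maps to a linear $2$-category, given explicitly by (\ref{source})--(\ref{composition}) with $tv_2$ replaced by $\ell_1 v_2$ in (\ref{target}); call it $\mathfrak G(V,\ell_1)$. Since Proposition~\ref{UniqueCatStr} already guarantees this is a genuine linear $2$-category, no $2$-categorical axiom needs to be rechecked. These two assignments are mutually inverse: in $\mathfrak G(V,\ell_1)$ equation (\ref{source}) identifies $\ker s_i$ with the $V_i$-summand and $t|_{V_i}$ with $\ell_1$ by construction, so $\mathfrak N(\mathfrak G(V,\ell_1))=(V,\ell_1)$; and $\mathfrak G(\mathfrak N(L))$ has $m$-cells $\oplus_{i=0}^m\ker s_i$, while the isomorphisms $\alpha_L,\beta_L$ of Proposition~\ref{UniqueCatStr} (for $m=0,1,2$) assemble into an invertible linear $2$-functor $\mathfrak G(\mathfrak N(L))\to L$ carrying (\ref{source})--(\ref{composition}) onto the given structure of $L$, by the uniqueness clause of that proposition.

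The only point demanding care --- and essentially the sole content beyond quoting Propositions~\ref{UniqueCatStr} and~\ref{EquivCat} --- is the precise sense of ``bijective correspondence''. The round trip $L\mapsto\mathfrak G(\mathfrak N(L))$ returns $L$ only after each $L_m$ has been identified with $\oplus_{i\le m}\ker s_i$; under the standing identification fixed in Remark~\ref{BasicIdentification} the functor $\alpha_L$ is the identity and one gets an honest bijection between the $2$-categorical structures and the differentials carried by one fixed sequence of vector spaces, whereas without that identification the correspondence is a bijection only up to the natural isomorphism $\alpha\colon\mathfrak{NG}\Rightarrow\mathrm{id}$ already exhibited in the proof of Proposition~\ref{EquivCat}. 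Reading the statement through that identification is exactly what makes it a strict bijection, and it is also what the rest of the paper (see Remark~\ref{BasicIdentification}) tacitly adopts.
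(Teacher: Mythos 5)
Your proposal is correct and follows essentially the same route as the paper: it invokes the functors $\mathfrak N$ and $\mathfrak G$ from the proof of Proposition~\ref{EquivCat} (with $\mathfrak G$ grounded in Proposition~\ref{UniqueCatStr}) specialized to $n=2$, and resolves the round-trip issue exactly as the paper does, by the standing identification of $L_m$ with $\oplus_{i=0}^m\ker s_i$ fixed in Remark~\ref{BasicIdentification}. Your extra care in spelling out when the correspondence is a strict bijection versus a bijection up to the natural isomorphism $\alpha$ is a welcome clarification but not a different argument.
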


\begin{proof} In the proof of Proposition \ref{EquivCat}, we associated to any linear $2$-category $L$ a unique 3-term chain complex of vector spaces
${\mathfrak{N}}(L)=V$, whose spaces are given by $V_m=\ker s_m$,
$m\in\{0,1,2\}$, and whose differential $\ell_1$ coincides on
$V_m$ with the restriction $t_m|_{V_m}$. Conversely, we assigned
to any such chain complex $V$ a unique linear 2-category ${\mathfrak{G}}(V)=L$, with spaces $L_m=\oplus_{i=0}^mV_i$, $m\in \{0,1,2\}$
and target $t_0(x)=0,$ $t_1(x,{\bf f})=x+\ell_1{\bf f}$,
$t_2(x,{\bf f},{\bf a})=(x,{\bf f}+\ell_1{\bf a})$. In view of
Remark \ref{BasicIdentification}, the maps $\mathfrak{N}$ and $\mathfrak{G}$
are inverses of each other.\end{proof}

\begin{rem} The globular space condition is the categorical counterpart of $L_{\infty}$-condition $n=1$.\end{rem}

\subsubsection{Bracket -- chain map}

We assume that we already built $(V,\ell_1)$ from $L$ or $L$ from $(V,\ell_1)$.

\begin{lem}\label{Lem2} There is a bijective correspondence between antisymmetric bilinear 2-functors $[-,-]$ on $L$ and graded antisymmetric chain maps $\ell_2:(V\otimes V,\ell_1\otimes\mathrm{id}+\mathrm{id}\otimes \ell_1)\to (V,\ell_1)$ that vanish on $V_1\times V_1$.\end{lem}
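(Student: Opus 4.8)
The plan is to unwind both sides of the claimed correspondence into explicit component data and match them. First I would use Remark~\ref{BasicIdentification} to write a bilinear $2$-functor $[-,-]:L\times L\to L$ through its action on the three layers $L_m=\bigoplus_{i=0}^m V_i$. On $0$-cells it is a bilinear map $L_0\times L_0=V_0\times V_0\to V_0$, which I will call $\ell_2$ on $V_0\times V_0$. On $1$-cells, bilinearity together with the decomposition $(x,\mathbf f)=(x,0)\circ_0(0,\mathbf f)$ (using that a $2$-functor respects $\circ_0$ and identities) forces $[-,-]$ on $L_1\times L_1$ to be determined by its values on pairs where at most one argument has a nonzero $V_1$-component; this produces a bilinear map $V_0\times V_1\to V_1$ (and symmetrically $V_1\times V_0\to V_1$, equal up to sign by antisymmetry) together with the already-known $V_0\times V_0\to V_0$, and a genuine choice on $V_1\times V_1\to V_1$ — this last piece is exactly the data the statement requires to vanish. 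The same argument on $L_2\times L_2$ produces the map $V_0\times V_2\to V_2$ and forces everything else in terms of lower data once we know the functor commutes with $s,t$.

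Next I would impose that $[-,-]$ is a functor, i.e. that it commutes with the source map $s$, the target map $t$, the identity map $1$, and each composition $\circ_p$ as given by Equations~(\ref{source})--(\ref{composition}). Commutation with $s$ is automatic from the layerwise description; commutation with $1$ is the statement that the induced maps $V_i\times V_j\to V_{i+j}$ are the ``same'' in each degree, which is how I set things up. The nontrivial content is commutation with $t$: writing $t(x,\mathbf f)=(x+\ell_1\mathbf f,\,\ast)$ and $t(x,\mathbf f,\mathbf a)=(x,\mathbf f+\ell_1\mathbf a)$, the functor equation $t[-,-]=[t(-),t(-)]$ becomes precisely the chain-map identities for $\ell_2$: namely $\ell_1\ell_2(x,\mathbf f)=\ell_2(\ell_1 x\otimes \mathbf f\text{-type terms})$ in the relevant bidegrees, i.e. $\ell_2$ is a morphism of complexes from $(V\otimes V,\ell_1\otimes\mathrm{id}+\mathrm{id}\otimes\ell_1)$ to $(V,\ell_1)$. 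I would then check that commutation with the compositions $\circ_p$ adds nothing new beyond bilinearity — this is the same phenomenon already observed in the excerpt after Proposition~\ref{3-CatStr}, that functoriality with respect to $\circ_p$ on a product of ``coordinatized'' linear categories reduces to the separate-argument/identity cases, which are handled by bilinearity and the $t$-compatibility. Antisymmetry of the $2$-functor translates degree by degree into graded antisymmetry of $\ell_2$, and conversely.

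For the reverse direction I would start from a graded antisymmetric chain map $\ell_2$ vanishing on $V_1\times V_1$, define $[-,-]$ on each $L_m$ by the formula $[(v_i),(w_j)]_m=\big(\sum_{i+j=k}\ell_2(v_i,w_j)\big)_{k\le m}$ (with the convention that the $V_1\times V_1$ term is zero, which is why that hypothesis is needed for $[-,-]$ to be well-defined as a bilinear $2$-functor valued in the $m$-th layer without spurious components), and verify directly from Equations~(\ref{source})--(\ref{composition}) that this is a bilinear $2$-functor; the only axiom requiring the chain-map property is compatibility with $t$, exactly as above. Finally I would note the two constructions are mutually inverse by Remark~\ref{BasicIdentification}, since on each layer they read off, respectively put back, the same family of bilinear maps $V_i\times V_j\to V_{i+j}$. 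The main obstacle — really the only subtle point — is bookkeeping the $V_1\times V_1$ component: one must check that functoriality does \emph{not} constrain $[-,-]|_{V_1\times V_1}$ (so it is free data on the categorical side, matching the free $\ell_2|_{V_1\times V_1}$ on the homotopy side), and that the hypothesis $\ell_2|_{V_1\times V_1}=0$ is precisely what is needed to make the layerwise formula land in the correct subspace; I would also need to be careful with Koszul signs when transcribing ``graded antisymmetric'' into the antisymmetry of the $2$-functor in each bidegree.
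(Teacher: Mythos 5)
Your treatment of the $V_1\times V_1$ data is where the argument genuinely fails. You claim that functoriality does not constrain $[-,-]$ on pairs of $1$-cells with vanishing $V_0$-part, so that this restriction is ``free data on the categorical side, matching the free $\ell_2|_{V_1\times V_1}$''. Both halves of this are wrong. First, for a \emph{bilinear} map respecting sources, targets and identities, compatibility with the compositions $\circ_p$ is \emph{not} automatic (Proposition~\ref{LinnFun} is a statement about linear families only; that bilinear maps may respect $s,t,1$ but not $\circ_p$ is precisely the point of Section~\ref{KMPSection5}), and imposing it yields the nontrivial constraints (\ref{FunComp1})--(\ref{FunComp3}): in particular $[{\bf f},{\bf g}]=[1_{t{\bf f}},{\bf g}]=[{\bf f},1_{t{\bf g}}]$ (obtained by expanding $[{\bf f}\circ_0 1_{t{\bf f}},1_0\circ_0{\bf g}]$), so the bracket of two such $1$-cells is completely determined by the mixed bracket $V_0\times V_1\to V_1$, and likewise $[{\bf a},{\bf b}]=[1_{{\bf f}},{\bf b}]=0$. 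Second, the component the lemma requires to vanish is the weight-zero piece $\ell_2:V_1\times V_1\to V_2$, which takes values in $V_2$, not $V_1$: it never appears as a component of the $2$-functor at all (there is also a symmetry clash -- graded antisymmetry makes $\ell_2$ symmetric in bidegree $(1,1)$, while the functor bracket is antisymmetric). The chain-map identities (\ref{Chain2}) and (\ref{Chain4}) only force $\ell_2|_{V_1\times V_1}$ to be valued in $V_2\cap\ker\ell_1$ and to kill $1$-coboundaries; since this data is invisible to the functor, the correspondence is bijective only after the canonical normalization $\ell_2|_{V_1\times V_1}=0$ -- otherwise the passage from chain maps to $2$-functors would not be injective. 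So the hypothesis in the statement is not there to ``match free data on both sides''; it is there because the categorical side carries no counterpart of that data.

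As a consequence your inverse construction is also incorrect as written: the layerwise formula $[(v_i),(w_j)]_m=\bigl(\sum_{i+j=k}\ell_2(v_i,w_j)\bigr)_{k\le m}$ does not respect compositions. For ${\bf f},{\bf g}\in V_1\subset L_1$ it gives $[{\bf f},{\bf g}]=0$, whereas $[1_{t{\bf f}},{\bf g}]=\ell_2(\ell_1{\bf f},{\bf g})$ is nonzero in general, violating (\ref{FunComp1}); the correct bracket, Equation~(\ref{BracketFromEll2}), contains the extra term $\ell_2({\bf f},tg)=\ell_2({\bf f},y)+\ell_2({\bf f},\ell_1{\bf g})$ in its $V_1$-component, and it is exactly this correction that composition-functoriality produces. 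Your appeal to the remark following Proposition~\ref{3-CatStr} does not help here: that remark concerns naturality of $2$-transformations on a product category being checkable argument by argument, not functoriality of a bilinear map with respect to $\circ_p$, which is a genuine constraint and is the source both of the forced values above and, in combination with (\ref{Chain2}) and (\ref{Chain4}), of the cohomological conditions that the vanishing hypothesis trivializes.
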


\begin{proof} Consider first an antisymmetric bilinear ``2-map'' $[-,-]:L\times L\to L$ that verifies all functorial requirements except as concerns composition. This bracket then respects the compositions, i.e., for each pairs $(v,w), (v',w')\in L_m\times L_m$, $m\in\{1,2\}$, that are composable along a $p$-cell, $0\le p<m$, we have \begin{equation}\label{FunCompGen}
[v\circ_pv',w\circ_pw']=[v,w]\circ_p[v',w'],
\end{equation}
if and only if the following conditions hold true, for any ${\bf f,g}\in V_1$ and any ${\bf a,b}\in V_2$: \begin{equation}\label{FunComp1}
[{\bf f},{\bf g}]=[1_{t{\bf f}},{\bf g}]=[{\bf f},1_{t{\bf g}}],
\end{equation}
\begin{equation}\label{FunComp2}
[{\bf a},{\bf b}]=[1_{t{\bf a}},{\bf b}]=[{\bf a},1_{t{\bf b}}]=0,
\end{equation}
\begin{equation}\label{FunComp3}
[1_{{\bf f}},{\bf b}]=[1^2_{t{\bf f}},{\bf b}]=0.
\end{equation}
To prove the first two conditions, it suffices to compute $[{\bf f}\circ_01_{t{\bf f}},1_0\circ_0{\bf g}]$, for the next three conditions, we consider $[{\bf a}\circ_11_{t{\bf a}},1_0\circ_1{\bf b}]$ and $[{\bf a}\circ_01^2_{0},1^2_0\circ_0{\bf b}]$, and for the last two, we focus on $[1_{\bf f}\circ_01^2_{t{\bf f}},1^2_0\circ_0{\bf b}]$ and $[1_{\bf f}\circ_0(1^2_{t{\bf f}}+1_{\bf f'}),{\bf b}\circ_0{\bf b'}].$ Conversely, it can be straightforwardly checked that Equations (\ref{FunComp1}) -- (\ref{FunComp3}) entail the general requirement (\ref{FunCompGen}).\medskip

On the other hand, a graded antisymmetric bilinear weight 0 map
$\ell_2:V\times V\to V$ commutes with the differentials $\ell_1$
and $\ell_1\otimes\mathrm{id}+\mathrm{id}\otimes \ell_1$, i.e., for all
$v,w\in V$, we have
\begin{equation}\label{ChainGen}
\ell_1(\ell_2(v,w))=\ell_2(\ell_1v,w)+(-1)^v\ell_2(v,\ell_1w)
\end{equation}
(we assumed that $v$ is homogeneous and denoted its degree by $v$ as well),
if and only if, for any $y\in V_0$, ${\bf f,g}\in V_1$, and ${\bf
a}\in V_2$,
\begin{equation}\label{Chain1}
\ell_1(\ell_2({\bf f},y))=\ell_2(\ell_1{\bf
f},y),\end{equation} \begin{equation}\label{Chain2} \ell_1(\ell_2({\bf f},{\bf
g}))=\ell_2(\ell_1{\bf f},{\bf g})-\ell_2({\bf f},\ell_1{\bf g}),
\end{equation}
\begin{equation}\label{Chain3}
\ell_1(\ell_2({\bf a},y))=\ell_2(\ell_1{\bf a},y),
\end{equation}
\begin{equation}\label{Chain4}
0=\ell_2(\ell_1{\bf f},{\bf b})-\ell_2({\bf f},\ell_1{\bf
b}).
\end{equation}

\begin{rem} Note that, in the correspondence $\ell_1\leftrightarrow t$ and $\ell_2\leftrightarrow [-,-]$, Equations
(\ref{Chain1}) and (\ref{Chain3}) read as compatibility
requirements of the bracket with the target and that Equations
(\ref{Chain2}) and (\ref{Chain4}) correspond to the second
conditions of Equations (\ref{FunComp1}) and (\ref{FunComp3}),
respectively.\end{rem}

{\it Proof of Lemma \ref{Lem2} (continuation)}. To prove the
announced 1-to-1 correspondence, we first define a graded
antisymmetric chain map ${\mathfrak{N}}([-,-])=\ell_2$, $\ell_2:V\otimes
V\to V$ from any antisymmetric bilinear 2-functor $[-,-]:L\times
L\to L$.

Let $x,y\in V_0$, ${\bf f,g}\in V_1$, and ${\bf a,b}\in V_2$. Set
$\ell_2(x,y)=[x,y]\in V_0$ and $\ell_2(x,{\bf g})=[1_{x},{\bf g}]\in
V_1$. However, we must define $\ell_2({\bf f},{\bf g})\in V_2$,
whereas $[{\bf f},{\bf g}]\in V_1.$ Moreover, in this case, the
antisymmetry properties do not match. The observation
$$[{\bf f},{\bf g}]=[1_{t{\bf f}},{\bf g}]=[{\bf f},1_{t{\bf g}}]=\ell_2(\ell_1{\bf f},{\bf g})=\ell_2({\bf f},\ell_1{\bf g})$$ and Condition (\ref{Chain2})
force us to {\it define $\ell_2$ on $V_1\times V_1$ as a symmetric
bilinear map valued in $V_2\cap\ker \ell_1.$} We further set
$\ell_2(x,{\bf b})=[1^2_{x},{\bf b}]\in V_2,$ and, as $\ell_2$ is
required to have weight 0, we must set $\ell_2({\bf f},{\bf b})=0$
and $\ell_2({\bf a},{\bf b})=0.$ It then follows from the functorial
properties of $[-,-]$ that the conditions (\ref{Chain1}) --
(\ref{Chain3}) are verified. In view of Equation (\ref{FunComp3}),
Property (\ref{Chain4}) reads
$$0=[1^2_{t{\bf f}},{\bf b}]-\ell_2({\bf f},\ell_1{\bf b})=-\ell_2({\bf f},\ell_1{\bf b}).$$ In other
words, in addition to the preceding requirement, we must {\it
choose $\ell_2$ in a way that it vanishes on $V_1\times V_1$ if
evaluated on a 1-coboundary.} These conditions are satisfied if we
choose $\ell_2=0$ on $V_1\times V_1$.\medskip

Conversely, from any graded antisymmetric chain map $\ell_2$ that
vanishes on $V_1\times V_1$, we can construct an antisymmetric
bilinear 2-functor ${\mathfrak{G}}(\ell_2)=[-,-]$. Indeed, using
obvious notations, we set
$$[x,y]=\ell_2(x,y)\in L_0,\,[1_{x},1_{y}]=1_{[x,y]}\in
L_1,\,[1_{x},{\bf g}]=\ell_2(x,{\bf g})\in V_1\subset L_1.$$ Again
$[{\bf f},{\bf g}]\in L_1$ cannot be defined as $\ell_2({\bf f},{\bf
g})\in V_2$. Instead, if we wish to build a 2-functor, we must set
$$[{\bf f},{\bf g}]=[1_{t{\bf f}},{\bf g}]=[{\bf f},1_{t{\bf
g}}]=\ell_2(\ell_1{\bf f},{\bf g})=\ell_2({\bf f}, \ell_1{\bf g})\in
V_1\subset L_1,$$ which is possible in view of Equation
(\ref{Chain2}), {\it if $\ell_2$ is on $V_1\times V_1$ valued in
2-cocycles} (and in particular if it vanishes on this subspace).
Further, we define
$$
[1^2_x,1^2_y]=1^2_{[x,y]}\in L_2,\,[1^2_x,1_{{\bf
g}}]=1_{[1_x,{\bf g}]}\in L_2,\,[1^2_x,{\bf b}]=\ell_2(x,{\bf b})\in
V_2\subset L_2,\,[1_{{\bf f}},1_{{\bf g}}]=1_{[{\bf f},{\bf
g}]}\in L_2.$$ Finally, we must set $$[1_{{\bf f}},{\bf
b}]=[1^2_{t{\bf f}},{\bf b}]=\ell_2(\ell_1{\bf f},{\bf b})=0,
$$ which
is possible in view of Equation (\ref{Chain4}), {\it if $\ell_2$
vanishes on $V_1\times V_1$ when evaluated on a 1-coboundary} (and
especially if it vanishes on the whole subspace $V_1\times V_1$),
and
$$
[{\bf a},{\bf b}]=[1_{t{\bf a}},{\bf b}]=[{\bf a},1_{t{\bf
b}}]=0,
$$
which is possible.

It follows from these definitions that the bracket of $\alpha=(x,{\bf
f},{\bf a})=1^2_x+1_{{\bf f}}+{\bf a}\in L_2$ and $\beta=(y,{\bf
g},{\bf b})=1^2_y+1_{{\bf g}}+{\bf b}\in L_2$ is given by
{\begin{equation}\label{BracketFromEll2}
[\alpha,\beta]=(\ell_2(x,y),\ell_2(x,{\bf
g})+\ell_2({\bf f},tg),\ell_2(x,{\bf b})+\ell_2({\bf a},y))\in
L_2,\end{equation}}
where $g=(y,{\bf g})$. The brackets of two elements of
$L_1$ or $L_0$ are obtained as special cases of the latter result.

We thus defined an antisymmetric bilinear map $[-,-]$ that assigns
an $i$-cell to any pair of $i$-cells, $i\in\{0,1,2\}$, and that
respects identities and sources. Moreover, since Equations
(\ref{FunComp1}) -- (\ref{FunComp3}) are satisfied, the map
$[-,-]$ respects compositions provided it respects targets. For
the last of the first three defined brackets, the target condition
is verified due to Equation (\ref{Chain1}). For the fourth
bracket, the target must coincide with $[t{\bf f},t{\bf
g}]=\ell_2(\ell_1{\bf f},\ell_1{\bf g})$ and it actually coincides with
$t[{\bf f},{\bf g}]=\ell_1\ell_2(\ell_1{\bf f},{\bf g})=\ell_2(\ell_1{\bf
f},\ell_1{\bf g})$, again in view of (\ref{Chain1}). As regards the
seventh bracket, the target $t[1^2_x,{\bf b}]=\ell_1\ell_2(x,{\bf
b})=\ell_2(x,\ell_1{\bf b}),$ due to (\ref{Chain3}), must coincide
with $[1_x,t{\bf b}]=\ell_2(x,\ell_1{\bf b})$. The targets of the two
last brackets vanish and $[{\bf f},t{\bf b}]=\ell_2({\bf
f},\ell_1\ell_1{\bf b})=0$ and $[t{\bf a},t{\bf b}]=\ell_2(\ell_1{\bf
a},\ell_1\ell_1{\bf b})=0.$\medskip

It is straightforwardly checked that the maps $\mathfrak{N}$ and $\mathfrak{G}$ are inverses.\end{proof}

Note that ${\mathfrak{N}}$ actually assigns to any antisymmetric
bilinear 2-functor a class of graded antisymmetric chain maps that
coincide outside $V_1\times V_1$ and whose restrictions to
$V_1\times V_1$ are valued in 2-cocycles and vanish when evaluated
on a 1-coboundary. The map $\mathfrak{N}$, with values in chain maps,
is well-defined thanks to a canonical choice of a representative
of this class. Conversely, the values on $V_1\times V_1$ of the
considered chain map cannot be encrypted into the associated
2-functor, only the mentioned cohomological conditions are of
importance. Without the canonical choice, the map $\mathfrak{G}$ would
not be injective.

\begin{rem} The categorical counterpart of $L_{\infty}$-condition $n=2$ is the functor condition on compositions.\end{rem}

\begin{rem} A 2-term Lie infinity algebra (resp. a Lie 2-algebra) can be viewed as a 3-term Lie infinity algebra (resp. a Lie 3-algebra). The preceding correspondence then of course reduces to the correspondence of \cite{BC04}.\end{rem}

\subsubsection{Jacobiator -- third structure map}

We suppose that we already constructed $(V,\ell_1,\ell_2)$ from
$(L,[-,-])$ or $(L,[-,-])$ from $(V,\ell_1,\ell_2)$.

\begin{lem}\label{Lem3}
There exists a bijective correspondence between skew-symmetric trilinear natural 2-transformations $J:[[-,-],\bullet]\Rightarrow [[-,\bullet],-]+
[-,[-,\bullet]]$ and graded antisymmetric trilinear weight 1 maps
$\ell_3:V^{\times 3}\to V$ that verify $L_{\infty}$-condition
$n=3$ and vanish in total degree 1.
\end{lem}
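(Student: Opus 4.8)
The plan is to follow the template of the proof of Lemma~\ref{Lem2}: isolate the free data carried by a natural $2$-transformation $J$, write out the structural axioms it must obey, and match these --- degree by degree --- against the components of $L_\infty$-condition $n=3$. Throughout I keep the identifications of Remark~\ref{BasicIdentification}, so that $L_m=\oplus_{i=0}^m V_i$ with source, target, identity and composition given by (\ref{source})--(\ref{composition}), and I use that $F=[[-,-],\bullet]$ and $G=[[-,\bullet],-]+[-,[-,\bullet]]$ are genuine trilinear $2$-functors $L^{\times 3}\to L$, obtained by composing the bilinear bracket $2$-functor $[-,-]$ with itself.

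First I would describe a skew-symmetric trilinear natural $2$-transformation $J:F\Rightarrow G$ by its components. A $2$-functor sends $0$-cells to $0$-cells, and naturality determines $J$ on $1$- and $2$-cells from its values on triples of $0$-cells; so $J$ amounts to a trilinear map $(x,y,z)\in V_0^{\times 3}\mapsto J_{xyz}\in L_1=V_0\oplus V_1$. The source requirement forces the $V_0$-component to be $F(x,y,z)=[[x,y],z]$, so the only free datum is a trilinear weight-$1$ map $j:V_0^{\times 3}\to V_1$ with $J_{xyz}=([[x,y],z],\,j(x,y,z))$, and skew-symmetry of $J$ is exactly antisymmetry of $j$. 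I then set $\ell_3:=j$ on $V_0^{\times 3}$ and $\ell_3:=0$ in total degree $1$ --- the latter being forced, since $\ell_3$ has weight $1$ and a natural $2$-transformation between these $2$-functors carries no data in those degrees --- and, conversely, from any such $\ell_3$ I build $J$ by $J_{xyz}:=([[x,y],z],\,\ell_3(x,y,z))$.

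The core is a degree-by-degree analysis, using that $L_\infty$-condition $n=3$ is degree-preserving and hence has nontrivial content only in total degrees $0$, $1$, $2$. In total degree $0$ the target requirement $t(J_{xyz})=G(x,y,z)$, i.e. $[[x,y],z]+\ell_1\ell_3(x,y,z)=[[x,z],y]+[x,[y,z]]$, is precisely the $n=3$ identity on degree-$0$ triples (the $\ell_3\circ\ell_1$ summand dies since $\ell_1$ vanishes on $V_0$, leaving only $\ell_1\circ\ell_3$ and the $\ell_2\circ\ell_2$ Jacobi sum). For the remaining degrees I invoke the trilinear analogue of the reduction established in Section~\ref{KMPSection2} --- naturality need only be tested on tuples with a single non-identity $2$-cell --- and split it by linearity into the cases $\alpha=1^2_x$ (vacuous), $\alpha=1_{\mathbf f}$ with $\mathbf f\in V_1$, and $\alpha=\mathbf a\in V_2$. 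Unwinding naturality against $1_{\mathbf f}$ via the bracket formula (\ref{BracketFromEll2}) and the chain-map identities for $\ell_2$ yields exactly the $n=3$ identity in total degree $1$ (the $\ell_1\circ\ell_3$ term drops since $\ell_3$ vanishes there, leaving $\ell_3(\ell_1\mathbf f,-,-)$ balanced against $\ell_2\circ\ell_2$ terms). Unwinding naturality against $\mathbf a$ yields the $n=3$ identity on a triple with one $V_2$-entry; the only other total-degree-$2$ case --- two entries in $V_1$ --- holds automatically because $\ell_2$ vanishes on $V_1\times V_1$ by Lemma~\ref{Lem2}, so every $\ell_2\circ\ell_2$ summand in it is zero. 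Combined with trilinearity (inherited from that of $\ell_3$ and of $[-,-]$) and antisymmetry, this shows the two assignments $J\mapsto\ell_3$ and $\ell_3\mapsto J$ are mutually inverse.

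The main obstacle is the sign bookkeeping: one must compute $F(\alpha,\beta,\gamma)$ and $G(\alpha,\beta,\gamma)$ explicitly from (\ref{BracketFromEll2}) for $\alpha\in\{1_{\mathbf f},\mathbf a\}$ (the other two slots double identities), carry out the $\circ_0$-compositions, and check that the identities so obtained carry precisely the signs $\chi(\sigma)(-1)^{i(j-1)}$ of (\ref{LieInftyCond}) for $n=3$ --- the Koszul signs turn out trivial in every case one must unwind, since no two odd-degree arguments ever meet, but this has to be verified. A secondary point deserving a line is that the values of $j$ on $V_0^{\times 3}$ genuinely exhaust the freedom in $J$, i.e. that extending $J$ from $0$-cells to higher cells introduces no new choices; this follows from naturality but should be made explicit, in the spirit of the remark after Lemma~\ref{Lem2} on canonical representatives.
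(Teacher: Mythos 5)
Your proposal is correct and follows essentially the same route as the paper: identify $J$ with its $V_1$-component on triples of $0$-cells, read the target condition as the degree-$0$ part (\ref{31}) of the $n=3$ condition, and match naturality tested against $1_{\mathbf f}$ and $\mathbf a$ with the degree-$1$ and degree-$2$ parts (\ref{32})--(\ref{33}), using the vanishing of $\ell_3$ in total degree $1$ and of $\ell_2$ on $V_1\times V_1$ to make the identifications exact and the two assignments mutually inverse. The only sentence to tighten is your treatment of (\ref{33})--(\ref{34}): besides the $\ell_2\circ\ell_2$ summands killed by Lemma~\ref{Lem2}, the terms $\ell_3(\ell_1\mathbf a,x,y)$ and $\ell_3(\ell_1\mathbf f,x,\mathbf g)$ are evaluations of $\ell_3$ in total degree $1$ and vanish only by the degree-$1$ vanishing hypothesis --- precisely the cocycle/coboundary point the paper makes explicit.
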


\begin{proof} A skew-symmetric trilinear natural
2-transformation $J:[[-,-],\bullet]\Rightarrow [[-,\bullet],-]+[-,[-,\bullet]]$
is a map that assigns to any $(x,y,z)\in L_0^{\times 3}$ a unique $J_{xyz}:[[x,y],z]\to [[x,z],y]+[x,[y,z]]$
in $L_1$, such that for any $\alpha=(z,{\bf f},{\bf a})\in L_2$, we have
$$[[1^2_x,1^2_y],\alpha]\circ_01_{J_{x,y,\,t^2\alpha}}=1_{J_{x,y,\,s^2\alpha}}\circ_0\left([[1^2_x,\alpha],1^2_y]+[1^2_x,[1^2_y,\alpha]]\right)$$
(as well as similar equations pertaining to naturality with respect
to the other two variables). A short computation shows that the
last condition decomposes into the following two requirements
on the $V_1$- and the $V_2$-component:
\begin{equation}
\label{3a}{\bf
J}_{x,y,\,t{\bf  f}}+[1_{[x,y]},{\bf f}]=[[1_x,{\bf
f}],1_y]+[1_x,[1_y,{\bf f}]],\end{equation}
\begin{equation}\label{3b}[1^2_{[x,y]},{\bf a}]=[[1^2_x,{\bf a}],1^2_y]+[1^2_x,[1^2_y,{\bf a}]].
\end{equation}\smallskip

A graded antisymmetric trilinear weight $1$ map $\ell_3: V^{\times 3}\to V$ verifies $L_{\infty}$-condition $n=3$ if
\begin{equation}\label{LieInfty3}
\ell_1(\ell_3(u,v,w))+\ell_2(\ell_2(u,v),w)-(-1)^{vw}\ell_2(\ell_2(u,w),v)+(-1)^{u(v+w)}\ell_2(\ell_2(v,w),u)+
$$ $$+\ell_3(\ell_1(u),v,w)
-(-1)^{uv}\ell_3(\ell_1(v),u,w)+(-1)^{w(u+v)}\ell_3(\ell_1(w),u,v)=0,
\end{equation} for any homogeneous $u,v,w\in V$.
This condition is trivial for any arguments of total degree $d=u+v+w>2$.
For $d=0$, we write $(u,v,w)=(x,y,z)\in V_0^{\times 3}$, for $d=1$,
we consider $(u,v)=(x,y)\in V_0^{\times 2}$ and $w={\bf f}\in V_1$, for $d=2$,
either $(u,v)=(x,y)\in V_0^{\times 2}$ and $w={\bf a}\in V_2$, or
$u=x\in V_0$ and $(v,w)=({\bf f},{\bf g})\in V_1^{\times 2}$, so that Equation (\ref{LieInfty3}) reads
\begin{equation}\label{31}\ell_1(\ell_3(x,y,z))+\ell_2(\ell_2(x,y),z)-\ell_2(\ell_2(x,z),y)+\ell_2(\ell_2(y,z),x)=0,\end{equation}
\begin{equation}\label{32}\ell_1(\ell_3(x,y,{\bf f}))+\ell_2(\ell_2(x,y),{\bf f})-\ell_2(\ell_2(x,{\bf f}),y)+\ell_2(\ell_2(y,{\bf f}),x)
+\ell_3(\ell_1({\bf f}),x,y)=0,\end{equation}
\begin{equation}\ell_2(\ell_2(x,y),{\bf a})-\ell_2(\ell_2(x,{\bf a}),y)+\ell_2(\ell_2(y,{\bf a}),x)
+\ell_3(\ell_1({\bf a}),x,y)=0,\label{33}\end{equation}
\begin{equation}\ell_2(\ell_2(x,{\bf f}),{\bf g})+\ell_2(\ell_2(x,{\bf g}),{\bf f})+\ell_2(\ell_2({\bf f},{\bf g}),x) -\ell_3(\ell_1({\bf f}),x,{\bf g})-\ell_3(\ell_1({\bf g}),x,{\bf f})=0.\label{34}\end{equation}\smallskip

It is easy to associate to any such map $\ell_3$ a unique Jacobiator ${\mathfrak{G}}(\ell_3)=J$: it suffices to set $J_{xyz}:=([[x,y],z],\ell_3(x,y,z))\in L_1$, for
any $x,y,z\in L_0$. Equation (\ref{31}) means that $J_{xyz}$ has the correct
target. Equations (\ref{3a}) and (\ref{3b}) exactly correspond to
Equations (\ref{32}) and (\ref{33}), respectively, {\it if we
assume that in total degree $d=1$, $\ell_3$ is valued
in 2-cocycles and vanishes when evaluated on a 1-coboundary}. These conditions are verified if we start from a structure map $\ell_3$ that vanishes on any arguments of total degree 1.

\begin{rem} Remark that the values $\ell_3(x,y,{\bf f})\in V_2$ cannot be encoded
in a natural 2-transformation $J:L_0^{\times 3}\ni (x,y,z)\to
J_{xyz}\in L_1$ (and that the same holds true for Equation (\ref{34}), whose first three terms are zero, since we started from a map $\ell_2$ that vanishes on $V_1\times
V_1$).\end{rem}

{\it Proof of Lemma \ref{Lem3} (continuation)}. Conversely, to any Jacobiator $J$ corresponds a unique map ${\mathfrak{N}}(J)=\ell_3$. Just set $\ell_3(x,y,z):={\bf J}_{xyz}\in V_1$ and $\ell_3(x,y,{\bf f})=0$, for all $x,y,z\in V_0$ and ${\bf f}\in V_1$ (as $\ell_3$ is required to have weight 1, it must vanish if evaluated on elements of degree $d\ge 2$).\smallskip

Obviously the composites $\mathfrak{NG}$ and $\mathfrak{GN}$ are identity maps.\end{proof}

\begin{rem} The naturality condition is, roughly speaking, the categorical analogue of the $L_{\infty}$-condition $n=3$.\end{rem}

\subsubsection{Identiator -- fourth structure map}

For $x,y,z,u\in L_0$, we set
\begin{equation}\label{Eta}\eta_{xyzu}:=[J_{x,y,z},1_u]\circ_0(J_{[x,z],y,u}+J_{x,[y,z],u})\circ_0([J_{xzu},1_y]+1)\circ_0([1_x,J_{yzu}]+1)\in L_1\end{equation} and
\begin{equation}\label{Epsilon}\varepsilon_{xyzu}:=
J_{[x,y],z,u}\circ_0([J_{xyu},1_z]+1)\circ_0(J_{x,[y,u],z}+J_{[x,u],y,z}+J_{x,y,[z,u]})\in L_1,\end{equation} see Definition \ref{Lie3Alg}. The identities $1$ are uniquely determined by the sources of the involved factors. The quadrilinear natural 2-transformations $\eta$ and $\varepsilon$ are actually the left and right hand composites of the Baez-Crans octagon that pictures the coherence law of a Lie 2-algebra, see \cite{BC04}, Definition 4.1.3. They connect the quadrilinear 2-functors $F,G:L\times L\times L\times L\to L$, whose values at $(x,y,z,u)$ are given by the source and the target of the 1-cells $\eta_{xyzu}$ and $\varepsilon_{xyzu}$, as well as by the top and bottom sums of triple brackets of the mentioned octagon.

\begin{lem} The skew-symmetric quadrilinear 2-modifications $\mu:\eta\Rrightarrow \varepsilon$ are in 1-to-1 correspondence with the graded antisymmetric quadrilinear weight 2 maps $\ell_4:V^{\times 4}\to V$ that verify the $L_{\infty}$-condition $n=4$.\end{lem}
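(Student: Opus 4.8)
The plan is to follow the same two-step scheme used in the proofs of Lemmas~\ref{Lem2} and~\ref{Lem3}. I assume that the data $(V,\ell_1,\ell_2,\ell_3)$ and $(L,[-,-],J)$ have already been built from one another as in those lemmas, so that in particular the $\eta$, $\varepsilon$ of Equations~(\ref{Eta}) and~(\ref{Epsilon}) are well-defined quadrilinear natural $2$-transformations $F\Rightarrow G$, and $\eta_{xyzu},\varepsilon_{xyzu}$ are parallel $1$-cells $F(x,y,z,u)\to G(x,y,z,u)$ in $L$. First I would record what a skew-symmetric quadrilinear $2$-modification $\mu:\eta\Rrightarrow\varepsilon$ amounts to once $L_m$ is identified with $\bigoplus_{i=0}^m V_i$ via Proposition~\ref{UniqueCatStr}. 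Such a $\mu$ assigns to each $(x,y,z,u)\in V_0^{\times 4}$ a $2$-cell $\mu_{xyzu}:\eta_{xyzu}\Rrightarrow\varepsilon_{xyzu}$ in $L_2$, quadrilinear in $(x,y,z,u)$ and with antisymmetric $V_2$-component. Because $\eta_{xyzu},\varepsilon_{xyzu}\in L_1=V_0\oplus V_1$, the condition $s(\mu_{xyzu})=\eta_{xyzu}$ fixes the $V_0$- and $V_1$-components of $\mu_{xyzu}$ to be those of $\eta_{xyzu}$, so $\mu_{xyzu}$ is completely determined by its remaining component $\ell_4(x,y,z,u):=\mu_{xyzu}^{(2)}\in V_2$, and the only further source/target requirement, $t(\mu_{xyzu})=\varepsilon_{xyzu}$, reads
$$\ell_1\big(\ell_4(x,y,z,u)\big)=\varepsilon_{xyzu}^{(1)}-\eta_{xyzu}^{(1)}.$$
Expanding the $V_1$-components of the octagon composites $\eta_{xyzu}$, $\varepsilon_{xyzu}$ (which, by Lemmas~\ref{Lem2} and~\ref{Lem3}, are built solely from $\ell_2$ and $\ell_3$) shows this is precisely the $L_\infty$-condition~(\ref{LieInftyCond}) with $n=4$ evaluated on four arguments of degree $0$.

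Next I would analyse the modification condition itself. By the discussion following Definition~\ref{2-modification}, applied to the product of four copies of $L$, it suffices to impose it on quadruples of $2$-cells in which all but one entry is an identity $1^2_{-}$; skew-symmetry of $\eta$, $\varepsilon$ and of $\ell_4$ makes the four resulting equations equivalent, so I may take the non-identity entry to be some $\alpha=(u,{\bf f},{\bf a})\in L_2$ in the last slot. Writing $F(1^2_x,1^2_y,1^2_z,\alpha)\circ_0\mu_{x,y,z,t^2\alpha}=\mu_{x,y,z,s^2\alpha}\circ_0 G(1^2_x,1^2_y,1^2_z,\alpha)$, substituting $s^2\alpha=u$ and $t^2\alpha=u+\ell_1{\bf f}$, using linearity of $\ell_4$ in its last slot together with the composition formula~(\ref{composition}), and splitting into $V_0$-, $V_1$- and $V_2$-components: the $V_0$- and $V_1$-parts hold automatically, being consequences of the naturality of $\eta,\varepsilon$ and of the $L_\infty$-conditions $n\le 3$ already in force, whereas the $V_2$-part collapses to an equation among the $V_2$-components of $F(\ldots,\alpha)$, $G(\ldots,\alpha)$ and of $\ell_4(\ell_1{\bf f},x,y,z)$. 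After the sign bookkeeping this is exactly the $L_\infty$-condition with $n=4$ in total degree $1$ — note that $\ell_4$ itself vanishes there for weight reasons, so only its value $\ell_4(\ell_1{\bf f},x,y,z)$ on four degree-$0$ elements, together with the relevant $\ell_2$- and $\ell_3$-terms, appears. Since $\ell_4$ has weight $2$ it must vanish on any arguments of total degree $\ge 1$, and the $L_\infty$-condition $n=4$ is trivial in total degree $\ge 2$; thus these two instances are the only non-trivial ones.

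With this in hand the correspondence is immediate. From a graded antisymmetric quadrilinear weight-$2$ map $\ell_4$ verifying the $L_\infty$-condition $n=4$ I define $\mathfrak{G}(\ell_4)=\mu$ by $\mu_{xyzu}:=\big(\eta_{xyzu}^{(0)},\eta_{xyzu}^{(1)},\ell_4(x,y,z,u)\big)\in L_2$: the degree-$0$ instance guarantees that $\mu_{xyzu}$ has the correct target, quadrilinearity and antisymmetry of the $V_2$-component are inherited from $\ell_4$, and the degree-$1$ instance is exactly the modification condition. Conversely, from a $2$-modification $\mu$ I set $\mathfrak{N}(\mu)=\ell_4$ with $\ell_4(x,y,z,u):=\mu_{xyzu}^{(2)}$ for $x,y,z,u\in V_0$ and $\ell_4=0$ on arguments of total degree $\ge 1$ (forced by weight $2$); the two instances just discussed show that $\ell_4$ verifies the $L_\infty$-condition $n=4$. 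The maps $\mathfrak{N}$ and $\mathfrak{G}$ are visibly inverse to one another. The genuinely laborious step — the main obstacle — is none of the structural items above but the explicit, sign-careful matching of the $V_1$- and $V_2$-components of the octagon composites $\eta_{xyzu},\varepsilon_{xyzu}$ and of $F,G$ evaluated at $(1^2_x,1^2_y,1^2_z,\alpha)$ with the individual summands of the $L_\infty$-identity~(\ref{LieInftyCond}) for $n=4$; I would organise this by reusing the componentwise expansions of $[-,-]$ and $J$, i.e. of $\ell_2$ and $\ell_3$, obtained in Lemmas~\ref{Lem2} and~\ref{Lem3}.
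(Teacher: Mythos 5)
Your plan is, in structure, the same as the paper's proof: you identify $L_2$ with $V_0\oplus V_1\oplus V_2$, note that the source condition forces the $V_0$- and $V_1$-components of $\mu_{xyzu}$ to be those of $\eta_{xyzu}$ (so $\mu_{xyzu}=1_{\eta_{xyzu}}+\mathbf{m}_{xyzu}$ is determined by its $V_2$-component), read the target condition as the degree-$0$ instance of the $L_{\infty}$-condition $n=4$, and reduce the modification condition (with all but one argument an identity) to its $V_2$-part, which -- after using $2$-naturality of $\eta$, Equation (\ref{3b}), to cancel the $F$- and $G$-contributions -- becomes $\ell_4(\ell_1(\mathbf{f}),x,y,z)=0$, i.e.\ Equation (\ref{4b}); weight and degree reasons kill all other instances. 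This is exactly the paper's route (Equations (\ref{41})--(\ref{4b})).

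The one step that fails as written is the sign of your dictionary. You set $\ell_4(x,y,z,u):=\mathbf{m}_{xyzu}$ and $\mathfrak{G}(\ell_4)_{xyzu}=(\eta^{(0)}_{xyzu},\eta^{(1)}_{xyzu},+\ell_4(x,y,z,u))$, so your target condition reads $\ell_1\ell_4=\mathbf{e}_{xyzu}-\mathbf{h}_{xyzu}$. But the degree-$0$ instance of Condition (\ref{LieInfty4}) is Equation (\ref{KMP4a}), $\ell_1(\ell_4(x,y,z,u))-\mathbf{h}_{xyzu}+\mathbf{e}_{xyzu}=0$, i.e.\ $\ell_1\ell_4=\mathbf{h}_{xyzu}-\mathbf{e}_{xyzu}$; this is precisely why the paper takes the $V_2$-component of $\mu_{xyzu}$ to be $-\ell_4(x,y,z,u)$, equivalently $\ell_4(x,y,z,u)=-\mathbf{m}_{xyzu}$. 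The discrepancy cannot be absorbed into conventions: Condition (\ref{LieInfty4}) is not invariant under $\ell_4\mapsto-\ell_4$, because the $\ell_2\ell_3$- and $\ell_3\ell_2$-terms (which assemble into $-\mathbf{h}+\mathbf{e}$) are unchanged. Concretely, if $\ell_4$ satisfies the $n=4$ condition, your $\mathfrak{G}(\ell_4)_{xyzu}$ has target $(F(x,y,z,u),\mathbf{h}_{xyzu}+\ell_1\ell_4(x,y,z,u))=(F(x,y,z,u),2\mathbf{h}_{xyzu}-\mathbf{e}_{xyzu})\neq\varepsilon_{xyzu}$ in general, so it is not a $2$-modification $\eta\Rrightarrow\varepsilon$ and the claimed bijection breaks. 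This is exactly what the ``sign bookkeeping'' you defer must produce: once the minus sign is inserted on either side of the correspondence, your argument coincides with the paper's.
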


\begin{proof}  A skew-symmetric quadrilinear 2-modification
$\mu:\eta\Rrightarrow \varepsilon$ maps every tuple $(x,y,z,u)\in
L_0^{\times 4}$ to a unique $\mu_{xyzu}:\eta_{xyzu}\Rightarrow
\varepsilon_{xyzu}$ in $L_2$, such that, for any
$\alpha=(u,{\bf f},{\bf a})\in L_2$, we have
\begin{equation} F(1^2_x,1^2_y,1^2_z,\alpha)\circ_0\mu_{x,y,z,u+t{\bf f}}=\mu_{xyzu}\circ_0G(1^2_x,1^2_y,1^2_z,\alpha)\label{ModCond}\end{equation} (as well as similar
results concerning naturality with respect to the three other
variables). If we decompose $\mu_{xyzu}\in L_2=V_0\oplus V_1\oplus V_2$,
$$\mu_{xyzu}=(F(x,y,z,u),{\bf h}_{xyzu},{\bf m}_{xyzu})=1_{\eta_{xyzu}}+{\bf m}_{xyzu},$$ Condition (\ref{ModCond}) reads
\begin{equation}\label{41}F(1_x,1_y,1_z,{\bf f})+{\bf h}_{x,y,z,u+t{\bf f}}={\bf h}_{xyzu}+G(1_x,1_y,1_z,{\bf f}),\end{equation}
\begin{equation}\label{42}F(1^2_x,1^2_y,1^2_z,{\bf a})+{\bf m}_{x,y,z,u+t{\bf f}}={\bf m}_{xyzu}+G(1^2_x,1^2_y,1^2_z,{\bf a}).\end{equation}\smallskip

On the other hand, a graded antisymmetric quadrilinear weight 2 map $\ell_4:V^{\times 4}\to V$, and more
precisely $\ell_4:V_0^{\times 4}\to V_2$, verifies $L_{\infty}$-condition $n=4$, if
$$\ell_1(\ell_4(a,b,c,d))$$
$$-\ell_2(\ell_3(a,b,c),d)+(-1)^{cd}\ell_2(\ell_3(a,b,d),c)-(-1)^{b(c+d)}\ell_2(\ell_3(a,c,d),b)$$
$$+(-1)^{a(b+c+d)}\ell_2(\ell_3(b,c,d),a)+
\ell_3(\ell_2(a,b),c,d)-(-1)^{bc}\ell_3(\ell_2(a,c),b,d)$$
$$+(-1)^{d(b+c)}\ell_3(\ell_2(a,d),b,c)+(-1)^{a(b+c)}\ell_3(\ell_2(b,c),a,d)$$
$$-(-1)^{ab+ad+cd}\ell_3(\ell_2(b,d),a,c)+(-1)^{(a+b)(c+d)}\ell_3(\ell_2(c,d),a,b)$$
$$-\ell_4(\ell_1(a),b,c,d)+(-1)^{ab}\ell_4(\ell_1(b),a,c,d)$$
\begin{equation}\label{LieInfty4}-(-1)^{c(a+b)}\ell_4(\ell_1(c),a,b,d)+(-1)^{d(a+b+c)}\ell_4(\ell_1(d),a,b,c)=0,\end{equation} for all homogeneous $a,b,c,d\in V$.
The condition is trivial for $d\ge 2$. For $d=0$, we write
$(a,b,c,d)=(x,y,z,u)\in V_0^{\times 4}$, and, for $d=1$, we take
$(a,b,c,d)=(x,y,z,{\bf f})\in V_0^{\times 3}\times V_1$, so that -- since $\ell_2$ and $\ell_3$ vanish on $V_1\times
V_1$ and for $d=1$, respectively -- Condition (\ref{LieInfty4}) reads
\begin{equation}\label{KMP4a}
\ell_1(\ell_4(x,y,z,u))-{\bf h}_{xyzu}+{\bf e}_{xyzu}=0,
\end{equation}
\begin{equation}\label{4b} \ell_4(\ell_1({\bf f}),x,y,z)=0,\end{equation} where
${\bf h}_{xyzu}$ and ${\bf e}_{xyzu}$ are the $V_1$-components of $\eta_{xyzu}$ and $\varepsilon_{xyzu}$, see Equations (\ref{Eta}) and (\ref{Epsilon}).\medskip

We can associate to any such map $\ell_4$ a unique 2-modification ${\mathfrak{G}}(\ell_4)=\mu$, $\mu:\eta\Rrightarrow \varepsilon$. It suffices to set, for $x,y,z,u\in
L_0$, $$\mu_{xyzu}=(F(x,y,z,u),{\bf h}_{xyzu},-\ell_4(x,y,z,u))\in
L_2.$$ In view of Equation (\ref{KMP4a}), the target of this 2-cell is
$$t\mu_{xyzu}=(F(x,y,z,u),{\bf h}_{xyzu}-\ell_1(\ell_4(x,y,z,u)))=\varepsilon_{xyzu}\in L_1.$$ Note now that the
2-naturality equations (\ref{3a}) and (\ref{3b}) show that 2-naturality of $\eta:F\Rightarrow G$ means that
$$F(1_x,1_y,1_z,{\bf f})+{\bf h}_{x,y,z,u+t{\bf f}}={\bf h}_{xyzu}+G(1_x,1_y,1_z,{\bf f}),$$
$$F(1^2_x,1^2_y,1^2_z,{\bf a})=G(1^2_x,1^2_y,1^2_z,{\bf a}).$$
When comparing with Equations (\ref{41}) and (\ref{42}), we conclude
that $\mu$ is a 2-modification if and only if $\ell_4(\ell_1({\bf f}),x,y,z)=0,$ which is exactly Equation (\ref{4b}).\medskip

Conversely, if we are given a skew-symmetric quadrilinear 2-modification
$\mu:\eta\Rrightarrow\varepsilon$, we
define a map ${\mathfrak{N}}(\mu)=\ell_4$ by setting
$\ell_4(x,y,z,u)=-{\bf m}_{xyzu}$, with self-explaining notations. $L_{\infty}$-condition $n=4$ is equivalent with Equations
(\ref{KMP4a}) and (\ref{4b}). The first means that $\mu_{xyzu}$ must
have the target $\varepsilon_{xyzu}$ and the second requires that ${\bf m}_{t{\bf f},x,y,z}$ vanish -- a consequence of the 2-naturality of
$\eta$ and of Equation (\ref{42}).\medskip

The maps $\mathfrak{N}$ and $\mathfrak{G}$ are again inverses.\end{proof}

\subsubsection{Coherence law -- $L_{\infty}$-condition $n=5$}

\begin{lem} Coherence law (\ref{CohLaw0}) is equivalent to $L_{\infty}$-condition $n=5$.\end{lem}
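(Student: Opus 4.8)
The plan is to run the same $\mathfrak{N}/\mathfrak{G}$ machinery used in Lemmas \ref{Lem2} and \ref{Lem3}, now at the top level: assume we have already produced $(V,\ell_1,\ell_2,\ell_3,\ell_4)$ from $(L,[-,-],J,\mu)$ or conversely, so that the only thing left to match is the coherence law (\ref{CohLaw0}) against $L_\infty$-condition $n=5$. First I would record what the latter condition actually says here. By a weight count -- each composite $\ell_j(\ell_i(-),-)$ occurring in (\ref{LieInftyCond}) for $i+j=6$ raises total degree by $i+j-4=2$, while $V$ lives in degrees $0,1,2$ -- $L_\infty$-condition $n=5$ is trivial unless all five homogeneous arguments lie in $V_0$, in which case it is a single $V_2$-valued identity. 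Among the summand types $\ell_1\ell_5,\ell_2\ell_4,\ell_3\ell_3,\ell_4\ell_2,\ell_5\ell_1$ the first and last vanish since $\ell_5=0$, and the $\ell_3\ell_3$ terms vanish as well: the inner $\ell_3$ of three degree-$0$ arguments has degree $1$, so the outer $\ell_3$ is evaluated on a triple of total degree $1$, where $\ell_3\equiv 0$ by Lemma \ref{Lem3}. Hence, for $(x,y,z,u,v)\in V_0^{\times 5}$, $L_\infty$-condition $n=5$ collapses to
\[
\sum \pm\,\ell_2\!\big(\ell_4(-,-,-,-),-\big)+\sum \pm\,\ell_4\!\big(\ell_2(-,-),-,-,-\big)=0\ \in V_2,
\]
the sums running over the relevant $(2,3)$- and $(1,4)$-shuffles of $x,y,z,u,v$ with the signs $\chi(\sigma)(-1)^{i(j-1)}$.

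On the categorical side, (\ref{CohLaw0}) is, for each $(x,y,z,u,v)\in L_0^{\times 5}$, an equality $\alpha_1\circ_1\alpha_4^{-1}=\alpha_3\circ_1\alpha_2^{-1}$ of $2$-cells of $L$, where $\alpha_1$--$\alpha_4$ are the composites of Definitions \ref{Alpha1}--\ref{Alpha4}, each built by $0$- and $1$-composing whiskered copies of the Identiator, i.e. $2$-cells of the form $[\mu,1^2]$, $[1^2,\mu]$ and $\mu$ itself, their source/target $1$-cells being the edges of the Baez--Crans octagon assembled from $\eta$ and $\varepsilon$ of (\ref{Eta})--(\ref{Epsilon}). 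Identifying $L_2=V_0\oplus V_1\oplus V_2$ as in Proposition \ref{UniqueCatStr} and Remark \ref{BasicIdentification}, I would write $\alpha_i=(a_0^{(i)},a_1^{(i)},a_2^{(i)})$. Two observations make this tractable. First, by Equation (\ref{composition}) composition along a $1$-cell merely adds the $V_2$-components, and passing to the $\circ_1$-inverse negates the $V_2$-component; so (\ref{CohLaw0}) splits into its $V_0$-part, its $V_1$-part, and the $V_2$-equation $a_2^{(1)}+a_2^{(2)}-a_2^{(3)}-a_2^{(4)}=0$. Second, the $V_0$- and $V_1$-parts hold automatically: both composites $\alpha_1\circ_1\alpha_4^{-1}$ and $\alpha_3\circ_1\alpha_2^{-1}$ are parallel $2$-cells by construction of the octagon, with the same source and target $1$-cells (made from the $J$'s and brackets), so their $0$- and $1$-cell data coincide a priori. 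Thus (\ref{CohLaw0}) is equivalent to the single $V_2$-identity just displayed.

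It then remains to compute the $a_2^{(i)}$. For this I would substitute $\mu_{xyzu}=(F(x,y,z,u),{\bf h}_{xyzu},-\ell_4(x,y,z,u))$ and $J_{xyz}=([[x,y],z],\ell_3(x,y,z))$ from the proofs of Lemmas \ref{Lem2} and \ref{Lem3}, expand every bracket by formula (\ref{BracketFromEll2}) -- so that e.g. $[\mu_{xyzu},1^2_v]$ contributes $-\ell_2(\ell_4(x,y,z,u),v)$ in its $V_2$-slot, $[1^2_x,\mu_{yzuv}]$ contributes $-\ell_2(x,\ell_4(y,z,u,v))$, and a ``pure'' factor $\mu_{[x,y],z,u,v}$ contributes $-\ell_4(\ell_2(x,y),z,u,v)$ -- and use the target/chain-map compatibilities (\ref{FunComp1})--(\ref{FunComp3}) and (\ref{Chain1})--(\ref{Chain4}) to normalize the remaining terms. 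Collecting, $a_2^{(1)}+a_2^{(2)}-a_2^{(3)}-a_2^{(4)}$ becomes exactly the left-hand side of the reduced $L_\infty$-condition $n=5$ above, up to a global sign coming from our conventions for $\mu$ and for $\circ_1$-inverses. Since $L_\infty$-condition $n=5$ is the last nontrivial one (everything with $n>5$ being trivial for degree reasons), this closes the chain of five lemmas and completes the proof of Theorem \ref{MainTheo}.

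The real obstacle is this last step: the octagon composites $\alpha_1$--$\alpha_4$ each unwind into a sizeable number of $0$-composed $2$-cells, and converting each to its $V_2$-component through (\ref{BracketFromEll2}) produces many terms that must be reorganized, via the identities of Lemmas \ref{Lem2} and \ref{Lem3} and careful shuffle-and-Koszul sign accounting, into the few shuffle-indexed terms of $L_\infty$-condition $n=5$. There is no conceptual difficulty, and -- in contrast to Lemmas \ref{Lem2} and \ref{Lem3} -- no cohomological slack: the coherence law and the $n=5$ condition both reduce to quintilinear maps $V_0^{\times 5}\to V_2$, so the correspondence is a genuine equivalence with no canonical choices involved. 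The work is entirely in the signs and in matching the two explicit expressions term by term.
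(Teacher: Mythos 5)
Your overall strategy is the paper's: identify $L_\infty$-condition $n=5$ as a single quintilinear $V_0^{\times 5}\to V_2$ identity in $\ell_2\ell_4$ and $\ell_4\ell_2$ terms (the $\ell_3\ell_3$ terms indeed drop because $\ell_3$ vanishes in total degree 1), decompose the coherence law into $V_0$-, $V_1$- and $V_2$-components, observe that the $V_0$-part is trivial, and match the $V_2$-part with Equation (\ref{51}) after substituting $\mu_{xyzu}=(F(x,y,z,u),{\bf h}_{xyzu},-\ell_4(x,y,z,u))$, $J_{xyz}=([[x,y],z],\ell_3(x,y,z))$ and expanding brackets via (\ref{BracketFromEll2}). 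But there is a genuine gap at the step you dismiss as automatic. First, note that the coherence law (\ref{CohLaw0}) is the equation $\alpha_1+\alpha_4^{-1}=\alpha_3+\alpha_2^{-1}$ in the vector space $L_2$, not a vertical composite $\alpha_1\circ_1\alpha_4^{-1}=\alpha_3\circ_1\alpha_2^{-1}$; its $V_1$-component is ${\bf s}_1+{\bf t}_4={\bf s}_3+{\bf t}_2$ (see (\ref{CohLawComp})), i.e. a genuine equation between sums of $V_1$-components of source/target $1$-cells, so an appeal to ``both sides are parallel $2$-cells, hence their $0$- and $1$-cell data coincide a priori'' is not available — and even for the composite reading, the equality of the relevant boundary $1$-cells is exactly what has to be proved, not something given by construction.

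Concretely, ${\bf s}_1,{\bf t}_4,{\bf s}_3,{\bf t}_2$ are sums of roughly $25$--$29$ terms each of type $\ell_3\ell_2\ell_2$, $\ell_2\ell_3\ell_2$, $\ell_2\ell_2\ell_3$, and their equality is not an identity of the free structure: after the pairwise cancellations, seven triplets on each side combine into sums of six terms such as
\begin{equation*}
\ell_3(\ell_2(\ell_2(x,y),z),u,v)+\ell_2(\ell_3(x,y,z),\ell_2(u,v))+\ell_2(\ell_2(\ell_3(x,y,z),v),u)
-\ell_2(\ell_2(\ell_3(x,y,z),u),v)-\ell_3(\ell_2(\ell_2(x,z),y),u,v)-\ell_3(\ell_2(x,\ell_2(y,z)),u,v)\;,
\end{equation*}
which vanish only because $\ell_1\ell_3(x,y,z)=t{\bf J}_{xyz}$ can be substituted and the $n=3$ condition in total degree $1$ (Equation (\ref{32})) applies — which in turn uses the standing hypothesis that $\ell_3$ vanishes in total degree $1$ (or, in the other direction, that the constructed $\ell_3$ has this property). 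Without this verification your argument only shows ``coherence law $\Rightarrow$ ($V_2$-equation $=$ condition $n=5$)'' modulo an unproven $V_1$-constraint; if that constraint were not automatically satisfied, the coherence law would be strictly stronger than $L_\infty$-condition $n=5$ and the asserted equivalence would fail. So the missing ingredient is precisely the lengthy $V_1$-bookkeeping, which is the core of the paper's proof rather than a formality.
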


\begin{proof} The sh Lie condition $n=5$ reads,
$$\ell_2(\ell_4(x,y,z,u),v)-\ell_2(\ell_4(x,y,z,v),u)+\ell_2(\ell_4(x,y,u,v),z)-\ell_2(\ell_4(x,z,u,v),y)+\ell_2(\ell_4(y,z,u,v),x)$$
$$+\ell_4(\ell_2(x,y),z,u,v)-\ell_4(\ell_2(x,z),y,u,v)+\ell_4(\ell_2(x,u),y,z,v)-\ell_4(\ell_2(x,v)y,z,u)+\ell_4(\ell_2(y,z),x,u,v)$$
$$-\ell_4(\ell_2(y,u),x,z,v)+\ell_4(\ell_2(y,v),x,z,u)+\ell_4(\ell_2(z,u),x,y,v)-\ell_4(\ell_2(z,v),x,y,u)+\ell_4(\ell_2(u,v),x,y,z)$$
\begin{equation}=0,\label{51}\end{equation} for any $x,y,z,u,v\in V_0$. It is trivial in degree $d\ge 1$. Let us mention that it follows from Equation (\ref{31}) that $(V_0,\ell_2)$ is
a Lie algebra up to homotopy, and from Equation (\ref{33}) that $\ell_2$ is a
representation of $V_0$ on $V_2$. Condition (\ref{51}) then requires
that $\ell_4$ be a Lie algebra 4-cocycle of $V_0$ represented upon
$V_2$.\medskip

The coherence law for the 2-modification $\mu$ corresponds to four different ways
to rebracket the expression $F([x,y],z,u,v)=[[[[x,y],z],u],v]$ by
means of $\mu$, $J$, and $[-,-]$. More precisely, we define, for any tuple
$(x,y,z,u,v)\in L_0^{\times 5}$, four 2-cells
$$\alpha_i:\sigma_i\Rightarrow \tau_i,$$ $i\in\{1,2,3,4\}$, in $L_2$, where
$\sigma_i,\tau_i:A_i\to B_i$. Dependence on the considered tuple is
understood. We omit temporarily also index $i$. Of course, $\sigma$ and $\tau$ read $\sigma=(A,{\bf s})\in L_1$ and $\tau=(A,{\bf t})\in L_1$. $$\mbox{If }\,\alpha=(A,{\bf s},{\bf a})\in L_2,\,\mbox{ we set }\,\alpha^{-1}=(A,{\bf t},-{\bf a})\in L_2,$$ which is,
as easily seen, the inverse of $\alpha$ for composition along
1-cells.

\begin{defi} The {\bf coherence law} for the 2-modification $\mu$ of a Lie 3-algebra $(L,[-,-],J,\mu)$ reads
\begin{equation}\label{CohLaw}\alpha_1+\alpha_4^{-1}=\alpha_3+\alpha_2^{-1},\end{equation} where $\alpha_1$ -- $\alpha_4$ are detailed in the next definitions.\end{defi}

\begin{defi}\label{Alpha1} The {\bf first 2-cell} $\alpha_1$ is given by \begin{equation}\label{CohLaw11} \alpha_1=
1_{11}\circ_0\left(\mu_{x,y,z,[u,v]}+[\mu_{xyzv},1^2_u]\right)\circ_01_{12}\circ_0\left(\mu_{[x,v],y,z,u}+\mu_{x,[y,v],z,u}+\mu_{x,y,[z,v],u}+1^2\right),\end{equation}
where \begin{equation}\label{CohLaw12}
1_{11}=1_{J_{[[x,y],z],u,v}},1_{12}=1_{[J_{x,[z,v],y},1_{u}]+[J_{[x,v],z,y},1_{u}]+[J_{x,z,[y,v]},1_{u}]}+1^2,\end{equation}
and where the $1^2$ are the identity 2-cells associated with
the elements of $L_0$ provided by the composability
condition.\end{defi}

For instance, the squared target of the second factor of $\alpha_1$
is $G(x,y,z,[u,v])+[G(x,y,z,v),u]$, whereas the squared source of
the third factor is
$$[[[x,[z,v]],y],{u}]+[[[[x,v],z],y],{u}]+[[[x,z],[y,v]],{u}]+\ldots.$$
As the three first terms of this sum are three of the six terms of
$[G(x,y,z,v),u]$, the object ``$\ldots$'', at which $1^2$ in $1_{12}$ is evaluated,
is the sum of the remaining terms and $G(x,y,z,[u,v]).$

\begin{defi}\label{Alpha2} The {\bf fourth 2-cell} $\alpha_4$ is equal to
\begin{equation}\label{CohLaw21}\alpha_4=[\mu_{xyzu},1^2_v]\circ_01_{41}\circ_0\left(\mu_{[x,u],y,z,v}+\mu_{x,[y,u],z,v}+\mu_{x,y,[z,u],v}\right)\circ_01_{42},\end{equation}
where $$
1_{41}=1_{[J_{[x,u],z,y},1_v]+[J_{x,z,[y,u]},1_v]+[J_{x,[z,u],y},1_v]}+1^2,$$
\begin{equation}\label{CohLaw22}
1_{42}=1_{[[J_{xuv},1_z],1_y]+[J_{xuv},1_{[y,z]}]+[1_x,[J_{yuv},1_z]]+[[1_x,J_{zuv}],1_y]+[1_x,[1_y,J_{zuv}]]+[1_{[x,z]},J_{yuv}]}+1^2.\end{equation}\end{defi}

\begin{defi}\label{Alpha3} The {\bf third 2-cell} $\alpha_3$ reads \begin{equation}\label{CohLaw31}
\alpha_3=\mu_{[x,y],z,u,v}\circ_01_{31}\circ_0\left([\mu_{xyuv},1^2_z]+1^2\right)\circ_01_{32}\circ_01_{33},\end{equation}
where $$ 1_{31}=1_{[J_{[x,y],v,u},1_z]}+1^2,$$
$$\hspace{-15mm} 1_{32}=1_{[J_{xyv},1_{[z,u]}]+J_{x,y,[[z,v],u]}+J_{x,y,[z,[u,v]]}+J_{[[x,v],u],y,z}+J_{[x,v],[y,u],z}+J_{[x,u],[y,v],z}+
J_{x,[[y,v],u],z}+J_{[x,[u,v]],y,z}+J_{x,[y,[u,v]],z}+[J_{xyu},1_{[z,v]}]},$$
\begin{equation}\label{CohLaw32}
1_{33}=1_{J_{x,[y,v],[z,u]}+J_{[x,v],y,[z,u]}+J_{x,[y,u],[z,v]}+J_{[x,u],y,[z,v]}}+1^2.\end{equation}\end{defi}

\begin{defi}\label{Alpha4} The {\bf second 2-cell} $\alpha_2$ is defined as \begin{equation}\label{CohLaw41}
\alpha_2=1_{21}\circ_0\left(\mu_{[x,z],y,u,v}+\mu_{x,[y,z],u,v}\right)\circ_01_{22}\circ_0\left([1^2_x,\mu_{yzuv}]+[\mu_{xzuv},1^2_y]+1^2\right)\circ_01_{23},\end{equation}
where \begin{equation}\label{CohLaw42} 1_{21}=1_{[[J_{xyz},1_u],1_v]},
1_{22}=1_{[1_x,J_{[y,z],v,u}]+[J_{[x,z],v,u},1_y]}+1^2,$$$$
1_{23}=1_{[J_{xzv},1_{[y,u]}]+[J_{xzu},1_{[y,v]}]+[1_{[x,v]},J_{yzu}]+[1_{[x,u]},J_{yzv}]}+1^2.\end{equation}\end{defi}

To get the component expression
\begin{equation}\label{CohLawComp}(A_1+A_4,{\bf s}_1+{\bf t}_4,{\bf a}_1-{\bf a}_4)=(A_3+A_2,{\bf s}_3+{\bf t}_2,{\bf a}_3-{\bf a}_2)\end{equation} of the coherence law (\ref{CohLaw}), we now comment on the computation of the components
$(A_i,{\bf s}_i,{\bf a}_i)$ (resp. $(A_i,{\bf t}_i,-{\bf a}_i)$) of $\alpha_i$ (resp. $\alpha_i^{-1}$).\medskip

As concerns $\alpha_1$, it is straightforwardly seen that all compositions make sense, that its
$V_0$-component is $$A_1=F([x,y],z,u,v),$$ and that the
$V_2$-component is
$${\bf a}_1=$$
$$-\ell_4(x,y,z,\ell_2(u,v))-\ell_2(\ell_4(x,y,z,v),u)-\ell_4(\ell_2(x,v),y,z,u)-\ell_4(x,\ell_2(y,v),z,u)-\ell_4(x,y,\ell_2(z,v),u).$$
When actually examining the composability conditions, we find that
$1^2$ in the fourth factor of $\alpha_1$ is $1^2_{G(x,y,z,[u,v])}$
and thus that the target $t^2\alpha_1$ is made up by the 24 terms
$$G([x,v],y,z,u)+G(x,[y,v],z,u)+G(x,y,[z,v],u)+G(x,y,z,[u,v]).$$
The computation of the $V_1$-component ${\bf s}_1$ is tedious but
simple -- it leads to a sum of 29 terms of the type
``$\ell_3\ell_2\ell_2,$ $\ell_2\ell_3\ell_2$, or
$\ell_2\ell_2\ell_3$''. We will comment on it in the case of
$\alpha_4^{-1}$, which is slightly more interesting.\medskip

The $V_0$-component of $\alpha_4^{-1}$ is
$$A_4=[F_{xyzu},v]=F([x,y],z,u,v)$$ and its $V_2$-component is
equal to
$$-{\bf a}_4=\ell_2(\ell_4(x,y,z,u),v)+\ell_4(\ell_2(x,u),y,z,v)+\ell_4(x,\ell_2(y,u),z,v)+\ell_4(x,y,\ell_2(z,u),v).$$
The $V_1$-component ${\bf t}_4$ of $\alpha_4^{-1}$ is the
$V_1$-component of the target of $\alpha_4$. This target is the
composition of the targets of the four factors of $\alpha_4$ and its
$V_1$-component is given by
$${\bf t}_4=[{\bf e}_{xyzu},1_v]+[{\bf J}_{[x,u],z,y},1_v]+[{\bf J}_{x,z,[y,u]},1_v]+[{\bf J}_{x,[z,u],y},1_v]+{\bf e}_{[x,u],y,z,v}+{\bf e}_{x,[y,u],z,v}+{\bf e}_{x,y,[z,u],v}$$
$$+[[{\bf J}_{xuv},1_z],1_y]+[{\bf J}_{xuv},1_{[y,z]}]+[1_x,[{\bf J}_{yuv},1_z]]+[[1_x,{\bf J}_{zuv}],1_y]+[1_x,[1_y,{\bf J}_{zuv}]]+[1_{[x,z]},{\bf J}_{yuv}].$$ The definition (\ref{Epsilon}) of $\varepsilon$ immediately provides its
$V_1$-component $\bf e$ as a sum of 5 terms of the type ``$\ell_3\ell_2$ or
$\ell_2\ell_3$''. The preceding $V_1$-component ${\bf t}_4$ of
$\alpha_4^{-1}$ can thus be explicitly written as a sum of $29$ terms
of the type ``$\ell_3\ell_2\ell_2,$ $\ell_2\ell_3\ell_2$, or
$\ell_2\ell_2\ell_3$''. It can moreover be checked that the target
$t^2\alpha_4^{-1}$ is again a sum of 24 terms -- the same as for $t^2\alpha_1$.\medskip

The $V_0$-component of $\alpha_3$ is $$A_3=F([x,y],z,u,v),$$ the $V_1$-component
${\bf s}_3$ can be computed as before and is a sum of 25 terms of
the usual type ``$\ell_3\ell_2\ell_2,$ $\ell_2\ell_3\ell_2$, or
$\ell_2\ell_2\ell_3$'', whereas the $V_2$-component is equal to
$${\bf a}_3=-\ell_4(\ell_2(x,y),z,u,v)-\ell_2(\ell_4(x,y,u,v),z).$$
Again $t^2\alpha
_3$ is made up by the same 24 terms as
$t^2\alpha_1$ and $t^2\alpha_4^{-1}$.\medskip

Eventually, the $V_0$-component of $\alpha_2^{-1}$ is $$A_2=F([x,y],z,u,v),$$ the
$V_1$-component ${\bf t}_2$ is straightforwardly obtained as a sum
of 27 terms of the form ``$\ell_3\ell_2\ell_2,$
$\ell_2\ell_3\ell_2$, or $\ell_2\ell_2\ell_3$'', and the
$V_2$-component reads
$$-{\bf a}_2=\ell_4(\ell_2(x,z),y,u,v)+\ell_4(x,\ell_2(y,z),u,v)+\ell_2(x,\ell_4(y,z,u,v))+\ell_2(\ell_4(x,z,u,v),y).$$
The target $t^2\alpha_2^{-1}$ is the same as in the
preceding cases.\medskip

Coherence condition (\ref{CohLaw}) and its component expression (\ref{CohLawComp}) can now be understood. The condition on the $V_0$-components is obviously trivial. The condition on the $V_2$-components is nothing but
$L_{\infty}$-condition $n=5$, see Equation (\ref{51}). The verification of triviality of the condition on the $V_1$-components is lengthy: 6 pairs (resp. 3 pairs) of terms of the {\small LHS} ${\bf s}_1+{\bf t}_4$ (resp. {\small RHS} ${\bf s}_3+{\bf t}_2$) are opposite and cancel out, 25 terms of
the {\small LHS} coincide with terms of the {\small RHS}, and, finally, 7 triplets of {\small LHS}-terms combine with triplets of {\small RHS}-terms and provide 7 sums of 6 terms, e.g.
$$\ell_3(\ell_2(\ell_2(x,y),z),u,v)+\ell_2(\ell_3(x,y,z),\ell_2(u,v))+\ell_2(\ell_2(\ell_3(x,y,z),v),u)$$ $$-\ell_2(\ell_2(\ell_3(x,y,z),u),v)-\ell_3(\ell_2(\ell_2(x,z),y),u,v)-\ell_3({\ell_2(x,\ell_2(y,z)),u,v}).$$
Since, for ${\bf f}=\ell_3(x,y,z)\in V_1$, we have $$\ell_1({\bf f})=t{\bf J}_{xyz}=\ell_2(\ell_2(x,z),y)+\ell_2(x,\ell_2(y,z))-\ell_2(\ell_2(x,y),z),$$ the preceding sum vanishes in view of Equation
(\ref{32}). Indeed, if we associate a Lie 3-algebra to a 3-term Lie infinity algebra, we started from a homotopy algebra whose term $\ell_3$ vanishes in total degree 1, and if we build an sh algebra from a categorified algebra,
we already constructed an $\ell_3$-map with that property. Finally, the condition on $V_1$-components is really trivial and the coherence law (\ref{CohLaw}) is actually equivalent to $L_{\infty}$-condition $n=5$.\end{proof}

\subsection{Monoidal structure of the category {\tt Vect} $n$-{\tt
Cat}}\label{KMPSection5}

In this section we exhibit a specific aspect of the natural
monoidal structure of the category of linear $n$-categories.

\begin{prop}\label{LinnFun} If $L$ and $L'$ are linear $n$-categories, a family $F_m:L_m\to L_m'$ of linear maps that respects sources, targets, and identities, commutes
automatically with compositions and thus defines a linear
$n$-functor $F:L\to L'$.\end{prop}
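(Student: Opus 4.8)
The plan is to reduce the statement to Proposition~\ref{UniqueCatStr}: that proposition says the composition maps $\circ_p$ of a linear $n$-category are \emph{uniquely} determined by the underlying $n$-globular vector space equipped with identities. Since $F$ is assumed to respect precisely this globular-with-identities part of the structure, and the compositions are forced, $F$ has no choice but to respect them too. So the strategy is: identify everything via Proposition~\ref{UniqueCatStr}, show that the hypotheses pin $F$ down to a very explicit ``coordinate-wise'' form coming from a chain map, and then either invoke the functor $\mathfrak{G}$ of the proof of Proposition~\ref{EquivCat} or check Equation~(\ref{composition}) by hand.

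First I would use Proposition~\ref{UniqueCatStr} to identify $L_m$ with $\bigoplus_{i=0}^m V_i$, $V_i=\ker s_i$, and likewise $L'_m$ with $\bigoplus_{i=0}^m V'_i$, $V'_i=\ker s'_i$, so that $s,t,1,\circ_p$ are given on both sides by Equations~(\ref{source})--(\ref{composition}). Then I would show, by induction on $m$, that the hypotheses force $F_m=\bigoplus_{i=0}^m f_i$ where $f_i:=F_i|_{V_i}:V_i\to V'_i$. Indeed, writing $(v_0,\dots,v_m)=1_{(v_0,\dots,v_{m-1})}+(0,\dots,0,v_m)$ and using linearity together with $F_m(1_a)=1_{F_{m-1}(a)}$ gives $F_m(v_0,\dots,v_m)=1_{F_{m-1}(v_0,\dots,v_{m-1})}+F_m(0,\dots,0,v_m)$; compatibility with sources, $s'F_m=F_{m-1}s$, and Equation~(\ref{source}) show that the $\bigoplus_{i<m}V'_i$-component of $F_m(0,\dots,0,v_m)$ equals $F_{m-1}(0,\dots,0)=0$, so $F_m(0,\dots,0,v_m)=(0,\dots,0,f_m v_m)$; the inductive hypothesis $F_{m-1}=\bigoplus_{i=0}^{m-1}f_i$ then yields $F_m=\bigoplus_{i=0}^m f_i$. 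Finally, compatibility with targets applied to $(0,\dots,0,v_m)$, using Equation~(\ref{target}), gives $f_{m-1}\ell_1=\ell'_1 f_m$, i.e. $\phi:=(f_i)_i$ is a chain map $(V,\ell_1)\to(V',\ell'_1)$.

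At this point $F$ is exactly the linear $n$-functor $\mathfrak{G}(\phi)$ built in the proof of Proposition~\ref{EquivCat}: it acts on $L_m=\bigoplus_{i=0}^m V_i$ by $\bigoplus_{i=0}^m\phi_i$, and $\mathfrak{G}(\phi)$ was shown there to respect the entire linear $n$-categorical structure, in particular the $\circ_p$. Alternatively one checks this directly: $F$ preserves the composability condition $t^{m-p}(a)=s^{m-p}(b)$ because it respects $s$ and $t$; and on a composable pair Equation~(\ref{composition}) expresses $\circ_p$ as addition of the top $m-p$ coordinates, which the coordinate-wise, componentwise-linear map $\bigoplus_i f_i$ manifestly commutes with. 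Either way, $F$ respects compositions and hence is a linear $n$-functor.

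I do not expect a genuine obstacle here: the real content was already absorbed into Proposition~\ref{UniqueCatStr}, and what remains is bookkeeping. The only care needed is to invoke the three hypotheses in the right order --- identities to peel off the top cell in the decomposition of a general $m$-cell, sources to annihilate the lower components, targets to extract the chain-map relation --- and to keep the two identifications $L_m\cong\bigoplus_i V_i$ and $L'_m\cong\bigoplus_i V'_i$ consistently in play. The one conceptual point worth emphasizing in the write-up is that ``respecting compositions'' looks like an independent requirement but is automatic, precisely because the compositions are not free data.
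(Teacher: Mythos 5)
Your proposal is correct and follows essentially the same route as the paper: the paper's (much terser) proof also works in the coordinates of Proposition~\ref{UniqueCatStr}, where it simply asserts that $F_m v=(F_0v_0,\ldots,F_mv_m)$, notes that composability is preserved, and observes that Equation~(\ref{composition}) is coordinatewise addition. Your induction using identities and sources merely spells out the coordinate-wise form of $F$ that the paper takes for granted, so the two arguments coincide in substance.
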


\begin{proof} If $v=(v_0,\ldots, v_m), w=(w_0,\ldots,w_m)\in L_m$ are composable along a $p$-cell, then $F_mv=(F_0v_0, \ldots,$ $ F_mv_m)$ and $F_mw=(F_0w_0,
\ldots, F_mw_m)$ are composable as well, and
$F_m(v\circ_pw)=(F_mv)\circ_p(F_mw)$ in view of Equation
(\ref{composition}).\end{proof}

\begin{prop} The category {\tt Vect} $n$-{\tt Cat} admits a canonical symmetric monoidal structure $\boxtimes$.\end{prop}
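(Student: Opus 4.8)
I claim the monoidal product $\boxtimes$ is, under the equivalence of Proposition~\ref{EquivCat}, the \emph{truncated graded tensor product of chain complexes}, and the plan is simply to construct that structure on $(n{+}1)$-term complexes and transport it. First I would recall the classical fact that the category {\tt C}$^{\geq 0}$({\tt Vect}) of nonnegatively graded chain complexes of $K$-vector spaces and chain maps is symmetric monoidal for the graded tensor product $(V\otimes W)_k=\bigoplus_{i+j=k}V_i\otimes W_j$, with differential $d\otimes\mathrm{id}+\mathrm{id}\otimes d$, Koszul signs, unit $K$ concentrated in degree $0$, and the usual associativity and symmetry constraints; since $K$ is a field every complex is flat, so $-\otimes W$ is exact, a fact I shall use. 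The tensor product of two $(n{+}1)$-term complexes has in general $2n{+}1$ nonzero terms, which is why a truncation is needed.

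Next I would realise {\tt C}$^{n+1}$({\tt Vect}), the complexes concentrated in degrees $0,\dots,n$, as a reflective full subcategory of {\tt C}$^{\geq 0}$({\tt Vect}). The reflector is the smart truncation $\tau_{\le n}$ that leaves $W_0,\dots,W_{n-1}$ untouched, places $W_n/\mathrm{im}(d_{n+1})$ in degree $n$, and is zero above; the unit is the canonical surjection $\eta_W\colon W\twoheadrightarrow\tau_{\le n}W$, and the adjunction isomorphism $\mathrm{Hom}(\tau_{\le n}W,V)\cong\mathrm{Hom}(W,V)$ for $V$ concentrated in degrees $\le n$ is immediate, because any chain map from $W$ into such a $V$ must annihilate $\mathrm{im}(d_{n+1})$ in degree $n$. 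By Day's reflection theorem the subcategory then inherits a symmetric monoidal structure $\boxtimes'$, with $A\boxtimes'B:=\tau_{\le n}(A\otimes B)$ and unit $\tau_{\le n}(K)=K$, as soon as $\tau_{\le n}(\eta_X\otimes\mathrm{id}_Y)\colon\tau_{\le n}(X\otimes Y)\to\tau_{\le n}(\tau_{\le n}X\otimes Y)$ is an isomorphism for all $X,Y$. This is the one computation I would actually carry out: in degrees below $n$ nothing changes; in degree $n$ the map $\eta_X\otimes\mathrm{id}_Y$ is the identity on every summand $X_i\otimes Y_{n-i}$ with $i<n$ and, on the summand $X_n\otimes Y_0$, is the surjection $X_n\otimes Y_0\twoheadrightarrow(X_n/\mathrm{im}\,d_{n+1})\otimes Y_0$ with kernel $\mathrm{im}(d_{n+1})\otimes Y_0$; this kernel already lies in $\mathrm{im}(d^{X\otimes Y}_{n+1})$, and since the degree $n{+}1$ component of $\eta_X\otimes\mathrm{id}_Y$ is likewise surjective, passing to $\tau_{\le n}$ identifies the two top quotients. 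Compatibility with the symmetry is manifest from the description of $\tau_{\le n}$, and the analogous condition with the second factor truncated follows by symmetry.

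Finally I would transport $\boxtimes'$ across the equivalence, putting $L\boxtimes L':=\mathfrak{G}\bigl(\mathfrak{N}(L)\boxtimes'\mathfrak{N}(L')\bigr)$, with unit the linear $n$-category whose cell space in every degree is $K$ and whose source, target and identity maps are all $\mathrm{id}_K$ — that is, $\mathfrak{G}$ applied to $K$ in degree $0$ — and with associator, symmetry and unitors obtained by conjugating those of $\boxtimes'$ through the natural isomorphism $\alpha\colon\mathfrak{NG}\Rightarrow\mathrm{id}$ (recall $\mathfrak{GN}=\mathrm{id}$). That transporting a symmetric monoidal structure along an equivalence again produces a symmetric monoidal category, with the equivalence strong monoidal, is standard, so the pentagon, hexagon and triangle are inherited from $\boxtimes'$. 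The only genuine obstacle is therefore the verification that $\tau_{\le n}$ is a monoidal reflector, i.e. Day's condition above; everything else is formal. I would close by stressing — in anticipation of the following sections — that this $\boxtimes$ is \emph{not} the cartesian product of linear $n$-categories, the failure of $\boxtimes$ and $\times$ to be compatible being exactly the phenomenon exploited there.
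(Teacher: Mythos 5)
Your argument does produce \emph{a} symmetric monoidal structure on {\tt Vect} $n$-{\tt Cat}: the smart truncation $\tau_{\le n}$ is indeed a reflector onto {\tt C}$^{n+1}(${\tt Vect}$)$, your verification of the Day condition (identity below degree $n$, kernel $\mathrm{im}(d_{n+1})\otimes Y_0\subset\mathrm{im}(d^{X\otimes Y}_{n+1})$ in degree $n$, surjectivity in degree $n+1$) is essentially right, and transporting along the equivalence of Proposition~\ref{EquivCat} is standard. But this is not the structure the paper calls $\boxtimes$, and the difference is not cosmetic. The paper's proof is a direct, machinery-free construction at the level of globular vector spaces: $(L\boxtimes L')_m:=L_m\otimes L'_m$, with $S_m=s_m\otimes s'_m$, $T_m=t_m\otimes t'_m$, $I_m=1_m\otimes 1'_m$, the compositions being the unique ones completing these data (Proposition~\ref{UniqueCatStr}); on morphisms $(F\boxtimes F')_m=F_m\otimes F'_m$, which is an $n$-functor by Proposition~\ref{LinnFun}; the unit is the constant $n$-category $K$. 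Under the identification $L_m\simeq\oplus_{i\le m}V_i$, the paper's product has $m$-cells $\oplus_{i,j\le m}V_i\otimes V'_j$, whereas yours has $\oplus_{i+j\le m}V_i\otimes V'_j$ (with a quotient in top degree): already for $n=1$ the summand $V_1\otimes V'_1$ is present in $(L\boxtimes L')_1$ but absent from your product, so the two monoidal products are genuinely non-isomorphic functors.

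This matters for everything the proposition is used for. Because $(L\boxtimes L')_m=L_m\otimes L'_m$, the paper's $\boxtimes$ enjoys the factorization property of Proposition~\ref{PreUP} (bilinear $n$-functors $L\times L'\to L''$ correspond to linear $n$-functors $L\boxtimes L'\to L''$ via the levelwise maps $\boxtimes_m:(v,v')\mapsto v\otimes v'$), and the failure of these $\boxtimes_m$ to respect compositions is precisely the phenomenon analyzed in Sections~\ref{KMPSection5}--\ref{KMPSection6}; in particular ${\mathfrak N}(L)\otimes{\mathfrak N}(L)\neq{\mathfrak N}(L\boxtimes L)$, which is the obstruction to using the Eilenberg--Zilber map. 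With your definition, ${\mathfrak N}$ is strong monoidal by fiat, the factorization property for bilinear $n$-functors fails (a bilinear functor need not kill $V_1\times V'_1$), and the closing sentence of your proposal misreads the later sections. So, as an existence proof your route is sound but heavier than needed; as a reconstruction of the canonical $\boxtimes$ that the rest of the paper relies on, it constructs the wrong object -- the intended structure is the elementary levelwise tensor product, not the transported truncated tensor product of complexes.
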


\begin{proof} We first define the product $\boxtimes$ of two linear $n$-categories $L$ and $L'$. The
$n$-globular vector space that underlies the linear $n$-category
$L\boxtimes L'$ is defined in the obvious way, $(L\boxtimes
L')_m=L_m\otimes L_m'$, $S_m=s_m\otimes s'_m$, $T_m=t_m\otimes
t'_m.$ Identities are clear as well, $I_m=1_m\otimes 1'_m.$ These
data can be completed by the unique possible compositions
$\square_p$ that then provide a linear $n$-categorical structure.

If $F:L\to M$ and $F':L'\to M'$ are two linear $n$-functors, we
set $$(F\boxtimes F')_m=F_m\otimes F'_m\in\mathrm{Hom}_{\mathbb{K}}(L_m\otimes
L'_m,M_m\otimes M'_m),$$ where $\mathbb{K}$ denotes the ground field. Due
to Proposition {\ref{LinnFun}}, the family $(F\boxtimes F')_m$
defines a linear $n$-functor $F\boxtimes F':L\boxtimes L'\to
M\boxtimes M'$.

It is immediately checked that $\boxtimes$ respects composition
and is therefore a functor from the product category ({\tt Vect}
$n$-{\tt Cat})$^{\times 2}$ to {\tt Vect} $n$-{\tt Cat}. Further,
the linear $n$-category $K$, defined by $K_m=\mathbb{K}$,
$s_m=t_m=\mathrm{id}_{\mathbb{K}}$ ($m>0$), and $1_m=\mathrm{id}_{\mathbb{K}}$ ($m<n$),
acts as identity object for $\boxtimes$. Its is now clear that
$\boxtimes$ endows {\tt Vect} $n$-{\tt Cat} with a symmetric
monoidal structure.\end{proof}

\begin{prop}\label{PreUP} Let $L$, $L'$, and $L''$ be linear $n$-categories. For any bilinear
$n$-functor $F:L\times L'\to L''$, there exists a unique linear
$n$-functor $\tilde F:L\boxtimes L'\to L''$, such that
$\boxtimes\, \tilde F=F.$ Here $\boxtimes:L\times L'\to L\boxtimes
L'$ denotes the family of bilinear maps $\boxtimes_m:L_m\times
L_m'\ni (v,v')\mapsto v\otimes v'\in L_m\otimes L_m'$, and
juxtaposition denotes the obvious composition of the first with
the second factor.\end{prop}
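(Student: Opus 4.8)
The plan is to reduce the statement, one globular level at a time, to the universal property of the tensor product of vector spaces, and then invoke Proposition~\ref{LinnFun} to get the compatibility with compositions for free.

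First, fix $m\in\{0,\ldots,n\}$. Since $F_m:L_m\times L_m'\to L_m''$ is bilinear, the universal property of $L_m\otimes L_m'$ provides a unique linear map $\tilde F_m:L_m\otimes L_m'\to L_m''$ with $\tilde F_m(v\otimes v')=F_m(v,v')$ for all $(v,v')\in L_m\times L_m'$; equivalently $\tilde F_m\circ\boxtimes_m=F_m$. This both pins down the underlying family $(\tilde F_m)_m$ and settles uniqueness: any linear $n$-functor $G:L\boxtimes L'\to L''$ with $\boxtimes\, G=F$ must satisfy $G_m(v\otimes v')=F_m(v,v')$ on the simple tensors, which span $L_m\otimes L_m'$, so $G_m=\tilde F_m$ for every $m$.

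Next I would check that $(\tilde F_m)_m$ respects sources, targets and identities. Recall that $L\boxtimes L'$ carries $S_m=s_m\otimes s_m'$, $T_m=t_m\otimes t_m'$, $I_m=1_m\otimes 1_m'$, whereas the product category $L\times L'$ carries the structure maps $s\times s'$, $t\times t'$, $1\times 1'$. All the maps $\tilde F_{m-1}\circ S_m$, $s\circ\tilde F_m$, $\tilde F_{m-1}\circ T_m$, $t\circ\tilde F_m$, $\tilde F_{m+1}\circ I_m$, $1\circ\tilde F_m$ are linear, so it suffices to compare the relevant pairs on the simple tensors $v\otimes v'$. There, for instance, $\tilde F_{m-1}(S_m(v\otimes v'))=\tilde F_{m-1}((sv)\otimes(s'v'))=F_{m-1}(sv,s'v')$, and this equals $s\,F_m(v,v')=s\,\tilde F_m(v\otimes v')$ precisely because $F$ is an $n$-functor out of $L\times L'$ and $s_{L\times L'}(v,v')=(sv,s'v')$. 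The identical computation, using that $F$ respects the target and identity maps of $L\times L'$, yields compatibility of $(\tilde F_m)_m$ with $T_m$ and $I_m$. By Proposition~\ref{LinnFun}, a family of linear maps between linear $n$-categories that respects sources, targets and identities automatically commutes with all the compositions $\square_p$ of $L\boxtimes L'$; hence $(\tilde F_m)_m$ assembles into a linear $n$-functor $\tilde F:L\boxtimes L'\to L''$, and $\boxtimes\,\tilde F=F$ by construction.

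There is no genuine obstacle here; the only points that require care are bookkeeping ones. One must verify the compatibility with the globular structure only on the spanning set of simple tensors and extend by linearity, rather than attempt it on a general element of $L_m\otimes L_m'$; and one must remember that the entire burden of checking compatibility with the compositions is offloaded onto Proposition~\ref{LinnFun}, which is itself a consequence of the explicit normal form~(\ref{composition}) for compositions in a linear $n$-category. Thus the mild subtlety, if any, is conceptual placement rather than computation.
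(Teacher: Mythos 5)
Your proposal is correct and follows the same route as the paper, which simply states that the result is a straightforward consequence of the universal property of the tensor product of vector spaces; you have merely made explicit the level-wise construction, the spanning argument for uniqueness, and the appeal to Proposition~\ref{LinnFun} for the compositions, all of which are exactly the intended details.
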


\begin{proof} The result is a straightforward consequence of the universal property of the tensor product of vector spaces.\end{proof}

The next remark is essential.

\begin{rem} Proposition \ref{PreUP} is not a Universal Property for the tensor product $\boxtimes$ of {\tt Vect} $n$-{\tt Cat},
since $\boxtimes:L\times L'\to L\boxtimes L'$ is not a bilinear
$n$-functor. It follows that bilinear $n$-functors on a product
category $L\times L'$ cannot be identified with linear
$n$-functors on the corresponding tensor product category
$L\boxtimes L'$.\end{rem}

The point is that the family $\boxtimes_m$ of bilinear maps
respects sources, targets, and identities, but not compositions
(in contrast with a similar family of linear maps, see Proposition
\ref{LinnFun}). Indeed, if $(v,v'),(w,w')\in L_m\times L'_m$ are
two $p$-composable pairs (note that this condition is equivalent
with the requirement that $v,w\in L_m$ and $v',w'\in L_m'$ be
$p$-composable), we have
\begin{equation}\label{CartProd}\boxtimes_m((v,v')\circ_p(w,w'))=(v\circ_pw)\otimes(v'\circ_pw')\in
L_m\otimes L_m',\end{equation} and
\begin{equation}\label{TensProd}\boxtimes_m(v,v')\;\circ_p\;\boxtimes_m(w,w')=(v\otimes
v')\circ_p(w\otimes w')\in L_m\otimes L_m'.\end{equation} As the elements
(\ref{CartProd}) and (\ref{TensProd}) arise from the compositions
in $L_m\times L'_m$ and $L_m\otimes L'_m$, respectively, -- which
are forced by linearity and thus involve the completely different
linear structures of these spaces -- it can be expected that the
two elements do not coincide.\medskip

Indeed, when confining ourselves, to simplify, to the case $n=1$
of linear categories, we easily check that
\begin{equation}\label{NonFunUP}(v\circ w)\otimes (v'\circ w')=(v\otimes v')\circ(w\otimes
w')+(v-1_{tv})\otimes {\bf w'}+{\bf w}\otimes (v'-1_{{tv'}}).\end{equation}

Observe also that the source spaces of the linear maps
$$\circ_L\otimes\circ'_{L'}:(L_1\times_{L_0}L_1)\otimes
(L'_1\times_{L'_0}L'_1)\ni (v,w)\otimes(v',w')\mapsto (v\circ
w)\otimes (v'\circ w')\in L_1\otimes L_1'$$ and
$$\circ_{L\boxtimes L'}:(L_1\otimes L_1')\times_{L_0\otimes L_0'}(L_1\otimes L_1')\ni ((v\otimes v'),(w\otimes w'))\mapsto (v\otimes v')\circ(w\otimes w')\in
L_1\otimes L_1'$$ are connected by
\begin{equation} \ell_2: (L_1\times_{L_0}L_1)\otimes (L'_1\times_{L'_0}L'_1)\ni
(v,w)\otimes(v',w')\mapsto (v\otimes v',w\otimes w')\in
(L_1\otimes L_1')\times_{L_0\otimes L_0'}(L_1\otimes L_1')\label{NonFun}\end{equation} -- a
linear map with nontrivial kernel.

\subsection{Discussion}\label{KMPSection6}

We continue working in the case $n=1$ and
investigate a more conceptual approach to the construction of a
chain map $\ell_2:{\mathfrak{N}}(L)\otimes {\mathfrak{N}}(L)\to {\mathfrak{N}}(L)$ from a bilinear functor $[-,-]:L\times L\to L$.\medskip

When denoting by $[-,-]:L\boxtimes L\to L$ the induced linear functor, we get a chain map ${\mathfrak
N}([-,-]):{\mathfrak N}(L\boxtimes L)\to {\mathfrak N}(L)$, so that it is
natural to look for a second chain map $$\phi:{\mathfrak N}(L)\otimes
{\mathfrak N}(L)\to {\mathfrak N}(L\boxtimes L).$$

The informed reader may skip the following subsection.

\subsubsection{Nerve and normalization functors, Eilenberg-Zilber chain map}

The objects of the simplicial category $\Delta$ are the finite
ordinals $n=\{0,\ldots, n-1\}$, $n\ge 0$. Its morphisms $f:m\to n$ are the
order respecting functions between the sets $m$ and $n$. Let
$\delta_i:n\rightarrowtail n+1$ be the injection that omits image
$i$, $i\in\{0,\ldots,n\}$, and let $\sigma_i:n+1\twoheadrightarrow n$
be the surjection that assigns the same image to $i$ and $i+1$,
$i\in\{0,\ldots,n-1\}$. Any order respecting function $f:m\to n$
reads uniquely as
$f=\sigma_{j_1}\ldots\sigma_{j_h}\delta_{i_1}\ldots\delta_{i_k}$, where the
$j_r$ are decreasing and the $i_s$ increasing. The application of this epi-monic decomposition to binary composites $\delta_i\delta_j$, $\sigma_i\sigma_j$, and $\delta_i\sigma_j$ yields
three basic commutation relations.\medskip

A simplicial object in the category {\tt Vect} is a functor
$S\in[\Delta^{+\mathrm{op}},\mbox{\tt Vect}]$, where $\Delta^{+}$ denotes
the full subcategory of $\Delta$ made up by the nonzero finite
ordinals. We write this functor $n+1\mapsto S(n+1)=:{S_n}$, $n\ge 0$,
($S_n$ is the vector space of $n$-simplices), $\delta_i\mapsto
S(\delta_i)=:{d_i:S_n\to S_{n-1}}$, $i\in\{0,\ldots,n\}$ ($d_i$ is a
face operator), $\sigma_i\mapsto S(\sigma_i)=:{s_i:S_{n}\to S_{n+1}}$,
$i\in\{0,\ldots,n\}$ ($s_i$ is a degeneracy operator). The $d_i$
and $s_j$ verify the duals of the mentioned commutation rules.
The simplicial data ($S_n,d^n_i,s^n_i$) (we added
superscript $n$) of course completely determine the functor $S$.
Simplicial objects in {\tt Vect} form themselves a category,
namely the functor category $s(\mbox{\tt
Vect}):=[\Delta^{+\mathrm{op}},\mbox{\tt Vect}]$, for which the
morphisms, called simplicial morphisms, are the natural
transformations between such functors. In view of the epi-monic
factorization, a simplicial map $\alpha
:S\to T$ is exactly a
family of linear maps $\alpha_n:S_n\to T_n$ that commute with the
face and degeneracy operators. \medskip

The nerve functor $${\cal N}:\mbox{\tt VectCat}\to s(\mbox{\tt
Vect})$$ is defined on a linear category $L$ as the sequence
$L_0,L_1,L_2:=L_1\times_{L_0}L_1,
L_3:=L_1\times_{L_0}L_1\times_{L_0}L_1\ldots$ of vector spaces of
$0,1,2,3\ldots$ simplices, together with the face operators
``composition'' and the degeneracy operators ``insertion of
identity'', which verify the simplicial commutation rules.
Moreover, any linear functor $F:L\to L'$ defines linear maps
$F_n:L_n\ni (v_1,\ldots,v_n)\to (F(v_1),\ldots, F(v_n))\in L'_n$
that implement a simplicial map.\medskip

The normalized or Moore chain complex of a simplicial vector space
$S=(S_n, d_i^n, s_i^n)$ is given by
$N(S)_n=\cap_{i=1}^n\mathrm{ker}d_i^n\subset S_n$ and
$\partial_n=d_0^n.$ Normalization actually provides a functor
$$ N:s(\mbox{\tt Vect}) \leftrightarrow \mbox{\tt C}^+(\mbox{\tt
Vect}):\Gamma$$ valued in the category of nonnegatively graded chain
complexes of vector spaces. Indeed, if $\alpha:S\to T$ is a
simplicial map, then $\alpha_{n-1}d_{i}^n=d_{i}^n\alpha
_{n}$. Thus,
$N(\alpha):N(S)\to N(T$), defined on $c_n\in N(S)_n$ by
$N(\alpha)_n(c_n)=\alpha_n(c_n)$, is valued in $N(T)_n$ and is further a
chain map. Moreover, the Dold-Kan correspondence claims that the
normalization functor $N$ admits a right adjoint $\Gamma$ and that
these functors combine into an equivalence of categories.\medskip

It is straightforwardly seen that, for any linear category $L$, we have \begin{equation}
\label{CompClassFun} N({\cal N}(L))={\mathfrak N}(L).\end{equation}

The categories $s(\mbox{\tt Vect})$ and {\tt C}$^+$({\tt Vect}) have well-known monoidal structures (we denote the unit objects by $I_s$ and $I_{\tt C}$, respectively). The normalization functor $N:s(\mbox{{\tt Vect}})\rightarrow
\mbox{\tt C}^+(\mbox{{\tt Vect}})$ is lax monoidal, i.e. it respects the tensor products and unit objects up to coherent chain maps $\varepsilon:I_{{\tt C}}\to N(I_s)$ and
$$EZ_{S,T}:N(S)\otimes N(T)\to N(S\otimes T)$$ (functorial in $S,T\in s(\mbox{\tt Vect})$), where $EZ_{S,T}$ is the
Eilenberg-Zilber map. Functor $N$ is lax comonoidal or oplax monoidal as well, the chain morphism
being here the Alexander-Whitney map $AW_{S,T}.$ These chain maps are inverses of
each other up to chain homotopy, $EZ\;AW=1,\;AW\,EZ\sim 1.$\medskip

The Eilenberg-Zilber map is defined as follows. Let $a\otimes b\in N(S)_p\otimes
N(T)_q\subset S_p\otimes T_q$ be an element of degree $p+q$. The chain map $EZ_{S,T}$ sends $a\otimes b$ to an
element of $N(S\otimes T)_{p+q}\subset (S\otimes
T)_{p+q}=S_{p+q}\otimes T_{p+q}$. We have $$EZ_{S,T}(a\otimes
b)=\sum_{(p,q)-\mbox{shuffles }(\mu,\nu)}\mathrm{sign}(\mu,\nu)\;\;s_{\nu_q}(\ldots
(s_{\nu_1}a))\,\otimes\, s_{\mu_p}(\ldots (s_{\mu_1}b))\in
S_{p+q}\otimes T_{p+q},$$ where the shuffles are permutations of
$(0,\ldots, p+q-1)$ and where the $s_i$ are the degeneracy operators.

\subsubsection{Monoidal structure and obstruction}

We now come back to the construction of a chain map $\phi:{\mathfrak
N}(L)\otimes {\mathfrak N}(L)\to {\mathfrak N}(L\boxtimes L)$.\medskip

For $L'=L$, the linear map (\ref{NonFun}) reads
$$\ell_2: ({\cal N}(L)\otimes {\cal N}(L))_2\ni
(v,w)\otimes(v',w')\mapsto (v\otimes v',w\otimes w')\in
{\cal N}(L\boxtimes L)_2.$$ If its obvious extensions $\ell_n$ to all other spaces $({\cal N}(L)\otimes {\cal N}(L))_n$ define a simplicial map $\ell:{\cal N}(L)\otimes {\cal N}(L)\to {\cal N}(L\boxtimes L),$ then $$N(\ell):N({\cal N}(L)\otimes {\cal N}(L))\to N({\cal N}(L\boxtimes L))$$ is a chain map. Its composition with the Eilenberg-Zilber chain map $$EZ_{{\cal N}(L),{\cal N}(L)}:N({\cal N}(L))\otimes N({\cal N}(L))\to N({\cal N}(L)\otimes {\cal N}(L))$$ finally provides the searched chain map $\phi$, see Equation (\ref{CompClassFun}).

However, the $\ell_n$ do not commute with all degeneracy and face operators. Indeed, we have for instance $$\ell_2((d_2^3\otimes d_2^3)((u,v,w)\otimes(u',v',w')))=(u\otimes u',(v\circ w)\otimes (v'\circ w')),$$ whereas $$d_2^3(\ell_3((u,v,w)\otimes(u',v',w')))=(u\otimes u',(v\otimes v')\circ(w\otimes w')).$$ Equation (\ref{NonFunUP}), which means that $\boxtimes:L\times L'\to L\boxtimes L'$ is not a functor, shows that these results do not coincide.\medskip

A natural idea would be to change the involved monoidal structures
$\boxtimes$ of {\tt VectCat} or $\otimes$ of {\tt C}$^{+}$({\tt
Vect}). However, even if we substitute the Loday-Pirashvili tensor
product $\otimes_{\mathrm{LP}}$ of 2-term chain complexes of vector
spaces, i.e. of linear maps \cite{LP98}, for the usual tensor
product $\otimes$, we do not get ${\mathfrak N}(L)\otimes_{\mathrm{LP}}
{\mathfrak N}(L)={\mathfrak N}(L\boxtimes L).$\bigskip\bigskip

\newpage
\section{The Supergeometry of Loday Algebroids}\label{SupergeomLodAlebroids}
The next research paper was published in `Journal of Geometric Mechanics', 5(2) (2013), 185-213 (joint work with Janusz Grabowski and Norbert Poncin).
\subsection{Introduction}
The concept of {\it Dirac structure}, proposed by Dorfman \cite{Do} in the Hamiltonian framework of integrable
evolution equations and defined in \cite{Co} as an isotropic subbundle of the Whitney sum ${\cal T} M= T
M\oplus_M T^\ast M$ of the tangent and the cotangent bundles and satisfying some additional conditions,
provides a geometric setting for Dirac's theory of constrained mechanical systems. To formulate the
integrability condition defining the Dirac structure, Courant \cite{Co} introduced a natural
skew-symmetric bracket operation on sections of \ ${\cal T} M$. The Courant bracket does not satisfy the Leibniz
rule with respect to multiplication by functions nor the Jacobi identity. These defects disappear upon
restriction to a Dirac subbundle because of the isotropy condition. Particular cases of Dirac structures are
graphs of closed 2-forms and  Poisson bivector fields on the manifold $M$.

The nature of the Courant bracket itself remained unclear until several years later when it was observed by
Liu, Weinstein and Xu \cite{LWX} that ${\cal T} M$ endowed with the Courant bracket plays the role of a `double'
object, in the sense of Drinfeld \cite{Dr}, for a pair of Lie algebroids (see \cite{Mac}) over $M$. Let us
recall that, in complete analogy with Drinfeld's Lie bialgebras, in the category of Lie algebroids there also
exist `bi-objects', Lie bialgebroids, introduced by Mackenzie and Xu \cite{MX} as linearizations of Poisson
groupoids. On the other hand, every Lie bialgebra has a double which is a Lie algebra. This is not so for
general Lie bialgebroids. Instead, Liu, Weinstein and Xu \cite{LWX} showed that the double of a Lie
bialgebroid is a more complicated structure they call a {\it Courant algebroid}, ${\cal T} M$ with the Courant
bracket being a special case.

There is also another way of viewing Courant algebroids as a generalization of Lie algebroids. This requires a
change in the definition of the Courant bracket and considering an analog of  the non-antisymmetric Dorfman
bracket \cite{Do}, so that the traditional Courant bracket becomes the skew-symmetrization of the new one
\cite{Roy}. This change replaces one of the defects with another one: a version of the Jacobi identity is
satisfied, while the bracket is no longer skew-symmetric. Such algebraic structures have been introduced by
Loday \cite{Lo} under the name {\it Leibniz algebras}, but they are nowadays also often called {\it Loday
algebras}. Loday algebras, like their skew-symmetric counterparts -- Lie algebras -- determine certain
cohomological complexes, defined on tensor algebras instead of Grassmann algebras. {Canonical examples of Loday algebras arise often as {\em derived brackets} introduced by Kosmann-Schwarzbach \cite{K-S,YKS}.}

Since Loday brackets, like the Courant-Dorfman bracket, appear naturally in Geometry and Physics in the form
of `algebroid brackets', i.e. brackets on sections of vector bundles, there were several attempts to formalize
the concept of {\it Loday} (or {\it Leibniz}) {\em algebroid} (see e.g. \cite{Ba,BV,G1,GM,ILMP,Ha,HM,KS,MM,SX,Wa}).
We prefer the terminology {\em Loday algebroid} to distinguish them from other {\em general algebroid}
brackets with both anchors (see \cite{GU}), called sometimes {\em Leibniz algebroids} or {\em Leibniz
brackets} and used recently in Physics, for instance, in the context of nonholonomic constraints
\cite{GG,GG1,GGU,GLMM,OPB}. Note also that a Loday algebroid is the horizontal categorification of a Loday algebra; vertical categorification would lead to Loday $n$-algebras, which are tightly related to truncated Loday infinity algebras, see \cite{AP10}, \cite{KMP11}.

The concepts of Loday algebroid we found in the literature do not seem to be exactly appropriate. The notion
in \cite{G1}, which assumes the existence of both anchor maps, is too strong and admits no real new examples,
except for Lie algebroids and bundles of Loday algebras. The concept introduced in \cite{SX} requires a
pseudo-Riemannian metric on the bundle, so it is too strong as well and does not reduce to a Loday algebra
when we consider a bundle over a single point, while the other concepts \cite{Ha,HM,ILMP,KS,MM,Wa}, assuming
only the existence of a left anchor, do not put any differentiability requirements for the first variable, so
that they are not geometric and too weak (see Example \ref{e1}). Only in \cite{Ba} one considers some Leibniz algebroids with local brackets.

The aim of this work is to propose a modified concept of Loday algebroid in terms of an operation on
sections of a vector bundle, as well as in terms of a homological vector field of a supercommutative manifold.
We put some minimal requirements that a proper concept of Loday algebroid should satisfy. Namely, the
definition of Loday algebroid, understood as a certain operation on sections of a vector bundle $E$,
\begin{itemize}
\item should reduce to the definition of Loday algebra in the case when $E$ is just a vector space;

\item should contain the Courant-Dorfman bracket as a particular example;

\item should be as close to the definition of Lie algebroid as possible.
\end{itemize}

We propose a definition satisfying all these requirements and
including all main known examples of Loday brackets with geometric
origins. Moreover, we can interpret our Loday algebroid structures
as homological vector fields on a supercommutative manifold; this
opens, like in the case of Lie algebroids, new horizons for a
geometric understanding of these objects and of their possible
`higher generalizations' \cite{BP12}.
{This supercommutative manifold is associated with a superalgebra of differential operators{, whose multiplication is a supercommutative shuffle product}.

Note that we cannot work with the supermanifold {$\Pi E$} like in the case of a Lie algebroid on $E$, since the Loday coboundary operator rises the degree of {a differential operator,} even for Lie algebroid brackets. For instance, the Loday differential associated with the standard bracket of vector fields produces the Levi-Civita connection out of a Riemannian metric \cite{Ldd1}. However, the Levi-Civita connection $\nabla_XZ$ is no longer a tensor, as it is of the first-order with respect to $Z$. Therefore, instead of the Grassmann algebra $\mathrm{Sec}(\wedge E^*)$ of `differential forms', which are zero-degree skew-symmetric multidifferential operators on $E$, we are forced to consider, not just the  tensor algebra of sections of $\oplus_{k=0}^\infty(E^*)^{\otimes k}$, but the algebra $\mathcal{D}^\bullet(E)$ spanned by all multidifferential operators
$$D:\mathrm{Sec}(E)\times\cdots\times\mathrm{Sec}(E)\to C^\infty(M)\,.$$
However, to retain the supergeometric flavor, we can reduce ourselves to a smaller subspace $\mathrm{D}^\bullet(E)$ of $\mathcal{D}^\bullet(E)$, which is a subalgebra with respect to the canonical supercommutative shuffle product and is closed under the Loday coboundary operators associated with the Loday algebroids we introduce. This interesting observation deserves further investigations that we postpone to a next paper.

We should also make clear that, although the algebraic structures in question have their roots in Physics (see the papers on Geometric Mechanics mentioned above), we do not propose in this paper new applications to Physics, but focus on finding a proper framework unifying all these structures. Our work seems to be technically complicated enough and applications to Mechanics will be the subject of a separate work.
}

\medskip
The paper is organized as follows. We first recall, in Section~\ref{GKPSection2}, needed results on differential
operators and derivative endomorphisms. In Section~\ref{GKPSection3} we investigate, under the name of pseudoalgebras,
algebraic counterparts of algebroids requiring varying differentiability properties for the two entries of the
bracket. The results of Section~\ref{GKPSection4} show that we should relax our traditional understanding of the right anchor
map. A concept of Loday algebroid satisfying all the above requirements is proposed in Definition \ref{d1}
and further detailed in Theorem \ref{LodAld}. In Section~\ref{GKPSection5} we describe a number of new Loday algebroids
containing main canonical examples of Loday brackets on sections of a vector bundle. A natural reduction a Loday pseudoalgebra to a Lie pseudoalgebra is studied in Section~\ref{GKPSection6}. For the standard Courant bracket it corresponds to its reduction to the Lie bracket of vector fields.  We then define Loday
algebroid cohomology, Section~\ref{GKPSection7}, and interpret in Section~\ref{GKPSection8} our Loday algebroid structures in terms of
homological vector fields of the graded ringed space given by the shuffle multiplication of multidifferential
operators, see Theorem \ref{GeoIntKLA2}. We introduce also the corresponding Cartan calculus.

\subsection{Differential operators and derivative endomorphisms}\label{GKPSection2}
All geometric objects, like manifolds, bundles, maps, sections, etc. will be smooth throughout
this paper.

\begin{defi}
A {\it Lie algebroid} structure on a vector bundle $\tau:E\to M$ is a Lie algebra bracket $[\cdot,\cdot]$ on the real vector space $\mathcal{E}=\mathrm{Sec}(E)$ of sections of $E$ which satisfies the following compatibility condition related to the $\mathcal{A}=C^\infty(M)$-module structure in $\mathcal{E}$:
\begin{equation}\label{anchor}
\forall \ X,Y\in\mathcal{E}\ \forall f\in\mathcal{A}\quad [X,fY]-f[X,Y]=\rho(X)(f)Y\,,
\end{equation}
for some vector bundle morphism $\rho:E\to T M$ covering the identity on $M$ and called the {\it anchor map}.
Here, $\rho(X)=\rho\circ X$ is the vector field on $M$ associated {\it via} $\rho$ with the section $X$.
\end{defi}
Note that the bundle morphism $\rho$ is uniquely determined by the bracket of the Lie algebroid.
What differs a general Lie algebroid bracket from just a Lie module bracket on the $C^\infty(M)$-module
$\mathrm{Sec}(E)$  of sections of $E$ is the fact that it is not $\mathcal{A}$-bilinear but a certain first-order
bidifferential operator: the adjoint operator $\mathrm{ad}_X=[X,\cdot]$ is a {\it derivative endomorphism}, i.e., the
{\it Leibniz rule}
\begin{equation}\label{lr}\mathrm{ad}_X(fY)=f\mathrm{ad}_X(Y)+\widehat{X}(f)Y
\end{equation}
is satisfied for each $Y\in\mathcal{E}$ and $f\in\mathcal{A}$, where $\widehat{X}=\rho(X)$ is the vector field on $M$ assigned to
$X$, the {\it anchor} of $X$. Moreover, the assignment $X\mapsto\widehat{X}$ is a differential operator of order 0,
as it comes from a bundle map $\rho:E\mapsto  T M$.

Derivative endomorphisms (also called {\it quasi-derivations}), like differential operators in general, can be
defined for any module $\mathcal{E}$ over an associative commutative ring $\mathcal{A}$. Also an extension to superalgebras is
straightforward. These natural ideas go back to Grothendieck and Vinogradov \cite{Vi}.
On the module $\mathcal{E}$ we have namely a distinguished family $\mathcal{A}_\mathcal{E}=\{ f_\mathcal{E}:f\in\mathcal{A}\}$ of linear operators
provided by the module structure: $f_\mathcal{E}(Y)=fY$.
\begin{defi} Let $\mathcal{E}_i$, $i=1,2$, be modules over the same ring $\mathcal{A}$. We say that an additive operator $D:\mathcal{E}_1\to\mathcal{E}_2$ is a {\it differential operator of order 0}, if it intertwines $f_{\mathcal{E}_1}$ with $f_{\mathcal{E}_2}$, i.e.
\begin{equation}\label{cm}\delta(f)(D):=D\circ f_{\mathcal{E}_1}-f_{\mathcal{E}_2}\circ D\,, \end{equation}
vanishes for all $f\in\mathcal{A}$. Inductively, we say that $D$ is a {\it differential operator of order} $\le k+1$, if the
commutators (\ref{cm}) are differential operators of order $\le k$. In other words, $D$ is a differential
operator of order $\le k$ if and only if \begin{equation}\label{do} \forall \ f_1,\dots,f_{k+1}\in\mathcal{A}\quad
\delta({f_1})\delta({f_2})\cdots\delta({f_{k+1}})(D)=0\,. \end{equation} The corresponding set of differential operators of order
$\le k$ will be denoted by $\mathcal{D}_k(\mathcal{E}_1;\mathcal{E}_2)$ (shortly, $\mathcal{D}_k(\mathcal{E})$, if $\mathcal{E}_1=\mathcal{E}_2=\mathcal{E}$) and the set of
differential operators of arbitrary order (filtered by $\left(\mathcal{D}_k(\mathcal{E}_1;\mathcal{E}_2)\right)_{k=0}^\infty$) by
$\mathcal{D}(\mathcal{E}_1;\mathcal{E}_2)$ (resp., $\mathcal{D}(\mathcal{E})$). We will say that $D$ {\it is of order $k$} if it is of order $\le k$
and not of order $\le k-1$.
\end{defi}
In particular, $\mathcal{D}_0(\mathcal{E}_1;\mathcal{E}_2)=\mathrm{Hom}_\mathcal{A}(\mathcal{E}_1;\mathcal{E}_2)$ is made up by module homomorphisms. Note that in the
case when $\mathcal{E}_i=\mathrm{Sec}(E_i)$ is the module of sections of a vector bundle $E_i$, $i=1,2$, the concept of
differential operators defined above coincides with the standard understanding. As this will be our standard
geometric model, to reduce algebraic complexity we will assume that $\mathcal{A}$ is an associative commutative
algebra with unity $1$ over a field $\mathbb{K}$ of characteristic 0 and all the $\mathcal{A}$-modules are faithful. In this
case, $\mathcal{D}(\mathcal{E}_1;\mathcal{E}_2)$ is a (canonically filtered) vector space over $\mathbb{K}$ and, since we work with fields of
characteristic 0, condition (\ref{do}) is equivalent to a simpler condition (see \cite{G}) \begin{equation}\label{do1}
\forall \ f\in\mathcal{A}\quad \delta({f})^{k+1}(D)=0\,. \end{equation} If $\mathcal{E}_1=\mathcal{E}_2=\mathcal{E}$, then $\delta(f)(D)=[D,f_\mathcal{E}]_c$, where
$[\cdot,\cdot]_c$ is the commutator bracket, and elements of $\mathcal{A}_\mathcal{E}$ are particular 0-order operators.
Therefore, we can canonically identify $\mathcal{A}$ with the subspace $\mathcal{A}_\mathcal{E}$ in $\mathcal{D}_0(\mathcal{E})$ and use it to
distinguish a particular set of first-order differential operators on $\mathcal{E}$ as follows.
\begin{defi} {\it Derivative endomorphisms} (or {\it quasi-derivations}) $D:\mathcal{E}\to\mathcal{E}$ are particular first-order differential operators distinguished by the condition
\begin{equation}\label{anchor1}\forall \ f\in\mathcal{A}\quad \exists\ \widehat{f}\in\mathcal{A}\quad [D,f_\mathcal{E}]_c=\widehat{f}_\mathcal{E}\,.\end{equation}
\end{defi}
Since the commutator bracket satisfies the Jacobi identity, one can immediately conclude that
$\widehat{f}_\mathcal{E}=\widehat{D}(f)_\mathcal{E}$ which holds for some derivation $\widehat{D}\in\mathrm{Der}(\mathcal{A})$ and an arbitrary $f\in\mathcal{A}$
\cite{G1}. Derivative endomorphisms form a submodule $\mathrm{Der}(\mathcal{E})$ in the $\mathcal{A}$-module $\mathrm{End}_{\mathbb{K}}(\mathcal{E})$
of $\mathbb{K}$-linear endomorphisms of $\mathcal{E}$ which is simultaneously a Lie subalgebra over $\mathbb{K}$ with respect
to the commutator bracket. The linear map,
$$\mathrm{Der}(\mathcal{E})\ni D\mapsto\widehat{D}\in\mathrm{Der}(\mathcal{A})\,,$$
called the {\it universal anchor map}, is a differential operator of order 0, $\widehat{fD}=f\widehat{D}$. The Jacobi
identity for the commutator bracket easily implies (see \cite[Theorem 2]{G1})
\begin{equation}\label{anchor2} {\widehat{[D_1,D_2]_c}}=[\widehat{D}_1,\widehat{D}_1]_c\,.
\end{equation}
It is worth remarking (see \cite{G1}) that also $\mathcal{D}(\mathcal{E})$ is a Lie subalgebra in $\mathrm{End}_\mathbb{K}(\mathcal{E})$, as
\begin{equation}\label{qP}[\mathcal{D}_k(\mathcal{E}),\mathcal{D}_l(\mathcal{E})]_c\subset\mathcal{D}_{k+l-1}(\mathcal{E})\,,
\end{equation}
and an associative subalgebra, as
\begin{equation}\label{qP1}\mathcal{D}_k(\mathcal{E})\circ\mathcal{D}_l(\mathcal{E})\subset\mathcal{D}_{k+l}(\mathcal{E})\,,
\end{equation}
that makes $\mathcal{D}(\mathcal{E})$ into a canonical example of a {\it quantum Poisson algebra} in the terminology of
\cite{GP}.

It was pointed out in \cite{KM} that the concept of derivative endomorphism can be traced back to N.~Jacobson
\cite{Ja1,Ja2} as a special case of his {\it pseudo-linear endomorphism}. It has appeared also in \cite{Ne}
under the name {\it module derivation} and was used to define linear connections in the algebraic setting. In
the geometric setting of Lie algebroids it has been studied in \cite{Mac} under the name {\it covariant
differential operator}. For more detailed history and recent development we refer to \cite{KM}.

Algebraic operations in differential geometry have usually a local character in order to be treatable with
geometric methods. On the pure algebraic level we should work with differential (or multidifferential)
operations, as tells us the celebrated Peetre Theorem \cite{Pe,Pe1}. The algebraic concept of a
multidifferential operator is obvious. For a $\mathbb{K}$-multilinear operator $D:\mathcal{E}_1\times\cdots\times\mathcal{E}_p\to\mathcal{E}$ and
each $i=1,\dots,p$, we say that  $D$ is a {\it differential operator of order $\le k$ with respect to the
$i$th variable}, if, for all $y_j\in\mathcal{E}_j$, $j\ne i$,
$$D(y_1,\dots,y_{i-1},\,\cdot\,,y_{i+1},\dots, y_p):\mathcal{E}_i\to\mathcal{E}$$
is a differential operator of order $\le k$. In other words,
\begin{equation}\label{mdo}
\forall \ f\in\mathcal{A}\quad \delta_i({f})^{k+1}(D)=0\,,
\end{equation}
where
\begin{equation}
\delta_i(f)D(y_1,\dots,y_p)=D(y_1,\dots,fy_i,\dots,y_p)-fD(y_1,\dots,y_p)\,.
\end{equation}
Note that the operations $\delta_i(f)$ and $\delta_j(g)$ commute. We say that the operator $D$ {\it is a
multidifferential operator of order $\le n$}, if it is of order $\le n$ with respect to each variable
separately. This means that, fixing any $p-1$ arguments, we get a differential operator of order $\le n$. A
similar, but stronger, definition is the following
\begin{defi} We say that a multilinear operator $D:\mathcal{E}_1\times\cdots\times\mathcal{E}_p\to\mathcal{E}$ is a {\it multidifferential operator of total order $\le k$}, if
\begin{equation}\label{mdo1}
\forall \ f_1,\dots,f_{k+1}\in\mathcal{A}\ \forall\ i_1,\dots,i_{k+1}=1,\dots,p\quad
\left[\delta_{i_1}({f_1})\delta_{i_2}({f_2})\cdots\delta_{i_{k+1}}({f_{k+1}})(D)=0\right]\,.
\end{equation}
\end{defi}
\noindent Of course, a multidifferential operator of total order $\le k$ is a multidifferential operator of
order $\le k$. It is also easy to see that a $p$-linear differential operator of order $\le k$ is a
multidifferential operator of total order $\le pk$. In particular, the Lie bracket of vector fields (in fact,
any Lie algebroid bracket) is a bilinear differential operator of total order $\le 1$.

\subsection{Pseudoalgebras}\label{GKPSection3}
Let us start this section with recalling that Loday, while studying relations between Hochschild and cyclic
homology in the search for obstructions to the periodicity of algebraic K-theory, discovered that one can skip
the skew-symmetry assumption in the definition of Lie algebra, still having a possibility to define an
appropriate (co)homology (see \cite{Lo1,LP} and \cite[Chapter 10.6]{Lo}). His Jacobi identity for such
structures was formally the same as the classical Jacobi identity in the form
\begin{equation}\label{JI} [x,[y,z]]=[[x,y],z]+[y,[x,z]]. \end{equation} This time,
however, this is no longer equivalent to \begin{equation}\label{JI1} [[x,y],z]=[[x,z],y]+[x,[y,z]], \end{equation} nor to
\begin{equation}\label{JI2} [x,[y,z]]+[y,[z,x]]+[z,[x,y]]=0, \end{equation} since we have no skew-symmetry. Loday called such
structures {\it Leibniz algebras}, but to avoid collision with another concept of {\it Leibniz brackets} in
the literature, we shall call them {\it Loday algebras}. This is in accordance with the terminology of {\cite{K-S}},
where analogous structures in the graded case are defined. Note that the identities (\ref{JI}) and (\ref{JI1})
have an advantage over the identity (\ref{JI2}) obtained by cyclic permutations, since they describe the
algebraic facts that the left-regular (resp., right-regular) actions are left (resp., right) derivations. This
was the reason to name the structure `Leibniz algebra'.

Of course, there is no particular reason not to define Loday algebras by means of (\ref{JI1}) instead of
(\ref{JI}) (and in fact, it was the original definition by Loday), but this is not a substantial difference,
as both categories are equivalent via transposition of arguments. We will use the form (\ref{JI}) of the
Jacobi identity.

Our aim is to find a proper generalization of the concept of Loday algebra in a way similar to that in which
Lie algebroids generalize Lie algebras. If one thinks about a generalization of a concept of Lie algebroid as
operations on sections of a vector bundle including operations (brackets) which are non-antisymmetric or which
do not satisfy the Jacobi identity, and are not just $\mathcal{A}$-bilinear, then it is reasonable, on one hand, to
assume differentiability properties of the bracket as close to the corresponding properties of Lie algebroids
as possible and, on the other hand, including all known natural examples of such brackets. This is not an
easy task, since, as we will see soon, some natural possibilities provide only few new examples.

To present a list of these possibilities, we propose the following definitions serving in the pure algebraic
setting.
\begin{defi}\label{def} Let $\mathcal{E}$ be a faithful module over an associative commutative algebra $\mathcal{A}$ over a field $\mathbb{K}$ of characteristic 0. A a $\mathbb{K}$-bilinear bracket $B=[\cdot,\cdot] :\mathcal{E}\times\mathcal{E}\to\mathcal{E}$ on the module $\mathcal{E}$
\begin{enumerate}
\item is called a {\it {faint} pseudoalgebra bracket}, if  $B$ is a bidifferential operator; \item is called
a {\it weak pseudoalgebra bracket}, if $B$ is a bidifferential operator of degree $\le 1$; \item is called a
{\it quasi pseudoalgebra bracket}, if $B$ is a bidifferential operator of total degree $\le 1$; \item is
called a {\it pseudoalgebra bracket}, if $B$ is a bidifferential operator of total degree $\le 1$ and the {\it
adjoint map} $\mathrm{ad}_X=[X,\cdot]:\mathcal{E}\to\mathcal{E}$ is a derivative endomorphism for each $X\in\mathcal{E}$; \item is called a
{\it QD-pseudoalgebra bracket}, if the {\it adjoint maps} $\mathrm{ad}_X,\mathrm{ad}_X^r:\mathcal{E}\to\mathcal{E}$, \begin{equation}\label{ad}
\mathrm{ad}_X=[X,\cdot]\,,\quad \mathrm{ad}_X^r=[\cdot,X]\,\quad(X\in\mathcal{E})\,,\end{equation} associated with $B$ are derivative
endomorphisms (quasi-derivations); \item is called a {\it strong pseudoalgebra bracket}, if $B$ is a
bidifferential operator of total degree $\le 1$ and the {\it adjoint maps} $\mathrm{ad}_X,\mathrm{ad}_X^r:\mathcal{E}\to\mathcal{E}$,
\begin{equation}\label{ad1} \mathrm{ad}_X=[X,\cdot]\,,\quad \mathrm{ad}_X^r=[\cdot,X]\,\quad(X\in\mathcal{E})\,, \end{equation} are derivative endomorphisms.
\end{enumerate}
We call the module $\mathcal{E}$ equipped with such a bracket, respectively, a {\it {faint} pseudoalgebra}, {\it weak
pseudoalgebra} etc. If the bracket is symmetric (skew-symmetric), we speak about {faint}, weak, etc., {\it
symmetric} ({\it skew}) {\it pseudoalgebras}. If the bracket satisfies the Jacobi identity (\ref{JI}), we
speak about local, weak, etc.,  {\it Loday pseudoalgebras}, and if the bracket is a Lie algebra bracket, we
speak about local, weak, etc., {\it Lie pseudoalgebras}. If $\mathcal{E}$ is the $\mathcal{A}=C^\infty(M)$ module of sections
of a vector bundle $\tau:E\to M$, we refer to the above pseudoalgebra structures as to {\it algebroids}.
\end{defi}
\begin{thm} If \ $[\cdot,\cdot]$ is a pseudoalgebra bracket, then the map
$$\rho:\mathcal{E}\to \mathrm{Der}(\mathcal{A})\,,\quad \rho(X)=\widehat{\mathrm{ad}_X}\,,$$
called the {\em anchor map}, is $\mathcal{A}$-linear, $\rho(fX)=f\rho(X)$, and
\begin{equation}\label{aanchor}[X,fY]=f[X,Y]+\rho(X)(f)Y
\end{equation}
for all $X,Y\in\mathcal{E}$, $f\in\mathcal{A}$. Moreover, if \ $[\cdot,\cdot]$ satisfies additionally the Jacobi identity,
i.e., we deal with a Loday pseudoalgebra, then the anchor map is a homomorphism into the commutator bracket,
\begin{equation}\label{anhom}
\rho\left([X,Y]\right)=[\rho(X),\rho(Y)]_c\,.
\end{equation}
\end{thm}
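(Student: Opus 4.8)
The plan is to prove the three assertions in turn, exploiting the definition of a pseudoalgebra bracket, namely that each $\mathrm{ad}_X = [X,\cdot]$ is a derivative endomorphism (quasi-derivation) and that $B$ is a bidifferential operator of total degree $\le 1$. First I would establish \eqref{aanchor}. Since $\mathrm{ad}_X$ is a quasi-derivation, by \eqref{anchor1} there is, for each $f\in\mathcal{A}$, an element $\widehat f\in\mathcal{A}$ with $[\mathrm{ad}_X, f_{\mathcal{E}}]_c = \widehat f_{\mathcal{E}}$, and the Jacobi identity for the commutator bracket forces $\widehat f = \widehat{\mathrm{ad}_X}(f)$ for the derivation $\widehat{\mathrm{ad}_X}\in\mathrm{Der}(\mathcal{A})$ (as recalled after Definition of derivative endomorphisms). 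Unwinding $[\mathrm{ad}_X, f_{\mathcal{E}}]_c(Y)$ gives exactly $[X,fY] - f[X,Y] = \widehat{\mathrm{ad}_X}(f)\,Y$; defining $\rho(X) := \widehat{\mathrm{ad}_X}$ then yields \eqref{aanchor}.

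Next I would prove that $\rho$ is $\mathcal{A}$-linear, i.e. $\rho(fX)=f\rho(X)$. The cleanest route is to use that the universal anchor map $D\mapsto\widehat D$ from $\mathrm{Der}(\mathcal{E})$ to $\mathrm{Der}(\mathcal{A})$ is a differential operator of order $0$, satisfying $\widehat{fD}=f\widehat D$ (stated in the excerpt). So it suffices to check $\mathrm{ad}_{fX} = f\cdot\mathrm{ad}_X$ as elements of $\mathrm{Der}(\mathcal{E})$ \emph{modulo the part that does not affect the anchor}; more precisely, since $B$ has total degree $\le 1$, applying $\delta_1(f)\delta_2(g)$ annihilates $B$, which means that the dependence of $[X,Y]$ on $X$ is $\mathcal{A}$-linear once $Y$ is held in a way that kills the first-variable derivative — concretely, $[fX, gY] - g[fX,Y] - f[X,gY] + fg[X,Y] = 0$. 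Combining this with \eqref{aanchor} applied to both $[fX,gY]$ and $[X,gY]$ gives $\rho(fX)(g)Y = f\rho(X)(g)Y$ for all $Y$, and faithfulness of $\mathcal{E}$ lets us cancel $Y$, yielding $\rho(fX)=f\rho(X)$.

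Finally, for \eqref{anhom} under the additional Jacobi identity \eqref{JI}, I would observe that $[X,[Y,Z]] = [[X,Y],Z] + [Y,[X,Z]]$ says precisely $\mathrm{ad}_X\circ\mathrm{ad}_Y - \mathrm{ad}_Y\circ\mathrm{ad}_X = \mathrm{ad}_{[X,Y]}$, i.e. $[\mathrm{ad}_X,\mathrm{ad}_Y]_c = \mathrm{ad}_{[X,Y]}$ in $\mathrm{Der}(\mathcal{E})$. Applying the universal anchor map, which by \eqref{anchor2} satisfies $\widehat{[D_1,D_2]_c} = [\widehat{D_1},\widehat{D_2}]_c$, gives $\rho([X,Y]) = \widehat{\mathrm{ad}_{[X,Y]}} = \widehat{[\mathrm{ad}_X,\mathrm{ad}_Y]_c} = [\widehat{\mathrm{ad}_X},\widehat{\mathrm{ad}_Y}]_c = [\rho(X),\rho(Y)]_c$, which is \eqref{anhom}.

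The routine parts are the commutator unwindings; the one place demanding care is the $\mathcal{A}$-linearity of $\rho$, where I must correctly exploit the \emph{total} degree $\le 1$ hypothesis (not merely degree $\le 1$ in each variable) to get the mixed cancellation identity, and then use faithfulness of $\mathcal{E}$ to strip off the trailing $Y$. I expect that step to be the main obstacle, since it is the only point where the precise differentiability hypothesis on $B$ — rather than just the quasi-derivation property of $\mathrm{ad}_X$ — is genuinely needed; everything else is formal manipulation of the commutator bracket together with the already-established properties of the universal anchor.
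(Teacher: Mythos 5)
Your proof is correct and takes essentially the same route as the paper: the crucial step, $\mathcal{A}$-linearity of $\rho$, is obtained exactly as in the paper's proof from $\delta_1(f)\delta_2(g)B=0$ (the total-order-$\le 1$ hypothesis), the identity \eqref{aanchor}, and faithfulness of the module, while \eqref{aanchor} itself is the definitional unwinding of the quasi-derivation property of $\mathrm{ad}_X$. The only cosmetic difference is the last step: you get \eqref{anhom} by noting that the Jacobi identity says $\mathrm{ad}_{[X,Y]}=[\mathrm{ad}_X,\mathrm{ad}_Y]_c$ and then invoking the universal anchor's compatibility \eqref{anchor2}, whereas the paper just records that \eqref{anhom} follows directly from the Jacobi identity combined with \eqref{aanchor}; both are immediate from facts already established in the text.
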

\begin{proof}
Since the bracket $B$ is a bidifferential operator of total degree $\le 1$, we have $\delta_1(f)\delta_2(g)B=0$ for
all $f,g\in\mathcal{A}$. On the other hand, as easily seen,
\begin{equation}\label{QD}(\delta_1(f)\delta_2(g)B)(X,Y)=\left(\rho(fX)-f\rho(X)\right)(g)Y\,,\end{equation}
and the module is faithful, it follows $\rho(fX)=f\rho(X)$. The identity (\ref{anhom}) is a direct implication
of the Jacobi identity combined with (\ref{aanchor}).

\end{proof}
\begin{thm} If \ $[\cdot,\cdot]$ is a QD-pseudoalgebra bracket, then it is a weak pseudoalgebra bracket and admits two {\it anchor maps}
$$\rho,\rho^r:\mathcal{E}\to \mathrm{Der}(\mathcal{A})\,,\quad \rho(X)=\widehat{\mathrm{ad}_X}\,,\ \rho^r=-\widehat{\mathrm{ad}^r}\,,$$
for which we have
\begin{equation}\label{anchors}[X,fY]=f[X,Y]+\rho(X)(f)Y\,,\quad [fX,Y]=f[X,Y]-\rho^r(X)(f)Y\,,
\end{equation}
for all $X,Y\in\mathcal{E}$, $f\in\mathcal{A}$. If the bracket is skew-symmetric, then both anchors coincide, and if the
bracket is a strong QD-pseudoalgebra bracket, they are $\mathcal{A}$-linear. Moreover, if \ $[\cdot,\cdot]$ satisfies
additionally the Jacobi identity, i.e., we deal with a Loday QD-pseudoalgebra, then, for all $X,Y\in\mathcal{E}$,
\begin{equation}\label{anhomn}
\rho\left([X,Y]\right)=[\rho(X),\rho(Y)]_c\,.
\end{equation}
\end{thm}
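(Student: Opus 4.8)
The plan is to extract everything from the defining property of a derivative endomorphism: if $D\in\mathrm{Der}(\mathcal{E})$, then $[D,f_\mathcal{E}]_c=\widehat{D}(f)_\mathcal{E}$, where $\widehat{D}\in\mathrm{Der}(\mathcal{A})$ is the image of $D$ under the universal anchor map. First I would record that a QD-pseudoalgebra bracket is a weak pseudoalgebra bracket: a derivative endomorphism is, by definition, a first-order differential operator, so the hypotheses on $\mathrm{ad}_X=[X,\cdot]$ and $\mathrm{ad}^r_X=[\cdot,X]$ say exactly that $B$ is of order $\le 1$ in its second and first argument respectively, i.e. $B$ is a bidifferential operator of degree $\le 1$.

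Next, with $\rho(X):=\widehat{\mathrm{ad}_X}$ and $\rho^r(X):=-\widehat{\mathrm{ad}^r_X}$, the two Leibniz-type identities are obtained by evaluating the commutator identities $[\mathrm{ad}_X,f_\mathcal{E}]_c=\widehat{\mathrm{ad}_X}(f)_\mathcal{E}$ and $[\mathrm{ad}^r_X,f_\mathcal{E}]_c=\widehat{\mathrm{ad}^r_X}(f)_\mathcal{E}$ on an arbitrary section of $E$; this gives $[X,fY]-f[X,Y]=\rho(X)(f)Y$ and the corresponding relation for scaling the first entry, which is the second identity of the theorem after relabelling the two arguments in accordance with the conventions of Definition~\ref{def}. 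For the skew-symmetric case I would observe that $[X,Y]=-[Y,X]$ forces $\mathrm{ad}^r_X=-\mathrm{ad}_X$ in $\mathrm{End}_\mathbb{K}(\mathcal{E})$, and since the universal anchor map is $\mathbb{K}$-linear (it is even a differential operator of order $0$), $\rho^r(X)=-\widehat{\mathrm{ad}^r_X}=\widehat{\mathrm{ad}_X}=\rho(X)$. For the strong case, the additional hypothesis that $B$ has total degree $\le 1$ means $\delta_1(f)\delta_2(g)B=\delta_2(f)\delta_1(g)B=0$; substituting the two Leibniz identities into these expressions and using that $\mathcal{E}$ is faithful yields $\rho(fX)=f\rho(X)$ and $\rho^r(fX)=f\rho^r(X)$, exactly as in the proof of the preceding theorem.

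Finally, for the statement under the Jacobi identity I would rewrite $[X,[Y,Z]]=[[X,Y],Z]+[Y,[X,Z]]$ as the operator identity $\mathrm{ad}_{[X,Y]}=\mathrm{ad}_X\circ\mathrm{ad}_Y-\mathrm{ad}_Y\circ\mathrm{ad}_X=[\mathrm{ad}_X,\mathrm{ad}_Y]_c$. Since $\mathrm{Der}(\mathcal{E})$ is a Lie subalgebra of $\mathrm{End}_\mathbb{K}(\mathcal{E})$ for the commutator bracket, the right-hand side is again a derivative endomorphism, and applying the compatibility of the universal anchor map with commutators, $\widehat{[D_1,D_2]_c}=[\widehat{D}_1,\widehat{D}_2]_c$ (formula (\ref{anchor2})), we conclude $\rho([X,Y])=\widehat{\mathrm{ad}_{[X,Y]}}=[\widehat{\mathrm{ad}_X},\widehat{\mathrm{ad}_Y}]_c=[\rho(X),\rho(Y)]_c$.

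I do not expect a serious obstacle: every assertion is a direct translation of the derivative-endomorphism definition, the Jacobi identity, and already-established properties of the universal anchor map. The only points that require attention are the bookkeeping of which entry each anchor is attached to in the right-anchor identity, and making sure that $\mathcal{A}$-linearity of the anchors is genuinely derived from the total-degree-$\le 1$ hypothesis in the strong case (it can fail for a bare QD-pseudoalgebra, where only order $\le 1$ in each variable separately is available).
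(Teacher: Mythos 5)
Your proposal is correct and follows essentially the same route as the paper: the weak property is exactly the observation that quasi-derivations are first-order (the paper just verifies this by computing $\delta_2(f)\delta_2(g)B$ explicitly from the Leibniz identity), the anchor identities come from evaluating $[\mathrm{ad}_X,f_\mathcal{E}]_c=\widehat{\mathrm{ad}_X}(f)_\mathcal{E}$ and its right-hand analogue on sections, and your treatment of the strong case is precisely the paper's Equation (\ref{QD1}), where the mixed operator $\delta_1(f)\delta_2(g)B$ is computed in two ways and its vanishing, via faithfulness, forces $\mathcal{A}$-linearity of both anchors. The only genuine variation is the last step: the paper deduces (\ref{anhomn}) ``analogously to the previous theorem'', i.e.\ by expanding the Jacobi identity on $[X,[Y,fZ]]$ and using the left Leibniz rule, whereas you pass through the operator identity $\mathrm{ad}_{[X,Y]}=[\mathrm{ad}_X,\mathrm{ad}_Y]_c$ and the already established compatibility $\widehat{[D_1,D_2]_c}=[\widehat{D}_1,\widehat{D}_2]_c$ of the universal anchor -- an equivalent, slightly cleaner argument relying only on facts recorded earlier in the paper. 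One remark on the bookkeeping point you flag: your derivation yields $[fX,Y]=f[X,Y]-\rho^r(Y)(f)X$, and this is indeed the correct form -- it is the one consistent with (\ref{QD1}) and with Theorems \ref{T1} and \ref{T1a}; the displayed formula (\ref{anchors}) with $-\rho^r(X)(f)Y$ is a transcription slip, since with that form $\delta_2(g)\delta_1(f)B$ would vanish identically and the distinction between QD- and strong QD-pseudoalgebras would trivialize. So your version of the right-anchor identity, not the printed one, is what the proof (and the rest of the paper) actually uses.
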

\begin{proof}
Similarly as above,
$$(\delta_2(f)\delta_2(g)B)(X,Y)=\rho(X)(g)fY-f\rho(X)(g)Y=0\,,$$
so $B$ is a first-order differential operator with respect to the second argument. The same can be done for
the first argument.

Next, as for any QD-pseudoalgebra bracket $B$ we have, analogously to (\ref{QD}),
\begin{equation}\label{QD1}(\delta_1(f)\delta_2(g)B)(X,Y)=\left(\rho(fX)-f\rho(X)\right)(g)Y=\left(\rho^r(gY)-g\rho^r(Y)\right)(f)X\,,\end{equation}
both anchor maps are $\mathcal{A}$-linear if and only if $D$ is of total order $\le 1$. The rest follows analogously
to the previous theorem.

\end{proof}

The next observation is that quasi pseudoalgebra structures on an $\mathcal{A}$-module $\mathcal{E}$ have certain analogs of
anchor maps, namely $\mathcal{A}$-module homomorphisms $b=b^r,b^l:\mathcal{E}\to\mathrm{Der}(\mathcal{A})\otimes_\mathcal{A}\mathrm{End}(\mathcal{E})$. For every
$X\in\mathcal{E}$ we will view $b(X)$ as an $\mathcal{A}$-module homomorphism $b(X):\Omega^1\otimes_\mathcal{A}\mathcal{E}\to\mathcal{E}$, where $\Omega^1$
is the $\mathcal{A}$-submodule of $\mathrm{Hom}_\mathcal{A}(\mathrm{Der}(\mathcal{A});\mathcal{A})$ generated by $\mathrm{d}\mathcal{A}=\{\mathrm{d} f:f\in\mathcal{A}\}$ and $\langle\mathrm{d}
f,D\rangle=D(f)$. Elements of $\,\mathrm{Der}(\mathcal{A})\otimes_\mathcal{A}\mathrm{End}(\mathcal{E})$ act on elements of $\Omega^1\otimes_\mathcal{A} \mathcal{E}$ in the
obvious way: $(V\otimes\Phi)(\omega\otimes X)=\langle V,\omega\rangle\Phi(X)$.

\begin{thm}\label{T1} A $\,\mathbb{K}$-bilinear bracket $B=[\cdot,\cdot]$ on an $\mathcal{A}$-module $\mathcal{E}$ defines a quasi pseudoalgebra structure if
and only if there are $\mathcal{A}$-module homomorphisms \begin{equation}\label{Lqa} b^r,b^l:\mathcal{E}\to\mathrm{Der}(\mathcal{A})\otimes_\mathcal{A}\mathrm{End}(\mathcal{E})\,,
\end{equation}
called {\em generalized anchor maps, right and left}, such that, for all $X,Y\in\mathcal{E}$ and all $f\in\mathcal{A}$,
\begin{equation}\label{aLqa} [X,fY]=f[X,Y]+b^l(X)(\mathrm{d} f\otimes Y)\,,\quad [fX,Y]=f[X,Y]-b^r(Y)(\mathrm{d} f\otimes X)\,. \end{equation} The
generalized anchor maps are actual anchor maps if they take values in $\mathrm{Der}(\mathcal{A})\otimes_\mathcal{A}\{\mathrm{Id}_\mathcal{E}\}$.
\end{thm}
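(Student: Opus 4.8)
The plan is to prove Theorem \ref{T1} by unwinding the definition of a quasi pseudoalgebra bracket, namely that $B$ is a bidifferential operator of total degree $\le 1$. Recall that this means $\delta_{i_1}(f)\delta_{i_2}(g)B = 0$ for all $f,g\in\mathcal{A}$ and all $i_1,i_2\in\{1,2\}$. The key observation is that this single condition splits into three separate requirements according to the choices of $(i_1,i_2)$: the case $(2,2)$ says $B$ is first-order in the second argument, the case $(1,1)$ says $B$ is first-order in the first argument, and the mixed cases $(1,2)=(2,1)$ (they give the same operator since $\delta_1(f)$ and $\delta_2(g)$ commute) control the interaction of the two variables. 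I would treat ``first-order in each argument'' as the statement that produces the generalized anchor maps, and the mixed condition as the statement that makes them $\mathcal{A}$-linear as maps $\mathcal{E}\to\mathrm{Der}(\mathcal{A})\otimes_\mathcal{A}\mathrm{End}(\mathcal{E})$.

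First I would establish the forward direction. Fix $X\in\mathcal{E}$; since $B$ is first-order in the second slot, the operator $Y\mapsto[X,fY]-f[X,Y]$ is $\mathcal{A}$-linear in $Y$ (this is exactly $\delta_2(f)$ applied once, and $\delta_2(f)^2 B = 0$ means the result no longer depends differentially on $Y$), and moreover it is a derivation in $f$ in the appropriate sense. The assignment $f\mapsto \bigl(Y\mapsto [X,fY]-f[X,Y]\bigr)$ is therefore an $\mathcal{A}$-linear-in-$f$, derivation-type object, which is precisely the data of an element $b^l(X)\in\mathrm{Der}(\mathcal{A})\otimes_\mathcal{A}\mathrm{End}(\mathcal{E})$ acting via $b^l(X)(\mathrm{d} f\otimes Y)$; here I would use that $\Omega^1$ is generated by $\mathrm{d}\mathcal{A}$ and the universal property of Kähler differentials to convert ``derivation valued in $\mathrm{End}(\mathcal{E})$'' into ``$\mathcal{A}$-linear map out of $\Omega^1\otimes_\mathcal{A}\mathcal{E}$.'' The symmetric argument on the first slot (using $\delta_1(f)^2 B = 0$) gives $b^r$ and the second equation in (\ref{aLqa}), with the sign chosen to match the anchor conventions of the previous theorems. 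Conversely, if $B$ admits such homomorphisms satisfying (\ref{aLqa}), then applying $\delta_2(f)$ twice and using $\mathcal{A}$-linearity of $b^l(X)$ shows $\delta_2(f)^2 B = 0$, similarly $\delta_1(f)^2 B = 0$, and finally one computes $\delta_1(f)\delta_2(g)B$ explicitly, analogously to Equation (\ref{QD1}) in the preceding proof, obtaining an expression that vanishes because $b^l, b^r$ are $\mathcal{A}$-module homomorphisms $\mathcal{E}\to\mathrm{Der}(\mathcal{A})\otimes_\mathcal{A}\mathrm{End}(\mathcal{E})$ — so $B$ is of total degree $\le 1$.

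For the last sentence of the statement, I would simply note that $\mathrm{Der}(\mathcal{A})\otimes_\mathcal{A}\{\mathrm{Id}_\mathcal{E}\}\subset \mathrm{Der}(\mathcal{A})\otimes_\mathcal{A}\mathrm{End}(\mathcal{E})$, and that when $b^l(X) = \rho^l(X)\otimes\mathrm{Id}_\mathcal{E}$ the defining relation (\ref{aLqa}) becomes $[X,fY] = f[X,Y] + \rho^l(X)(f)Y$, which is exactly the genuine (left) anchor relation of Equation (\ref{anchors}); so in that case the generalized anchor maps reduce to the anchor maps $\rho,\rho^r:\mathcal{E}\to\mathrm{Der}(\mathcal{A})$ appearing in the QD-pseudoalgebra theorem.

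The main obstacle I anticipate is purely bookkeeping rather than conceptual: namely, packaging the ``$f\mapsto$ derivation-valued-in-endomorphisms'' data cleanly as an element of $\mathrm{Der}(\mathcal{A})\otimes_\mathcal{A}\mathrm{End}(\mathcal{E})$ and verifying that $\mathcal{A}$-linearity of that element $b^l(X)$ in $X$ (i.e. $b^l(fX) = f\,b^l(X)$) is equivalent to the total-degree-$\le 1$ condition rather than merely the degree-$\le 1$ condition. This is where the mixed commutator $\delta_1(f)\delta_2(g)B$ must be handled carefully, since it is precisely the obstruction distinguishing a weak pseudoalgebra from a quasi pseudoalgebra; I would compute it directly on a pair $(X,Y)$ exactly as in the displayed identity (\ref{QD1}) of the previous proof, and read off that its vanishing is the $\mathcal{A}$-linearity of the generalized anchors. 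Everything else is a routine unravelling of the Grothendieck–Vinogradov filtration in the bilinear setting.
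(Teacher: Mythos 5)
Your proposal is correct and follows essentially the same route as the paper: both arguments introduce the trilinear object $A(X,g,Y)=[X,gY]-g[X,Y]$ (and its analogue for the first slot), extract $\mathcal{A}$-linearity in $Y$ from $\delta_2\delta_2 B=0$, the derivation property in $g$ by direct computation, and $\mathcal{A}$-linearity in $X$ from the mixed commutator $\delta_1(f)\delta_2(g)B=0$, then package this as $b^l$ (symmetrically $b^r$), with the converse obtained by computing the same commutators from (\ref{aLqa}) and the $\mathcal{A}$-linearity of the generalized anchors. Your explicit remark on the final sentence (reduction to genuine anchors when the values lie in $\mathrm{Der}(\mathcal{A})\otimes_\mathcal{A}\{\mathrm{Id}_\mathcal{E}\}$) is treated as immediate in the paper, and your identification of the mixed condition as exactly what separates quasi from weak pseudoalgebras matches the paper's use of Equation (\ref{QD1}).
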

\begin{proof} Assume first that the bracket $B$ is a bidifferential operator of total degree $\le 1$ and define a
three-linear map of vector spaces  $A:\mathcal{E}\times\mathcal{A}\times\mathcal{E}\to\mathcal{E}$ by
$$A(X,g,Y)=(\delta_2(g)B)(X,Y)=[X,gY]-g[X,Y]\,.
$$
It is easy to see that $A$ is $\mathcal{A}$-linear with respect to the first and the third argument, and a derivation
with respect to the second. Indeed, as
$$
(\delta_1(f)\delta_2(g)B)(X,Y)=A(fX,g,Y)-fA(X,g,Y)=0\,,
$$
we get $\mathcal{A}$-linearity with respect to the first argument. Similarly, from $\delta_2(f)\delta_2(g)B=0$, we get the
same conclusion for the third argument. We have also
\begin{eqnarray}\nonumber A(X,fg,Y)&=&[X,fgY]-fg[X,Y]=[X,fgY]-f[X,gY]+f[X,gY]-fg[X,Y]\\
&=&A(X,f,gY)+fA(X,g,Y)=gA(X,f,Y)+fA(X,g,Y)\,,\label{w1}
\end{eqnarray}
thus the derivation property. This implies that
$A$ is represented by an $\mathcal{A}$-module homomorphism $b^l:\mathcal{E}\to\mathrm{Der}(\mathcal{A})\otimes_\mathcal{A}\mathrm{End}(\mathcal{E})$. Analogous
considerations give us the right generalized anchor map $b^r$.

Conversely, assume the existence of both generalized anchor maps. Then, the map $A$ defined as above reads
$A(X,f,Y)=b^l(X)(\mathrm{d} f\otimes Y)$, so is $\mathcal{A}$-linear with respect to $X$ and $Y$. Hence,
$$(\delta_1(f)\delta_2(g)B)(X,Y)=A(fX,g,Y)-fA(X,g,Y)=0$$ and
$$(\delta_2(f)\delta_2(g)B)(X,Y)=A(X,g,fY)-fA(X,g,Y)=0\,.$$
A similar reasoning for $b^r$ gives $(\delta_1(f)\delta_1(g)B)(X,Y)=0$, so the bracket is a bidifferential operator
of total order $\le 1$.
\end{proof}

In the case when we deal with a quasi algebroid, i.e., $\mathcal{A}=C^\infty(M)$ and $\mathcal{E}=\mathrm{Sec}(E)$ for a vector bundle
$\tau:E\to M$, the generalized anchor maps (\ref{Lqa}) are associated with vector bundle maps that we denote
(with some abuse of notations) also by $b^r,b^l$,
$$b^r,b^l:E\to T M\otimes_M\mathrm{End}(E)\,,$$
covering the identity on $M$. Here, $\mathrm{End}(E)$ is the endomorphism bundle of $E$, so $\mathrm{End}(E)\simeq E^\ast\otimes_M
E$. The induced maps of sections produce from sections of $E$ sections of $ T M\otimes_M\mathrm{End}(E)$ which, in
turn, act on sections of $ T^\ast M\otimes_M E$ in the obvious way. An algebroid version of Theorem \ref{T1}
is the following.

\begin{thm}\label{T1a} An $\mathbb{R}$-bilinear bracket $B=[\cdot,\cdot]$ on the real space $\mathrm{Sec}(E)$ of sections of a vector
bundle $\tau:E\to M$ defines a quasi
algebroid structure if and only if there are vector bundle morphisms
\begin{equation}\label{Lqa1} b^r,b^l:E\to T
M\otimes_M\mathrm{End}(E) \end{equation} covering the identity on $M$, called {\em generalized anchor maps, right and left}, such
that, for all $X,Y\in\mathrm{Sec}(E)$ and all $f\in C^\infty(M)$, (\ref{aLqa}) is satisfied. The generalized anchor maps are
actual anchor maps, if they take values in $ T M\otimes\langle \mathrm{Id}_E\rangle\simeq T M$.
\end{thm}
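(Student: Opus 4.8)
The plan is to derive Theorem \ref{T1a} as the geometric incarnation of Theorem \ref{T1}, specialising to $\mathcal{A}=C^\infty(M)$ and $\mathcal{E}=\mathrm{Sec}(E)$ for a vector bundle $\tau:E\to M$. First I would invoke Theorem \ref{T1} directly: a $\mathbb{R}$-bilinear bracket $B=[\cdot,\cdot]$ on $\mathrm{Sec}(E)$ is a quasi algebroid bracket (i.e.\ a bidifferential operator of total order $\le 1$) if and only if there exist $C^\infty(M)$-module homomorphisms $b^r,b^l:\mathrm{Sec}(E)\to\mathrm{Der}(C^\infty(M))\otimes_{C^\infty(M)}\mathrm{End}(\mathrm{Sec}(E))$ satisfying the two identities in (\ref{aLqa}). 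So the only thing left to do is to translate the algebraic objects $\mathrm{Der}(C^\infty(M))$, $\mathrm{End}(\mathrm{Sec}(E))$, and module homomorphisms between section modules into the corresponding bundle-theoretic objects, and to check that the resulting bundle maps cover the identity on $M$.

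The key steps, in order, are the following. (1) Identify $\mathrm{Der}(C^\infty(M))$ with $\mathrm{Sec}(TM)$; this is the standard fact that derivations of the smooth function algebra are exactly vector fields. (2) Identify $\mathrm{End}(\mathrm{Sec}(E))=\mathrm{Hom}_{C^\infty(M)}(\mathrm{Sec}(E),\mathrm{Sec}(E))$ with $\mathrm{Sec}(\mathrm{End}(E))=\mathrm{Sec}(E^\ast\otimes_M E)$, using the Serre–Swan-type correspondence between $C^\infty(M)$-linear maps of section modules and vector bundle morphisms (here one uses that all modules in play are finitely generated projective, i.e.\ sections of vector bundles). (3) Combine these two identifications to get $\mathrm{Der}(C^\infty(M))\otimes_{C^\infty(M)}\mathrm{End}(\mathrm{Sec}(E))\cong\mathrm{Sec}(TM\otimes_M\mathrm{End}(E))$, again invoking that tensoring sections of two bundles over $C^\infty(M)$ gives sections of the tensor product bundle. (4) Conclude that a $C^\infty(M)$-module homomorphism $b^l:\mathrm{Sec}(E)\to\mathrm{Sec}(TM\otimes_M\mathrm{End}(E))$ is exactly the map on sections induced by a vector bundle morphism $b^l:E\to TM\otimes_M\mathrm{End}(E)$ covering $\mathrm{id}_M$, and likewise for $b^r$. (5) Observe that under these identifications the module-level identities (\ref{aLqa}) are literally the identities (\ref{aLqa}) read at the level of sections, and that the action of $\mathrm{Sec}(TM\otimes_M\mathrm{End}(E))$ on $\mathrm{Sec}(T^\ast M\otimes_M E)$ used in the statement is precisely the pairing $(V\otimes\Phi)(\omega\otimes X)=\langle V,\omega\rangle\,\Phi(X)$ from the algebraic setting, now carried out pointwise. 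Finally, (6) note that $b^l$ (resp.\ $b^r$) takes values in $TM\otimes_M\langle\mathrm{Id}_E\rangle\cong TM$ iff the corresponding algebraic homomorphism takes values in $\mathrm{Der}(C^\infty(M))\otimes_{C^\infty(M)}\{\mathrm{Id}_{\mathrm{Sec}(E)}\}$, which by Theorem \ref{T1} is the condition for the generalized anchors to be genuine anchors; this matches the Lie/quasi algebroid anchor of (\ref{anchor}).

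I expect the main obstacle — or rather the only point needing genuine care rather than bookkeeping — to be step (2)–(3): making precise that $C^\infty(M)$-linear maps between section modules are induced by bundle morphisms, and that this identification is compatible with the tensor-over-$C^\infty(M)$ operation, so that $\mathrm{Der}(C^\infty(M))\otimes_{C^\infty(M)}\mathrm{End}(\mathrm{Sec}(E))$ really is $\mathrm{Sec}(TM\otimes_M\mathrm{End}(E))$ and not some larger module. This rests on local finite generation/projectivity of $\mathrm{Sec}(E)$ and a partition-of-unity argument showing $C^\infty(M)$-linear operators are local, hence pointwise-defined; all of this is classical (Peetre's theorem, cited in the paper, is the relevant locality input), so the proof is essentially a dictionary translation. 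I would therefore present the argument compactly: state that everything follows from Theorem \ref{T1} via the canonical identifications $\mathrm{Der}(C^\infty(M))\cong\mathrm{Sec}(TM)$ and $\mathrm{Hom}_{C^\infty(M)}(\mathrm{Sec}(E_1),\mathrm{Sec}(E_2))\cong\mathrm{Sec}(\mathrm{Hom}(E_1,E_2))$, spell out that the $C^\infty(M)$-linearity of $b^l,b^r$ guarantees they come from fibrewise-linear bundle maps covering $\mathrm{id}_M$, and check the anchor-value condition, leaving the standard locality facts to the references.
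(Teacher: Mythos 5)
Your proposal is correct and follows essentially the same route as the paper, which gives no separate argument for Theorem \ref{T1a} but presents it as the geometric reading of Theorem \ref{T1} under precisely the identifications you list: $\mathrm{Der}(C^\infty(M))\cong\mathrm{Sec}(TM)$, $C^\infty(M)$-linear maps of section modules $\cong$ bundle morphisms covering $\mathrm{id}_M$, and compatibility with the tensor product over $C^\infty(M)$. Your write-up merely makes explicit the standard Serre--Swan/locality bookkeeping that the paper leaves implicit, so there is nothing to object to.
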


\subsection{Loday algebroids}\label{GKPSection4}
Let us isolate and specify the most important particular cases of Definition \ref{def}.
\begin{defi}\

\begin{enumerate}
\item A {\it {faint} Loday algebroid} (resp., {\it {faint} Lie algebroid}) on a vector bundle $E$ over a
base manifold $M$ is a Loday bracket (resp., a Lie bracket) on the $C^\infty(M)$-module $\mathrm{Sec}(E)$ of smooth
sections of $E$ which is a bidifferential operator.

\item A {\it weak Loday algebroid} (resp., {\it weak Lie algebroid}) on a vector bundle $E$ over a base
manifold $M$ is a Loday bracket (resp., a Lie bracket) on the $C^\infty(M)$-module $\mathrm{Sec}(E)$ of smooth
sections of $E$ which is a bidifferential operator of degree $\le 1$ with respect to each variable separately.

\item A {\it Loday quasi algebroid} (resp., {\it Lie quasi algebroid}) on a vector bundle $E$ over a base
manifold $M$ is a Loday bracket (resp., Lie bracket) on the $C^\infty(M)$-module $\mathrm{Sec}(E)$ of smooth sections
of $E$ which is a bidifferential operator of total degree $\le 1$.

\item A {\it QD-algebroid} (resp., {\it skew QD-algebroid, Loday QD-algebroid, Lie QD-algebroid}) on a vector
bundle $E$ over a base manifold $M$ is an $\mathbb{R}$-bilinear bracket (resp., skew bracket, Loday bracket, Lie
bracket) on the $C^\infty(M)$-module $\mathrm{Sec}(E)$ of smooth sections of $E$ for which the adjoint operators
$\mathrm{ad}_X$ and $\mathrm{ad}_X^r$ are derivative endomorphisms.
\end{enumerate}
\end{defi}
\begin{rem} {\it Lie pseudoalgebras} appeared first in the paper of Herz  \cite{He}, but one can find similar concepts
under more than a dozen of names in the literature  (e.g. {\it Lie modules,  $(R,A)$-Lie   algebras,
Lie-Cartan pairs, Lie-Rinehart algebras, differential algebras}, etc.). Lie algebroids were introduced  by
Pradines \cite{Pr} as infinitesimal parts of differentiable groupoids. In the same year a book by Nelson was
published where a general theory of Lie modules, together with a big part of the corresponding differential
calculus, can be found. We also refer to a survey article by Mackenzie \cite{Ma}. QD-algebroids, as well as
Loday QD-algebroids  and Lie QD-algebroids, have been introduced in \cite{G1}. In \cite{GU,GGU} Loday strong
QD-algebroids have been called Loday algebroids and strong QD-algebroids have been called just {\it
algebroids}. The latter served as geometric framework for generalized Lagrange and Hamilton formalisms.

In the case of line bundles, $\mathrm{rk} E=1$, Lie QD-algebroids are exactly {\it local Lie algebras} in the sense of
Kirillov \cite{Ki}. They are just {\it Jacobi brackets}, if the bundle is trivial, $\mathrm{Sec}(E)=C^\infty(M)$. Of course,
Lie QD-algebroid brackets are first-order bidifferential operators by definition, while Kirillov has
originally started with considering Lie brackets on sections of line bundles determined by local operators and
has only later discovered that these operators have to be bidifferential operators of first order. A purely
algebraic version of Kirillov's result has been proven in \cite{G}, Theorems 4.2 and 4.4, where bidifferential
Lie brackets on associative commutative algebras containing no nilpotents have been considered.
\end{rem}

\begin{ex}\label{e1} Let us consider a Loday algebroid bracket in the sense of \cite{Ha,HM,ILMP,KS,MM,Wa}, i.e., a Loday algebra bracket $[\cdot,\cdot]$ on the $C^\infty(M)$-module $\mathcal{E}=\mathrm{Sec}(E)$ of sections of a vector bundle $\tau:E\to M$ for which there is a vector bundle morphism $\rho:E\to T M$ covering the identity on $M$ (the left anchor map) such that (\ref{anchor}) is satisfied. Since, due to (\ref{anhom}), the anchor map is necessarily a homomorphism of the Loday bracket into the Lie bracket of vector fields, our Loday algebroid is just a Lie algebroid in the case when $\rho$ is injective.
In the other cases the anchor map does not determine the Loday algebroid structure, in particular does not
imply any locality of the bracket with respect to the first argument. Thus, this concept of Loday algebroid is
not geometric.

For instance, let us consider a Whitney sum bundle $E=E_1\oplus_M E_2$ with the canonical projections $p_i:E\to
E_i$ and any $\mathbb{R}$-linear map $\varphi:\mathrm{Sec}(E_1)\to C^\infty(M)$. Being only $\mathbb{R}$-linear, $\varphi$ can be chosen very strange
non-geometric and non-local. Define now the following bracket on $\mathrm{Sec}(E)$:
$$[X,Y]=\varphi(p_1(X))\cdot p_2(Y)\,.$$
It is easy to see that this is a Loday bracket which admits the trivial left anchor, but the bracket is
non-local and non-geometric as well.
\end{ex}

\begin{ex} A standard example of a weak Lie algebroid bracket is a Poisson (or, more generally, Jacobi) bracket $\{\cdot,\cdot\}$ on $C^\infty(M)$ viewed as a $C^\infty(M)$-module of section of the trivial line bundle $M\times\mathbb{R}$.
It is a bidifferential operator of order $\le 1$ and the total order $\le 2$. It is actually a Lie QD-algebroid
bracket, as $\mathrm{ad}_f$ and $\mathrm{ad}_f^r$ are, by definition, derivations (more generally, first-order differential
operators). Both anchor maps coincide and give the corresponding Hamiltonian vector fields, $\rho(f)(g)=\{
f,g\}$. The map $f\mapsto \rho(f)$ is again a differential operator of order 1, so is not implemented by a
vector bundle morphism $\rho:M\times\mathbb{R}\to T M$. Therefore, this weak Lie algebroid is not a Lie algebroid. {This has a straightforward generalization to {\em Kirillov brackets} being local Lie brackets on sections of a line bundle \cite{Ki}.}
\end{ex}
\begin{ex}
Various brackets are associated with a volume form $\omega$ on a manifold $M$ of dimension $n$ (see e.g. \cite{Li}). Denote with $\mathcal{X}^k(M)$ (resp., $\Omega^k(M)$) the spaces of $k$-vector fields (resp., $k$-forms) on $M$. As
the contraction maps $\mathcal{X}^k(M)\ni K\mapsto i_K\omega\in\Omega^{n-k}(M)$ are isomorphisms of $C^\infty(M)$-modules, to the
de Rham cohomology operator $\mathrm{d}:\Omega^{n-k-1}(M)\to\Omega^{n-k}(M)$ corresponds a homology operator
$\delta:\mathcal{X}^k(M)\to\mathcal{X}^{k-1}(M)$. The skew-symmetric bracket $B$ on $\mathcal{X}^2(M)$ defined in \cite{Li} by
$B(t,u)=-\delta(t)\wedge\delta(u)$ is not a Lie bracket, since its Jacobiator $B(B(t,u),v)+c.p.$ equals
$\delta(\delta(t)\wedge\delta(u)\wedge\delta(v))$. A solution proposed in \cite{Li} depends on considering the algebra $N$ of bivector
fields modulo $\delta$-exact bivector fields for which the Jacobi anomaly disappears, so that $N$ is a Lie
algebra.

Another option is to resign from skew-symmetry and define the corresponding {faint} Loday algebroid. In view
of the duality between $\mathcal{X}^2(M)$ and $\Omega^{n-2}$, it is possible to work with $\Omega^{n-2}(M)$ instead. For
$\gamma\in\Omega^{n-2}(M)$ we define the vector field $\widehat{\gamma}\in\mathcal{X}(M)$ from the formula $i_{\widehat{\gamma}}\omega=\mathrm{d}\gamma$.
The bracket in $\Omega^{n-2}(M)$ is now defined by {(see \cite{Lo})}
$$\{\gamma,\beta\}_\omega=\mathcal{L}_{\widehat{\gamma}}\beta=i_{\widehat{\gamma}}i_{\widehat{\beta}}\omega+\mathrm{d}
i_{\widehat{\gamma}}\beta\,.$$ Since we have
$$i_{[\widehat{\gamma},\widehat{\beta}]_{vf}}\omega=\mathcal{L}_{\widehat{\gamma}}i_{\widehat{\beta}}\omega-i_{\widehat{\beta}}\mathcal{L}_{\widehat{\gamma}}\omega=
\mathrm{d} i_{\widehat{\gamma}}i_{\widehat{\beta}}\omega=\mathrm{d} \{\gamma,\beta\}_\omega\,,$$ it holds
$$\{\gamma,\beta\}_\omega^{\widehat{}}=[\widehat{\gamma},\widehat{\beta}]_{vf}\,.$$
Therefore,
$$\{\{\gamma,\beta\}_\omega,\eta\}_\omega=\mathcal{L}_{\{\gamma,\beta\}_\omega^{\widehat{}}}\eta=
\mathcal{L}_{\widehat{\gamma}}\mathcal{L}_{\widehat{\beta}}\eta-\mathcal{L}_{\widehat{\beta}}\mathcal{L}_{\widehat{\gamma}}\eta=\{\gamma,\{\beta,\eta\}_\omega\}_\omega
-\{\beta,\{\gamma,\eta\}_\omega\}_\omega\,,$$
so the Jacobi identity is satisfied and we deal with a Loday algebra. This
is in fact a {faint} Loday algebroid structure on $\wedge^{n-2} T^\ast M$ with the left anchor
$\rho(\gamma)=\widehat{\gamma}$. This bracket is a bidifferential operator which is first-order with respect to the second
argument and second-order with respect to the first one.
\end{ex}

Note that Lie QD-algebroids are automatically Lie algebroids, if the rank of the bundle $E$ is $>1$
\cite[Theorem 3]{G1}. Also some other of the above concepts do not produce qualitatively new examples.
\begin{thm} (\cite{G1,GM,GM1})\

\begin{description}
\item{(a)} Any Loday bracket on $C^\infty(M)$ (more generally, on sections of a line bundle) which is a
bidifferential operator is actually a Jacobi bracket (first-order and skew-symmetric). \item{(b)} Let
$[\cdot,\cdot]$ be a Loday bracket on sections of a vector bundle $\tau:E\to M$, admitting anchor maps
$\rho,\rho^r:\mathrm{Sec}(E)\to\mathcal{X}(M)$ which assign vector fields to sections of $E$ and such that (\ref{anchors}) is
satisfied (Loday QD-algebroid on $E$). Then, the anchors coincide, $\rho=\rho^r$, and the bracket is
skew-symmetric at points $p\in M$ in the support of $\rho=\rho^r$. Moreover, if the rank of $E$ is $>1$, then
the anchor maps are $C^\infty(M)$-linear, i.e. they come from a vector bundle morphism $\rho=\rho^r:E\to M$. In other words,
any Loday QD-algebroid is actually, around points where one anchor does not vanish, a Jacobi bracket if
$\mathrm{rk}(E)=1$, or Lie algebroid bracket if $\mathrm{rk}(E)>1$.
\end{description}
\end{thm}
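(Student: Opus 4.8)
The plan is to reduce both assertions, via the Jacobi identity (\ref{JI}) and the Leibniz rules (\ref{anchors}), to the Kirillov--type rigidity theorem for bidifferential skew brackets on reduced algebras proved in \cite{G}. Two elementary ingredients feed the whole argument. First, a purely algebraic identity: setting $A=B=X$, $C=W$ in (\ref{JI}) gives at once $[[X,X],W]=0$ for all $W$, and polarising in $X$ yields $[S(X,Y),W]=0$, where $S(X,Y):=[X,Y]+[Y,X]$ is the symmetric part of the bracket; feeding the left Leibniz rule of (\ref{anchors}) into $[S(X,Y),fW]=0$ moreover forces $\rho(S(X,Y))=0$. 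Second, on a line bundle a skew bidifferential bracket satisfying (\ref{JI}) is, by \cite[Theorems 4.2 and 4.4]{G}, automatically of the first order, i.e.\ a Jacobi bracket.

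For (b), I would substitute $fX$ (in all three occurrences) for the first argument of (\ref{JI}), expand each resulting bracket by the two rules (\ref{anchors}), and cancel the terms proportional to $f$ using (\ref{JI}) once more. What remains is the identity
\[
\rho^r([Y,Z])(f)\,X \;=\; \rho^r(Z)(f)\,S(X,Y)\;-\;\bigl(\rho(Y)(f)-\rho^r(Y)(f)\bigr)[X,Z]\;+\;\bigl(\rho(Y)\!\circ\!\rho^r(Z)-\rho^r(Z)\!\circ\!\rho^r(Y)\bigr)(f)\,X,
\]
valid for all $f\in\mathcal A$, $X,Y,Z\in\mathcal E$ (juxtaposition of anchors meaning composition of derivations of $\mathcal A$; we read the right Leibniz rule of (\ref{anchors}) as $[fX,W]=f[X,W]-\rho^r(W)(f)X$). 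Evaluating this identity at a point $p$ on a section $X=f_0X_0$ with $f_0(p)=0$ and $X_0(p)\neq 0$ (which exists since $\mathrm{rk}\,E\geq 1$), every term carrying an $X$ drops out while $S(f_0X_0,Y)(p)$ and $[f_0X_0,Z](p)$ are computed from (\ref{anchors}); one obtains $\rho^r(Z)(f_0)(p)\,\bigl(\rho(Y)-\rho^r(Y)\bigr)(f_0)(p)=0$. Since $df_0|_p$ ranges over all of $T_p^*M$ and a product of two linear functionals on $T_p^*M$ vanishes identically only when one of the factors does, at every point $p$ of the support of the anchors we get $\rho(Y)|_p=\rho^r(Y)|_p$ for all $Y$; thus $\rho=\rho^r=:\rho$ there. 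Re-inserting $\rho^r=\rho$ in the displayed identity and using $\rho([Y,Z])=[\rho(Y),\rho(Z)]_c$ (equation (\ref{anhomn})), the identity collapses to $\rho(Z)(f)\,S(X,Y)=0$; choosing $Z,f$ with $\rho(Z)(f)(p)\neq 0$ forces $S(X,Y)(p)=0$. Hence the bracket is skew-symmetric over the support, where it is therefore a Lie bracket satisfying (\ref{JI}) whose adjoint operators are derivative endomorphisms, i.e.\ a Lie QD-algebroid; by \cite[Theorem 3]{G1} this is a genuine Lie algebroid bracket (anchor induced by a bundle map $E\to TM$) when $\mathrm{rk}\,E>1$, and a local Lie algebra in the sense of Kirillov --- a Jacobi bracket --- when $\mathrm{rk}\,E=1$, as recalled in the remark preceding the theorem.

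For (a) one has no a priori anchor, so I would first establish skew-symmetry directly: the symmetric part $S$ of a bidifferential Loday bracket on sections of a line bundle is again bidifferential and satisfies $[S(f,g),h]=0$ for all $h$ (the Leibniz-kernel identity above), and a principal-symbol computation exploiting that $C^\infty(M)$ has no nilpotents forces $S\equiv 0$; the bracket is then skew, bidifferential and satisfies (\ref{JI}), so \cite[Theorems 4.2 and 4.4]{G} identify it as a first-order operator, i.e.\ a Jacobi bracket. This also supplies the $\mathrm{rk}\,E=1$ input invoked in part (b). The computational heart of the proof is the expansion producing the displayed master identity together with the bookkeeping of its terms; the genuine obstacle, however, is conceptual --- passing from the pointwise skew-symmetry on the support, which the Jacobi identity yields essentially for free, to the statement that the anchor comes from a vector bundle morphism, a step that truly uses the rank hypothesis and the Kirillov-type rigidity results of \cite{G,G1}.
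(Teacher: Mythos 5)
A preliminary remark: the paper itself does not prove this theorem -- it is quoted from \cite{G1,GM,GM1} -- so your proposal can only be measured against those sources, not against an argument in the text. With that said, your treatment of part (b) is essentially sound and in the spirit of the cited works. The master identity you obtain by substituting $fX$ into (\ref{JI}) and expanding with the two Leibniz rules (read correctly as $[fX,Y]=f[X,Y]-\rho^r(Y)(f)X$, which silently repairs the misprint in (\ref{anchors})) is correct, and the evaluation on $X=f_0X_0$ with $f_0(p)=0$, $X_0(p)\neq 0$ gives what you claim \emph{provided} you state that you also specialize $f=f_0$; for general $f$ one only gets the polarized relation $\rho^r(Z)(f)\,(\rho(Y)-\rho^r(Y))(f_0)+(\rho(Y)-\rho^r(Y))(f)\,\rho^r(Z)(f_0)=0$ at $p$. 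Granting that, the product-of-linear-functionals argument, the conclusion $\rho|_p=\rho^r|_p$ wherever $\rho^r$ does not vanish, the collapse of the identity via (\ref{anhomn}) to $\rho(Z)(f)\,S(X,Y)=0$, and the reduction to the Lie QD-algebroid/Kirillov results of \cite{G1,G} are all fine. Two repairs are needed: the theorem asserts $\rho=\rho^r$ on all of $M$, and you are silent on the interior of the zero set of $\rho^r$; there the master identity degenerates to $\rho(Y)(f)[X,Z]=0$ (the second-order term dies because $\rho^r(Z)(f)$ vanishes on a whole neighbourhood), which combined with the left Leibniz rule forces $\rho$ to vanish there too, so the global statement follows with one extra line. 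Also, re-inserting $\rho^r=\rho$ into the identity uses equality of the functions $\rho^r(Z)(f)$ and $\rho(Z)(f)$ \emph{near} $p$, not just at $p$; this is harmless because the non-vanishing locus is open, but it should be said.

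Part (a) is where there is a genuine gap. From $[S(f,g),h]=0$ for all $h$ you only learn that the symmetric part lies in the left annihilator of the bracket, and membership in the left annihilator does not by itself force vanishing -- a degenerate Poisson bracket already has a large space of such ``central'' elements -- so the sentence ``a principal-symbol computation exploiting that $C^\infty(M)$ has no nilpotents forces $S\equiv 0$'' is precisely the nontrivial content of \cite{GM,GM1}, not a proof of it. The actual argument there works with the bidifferential expression of the bracket, extracting from the Loday identity (e.g. from $[[f,f],h]=0$, viewed as an identity of multidifferential operators in $f$ and $h$) constraints on the symbols, with an induction on the order and with reducedness of $C^\infty(M)$ used to kill squares of symbols; none of this appears, even in outline, in your text. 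Since the rank-one input you need in part (b) is only the skew-symmetric Kirillov-type rigidity of \cite[Theorems 4.2 and 4.4]{G}, which you may legitimately quote, the honest options for (a) are either to cite \cite{GM,GM1} outright, as the paper does, or to actually carry out the symbol analysis; as written, (a) is an appeal to the result presented as if it were a sketch of its proof.
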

The above results show that relaxing skew-symmetry and considering Loday brackets on $C^\infty(M)$ or $\mathrm{Sec}(E)$ does
not lead to new structures (except for just bundles of Loday algebras), if we assume differentiability in the
first case and the existence of both (possibly different) anchor maps in the second. Therefore, a definition
of Loday algebroids that admits a rich family of new examples, must resign from the traditionally understood
right anchor map.

\medskip
\noindent The definition of the main object of our studies can be formulated as follows.
\begin{defi}\label{d1}
A {\it Loday algebroid} on a vector bundle $E$ over a base manifold $M$ is a Loday bracket on the
$C^\infty(M)$-module $\mathrm{Sec}(E)$ of smooth sections of $E$ which is a bidifferential operator of total degree
$\le 1$ and for which the adjoint operator $\mathrm{ad}_X$ is a derivative endomorphism.
\end{defi}
{Of course, the above definition of Loday algebroid is stronger than those known in the literature (e.g. \cite{Ha,HM,ILMP,KS,MM,Wa}), which assume only the existence of a left anchor and put no differentiability requirements for the first variable.
}
\begin{thm}\label{LodAld} A Loday bracket $[\cdot,\cdot]$ on the real space $\mathrm{Sec}(E)$ of sections of a vector bundle $\tau:E\to M$
defines a Loday algebroid structure
if and only if there are vector bundle morphisms
\begin{equation}\label{Lqa1a} \rho:E\to T M\,, \quad \alpha:E\to T
M\otimes_M\mathrm{End}(E)\,, \end{equation} covering the identity on $M$, such that, for all $X,Y\in\mathrm{Sec}(E)$ and all $f\in
C^\infty(M)$, \begin{equation}\label{aLqa1} [X,fY]=f[X,Y]+\rho(X)(f)Y\,,\quad [fX,Y]=f[X,Y]-\rho(Y)(f)X+\alpha(Y)(\mathrm{d} f\otimes
X)\,. \end{equation} If this is the case, {the anchors are uniquely determined and} the left anchor induces a homomorphism of the Loday bracket into the bracket
$[\cdot,\cdot]_{vf}$ of vector fields,
$$\rho([X,Y])=[\rho(X),\rho(Y)]_{vf}\,.$$
\end{thm}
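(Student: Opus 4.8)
The statement is an ``if and only if'' together with a consequence, so the plan is to treat the two implications separately and then derive the anchor homomorphism property. For the forward implication, I would start from a Loday algebroid bracket in the sense of Definition~\ref{d1}, i.e.\ a Loday bracket on $\mathrm{Sec}(E)$ which is a bidifferential operator of total degree $\le 1$ and whose left adjoint $\mathrm{ad}_X=[X,\cdot]$ is a derivative endomorphism. The condition that $\mathrm{ad}_X$ is a quasi-derivation immediately gives, via the universal anchor map of Section~\ref{GKPSection2}, a map $\rho(X)=\widehat{\mathrm{ad}_X}\in\mathrm{Der}(\mathcal{A})$ with $[X,fY]=f[X,Y]+\rho(X)(f)Y$; since $B$ has total degree $\le 1$, the computation $(\delta_1(f)\delta_2(g)B)(X,Y)=(\rho(fX)-f\rho(X))(g)Y$ from the proof of the pseudoalgebra theorem shows $\rho$ is $\mathcal{A}$-linear, hence comes from a vector bundle morphism $\rho:E\to TM$. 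For the left entry, I would apply Theorem~\ref{T1a}: total degree $\le 1$ already yields a left generalized anchor $b^r:E\to TM\otimes_M\mathrm{End}(E)$ with $[fX,Y]=f[X,Y]-b^r(Y)(\mathrm df\otimes X)$. The task is then to extract from $b^r$ the combination appearing in \eqref{aLqa1}: I would compute $\delta_1(f)$ applied to the \emph{already-known} right-hand identity $[X,fY]=f[X,Y]+\rho(X)(f)Y$, i.e.\ evaluate $[gX,fY]$ two ways, to see that $b^r(Y)(\mathrm df\otimes X)$ must equal $\rho(Y)(f)X$ modulo an $\mathrm{End}(E)$-valued correction that is itself $\mathcal{A}$-linear in all slots; defining $\alpha(Y):=b^r(Y)-\rho(Y)\otimes\mathrm{Id}_E$ (suitably interpreted as a map $E\to TM\otimes_M\mathrm{End}(E)$) gives the second formula of \eqref{aLqa1}. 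Uniqueness of $\rho$ and $\alpha$ follows because the module is faithful: $\rho(X)$ is determined by $[X,fY]-f[X,Y]$ and then $\alpha(Y)$ is determined by $[fX,Y]-f[X,Y]+\rho(Y)(f)X$.

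For the converse, I would simply observe that the two identities in \eqref{aLqa1} say precisely that the operators $\delta_2(f)\delta_2(g)B$, $\delta_1(f)\delta_1(g)B$, and $\delta_1(f)\delta_2(g)B$ all vanish: the first formula shows $B$ is first order (indeed $\mathcal{A}$-linear after one $\delta_2$) in the second argument and that $\mathrm{ad}_X$ is a derivative endomorphism with anchor $\rho(X)$; the second formula, being $\mathcal{A}$-linear in $X$ after one $\delta_1$ (since $\rho(Y)(f)X$ and $\alpha(Y)(\mathrm df\otimes X)$ are $\mathcal{A}$-linear in $X$), shows $B$ is first order in the first argument; and a mixed computation $\delta_1(f)\delta_2(g)B$ using both formulas shows total degree $\le 1$. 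Combined with the assumed Loday (Jacobi) identity, this is exactly the content of Definition~\ref{d1}.

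Finally, for the homomorphism property $\rho([X,Y])=[\rho(X),\rho(Y)]_{vf}$, I would run the standard argument: apply the Jacobi identity \eqref{JI} to the triple $(X,Y,fZ)$ and expand every bracket involving $fZ$ using the first formula of \eqref{aLqa1}. The $f[\cdot,\cdot]$-terms cancel by the Jacobi identity applied to $(X,Y,Z)$, and collecting the coefficients of $Z$ forces $\rho([X,Y])(f)Z = \big(\rho(X)\rho(Y)(f) - \rho(Y)\rho(X)(f)\big)Z = [\rho(X),\rho(Y)]_{vf}(f)\,Z$; faithfulness of the module then gives the claimed identity of vector fields. Alternatively one can invoke that $X\mapsto\mathrm{ad}_X$ is a Loday-algebra morphism into $(\mathrm{Der}(\mathcal{E}),[\cdot,\cdot]_c)$ by the Jacobi identity and compose with the universal anchor, using \eqref{anchor2}; I would present whichever is shorter.

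\textbf{Main obstacle.} The delicate point is the bookkeeping in the forward direction that isolates $\rho$ (an honest $TM$-valued anchor) as the ``scalar part'' of the left generalized anchor $b^r$ while relegating the genuinely new, $\mathrm{End}(E)$-valued behavior into $\alpha$ --- and checking that the hypothesis ``$\mathrm{ad}_X$ is a derivative endomorphism'' (imposed only on the \emph{left} adjoint) is exactly what makes the \emph{right}-hand formula in \eqref{aLqa1} have the asymmetric shape $-\rho(Y)(f)X+\alpha(Y)(\mathrm df\otimes X)$ with the \emph{same} $\rho$ appearing. Everything else is a routine unwinding of the definitions of $\delta_i(f)$ and of faithfulness.
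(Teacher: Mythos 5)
Your proposal is correct and takes essentially the same route as the paper: the paper's proof is precisely the combination of Theorem \ref{T1a} with the existence of the $\mathcal{A}$-linear left anchor coming from the pseudoalgebra theorem (since $\mathrm{ad}_X$ is a quasi-derivation and the bracket has total order $\le 1$), writing the generalized right anchor as $b^r=\rho\otimes\mathrm{Id}_E-\alpha$, and the homomorphism property $\rho([X,Y])=[\rho(X),\rho(Y)]_{vf}$ is the same Jacobi-plus-faithfulness computation you sketch. The only slip is a sign (and a name: $b^r$ is the \emph{right} generalized anchor): with your definition $\alpha(Y):=b^r(Y)-\rho(Y)\otimes\mathrm{Id}_E$ you obtain \eqref{aLqa1} with $-\alpha$ in place of $\alpha$, so one should set $\alpha:=\rho\otimes\mathrm{Id}_E-b^r$, which is immaterial for the existence claim.
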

\begin{proof} This is a direct consequence of Theorem \ref{T1a} and the fact that an algebroid bracket has the left anchor map. We
just write the generalized right anchor map as $b^r=\rho\otimes I-\alpha$.
\end{proof}

To give a local form of a Loday algebroid bracket, let us recall that sections $X$ of the vector bundle $E$
can be identified with linear (along fibers) functions $\iota_X$ on the dual bundle $E^\ast$. Thus, fixing local
coordinates $(x^a)$ in $M$ and a basis of local sections $e_i$ of $E$, we have a corresponding system
$(x^a,\xi_i=\iota_{e_i})$ of affine coordinates in $E^\ast$. As local sections of $E$ are identified with linear
functions $\sigma=\sigma^i(x)\xi_i$, the Loday bracket is represented by a bidifferential operator $B$ of total
order $\le 1$:
$$B(\sigma^i_1(x)\xi_i,\sigma^j_2(x)\xi_j)=c_{ij}^k(x)\sigma^i_1(x)\sigma^j_2(x)\xi_k+\beta_{ij}^{ak}(x)\frac{\partial\sigma^i_1}
{\partial x^a}(x)\sigma^j_2(x)\xi_k+\gamma_{ij}^{ak}(x)\sigma^i_1(x)\frac{\partial\sigma^j_2}{\partial x^a}(x)\xi_k\,.$$
Taking into account the existence of the left anchor, we have
\begin{eqnarray}\label{loc} B(\sigma^i_1(x)\xi_i,\sigma^j_2(x)\xi_j)&=&c_{ij}^k(x)\sigma^i_1(x)\sigma^j_2(x)\xi_k+\alpha_{ij}^{ak}(x)
\frac{\partial\sigma^i_1}{\partial x^a}(x)\sigma^j_2(x)\xi_k\\
&&+\rho_{i}^{a}(x)\left(\sigma^i_1(x)\frac{\partial\sigma^j_2}{\partial x^a}(x)-\frac{\partial\sigma^j_1}{\partial
x^a}(x)\sigma^i_2(x)\right)\xi_j\,. \nonumber
\end{eqnarray} Since sections of $\mathrm{End}(E)$ can be written in the form of linear
differential operators, we can rewrite (\ref{loc}) in the form
\begin{equation}\label {loc1}
B=c_{ij}^k(x)\xi_k\partial_{\xi_i}\otimes\partial_{\xi_j}+\alpha_{ij}^{ak}(x)\xi_k\partial_{x^a}\partial_{\xi_i}\otimes\partial_{\xi_j}
+\rho_{i}^{a}(x)\partial_{\xi_i}\wedge\partial_{x^a}\,.
\end{equation}
Of course, there are additional relations between coefficients of $B$ due to the fact that the Jacobi identity
is satisfied.

\subsection{Examples}\label{GKPSection5}
\subsubsection{Leibniz algebra}

Of course, a finite-dimensional Leibniz algebra is a Leibniz algebroid over a point.

\subsubsection{Courant-Dorfman bracket}

The {\it Courant bracket} is defined on sections of ${\cal T} M= T M\oplus_M T^*M$ as follows:
\begin{equation}\label{CB}[X+\omega,Y+\eta]=[X,Y]_{vf}+\mathcal{L}_X\eta-\mathcal{L}_Y\omega-\frac{1}{2}\left(d\,i_X\eta-d\,i_Y\omega\right)\,.
\end{equation} This bracket is antisymmetric, but it does not satisfy the
Jacobi identity; the Jacobiator is an exact 1-form. It is, as easily seen, given by a bidifferential operator
of total order $\le 1$, so it is a skew quasi algebroid.

\medskip
The {\it Dorfman bracket} is defined on the same module of sections. Its definition is the same as for
Courant, except that the corrections and the exact part of the second Lie derivative disappear:
\begin{equation}\label{CD}[X+\omega,Y+\eta]=[X,Y]_{vf}+\mathcal{L}_X\eta-i_Y\,\mathrm{d}\omega=[X,Y]_{vf}+i_X\,\mathrm{d}\eta-i_Y\,\mathrm{d}\omega+\mathrm{d}\,i_X\eta\,.
\end{equation}
This bracket is visibly non skew-symmetric, but it is a Loday bracket which is bidifferential of total order
$\le 1$. Moreover, the Dorfman bracket admits the classical left anchor map
\begin{equation}\label{CDa}\rho:{\cal T} M= T M\oplus_M T^*M\to  T M
\end{equation}
which is the projection onto the first component. Indeed,
$$[X+\omega,f(Y+\eta)]=[X,fY]_{vf}+\mathcal{L}_Xf\eta-i_{fY}\,\mathrm{d}\omega=f[X+\omega,Y+\eta]+X(f)(Y+\eta)\,.$$
For the right generalized anchor we have
\begin{eqnarray*}[f(X+\omega),Y+\eta]&=&[fX,Y]_{vf}+i_{fX}\,\mathrm{d}\eta-i_Y\,\mathrm{d}(f\omega)+\mathrm{d}\,i_{fX}\eta\\
&=&f[X+\omega,Y+\eta]-Y(f)(X+\omega)+\mathrm{d} f\wedge (i_X\eta+i_Y\omega)\,,\end{eqnarray*} so that
 $$\alpha(Y+\eta)(\mathrm{d} f\otimes(X+\omega))=\mathrm{d} f\wedge (i_X\eta+i_Y\omega)=2\langle X+\omega,Y+\eta\rangle_+\cdot\mathrm{d} f\,,$$
where
$$\langle
X+\omega,Y+\eta\rangle_+=\frac{1}{2}\left(i_X\eta+i_Y\omega\right)=\frac{1}{2}\left(\langle X,\eta\rangle+\langle
Y,\omega\rangle\right)\,,$$ is a symmetric nondegenerate bilinear form on ${\cal T} M$ (while $\langle\cdot,\cdot\rangle$ is the
canonical pairing). We will refer to it, though it is not positively defined, as the {\it scalar product} in
the bundle ${\cal T} M$.

Note that $\alpha(Y+\eta)$ is really a section of $ T M\otimes_M\mathrm{End}( T M\oplus_M T^\ast M)$ that in local
coordinates reads
$$\alpha(Y+\eta)=\sum_k\partial_{x^k}\otimes(\mathrm{d} x^k\wedge (i_{\eta}+i_Y))\,.$$
Hence, the Dorfman bracket is a Loday algebroid bracket.\medskip

It is easily checked that the Courant bracket is the antisymmetrization of the Dorfman bracket, and that the
Dorfman bracket is the Courant bracket plus $\mathrm{d}\langle X+\omega,Y+\eta\rangle_+$
\subsubsection{Twisted Courant-Dorfman bracket}
The Courant-Dorfman bracket can be twisted by adding a term associated with a 3-form $\Theta$ \cite{YKS1,SW}:
\begin{equation}\label{TCD}[X+\omega,Y+\eta]=[X,Y]_{vf}+\mathcal{L}_X\eta-i_Y\,\mathrm{d}\omega+i_{X\wedge Y}\Theta\,.
\end{equation}
It turns out that this bracket is still a Loday bracket if the 3-form $\Theta$ is closed. As the added term is
$C^\infty(M)$-linear with respect to $X$ and $Y$, the anchors remain the same, thus we deal with a Loday algebroid.

\subsubsection{Courant algebroid}
Courant algebroids -- structures generalizing the Courant-Dorfman bracket on ${\cal T} M$ -- were introduced as as
double objects for Lie bialgebroids by Liu, Weinstein and Xu \cite{LWX} in a bit complicated way. It was shown
by Roytenberg \cite{Roy0} that a Courant algebroid can be equivalently defined as a vector bundle $\tau:E\to M$
with a Loday bracket on $\mathrm{Sec}(E)$, an anchor $\rho:E\to  T M$, and a symmetric nondegenerate inner product
$(\cdot,\cdot)$ on $E$, related by a set of four additional properties. It was further observed \cite{Uch,
GM2} that the number of independent conditions can be reduced.

\begin{defi} A \textit{Courant algebroid} is a vector bundle $\tau:E\to M$ equipped with a Leibniz
bracket $[\cdot,\cdot]$ on $\mathrm{Sec}(E)$, a vector bundle map (over the identity) $\rho:E\to  T M$, and a
nondegenerate symmetric bilinear form (scalar product) $({\cdot}|{\cdot})$ on $E$ satisfying the identities
\begin{eqnarray}\label{4} &\rho(X)( Y|Y)=2( X|[Y,Y]),\\ &\rho(X)( Y|Y)=2( [X,Y]|Y).\label{5} \end{eqnarray}
\end{defi}

\medskip\noindent Note that (\ref{4}) is equivalent to
\begin{equation}\label{4a}
\rho(X)( Y|Z)=( X|[Y,Z]+[Z,Y]).
\end{equation}
Similarly, (\ref{5}) easily implies the invariance of the pairing $({\cdot},{\cdot})$ with respect to the
adjoint maps
\begin{equation}\label{6}\rho(X)(Y|Z)=( [X,Y]|Z)+( Y|[X,Z]),
\end{equation}
which in turn shows that $\rho$ is the anchor map for the left multiplication: \begin{equation}\label{zr}
[X,fY]=f[X,Y]+\rho(X)(f)Y\,. \end{equation} Twisted Courant-Dorfman brackets are examples of Courant algebroid brackets
with $({\cdot},{\cdot})=\langle\cdot,\cdot\rangle_+$ as the scalar product. Defining a derivation
$\mathrm{D}:C^\infty(M)\to\mathrm{Sec}(E)$ by means of the scalar product
\begin{equation}\label{D}(\mathrm{D}(f)|X)=\frac{1}{2}\rho(X)(f)\,, \end{equation} we get out of
(\ref{4a}) that \begin{equation}\label{4c}[Y,Z]+[Z,Y]=2\mathrm{D}(Y|Z)\,. \end{equation} This, combined with (\ref{zr}), implies in turn
\begin{equation}\label{LAX} \alpha(Z)(\mathrm{d} f\otimes Y)=2(Y|Z)\mathrm{D}(f)\,, \end{equation} so any Courant algebroid is a Loday algebroid.
{
\subsubsection{Brackets associated with contact structures}
In \cite{G2}, contact (super)manifolds have been studied as symplectic principal $\mathbb{R}^\times$-bundles $(P,\omega)$; the symplectic form being homogeneous with respect to the $\mathbb{R}^\times$-action. Similarly, Kirillov brackets on line bundles have been regarded as Poisson principal $\mathbb{R}^\times$-bundles. Consequently, {\em Kirillov algebroids} and {\em contact Courant algebroids} have been introduced, respectively, as homogeneous Lie algebroids and Courant algebroids on vector bundles equipped with a compatible
$\mathbb{R}^\times$-bundle structure. The corresponding brackets are therefore particular Lie algebroid and Courant algebroid brackets, thus Loday algebroid brackets. In other words, Kirillov and contact Courant algebroids are examples of Loday algebroids equipped additionally with some extra geometric structures.

As a canonical example of a contact Courant algebroid, consider the contact 2-manifold represented by the symplectic principal $\mathbb{R}^\times$-bundle $ T^*[2] T[1](\mathbb{R}^\times\times M)$, for a purely even manifold $M$ \cite{G2}. As the cubic Hamiltonian $H$ associated with the canonical vector field on $ T[1](\mathbb{R}^\times\times M)$ being the de Rham derivative is 1-homogeneous, we obtain a homogeneous Courant bracket on the linear principal $\mathbb{R}^\times$-bundle $P= T(\mathbb{R}^\times\times M)\oplus_{\mathbb{R}^\times\times M} T^*(\mathbb{R}^\times\times M)$. It can be reduced to the vector bundle $E=(\mathbb{R}\times T M)\oplus_M(\mathbb{R}^*\times T^*M)$ whose sections are $(X,f)+(\alpha,g)$, where $f,g\in C^\infty(M)$, $X$ is a vector field, and $\alpha$ is a one-form on $M$,
which is a Loday algebroid bracket of the form
\begin{eqnarray}\label{edirac}
&[(X_1,f_1)+(\alpha_1,g_1),(X_2,f_2)+(\alpha_2,g_2)]=
\left([X_1,X_2]_{vf},X_1(f_2)-X_2(f_1)\right)\\
&+\left(\mathcal{L}_{X_1}\alpha_2-i_{X_2}\mathrm{d}\alpha_1+f_1\alpha_2- f_2\alpha_1
+f_2\mathrm{d} g_1+g_2\mathrm{d} f_1,
X_1(g_2)-X_2(g_1)+i_{X_2}\alpha_1+f_1g_2\right)\,.\nonumber
\end{eqnarray}
This is the Dorfman-like version of the bracket whose skew-symmetrization gives exactly the bracket introduced by Wade \cite{Wa0} to define so called {\it $\mathcal{E}^1(M)$-Dirac structures} and considered also in \cite{GM2}.
The full contact Courant algebroid structure on $E$ consists additionally \cite{G2} of
the symmetric pseudo-Euclidean product
$$\langle(X,f)+(\alpha,g),(X,f)+(\alpha,g)\rangle=\langle X,\alpha\rangle+fg\,,$$
and
the vector bundle morphism $\rho^{\! 1}:E\to T M\times\mathbb{R}$, corresponding to a map assigning to sections of $E$ first-order differential operators on $M$, of the form
$$\rho^{\! 1}\left((X,f)+(\alpha,g)\right)=X+f\,.$$

}

\subsubsection{Grassmann-Dorfman bracket}
The Dorfman bracket (\ref{CD}) can be immediately generalized to a bracket on sections of ${\cal T}^\wedge M= T
M\oplus_M\wedge T^\ast M$, where
$$\wedge T^\ast M=\bigoplus_{k=0}^\infty\wedge^k T^\ast M\,,$$
so that the module of sections, $\mathrm{Sec}(\wedge T^\ast M)=\Omega(M)=\bigoplus_{k=0}^\infty\Omega^k(M)$, is the
Grassmann algebra of differential forms. The bracket, {\it Grassmann-Dorfman bracket}, is formally given by
the same formula $(\ref{CD})$ and the proof that it is a Loday algebroid bracket is almost the same. The left
anchor is the projection on the summand $ T M$,
\begin{equation}\label{CDa1}\rho: T M\oplus_M \wedge T^*M\to  T M\,,
\end{equation}
and
$$\alpha(Y+\eta)(\mathrm{d} f\otimes(X+\omega))=\mathrm{d} f\wedge (i_X\eta+i_Y\omega)=2\,\mathrm{d} f\wedge\langle X+\omega,Y+\eta\rangle_+\,,$$
where
$$\langle
X+\omega,Y+\eta\rangle_+=\frac{1}{2}\left(i_X\eta+i_Y\omega\right)\,,$$ is a symmetric nondegenerate bilinear form on
${\cal T}^\wedge M$, this time with values in $\Omega(M)$. Like for the classical Courant-Dorfman bracket, the graph of a
differential form $\beta$ is an isotropic subbundle in ${\cal T}^\wedge M$ which is involutive (its sections are
closed with respect to the bracket) if and only if $\mathrm{d} \beta=0$. The Grassmann-Dorfman bracket induces Loday
algebroid brackets on all bundles $ T M\oplus_M\wedge^k T^\ast M$, $k=0,1,\dots,\infty$. These brackets have
been considered in \cite{Sh} and called there {\it higher-order Courant brackets} {(see also \cite{Za})}. Note that this is exactly
the bracket derived from the bracket of first-order (super)differential operators on the Grassmann algebra
$\Omega(M)$: we associate with $X+\omega$ the operator $S_{X+\omega}=i_X+\omega\wedge\,$ and compute the super-commutators,
$$[[S_{X+\omega},\mathrm{d}]_{sc},S_{Y+\eta}]_{sc}=S_{[X+\omega,Y+\eta]}\,.$$

\subsubsection{Grassmann-Dorfman bracket for a Lie algebroid}
All the above remains valid when we replace $ T M$ with a Lie algebroid $(E,[\cdot,\cdot]_E,\rho_E)$, the de
Rham differential $\mathrm{d}$ with the Lie algebroid cohomology operator $\mathrm{d}^E$ on $\mathrm{Sec}(\wedge E^\ast)$, and the Lie
derivative along vector fields with the Lie algebroid Lie derivative $\mathcal{L}^E$. We define a bracket on sections
of $E\oplus_M\wedge E^\ast$ with formally the same formula
\begin{equation}\label{CDA}[X+\omega,Y+\eta]=[X,Y]_{E}+\mathcal{L}_X^E\eta-i_Y\,\mathrm{d}^E\omega\,.
\end{equation}
This is a Loday algebroid bracket with the left anchor
$$\rho:E\oplus_M\wedge E^\ast\to T M\,,\quad \rho(X+\omega)=\rho_E(X)$$
and
$$\alpha(Y+\eta)(\mathrm{d} f\otimes(X+\omega))=\mathrm{d}^E f\wedge (i_X\eta+i_Y\omega)\,.$$

\subsubsection{Lie derivative bracket for a Lie algebroid}
The above Loday bracket on sections of $E\oplus_M\wedge E^\ast$ has a simpler version. Let us put simply
\begin{equation}\label{CDA1}[X+\omega,Y+\eta]=[X,Y]_{E}+\mathcal{L}_X^E\eta\,.
\end{equation}
This is again a Loday algebroid bracket with the same left anchor and and
$$\alpha(Y+\eta)(\mathrm{d} f\otimes(X+\omega))=\mathrm{d}^E f\wedge i_X\eta+\rho_E(Y)(f)\omega\,.$$
In particular, when reducing to 0-forms, we get a Leibniz algebroid structure on $E\times \mathbb{R}$, where the
bracket is defined by $[X+f,Y+g]=[X,Y]_E+\rho_E(X)g$, the left anchor by $\rho(X,f)=\rho_E(X)$, and the generalized right
anchor by
$$b^r(Y,g)(\mathrm{d} h\otimes (X+f))=-\rho_E(Y)(h)X\,.$$
In other words,
$$\alpha(Y,g)(\mathrm{d} h\otimes (X+f))=\rho_E(Y)(h)f\,.$$

\subsubsection{Loday algebroids associated with a Nambu-Poisson structure}

In the following $M$ denotes a smooth $m$-dimensional manifold and $n$ is an integer such that $3\le n\le
m$. An almost Nambu-Poisson structure of order $n$ on $M$ is an $n$-linear bracket $\{\cdot,\ldots,\cdot\}$ on
$C^\infty(M)$ that is skew-symmetric and has the Leibniz property with respect to the point-wise multiplication. It
corresponds to an $n$-vector field $\Lambda\in\mathrm{Sec}(\wedge^n T M)$. Such a structure is Nambu-Poisson if it verifies
the {\em Filippov identity} ({\em generalized Jacobi identity}): \begin{eqnarray}\label{J} &\{ f_1,\dots,f_{n-1},\{
g_1,\dots,g_n\}\}=
\{\{ f_1,\dots,f_{n-1},g_1\},g_2,\dots,g_n\}+\\
&\{ g_1,\{ f_1,\dots,f_{n-1},g_2\},g_3,\dots,g_n\}+\dots+ \{ g_1,\dots,g_{n-1},\{
f_1,\dots,f_{n-1},g_n\}\}\,,\nonumber
\end{eqnarray} i.e., if the Hamiltonian vector fields $X_{f_1\ldots
f_{n-1}}=\{f_1,\ldots,f_{n-1},\cdot\}$ are derivations of the bracket. Alternatively, an almost Nambu-Poisson
structure is Nambu-Poisson if and only if
$$\mathcal{L}_{X_{f_1,\ldots,f_{n-1}}}\Lambda=0\,,$$
for all functions $f_1,\dots,f_{n-1}$.\medskip

Spaces equipped with skew-symmetric brackets satisfying the above identity have been introduced by Filippov
\cite{Fi} under the name {\it $n$-Lie algebras}.\medskip

The concept of Leibniz (Loday) algebroid used in \cite{ILMP} is the usual one, without differentiability
condition for the first argument. Actually, this example is a Loday algebroid in our sense as well. The
bracket is defined for $(n-1)$-forms by
$$[\omega,\eta]=\mathcal{L}_{\rho(\omega)}\eta+(-1)^n(i_{\mathrm{d}\omega}\Lambda)\eta\,,$$
where
$$\rho:\wedge^{n-1} T^*M\ni\omega\mapsto i_{\omega}\Lambda\in  T M$$
is actually the left anchor. Indeed,
$$[\omega,f\eta]=\mathcal{L}_{\rho(\omega)}f\eta+(-1)^n(i_{\mathrm{d}\omega}\Lambda)f\eta=f[\omega,\eta]+\rho(\omega)(f)\eta\,.$$
For the generalized right anchor we get
$$[f\omega,\eta]=\mathcal{L}_{\rho(f\omega)}\eta+(-1)^n(i_{\mathrm{d}(f\omega)}\Lambda)\eta=f[\omega,\eta]-i_{\rho(\omega)}(\mathrm{d} f\wedge \eta)\,,$$
so
$$\alpha(\eta)(\mathrm{d} f\otimes\omega)=\rho(\eta)(f)\,\omega-\rho(\omega)(f)\,\eta+\mathrm{d} f\wedge i_{\rho(\omega)}\eta\,.$$
Note that $\alpha$ is really a bundle map $\alpha:\wedge^{n-1} T^\ast M\to  T M\otimes_M\mathrm{End}(\wedge^{n-1} T^\ast M)$, since
it is obviously $C^{\infty}(M)$-linear in $\eta$ and $\omega$, as well as a derivation with respect to
$f.$\medskip

In \cite{Ha,HM}, another Leibniz algebroid associated with the Nambu-Poisson structure $\Lambda$ is proposed. The
vector bundle is the same, $E=\wedge^{n-1} T^\ast M$, the left anchor map is the same as well,
$\rho(\omega)=i_{\omega}\Lambda$, but the Loday bracket reads
$$[\omega,\eta]'=\mathcal{L}_{\rho(\omega)}\eta-i_{\rho(\eta)}\mathrm{d}\omega\,.$$
Hence,
\begin{eqnarray*}[f\omega,\eta]'&=&\mathcal{L}_{\rho(f\omega)}\eta-i_{\rho(\eta)}\mathrm{d}(f\omega)\\
&=&f[\omega,\eta]'-\rho(\eta)(f)\,\omega+\mathrm{d} f\wedge(i_{\rho(\omega)}\eta+i_{\rho(\eta)}\omega)\,, \end{eqnarray*} so that for the generalized
right anchor we get
$$\alpha(\eta)(\mathrm{d} f\otimes\omega)=\mathrm{d} f\wedge(i_{\rho(\omega)}\eta+i_{\rho(\eta)}\omega)\,.$$
This Loday algebroid structure is clearly the one obtained from the Grassmann-Dorfman bracket on the graph of
$\Lambda$,
$$\operatorname{graph}(\Lambda)=\{ \rho(\omega)+\omega:\omega\in\Omega^{n-1}(M)\}\,.$$
Actually, an $n$-vector field $\Lambda$ is a Nambu-Poisson tensor if and only if its graph is closed with respect
to the Grassmann-Dorfman bracket {\cite{Sh,Ha}}.

\subsection{The Lie pseudoalgebra of a Loday algebroid}\label{GKPSection6}
Let us fix a Loday pseudoalgebra bracket $[\cdot,\cdot]$ on an $\mathcal{A}$-module $\mathcal{E}$. Let $\rho:\mathcal{E}\to \mathrm{Der}(\mathcal{A})$
be the left anchor map, and let
$$b^r=\rho-\alpha:\mathcal{E}\to\mathrm{Der}(\mathcal{A})\otimes_\mathcal{A}\mathrm{End}(\mathcal{E})$$
be the generalized right anchor map. For every $X\in\mathcal{E}$ we will view $\alpha(X)$ as a $\mathcal{A}$-module homomorphism
$\alpha(X):\Omega^1\otimes_\mathcal{A}\mathcal{E}\to\mathcal{E}$, where $\Omega^1$ is the $\mathcal{A}$-submodule of $\mathrm{Hom}_\mathcal{A}(\mathcal{E};\mathcal{A})$ generated by
$\mathrm{d}\mathcal{A}=\{\mathrm{d} f:f\in\mathcal{A}\}$ and $\mathrm{d} f(D)=D(f)$.

It is a well-known fact that the subspace $\mathfrak{g}^{0}$ generated in a Loday algebra $\mathfrak{g}$ by
the symmetrized brackets $X\diamond Y=[X,Y]+[Y,X]$ is a two-sided ideal and that $\mathfrak{g}/\mathfrak{g}^0$
is a Lie algebra. Putting
$$\mathcal{E}^0=\operatorname{span}\{ [X,X]: X\in\mathcal{E}\}\,,$$
we have then \begin{equation}\label{ce} [\mathcal{E}^0,\mathcal{E}]=0\,,\quad [\mathcal{E},\mathcal{E}^0]\subset\mathcal{E}^0\,. \end{equation} Indeed, symmetrized brackets
are spanned by squares $[X,X]$, so, due to the Jacobi identity,
$$[[X,X],Y]=[X,[X,Y]]-[X,[X,Y]]=0$$
and \begin{equation}\label{dia}[Y,[X,X]]=[[Y,X],X]+[X,[Y,X]]=[X,Y]\diamond Y\,.
\end{equation} However, working with $\mathcal{A}$-modules, we would like to have an
$\mathcal{A}$-module structure on $\mathcal{E}/\mathcal{E}^0$. Unfortunately, $\mathcal{E}^0$ is not a submodule in general. Let us consider
therefore the $\mathcal{A}$-submodule ${\bar{\mathcal{E}}^0}$ of $\mathcal{E}$ generated by $\mathcal{E}^0$, i.e., ${\bar{\mathcal{E}}^0}=\mathcal{A}\cdot\mathcal{E}^0$.
\begin{lem} For all $f\in\mathcal{A}$ and $X,Y,Z\in\mathcal{E}$ we have
\begin{eqnarray}\label{fid0}\alpha(X)(\mathrm{d} f\otimes Y)&=&X\diamond(fY)-f(X\diamond Y)\,,\\
{[\alpha(X)(\mathrm{d} f\otimes Y),Z]}&=&{\rho(Z)(f)(X\diamond Y)-\alpha(Z)(\mathrm{d} f\otimes(X\diamond Y))\,.}\label{fid1}
\end{eqnarray} In
particular,
\begin{equation}\label{sy} [\alpha(X)(\mathrm{d} f\otimes Y),Z]=[\alpha(Y)(\mathrm{d} f\otimes X),Z]\,. \end{equation}
\end{lem}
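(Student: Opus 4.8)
The plan is to derive all three identities by elementary manipulation of the two defining relations of a Loday pseudoalgebra bracket,
$$[X,fY]=f[X,Y]+\rho(X)(f)Y,\qquad [fX,Y]=f[X,Y]-\rho(Y)(f)X+\alpha(Y)(\mathrm{d} f\otimes X),$$
where the first relation expresses that $\mathrm{ad}_X$ is a derivative endomorphism with anchor $\rho$, and the second comes from writing the right generalized anchor as $b^r=\rho-\alpha$ (as fixed just before the lemma), combined with the two properties $[\mathcal{E}^0,\mathcal{E}]=0$ and $[\mathcal{E},\mathcal{E}^0]\subset\mathcal{E}^0$ recorded in (\ref{ce}), which are consequences of the Jacobi identity. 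Conceptually the lemma just reorganises the defining identities modulo $\mathcal{E}^0$; I expect no genuine obstacle, the only thing requiring attention being the role-swap of the entries in the second relation.

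First I would establish (\ref{fid0}). Expanding $X\diamond(fY)=[X,fY]+[fY,X]$ and applying the first relation to $[X,fY]$ and the second relation to $[fY,X]$ — with $Y$ in the function-carrying slot and $X$ in the other slot — the two contributions $\rho(X)(f)Y$ and $-\rho(X)(f)Y$ cancel, leaving $X\diamond(fY)=f\big([X,Y]+[Y,X]\big)+\alpha(X)(\mathrm{d} f\otimes Y)=f(X\diamond Y)+\alpha(X)(\mathrm{d} f\otimes Y)$, which is (\ref{fid0}) after rearranging.

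Next I would prove (\ref{fid1}) by substituting (\ref{fid0}) into the bracket: $[\alpha(X)(\mathrm{d} f\otimes Y),Z]=[X\diamond(fY),Z]-[f(X\diamond Y),Z]$. The element $X\diamond(fY)$ is a symmetrized bracket, hence lies in $\mathcal{E}^0$, so the first term vanishes by $[\mathcal{E}^0,\mathcal{E}]=0$. For the second term, apply the second defining relation with $X\diamond Y\in\mathcal{E}^0$ in the function-carrying slot, $[f(X\diamond Y),Z]=f[X\diamond Y,Z]-\rho(Z)(f)(X\diamond Y)+\alpha(Z)(\mathrm{d} f\otimes(X\diamond Y))$, and use $[X\diamond Y,Z]=0$ once more; collecting the signs yields exactly (\ref{fid1}).

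Finally, (\ref{sy}) is immediate from (\ref{fid1}): its right-hand side involves $X$ and $Y$ only through the symmetric combination $X\diamond Y=Y\diamond X$, so it is invariant under $X\leftrightarrow Y$, and hence so is the left-hand side. (Equivalently, (\ref{fid0}) gives $\alpha(X)(\mathrm{d} f\otimes Y)-\alpha(Y)(\mathrm{d} f\otimes X)=X\diamond(fY)-Y\diamond(fX)\in\mathcal{E}^0$, and one concludes with $[\mathcal{E}^0,\mathcal{E}]=0$.) Throughout, the one point to keep in mind is that already the symmetrized brackets $X\diamond Y$, not merely the squares $[X,X]$, belong to $\mathcal{E}^0$ — precisely the observation used when (\ref{ce}) was proved.
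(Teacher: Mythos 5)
Your proof is correct and follows essentially the same route as the paper: (\ref{fid0}) by combining the left-anchor identity for $[X,fY]$ with the generalized right-anchor identity for $[fY,X]$ so the $\rho(X)(f)Y$ terms cancel, then (\ref{fid1}) from $[\mathcal{E}^0,\mathcal{E}]=0$ applied to $X\diamond(fY)$ and $X\diamond Y$ together with the right-anchor formula for $[f(X\diamond Y),Z]$, and (\ref{sy}) from the symmetry of the right-hand side in $X,Y$. Your closing remark that $X\diamond Y\in\mathcal{E}^0$ (by polarization of the squares) is exactly the point the paper relies on as well.
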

\begin{proof} To prove (\ref{fid0}) it suffices to combine the identity $[X,fY]=f[X,Y]+\rho(X)(f)(Y)$ with
$$[fY,X]=f[Y,X]-\rho(X)(f)Y+\alpha(X)(\mathrm{d} f\otimes Y)\,.$$
Then, as $[\mathcal{E}^0,\mathcal{E}]=0$,
$$[\alpha(X)(\mathrm{d} f\otimes Y),Z]=-[f(X\diamond Y),Z]={\rho(Z)(f)(X\diamond Y)-\alpha(Z)(\mathrm{d} f\otimes(X\diamond Y))\,.}$$
\end{proof}
\begin{cor}\label{cor1} For all $f\in\mathcal{A}$ and $X,Y\in\mathcal{E}$,
\begin{equation}\label{fid} \alpha(X)(\mathrm{d} f\otimes Y)\in{\bar{\mathcal{E}}^0}\,, \end{equation} and the left
anchor vanishes on ${\bar{\mathcal{E}}^0}$,
\begin{equation}\label{qan}
\rho({\bar{\mathcal{E}}^0})=0\,.
\end{equation}
Moreover, ${\bar{\mathcal{E}}^0}$ is a two-sided Loday ideal in
$\mathcal{E}$ and the Loday bracket induces on the $\mathcal{A}$-module
${\bar{\mathcal{E}}}=\mathcal{E}\slash{\bar{\mathcal{E}}^0}$  a Lie pseudoalgebra structure with
the anchor
\begin{equation}\label{qan1}\bar\rho([X])=\rho(X)\,, \end{equation} where $[X]$ denotes the
coset of $X$.
\end{cor}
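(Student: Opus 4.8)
The plan is to derive everything from the Lemma together with the two facts about the left anchor of a Loday pseudoalgebra established earlier: that $\rho$ is $\mathcal{A}$-linear, $\rho(fX)=f\rho(X)$, and that it is a homomorphism into the commutator bracket, $\rho([X,Y])=[\rho(X),\rho(Y)]_c$.

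First I would settle (\ref{fid}) and (\ref{qan}). For (\ref{fid}), the polarization $X\diamond Y=[X+Y,X+Y]-[X,X]-[Y,Y]$ shows that every symmetrized bracket lies in $\mathcal{E}^0$; in particular $X\diamond(fY)\in\mathcal{E}^0$. Then formula (\ref{fid0}) rewrites $\alpha(X)(\mathrm{d} f\otimes Y)=X\diamond(fY)-f(X\diamond Y)$, with first summand in $\mathcal{E}^0\subset{\bar{\mathcal{E}}^0}$ and second in $\mathcal{A}\cdot\mathcal{E}^0={\bar{\mathcal{E}}^0}$. For (\ref{qan}), since $\rho$ is a homomorphism into the commutator bracket we get $\rho([X,X])=[\rho(X),\rho(X)]_c=0$, so $\rho$ vanishes on $\mathcal{E}^0$, and by $\mathcal{A}$-linearity also on ${\bar{\mathcal{E}}^0}=\mathcal{A}\cdot\mathcal{E}^0$.

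Next I would verify that ${\bar{\mathcal{E}}^0}$ is a two-sided Loday ideal. By $\mathcal{A}$-bilinearity it suffices to bracket an element $gZ$ with $g\in\mathcal{A}$ and $Z\in\mathcal{E}^0$. In the left slot the local form of the bracket gives $[gZ,W]=g[Z,W]-\rho(W)(g)Z+\alpha(W)(\mathrm{d} g\otimes Z)$, where $[Z,W]=0$ by (\ref{ce}), $\rho(W)(g)Z\in\mathcal{A}\cdot\mathcal{E}^0={\bar{\mathcal{E}}^0}$, and $\alpha(W)(\mathrm{d} g\otimes Z)\in{\bar{\mathcal{E}}^0}$ by (\ref{fid}); hence $[{\bar{\mathcal{E}}^0},\mathcal{E}]\subset{\bar{\mathcal{E}}^0}$. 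In the right slot $[W,gZ]=g[W,Z]+\rho(W)(g)Z$ with $[W,Z]\in\mathcal{E}^0$ by (\ref{ce}), so both terms lie in ${\bar{\mathcal{E}}^0}$ and $[\mathcal{E},{\bar{\mathcal{E}}^0}]\subset{\bar{\mathcal{E}}^0}$.

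Finally I would pass to the quotient. As ${\bar{\mathcal{E}}^0}$ is an $\mathcal{A}$-submodule, ${\bar{\mathcal{E}}}=\mathcal{E}\slash{\bar{\mathcal{E}}^0}$ is an $\mathcal{A}$-module, and since ${\bar{\mathcal{E}}^0}$ is a two-sided ideal the bracket descends to a Loday bracket on ${\bar{\mathcal{E}}}$. It becomes skew-symmetric because $[X,Y]+[Y,X]=X\diamond Y\in{\bar{\mathcal{E}}^0}$, so together with the inherited Jacobi identity it is a Lie bracket. The differentiability conditions transport through the surjective $\mathcal{A}$-linear projection $\pi\colon\mathcal{E}\to{\bar{\mathcal{E}}}$ (one just applies $\pi$ to the vanishing iterated $\delta_i(f)$-commutators): the induced bracket is again a bidifferential operator of total degree $\le 1$, and $\mathrm{ad}_{\pi X}(f\,\pi Y)=f\,\mathrm{ad}_{\pi X}(\pi Y)+\rho(X)(f)\,\pi Y$ exhibits each adjoint map as a derivative endomorphism. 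By (\ref{qan}) the rule $\bar\rho([X])=\rho(X)$ is well defined; it is $\mathcal{A}$-linear, it satisfies $[\,[X],f[Y]\,]=f[\,[X],[Y]\,]+\bar\rho([X])(f)[Y]$ inherited from $\mathcal{E}$, and $\bar\rho([\,[X],[Y]\,])=\rho([X,Y])=[\rho(X),\rho(Y)]_c=[\bar\rho([X]),\bar\rho([Y])]_c$. Thus $({\bar{\mathcal{E}}},[\cdot,\cdot],\bar\rho)$ is a Lie pseudoalgebra. The only genuinely delicate point is the ideal property: because $\mathcal{E}^0$ itself is not an $\mathcal{A}$-submodule one is forced to enlarge it to ${\bar{\mathcal{E}}^0}=\mathcal{A}\cdot\mathcal{E}^0$, and one must check that the $\alpha$-term produced when a scalar is extracted from the left slot stays inside ${\bar{\mathcal{E}}^0}$ — which is exactly what (\ref{fid})/(\ref{fid0}) provides. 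Everything else is a routine transport of identities through the quotient.
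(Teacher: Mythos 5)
Your proof is correct and follows essentially the same route as the paper: \eqref{fid} is read off from \eqref{fid0}, the two-sided ideal property is checked by expanding $[gZ,W]$ and $[W,gZ]$ with the (generalized) anchor formulas and invoking \eqref{ce} and \eqref{fid}, and the quotient inherits a skew-symmetric bracket with anchor $\bar\rho$. The only minor deviation is that you obtain $\rho(\mathcal{E}^0)=0$ from the homomorphism property \eqref{anhom}, whereas the paper deduces it directly from $[\mathcal{E}^0,\mathcal{E}]=0$; your additional check that the induced bracket is still a bidifferential operator of total order $\le 1$ with derivative-endomorphism adjoints is a detail the paper leaves implicit.
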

\begin{proof}
The first statement follows directly from (\ref{fid0}). As $[\mathcal{E}^0,\mathcal{E}]=0$, the anchor vanishes on $\mathcal{E}^0$ and
thus on ${\bar{\mathcal{E}}^0}=\mathcal{A}\cdot\mathcal{E}^0$. From
$$[Z,f(X\diamond Y)]=f[Z,X\diamond Y]+\rho(X)(f)(X\diamond Y)\in{\bar{\mathcal{E}}^0}$$
and
$$[f(X\diamond Y),Z]=f[(X\diamond Y),Z]-\rho(Z)(f)(X\diamond Y)+\alpha(Z)(\mathrm{d} f\otimes(X\diamond Y))\in{\bar{\mathcal{E}}^0}\,,$$
we conclude that ${\bar{\mathcal{E}}^0}$ is a two-sided ideal. As ${\bar{\mathcal{E}}^0}$ contains all elements $X\diamond Y$, The Loday bracket
induces on $\mathcal{E}/{\bar{\mathcal{E}}^0}$ a skew-symmetric bracket with the anchor (\ref{qan1}) and satisfying the Jacobi
identity, thus a Lie pseudoalgebra structure.
\end{proof}
\begin{defi}
The Lie pseudoalgebra ${\bar{\mathcal{E}}}=\mathcal{E}/{\bar{\mathcal{E}}^0}$ we will call the {\it Lie pseudoalgebra of the Loday pseudoalgebra $\mathcal{E}$}.
If $\mathcal{E}=\mathrm{Sec}(E)$ is the Loday pseudoalgebra of a Loday algebroid on a vector bundle $E$ and the module ${\bar{\mathcal{E}}^0}$ is the module
of sections of a vector subbundle $\bar E$ of $E$, we deal with the {\it Lie algebroid of the Loday algebroid $E$}.
\end{defi}
\begin{ex} The Lie algebroid of the Courant-Dorfman bracket is the canonical Lie algebroid $ T M$.
\end{ex}
\begin{thm} For any Loday pseudoalgebra structure on an $\mathcal{A}$-module $\mathcal{E}$ there is a short exact sequence of morphisms of
Loday pseudoalgebras over $\mathcal{A}$,
\begin{equation}\label{es}
0\longrightarrow{\bar{\mathcal{E}}^0}\longrightarrow\mathcal{E}\longrightarrow\bar{\mathcal{E}}\longrightarrow 0\,, \end{equation} where ${\bar{\mathcal{E}}^0}$ -- the
$\mathcal{A}$-submodule in $\mathcal{E}$ generated by $\{[X,X]:X\in\mathcal{E}\}$ -- is a Loday pseudoalgebra with the trivial left
anchor and $\bar{\mathcal{E}}=\mathcal{E}/{\bar{\mathcal{E}}^0}$ is a Lie pseudoalgebra.
\end{thm}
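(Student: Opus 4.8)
The plan is to assemble the theorem from the pieces already proved in Corollary~\ref{cor1} and the surrounding lemmas, checking only that the maps in the sequence \eqref{es} are genuinely morphisms of Loday pseudoalgebras over $\mathcal{A}$. First I would recall that, by definition, $\bar{\mathcal{E}}^0=\mathcal{A}\cdot\mathcal{E}^0$ with $\mathcal{E}^0=\operatorname{span}\{[X,X]:X\in\mathcal{E}\}$, and that Corollary~\ref{cor1} already gives three of the four facts we need: $\bar{\mathcal{E}}^0$ is a two-sided Loday ideal in $\mathcal{E}$, the left anchor $\rho$ vanishes on $\bar{\mathcal{E}}^0$, and the quotient $\bar{\mathcal{E}}=\mathcal{E}/\bar{\mathcal{E}}^0$ carries an induced Lie pseudoalgebra structure with anchor $\bar\rho([X])=\rho(X)$. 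So the content that remains is packaging these into an exact sequence and verifying the morphism property at each arrow.

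Next I would treat the three arrows in turn. Exactness at $\bar{\mathcal{E}}^0$ is the injectivity of the inclusion $\iota:\bar{\mathcal{E}}^0\hookrightarrow\mathcal{E}$, which is trivial; exactness at $\bar{\mathcal{E}}$ is the surjectivity of the canonical projection $\pi:\mathcal{E}\to\bar{\mathcal{E}}$, also trivial; exactness at $\mathcal{E}$ is precisely $\operatorname{im}\iota=\ker\pi$, which holds by construction of the quotient module. That $\iota$ is a morphism of Loday pseudoalgebras amounts to observing that $\bar{\mathcal{E}}^0$, equipped with the restricted bracket, is itself a Loday pseudoalgebra: it is an $\mathcal{A}$-submodule closed under $[\cdot,\cdot]$ (it is an ideal), the bracket restricted to it is still a bidifferential operator of total degree $\le 1$ and still satisfies the Jacobi identity, so it inherits the Loday pseudoalgebra axioms, and its left anchor is the restriction of $\rho$, hence trivial by \eqref{qan}. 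The inclusion then obviously intertwines brackets. That $\pi$ is a morphism is immediate from the way the bracket on $\bar{\mathcal{E}}$ was defined, $[\,[X],[Y]\,]:=[\pi(X),\pi(Y)]=\pi([X,Y])$, and from $\bar\rho\circ\pi=\rho$; one should just note in passing that $\pi$ is $\mathcal{A}$-linear because $\bar{\mathcal{E}}^0$ is a submodule, which is exactly the reason $\bar{\mathcal{E}}^0$ was introduced in place of the merely linear subspace $\mathcal{E}^0$.

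The one point deserving a little care — and the place I expect the only real friction — is confirming that the bracket on $\bar{\mathcal{E}}$ really descends to a \emph{skew-symmetric} bracket, i.e. that $[X,X]\in\bar{\mathcal{E}}^0$ for all $X$ (clear) together with well-definedness modulo $\bar{\mathcal{E}}^0$ in each argument, which uses $[\bar{\mathcal{E}}^0,\mathcal{E}]=0$ and $[\mathcal{E},\bar{\mathcal{E}}^0]\subset\bar{\mathcal{E}}^0$; the first of these is \eqref{ce} and the second is part of Corollary~\ref{cor1}. The subtlety is that $[\bar{\mathcal{E}}^0,\mathcal{E}]=0$ requires passing from $[\mathcal{E}^0,\mathcal{E}]=0$ to $[\mathcal{A}\cdot\mathcal{E}^0,\mathcal{E}]=0$, which is where one invokes $[fZ,Y]=f[Z,Y]-\rho(Y)(f)Z+\alpha(Y)(\mathrm{d} f\otimes Z)$ from Theorem~\ref{LodAld} together with \eqref{fid}, namely $\alpha(Y)(\mathrm{d} f\otimes Z)\in\bar{\mathcal{E}}^0$ and $\rho(Y)(f)Z$ landing in $\bar{\mathcal{E}}^0$ when $Z\in\mathcal{E}^0$ — all of this is already recorded in the proof of Corollary~\ref{cor1}, so the theorem follows by collecting these observations. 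I would close by remarking that morphisms of Loday pseudoalgebras over $\mathcal{A}$ are by definition $\mathcal{A}$-linear bracket-preserving maps intertwining the anchors, and that every arrow in \eqref{es} has been checked to satisfy exactly these conditions, which completes the proof.
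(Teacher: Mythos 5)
Your proposal is correct and takes essentially the paper's own route: the paper states this theorem without a separate proof precisely because it is the packaging of the preceding Lemma and Corollary \ref{cor1} (two-sided ideal property of ${\bar{\mathcal{E}}^0}$, vanishing of $\rho$ on ${\bar{\mathcal{E}}^0}$, induced Lie pseudoalgebra structure on the quotient) into the exact sequence, together with the routine checks you carry out for the inclusion and the projection. One slip to correct: the identity $[{\bar{\mathcal{E}}^0},\mathcal{E}]=0$ is false in general — only $[\mathcal{E}^0,\mathcal{E}]=0$ holds, see (\ref{ce}), and for $Z\in\mathcal{E}^0$ one gets $[fZ,Y]=-\rho(Y)(f)Z+\alpha(Y)(\mathrm{d} f\otimes Z)$, which lies in ${\bar{\mathcal{E}}^0}$ but need not vanish (consistent with the paper's remark that ${\bar{\mathcal{E}}^0}$ is in general not commutative) — however, well-definedness of the quotient bracket only requires the containment $[{\bar{\mathcal{E}}^0},\mathcal{E}]\subset{\bar{\mathcal{E}}^0}$, which is exactly what your cited computation establishes, so the argument stands.
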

Note that the Loday ideal $\mathcal{E}^0$ is clearly commutative, while the modular ideal ${\bar{\mathcal{E}}^0}$ is no longer
commutative in general.

\subsection{Loday algebroid cohomology}\label{GKPSection7}

We first recall the definition of the Loday cochain complex associated to a bi-module over a Loday algebra
\cite{LP}.\medskip

Let $\mathbb{K}$ be a field of nonzero characteristic and $V$ a $\mathbb{K}$-vector space endowed with a
(left) Loday bracket $[\cdot,\cdot]$. A {\it bimodule} over a Loday algebra $(V,[\cdot,\cdot])$ is a $\mathbb{K}$-vector space
$W$ together with a left (resp., right) {\it action} $\mu^l\in\mathrm{Hom}(V\otimes W,W)$ (resp.,
$\mu^r\in\mathrm{Hom}(W\otimes V,W)$) that verify the following requirements \begin{equation}\label{VVW}
\mu^r[x,y]=\mu^r(y)\mu^r(x)+\mu^l(x)\mu^r(y),\end{equation} \begin{equation}\label{WVV}
\mu^r[x,y]=\mu^l(x)\mu^r(y)-\mu^r(y)\mu^l(x),\end{equation} \begin{equation}\label{VWV}
\mu^l[x,y]=\mu^l(x)\mu^l(y)-\mu^l(y)\mu^l(x),\end{equation} for all $x,y\in V.$\medskip

The {\it Loday cochain complex} associated to the Loday algebra $(V,[\cdot,\cdot])$ and the bimodule $(W,\mu^l,\mu^r)$, shortly -- to $B=([\cdot,\cdot],\mu^r,\mu^l)$,
is made up by the cochain space $$\mathrm{Lin}^\bullet(V,W)=\bigoplus_{p\in \mathbb{N}}\, \mathrm{Lin}^p(V,W)=\bigoplus_{p\in
\mathbb{N}}\,\mathrm{Hom}(V^{\otimes p},W),$$ where we set $\mathrm{Lin}^0(V,W)=W$, and the coboundary operator $\partial_B$
defined, for any $p$-cochain $c$ and any vectors $x_1,\ldots,x_{p+1}\in V$, by
\begin{eqnarray}\nonumber (\partial_B
c)(x_1,\ldots,x_{p+1})&=&(-1)^{p+1}\mu^r(x_{p+1})c(x_1,\ldots,x_p)+\sum_{i=1}^p(-1)^{i+1}\mu^l(x_i)c(x_1,\ldots\hat{\imath}\ldots,x_{p+1})\\
&&+\sum_{i<j}(-1)^{i}c(x_1,\ldots\hat{\imath}\ldots,\stackrel{(j)}{\overbrace{[x_i,x_j]}},\ldots,x_{p+1})\;\;.\label{LodCohOp}
\end{eqnarray}

Let now $\rho$ be a {\it representation} of the Loday algebra $(V,[\cdot,\cdot])$ on a $\mathbb{K}$-vector space $W$,
i.e. a Loday algebra homomorphism $\rho: V\to \mathrm{End}(W)$. It is easily checked that $\mu^l:=\rho$ and
$\mu^r:=-\rho$ endow $W$ with a bimodule structure over $V$. Moreover, in this case of a bimodule induced by a
representation, the Loday cohomology operator reads

\begin{eqnarray}\label{LodCohOpRepr}(\partial_B
c)(x_1,\ldots,x_{p+1})&=&\sum_{i=1}^{p+1}(-1)^{i+1}\rho(x_i)c(x_1,\ldots\hat{\imath}\ldots,x_{p+1})\\
&&
+\sum_{i<j}(-1)^ic(x_1,\ldots\hat{\imath}\ldots,\stackrel{(j)}{\overbrace{[x_i,x_j]}},\ldots,x_{p+1})\;\;.\nonumber
\end{eqnarray}

Note that the above operator $\partial_B$ is well defined if only the {\it
map} $\rho:V\to \mathrm{End}(W)$ and the {\it bracket}
$[\cdot,\cdot]:V\otimes V\to V$ are given. We will refer to it as
to the {\it Loday operator} associated with $B=([\cdot,\cdot],\rho)$.
The point is that $\partial_B^2=0$ if and only if $[\cdot,\cdot]$ is a
Loday bracket and $\rho$ is its representation. Indeed, the Loday
algebra homomorphism property of $\rho$ (resp., the Jacobi identity
for $[\cdot,\cdot]$) is encoded in $\partial_B^2=0$ on
$\mathrm{Lin}^0(V,W)=W$ (resp., $\mathrm{Lin}^1(V,W)$), at least if
$W\neq \{0\}$, what we assume).\medskip

{
Let now $E$ be a vector bundle over a manifold $M$ and $B=([\cdot,\cdot],\rho)$ be an {\it anchored
faint algebroid} structure on $E$, where $[\cdot,\cdot]$ is a
faint pseudoalgebra bracket (bidifferential operator) and $\rho:E\to  T M$ is
a vector bundle morphism covering the identity, so inducing a
module morphism $\rho:\mathrm{Sec}(E)\to\mathrm{Der}(C^\infty(M))=\mathcal{X}(M)$.
It is easy to see that, unlike in the case of a Lie algebroid,
the tensor algebra of sections of $\oplus_{k=0}^\infty(E^*)^{\otimes k}$ is, in general, not
invariant under the Loday cohomology operator $\partial_B$ associated with
$B=([\cdot,\cdot],\rho)$. Actually, $\partial_B$ rises the degree of a multidifferential operator by one, even when the Loday bracket is skew-symmetric (see e.g. \cite{Ldd,Ldd1}).

\begin{ex} \cite{Ldd1}
Suppose that $M$ is a Riemannian manifold with metric tensor
$g$ and let $\partial_B$ be the Loday coboundary operator associated with the canonical
bracket of vector fields $B=([\cdot,\cdot]_{\text{vf}},\text{id}_{ T M})$ on $E= T M$. {When adopting the conventions of \cite{Ldd1}, where the Loday differential associated to right Loday algebras is considered, we then get,} for all $X,Y,Z\in\mathcal{X}(M)$,
\begin{equation}\label{LC}(\partial_B g)(X,Y,Z)=2g(Y,\nabla_XZ)\,,
\end{equation}
where $\nabla$ is the Levi-Civita connection on M. One can say that the Loday differential of a Riemannian metric defines the corresponding Levi-Civita connection, which clearly is no longer a tensor on $M$.
\end{ex}

The above observation suggests to consider in $\mathrm{Lin}^\bullet(\mathrm{Sec}(E),C^{\infty}(M))$, instead of {$\mathrm{Sec}(\otimes^{\bullet}E^*)$},
the subspace
$$\mathcal{D}^\bullet(\mathrm{Sec}(E),C^{\infty}(M))\subset\mathrm{Lin}^\bullet(\mathrm{Sec}(E),C^{\infty}(M))$$
consisting of all multidifferential operators. If now $B=([\cdot,\cdot],\rho)$ is an {\it anchored
{faint} algebroid} structure on $E${, see above}, then it is clear that the space
{$\mathcal{D}^\bullet(E):=\mathcal{D}^\bullet(\mathrm{Sec}(E),C^{\infty}(M))$} is stable under the
Loday operator $\partial_B$ associated with
$B=([\cdot,\cdot],\rho)$.}

\medskip
In particular, if $([\cdot,\cdot],\rho,\alpha)$ is a Loday algebroid structure on
$E$, its left anchor $\rho:\mathrm{Sec}(E)\to
\mathrm{Der}(C^{\infty}(M))\subset \mathrm{End}(C^{\infty}(M))$ is a
representation of the Loday algebra $(\mathrm{Sec}(E),[\cdot,\cdot])$
by derivations on $C^{\infty}(M)$ and $\partial_B^2=0$, so $\partial_B$ is a
coboundary operator.

\begin{defi} Let $(E,[\cdot,\cdot],\rho,\alpha)$ be a Loday algebroid over a manifold $M$. We call {\it
Loday algebroid cohomology}, the cohomology of the Loday cochain
subcomplex {$(\mathcal{D}^\bullet(E),\partial_B)$}
associated with $B=([\cdot,\cdot],\rho)$, i.e. the Loday algebra
structure $[\cdot,\cdot]$ on $\mathrm{Sec}(E)$ represented by $\rho$
on $C^{\infty}(M)$.\end{defi}

\subsection{Supercommutative geometric interpretation}\label{GKPSection8}

Let $E$ be a vector bundle over a manifold $M$.
{Looking for a canonical superalgebra structure in $\mathcal{D}^\bullet(E)$, a natural candidate is the {\it shuffle (super)product}, introduced by Eilenberg and Mac Lane \cite{EML}
(see also \cite{R1,R2}). It is known that a shuffle algebra on a free associative algebra is a free commutative algebra with the Lyndon words as its free generators \cite{Ra}. A similar result is valid in the supercommutative case \cite{ZM}. In this sense the free shuffle superalgebra represents a supercommutative space.
}
\begin{defi} For any $\ell'\in
\mathcal{D}^p(\mathrm{Sec}(E),C^{\infty}(M))$ and $\ell''\in \mathcal{D}^q(\mathrm{Sec}(E),C^{\infty}(M))$,
$p,q\in\mathbb{N}$, we define the {\it shuffle product}
$$(\ell'\pitchfork
\ell'')(X_1,\ldots,X_{p+q}):=\sum_{\sigma\in\mathrm{Sh}(p,q)}\mathrm{sign}\sigma\,\;
\ell'(X_{\sigma_1},\ldots,X_{\sigma_p})\;\ell''(X_{\sigma_{p+1}},\ldots,X_{\sigma_{p+q}}),$$ where the $X_i$-s denote
sections in $\mathrm{Sec}(E)$ and where $\mathrm{Sh}(p,q)\subset \mathbb{S}_{p+q}$ is the subset of the symmetric
group $\mathbb{S}_{p+q}$ made up by all $(p,q)$-shuffles.\end{defi}

The next proposition is well-known.

\begin{prop} The space {$\mathcal{D}^\bullet(E)$}, together with the shuffle multiplication $\pitchfork$, is a graded commutative associative unital $\mathbb{R}$-algebra.\end{prop}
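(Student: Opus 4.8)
The plan is to verify the three defining properties of a graded commutative associative unital algebra for $(\mathcal{D}^\bullet(E),\pitchfork)$ in turn, treating each as an essentially combinatorial statement about $(p,q)$-shuffles. First I would observe that for $\ell'\in\mathcal{D}^p$ and $\ell''\in\mathcal{D}^q$ the shuffle product $\ell'\pitchfork\ell''$ is again multidifferential: each summand $X_{\sigma_1},\ldots\mapsto \ell'(X_{\sigma_1},\ldots,X_{\sigma_p})\ell''(X_{\sigma_{p+1}},\ldots,X_{\sigma_{p+q}})$ is a product of two multidifferential operators composed with a permutation of arguments, and a product of functions times a permutation of slots preserves the property of being differential in each slot separately; hence $\ell'\pitchfork\ell''\in\mathcal{D}^{p+q}$, so $\pitchfork$ is a well-defined bilinear graded operation. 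The unit is the canonical element of $\mathcal{D}^0(\mathrm{Sec}(E),C^\infty(M))=C^\infty(M)$, namely $1\in C^\infty(M)$, and since $\mathrm{Sh}(0,q)=\mathrm{Sh}(q,0)=\{\mathrm{id}\}$ one gets $1\pitchfork \ell=\ell\pitchfork 1=\ell$ immediately from the definition.

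Next I would establish graded commutativity. Since everything lands in the commutative ring $C^\infty(M)$, the scalar factors $\ell'(\ldots)\ell''(\ldots)$ already commute, so the only issue is to match the indexing sets: the map sending a $(p,q)$-shuffle $\sigma$ to the $(q,p)$-shuffle obtained by precomposing with the block transposition that swaps the first $p$ and last $q$ entries is a bijection $\mathrm{Sh}(p,q)\to\mathrm{Sh}(q,p)$, and that block transposition has sign $(-1)^{pq}$. Comparing terms under this bijection yields $\ell'\pitchfork\ell''=(-1)^{pq}\,\ell''\pitchfork\ell'$, which is exactly graded commutativity for operators of degrees $p$ and $q$.

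The one step that requires genuine care is associativity. For $\ell'\in\mathcal{D}^p$, $\ell''\in\mathcal{D}^q$, $\ell'''\in\mathcal{D}^r$ both $(\ell'\pitchfork\ell'')\pitchfork\ell'''$ and $\ell'\pitchfork(\ell''\pitchfork\ell''')$ should expand, on $X_1,\ldots,X_{p+q+r}$, into a single sum over all $\sigma\in\mathrm{Sh}(p,q,r)$ of $\mathrm{sign}\sigma$ times $\ell'(X_{\sigma_1},\ldots)\,\ell''(X_{\sigma_{p+1}},\ldots)\,\ell'''(X_{\sigma_{p+q+1}},\ldots)$, where $\mathrm{Sh}(p,q,r)$ denotes the $(p,q,r)$-shuffles (the unshuffles of Definition~\ref{unshuffledPermutationDef} with three blocks). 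So the plan is to prove the standard decomposition $\mathrm{Sh}(p,q,r)\cong$ ``$(p+q,r)$-shuffles composed with $(p,q)$-shuffles on the first block'' $\cong$ ``$(p,q+r)$-shuffles composed with $(q,r)$-shuffles on the last block,'' with the signs multiplying correctly because the sign of a composition is the product of signs and the embedded block shuffles act on disjoint coordinate ranges. This is the classical fact that the shuffle product on the tensor module is associative (Eilenberg--Mac Lane); I would cite \cite{EML} and, if a self-contained argument is wanted, spell out the bijection of shuffle sets, which is the only mildly technical point. Once associativity, graded commutativity, unitality, and well-definedness in $\mathcal{D}^\bullet(E)$ are in hand, the proposition follows, bilinearity over $\mathbb{R}$ being obvious from the formula.
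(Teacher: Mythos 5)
Your proposal is correct. Note, however, that the paper itself gives no proof of this proposition: it is stated as ``well-known'', the shuffle product being the classical one of Eilenberg and Mac Lane (\cite{EML}, see also \cite{Ra}, \cite{R1}, \cite{R2}), so there is no argument in the text to compare yours against. Your direct verification supplies exactly the standard argument behind that citation: unitality from $\mathrm{Sh}(0,q)=\mathrm{Sh}(q,0)=\{\mathrm{id}\}$, graded commutativity from the bijection $\mathrm{Sh}(p,q)\to\mathrm{Sh}(q,p)$ given by precomposition with the block transposition of sign $(-1)^{pq}$ (the values lying in the commutative ring $C^\infty(M)$), and associativity from the decomposition of $(p,q,r)$-shuffles into iterated two-block shuffles, with signs multiplying correctly. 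The only point that is specific to the geometric setting and not purely classical shuffle combinatorics is the closure statement that $\ell'\pitchfork\ell''$ is again a multidifferential operator, and you handle it correctly: in each shuffle term a fixed argument $X_i$ occurs in exactly one of the two factors, the other factor acting merely as a smooth coefficient, and since $\delta_i(f)(g\,D)=g\,\delta_i(f)(D)$ multiplication by a function and finite sums preserve the order of differentiability in each slot. So your write-up is a legitimate, self-contained replacement for the citation; if anything, you could remark that the same closure argument shows the finer statement used later in the paper, namely that the subspace $\mathrm{D}^\bullet(E)$ is also stable under $\pitchfork$.
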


We refer to this algebra as the {\it shuffle algebra of the vector bundle} $E\to M$, or simply, of $E$.

\medskip
Let $B=([\cdot,\cdot],\rho)$ be an anchored {faint} algebroid
structure on $E$ and let $\partial_B$ be the associated Loday operator
in $\mathcal{D}^\bullet(E)$. Note that we would have $\partial_B^2=0$ if we had assumed that we deal with a Loday
algebroid.\medskip

Denote now by $\mathrm{D}^k(E)$ those $k$-linear multidifferential
operators from $\mathcal{D}^k(E)$ which are of degree 0 with respect to
the last variable and of total degree $\le k-1$, and set
{$\mathrm{D}^\bullet(E)=\bigoplus_{k=0}^\infty\mathrm{D}^k(E)$}. By convention,
$\mathrm{D}^0(E)=\mathcal{D}^0(E)=C^\infty(M)$. Moreover,
$\mathrm{D}^1(E)=\mathrm{Sec}(E^\ast)$. It is easy to see that $\mathrm{D}^\bullet(E)$
is stable for the shuffle multiplication. We will call the
subalgebra $(\mathrm{D}^\bullet(E),\pitchfork)$, the {\it reduced shuffle
algebra}, and refer to the corresponding graded ringed space as
{\it supercommutative manifold}. Let us emphasize that this
denomination is in the present text merely a terminological
convention. The graded ringed spaces of the considered type are
being investigated in a separate work.

\begin{thm}\label{GeoIntKLA1} The coboundary operator $\partial_B$ is a degree 1
graded derivation of the shuffle algebra of $E$, i.e.
\begin{equation}\partial(\ell'\pitchfork\ell'')=(\partial \ell')\pitchfork \ell''+(-1)^p
\ell'\pitchfork (\partial\ell''),\label{DerShuffle}\end{equation} for any
$\ell'\in \mathcal{D}^p(E)$ and $\ell''\in \mathcal{D}^q(E)$. Moreover, if
$[\cdot,\cdot]$ is a pseudoalgebra bracket, i.e., if it is of
total order $\le 1$ and $\rho$ is the left anchor for
$[\cdot,\cdot]$, then $\partial_B$ leaves invariant the reduced shuffle
algebra $\mathrm{D}^\bullet(E)\subset\mathcal{D}^\bullet(E)$.\end{thm}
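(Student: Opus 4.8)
The plan is to prove Theorem~\ref{GeoIntKLA1} in two stages: first the graded Leibniz rule~(\ref{DerShuffle}) for $\partial_B$ on the full shuffle algebra $(\mathcal{D}^\bullet(E),\pitchfork)$, and then the invariance of the reduced subalgebra $\mathrm{D}^\bullet(E)$ under $\partial_B$ when $[\cdot,\cdot]$ is a genuine pseudoalgebra bracket. For the first stage I would take the explicit formula~(\ref{LodCohOpRepr}) for the Loday operator associated with $B=([\cdot,\cdot],\rho)$ (valid since $\mu^l=\rho$, $\mu^r=-\rho$), apply it to $\ell'\pitchfork\ell''$ evaluated on sections $X_1,\dots,X_{p+q+1}$, and expand the shuffle sum. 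The derivation terms $\sum_i(-1)^{i+1}\rho(X_i)(\cdots)$ split, by the Leibniz rule for the vector field $\rho(X_i)$ acting on the product of two functions $\ell'(\cdots)\ell''(\cdots)$, into a part that reassembles $(\partial_B\ell')\pitchfork\ell''$ and a part that reassembles $\ell'\pitchfork(\partial_B\ell'')$; the sign $(-1)^p$ appears exactly because moving the index past the $p$ arguments fed to $\ell'$ costs that sign. The bracket terms $\sum_{i<j}(-1)^i(\cdots[X_i,X_j]\cdots)$ similarly split according to whether both $X_i,X_j$ land in the $\ell'$-block, both in the $\ell''$-block, or one in each; the mixed terms cancel in pairs against a corresponding reshuffling, and the pure terms reconstitute the bracket contributions to $\partial_B\ell'$ and $\partial_B\ell''$. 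This is essentially the classical computation showing that the Chevalley--Eilenberg-type differential is a derivation of a shuffle (or wedge) product; it is bookkeeping with shuffle permutations and signs, so I would state it and indicate the pairing of terms rather than writing every permutation out.

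For the second stage I would need to check that if $\ell\in\mathrm{D}^k(E)$ — i.e. $\ell$ is $k$-linear, of degree $0$ in the last slot and of total differential order $\le k-1$ — then $\partial_B\ell\in\mathrm{D}^{k+1}(E)$, i.e. $\partial_B\ell$ is of degree $0$ in its last (the $(k+1)$-st) slot and of total order $\le k$. Here the hypothesis that $[\cdot,\cdot]$ is a \emph{pseudoalgebra} bracket — bidifferential of total order $\le 1$ with $\rho$ its left anchor, so in particular $[X,fY]=f[X,Y]+\rho(X)(f)Y$ — is what does the work. Writing out $(\partial_B\ell)(X_1,\dots,X_{k+1})$ from~(\ref{LodCohOpRepr}), the last argument $X_{k+1}$ occurs only: in the term $i=k+1$, namely $(-1)^{k+2}\rho(X_{k+1})\,\ell(X_1,\dots,X_k)$, which is visibly of order $\le 1$ in $X_{k+1}$ and of degree $0$ (no factor of $X_{k+1}$ survives, only $\rho(X_{k+1})$ applied to a function); in the terms $i\le k$, where $X_{k+1}$ sits inside $\ell(X_1,\dots,\hat{\imath},\dots,X_{k+1})$ in a slot where $\ell$ has order $0$ if that slot is the last one of $\ell$, and otherwise order $\le k-1$; and in the bracket terms with $j=k+1$, i.e. $(-1)^i\ell(\dots,[X_i,X_{k+1}],\dots)$, where $[X_i,X_{k+1}]$ is order $\le 1$ in $X_{k+1}$ and is then plugged into an $\ell$-slot. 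The key point is to add up the differential orders in $X_{k+1}$ across all these contributions and see the total stays $\le k$, and that after the degree-$0$-in-the-last-slot reduction of $\ell$ is used, the surviving $X_{k+1}$-dependence of $\partial_B\ell$ is again of degree $0$; the total-order-$\le 1$ hypothesis on the bracket is exactly what prevents the bracket terms from raising the order too much.

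I expect the main obstacle to be the careful accounting in the second stage: one must track simultaneously the order in the \emph{last} variable, the \emph{total} differential order, and the interplay with the fact that $\ell$ itself is only assumed of total order $\le k-1$ (not order $\le 1$ in each slot), so the estimate $\le k$ for $\partial_B\ell$ is tight and has to be obtained by combining the order-$0$-in-the-last-slot condition on $\ell$ with the order-$\le 1$ condition on the bracket in just the right way. A clean way to organize this is to use the $\delta_i(f)$ notation from Section~\ref{GKPSection2}: show directly that $\delta_{k+1}(f)(\partial_B\ell)=0$ (degree $0$ in the last slot) and that any $k+1$-fold composite $\delta_{i_1}(f_1)\cdots\delta_{i_{k+1}}(f_{k+1})(\partial_B\ell)$ vanishes, by pushing each $\delta_i(f)$ through formula~(\ref{LodCohOpRepr}) using $\delta_i(f)\rho(X_j)=0$ for $i\neq j$, the left-anchor identity for $i=j$, and $\delta_i(f)[X_i,\cdot]$, $\delta_i(f)[\cdot,X_i]$ being order $\le 0$ by the total-order-$\le 1$ assumption on $B$. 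The first stage~(\ref{DerShuffle}), by contrast, is routine once the shuffle-sign conventions are fixed, and I would present it compactly.
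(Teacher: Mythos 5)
Your route for Equation (\ref{DerShuffle}) is essentially the paper's: evaluate both sides on $X_1,\ldots,X_{p+q+1}$, expand the shuffle sums, split the $\rho$-terms by the Leibniz rule for the vector fields $\rho(X_k)$ acting on the product of the two function values, and match terms. One structural claim in your sketch is wrong, however: in $\partial_B(\ell'\pitchfork\ell'')$ the bracket $[X_i,X_j]$ is inserted as a \emph{single} argument of the $(p+q)$-linear map $\ell'\pitchfork\ell''$, so in every shuffle term it lies entirely in the $\ell'$-block or entirely in the $\ell''$-block; there is no ``one in each block'' configuration and hence no mixed terms to cancel. The identification with $(\partial\ell')\pitchfork\ell''+(-1)^p\,\ell'\pitchfork(\partial\ell'')$ is a sign-preserving bijection of individual terms, with no cancellations at all -- the paper exploits exactly this by counting the terms on both sides ($2(p+q+1)!/(p!\,q!)$ terms involving $\rho$ and $(p+q)(p+q+1)!/(2\,p!\,q!)$ bracket terms on each side) and then only verifying that every term of the left-hand side occurs, with the same sign, on the right-hand side. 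Your bookkeeping would still reach the identity once written out, but the ``cancellation of mixed terms'' you invoke is not a step that exists.

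For the invariance of $\mathrm{D}^\bullet(E)$ the paper gives no argument (it is declared obvious), so your decision to verify $\delta_{k+1}(f)(\partial_B\ell)=0$ together with the vanishing of $(k+1)$-fold composites of the $\delta$'s is reasonable and more informative than the paper; but the auxiliary identity you propose for pushing the $\delta$'s through (\ref{LodCohOpRepr}) -- ``$\delta_i(f)\rho(X_j)=0$ for $i\neq j$, the left-anchor identity for $i=j$'' -- is backwards and, as stated, false. For $\ell\in\mathrm{D}^k(E)$ and $i\le k$, applying $\delta_{k+1}(f)$ to the term $(-1)^{i+1}\rho(X_i)\,\ell(X_1,\ldots\hat{\imath}\ldots,X_{k+1})$ yields $(-1)^{i+1}\rho(X_i)(f)\,\ell(X_1,\ldots\hat{\imath}\ldots,X_{k+1})$, which does not vanish; it cancels against $\delta_{k+1}(f)$ applied to the bracket term $(-1)^{i}\,\ell(X_1,\ldots\hat{\imath}\ldots,X_k,[X_i,X_{k+1}])$, computed using the left-anchor identity $[X_i,fX_{k+1}]=f[X_i,X_{k+1}]+\rho(X_i)(f)X_{k+1}$ and the $C^\infty(M)$-linearity of $\ell$ in its last slot. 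The term $i=k+1$ is the one that is individually linear, since $\rho(fX_{k+1})=f\rho(X_{k+1})$ ($\rho$ comes from a bundle map). So degree $0$ in the last argument is obtained by a pairwise cancellation, not by termwise vanishing; with this correction, and with the total-order-$\le 1$ hypothesis on the bracket controlling the order count as you indicate, your plan for the estimate $\partial_B\ell\in\mathrm{D}^{k+1}(E)$ goes through.
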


The claim is easily checked on low degree examples. The general proof is as follows.

\begin{proof} The value of the {\small LHS} of Equation (\ref{DerShuffle}) on sections $X_1,\ldots,X_{p+q+1}\in\mathrm{Sec}(E)$
is given by $S_1 +\ldots + S_4$, where
$$S_1=\sum_{k=1}^{p+1}\sum_{\tau\in\mathrm{Sh}(p,q)}(-1)^{k+1}\mathrm{sign}\tau\; \rho(X_k)\left(\ell'(X_{\tau_1},\ldots,
\widehat{X}_{\tau_k},\ldots, X_{\tau_{p+1}})\;\ell''(X_{\tau_{p+2}},\ldots, X_{\tau_{p+q+1}})\right)$$
and
$$S_3=\sum_{1\le k< m\le p+q+1}\sum_{\tau\in \mathrm{Sh}(p,q)}(-1)^k\mathrm{sign}\tau\;\,\ell'(X_{\tau_1},\ldots,[X_k,X_m],\ldots)
\;\ell''(X_{\tau_{-}},\ldots).$$
In the sum $S_2$, which is similar to $S_1$, the index $k$ runs through $\{p+2,\ldots,p+q+1\}$ ($X_{\tau_k}$ is
then missing in $\ell''$). The sum $S_3$ contains those shuffle permutations of $1\ldots \hat{k}\ldots p+q+1$
that send the argument $[X_k,X_m]$ with index $m=:\tau_r$ into $\ell'$, whereas $S_4$ is taken over the shuffle
permutations that send $[X_k,X_m]$ into $\ell''.$\medskip

Analogously, the value of $(\partial\ell')\pitchfork\ell''$ equals $T_1+T_2$ with
$$T_1=\sum_{\sigma\in\mathrm{Sh}(p+1,q)}\sum_{i=1}^{p+1}\mathrm{sign}\sigma\,(-1)^{i+1}\left(\rho(X_{\sigma_i})\,\ell'(X_{\sigma_1},
\ldots,\widehat{X}_{\sigma_i},\ldots,X_{\sigma_{p+1}})\right)\ell''(X_{\sigma_{p+2}},\ldots,X_{\sigma_{p+q+1}})$$
and
$$T_2=\sum_{\sigma\in\mathrm{Sh}(p+1,q)}\sum_{1\le i<j\le p+1}\mathrm{sign}\sigma\,(-1)^{i}\,\ell'(X_{\sigma_1},\ldots,[X_{\sigma_i},
X_{\sigma_j}],\ldots)\;\ell''(X_{\sigma_{p+2}},\ldots,X_{\sigma_{p+q+1}})$$
(whereas the value $T_3+T_4$ of $(-1)^p\;\ell'\pitchfork(\partial\ell'')$, which is similar, is not (really) needed
in this (sketch of) proof).\medskip

Let us stress that in $S_3$ and $T_2$ the bracket is in its natural position determined by the index $\tau_r=m$
or $\sigma_j$ of its second argument, that, since $\mathrm{Sh}(p,q)\simeq \mathbb{S}_{p+q}/(\mathbb{S}_p\times
\mathbb{S}_q)$, the number of $(p,q)$-shuffles equals ${(p+q)!}/{(p!\,q!)}\,$, and that in $S_1$ the vector field
$\rho(X_k)$ acts on a product of functions according to the Leibniz rule, so that each term splits. It is now
easily checked that after this splitting the number of different terms in $\rho(X_{-})$ (resp. $[X_-,X_-]$) in
the {\small LHS} and the {\small RHS} of Equation (\ref{DerShuffle}) is equal to $2 (p+q+1)!/(p!\,q!)$ (resp.
$(p+q)(p+q+1)!/(2\,p!\,q!)$). To prove that both sides coincide, it therefore suffices to show that any term of the
{\small LHS} can be found in the {\small RHS}.\medskip

We first check this for any split term of $S_1$ with vector field action on the value of $\ell'$ (the proof is
similar if the field acts on the second function and also if we choose a split term in $S_2$),
$$(-1)^{k+1}\mathrm{sign}\tau\; \left(\rho(X_k)\ell'(X_{\tau_1},\ldots, \widehat{X}_{\tau_k},\ldots,
X_{\tau_{p+1}})\right)\,\ell''(X_{\tau_{p+2}},\ldots, X_{\tau_{p+q+1}}),$$ where $k\in\{1,\ldots,p+1\}$ is fixed,
as well as $\tau\in\mathrm{Sh(p,q)}$ -- which permutes $1\ldots\hat{k}\ldots p+q+1$. This term exists also in
$T_1$. Indeed, the shuffle $\tau$ induces a unique shuffle $\sigma\in\mathrm{Sh}(p+1,q)$ and a unique
$i\in\{1,\ldots,p+1\}$ such that $\sigma_i=k.$ The corresponding term of $T_1$ then coincides with the chosen
term in $S_1$, since, as easily seen, $\mathrm{sign}\sigma\;(-1)^{i+1}=(-1)^{k+1}\mathrm{sign}\tau$.\medskip

Consider now a term in $S_3$ (the proof is analogous for the terms of $S_4$),
$$(-1)^k\mathrm{sign}\tau\;\,\ell'(X_{\tau_1},\ldots,[X_k,X_m],\ldots)\;\ell''(X_{\tau_{-}},\ldots),$$
where $k<m$ are fixed in $\{1,\ldots, p+q+1\}$ and where $\tau\in \mathrm{Sh}(p,q)$ is a fixed permutation of
$1\ldots\hat{k}\ldots p+q+1$ such that the section $[X_k,X_m]$ with index $m=:\tau_r$ is an argument of
$\ell'$. The shuffle $\tau$ induces a unique shuffle $\sigma\in\mathrm{Sh}(p+1,q)$. Set $k=:\sigma_i$ and $m=:\sigma_j$. Of
course $1\le i<j\le p+1$. This means that the chosen term reads
$$(-1)^k\mathrm{sign}\tau\;\,\ell'(X_{\sigma_1},\ldots,[X_{\sigma_i},X_{\sigma_j}],\ldots,X_{\sigma_{p+1}})\;\ell''(X_{\sigma_{p+2}},\ldots,X_{\sigma_{p+q+1}}).$$
Finally this term is a term of $T_2$, as it is again clear that
$(-1)^k\mathrm{sign}\tau=\mathrm{sign}\sigma\,(-1)^i$.\medskip

That $\mathrm{D}^\bullet(E)$ is invariant under $\partial_B$ in the case of a
pseudoalgebra bracket is obvious. This completes the
proof.\end{proof}

Note that the derivations $\partial_B$ of the reduced shuffle algebra
(in the case of pseudoalgebra brackets on $\mathrm{Sec}(E)$) are, due to
formula (\ref{LodCohOpRepr}), completely determined by their
values on $\mathrm{D}^0(E)\oplus\mathrm{D}^1(E)$. More precisely,
$B=([\cdot,\cdot],\rho)$ can be easily reconstructed from $\partial_B$
thanks to the formulae \begin{equation}\label{anchor-reconstruction}
\rho(X)(f)=\langle X,\partial_Bf\rangle \end{equation} and
\begin{equation}\label{bracket-reconstruction} \langle\mathfrak{l},[X,Y]\rangle=\langle
X,\partial_B\langle\mathfrak{l},Y\rangle\rangle-\langle
Y,\partial_B\langle\mathfrak{l},X\rangle\rangle-\partial_B\mathfrak{l}(X,Y)\,, \end{equation}
where $X,Y\in\mathrm{Sec}(E)$, $\mathfrak{l}\in\mathrm{Sec}(E^\ast)$, and $f\in
C^\infty(M)$.

\begin{thm}\label{th:LO} If $\partial$ is a derivation of the reduced shuffle algebra $\mathrm{D}^\bullet(E)$, then on $\mathrm{D}^0(E)\oplus\mathrm{D}^1(E)$ the
derivation $\partial$ coincides with $\partial_B$ for a certain uniquely determined
$B=([\cdot,\cdot]_\partial,\rho_\partial)$ associated with a pseudoalgebra
bracket $[\cdot,\cdot]_\partial$ on $\mathrm{Sec}(E)$.
\end{thm}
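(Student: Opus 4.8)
The plan is to reconstruct the pair $B=([\cdot,\cdot]_\partial,\rho_\partial)$ out of the abstract degree $1$ derivation $\partial$ by means of the formulas (\ref{anchor-reconstruction}) and (\ref{bracket-reconstruction}), and then to show: (i) these formulas define a bundle map $\rho_\partial:E\to TM$ and an $\mathbb{R}$-bilinear bracket $[\cdot,\cdot]_\partial$ on $\mathrm{Sec}(E)$; (ii) $([\cdot,\cdot]_\partial,\rho_\partial)$ is a pseudoalgebra structure; (iii) $\partial$ and $\partial_B$ agree on $\mathrm{D}^0(E)\oplus\mathrm{D}^1(E)$; (iv) $B$ is the only pseudoalgebra structure with this property. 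First I would put $\rho_\partial(X)(f):=\langle X,\partial f\rangle$ for $X\in\mathrm{Sec}(E)$ and $f\in C^\infty(M)=\mathrm{D}^0(E)$; since the shuffle product of two elements of $\mathrm{D}^0(E)$ is the pointwise product and the shuffle product $\mathrm{D}^0(E)\pitchfork\mathrm{D}^1(E)$ is multiplication of a one-cochain by a function, the Leibniz rule for $\partial$ yields $\partial(fg)=g\,\partial f+f\,\partial g$ in $\mathrm{D}^1(E)=\mathrm{Sec}(E^\ast)$, so $\rho_\partial(X)$ is a derivation of $C^\infty(M)$, and $\rho_\partial$ is visibly $C^\infty(M)$-linear in $X$, hence induced by a vector bundle morphism $E\to TM$ over the identity.

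Next I would define $[X,Y]_\partial\in\mathrm{Sec}(E)$ by declaring, for every $\mathfrak{l}\in\mathrm{Sec}(E^\ast)=\mathrm{D}^1(E)$,
\[
\langle\mathfrak{l},[X,Y]_\partial\rangle:=\langle X,\partial\langle\mathfrak{l},Y\rangle\rangle-\langle Y,\partial\langle\mathfrak{l},X\rangle\rangle-(\partial\mathfrak{l})(X,Y),
\]
which is meaningful because $\partial\mathfrak{l}\in\mathrm{D}^2(E)$, so $(\partial\mathfrak{l})(X,Y)\in C^\infty(M)$. The key point is that the right-hand side is $C^\infty(M)$-linear in $\mathfrak{l}$: replacing $\mathfrak{l}$ by $h\mathfrak{l}$ and expanding $\partial(h\langle\mathfrak{l},Y\rangle)$, $\partial(h\langle\mathfrak{l},X\rangle)$ and $\partial(h\mathfrak{l})=(\partial h)\pitchfork\mathfrak{l}+h\pitchfork\partial\mathfrak{l}$ by the derivation property, all terms involving $\rho_\partial$ cancel in pairs; since $E$ has finite rank, $\mathrm{D}^1(E)$ separates sections of $E$, so $[X,Y]_\partial$ is a well-defined section, plainly $\mathbb{R}$-bilinear. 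To see it is a pseudoalgebra bracket, I would expand the defining formula for $[X,fY]_\partial$ using that $\partial\mathfrak{l}$ has degree $0$ in its last argument (hence is $C^\infty(M)$-linear there), obtaining $[X,fY]_\partial=f[X,Y]_\partial+\rho_\partial(X)(f)Y$; this is exactly the statement that $\mathrm{ad}_X$ is a derivative endomorphism with $\widehat{\mathrm{ad}_X}=\rho_\partial(X)$. Expanding $[fX,Y]_\partial$ and using that $\partial\mathfrak{l}$ is of total order $\le1$ — so $\delta_1(f)(\partial\mathfrak{l})$ is $C^\infty(M)$-bilinear and a derivation in $f$, hence factors through $\mathrm{d}f$ — produces a relation of the shape $[fX,Y]_\partial=f[X,Y]_\partial-\rho_\partial(Y)(f)X+\alpha_\partial(Y)(\mathrm{d}f\otimes X)$ with $\alpha_\partial$ tensorial; together with the previous identity this shows $[\cdot,\cdot]_\partial$ is a bidifferential operator of total order $\le1$ (cf.\ Theorems \ref{T1a} and \ref{LodAld}), so $([\cdot,\cdot]_\partial,\rho_\partial)$ is a pseudoalgebra structure.

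For the agreement, I would simply evaluate $\partial_B$ via (\ref{LodCohOpRepr}): on $f\in\mathrm{D}^0(E)$ it gives $(\partial_Bf)(X)=\rho_\partial(X)(f)=\langle X,\partial f\rangle$, i.e.\ $\partial_Bf=\partial f$; on $\mathfrak{l}\in\mathrm{D}^1(E)$ it gives $(\partial_B\mathfrak{l})(X,Y)=\langle X,\partial\langle\mathfrak{l},Y\rangle\rangle-\langle Y,\partial\langle\mathfrak{l},X\rangle\rangle-\langle\mathfrak{l},[X,Y]_\partial\rangle$, which equals $(\partial\mathfrak{l})(X,Y)$ by the very definition of $[X,Y]_\partial$; hence $\partial_B=\partial$ on $\mathrm{D}^0(E)\oplus\mathrm{D}^1(E)$. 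Uniqueness is then immediate: if $B'=([\cdot,\cdot]',\rho')$ is any pseudoalgebra structure with $\partial_{B'}=\partial$ on that subspace, evaluating on $\mathrm{D}^0(E)$ forces $\rho'(X)(f)=\langle X,\partial f\rangle=\rho_\partial(X)(f)$, and then evaluating on $\mathrm{D}^1(E)$ forces $\langle\mathfrak{l},[X,Y]'\rangle=\langle\mathfrak{l},[X,Y]_\partial\rangle$ for all $\mathfrak{l}$, whence $B'=B$. I expect the main obstacle to be the second step — converting the membership $\partial\mathfrak{l}\in\mathrm{D}^2(E)$, i.e.\ the properties ``degree $0$ in the last variable and total order $\le1$'', into the precise differential-operator behaviour of $[\cdot,\cdot]_\partial$ (first order in each variable, total order $\le1$, genuine left anchor $\rho_\partial$), while keeping track of the symbol term $\alpha_\partial$; the $C^\infty(M)$-linearity check for $[X,Y]_\partial$ in $\mathfrak{l}$, where the derivation property of $\partial$ and the explicit low-degree shuffle products must be used carefully, is the other delicate point. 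Everything else is routine bookkeeping with the pairing $\langle\cdot,\cdot\rangle$ and formula (\ref{LodCohOpRepr}).
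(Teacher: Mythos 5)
Your proposal is correct and follows essentially the same route as the paper: define $\rho_\partial$ and $[\cdot,\cdot]_\partial$ by the reconstruction formulae, use the shuffle-derivation property to get the left anchor, and use the fact that $\partial\mathfrak{l}\in\mathrm{D}^2(E)$ is of order $0$ in the last variable and of total order $\le 1$ to read off the generalized right anchor $\alpha_\partial$ from $\delta_1(f)(\partial\mathfrak{l})$, whence a pseudoalgebra bracket agreeing with $\partial_B$ on $\mathrm{D}^0(E)\oplus\mathrm{D}^1(E)$. Your explicit checks of $C^\infty(M)$-linearity in $\mathfrak{l}$ (well-definedness of $[X,Y]_\partial$ as a section) and of uniqueness are points the paper leaves implicit, and they are carried out correctly.
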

\begin{proof}
Let us define $\rho=\rho_\partial$ and $[\cdot,\cdot]=[\cdot,\cdot]_\partial$ out of formulae
(\ref{anchor-reconstruction}) and (\ref{bracket-reconstruction}),
i.e., \begin{equation}\label{anchor-reconstruction1} \rho(X)(f)=\langle X,\partial
f\rangle \end{equation} and \begin{equation}\label{bracket-reconstruction1}
\langle\mathfrak{l},[X,Y]\rangle=\langle
X,\partial\langle\mathfrak{l},Y\rangle\rangle-\langle
Y,\partial\langle\mathfrak{l},X\rangle\rangle-\partial\mathfrak{l}(X,Y)\,. \end{equation} The
fact that $\rho(X)$ is a derivation of $C^{\infty}(M)$ is a
direct consequence of the shuffle algebra derivation property of
$\partial$. Eventually, the map $\rho$ is visibly associated with
a bundle map $\rho:E\to TM$.\smallskip

The bracket $[\cdot,\cdot]$ has $\rho$ as left anchor.
Indeed, since $\partial\mathfrak{l}(X,Y)$ is of order 0 with respect to
$Y$, we get from (\ref{bracket-reconstruction1})
$$[X,fY]-f[X,Y]= \langle X,\partial f\rangle Y=\rho(X)(f)Y\,.$$
Similarly, as $\partial\mathfrak{l}(X,Y)$ is of order 1 with respect to $X$ and of order 0 with respect to $Y$, the operator
$$\delta_1(f)\left(\partial\mathfrak{l}\right)(X,Y)=\partial\mathfrak{l}(fX,Y)-f\partial\mathfrak{l}(X,Y)$$
is $C^\infty(M)$-bilinear, so that the LHS of
$$\langle\mathfrak{l},[fX,Y]-f[X,Y]\rangle=-\langle Y,\partial f\rangle\langle\mathfrak{l},X\rangle -\delta_1(f)\left(\partial\mathfrak{l}\right)(X,Y),$$ see (\ref{bracket-reconstruction1}),
is $C^\infty(M)$-linear with respect to $X$ and $Y$ and a
derivation with respect to $f$. The bracket $[\cdot,\cdot]$ is
therefore of total order $\le 1$ with the generalized right anchor
$b\,^r=\rho-\alpha$, where $\alpha$ is determined by
the identity \begin{equation}\label{za}\langle\mathfrak{l},\alpha(Y)(\mathrm{d} f\otimes
X)\rangle=\delta_1(f)\left(\partial\mathfrak{l}\right)(X,Y)\,. \end{equation} This
corroborates that $\alpha$ is a bundle map from $E$ to
$TM\otimes_M\mathrm{End}(E)$.\end{proof}

\begin{defi}
Let ${\mathrm{Der}}_1(\mathrm{D}^\bullet(E),\pitchfork)$ be the space of degree $1$ graded
derivations $\partial$ of the reduced shuffle algebra that verify, for any $c\in \mathrm{D}^2(E)$ and any
$X_i\in\mathrm{Sec}(E)$, $i=1,2,3$,
\begin{eqnarray}\label{EncodingJacobi}(\partial c)(X_1,X_2,X_3)&=&\sum_{i=1}^3(-1)^{i+1}\langle\partial
(c(X_1,\ldots\hat{\imath}\ldots,X_3)),X_i\rangle\\
&&+\sum_{i<j}(-1)^i\,c(X_1,\ldots\hat{\imath}\ldots,\stackrel{(j)}
{[X_i,X_j]_\partial},\ldots,X_3)\,.\nonumber
\end{eqnarray}
A {\it homological vector field} of the supercommutative manifold $(M,\mathrm{D}^\bullet(E))$ is a square-zero
derivation in $\mathrm{{Der}}_1(\mathrm{D}^\bullet(E),\pitchfork)$. Two homological vector fields of $(M,\mathrm{D}^\bullet(E))$ are {\it equivalent}, if they coincide on $C^{\infty}(M)$ and on
$\mathrm{Sec}(E^*)$.
\end{defi}

\medskip
Observe that Equation (\ref{EncodingJacobi}) implies that two equivalent homological fields also coincide on
$\mathrm{D}^2(E)$.
We are now prepared to give the main theorem of this section.

\begin{thm}\label{GeoIntKLA2} Let $E$ be a vector bundle. There exists a 1-to-1 correspondence between equivalence classes of homological vector
fields
$$\partial\in\mathrm{{Der}}_1(\mathrm{D}^\bullet(E),\pitchfork),\;\partial^2=0$$
and Loday algebroid structures on $E$.\end{thm}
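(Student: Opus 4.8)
The strategy is to combine the two directions of a standard dictionary between differentials and brackets with the structural theorems already established. For the forward direction, given a Loday algebroid structure $([\cdot,\cdot],\rho,\alpha)$ on $E$ in the sense of Definition \ref{d1}, we first use Theorem \ref{GeoIntKLA1}: since $[\cdot,\cdot]$ is a pseudoalgebra bracket (total order $\le 1$, with $\mathrm{ad}_X$ a derivative endomorphism), the Loday operator $\partial_B$ associated with $B=([\cdot,\cdot],\rho)$ is a degree $1$ graded derivation of the shuffle algebra that leaves $\mathrm{D}^\bullet(E)$ invariant. Because $\rho$ is an honest anchor (valued in $\mathrm{Der}(C^\infty(M))$) and a homomorphism of the Loday bracket into $[\cdot,\cdot]_{vf}$, the pair $([\cdot,\cdot],\rho)$ is a Loday bracket together with a representation by derivations, so by the discussion preceding the definition of Loday algebroid cohomology we have $\partial_B^2=0$. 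One checks directly that $\partial_B$ satisfies the compatibility condition (\ref{EncodingJacobi}) on $\mathrm{D}^2(E)$ — this is just formula (\ref{LodCohOpRepr}) written out in degree $2$ — so $\partial_B\in\mathrm{Der}_1(\mathrm{D}^\bullet(E),\pitchfork)$ is a homological vector field.

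\textbf{Reconstruction and well-definedness.} For the reverse direction, start from a homological vector field $\partial$. Theorem \ref{th:LO} already produces, from $\partial$ restricted to $\mathrm{D}^0(E)\oplus\mathrm{D}^1(E)$, a uniquely determined pseudoalgebra bracket $[\cdot,\cdot]_\partial$ with left anchor $\rho_\partial$ and generalized right anchor $b^r=\rho_\partial-\alpha_\partial$, where $\rho_\partial$ and $\alpha_\partial$ come from bundle maps via (\ref{anchor-reconstruction1}), (\ref{bracket-reconstruction1}) and (\ref{za}). It remains to show that $[\cdot,\cdot]_\partial$ satisfies the Jacobi identity (\ref{JI}), i.e. that the reconstructed bracket is genuinely \emph{Loday}. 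Here is where $\partial^2=0$ enters: as recalled in Section \ref{GKPSection7}, for the Loday operator built from a bracket and a map $\rho$, the identity $\partial_B^2=0$ on $\mathrm{Lin}^0$ encodes the homomorphism property of $\rho$ and on $\mathrm{Lin}^1$ encodes the Jacobi identity. Using the compatibility condition (\ref{EncodingJacobi}) — which forces $\partial$ to agree with the Loday operator formula $\partial_{B_\partial}$ on $\mathrm{D}^2(E)$ — together with the derivation property, we get that $\partial$ agrees with $\partial_{B_\partial}$ on $\mathrm{D}^0\oplus\mathrm{D}^1\oplus\mathrm{D}^2$, and then $0=\partial^2=\partial_{B_\partial}^2$ applied to $C^\infty(M)$ and to $\mathrm{Sec}(E^\ast)$ yields respectively that $\rho_\partial$ is a homomorphism and that $[\cdot,\cdot]_\partial$ obeys (\ref{JI}). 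Thus $(E,[\cdot,\cdot]_\partial,\rho_\partial,\alpha_\partial)$ is a Loday algebroid.

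\textbf{The two constructions are mutually inverse.} Finally we verify that the assignments are inverse to each other on the relevant equivalence classes. Starting from a Loday algebroid, passing to $\partial_B$ and reconstructing via (\ref{anchor-reconstruction})–(\ref{bracket-reconstruction}) returns the original $\rho$, $[\cdot,\cdot]$ and hence $\alpha$, since those formulae are literally the inversion of the definition of $\partial_B$; this is immediate. Conversely, starting from $\partial$, building $[\cdot,\cdot]_\partial$ and then forming $\partial_{B_\partial}$, the observation (made right after the definition of equivalence) that a homological field is determined on $\mathrm{D}^2(E)$ by its values on $\mathrm{D}^0\oplus\mathrm{D}^1$ via (\ref{EncodingJacobi}), combined with the fact that a derivation of $\mathrm{D}^\bullet(E)$ is determined by its values on the generators $\mathrm{D}^0(E)\oplus\mathrm{D}^1(E)$, shows that $\partial_{B_\partial}$ and $\partial$ lie in the same equivalence class. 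Hence the correspondence is a bijection between equivalence classes of homological vector fields in $\mathrm{Der}_1(\mathrm{D}^\bullet(E),\pitchfork)$ and Loday algebroid structures on $E$.

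\textbf{Main obstacle.} The routine part is the bookkeeping of signs and shuffle sums in checking (\ref{EncodingJacobi}) and in the derivation identity; the genuinely delicate point is the equivalence $\partial^2=0 \Leftrightarrow$ (Jacobi for $[\cdot,\cdot]_\partial$ and homomorphism for $\rho_\partial$). One must be careful that $\partial^2=0$ is imposed as an identity of \emph{derivations of the reduced shuffle algebra} $\mathrm{D}^\bullet(E)$, not of the full $\mathcal{D}^\bullet(E)$, so one has to make sure that testing $\partial^2$ on $C^\infty(M)$ and on $\mathrm{Sec}(E^\ast)$ — which do lie in $\mathrm{D}^\bullet(E)$ — already captures the full content of the Loday axioms; this is exactly the faithfulness-of-low-degrees argument behind formula (\ref{LodCohOpRepr}) and the surrounding remark that $\partial_B^2=0$ iff the bracket is Loday and $\rho$ a representation, and it must be transcribed with care to the pseudoalgebra (non-skew, two-anchor) setting.
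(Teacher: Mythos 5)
Your proposal is correct and follows essentially the same route as the paper: the forward direction rests on Theorem \ref{GeoIntKLA1} together with the fact that $\rho$ represents the Loday bracket by derivations (so $\partial_B^2=0$), and the converse uses Theorem \ref{th:LO} plus $\partial^2=0$ and condition (\ref{EncodingJacobi}) to recover the Jacobi identity -- the paper merely performs this last step by expanding the three double brackets $\langle\mathfrak{l},[X_1,[X_2,X_3]]\rangle$, etc., via (\ref{bracket-reconstruction1}) and identifying their alternating sum with $(\partial^2\mathfrak{l})(X_1,X_2,X_3)=0$, whereas you phrase it as ``$\partial$ agrees with $\partial_{B_\partial}$ in degrees $\le 2$, hence invoke the general dictionary (\ref{LodCohOpRepr}) of Section \ref{GKPSection7}''; the two arguments are the same computation in different packaging. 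One caveat: your parenthetical claim that a derivation of $\mathrm{D}^\bullet(E)$ is determined by its values on $\mathrm{D}^0(E)\oplus\mathrm{D}^1(E)$ is false -- the reduced shuffle algebra is not generated in degrees $0$ and $1$, which is precisely why the theorem is stated for equivalence classes -- but your conclusion does not need it, since coinciding on $C^\infty(M)$ and $\mathrm{Sec}(E^*)$ is by definition what it means for $\partial$ and $\partial_{B_\partial}$ to be equivalent.
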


\begin{rem} This theorem is a kind of a non-antisymmetric counterpart of
the well-known similar correspondence between homological vector fields of split supermanifolds and Lie
algebroids. Furthermore, it may be viewed as an analogue for Loday algebroids of the celebrated
Ginzburg-Kapranov correspondence for quadratic Koszul operads \cite{GK94}. According to the latter result,
homotopy Loday structures on a graded vector space $V$ correspond bijectively to degree 1 differentials of the
Zinbiel algebra $(\bar{\otimes}sV^*,\star)$, where $s$ is the suspension operator and where
$\bar{\otimes}sV^*$ denotes the reduced tensor module over $sV^*.$ However, in our geometric setting scalars,
or better functions, must be incorporated (see the proof of Theorem \ref{GeoIntKLA2}), which turns out to be
impossible without passing from the Zinbiel multiplication or half shuffle $\star$ to its symmetrization
$\pitchfork$. Moreover, it is clear that the algebraic structure on the function sheaf should be
associative.\end{rem}

\begin{proof} Let $([\cdot,\cdot],\rho,\alpha)$ be a Loday algebroid
structure on the given vector bundle $E\to M.$ According to
Theorem \ref{GeoIntKLA1}, the corresponding coboundary operator
$\partial_B$ is a square 0 degree 1 graded derivation of the
reduced shuffle algebra and (\ref{EncodingJacobi}) is satisfied by
definition, as $[\cdot,\cdot]_{\partial_B}=[\cdot,\cdot]$.\medskip

Conversely, let $\partial$ be such a homological vector field.
According to Theorem \ref{th:LO}, the derivation $\partial$ coincides
on $\mathrm{D}^0(E)\oplus\mathrm{D}^1(E)$ with $\partial_B$ for a certain
pseudoalgebra bracket $[\cdot,\cdot]=[\cdot,\cdot]_\partial$ on
$\mathrm{Sec}(E)$. Its left anchor is $\rho=\rho_\partial$ and the generalized
right anchor $b^r=\rho-\alpha$ is determined by means of formula
(\ref{za}), where $\mathfrak{l}$ runs through all sections of
$E^\ast$.\medskip

To prove that the triplet $([\cdot,\cdot],\rho,\alpha)$ defines a Loday algebroid structure on $E,$ it now suffices to
check that the Jacobi identity holds true. It follows from (\ref{bracket-reconstruction1}) that
$$\langle\mathfrak{l},[X_1,[X_2,X_3]]\rangle=-\langle\partial\langle \mathfrak{l},X_1\rangle, [X_2,X_3]\rangle+\langle\partial\langle
\mathfrak{l},[X_2,X_3]\rangle, X_1\rangle-(\partial\mathfrak{l})(X_1,[X_2,X_3]).$$ Since the first term of the
{\small RHS} is (up to sign) the evaluation of $[X_2,X_3]$ on the section $\partial\langle \mathfrak{l},X_1\rangle$ of
$E^*$, and a similar remark is valid for the contraction $\langle \mathfrak{l},[X_2,X_3]\rangle$ in the second
term, we can apply (\ref{bracket-reconstruction1}) also to these two brackets. If we proceed analogously for
$[[X_1,X_2],X_3]$ and $[X_2,[X_1,X_3]]$, and use (\ref{anchor-reconstruction1}) and the homological property
$\partial^2=0$, we find, after simplification, that the sum of the preceding three double brackets equals
$$\sum_{i=1}^3(-1)^{i+1}
\rho(X_i)(\partial\mathfrak{l})(X_1,\ldots\hat{\imath}\ldots,X_3)+\sum_{i<j}(-1)^i\,(\partial\mathfrak{l})(X_1,\ldots\hat{\imath}\ldots,\stackrel{(j)}
{\overbrace{[X_i,X_j]}},\ldots,X_3)\;.
$$
In view of (\ref{EncodingJacobi}), the latter expression coincides with $(\partial^2\mathfrak{l})(X_1,X_2,X_3)=0$, so that
the Jacobi identity holds.\medskip

It is clear that the just detailed assignment of a Loday algebroid
structure to any homological vector field can be viewed as a map
on equivalence classes of homological vector fields.\end{proof}

Having a homological vector field $\partial$ associated with a Loday
algebroid structure $([\cdot,\cdot],\rho,\alpha)$ on $E$, we can
easily develop the corresponding Cartan calculus for the shuffle
algebra $\mathcal{D}^\bullet(E)$.

\begin{prop} For any $X\in\mathrm{Sec}(E)$, the contraction
$$\mathcal{D}^p(E)\ni\ell\mapsto i_X\ell\in\mathcal{D}^{p-1}(E)\,,\quad (i_X\ell)(X_1,\dots,X_{p-1})=\ell(X,X_1,\dots,X_{p-1})\,,$$
is a degree $-1$ graded derivation of the shuffle algebra $({\cal
D}^{\bullet}(E),\pitchfork)$.\end{prop}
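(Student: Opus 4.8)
The plan is to reduce the statement to a purely combinatorial identity about shuffle permutations. Fix $X\in\mathrm{Sec}(E)$ and homogeneous $\ell'\in\mathcal{D}^p(E)$, $\ell''\in\mathcal{D}^q(E)$; what has to be proved is the graded Leibniz rule
\[
i_X(\ell'\pitchfork\ell'')=(i_X\ell')\pitchfork\ell''+(-1)^p\,\ell'\pitchfork(i_X\ell'').
\]
First I would dispatch the trivial points: $i_X\ell$ is again a multidifferential operator, since substituting a fixed section for one entry of a multidifferential operator does not raise its order in any remaining variable; and for $p=0$ the first summand is absent and the identity degenerates to $i_X(f\pitchfork\ell'')=f\,(i_X\ell'')$, which holds because $\pitchfork$ restricted to $\mathcal{D}^0(E)=C^\infty(M)$ is the module multiplication. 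So one may assume $p\ge 1$.

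Next I would unwind the definition of $\pitchfork$. Evaluating on sections $X_1,\dots,X_{p+q-1}$ and writing $Y_1:=X$, $Y_j:=X_{j-1}$ for $j\ge 2$, one has
\[
(i_X(\ell'\pitchfork\ell''))(X_1,\dots,X_{p+q-1})=\sum_{\sigma\in\mathrm{Sh}(p,q)}\mathrm{sign}\,\sigma\;\ell'(Y_{\sigma_1},\dots,Y_{\sigma_p})\,\ell''(Y_{\sigma_{p+1}},\dots,Y_{\sigma_{p+q}}).
\]
Since a $(p,q)$-shuffle satisfies $\sigma_1<\dots<\sigma_p$ and $\sigma_{p+1}<\dots<\sigma_{p+q}$, the index $1$, being the minimum of $\{1,\dots,p+q\}$, occurs either as $\sigma_1$ or as $\sigma_{p+1}$; that is, the distinguished argument $X$ is forced to be either the first argument of $\ell'$ or the first argument of $\ell''$. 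This splits the sum into two parts, which I would treat separately. In the part with $\sigma_1=1$, deleting the fixed slot identifies the remaining data $\sigma_2<\dots<\sigma_p$, $\sigma_{p+1}<\dots<\sigma_{p+q}$ (shifted down by one) bijectively with the $(p-1,q)$-shuffles $\bar\sigma$ of $\{1,\dots,p+q-1\}$, and $\mathrm{sign}\,\sigma=\mathrm{sign}\,\bar\sigma$ because $\sigma$ fixes $1$; the corresponding terms reassemble into $((i_X\ell')\pitchfork\ell'')(X_1,\dots,X_{p+q-1})$. In the part with $\sigma_{p+1}=1$, I would move the entry $1$ from slot $p+1$ to the front, paying $p$ adjacent transpositions, hence a factor $(-1)^p$; after this, deletion of the fixed slot identifies $\sigma$ bijectively with the $(p,q-1)$-shuffles $\bar\sigma$ of $\{1,\dots,p+q-1\}$ and $\mathrm{sign}\,\sigma=(-1)^p\,\mathrm{sign}\,\bar\sigma$, so these terms sum to $(-1)^p\,(\ell'\pitchfork(i_X\ell''))(X_1,\dots,X_{p+q-1})$. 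Adding the two parts gives the claimed identity.

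The one delicate point, and the step I expect to require the most care, is precisely this sign computation in the second part: one must verify that removing the fixed argument and relabelling the two ordered blocks produces each $(p,q-1)$-shuffle exactly once, with sign $(-1)^p\,\mathrm{sign}\,\bar\sigma$ — the factor $(-1)^p$ being exactly the commutation sign of a degree $-1$ derivation past a factor of degree $p$. Everything else is routine bookkeeping. As a consistency check I would verify the low-degree case $p=q=1$ using $(\ell'\pitchfork\ell'')(X_1,X_2)=\ell'(X_1)\ell''(X_2)-\ell'(X_2)\ell''(X_1)$, which reproduces the signs above.
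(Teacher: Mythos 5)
Your proof is correct and follows essentially the same route as the paper: evaluate $i_X(\ell'\pitchfork\ell'')$, split the $(p,q)$-shuffles according to whether the minimal index sits in the first slot of $\ell'$ or of $\ell''$, and identify the two families with $(p-1,q)$- and $(p,q-1)$-shuffles with signs $\mathrm{sign}\,\bar\sigma$ and $(-1)^p\,\mathrm{sign}\,\bar\sigma$, respectively. Your extra remarks (that $i_X\ell$ stays multidifferential and the $p=0$ degeneration) are harmless additions to the same argument.
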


\begin{proof} Using usual notations, our definitions, as well as a separation of
the involved shuffles $\sigma$ into the $\sigma$-s that verify
${\sigma_1}=1$ and those for which ${\sigma_{p+1}=1}$, we get
$$\left(i_{X_1}(\ell'\pitchfork\ell'')\right)(X_2,\ldots,X_{p+q})= \sum_{\sigma:\sigma_1=1}\mathrm{sign}\sigma\;(i_{X_1}\ell')(X_{\sigma_2},
\ldots,X_{\sigma_{p}}) \ell''(X_{\sigma_{p+1}},\ldots,X_{\sigma_{p+q}})$$
$$+\sum_{\sigma:\sigma_{p+1}=1}\mathrm{sign}\sigma\;\ell'(X_{\sigma_1},\ldots,X_{\sigma_{p}})
(i_{X_1}\ell'')(X_{\sigma_{p+2}},\ldots,X_{\sigma_{p+q}}).$$ Whereas a
$(p,q)$-shuffle of the type $\sigma_1=1$ is a $(p-1,q)$-shuffle with
same signature, a $(p,q)$-shuffle such that $\sigma_{p+1}=1$ defines
a $(p,q-1)$-shuffle with signature $(-1)^p\mathrm{sign}\sigma$.
Therefore, we finally get
$$i_{X_1}(\ell'\pitchfork\ell'')=(i_{X_1}\ell')\pitchfork
\ell''+(-1)^p\ell'\pitchfork(i_{X_1}\ell'').$$\end{proof}

Observe that the supercommutators
$[i_X,i_Y]_{\mathrm{sc}}=i_Xi_Y+i_Yi_X$ do not necessarily vanish, so
that the derivations $i_X$ of the shuffle algebra generate a Lie
superalgebra of derivations with negative degrees. Indeed,
$[i_X,i_Y]_{\mathrm{sc}}=:i_{X\Box Y}$,
$[[i_X,i_Y]_{\mathrm{sc}},i_Z]_{\mathrm{sc}}=:i_{(X\Box Y)\Box Z},...$
are derivations of degree $-2$, $-3,...$ given on any
$\ell\in{\cal D}^p(E)$ by
$$(i_{X\Box Y}\ell)(X_1,\dots,X_{p-2})=\ell(Y,X,X_1,\dots,X_{p-2})+\ell(X,Y,X_1,\dots,X_{p-2})\,,$$
$$(i_{(X\Box X)\Box
Y}\ell)(X_1,\dots,X_{p-3})=2\ell(Y,X,X,X_1,\dots,X_{p-3})-2\ell(X,X,Y,X_1,\dots,X_{p-3})\,,...$$\smallskip

The next proposition is obvious.

\begin{prop} The supercommutator $\mathcal{L}_X:=[\partial,i_X]_{\mathrm{sc}}=\partial
i_X+i_X\partial$, $X\in\mathrm{Sec}(E)$, is a degree 0 graded derivation of the
shuffle algebra. Explicitly, for any $\ell\in\mathcal{D}^p(E)$ and
$X_1,\ldots,X_p\in\mathrm{Sec}(E)$, \begin{equation}\label{Ld}
(\mathcal{L}_X\ell)(X_1,\dots,X_p)=\rho(X)\left(\ell(X_1,\dots,X_p)\right)-\sum_i\ell(X_1,\dots,\stackrel{(i)}
{\overbrace{[X,X_i]}},\dots,X_p)\,.\end{equation}\end{prop}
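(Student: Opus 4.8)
The plan is to treat the two assertions separately: the first is a formal consequence of results already established, and the second is a short direct computation from the defining formulas.

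First I would dispatch the claim that $\mathcal{L}_X$ is a degree $0$ graded derivation of $(\mathcal{D}^\bullet(E),\pitchfork)$. By Theorem \ref{GeoIntKLA1} the operator $\partial=\partial_B$ is a degree $1$ graded derivation of the shuffle algebra of $E$, and by the preceding proposition $i_X$ is a degree $-1$ graded derivation of the same algebra. In any $\mathbb{Z}$-graded commutative associative algebra the graded commutator of two graded derivations is again a graded derivation whose degree is the sum of the two degrees; applying this to the two odd derivations $\partial$ and $i_X$ (so that the supercommutator of odd operators is $[\partial,i_X]_{\mathrm{sc}}=\partial i_X+i_X\partial$, matching the definition in the statement) yields a graded derivation of degree $1+(-1)=0$. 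This is exactly the first assertion, and it requires nothing beyond the two cited propositions.

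For the explicit formula (\ref{Ld}) I would compute $(\partial i_X\ell+i_X\partial\ell)(X_1,\dots,X_p)$ for $\ell\in\mathcal{D}^p(E)$ directly from formula (\ref{LodCohOpRepr}) — legitimate here since the left anchor $\rho$ is a representation of the Loday algebra $(\mathrm{Sec}(E),[\cdot,\cdot])$ on $C^\infty(M)$, so $\partial_B$ acts on $\mathcal{D}^\bullet(E)$ by that formula — together with the defining identity $(i_X c)(Y_1,\dots,Y_{q-1})=c(X,Y_1,\dots,Y_{q-1})$. Expanding $\partial(i_X\ell)$, a $p$-cochain obtained from the $(p-1)$-cochain $i_X\ell$, produces an anchor sum $\sum_{i}(-1)^{i+1}\rho(X_i)\bigl(\ell(X,X_1,\dots\hat{\imath}\dots,X_p)\bigr)$ and a bracket sum $\sum_{i<j}(-1)^i\ell(X,X_1,\dots\hat{\imath}\dots,\stackrel{(j)}{\overbrace{[X_i,X_j]}},\dots,X_p)$, in each term of which $X$ occupies the first slot of $\ell$. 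Expanding $i_X(\partial\ell)=(\partial\ell)(X,X_1,\dots,X_p)$ and relabelling the $p+1$ arguments as $x_1=X,\,x_2=X_1,\,\dots,\,x_{p+1}=X_p$: the $i=1$ term of the anchor sum of $\partial\ell$ is precisely $\rho(X)\bigl(\ell(X_1,\dots,X_p)\bigr)$; the terms $i\ge 2$ of that anchor sum reproduce, after the index shift $i\mapsto i-1$ and with an extra factor $-1$, the anchor sum of $\partial(i_X\ell)$, so those cancel; in the bracket sum of $\partial\ell$ the terms with $i\ge 2$ cancel against the bracket sum of $\partial(i_X\ell)$ after the same shift; and the terms with $i=1$, namely those containing $[X,x_j]$, survive with global sign $-1$ and, writing $k=j-1$, give $-\sum_{k=1}^{p}\ell(X_1,\dots,\stackrel{(k)}{\overbrace{[X,X_k]}},\dots,X_p)$. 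Collecting the surviving contributions yields (\ref{Ld}).

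The only point demanding care — and thus the ``hardest'' part, though it is bookkeeping rather than anything conceptual — is matching the signs $(-1)^{i+1}$ and $(-1)^i$ on the two sides under the shift of indices by one caused by inserting or deleting $X$ from the first argument of $\ell$. That the insertion point is always the first slot is exactly what makes this non-antisymmetric (Loday) version of Cartan's magic formula go through verbatim, in parallel with the familiar Lie algebroid computation. One could alternatively note that, the right-hand side of (\ref{Ld}) being $\mathbb{R}$-linear and local in $\ell$, it suffices to check the identity on $\mathcal{D}^0(E)\oplus\mathcal{D}^1(E)$ and extend it using the graded-derivation property established above; but the direct expansion is short enough that I would simply carry it out.
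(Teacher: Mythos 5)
Your proof is correct and is essentially the argument the paper intends, since it states this proposition without proof as ``obvious'': the degree-0 derivation property follows from the standard fact that the supercommutator of the odd derivations $\partial$ (Theorem \ref{GeoIntKLA1}) and $i_X$ (the preceding proposition) is a derivation of degree $1+(-1)=0$, and the explicit formula (\ref{Ld}) comes from expanding $\partial i_X+i_X\partial$ via (\ref{LodCohOpRepr}), with the cancellations under the index shift exactly as you describe.
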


We refer to the derivation $\mathcal{L}_X$ as the Loday algebroid {\it Lie
derivative along $X$}.\medskip

If we define the Lie derivative on the tensor algebra
$T_\mathbb{R}(E)=\bigoplus_{p=0}^\infty\mathrm{Sec}(E)^{\otimes_\mathbb{R} p}$ in the obvious
way by
$$\mathcal{L}_X(X_1\otimes_\mathbb{R}\cdots\otimes_\mathbb{R} X_p)=\sum_iX_1\otimes_\mathbb{R}\dots\otimes_\mathbb{R}\stackrel{(i)}
{\overbrace{[X,X_i]}}\otimes_\mathbb{R}\dots\otimes_\mathbb{R} X_p\,,$$ and if we use the
canonical pairing
$$\langle\ell,X_1\otimes_\mathbb{R}\dots\otimes_\mathbb{R} X_p\rangle=\ell(X_1,\dots,X_p)$$ between $\mathcal{D}^\bullet(E)$ and
$T_\mathbb{R}(E)$, we get \begin{equation}
\label{Ld1} \mathcal{L}_X\langle\ell,X_1\otimes_\mathbb{R}\dots\otimes_\mathbb{R}
X_p\rangle=\langle\mathcal{L}_X\ell,X_1\otimes_\mathbb{R}\dots\otimes_\mathbb{R}
X_p\rangle+\langle\ell,\mathcal{L}_X(X_1\otimes_\mathbb{R}\dots\otimes_\mathbb{R}
 X_p)\rangle\,.\end{equation}

The following theorem is analogous to the results in the standard
case of a Lie algebroid $E= T M$ and operations on the Grassmann
algebra $\Omega(M)\subset \mathcal{D}^\bullet( T M)$ of differential forms.

\begin{thm} The graded derivations $\partial$, $i_X$, and $\mathcal{L}_X$ on $\mathcal{D}^\bullet(E)$ satisfy the following identities:
\begin{itemize}
\item[(a)] $2\partial^2=[\partial,\partial]_{\mathrm{sc}}=0$\;, \item[(b)]
$\mathcal{L}_X=[\partial,i_X]_{\mathrm{sc}}=\partial i_X+i_X\partial$\;, \item[(c)]
$\partial\mathcal{L}_X-\mathcal{L}_X\partial=[\partial,\mathcal{L}_X]_{\mathrm{sc}}=0$\;, \item[(d)]
$\mathcal{L}_Xi_Y-i_Y\mathcal{L}_X=[\mathcal{L}_X,i_Y]_{\mathrm{sc}}=i_{[X,Y]}$\;, \item[(e)]
$\mathcal{L}_X\mathcal{L}_Y-\mathcal{L}_Y\mathcal{L}_X=[\mathcal{L}_X,\mathcal{L}_Y]_{\mathrm{sc}}=\mathcal{L}_{[X,Y]}$ .
\end{itemize}
\end{thm}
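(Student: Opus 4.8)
The plan is to establish identities (a)--(e) essentially in the listed order, exploiting that each of $\partial$, $i_X$, and $\mathcal{L}_X$ is a graded derivation of the shuffle algebra $(\mathcal{D}^\bullet(E),\pitchfork)$, so that every supercommutator of two of them is again a graded derivation of $\mathcal{D}^\bullet(E)$ of the appropriate degree. Since a graded derivation of the shuffle algebra is completely determined by its values on generators, i.e. on $\mathcal{D}^0(E)=C^\infty(M)$ and $\mathcal{D}^1(E)=\mathrm{Sec}(E^\ast)$ (and in the degree $-1$ and $-2$ cases also by what it does on low-arity operators), each identity reduces to a finite, low-degree check. Identity (a) is simply the homological property $\partial_B^2=0$, which holds by the cohomology-operator discussion of Section~\ref{GKPSection7}: $\partial^2=0$ encodes the Jacobi identity for $[\cdot,\cdot]$ and the representation property of $\rho$, both of which hold for a Loday algebroid; and $\tfrac12[\partial,\partial]_{\mathrm{sc}}=\partial^2$ since $\partial$ has odd degree $1$. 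Identity (b) is just the \emph{definition} of $\mathcal{L}_X$ given in the Proposition preceding the theorem, together with the explicit formula (\ref{Ld}) which was derived there; so (b) requires nothing new.

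For (c), I would argue that $[\partial,\mathcal{L}_X]_{\mathrm{sc}}=[\partial,[\partial,i_X]_{\mathrm{sc}}]_{\mathrm{sc}}$, and then invoke the graded Jacobi identity for the supercommutator bracket on graded endomorphisms of $\mathcal{D}^\bullet(E)$: for the odd operator $\partial$ one has $[\partial,[\partial,i_X]_{\mathrm{sc}}]_{\mathrm{sc}}=\tfrac12[[\partial,\partial]_{\mathrm{sc}},i_X]_{\mathrm{sc}}=[\partial^2,i_X]_{\mathrm{sc}}=0$ by (a). This is a purely formal consequence and needs only a one-line sign check. For (d), both sides are graded derivations of $\mathcal{D}^\bullet(E)$ of degree $-1$, so it suffices to check the equality after evaluating on an element of $\mathcal{D}^0(E)$ and of $\mathcal{D}^1(E)$; on functions all three operators $\mathcal{L}_Xi_Y$, $i_Y\mathcal{L}_X$, $i_{[X,Y]}$ vanish for degree reasons, and on a section $\mathfrak{l}\in\mathrm{Sec}(E^\ast)$ one computes directly, using $\mathrm{ad}_X(fY)=f\,\mathrm{ad}_X(Y)+\widehat{X}(f)Y$ and the formula (\ref{Ld}), that $(\mathcal{L}_Xi_Y-i_Y\mathcal{L}_X)\mathfrak{l}=\langle\mathfrak{l},[X,Y]\rangle=i_{[X,Y]}\mathfrak{l}$. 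Then (e) follows from (b), (c), (d) by the standard manipulation $[\mathcal{L}_X,\mathcal{L}_Y]_{\mathrm{sc}}=[\mathcal{L}_X,[\partial,i_Y]_{\mathrm{sc}}]_{\mathrm{sc}}=[[\mathcal{L}_X,\partial]_{\mathrm{sc}},i_Y]_{\mathrm{sc}}+[\partial,[\mathcal{L}_X,i_Y]_{\mathrm{sc}}]_{\mathrm{sc}} =0+[\partial,i_{[X,Y]}]_{\mathrm{sc}}=\mathcal{L}_{[X,Y]}$, where the first term dies by (c) and the last equality is (b) applied to the section $[X,Y]$; here one uses that $\mathcal{L}_X$ is even, so the graded Jacobi identity has the displayed form.

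The one genuinely computational point, and the step I expect to be the main obstacle, is identity (d): it is the only place where the Loday/Leibniz structure on $\mathrm{Sec}(E)$ enters non-formally, and one must be careful that the derivation $\mathrm{ad}_Y=[\cdot\,,Y]$ is \emph{not} $C^\infty(M)$-linear in $Y$ (the generalized right anchor $\alpha$ intervenes), so the verification on $\mathcal{D}^1(E)$ must be done at the level of the abstract bracket, without appealing to skew-symmetry. Concretely, I would write out $(\mathcal{L}_Xi_Y\mathfrak{l})$ and $(i_Y\mathcal{L}_X\mathfrak{l})$ via (\ref{Ld}) on generic sections and track the terms: the $\rho(X)$-terms cancel against each other, and what survives is precisely $-\langle\mathfrak{l},[\,[X,Y]\,,\,\cdot\,]\rangle$ evaluated suitably, i.e. $i_{[X,Y]}\mathfrak{l}$; the Jacobi identity (\ref{JI}) for $[\cdot,\cdot]$ is what guarantees this closes. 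Everything else in the theorem is bookkeeping with Koszul signs and the derivation property, so once (d) is pinned down the remaining identities fall out by formal supercommutator algebra.
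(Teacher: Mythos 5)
Your overall strategy is the same as the paper's: (a)--(c) are formal, (d) is a computation, and (e) follows from (b), (c), (d) via the graded Jacobi identity applied to $[\mathcal{L}_X,[\partial,i_Y]_{\mathrm{sc}}]_{\mathrm{sc}}$. There is, however, a genuine gap in your justification of (d). The reduction ``both sides are degree $-1$ graded derivations, so it suffices to check them on $\mathcal{D}^0(E)$ and $\mathcal{D}^1(E)$'' is not valid, because the shuffle algebra $\mathcal{D}^\bullet(E)$ is \emph{not} generated by $C^\infty(M)$ and $\mathrm{Sec}(E^*)$: shuffle products of such elements only yield decomposable operators of order $0$ in each argument, whereas a general element of $\mathcal{D}^p(E)$ (for instance a Riemannian metric $g\in\mathcal{D}^2(TM)$, or the first-order operator $\partial_B g$ of the Levi-Civita example) does not lie in the subalgebra they generate. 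This is precisely why, in the paper, a derivation of the (reduced) shuffle algebra is not determined by its values in degrees $0$ and $1$ -- the extra condition (\ref{EncodingJacobi}) in degree $2$ has to be imposed in the definition of a homological vector field, and the determination of $\partial_B$ by its low-degree values rests on the explicit formula (\ref{LodCohOpRepr}), not on a generation property. So as written, your check of (d) on $\mathcal{D}^0\oplus\mathcal{D}^1$ does not prove the identity on all of $\mathcal{D}^\bullet(E)$.

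The repair is immediate, and it also corrects a misattribution in your last paragraph: evaluate both sides of (d) on an arbitrary $\ell\in\mathcal{D}^p(E)$ using (\ref{Ld}) and the definition of $i_Y$; the $\rho(X)$-terms and all the $[X,X_i]$-insertions cancel, and the only surviving term is $\ell([X,Y],X_1,\ldots,X_{p-1})=(i_{[X,Y]}\ell)(X_1,\ldots,X_{p-1})$, which is how the paper checks (d) ``by direct computation''. In particular the Jacobi identity (\ref{JI}) of the Loday bracket plays no role in (d); it enters the theorem only through $\partial^2=0$ in (a), and hence in (c) and (e). With (d) established this way, the rest of your argument -- (a) from $\partial_B^2=0$, (b) as the definition together with (\ref{Ld}), (c) and (e) by formal supercommutator algebra -- is correct and coincides with the paper's proof.
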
\smallskip

\begin{proof} The results (a), (b), and (c) are obvious. Identity (d) is immediately checked by direct computation. The last
equality is a consequence of (c), (d), and the Jacobi identity
applied to $[\mathcal{L}_X,[\partial,i_Y]_{\mathrm{sc}}]_{\mathrm{sc}}$.\end{proof}

Note that we can easily calculate the Lie derivatives of negative
degrees, $\mathcal{L}_{X\Box Y}:=[\partial,i_{X\Box Y}]_{\mathrm{sc}}$,
$\mathcal{L}_{(X\Box Y)\Box Z}:=[\partial,i_{(X\Box Y)\Box Z}]_{\mathrm{sc}}$, ...
with the help of the graded Jacobi identity. \medskip

Observe finally that Item (d) of the preceding theorem actually
means that
$$i_{[X,Y]}=[\![i_X,i_Y]\!]_{\partial},$$ where the {\small RHS} is the restriction to interior products of the derived bracket on
$\mathrm{Der}(\mathcal{D}^{\bullet}(E),\pitchfork\nolinebreak)$ defined by
the graded Lie bracket $[\cdot,\cdot]_{\mathrm{sc}}$ and the interior
Lie algebra derivation $[\partial,\cdot]_{\mathrm{sc}}$ of
$\mathrm{Der}(\mathcal{D}^{\bullet}(E),\pitchfork)$ induced by the
homological vector field $\partial.$

\newpage
\section{On the infinity category of homotopy Leibniz algebras}\label{InfCatHomLeibAlg}
The following research work is a cooperation with Prof. Dr. Norbert Poncin and Dr. Jian Qiu, which is being published in the ArXiv preprint database and submitted for publication in a peer-reviewed international journal.
\subsection{Introduction}
\subsubsection{General background}

Homotopy, sh, or infinity algebras \cite{Sta63} are homotopy invariant extensions of differential graded algebras. They are of importance, e.g. in {\small BRST} of closed string field theory, in Deformation Quantization of Poisson manifolds ... Another technique to increase the flexibility of algebraic structures is categorification \cite{CF94}, \cite{Cra95} -- a sharpened viewpoint that leads to astonishing results in {\small TFT}, bosonic string theory ... Both methods, homotopification and categorificiation are tightly related: the 2-categories of 2-term Lie (resp., Leibniz) infinity algebras and of Lie (resp., Leibniz) 2-algebras turned out to be equivalent \cite{BC04}, \cite{SL10} (for a comparison of 3-term Lie infinity algebras and Lie 3-algebras, as well as for the categorical definition of the latter, see \cite{KMP11}). However, homotopies of $\infty$-morphisms and their compositions are far from being fully understood. In \cite{BC04}, $\infty$-homotopies are obtained from categorical homotopies, which are God-given. In \cite{SS07Structure}, (higher) $\infty$-homotopies are (higher) derivation homotopies, a variant of infinitesimal concordances, which seems to be the wrong concept \cite{DP12}. In \cite{BS07}, the author states that $\infty$-homotopies of sh Lie algebra morphisms can be composed, but no proof is given and the result is actually not true in whole generality. The objective of this work is to clarify the concept of (higher) $\infty$-homotopies, as well as the problem of their compositions.

\subsubsection{Structure and main results}

In Section~\ref{KPQSection2}, we provide explicit formulae for Leibniz infinity algebras and their morphisms. Indeed, although a category of homotopy algebras is simplest described as a category of quasi-free {\small DG} coalgebras, its original nature is its manifestation in terms of brackets and component maps.\medskip

We report, in Section~\ref{KPQSection3}, on the notions of homotopy that are relevant for our purposes: concordances, i.e. homotopies for morphisms between quasi-free {\small DG} (co)algebras, gauge and Quillen homotopies for Maurer-Cartan ({\small MC} for short) elements of pronilpotent Lie infinity algebras, and $\infty$-homotopies, i.e. gauge or Quillen homotopies for $\infty$-morphisms viewed as {\small MC} elements of a complete convolution Lie infinity algebra.\medskip

Section~\ref{KPQSection4} starts with the observation that vertical composition of $\infty$-homotopies of {\small DG} algebras is well-defined. However, this composition is not associative and cannot be extended to the $\infty$-algebra case -- which suggests that $\infty$-algebras actually form an $\infty$-category. To allow independent reading of the present paper, we provide a short introduction to $\infty$-categories, see Subsection \ref{InftyCatIntro}. In Subsection \ref{InftyCatInftyAlg}, the concept of $\infty$-$n$-homotopy is made precise and the class of $\infty$-algebras is viewed as an $\infty$-category. Since we apply the proof of the Kan property of the nerve of a nilpotent Lie infinity algebra to the 2-term Leibniz infinity case, a good understanding of this proof is indispensable: we detail the latter in Subsection \ref{Kan property}.\medskip

To be complete, we give an explicit description of the category of 2-term Leibniz infinity algebras at the beginning of Section~\ref{KPQSection5}. We show that composition of $\infty$-homotopies in the nerve-$\infty$-groupoid, which is defined and associative only up to higher $\infty$-homotopy, projects to a well-defined and associative vertical composition in the 2-term case -- thus obtaining the Leibniz counterpart of the strict 2-category of 2-term Lie infinity algebras \cite{BC04}, see Subsection \ref{KanHomComp}, Theorem \ref{KanHom1} and Theorem \ref{KomComp2}.\medskip

Eventually, we provide, in Section~\ref{KPQSection6}, the definitions of the strict 2-category of Leibniz 2-algebras, which is 2-equivalent to the preceding 2-category.\medskip

An $\infty$-category structure on the class of $\infty$-algebras over a quadratic Koszul operad is being investigated independently of \cite{Get09} in a separate paper.


\subsection{Category of Leibniz infinity algebras}\label{KPQSection2}

Let $P$ be a quadratic Koszul operad. Surprisingly enough, $P_{\infty}$-structures on a graded vector space $V$ (over a field $\mathbb{K}$ of characteristic zero), which are essentially sequences $\ell_n$ of $n$-ary brackets on $V$ that verify a sequence $R_n$ of defining relations, $n\in\{1,2,\ldots\}$, are 1:1 \cite{GK94} with codifferentials
\begin{equation}\label{GKCoalg}D\in\mathrm{CoDer}^{1}({\cal F}^{\mathrm{gr,c}}_{P^{\text{!`}}}(s^{-1}V))\quad (|\ell_n|=2-n)\quad \text{
or }\quad D\in\mathrm{CoDer}^{-1}({\cal F}^{\mathrm{gr,c}}_{P^{\text{!`}}}(sV))\quad (|\ell_n|=n-2)\;,\end{equation} or, also, (if $V$ is finite-dimensional) 1:1 with differentials \begin{equation}\label{GKAlgFin}d\in
\mathrm{Der}^1({\cal F}^{\mathrm{gr}}_{P^{!}}(sV^*))\quad(|\ell_n|=2-n)\quad \text{ or }\quad d\in \mathrm{Der}^{-1}({\cal F}^{\mathrm{gr}}_{P^{!}}(s^{-1}V^*))\quad(|\ell_n|=n-2)\;.\end{equation} Here $\mathrm{Der}^1({\cal F}^{\mathrm{gr}}_{P^{!}}(sV^*))$ (resp., $\mathrm{CoDer}^{1}({\cal F}^{\mathrm{gr,c}}_{P^{\text{!`}}}(s^{-1}V))$), for instance, denotes the space of endomorphisms of the free graded algebra over the Koszul dual operad $P^{!}$ of $P$ on the suspended linear dual $sV^*$ of $V$, which have degree 1 (with respect to the grading of the free algebra that is induced by the grading of $V$) and are derivations for each binary operation in $P^{!}$ (resp., the space of endomorphisms of the free graded coalgebra over the Koszul dual cooperad $P^{\text{!`}}$ on the desuspended space $s^{-1}V$ that are coderivations) (by differential and codifferential we mean of course a derivation or coderivation that squares to 0).\medskip

Although the original nature of homotopified or oidified algebraic objects is their manifestation in terms of brackets \cite{BP12}, the preceding coalgebraic and algebraic settings are the most convenient contexts to think about such higher structures.

\subsubsection{Zinbiel (co)algebras}

Since we take an interest mainly in the case where $P$ is the operad $\text{\sf Lei}$ (resp., the operad $\text{\sf Lie}$) of Leibniz (resp., Lie) algebras, the Koszul dual $P^{!}$ to consider is the operad $\text{\sf Zin}$ (resp., $\text{\sf Com}$) of Zinbiel (resp., commutative) algebras. We now recall the relevant definitions and results.

\begin{defi} A {\em graded Zinbiel algebra} $(${\em\small GZA}$)$ $($resp., {\em graded Zinbiel coalgebra} $($\em{\small GZC}$)$$)$ is a $\mathds{Z}$-graded vector space $V$ endowed with a multiplication, i.e. a degree 0 linear map $m:V\otimes V\to V$ $($resp., a comultiplication, i.e. a degree 0 linear map $\Delta:V\to V\otimes V$$)$ that verifies the relation
\begin{equation}
m(\mathrm{id}\otimes m)=m(m\otimes \mathrm{id})+m(m\otimes \mathrm{id})(\tau\otimes \mathrm{id})\quad (\text{resp.,}\; (\mathrm{id}\otimes \Delta)\Delta=(\Delta\otimes\mathrm{id})\Delta+(\tau\otimes\mathrm{id})(\Delta\otimes\mathrm{id})\Delta)\;,
\end{equation} where $\tau:V\otimes V\ni u\otimes v\mapsto (-1)^{|u||v|}v\otimes u\in V\otimes V$.
\end{defi}

Of course, when evaluated on homogeneous vectors $u,v,w\in V$, the Zinbiel relation for the multiplication $m(u,v)=:u\cdot v$ reads, $$ u\cdot(v\cdot w)=(u\cdot v)\cdot w+(-1)^{|u||v|}(v\cdot u)\cdot w\;.$$

\begin{ex} \label{propFreeZinbielProducFormulaOnTVarticle}
The multiplication $\cdot$ on the reduced tensor module $\overline{T}(V):=\oplus_{n\ge 1} V^{\otimes n}$ over a $\mathbb{Z}$-graded vector space $V$, defined, for homogeneous $v_i\in V$, by
\begin{equation}
\begin{array}{l}
(v_1...v_p)\cdot(v_{p+1}...v_{p+q})=\sum\limits_{\sigma\in \mathrm{Sh}(p,q-1)}(\sigma^{-1}\otimes \mathrm{id})(v_1...v_{p+q})=\\
=\sum\limits_{\sigma\in \mathrm{Sh}(p,q-1)}\varepsilon(\sigma^{-1})v_{\sigma^{-1}(1)}v_{\sigma^{-1}(2)}... v_{\sigma^{-1}(p+q-1)}v_{p+q}\;,\\
\end{array}\;
\end{equation}
where we wrote tensor products of vectors by simple juxtaposition, where $\mathrm{Sh}(p,q-1)$ is the set of $(p,q-1)$-shuffles, and where $\varepsilon(\sigma^{-1})$ is the Koszul sign, endows $\overline{T}(V)$ with a {\em\small GZA} structure.\medskip

Similarly, the comultiplication $\Delta$ on $\overline{T}(V)$, defined, for homogeneous $v_i\in V$, by
\begin{equation}\label{FreeZinbielCoProducFormulaOnTVarticle}
\Delta(v_1...v_p)=\sum_{k=1}^{p-1}\sum\limits_{\sigma\in \mathrm{Sh}(k,p-k-1)}\varepsilon(\sigma)\left( v_{\sigma(1)}... v_{\sigma(k)}\right)\bigotimes\left( v_{\sigma(k+1)}... v_{\sigma(p-k-1)}v_{p}\right)\;,
\end{equation} is a {\em\small GZC} structure on $\overline{T}(V)$.
\end{ex}
\noindent As for the {\small GZA} multiplication on $\overline{T}(V),$ we have in particular
$$ v_1\cdot v_2=v_1 v_2\;;\quad (v_1v_2)\cdot v_3=v_1v_2v_3\;;$$ $$\quad v_1\cdot(v_2v_3)=v_1v_2v_3+(-1)^{|v_1||v_2|}v_2v_1v_3;\;\quad
(((v_1\cdot v_2)\cdot v_3)...)\cdot v_k=v_1v_2...v_k\;.$$

\begin{prop}
 The {\em\small GZA} $(\overline{T}(V),\cdot)$ $($resp., the {\em\small GZC} $(\overline{T}(V),\Delta)$$)$ defined in Example~\ref{propFreeZinbielProducFormulaOnTVarticle} is the {{\em free} {\small \em GZA} $($resp., {\em free} {\small \em GZC}$)$ over $V$}. We will denote it by $\mathrm{Zin}(V)$ $($resp., $\mathrm{Zin}^{\mathrm{c}}(V)$$)$.
\end{prop}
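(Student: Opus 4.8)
The statement to prove is the universal property: for $\mathrm{Zin}(V)=(\overline{T}(V),\cdot)$ as in Example~\ref{propFreeZinbielProducFormulaOnTVarticle}, any degree $0$ linear map $f:V\to A$ into a {\small GZA} $(A,m_A)$ extends uniquely to a {\small GZA} morphism $\widetilde f:\overline{T}(V)\to A$ with $\widetilde f|_V=f$, and dually for the cofree {\small GZC}. The plan is to treat the algebra case in full and then indicate the evident dualization for the coalgebra case. First I would observe that the formula $(((v_1\cdot v_2)\cdot v_3)\cdots)\cdot v_k=v_1v_2\cdots v_k$ displayed just before the proposition shows that $\overline{T}(V)$ is \emph{generated} as a {\small GZA} by $V$: every monomial $v_1\cdots v_k$ is obtained from the generators $v_i\in V$ by iterated left-normed products. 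This immediately forces uniqueness of $\widetilde f$, since a {\small GZA} morphism is determined by its values on a generating set: one must have
\begin{equation}\label{ZinFreeUnique}
\widetilde f(v_1\cdots v_k)=m_A\!\left(\cdots m_A\big(m_A(f(v_1),f(v_2)),f(v_3)\big)\cdots,f(v_k)\right).
\end{equation}

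Second, I would \emph{define} $\widetilde f$ by the right-hand side of~\eqref{ZinFreeUnique} (extended linearly, and with $\widetilde f|_V=f$), and prove it is a {\small GZA} morphism, i.e. $\widetilde f(x\cdot y)=m_A(\widetilde f(x),\widetilde f(y))$ for all $x,y\in\overline{T}(V)$. By bilinearity it suffices to check this on monomials $x=v_1\cdots v_p$, $y=v_{p+1}\cdots v_{p+q}$. The left side unfolds, via the shuffle formula for $\cdot$ on $\overline{T}(V)$, into a signed sum over $(p,q-1)$-shuffles of left-normed $f$-products; the right side is $m_A(\widetilde f(x),\widetilde f(y))$, where $\widetilde f(y)$ is itself a left-normed product. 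The key algebraic input is the Zinbiel relation in $A$, used in the form of its iterate: for elements $a,b_1,\dots,b_q$ of any {\small GZA}, the product $a\cdot(b_1\cdots b_q)_{\text{left-normed}}$ expands as a signed shuffle sum $\sum_{\sigma\in\mathrm{Sh}(1,q-1)}\cdots$, and more generally $(a_1\cdots a_p)\cdot(b_1\cdots b_q)$ expands as a signed sum over $(p,q-1)$-shuffles of left-normed products of the $a$'s and $b$'s, with the last slot always occupied by $b_q$. This is exactly the combinatorial identity that is \emph{built into} the definition of $\cdot$ on $\overline{T}(V)$; so the proof reduces to showing that this same identity holds in an arbitrary {\small GZA} $A$, which one does by induction on $p$ (base case $p=1$ is one application of the Zinbiel relation; the inductive step peels off $a_p$ using the relation and reassembles the shuffles), tracking Koszul signs throughout.

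Third, for the coalgebra statement I would dualize: define the corestriction of $\widetilde F:\overline{T}(V)\to C$ (for $g:C\to V$ a degree $0$ map out of a {\small GZC}) or rather, in the cofree direction, define for $h:C\to V$ the map $\widetilde h:C\to\overline{T}(V)$ by $\widetilde h=\sum_{k\ge 1}(h^{\otimes k})\circ\Delta_C^{(k-1)}$, where $\Delta_C^{(k-1)}$ is the iterated comultiplication built using the left-normed bracketing dual to~\eqref{ZinFreeUnique}, and check it is a {\small GZC} morphism and the unique one lifting $h$. The argument is formally the transpose of the algebra argument: the iterated-shuffle identity for $\Delta$ on $\overline{T}(V)$ (displayed in~\eqref{FreeZinbielCoProducFormulaOnTVarticle} and its iterates) is dual to the one for $\cdot$, and conilpotency of $\overline{T}(V)$ guarantees the sum defining $\widetilde h$ is locally finite. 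I expect the \textbf{main obstacle} to be purely bookkeeping: getting the Koszul signs in the iterated Zinbiel/shuffle identity exactly right in the inductive step, and making sure the shuffle-decomposition $\mathrm{Sh}(p,q-1)$ recombines correctly when one separates the action of $a_p$ — this is where one must be careful that the ``distinguished last slot'' ($v_{p+q}$ on the algebra side) is preserved under the induction. Once that identity is established in a general {\small GZA}, both halves of the proposition follow immediately, and I would close by remarking that freeness plus uniqueness gives the usual functoriality $\mathrm{Zin}(-)$, resp. $\mathrm{Zin}^{\mathrm c}(-)$.
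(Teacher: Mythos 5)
The paper itself states this proposition without proof (it is the classical fact, going back to Loday, that the half-shuffle product makes $\overline{T}(V)$ the free Zinbiel algebra), so there is no in-paper argument to measure you against. Judged on its own, your route is the standard and correct one: generation of $\overline{T}(V)$ by $V$ through left-normed products gives uniqueness of the extension, existence reduces to the iterated Zinbiel/half-shuffle identity valid in an arbitrary {\small GZA}, and the coalgebra half is obtained by the dual corestriction construction.

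Two points need repair before the sketch is a proof. First, the induction for the key identity $\langle a_1,\dots,a_p\rangle\cdot\langle b_1,\dots,b_q\rangle=\sum_{\sigma\in\mathrm{Sh}(p,q-1)}\pm\,\langle c_{\sigma^{-1}(1)},\dots,c_{\sigma^{-1}(p+q-1)},b_q\rangle$ (where $\langle-\rangle$ denotes the left-normed product) does not naturally run by ``peeling off $a_p$'': the Zinbiel relation $u\cdot(v\cdot w)=(u\cdot v)\cdot w+(-1)^{|u||v|}(v\cdot u)\cdot w$ rewrites products whose \emph{right} factor is a product, and there is no direct rewriting of $(x\cdot a_p)\cdot y$. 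The induction should instead peel off $b_q$, writing $\langle b_1,\dots,b_q\rangle=\langle b_1,\dots,b_{q-1}\rangle\cdot b_q$ and inducting on $p+q$; the two terms produced by the relation then correspond exactly to the $(p,q-1)$-shuffles whose letter in position $p+q-1$ is $b_{q-1}$, respectively $a_p$. In particular the base case $p=1$ is itself an induction on $q$, not ``one application'' of the relation. Second, in the coalgebra half it is the local conilpotency of the \emph{source} coalgebra $C$, not of $\overline{T}(V)$, that makes $\widetilde h=\sum_{k\ge1}h^{\otimes k}\circ\Delta_C^{(k-1)}$ locally finite, and the universal (cofree) property only holds in the category of locally conilpotent {\small GZC}-s — without that restriction the statement fails, so the hypothesis must be made explicit. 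With these adjustments your argument is complete and matches the standard proof of the result the paper quotes.
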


\begin{defi} A {\em differential graded Zinbiel algebra} $(${\em\small DGZA}$)$ $($resp., a {\em differential graded Zinbiel coalgebra}$)$ $(${\em\small DGZC}$)$ is a {\em\small GZA} $(V,m)$ $($resp., {\em\small GZC} $(V,\Delta)$$)$ together with a degree $1$ $($$-1$ in the homological setting$)$ derivation $d$ $($resp., coderivation $D$$)$ that squares to 0. More precisely, $d$ $($resp., $D$$)$ is a degree $1$ $($$-1$ in the homological setting$)$ linear map $d:V\to V$ $($resp., $D:V\to V$$)$, such that $$d\,m=m\left( d\otimes \mathrm{id}+\mathrm{id}\otimes d\right)\quad (\text{resp.,}\quad \Delta\,D=\left( D\otimes\mathrm{id} +\mathrm{id}\otimes D\right)\Delta)\;$$ and $d^2=0$ $($resp., $D^2=0$$)$.
\end{defi}

Since the {\small GZA} $\mathrm{Zin}(V)$ (resp., {\small GZC} $\mathrm{Zin}^{\mathrm{c}}(V)$) is free, any degree $1$ linear map $d:V\rightarrow \mathrm{Zin}(V)$ (resp., $D:\mathrm{Zin}^{\mathrm{c}}(V)\to V$) uniquely extends to a derivation ${d}:\mathrm{Zin}(V)\to\mathrm{Zin}(V)$ (resp., coderivation $D:\mathrm{Zin}^{\mathrm{c}}(V)\to\mathrm{Zin}^{\mathrm{c}}(V)$).

\begin{defi} A {\em quasi-free} {\small DGZA} (resp., a {\em quasi-free} {\small DGZC}) over $V$ is a {\small DGZA} (resp., {\small DGZC}) of the type $(\mathrm{Zin}(V),d)$ $($resp., $(\mathrm{Zin}^{\mathrm{c}}(V),D)\,$$)$.\end{defi}

\subsubsection{Leibniz infinity algebras}

In the present text we use homological ($i$-ary map of degree $i-2$) and cohomological ($i$-ary map of degree $2-i$) infinity algebras. Let us recall the definition of homological Leibniz infinity algebras.

\begin{defi}\label{LeibInftyAlg} A (homological) {\em Leibniz infinity algebra} is a graded vector space $V$ together with a sequence of linear maps $l_i: V^{\otimes i}\to V$ of degree $i-2$, $i\ge 1$, such that for any $n\ge 1$, the following {\em higher Jacobi identity} holds:
\begin{equation}
\begin{array}{l}
\displaystyle\sum\limits_{i+j=n+1}\sum\limits_{\substack{j\leqslant k\leqslant n}}\sum\limits_{\sigma\in \mathrm{Sh}(k-j,j-1)}(-1)^{(n-k+1)(j-1)}\,(-1)^{j(v_{\sigma(1)}+...+v_{\sigma(k-j)})}\,\varepsilon(\sigma)\, \mathrm{sign}(\sigma) \\[4ex]\quad\quad
{l_i(v_{\sigma(1)},...,v_{\sigma(k-j)},l_j(v_{\sigma(k-j+1)},...,v_{\sigma(k-1)},v_k),v_{k+1},...,v_{n})}=0\;,
\end{array}
\end{equation} where $\mathrm{sign}{\sigma}$ is the signature of $\sigma$ and where we denoted the degree of the homogeneous $v_i\in V$ by $v_i$ instead of $|v_i|$.
\end{defi}

\begin{thm}\label{LeibInftyAlg1:1} There is a 1:1 correspondence between Leibniz infinity algebras, in the sense of Definition \ref{LeibInftyAlg}, over a graded vector space $V$ and quasi-free {\em\small DGZC}-s $(\mathrm{Zin}^{\mathrm{c}}(sV),D)$ $($resp., in the case of a finite-dimensional graded vector space $V$, quasi-free {\small\em DGZA}-s $(\mathrm{Zin}(s^{-1}V^*),d)$$)$.
\end{thm}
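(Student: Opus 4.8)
The statement is an instance of the Ginzburg--Kapranov correspondence (Theorem~\ref{GinzburgKapranovThm}) specialized to the Koszul operad $P=\text{\sf Lei}$, whose Koszul dual is $P^{!}=\text{\sf Zin}$ (and whose Koszul dual cooperad $P^{\text{!`}}$ has underlying cofree cooperad $\mathrm{Zin}^{\mathrm c}$). So at the level of principle there is nothing to do: a Leibniz infinity structure on $V$ is a representation of $\mathcal{L}ei_\infty=\Omega\,\text{\sf Lei}^{\text{!`}}$ on $V$, which by (\ref{GinzburgKapranovThm}) is the same as a square-zero degree $-1$ coderivation $D$ of the cofree $\text{\sf Zin}$-coalgebra $\mathrm{Zin}^{\mathrm c}(sV)$ (and, when $V$ is finite-dimensional, the same as a square-zero degree $+1$ derivation $d$ of the free $\text{\sf Zin}$-algebra $\mathrm{Zin}(s^{-1}V^*)$, by dualizing arity by arity). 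The real content of the theorem is that the \emph{classical} definition of Leibniz infinity algebra given in Definition~\ref{LeibInftyAlg} --- a sequence of brackets $l_i:V^{\otimes i}\to V$ of degree $i-2$ satisfying the explicit higher Jacobi identities --- matches this coalgebraic description under the suspension/desuspension dictionary, with the precise signs as stated. This is exactly parallel to the Proposition in the excerpt proving that the classical $L_\infty$ definition coincides with the operadic one (codifferentials on $\overline{S}^c(sV)$), and the proof should be modeled on it step by step, replacing the symmetric (co)algebra by the Zinbiel (co)algebra.

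\textbf{Main steps.} First I would recall that, because $\mathrm{Zin}^{\mathrm c}(sV)$ is the cofree Zinbiel coalgebra, a coderivation $D$ is uniquely determined by its corestrictions $D_i:(sV)^{\otimes i}\to sV$ (more precisely by its composite with the projection onto cogenerators), and dually a derivation $d$ of $\mathrm{Zin}(s^{-1}V^*)$ is determined by its restriction to generators $d_i:s^{-1}V^*\to (s^{-1}V^*)^{\otimes i}$; this is the analogue of the cofreeness argument used for $T^c$ and $S^c$ in the excerpt. Second, using the explicit formula for the cofree Zinbiel comultiplication $\Delta$ on $\overline{T}(sV)$ from Example~\ref{propFreeZinbielProducFormulaOnTVarticle} (Equation (\ref{FreeZinbielCoProducFormulaOnTVarticle})), I would write out the condition $D^2=0$ as a weight-graded identity: the weight-$n$ component of $D^2$ evaluated on a decomposable element decomposes, via the shuffle formula for $\Delta$, into the sum over $i+j=n+1$, over $j\le k\le n$, and over $(k-j,j-1)$-shuffles, that appears in Definition~\ref{LeibInftyAlg}. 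Third, I would transport this from $sV$ to $V$ by inserting the identities $(-1)^{\frac{i(i-1)}2}s^{\otimes i}(s^{-1})^{\otimes i}=\mathrm{id}^{\otimes i}$ in the appropriate places (exactly as done in (\ref{LieInfinityBeforeEvaluation}) for the $L_\infty$ case), define $l_i:=D_i^{\mathrm{susp}}=s^{-1}D_i s^{\otimes i}$ so that $|l_i|=|D_i|+i-1=-1+i-1=i-2$, and track the three sign contributions --- the one from $s^{-1}D_i s^{\otimes i}$, the one from commuting $D_j$ past a string of suspensions in $(s^{-1})^{\otimes i}(D_j\otimes\mathrm{id}^{\otimes(i-1)})s^{\otimes n}$, and the Koszul/shuffle sign $\varepsilon(\sigma)\,\mathrm{sign}(\sigma)$ coming from $(s^{-1})^{\otimes n}\sigma\, s^{\otimes n}$ together with $\Delta$ --- to recover the exponents $(-1)^{(n-k+1)(j-1)}$ and $(-1)^{j(v_{\sigma(1)}+\cdots+v_{\sigma(k-j)})}$ of the higher Jacobi identity. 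Fourth, for the finite-dimensional dual statement I would simply dualize arity by arity: $\mathrm{Zin}(s^{-1}V^*)$ is arity-wise the graded dual of $\mathrm{Zin}^{\mathrm c}(sV)$, transposition exchanges derivations and coderivations and preserves the square-zero condition, and the suspension shifts combine to give degree $+1$; invoking the general remarks on (co)operad duality in the excerpt (Remark~\ref{RemInfDimCoalgAlg} and its operadic analogue) makes this routine.

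\textbf{Expected main obstacle.} The conceptual part is free, and the combinatorial skeleton is dictated by the shuffle coproduct, so the genuine difficulty is entirely in the \emph{bookkeeping of signs}: verifying that the Koszul signs produced by desuspending $i$-fold and $n$-fold tensor products, by commuting the inner bracket $D_j$ past the outer arguments, and by the shuffle permutations in $\Delta$, assemble precisely into the coefficient $(-1)^{(n-k+1)(j-1)}(-1)^{j(v_{\sigma(1)}+\cdots+v_{\sigma(k-j)})}\varepsilon(\sigma)\,\mathrm{sign}(\sigma)$ of Definition~\ref{LeibInftyAlg}, rather than some cohomologically-equivalent but different-looking variant. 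One must also be careful that the cofree Zinbiel cogenerator projection is \emph{not} symmetric in its inputs (unlike the $S^c$ case), so the shuffles that occur are genuine $(k-j,j-1)$-shuffles with the last slot distinguished, matching exactly the index ranges $j\le k\le n$ and $\sigma\in\mathrm{Sh}(k-j,j-1)$; getting these index ranges to come out correctly from Equation~(\ref{FreeZinbielCoProducFormulaOnTVarticle}) is the other place where care is needed. Since these are precisely the verifications the authors elsewhere deem ``too technical to be part of a research paper'' (cf.\ the Appendix), I would present the correspondence and the structure of the sign computation, and relegate the full sign chase to a reference or an appendix, exactly as the excerpt does for the analogous $L_\infty$ and $L_\infty$-morphism statements.
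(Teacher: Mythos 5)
Your proposal is correct, and its computational core coincides with the paper's own proof (given in the Appendix): determine the (co)derivation by its behaviour on (co)generators, split the square-zero condition by weight, insert the identities $(-1)^{\frac{i(i-1)}{2}}s^{\otimes i}(s^{-1})^{\otimes i}=\mathrm{id}^{\otimes i}$, set $l_i=s^{-1}D_is^{\otimes i}$, and chase exactly the three sign contributions you list to land on the higher Jacobi identities with the stated index ranges $i+j=n+1$, $j\le k\le n$, $\sigma\in\mathrm{Sh}(k-j,j-1)$. The one genuine difference is which side of the duality hosts the explicit combinatorics: the paper starts on the algebra side, taking a finite-dimensional $W$, expanding a derivation of $\mathrm{Zin}(W)$ via the Leibniz rule and the explicit free Zinbiel product (which makes the shuffle formula for $d(w_1\ldots w_p)$ easy to iterate), then transposes to obtain the codifferential condition on $\mathrm{Zin}^{\mathrm c}(W^*)$ before desuspending; you instead propose to work directly with the coderivation and the cofree Zinbiel coproduct of Equation~(\ref{FreeZinbielCoProducFormulaOnTVarticle}), dualizing only at the very end for the algebraic half of the statement. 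Your route has the mild advantage of yielding the coalgebraic correspondence for arbitrary (not necessarily finite-dimensional) $V$ without any transposition argument, which is actually the generality claimed in Theorem~\ref{LeibInftyAlg1:1}, at the price of having to derive the expression of a coderivation in terms of its corestrictions (the dual of the paper's derivation formula) rather than just applying the Leibniz rule; the paper's route keeps the shuffle bookkeeping on the more familiar product side but, as written, only covers the finite-dimensional situation before dualizing. Either way the sign chase you postpone is precisely the content of the paper's Appendix computation.
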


In the abovementioned 1:1 correspondence between infinity algebras over a quadratic Koszul operad $P$ and quasi-free {\small DG$P^{\text{!`}}$C} (resp., quasi-free {\small DG$P^!$A}) (self-explaining notation), a $P_{\infty}$-algebra structure on a graded vector space $V$ is viewed as a representation on $V$ of the {\small DG} operad $P_{\infty}$ -- which is defined as the cobar construction $\Omega P^{\text{!`}}$ of the Koszul dual cooperad $P^{\text{!`}}$. Theorem \ref{LeibInftyAlg1:1} makes this correspondence concrete in the case $P={\sf Lei}$; a proof can be found in \cite{AP10} and in the Appendix of this thesis (see page~\pageref{LeibnizInftyAlgebra}).

\subsubsection{Leibniz infinity morphisms}

\begin{defi}\label{LeibInftyAlgMorph}
A {\em morphism between Leibniz infinity algebras} $(V,l_i)$ and $(W,m_i)$ is a sequence of linear maps $\varphi_i:V^{\otimes i}\to W$ of degree $i-1$, $i\ge 1$, which satisfy, for any $n\ge 1,$ the condition

\begin{equation}
\begin{array}{l}
\sum\limits_{i=1}^n\hspace{1mm}\sum\limits_{\substack{k_1+...+k_i=n}}\hspace{1mm}\sum\limits_{\sigma\in \mathrm{\mathfrak{Sh}}(k_1,...,k_i)}(-1)^{\sum\limits_{r=1}^{i-1}(i-r)k_r+\frac{i(i-1)}{2}}\,(-1)^{
\sum\limits_{r=2}^i(k_r-1)(v_{\sigma(1)}+...+v_{\sigma(k_1+...+k_{r-1})})}\,\varepsilon(\sigma)\,\mathrm{sign}(\sigma)\\[0.5cm]
\;m_i\left(\varphi_{k_1}(v_{\sigma(1)},...,v_{\sigma(k_1)}),\varphi_{k_2}(v_{\sigma(k_1+1)},...,v_{\sigma(k_1+k_2)}),...,\varphi_{k_i}(v_{\sigma(k_1+...+k_{i-1}+1)},...,v_{\sigma(k_1+...+k_i)})\right)\\[0,5cm]
=\\\quad\quad\quad\sum\limits_{i+j=n+1}\hspace{1mm}\sum\limits_{j\leqslant k \leqslant n}\sum\limits_{\sigma\in \mathrm{Sh}(k-j,j-1)}(-1)^{k+(n-k+1)j}\,(-1)^{j(v_{\sigma(1)}+...+v_{\sigma(k-j)})}\,\varepsilon(\sigma)\,\mathrm{sign}(\sigma)\\[0.5cm]
\quad\quad\quad\quad\quad\quad{\varphi_i(v_{\sigma(1)},...,v_{\sigma(k-j)},l_j(v_{\sigma(k-j+1)},...,v_{\sigma(k-1)},v_k),v_{k+1},...,v_{n})}\;,
\end{array}
\end{equation} where $\mathrm{\mathfrak S\mathfrak h}(k_1,\ldots,k_i)$ denotes the set of shuffles $\sigma\in\mathrm{Sh}(k_1,\ldots,k_i)$, such that $\sigma(k_1)<\sigma(k_1+k_2)<\ldots<\sigma(k_1+k_2+\ldots+k_i)$.
\end{defi}

\begin{thm}\label{LeibInftyAlgMorph1:1}
There is a 1:1 correspondence between Leibniz infinity algebra morphisms from $(V,l_i)$ to $(W,m_i)$ and {\small \em DGC} morphisms $\mathrm{Zin}^{\mathrm{c}}(sV)\to \mathrm{Zin}^{\mathrm{c}}(sW)$ $($resp., in the finite-dimensional case, {\em\small DGA} morphisms $\mathrm{Zin}(s^{-1}W^*)\to \mathrm{Zin}(s^{-1}V^*)$$)$, where the quasi-free {\small\em DGZC}-s $($resp., the quasi-free {\small\em DGZA}-s$)$ are endowed with the codifferentials $($resp., differentials$)$ that encode the structure maps $l_i$ and $m_i$.
\end{thm}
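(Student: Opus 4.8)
\textbf{Proof plan for Theorem \ref{LeibInftyAlgMorph1:1}.}
The plan is to reduce the statement to the universal property of the free graded Zinbiel coalgebra $\mathrm{Zin}^{\mathrm c}(sV)$ and the fact (recalled above) that any degree $0$ linear map $f\colon\mathrm{Zin}^{\mathrm c}(sV)\to sW$ extends uniquely to a coalgebra morphism $F\colon\mathrm{Zin}^{\mathrm c}(sV)\to\mathrm{Zin}^{\mathrm c}(sW)$, and then to dictionary-translate the condition ``$F$ commutes with the codifferentials'' into the explicit identity of Definition \ref{LeibInftyAlgMorph}. First I would fix notation: write $D_V$ and $D_W$ for the codifferentials on $\mathrm{Zin}^{\mathrm c}(sV)$ and $\mathrm{Zin}^{\mathrm c}(sW)$ that encode $\{l_i\}$ and $\{m_i\}$ via Theorem \ref{LeibInftyAlg1:1}, and recall that they are determined by their corestrictions $D_V^{(j)}\colon\mathrm{Zin}^{\mathrm c}(sV)^{(j)}\to sV$, which are precisely the (de)suspensions $s\,l_j\,(s^{-1})^{\otimes j}$ up to the usual Koszul sign $(-1)^{\binom{j}{2}+\dots}$ fixed in the Lie/Leibniz computations of the survey part. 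A coalgebra morphism $F$ is likewise uniquely determined by its corestriction $\pi_W\circ F=:f\colon\mathrm{Zin}^{\mathrm c}(sV)\to sW$, whose homogeneous components $f^{(i)}\colon\mathrm{Zin}^{\mathrm c}(sV)^{(i)}\to sW$ correspond, again up to a sign, to the maps $\varphi_i\colon V^{\otimes i}\to W$ via $f^{(i)}=s\,\varphi_i\,(s^{-1})^{\otimes i}$.

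The core of the argument is then the observation that for a morphism of coalgebras, the equation $D_W\circ F=F\circ D_V$ need only be checked after corestriction along $\pi_W$, because both sides are $(F,F)$-coderivations (in the appropriate sense) over the coalgebra morphism $F$, and such coderivations are determined by their corestriction. This reduces the infinite family of equations hidden in $D_W F=FD_V$ to the single family $\pi_W D_W F=\pi_W F D_V$, graded by weight $n\ge 1$. Expanding $\pi_W D_W F$ using the formula for the comultiplication $\Delta$ on $\mathrm{Zin}^{\mathrm c}(sW)$ from \eqref{FreeZinbielCoProducFormulaOnTVarticle} produces exactly the left-hand side of the displayed identity in Definition \ref{LeibInftyAlgMorph}: the shuffles $\mathfrak{Sh}(k_1,\dots,k_i)$ arise because $F$ is a Zinbiel-coalgebra morphism and hence built from $\Delta^{(i-1)}$, whose image is governed by half-shuffles, and the sign $(-1)^{\sum(i-r)k_r+i(i-1)/2}$ together with the degree-dependent sign is precisely the Koszul sign collected when moving the $(s^{-1})^{\otimes k_r}$ and $s$ across the graded arguments. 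Expanding $\pi_W F D_V$ similarly, using that $D_V$ is a coderivation determined by $s\,l_j\,(s^{-1})^{\otimes j}$, yields the right-hand side, with shuffles $\mathrm{Sh}(k-j,j-1)$ and signs matching those in Definition \ref{LeibInftyAlg}'s higher Jacobi identity shifted by the contribution of $\varphi_i$. The bijectivity is then immediate: the assignment $\{\varphi_i\}\mapsto F$ is injective and surjective onto coalgebra morphisms by freeness, and the computation just sketched shows $\{\varphi_i\}$ satisfies the infinity-morphism relations if and only if $F$ is a chain map.

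For the finite-dimensional, algebraic variant one dualizes: a morphism $\mathrm{Zin}(s^{-1}W^*)\to\mathrm{Zin}(s^{-1}V^*)$ of graded Zinbiel algebras is, by freeness of $\mathrm{Zin}(s^{-1}W^*)$, uniquely determined by a degree $0$ linear map $s^{-1}W^*\to\mathrm{Zin}(s^{-1}V^*)$, i.e. by components $V^{\otimes i}\to W$ after (de)suspension and linear duality (using $V^{**}\simeq V$); the condition that this algebra morphism intertwines the differentials $d_W$ and $d_V$ transposes, term by term, into the same family of relations. I would present this direction briefly as the transpose of the coalgebraic computation, invoking the finite-dimensional duality between $\mathrm{Zin}(V)$ and $\mathrm{Zin}^{\mathrm c}(V^*)$ and between derivations and coderivations recalled in the operadic survey, rather than redoing the combinatorics.

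The main obstacle I anticipate is purely bookkeeping: tracking the Koszul signs through the three compositions $(s^{-1})^{\otimes\bullet}$, the bracket/component maps, and $s$, and verifying that the signs produced by the comultiplication formula \eqref{FreeZinbielCoProducFormulaOnTVarticle} and by the corestriction of a Zinbiel coderivation reassemble into exactly the exponents $(-1)^{\sum_{r=1}^{i-1}(i-r)k_r+i(i-1)/2}$, $(-1)^{\sum_{r=2}^i(k_r-1)(v_{\sigma(1)}+\dots+v_{\sigma(k_1+\dots+k_{r-1})})}$ on the left and $(-1)^{k+(n-k+1)j}$, $(-1)^{j(v_{\sigma(1)}+\dots+v_{\sigma(k-j)})}$ on the right of Definition \ref{LeibInftyAlgMorph}. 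The conceptual content—freeness plus ``coderivations are determined by their corestriction''—is routine; the delicate part is checking that the particular sign and shuffle conventions chosen for $l_i$, $m_i$, $\varphi_i$, $D$, and $\Delta$ in this text are mutually consistent, for which I would rely on the analogous sign computations already carried out for $L_\infty$-algebras in the survey section and on the detailed proof referenced in \cite{AP10} and the Appendix.
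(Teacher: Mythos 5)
Your proposal is correct and follows essentially the same strategy as the paper's Appendix proof: (co)freeness plus the observation that the defect $D_W F - F D_V$ (resp.\ $fd-df$) is a (co)derivation along the morphism and hence determined by its (co)restriction to (co)generators, followed by splitting the resulting identity by weight and translating through the suspensions with the Koszul signs to recover the component relations of Definition \ref{LeibInftyAlgMorph}. The only difference is one of presentation: the paper carries out the explicit half-shuffle and sign computation on the free Zinbiel algebra side and then transposes to the coalgebra statement, whereas you work directly with corestrictions on the cofree coalgebra side and dualize at the end for the finite-dimensional algebraic variant.
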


In literature, infinity morphisms of $P_{\infty}$-algebras are usually defined as morphisms of quasi-free {\small DG$P^{\text{!`}}$C}-s. However, no explicit formulae seem to exist for the Leibniz case. A proof of Theorem \ref{LeibInftyAlgMorph1:1} can be found in the Appendix (see page~\pageref{LeinbizInftyAlgebraMorph}). Let us also stress that the concept of infinity morphism of $P_{\infty}$-algebras does not coincide with the notion of morphism of algebras over the operad $P_{\infty}$.

\subsubsection{Composition of Leibniz infinity morphisms}\label{LeibInftyMorphComp}

Composition of infinity morphisms between $P_{\infty}$-algebras corresponds to composition of the corresponding morphisms between quasi-free {\small DG$P^{\text{!`}}$C}-s: the categories {\tt $P_{\infty}$-Alg} and {\tt qfDG$P^{\text{!`}}$CoAlg} (self-explaining notation) are isomorphic. Explicit formulae can easily be computed.

\subsection{Leibniz infinity homotopies}\label{KPQSection3}

\subsubsection{Concordances and their compositions}

Let us first look for a proper concept of homotopy in the category {\tt qfDG$P^{\text{!`}}$CoAlg}, or, dually, in {\tt qfDG$P^{\,!}$Alg}.

\paragraph{Definition and characterization}

The following concept of homotopy -- referred to as concordance -- first appeared in an unpublished work by Stasheff and Schlessinger, which was based on ideas of Bousfield and Gugenheim. It can also be found in \cite{SSS07}, for homotopy algebras over the operad {\sf Lie} (algebraic version), as well as in \cite{DP12}, for homotopy algebras over an arbitrary operad $P$ (coalgebraic version).\medskip

It is well-known that a $C^{\infty}$-homotopy $\eta:I\times X\rightarrow Y$, $I=[0,1]$, connecting two smooth maps $p,q$ between two smooth manifolds $X,Y$, induces a cochain homotopy between the pullbacks $p^*,q^*.$ Indeed, in the algebraic category, $$
\eta^*:\Omega(Y)\rightarrow \Omega(I)\otimes\Omega(X)\;,
$$
and $\eta^*(\omega)$, $\omega\in\Omega(Y),$ reads \begin{equation}\label{ConDecomp}\eta^*(\omega)(t)=\varphi(\omega)(t)+dt\,\rho(\omega)(t)\;.\end{equation} It is easily checked (see below for a similar computation) that, since $\eta^*$ is a cochain map, we have
$$
{d_t\varphi}=d_X\rho(t)+\rho(t) d_Y\;,
$$
where $d_X,d_Y$ are the de Rham differentials. When integrating over $I$, we thus obtain
$$
q^*-p^*=d_Xh+hd_Y\;,
$$
where $\displaystyle h=\smallint_I \rho(t) dt$ has degree $-1$.

Before developing a similar approach to homotopies between morphisms of quasi-free {\small DGZA}-s, let us recall that tensoring an `algebra' (resp., `coalgebra') with a {\small DGCA} (resp., {\small DGCC}) does not change the considered type of algebra (resp., coalgebra); let us also introduce the `evaluation' maps $$\varepsilon_1^i:\Omega(I)=C^{\infty}(I)\oplus dt\,C^{\infty}(I)\ni f(t)+dt\,g(t)\mapsto f(i)\in\mathbb{K},\quad i\in\{0,1\}\;.$$

In the following -- in contrast with our above notation -- we omit stars. Moreover -- although the `algebraic' counterpart of a Leibniz infinity algebra over $V$ is ($\mathrm{Zin}(s^{-1}V^*),d_V)$ -- we consider Zinbiel algebras of the type $(\mathrm{Zin}(V),d_V)$.

\begin{defi}If $p,q:\mathrm{Zin}(W)\to \mathrm{Zin}(V)$ are two {\em\small DGA} morphisms, a {\em homotopy} or {\em concordance} $\eta:p\Rightarrow q$ from $p$ to $q$ is a {\em\small DGA} morphism
$\eta:\mathrm{Zin}(W)\to \Omega(I)\otimes\mathrm{Zin}(V)$, such that $$\varepsilon_1^0\eta=p\quad\text{and}\quad \varepsilon_1^1\eta=q\;.$$\end{defi}

The following proposition is basic.

\begin{prop}\label{CharConcord} Concordances
$$\eta:\mathrm{Zin}(W)\to\Omega(I)\otimes\mathrm{Zin}(V)$$ between {\em\small DGA} morphisms $p$, $q$ can be identified with 1-parameter families
$$\varphi:I\to\mathrm{Hom}_{\mathrm{DGA}}(\mathrm{Zin}(W),\mathrm{Zin}(V))\;$$and $$\rho:I\to \varphi\!\mathrm{Der}(\mathrm{Zin}(W),\mathrm{Zin}(V))\;$$of (degree 0) {\em\small DGA} morphisms and of degree $1$ $\varphi$-Leibniz
morphisms, respectively, such that
{\begin{equation}\label{DECon}d_t\varphi=[d,\rho(t)]\;\end{equation}}and $\varphi(0)=p$, $\varphi(1)=q$. The {\em\small RHS} of the differential equation (\ref{DECon}) is defined by $$[d,\rho(t)]:=d_V\rho(t)+\rho(t)d_W\;,$$ where $d_V,d_W$ are the differentials of the quasi-free {\em\small DGZA}-s $\mathrm{Zin}(V),\mathrm{Zin}(W)$. \end{prop}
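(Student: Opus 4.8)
\textbf{Proof plan for Proposition \ref{CharConcord}.}

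The plan is to unravel the definition of a concordance $\eta:\mathrm{Zin}(W)\to\Omega(I)\otimes\mathrm{Zin}(V)$ by decomposing it with respect to the splitting $\Omega(I)=C^\infty(I)\oplus dt\,C^\infty(I)$, exactly as in the motivating example $\eta^*(\omega)(t)=\varphi(\omega)(t)+dt\,\rho(\omega)(t)$ recalled just before the statement. First I would write, for $w\in\mathrm{Zin}(W)$, $\eta(w)=\varphi(t)(w)+dt\,\rho(t)(w)$, thereby defining a $t$-dependent family of linear maps $\varphi(t):\mathrm{Zin}(W)\to\mathrm{Zin}(V)$ of degree $0$ and a $t$-dependent family $\rho(t):\mathrm{Zin}(W)\to\mathrm{Zin}(V)$ of degree $1$ (the shift by one coming from the degree $1$ generator $dt$). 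The boundary conditions $\varepsilon_1^0\eta=p$ and $\varepsilon_1^1\eta=q$ translate immediately into $\varphi(0)=p$ and $\varphi(1)=q$, since $\varepsilon_1^i$ kills the $dt$-component and evaluates the function part at $i$.

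Next I would extract the algebraic conditions on $\varphi(t)$ and $\rho(t)$ from the requirement that $\eta$ be a morphism of DGZA-s, i.e. that it respect the Zinbiel multiplication and commute with the differentials. Imposing multiplicativity, $\eta(m_W(w_1\otimes w_2))=m_{\Omega(I)\otimes\mathrm{Zin}(V)}(\eta(w_1)\otimes\eta(w_2))$, and expanding both sides in the basis $\{1,dt\}$ — using that $dt\wedge dt=0$ and the Koszul sign rule when moving $dt$ past a homogeneous element — the degree-$0$ (function) part yields that $\varphi(t)$ is an algebra morphism for each $t$, and the degree-$1$ ($dt$) part yields precisely that $\rho(t)$ is a $\varphi(t)$-derivation in the Zinbiel sense, i.e. $\rho(t)\in\varphi\mathrm{Der}(\mathrm{Zin}(W),\mathrm{Zin}(V))$. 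Then, imposing compatibility with the differentials, $\eta\, d_W=(d_{\Omega(I)}\otimes\mathrm{id}+\mathrm{id}\otimes d_V)\,\eta$, and again separating the $1$- and $dt$-components: the $1$-part gives that each $\varphi(t)$ is a chain map, while the $dt$-part, which is where the de Rham differential $d_{\Omega(I)}=dt\,\partial_t$ of the function-component of $\varphi$ enters, produces the differential equation $d_t\varphi=d_V\rho(t)+\rho(t)d_W=[d,\rho(t)]$. I would be careful here with the sign: differentiating $\varphi(t)(w)$ in $t$ contributes a term $dt\,\partial_t\varphi(t)(w)$, and matching it against $d_V\rho(t)(w)\,dt\mp dt\,\rho(t)d_W(w)$ after commuting $dt$ through; the stated convention $[d,\rho(t)]:=d_V\rho(t)+\rho(t)d_W$ records the outcome (the absence of a sign being a consequence of $\rho(t)$ having odd degree and $dt$ being pulled out to the front). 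Conversely, given families $\varphi(t),\rho(t)$ with these properties, I would define $\eta$ by the same formula and check — reversing the computations — that it is a DGZA morphism with the correct endpoints, establishing the bijection.

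The main obstacle I anticipate is purely bookkeeping: getting all Koszul signs right when expanding the Zinbiel multiplication on $\Omega(I)\otimes\mathrm{Zin}(V)$ — since the Zinbiel product is not (graded) commutative, one must track carefully the order in which $dt$-factors and $\mathrm{Zin}(V)$-factors are interleaved, and in particular verify that the two potential $dt\wedge dt$ terms that would obstruct multiplicativity of $\rho(t)$ indeed cancel or vanish. A secondary subtlety is the precise meaning of ``$\varphi$-Leibniz morphism'' / $\varphi\mathrm{Der}$: one must spell out that $\rho(t)$ satisfies $\rho(t)(u\cdot v)=\rho(t)(u)\cdot\varphi(t)(v)\pm\varphi(t)(u)\cdot\rho(t)(v)$ (with the sign dictated by the degrees and the non-symmetry of $\cdot$), and confirm that this is exactly what the $dt$-component of the multiplicativity equation delivers. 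Once the sign conventions are fixed consistently, both directions of the correspondence are a direct, if lengthy, verification, and I would relegate the detailed sign computation to a remark or simply indicate it as ``a straightforward computation analogous to the $C^\infty$ case treated above.''
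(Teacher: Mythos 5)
Your proposal follows essentially the same route as the paper's proof: decompose $\eta(w)(t)=\varphi(w)(t)+dt\,\rho(w)(t)$ along $\Omega(I)=C^\infty(I)\oplus dt\,C^\infty(I)$, read off the $\varphi$-Leibniz property of $\rho$ from the $dt$-component of multiplicativity, the chain-map property of $\varphi$ and the ODE $d_t\varphi=[d,\rho]$ from the two components of the compatibility with differentials, and $\varphi(0)=p$, $\varphi(1)=q$ from the evaluation maps. The sign bookkeeping you flag is handled in the paper exactly as you anticipate (the Koszul sign $(-1)^{w}$ appearing in $\varphi(w)\cdot\rho(w')$), so the plan is correct as it stands.
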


The notion of $\varphi$-derivation or $\varphi$-Leibniz morphism appeared for instance in \cite{BKS04}: for $w,w'\in\mathrm{Zin}(W)$, $w$ homogeneous, $$\rho(w\cdot w')=\rho(w)\cdot\varphi(w')+(-1)^{w}\varphi(w)\cdot\rho(w')\;,$$ where we omitted the dependence of $\rho$ on $t$.

\begin{proof} As already mentioned in Equation (\ref{ConDecomp}), $\eta(w)$, $w\in\mathrm{Zin}(W)$, reads $$\eta(w)(t)=\varphi(w)(t)+dt\,\rho(w)(t)\;,$$ where $\varphi(t):\mathrm{Zin}(W)\to\mathrm{Zin}(V)$ and $\rho(t):\mathrm{Zin}(W)\to\mathrm{Zin}(V)$ have degrees $0$ and $1$, respectively (the grading of $\mathrm{Zin}(V)$ is induced by that of $V$ and the grading of $\Omega(I)$ is the homological one). Let us now translate the remaining properties
of $\eta$ into properties of $\varphi$ and $\rho$. We denote by $d_I=dt\,d_t$ the de Rham differential of $I$. Since $\eta$ is a chain map,
$$dt\,d_t\varphi+d_V\varphi-dt\,d_V\rho=(d_I\otimes\mathrm{id}+\mathrm{id}\otimes d_V)\eta=\eta d_W=\varphi d_W+dt\, \rho d_W\;,$$ so that $$d_{V}\varphi=\varphi d_{W}\quad \text{and}\quad d_t\varphi=d_{V}\rho+\rho d_{W}=[d,\rho]\;.$$ As $\eta$
is also an algebra morphism, we have, for $w,w'\in\mathrm{Zin}(W),$ $$
\varphi(w\cdot w')+dt\,\rho(w\cdot w')=
(\varphi(w)+dt\,\rho(w))\cdot(\varphi(w')+dt\,\rho(w')) $$ $$= \varphi(w)\cdot\varphi(w')+(-1)^{w}dt\,(\varphi(w)\cdot\rho(w')) +dt\,(\rho(w)\cdot\varphi(w'))\;,$$ and $\varphi$ (resp., $\rho$) is a family of {\small DGA} morphisms (resp., of degree $1$ $\varphi$-Leibniz maps) from $\mathrm{Zin}(W)$ to $\mathrm{Zin}(V)$. Eventually, $$p=\varepsilon_1^0\eta=\varphi(0)\quad\text{and}\quad q=\varepsilon_1^1\eta=\varphi(1)\;.$$\end{proof}

\paragraph{Horizontal and vertical compositions}

{\em In literature, the `categories' of Leibniz (resp., Lie) infinity algebras over $V$ (finite-dimensional) and of quasi-free {\small DGZA}-s (resp., quasi-free {\small DGCA}-s) over $s^{-1}V^*$ are (implicitly or explicitly) considered equivalent}. This conjecture is so far corroborated by the results of this paper. Hence, let us briefly report on compositions of concordances.\medskip

Let $\eta:p\Rightarrow q$ and $\eta':p'\Rightarrow q'$, 

\begin{equation}\label{horizontal_composition_diagramm}
\xymatrix
@R=1pc{
&\ar@{=>}^{\eta}[dd]&&\ar@{=>}^{\eta'}[dd]&\\
(\mathrm{Zin}(W),d_W)\ar@/^2pc/@{->}[rr]^-{p}\ar@/_2pc/@{->}[rr]_-{q}&&(\mathrm{Zin}(V),d_V)\ar@/_2pc/@{->}[rr]_-{q'}\ar@/^2pc/@{->}[rr]^-{p'}&&(\mathrm{Zin}(U),d_U)\;,\\
&&&&
}
\end{equation}
be concordances between {\small DGA} morphisms. Their horizontal composite $\eta'\circ_0\eta:p'\circ p\Rightarrow q'\circ q$,
$$
\xymatrix
@R=1pc{
&\ar@{=>}^{\eta'\circ_0\eta}[dd]\\
(\mathrm{Zin}(W),d_W)\ar@/^2pc/@{->}[rr]^-{p'\circ p}\ar@/_2pc/@{->}[rr]_-{q'\circ q}&&(\mathrm{Zin}(U),d_U)\;,\\
&&&&
}
$$
is defined by
\begin{equation}
(\eta'\circ_0\eta)(t)=(\varphi'(t)\circ\varphi(t))+dt\,(\varphi'(t)\circ\rho(t)+\rho'(t)\circ\varphi(t))\;,
\end{equation}
with self-explaining notation. It is easily checked that the first term of the {\small RHS} and the coefficient of $dt$ in the second term have the properties needed to make $\eta'\circ_0\eta$ a concordance between $p'\circ p$ and $q'\circ q$.

As for the vertical composite $\eta'\circ_1\eta:p\Rightarrow r$ of concordances $\eta:p\Rightarrow q$ and $\eta':q\Rightarrow r$,

$$\xymatrix@C=3pc@R=3pc
{
&\ar@{=>}[d]^{\eta}&\\
(\mathrm{Zin}(W),d_W)\ar@/^4pc/@{->}^-p[rr]\ar@{->}^-(0.43)q[rr]\ar@/_4pc/@{->}_-r[rr]&\ar@{=>}^{\eta'}[d]&(\mathrm{Zin}(V),d_V)\;,\\
&&
}$$ note that the composability condition $\varphi(1)=q=t(\eta)=s(\eta')=q=\varphi'(0)$, where $s,t$ denote the source and target maps, does not encode any information about $\rho(1),\rho'(0)$. Hence, the usual `half-time' composition cannot be applied.

\begin{rem} The preceding observation is actually the shadow of the fact that the `category' of Leibniz infinity algebras is an infinity category.\end{rem}

\subsubsection{Infinity homotopies}\label{sec_Getzler_Shoiket}

Some authors addressed directly or indirectly the concept of homotopy of Lie infinity algebras ($L_{\infty}$-algebras). As aforementioned, in the (equivalent) `category' of quasi-free {\small DGCA}-s, the classical picture of homotopy leads to concordances. In the `category' of $L_{\infty}$-algebras itself, morphisms can be viewed as Maurer-Cartan ({\small MC}) elements of a specific $L_{\infty}$-algebra \cite{Dol07},\cite{BS07}, which yields the notion of `gauge homotopy' between $L_{\infty}$-morphisms. Additional notions of homotopy between {\small MC} elements do exist: Quillen and cylinder homotopies. On the other hand, Markl \cite{Mar02} uses colored operads to construct homotopies for $\infty$-morphisms in a systematic way. The concepts of concordance, operadic homotopy, as well as Quillen, gauge, and cylinder homotopies are studied in detail in \cite{DP12}, for homotopy algebras over any Koszul operad, and they are shown to be equivalent, essentially due to homotopy transfer.\medskip

In this subsection, we focus on the Leibniz infinity case and provide a brief account on the relationship between concordances, gauge homotopies, and Quillen homotopies (in the next section, we explain why the latter concept is the bridge to Getzler's \cite{Get09} (and Henriques' \cite{Hen08}) work, as well as to the infinity category structure on the set of Leibniz infinity algebras).\medskip

Let us stress that all series in this section converge under some local finiteness or nilpotency conditions (for instance pronilpotency or completeness).

\paragraph{Gauge homotopic Maurer-Cartan elements}\label{Gauge}

Lie infinity algebras over ${\mathfrak g}$ are in bijective correspondence with quasi-free {\small DGCC}-s $(\mathrm{Com}^c(s{\mathfrak g}),D)$, see Equation (\ref{GKCoalg}). Depending on the definition of the $i$-ary brackets $\ell_i$, $i\ge 1$, from the corestrictions $D_i:(s{\mathfrak g})^{\odot i}\to s{\mathfrak g}$, where $\odot$ denotes the graded symmetric tensor product, one obtains various sign conventions in the defining relations of a Lie infinity algebra. When setting $\ell_i:=D_i$ (resp., $\ell_i:=s^{-1}D_i\, s^{\,i}$ (our choice in this paper), $\ell_i:=(-1)^{i(i-1)/2}s^{-1}D_i\, s^{\,i}$), we get a Voronov $L_{\infty}$-antialgebra \cite{Vor05} (resp., a Lada-Stasheff $L_{\infty}$-algebra \cite{LS93}, a Getzler $L_{\infty}$-algebra \cite{Get09}) made up by graded symmetric multilinear maps $\ell_i:(s{\mathfrak g})^{\times i}\to s{\mathfrak g}$ of degree $-1$ (resp., by graded antisymmetric multilinear maps $\ell_i:{\mathfrak g}^{\times i}\to {\mathfrak g}$ of degree $i-2$, idem), which verify the conditions
\begin{equation}\sum_{i+j=r+1}\sum_{\sigma\in\mathrm{Sh}(i,j-1)}\varepsilon(\sigma)\ell_{j}(\ell_i(sv_{\sigma_1},\ldots,sv_{\sigma_i}),sv_{\sigma_{i+1}},\ldots,sv_{\sigma_r})=0\;,\label{VLInfty}\end{equation} for all homogeneous $sv_k\in s{\mathfrak g}$ and all $r\ge 1$ (resp., the same higher Jacobi identities (\ref{VLInfty}), except that the sign $\varepsilon(\sigma)$ is replaced by $(-1)^{i(j-1)}\varepsilon(\sigma)\mathrm{sign}(\sigma)$, by $(-1)^{i}\varepsilon(\sigma)\mathrm{sign}(\sigma)$).\medskip

As the {\small MC} equation of a Lie infinity algebra $(\mathfrak g,\ell_i)$ must correspond to the {\small MC} equation given by the $D_i$, it depends on the definition of the operations $\ell_i$. For a Lada-Stasheff $L_{\infty}$-algebra (resp., a Getzler $L_{\infty}$-algebra), we obtain that the set $\mathrm{MC}(\mathfrak g)$ of {\small MC} elements of $\mathfrak g$ is the set of solutions $\alpha\in\mathfrak g_{-1}$ of the {\small MC} equation \begin{equation}\label{MCLInfty}{\sum_{i=1}^{\infty}\frac{1}{i!}(-1)^{i(i-1)/2}\ell_i(\alpha,\ldots,\alpha)=0}\quad (\text{resp.}, \sum_{i=1}^{\infty}\frac{1}{i!}\ell_i(\alpha,\ldots,\alpha)=0)\;.\end{equation}

\begin{rem} Since we prefer the original definition of homotopy Lie \cite{LS93} and homotopy Leibniz \cite{AP10} algebras, but use the results of \cite{Get09}, we adopt -- to facilitate the comparison with \cite{Get09} -- Getzler's sign convention whenever his work is involved, and change signs appropriately later, when applying the results in our context.\end{rem}

Hence, we now consider the second {\small MC} equation (\ref{MCLInfty}). Further, for any $\alpha\in\mathfrak g_{-1}$, the twisted brackets {$$\ell^{\alpha}_i(v_1,\ldots,v_i)=\sum_{k=0}^{\infty}\frac{1}{k!}\ell_{k+i}(\alpha^{\otimes
k},v_1,\ldots,v_i)\;,$$} $v_1,\ldots,v_i\in {\mathfrak g}$, are a sequence of graded antisymmetric multilinear maps of degree $i-2$. It is well-known that the $\ell_i^{\alpha}$ endow $\mathfrak g$ with a new Lie infinity structure, if $\alpha\in\mathrm{MC}({\mathfrak g}).$ Finally, any vector $r\in{\mathfrak g}_0$ gives rise to a vector field \begin{equation} V_r:{\mathfrak g}_{-1}\ni\alpha\mapsto {V_r(\alpha):=-\ell_1^{\alpha}(r)}=-\sum_{k=0}^{\infty}\frac{1}{k!}\ell_{k+1}(\alpha^{\otimes
k},r)=\sum_{k=1}^{\infty}\frac{(-1)^{k}}{(k-1)!}\ell_{k}(r,\alpha^{\otimes (k-1)})\in{\mathfrak g}_{-1}\;.\label{SpecVectfield}\end{equation} This field restricts to a vector field of the Maurer-Cartan quadric $\mathrm{MC}(\mathfrak g)$ \cite{DP12}. It follows that the integral curves {\begin{equation}\label{FlowEq}d_t\alpha=V_r(\alpha(t))\;,\end{equation}} starting from points in the quadric, are located inside $\mathrm{MC}(\mathfrak g)$. Hence, the

\begin{defi}\label{DolSho} $($\cite{Dol07}, \cite{BS07}$)$ Two {\small MC} elements $\alpha,\beta\in\mathrm{MC}({\mathfrak g})$ of a Lie infinity algebra $\mathfrak g$ are {\em gauge homotopic} if there exists $r\in{\mathfrak g}_0$ and an integral curve $\alpha(t)$ of $V_r$, such that $\alpha(0)=\alpha$ and $\alpha(1)=\beta$.\end{defi}

This gauge action is used to define the deformation functor $\mathrm{Def}:{\tt L}_{\infty}\to{\tt Set}$ from the category of Lie infinity algebras to the category of sets. Moreover, it will provide a concept of homotopy between Leibniz infinity morphisms.\medskip

Let us first observe that Equation (\ref{FlowEq}) is a 1-variable ordinary differential equation ({\small ODE}) and can be solved via an iteration procedure. The integral curve with initial point $\alpha\in\mathrm{MC}(\mathfrak g)$ is computed in \cite{Get09}. When using our sign convention in the defining relations of a Lie infinity algebra, we get an {\small ODE} that contains different signs and the solution of the corresponding Cauchy problem reads \begin{equation} \alpha(t)=\sum_{k=0}^\infty\frac{t^k}{k!}e_{\alpha}^k(r)\;,\label{GetzlerGaugeHomotopyExplicit}\end{equation} where the $e_{\alpha}^k(r)$ admit a nice combinatorial description in terms of rooted trees. Moreover, they can be obtained inductively:
\begin{equation}\label{Getzler induction formula}
\left\{\begin{array}{l}
\displaystyle e_{\alpha}^{i+1}(r)=\sum\limits_{n=0}^{\infty}\frac{1}{n!}(-1)^{\frac{n(n+1)}{2}}\sum\limits_{k_1+...+k_n=i}\frac{i!}{k_1!...k_n!}\ell_{n+1}(e_{\alpha}^{k_1}(r),...,e_{\alpha}^{k_n}(r),r)\;,\\
e^0_{\alpha}(r)=\alpha\;.
\end{array}\right.
\end{equation}
It follows that $\alpha,\beta\in\mathrm{MC}({\mathfrak g})$ are gauge homotopic if \begin{equation}\beta-\alpha=\sum_{k=1}^{\infty}\frac{1}{k!}e_{\alpha}^k(r)\;,\label{FundaHomotopyEquation}\end{equation} for some $r\in{\mathfrak g}_0.$

\paragraph{Simplicial de Rham algebra}\label{SimpicialDR}

\newcommand*{\EnsQuot}[2]%
{\ensuremath{%
    #1/\!\raisebox{-.65ex}{\ensuremath{{#2}}}}}

We first fix the notation.\medskip

Let $\Delta$ be the {\it simplicial category} with objects the nonempty finite ordinals $[n]=\{0,\ldots, n\}$, $n\ge 0$, and morphisms the order-respecting functions $f:[m]\to [n]$. Denote by $\delta_n^i:[n-1]\rightarrowtail [n]$ the injection that omits the image $i$ and by $\sigma_n^i:[n+1]\twoheadrightarrow [n]$ the surjection that assigns the same image to $i$ and $i+1$, $i\in\{0,\ldots,n\}$.

A {\it simplicial object} in a category {\tt C} is a functor $X \in [\Delta^{\mathrm{op}},{\tt C}\,]\;.$ It is completely determined by the simplicial data $(X_n,d\,^n_i,s\,^n_i)$, ${n\ge 0,\, i\in\{0,\dots,n\}},$ where $X_n=X[n]$ ($n$-simplices), $d\,^n_i=X(\delta^i_n)$ (face maps), and $s\,^n_i=X(\sigma^i_n)$ (degeneracy maps). We denote by ${\tt SC}$ the functor category $[\Delta^{\mathrm{op}},{\tt C}]$ of simplicial objects in ${\tt C}$.

The simplicial category is embedded in its Yoneda dual category: $$h_{\ast}: \Delta\ni[n]\mapsto \mathrm{Hom}_{\Delta}(-,[n])\in [\Delta^{\mathrm{op}},{\tt Set}]={\tt SSet}\;.$$ We refer to the functor of points of $[n]$, i.e. to the simplicial set $\Delta[n]:=\mathrm{Hom}_{\Delta}(-,[n]),$ as the {\it standard simplicial $n$-simplex}. Moreover, the Yoneda lemma states that $$\mathrm{Hom}_{\Delta}([n],[m])\simeq \mathrm{Hom}(\mathrm{Hom}_{\Delta}(-,[n]),\mathrm{Hom}_{\Delta}(-,[m]))=\mathrm{Hom}(\Delta[n],\Delta[m]) \;.$$ This bijection sends $f:[n]\to [m]$ to $\varphi$ defined by $\varphi_{[k]}(\bullet)= f\circ \bullet$ and $\varphi$ to $\varphi_{[n]}(\mathrm{id}_{[n]})$. In the following we identify $[n]$ (resp., $f$) with $\Delta[n]$ (resp., $\varphi$).

The set $S_n$ of $n$-simplices of a simplicial set $S$ is obviously given by $S_n\simeq\mathrm{Hom}(\mathrm{Hom}_{\Delta}(-,[n]),S)=\mathrm{Hom}(\Delta[n],S).$

Let us also recall the adjunction $$|-|:{\tt SSet}\rightleftarrows {\tt Top}:\mathrm{Sing}$$ given by the `geometric realization functor' $|-|$ and the `singular complex functor' Sing. To define $|-|$, we first define the realization $|\Delta[n]|$ of the standard simplicial $n$-simplex to be the {\it standard topological $n$-simplex} $$\Delta^n=\{(x_0,\ldots,x_n)\in \mathbb{R}^{{n+1}}: x_i\ge 0, \sum_ix_i=1\}\;.$$ We can view $|-|$ as a functor $|-|\in[\Delta,{\tt Top}]\;$. Indeed, if $f:[n]\to [m]$ is an order-respecting map, we can define a continuous map $|f|:\Delta^n\to \Delta^m$ by $$|f|[x_0,\ldots,x_n]=[y_0,\ldots,y_m]\;,$$ where $y_i=\sum_{j\in f^{-1}\{i\}}x_j$.\medskip

Let $\wedge_{\mathbb{K}}^{\star}(x_0,\ldots,x_n,dx_0,\ldots,dx_n)$ be the free graded commutative algebra generated over $\mathbb{K}$ by the degree 0 (resp., degree 1) generators $x_i$ (resp., $dx_i$). If we divide the relations $\sum_ix_i=1$ and $\sum_idx_i=0$ out and set $d(x_i)=dx_i$ and $d(dx_i)=0$, we obtain a quotient {\small DGCA} $$\Omega_n^{\star}=\EnsQuot{\wedge_{\mathbb{K}}^{\star}(x_0,\ldots,x_n,dx_0,\ldots,dx_n)}{\left(\sum_ix_i-1,\sum_idx_i\right)}$$ that can be identified, for $\mathbb{K}=\mathbb{R}$, with the algebra of polynomial differential forms $\Omega^{\star}(\Delta^n)$ of the standard topological $n$-simplex $\Delta^n$. When defining $\Omega^{\star}:\Delta^{\mathrm{op}}\to {\tt DGCA}$ by $\Omega^{\star}[n]:=\Omega^{\star}_n$ and, for $f:[n]\to [m]$, by $\Omega^{\star}(f):=|f|^*:\Omega^{\star}_m\to \Omega^{\star}_n$ (use the standard pullback formula for differential forms given by $y_i=\sum_{j\in f^{-1}\{i\}}x_j$), we obtain a simplicial differential graded commutative algebra $\Omega^{\star}\in {\tt SDGCA}$. Hence, the face maps $d\,^n_i:\Omega^{\star}_n\to \Omega^{\star}_{n-1}$ are the pullbacks by the $|\delta^i_n|:\Delta^{n-1}\to \Delta^n$, and similarly for the degeneracy maps. In particular, $d^2_0=|\delta_2^0|^*:\Omega^{\star}_2\to \Omega^{\star}_{1}$ is induced by $y_0=0,y_1=x_0,y_2=x_1$. Let us eventually introduce, for $0\le i\le n$, the vertex $e_i$ of $\Delta^n$ whose $(i+1)$-th component is equal to $1$, as well as the evaluation map $\varepsilon_n^i:\Omega^{\star}_n\to \mathbb{K}$ at $e_i$ (`pullback' induced by $(y_0,\ldots,y_n)=e_i$).

\paragraph{Quillen homotopic Maurer-Cartan elements}\label{QuillenDefSect}

We already mentioned that, if $({\mathfrak g},\ell_i)$ is an $L_{\infty}$-algebra and $(A,\cdot,d)$ a {\small DGCA}, their tensor product ${\mathfrak g}\otimes A$ has a canonical $L_{\infty}$-structure $\bar\ell_i$. It is given by $$\bar\ell_1(v\otimes a)=(\ell_1\otimes\mathrm{id}+\mathrm{id}\otimes d)(v\otimes a)=\ell_1(v)\otimes a+(-1)^vv\otimes d(a)$$ and, for $i\ge 2$, by $$\bar\ell_i(v_1\otimes a_1,\ldots,v_i\otimes a_i)=\pm\ell_i(v_1,\ldots,v_i)\otimes(a_1\cdot \ldots \cdot a_i)\;,$$ where $\pm$ is the Koszul sign generated by the commutation of the variables.\medskip

The following concept originates in Rational Homotopy Theory.

\begin{defi} Two {\small MC} elements $\alpha,\beta\in\mathrm{MC}({\mathfrak g})$ of a Lie infinity algebra $\mathfrak g$ are {\em Quillen homotopic} if there exists a {\small MC} element $\bar\gamma\in\mathrm{MC}(\bar{\mathfrak g}_1)$ of the Lie infinity algebra $\bar{\mathfrak g}_1:={\mathfrak g}\otimes\Omega^{\star}_1$, such that $\varepsilon_1^0\bar\gamma=\alpha$ and $\varepsilon_1^1\bar\gamma=\beta$ $($where the $\varepsilon_1^i$ are the natural extensions of the evaluation maps$)$.\end{defi}

From now on, we accept, in the definition of gauge equivalent {\small MC} elements, vector fields $V_{r(t)}$ induced by time-dependent $r=r(t)\in{\mathfrak g}_0$. The next result is proved in \cite{Can99} (see also \cite{Man99}). A proof sketch will be given later.

\begin{prop}
Two {\small MC} elements of a Lie infinity algebra are Quillen homotopic if and only if they are gauge homotopic.
\end{prop}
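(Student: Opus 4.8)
The plan is to establish the equivalence by passing through an explicit integral-curve/pointwise description of {\small MC} elements of $\bar{\mathfrak g}_1={\mathfrak g}\otimes\Omega^{\star}_1$ and matching it with the flow equation (\ref{FlowEq}) defining gauge homotopy (allowing now time-dependent $r(t)$). First I would unwind the structure of $\Omega^{\star}_1$: after dividing out $\sum_ix_i=1$ and $\sum_idx_i=0$, one has $\Omega^{\star}_1\cong C^{\infty}_{\mathrm{pol}}(I)\oplus dt\,C^{\infty}_{\mathrm{pol}}(I)$ with $I=[0,1]$, $t=x_1$, $dt=dx_1$, exactly as in the concordance discussion (\ref{ConDecomp}). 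Hence a general element $\bar\gamma\in(\bar{\mathfrak g}_1)_{-1}$ decomposes uniquely as $\bar\gamma(t)=\alpha(t)+dt\,r(t)$ with $\alpha(t)\in{\mathfrak g}_{-1}$ a curve and $r(t)\in{\mathfrak g}_0$ a curve. The evaluation maps $\varepsilon_1^0,\varepsilon_1^1$ then pick out $\alpha(0)$ and $\alpha(1)$, so the boundary conditions in the definition of Quillen homotopy become precisely $\alpha(0)=\alpha$, $\alpha(1)=\beta$.

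Next I would expand the {\small MC} equation $\sum_{i\ge1}\frac{1}{i!}\bar\ell_i(\bar\gamma,\ldots,\bar\gamma)=0$ (with the Getzler sign convention, which we have adopted whenever Getzler's results are invoked) using the formula for $\bar\ell_i$ on a tensor product and the fact that $(dt)^2=0$, so that in each $\bar\ell_i(\bar\gamma,\ldots,\bar\gamma)$ at most one slot can carry the $dt\,r(t)$ part. This splits the equation into its $\Omega^0_1$-component and its $dt\,\Omega^0_1$-component. The degree-$0$ component reads $\sum_{i\ge1}\frac{1}{i!}\ell_i(\alpha(t),\ldots,\alpha(t))=0$ for all $t$, i.e. $\alpha(t)\in\mathrm{MC}({\mathfrak g})$ pointwise — this is where the differential $d$ of $\Omega^\star_1$ acting as $\mathrm{id}\otimes d$ contributes the $d_t\alpha$ term, so care with the Koszul sign from commuting $\alpha$ past $dt$ is needed. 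The $dt$-component, after collecting the terms in which one argument is $r(t)$, becomes exactly $d_t\alpha(t)=V_{r(t)}(\alpha(t))=-\ell_1^{\alpha(t)}(r(t))$, cf. (\ref{SpecVectfield})–(\ref{FlowEq}). Thus a Quillen homotopy is the same datum as: a curve $\alpha(t)$ lying in the Maurer–Cartan quadric together with a time-dependent $r(t)\in{\mathfrak g}_0$ solving the flow equation, with endpoints $\alpha,\beta$ — which is precisely a (time-dependent) gauge homotopy.

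For the two directions concretely: given a gauge homotopy, set $\bar\gamma(t):=\alpha(t)+dt\,r(t)$ and check, by reversing the computation above, that the two {\small MC} components vanish — the $dt$-component holds by the flow equation and the degree-$0$ component holds because $\alpha(t)$ stays inside $\mathrm{MC}({\mathfrak g})$ (a fact one may either cite from \cite{DP12}, as noted after (\ref{SpecVectfield}), or re-derive by differentiating $\sum\frac{1}{i!}\ell_i(\alpha(t),\ldots)$ in $t$ and using the higher Jacobi identities). Conversely, given $\bar\gamma\in\mathrm{MC}(\bar{\mathfrak g}_1)$, decompose it as above and read off $\alpha(t),r(t)$; the {\small MC} equation forces the flow equation and the pointwise {\small MC} condition, and the evaluation conditions give the endpoints. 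The main obstacle I anticipate is purely bookkeeping: getting every Koszul sign right in the expansion of $\bar\ell_i$ on the tensor product $\mathfrak g\otimes\Omega^\star_1$, in particular the signs produced when the odd element $dt$ is commuted to the front and when $\mathrm{id}\otimes d$ hits a product of generators, and then checking that these match the signs in the definition (\ref{SpecVectfield}) of $V_r$ under our sign convention; once the sign accounting is fixed, the equivalence is a direct term-by-term comparison. I would also remark that the convergence of all the series involved is guaranteed by the standing pronilpotency/completeness hypothesis stated at the beginning of the subsection, so no analytic subtlety arises. Finally, I would note for the reader that this proposition is the ``$\Omega^\star_1$'' instance of the more general simplicial picture, and that the same argument with $\Omega^\star_n$ in place of $\Omega^\star_1$ will later feed into the Kan-property discussion; a fuller treatment appears in \cite{DP12}, \cite{Can99}, \cite{Man99}.
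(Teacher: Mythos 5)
Your proposal is correct and follows essentially the same route as the paper, which (citing Canonaco and Manetti for the full details) sketches exactly this argument: decompose $\bar\gamma=\gamma(t)\otimes 1+r(t)\otimes dt$ in ${\mathfrak g}\otimes\Omega^\star_1$ and split the Maurer--Cartan equation by powers of $dt$ into the pointwise {\small MC} condition on $\gamma(t)$ and the flow equation $d_t\gamma=V_{r(t)}(\gamma(t))$, which with time-dependent $r$ is precisely gauge homotopy. Your added remarks on the two directions, the Koszul-sign bookkeeping, and convergence under pronilpotency/completeness are consistent with the paper's level of rigor.
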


\paragraph{Infinity morphisms as Maurer-Cartan elements and infinity homotopies}

The possibility to view morphisms in $\mathrm{Hom}_{\mathrm{DGP^{\text{!`}}C}}(C,{\cal F}^{\mathrm{gr,c}}_{P^{\text{!`}}}(sW))$ as {\small MC} elements is known from the theory of the bar and cobar constructions of algebras over an operad. In \cite{DP12}, the authors show that the fact that infinity morphisms between $P_{\infty}$-algebras $V$ and $W$, i.e. morphisms in $$\mathrm{Hom}_{\mathrm{DGP^{\text{!`}}C}}({\cal F}^{\mathrm{gr,c}}_{P^{\text{!`}}}(sV), {\cal F}^{\mathrm{gr,c}}_{P^{\text{!`}}}(sW))\;,$$ are 1:1 with Maurer-Cartan elements of an $L_{\infty}$-structure on $$\mathrm{Hom}_{\mathbb{K}}({\cal F}^{\mathrm{gr,c}}_{P^{\text{!`}}}(sV),W)\;,$$ is actually a consequence of a more general result based on the encoding of two $P_{\infty}$-algebras and an infinity morphism between them in a {\small DG} colored free operad. In the case $P={\sf Lie}$, one recovers the fact \cite{BS07} that \begin{equation}\label{NatBijCoAlg}\mathrm{Hom}_{\mathrm{DGCC}}(C,\mathrm{Com}^c(sW))\simeq \mathrm{MC}(\mathrm{Hom}_{\mathbb{K}}(C,W))\;,\end{equation} where $C$ is any locally conilpotent {\small DGCC}, where $W$ is an $L_{\infty}$-algebra, and where the {\small RHS} is the set of {\small MC} elements of some convolution $L_{\infty}$-structure on $\mathrm{Hom}_{\mathbb{K}}(C,W)$.\medskip

In the sequel we detail the case $P={\sf Lei}$. Indeed, when interpreting infinity morphisms of Leibniz infinity algebras as {\small MC} elements of a Lie infinity algebra, the equivalent notions of gauge and Quillen homotopies provide a concept of homotopy between Leibniz infinity morphisms.

\begin{prop}\label{proposition_about_lie_inf_on_space_of_homomorphisms}
Let $(V,\ell_i)$ and $(W, m_i)$ be two Leibniz infinity algebras and let $(\mathrm{Zin}^c(sV),D)$ be the quasi-free {\small\em DGZC} that corresponds to $(V,\ell_i)$. The graded vector space $$L(V,W):=\mathrm{Hom}_{\mathbb{K}}(\mathrm{Zin}^c(sV),W)$$ carries a convolution Lie infinity structure given by
\begin{equation}\label{lieInfinityStr1}
\mathcal{L}_1f=m_1\circ f+(-1)^ff\circ D
\end{equation}
and, as for $\mathcal{L}_p(f_1,...,f_p)$, $p\ge 2$, by
\begin{equation}\label{lieInfinityStrp}
\xymatrix@C=3.5pc{
\mathrm{Zin}^c(sV)\ar@{->}^-{\Delta^{p-1}}[r]&(\mathrm{Zin}^c(sV))^{\otimes p}\ar@{->}^-{\sum\limits_{\sigma\in S(p)}\varepsilon(\sigma)\, \mathrm{sign}(\sigma)\, f_{\sigma(1)}\otimes...\otimes f_{\sigma(p)}}[rr]&\qquad\quad&W^{\otimes p}\ar@{->}^-{m_p}[r]&W\;,
}
\end{equation}
where $f,f_1,\ldots,f_p\in L(V,W)$, $\Delta^{p-1}=(\Delta\otimes \mathrm{id}^{\otimes(p-2)})...(\Delta\otimes \mathrm{id})\Delta\,$, where $S(p)$ denotes the symmetric group on $p$ symbols, and where the central arrow is the graded antisymmetrization operator.
\end{prop}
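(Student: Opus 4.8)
The statement asserts that the explicit formulas (\ref{lieInfinityStr1})--(\ref{lieInfinityStrp}) define a Lie infinity structure on $L(V,W)=\mathrm{Hom}_{\mathbb K}(\mathrm{Zin}^c(sV),W)$. The cleanest route is \emph{not} to verify the (Lada--Stasheff) higher Jacobi identities for the $\mathcal L_p$ by brute force, but to recognize $L(V,W)$ as a convolution object and to transport a known Lie infinity structure through the bar/cobar formalism. Concretely, $(W,m_i)$ corresponds, by Theorem~\ref{LeibInftyAlg1:1}, to a quasi-free {\small DGZC} $(\mathrm{Zin}^c(sW),D_W)$, i.e. to a codifferential on the cofree Zinbiel coalgebra, equivalently to a representation of the {\small DG} operad $\mathcal Lei_\infty=\Omega\,\mathsf{Lei}^{\text{!`}}$ on $W$. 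On the other side, $(\mathrm{Zin}^c(sV),D_V)$ is a {\small DGZC}, in particular a coalgebra over the Koszul dual cooperad $\mathsf{Lei}^{\text{!`}}=\mathsf{Zin}^{\text{!`}}$. First I would therefore set up the \emph{convolution Lie infinity algebra} associated with a $\mathsf{Lie}^{\text{!`}}$-coalgebra (here $\mathsf{Com}^c$, obtained from the Zinbiel decomposition $\Delta$ on $\mathrm{Zin}^c(sV)$ via the operadic map $\mathsf{Lie}^{\text{!`}}\to\mathsf{Zin}^{\text{!`}}$, i.e. via symmetrization of coshuffles) and a $\mathsf{Lie}_\infty$-algebra; this is exactly the construction recalled in the paragraph preceding the proposition, specialized from \cite{DP12} to $P=\mathsf{Lei}$.

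The key steps, in order, are the following. (1) Unwind the operadic statement of \cite{DP12}: an $\infty$-morphism $\mathrm{Zin}^c(sV)\to\mathrm{Zin}^c(sW)$ is the same datum as a {\small MC} element of an $L_\infty$-structure on $\mathrm{Hom}_{\mathbb K}(\mathrm{Zin}^c(sV),W)$, and this $L_\infty$-structure is the convolution structure built from the $\mathsf{Lie}^{\text{!`}}$-coalgebra structure on the source and the $\mathsf{Lie}_\infty$-algebra $W$. (2) Make the brackets explicit: the unary bracket is the convolution differential $\mathcal L_1 f=m_1\circ f-(-1)^{|f|}(-1)f\circ D=m_1\circ f+(-1)^{|f|}f\circ D$ (the sign matching Proposition~\ref{proposition_about_lie_inf_on_space_of_homomorphisms}), coming from the {\small DGZC} differential $D$ and the differential $m_1$ of $W$; for $p\ge 2$ the $p$-ary bracket is the composite "iterated coproduct $\Delta^{p-1}$, then graded-antisymmetrized tensor product $f_{\sigma(1)}\otimes\cdots\otimes f_{\sigma(p)}$ over $S(p)$, then the $W$-bracket $m_p$'' --- precisely (\ref{lieInfinityStrp}). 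The antisymmetrization over $S(p)$ is exactly what implements the passage from the Zinbiel (half-shuffle) cooperad to the commutative (shuffle) cooperad $\mathsf{Com}^c=\mathsf{Lie}^{\text{!`}}$, so that the convolution construction lands in $L_\infty$ rather than in a $\mathsf{Zin}_\infty$-dual structure. (3) Invoke coassociativity of $\Delta$ (which holds since $(\mathrm{Zin}^c(sV),\Delta)$ is a {\small GZC}, hence in particular $\Delta^{p-1}$ is well-defined up to the Zinbiel relation) together with the higher Jacobi identities for the $m_i$ (Definition~\ref{LeibInftyAlg}, via Theorem~\ref{LeibInftyAlg1:1} applied to $W$) to conclude that the $\mathcal L_p$ satisfy the $L_\infty$-relations; equivalently, observe that the square-zero condition $D^2=0$ on $\mathrm{Zin}^c(sV)$ plus the codifferential property of $D_W$ on $\mathrm{Zin}^c(sW)$ force the convolution brackets to verify the $L_\infty$-axioms by a formal bar-cobar / twisting-morphism argument.

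The step I expect to be the genuine obstacle is \textbf{getting the signs and the combinatorics of the symmetrization right} in step (2)--(3): one must check that antisymmetrizing the $p$-fold tensor product of the $f_i$ over $S(p)$, precomposed with the \emph{Zinbiel} (not cocommutative) iterated coproduct $\Delta^{p-1}$, actually produces well-defined graded-antisymmetric maps $\mathcal L_p$ and that the Koszul signs $\varepsilon(\sigma)$ together with the signatures $\mathrm{sign}(\sigma)$ conspire so that the $L_\infty$-relations for $\mathrm{Hom}_{\mathbb K}(\mathrm{Zin}^c(sV),W)$ reduce term by term to the $\infty$-Jacobi relations for $(W,m_i)$ and the $\infty$-coalgebra relations for $(\mathrm{Zin}^c(sV),\Delta,D)$. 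This is where the Zinbiel-versus-commutative discrepancy (the asymmetry $v_1\cdot(v_2 v_3)=v_1v_2v_3+(-1)^{|v_1||v_2|}v_2v_1v_3$ recorded after Example~\ref{propFreeZinbielProducFormulaOnTVarticle}) has to be absorbed precisely by the antisymmetrization. A pragmatic way to discharge this is to \emph{not} recompute it from scratch but to cite the general convolution-$L_\infty$ theorem of \cite{DP12} for the Koszul operad $\mathsf{Lei}$ and merely verify that the formulas (\ref{lieInfinityStr1})--(\ref{lieInfinityStrp}) are the explicit shadow of that abstract structure in our sign convention --- a bounded check on $p=1$ and $p=2$ plus a functoriality remark. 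A full self-contained verification, writing out $\sum_{\mathcal L}$ on a generic element of $\mathrm{Zin}^c(sV)$ and matching it against $D^2=0$, is possible but would be the lengthy routine computation that this plan deliberately postpones to an appendix.
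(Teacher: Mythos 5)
Your plan is essentially the paper's own proof: the paper simply writes ``See \cite{Laa02} or \cite{DP12}; a direct verification is possible as well,'' i.e.\ it invokes exactly the general convolution Lie infinity theorem you propose to cite, leaving the term-by-term check as an optional routine computation. Your additional remarks on how the antisymmetrization over $S(p)$ absorbs the Zinbiel-versus-commutative asymmetry are a correct gloss on why the cited result applies, so nothing further is needed.
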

\begin{proof}
See \cite{Laa02} or \cite{DP12}. A direct verification is possible as well.
\end{proof}

\begin{prop}\label{MorphMC}
Let $(V,\ell_i)$ and $(W, m_i)$ be two Leibniz infinity algebras. There exists a 1:1 correspondence between the set of infinity morphisms from $(V,\ell_i)$ to $(W,m_i)$ and the set of {\small MC} elements of the convolution Lie infinity algebra structure ${\cal L}_i$ on $L(V,W)$ defined in Proposition~\ref{proposition_about_lie_inf_on_space_of_homomorphisms}.
\end{prop}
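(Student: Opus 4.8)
The plan is to prove Proposition~\ref{MorphMC} by unraveling both sides explicitly and showing that the Maurer--Cartan equation for the convolution Lie infinity structure $\mathcal{L}_i$ of Proposition~\ref{proposition_about_lie_inf_on_space_of_homomorphisms}, when written out in terms of components, reproduces exactly the defining relations of a Leibniz infinity morphism from Definition~\ref{LeibInftyAlgMorph}. First I would recall that, by Theorem~\ref{LeibInftyAlgMorph1:1}, an infinity morphism $(V,\ell_i)\to(W,m_i)$ is the same thing as a morphism of differential graded Zinbiel coalgebras $F:\mathrm{Zin}^{\mathrm{c}}(sV)\to\mathrm{Zin}^{\mathrm{c}}(sW)$ commuting with the codifferentials $D$ and $D'$ induced by the $\ell_i$ and $m_i$. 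Since $\mathrm{Zin}^{\mathrm{c}}(sW)$ is cofree (as a conilpotent Zinbiel coalgebra), such a coalgebra morphism $F$ is uniquely determined by its corestriction $f:=\mathrm{pr}_{sW}\circ F:\mathrm{Zin}^{\mathrm{c}}(sV)\to sW$, i.e. by an element $f\in L(V,W)=\mathrm{Hom}_{\mathbb K}(\mathrm{Zin}^{\mathrm c}(sV),W)$ after the harmless degree shift $sW\cong W$. The content of the proposition is that, under this identification, the condition "$F$ is a coalgebra morphism commuting with codifferentials'' translates into the condition "$f$ satisfies $\sum_{i\ge1}\tfrac{1}{i!}\mathcal{L}_i(f,\ldots,f)=0$.''

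The key steps, in order, would be the following. Step one: make precise the bijection $f\leftrightarrow F$ between $L(V,W)$ (of appropriate degree) and cofree coalgebra morphisms, using the cofreeness of $\mathrm{Zin}^{\mathrm c}(sW)$; this is the standard corestriction argument and I would state it as a lemma, noting that $F=\sum_{p\ge1}\tfrac{1}{p!}m^{(p)}\circ f^{\otimes p}\circ\Delta^{p-1}$ in suitable notation, the symmetrization being automatic from the Zinbiel coalgebra structure. Step two: compute $D'\circ F - F\circ D$ in terms of $f$. The term $F\circ D$ produces, after corestriction, the piece $(-1)^f f\circ D$ together with the higher pieces coming from $D$ acting inside the tensor powers; the term $D'\circ F$ produces $m_1\circ f$ together with the pieces where the bracket $m_p$ of $W$ is applied to several outputs of $f$. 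Collecting by the number $p$ of tensor factors of $f$ involved, the coefficient of the "$p$-fold'' part is precisely $\mathcal{L}_1 f$ for $p=1$ and $\tfrac{1}{p!}\mathcal{L}_p(f,\ldots,f)$ for $p\ge2$, where $\mathcal{L}_p$ is as in Equations~(\ref{lieInfinityStr1}) and~(\ref{lieInfinityStrp}). Hence $D'F=FD$ if and only if $\sum_{p\ge1}\tfrac{1}{p!}\mathcal{L}_p(f,\ldots,f)=0$, which is the Maurer--Cartan equation~(\ref{MCLInfty}) for $f\in L(V,W)_{-1}$ in the Lada--Stasheff sign convention. Step three: check the degree bookkeeping — the corestriction of a degree $0$ coalgebra morphism has components $f_i:(sV)^{\otimes i}\to sW$ of degree $0$, which under the desuspensions correspond to maps $\varphi_i:V^{\otimes i}\to W$ of degree $i-1$ as required by Definition~\ref{LeibInftyAlgMorph}, and simultaneously $f$ sits in degree $-1$ of the convolution algebra $L(V,W)$, so that it is a legitimate candidate {\small MC} element. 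Finally, I would remark that unwinding $\mathcal{L}_p$ via the explicit deconcatenation-type coproduct formula~(\ref{FreeZinbielCoProducFormulaOnTVarticle}) and the higher Jacobi identity of Definition~\ref{LeibInftyAlg} recovers verbatim the (admittedly heavy) component identities displayed in Definition~\ref{LeibInftyAlgMorph}; this last verification is routine and can be referenced to \cite{DP12} or \cite{Laa02} rather than carried out in full.

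The main obstacle I expect is purely combinatorial and notational: matching the shuffle sums and Koszul/permutation signs produced by the coproduct $\Delta^{p-1}$ on $\mathrm{Zin}^{\mathrm c}(sV)$ and by the antisymmetrization operator in~(\ref{lieInfinityStrp}) with the triple sum over $\mathfrak{Sh}(k_1,\ldots,k_i)$ and the sign $(-1)^{\sum(i-r)k_r+i(i-1)/2}(-1)^{\sum(k_r-1)(\cdots)}$ appearing in Definition~\ref{LeibInftyAlgMorph}. The suspension isomorphisms $s^{\otimes n}$ and the identities~(\ref{identity_formula}) and~(\ref{suspension_of_permutation}) must be inserted at exactly the right places, just as in the proof of the correspondence for $L_\infty$-morphisms recalled in Section~2 of the survey part; the structure of that argument is the template to follow, only with the graded symmetric coalgebra replaced by the graded Zinbiel coalgebra and the commutative operad replaced by $\mathrm{Zin}$. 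Because the sign verification is mechanical and was already carried out in the Appendix (see page~\pageref{LeinbizInftyAlgebraMorph}) for Theorem~\ref{LeibInftyAlgMorph1:1}, I would in the final write-up reduce this proposition to that theorem plus the cofreeness argument of Step one, keeping the new content — that the obstruction to commuting with codifferentials is organized by the convolution brackets $\mathcal{L}_p$ into a single {\small MC} equation — front and center, and citing \cite{Laa02,DP12} for the direct check.
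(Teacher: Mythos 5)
Your proposal is correct and follows essentially the same route as the paper: identify degree $-1$ elements of $L(V,W)$ with their degree $0$ (co)restrictions $\mathrm{Zin}^{\mathrm c}(sV)\to sW$, use cofreeness of $\mathrm{Zin}^{\mathrm c}(sW)$ to pass uniquely between these and coalgebra morphisms, and then check via the suspension relations $\ell_i=s^{-1}D_is^{\,i}$, $m_i=s^{-1}\mathfrak D_is^{\,i}$ that the Maurer--Cartan equation is exactly the condition $\mathfrak D\widehat{(s\alpha)}=\widehat{(s\alpha)}D$, deferring the component-level sign check to Theorem~\ref{LeibInftyAlgMorph1:1} and its Appendix proof, just as the paper does. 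Only a cosmetic caveat: your explicit coextension formula $F=\sum_p\tfrac1{p!}m^{(p)}\circ f^{\otimes p}\circ\Delta^{p-1}$ is a symmetric-coalgebra artifact (for the cofree Zinbiel coalgebra the weight-$p$ component is just $f^{\otimes p}$ applied to the iterated decomposition, with no $\tfrac1{p!}$ or product), but this is harmless since your argument only uses the uniqueness of the coextension, and the $\tfrac1{p!}$ in the MC equation is cancelled by the $p!$ equal terms of the antisymmetrization over $S(p)$ applied to identical odd arguments.
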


Observe that the considered {\small MC} series converges pointwise. Indeed, the evaluation of $\mathcal{L}_p(f_1,...,f_p)$ on a tensor in $\mathrm{Zin}^c(sV)$ vanishes for $p\gg$, in view of the local conilpotency of $\mathrm{Zin}^c(sV)$. Moreover, convolution $L_{\infty}$-algebras are complete, so that their {\small MC} equation converges in the topology induced by the filtration (a descending filtration $F^iL$ of the space $L$ of an $L_{\infty}$-algebra $(L,{\cal L}_k)$ is \emph{compatible} with the $L_{\infty}$-structure ${\cal L}_k$, if ${\cal L}_k(F^{i_1}L,\ldots,F^{i_k}L)\subset F^{i_1+\ldots+i_k}L$, and it is \emph{complete}, if, in addition, the `universal' map $L\to  \varprojlim L/F^iL$ from $L$ to the (projective) limit of the inverse system $L/F^iL$ is an isomorphism) \cite{DP12}.\medskip

Note also that Proposition \ref{MorphMC} is a specification, in the case $P={\sf Lei}$, of the abovementioned 1:1 correspondence between infinity morphisms of $P_{\infty}$-algebras and {\small MC} elements of a convolution $L_{\infty}$-algebra. To increase the readability of this text, we give nevertheless a sketchy proof.

\begin{proof} An {\small MC} element is an $\alpha\in \mathrm{Hom}_{\mathbb{K}}(\mathrm{Zin}^c(sV),W)$ of degree $-1$ that verifies the {\small MC} equation. Hence, $s\alpha:\mathrm{Zin}^c(sV)\to sW$ has degree 0 and, since $\mathrm{Zin}^c(sW)$ is free as {\small GZC}, $s\alpha$ coextends uniquely to $\widehat{s\alpha}\in\mathrm{Hom}_{\mathrm{GZC}}(\mathrm{Zin}^c(sV),\mathrm{Zin}^c(sW))$. The fact that $\alpha$ is a solution of the {\small MC} equation exactly means that $\widehat{s\alpha}$ is a {\small DGZC}-morphism, i.e. an infinity morphism between the Leibniz infinity algebras $V$ and $W$. Indeed, when using e.g. the relations $\ell_i= s^{-1} D_i\, s^{\,i}$ and $m_i=s^{-1} {\mathfrak D}_i\,s^{\,i}$, and the corresponding version of the {\small MC} equation, we get
$$
\begin{array}{l}
\sum\limits^\infty_{p=1}\frac{1}{p!}(-1)^{\frac{p(p-1)}{2}}\mathcal{L}_p(\alpha,...,\alpha)=0 \Leftrightarrow\\
\sum\limits^\infty_{p=1} (-1)^{\frac{p(p-1)}{2}}m_p(\alpha\otimes...\otimes\alpha)\Delta^{p-1}+(-1)^\alpha\alpha D=0\Leftrightarrow\\
\sum\limits^\infty_{p=1}(-1)^{\frac{p(p-1)}{2}}s^{-1}{\mathfrak D}_p\,s^{\, p}(\alpha\otimes...\otimes\alpha)\Delta^{p-1}+(-1)^\alpha\alpha D=0 \Leftrightarrow\\
\sum\limits^\infty_{p=1}s^{-1}{\mathfrak D}_p(s\alpha\otimes...\otimes s\alpha)\Delta^{p-1}-s^{-1}s\alpha D=0 \Leftrightarrow\\
{\mathfrak D}\widehat{(s\alpha)}=\widehat{(s\alpha)}D\;.
\end{array}
$$
\end{proof}

Hence, the

\begin{defi} Two infinity morphisms $f,g$ between Leibniz infinity algebras $(V,\ell_i)$, $(W,m_i)$ are {\em infinity homotopic}, if the corresponding {\small MC} elements $\alpha=\alpha(f)$ and $\beta=\beta(g)$ of the convolution Lie infinity structure ${\cal L}_i$ on $L=L(V,W)$ are Quillen (or gauge) homotopic. In other words, $f$ and $g$ are infinity homotopic, if there exists $\bar\gamma\in\mathrm{MC}_1(\bar{L})$, i.e. an {\small MC} element $\bar\gamma$ of the Lie infinity structure $\bar{\cal L}_i$ on $\bar{L}=L\otimes\Omega^{\star}_1$ -- obtained from the convolution structure ${\cal L}_i$ on $L=\mathrm{Hom}_{\mathbb{K}}(\mathrm{Zin}^c(sV),W)$ via extension of scalars --, such that $\varepsilon_1^0\bar\gamma=\alpha$ and $\varepsilon_1^1\bar\gamma=\beta$.\end{defi}

\paragraph{Comparison of concordances and infinity homotopies}

Since, according to the prevalent philosophy, the `categories' {\tt qfDG$P^{\text{!`}}$CoAlg} and {\tt $P_{\infty}$-Alg} are `equivalent', appropriate concepts of homotopy in both categories should be in 1:1 correspondence. It can be shown \cite{DP12} that, for any type of algebras, the concepts of concordance and of Quillen homotopy are equivalent (at least if one defines concordances in an appropriate way); as, Quillen homotopies as already known to be equivalent to gauge homotopies, the wanted result follows in whole generality.\medskip

To accommodate the reader who is not interested in (nice) technicalities, we provide now a sketchy explanation of both relationships, defining concordances dually and assuming for simplicity that $\mathbb{K}=\mathbb{R}$.\medskip

Remember first that we defined concordances, in conformity with the classical picture, in a contravariant way: two infinity morphisms $f,g:V\to W$ between homotopy Leibniz algebras, i.e. two {\small DGA} morphisms $f^*,g^*:\mathrm{Zin}(s^{-1}W^*)\to \mathrm{Zin}(s^{-1}V^*)$, are concordant if there is a morphism $$\eta\in\mathrm{Hom}_{\mathrm{DGA}}(\mathrm{Zin}(s^{-1}W^*),\mathrm{Zin}(s^{-1}V^*)\otimes\Omega^{\star}_1)\;,$$ whose values at $0$ and $1$ are equal to $f^*$ and $g^*$, respectively. Although we will use this definition in the sequel (observe that we slightly adapted it to future purposes), we temporarily prefer a dual, covariant definition (which has the advantage that the spaces $V,W$ need not be finite-dimensional).\medskip

The problem that the linear dual of the infinite-dimensional {\small DGCA} $\Omega^{\star}_1$ (let us recall that $\star$ stands for the (co)homological degree) is not a coalgebra, has already been addressed in \cite{BM12}. The authors consider a space $(\Omega_1^{\star})^{\vee}$ made up by the formal series, with coefficients in $\mathbb{K}$, of the elements $\alpha_i=(t^i)^\vee,$ $i\ge0$, and $\beta_i=(t^i\,dt)^\vee,$ $i\ge 0$. For instance, $\sum_{i\in\mathbb{N}}{\mathfrak K}^i\alpha_i$ represents the map $\{t^i\}_{i\in\mathbb{N}}\to \mathbb{K}$ and assigns to each $t^i$ the coefficient ${\mathfrak K}^i$. The differential $\partial$ of $(\Omega_1^{\star})^\vee$ is (dual to the de Rham differential $d$ and is) defined by $\partial(\alpha_i)=0$ and $\partial(\beta_i)=(i+1)\alpha_{i+1}$. As for the coalgebra structure $\delta$, we set
\begin{gather*}
\delta(\alpha_i)=\sum_{a+b=i}\alpha_a\otimes\alpha_b\;,\\
\delta(\beta_i)=\sum_{a+b=i}(\beta_a\otimes\alpha_b+\alpha_a\otimes\beta_b)\;.
\end{gather*}
When extending to all formal series, we obtain a map $\delta:(\Omega_1^{\star})^\vee\to (\Omega_1^\star)^\vee\widehat\otimes(\Omega_1^\star)^\vee$, whose target is the completed tensor product. To fix this difficulty, one considers the decreasing sequence of vector spaces $(\Omega_1^\star)^\vee=:\Lambda^0\supset \Lambda^1\supset\Lambda^2\supset\ldots\;$, where $\Lambda^i=\delta^{-1}(\Lambda^{i-1}\otimes\Lambda^{i-1})$, $i\ge 1$, and defines the universal {\small DGCC} $\Lambda:=\cap_{i\ge 1}\Lambda^i$.\medskip

A concordance can then be defined as a map $$\eta\in\mathrm{Hom}_{\mathrm{DGC}}(\mathrm{Zin}^c(sV)\otimes\Lambda,\mathrm{Zin}^c(sW))$$ (with the appropriate boundary values). It is then easily seen that any Quillen homotopy, i.e. any element in $\mathrm{MC}(L\otimes\Omega_1^\star)$, gives rise to a concordance. Indeed, when writing ${\cal V}$ instead of $sV$, we have maps $$L\otimes\Omega_1^\star=\mathrm{Hom}_{\mathbb{K}}(\mathrm{Zin}^c({\cal V}),W)\otimes\Omega_1^\star=\mathrm{Hom}_{\mathbb{K}}\left(\bigoplus_{i\ge 1}{\cal V}^{\otimes i},W\right)\otimes\Omega_1^\star=\left(\prod_{i\ge 1}\mathrm{Hom}_{\mathbb{K}}({\cal V}^{\otimes i},W)\right)\otimes\Omega_1^\star$$ $$\longrightarrow\; \prod_{i\ge 1}\left(\mathrm{Hom}_{\mathbb{K}}({\cal V}^{\otimes i},W)\otimes\Omega_1^\star\right)\;\longrightarrow \; \prod_{i\ge 1}\mathrm{Hom}_{\mathbb{K}}({\cal V}^{\otimes i}\otimes\Lambda,W)=\mathrm{Hom}_{\mathbb{K}}(\mathrm{Zin}^c({\cal V})\otimes\Lambda,W)\;.$$ Only the second arrow is not entirely obvious. Let $(f,\omega)\in \mathrm{Hom}_{\mathbb{K}}({\cal V}^{\otimes i},W)\times\Omega_1^\star$ and let $(v,\lambda)\in{\cal V}^{\otimes i}\times \Lambda$. Set $\omega=\sum_{a=0}^{N}k_at^a+\sum_{b=0}^N\kappa_b\,(t^b\,dt)$ and, for instance, $\lambda=\sum_{i\in\mathbb{N}}{\mathfrak K}^i\alpha_i$. Then $$g(f,\omega)(v,\lambda):=(\sum_{i\in\mathbb{N}}{\mathfrak K}^i\alpha_i)(\omega)f(v):=(\sum_{a=0}^N{\mathfrak K}^ak_a)f(v)\in W\;$$ defines a map between the mentioned spaces. Eventually, a degree $-1$ element of $L\otimes\Omega_1^\star$ (resp., an {\small MC} element of $L\otimes\Omega_1^\star$, a Quillen homotopy) is sent to an element of $\mathrm{Hom}_{\mathrm{GC}}(\mathrm{Zin}^c({\cal V})\otimes\Lambda,\mathrm{Zin}^c{\cal W})$ (resp., an element of $\mathrm{Hom}_{\mathrm{DGC}}(\mathrm{Zin}^c({\cal V})\otimes\Lambda,\mathrm{Zin}^c{\cal W})$, a concordance).\medskip

The relationship between Quillen and gauge homotopy is (at least on the chosen level of rigor) much clearer. Indeed, an element $\bar\gamma\in\mathrm{MC}_1(\bar L) = \mathrm{MC}(L\otimes\Omega_1^\star)$ can be decomposed as
$$ \bar\gamma=\gamma(t)\otimes 1+ r(t)\otimes dt,$$
where $t\in[0,1]$ is the coordinate of $\Delta^1$. When unraveling the {\small MC} equation of the $\bar{\cal L}_i$ according to the powers of $dt$, one gets
\begin{equation}\label{Getzler-Shoiket_homotopy}
\begin{array}{l}
\sum_{p=1}^{\infty}\frac{1}{p!}{\mathcal{L}}_p(\gamma(t),...,\gamma(t))=0\;,\\
\\
\frac{d\gamma}{dt}=-\sum^{\infty}_{p=0}\frac{1}{p!}\mathcal{L}_{p+1}(\gamma(t),...,\gamma(t),r(t))\;.
\end{array}
\end{equation}
A (nonobvious) direct computation allows to see that the latter {\small ODE}, see Definition \ref{DolSho} of gauge homotopies and Equations (\ref{FlowEq}) and (\ref{SpecVectfield}), is dual (up to dimensional issues) to the {\small ODE} (\ref{DECon}), see Proposition \ref{CharConcord} that characterizes concordances.

\subsection{Infinity category of Leibniz infinity algebras}\label{KPQSection4}

We already observed that vertical composition of concordances is not well-defined and that Leibniz infinity algebras should form an infinity category. It is instructive to first briefly look at infinity homotopies between infinity morphisms of {\small DG} algebras.

\subsubsection{{\small DG} case}

Remember that infinity homotopies can be viewed as integral curves of specific vector fields $V_r$ of the {\small MC} quadric (with obvious endpoints). In the {\small DG} case, we have, for any $r\in L_0$, $$V_r:L_{-1}\ni\alpha\mapsto V_r(\alpha) = -{\cal L}_1(r)-{\cal L}_2(\alpha,r)\in L_{-1}\;.$$ In view of the Campbell-Baker-Hausdorff formula,

$$ \exp(t V_r)\circ \exp(t V_s) = \exp (t V_r + t V_s + 1/2\; t^2 [V_r, V_s]+ ...)\;. $$
The point is that $$V:L_0\to \mathrm{Vect}(L_{-1})$$ is a Lie algebra morphism -- also after restriction to the {\small MC} quadric; we will not detail this nonobvious fact. It follows that

$$\exp(t V_r)\circ \exp(t V_s) = \exp (t V_{r+s + 1/2\; t [r,s]+ ...})\;.$$
If we accept, as mentioned previously, time-dependent $r$-s, the problem of the vertical composition of homotopies is solved in the considered {\small DG} situation: the integral curve of the composed homotopy of two homotopies $\exp(t V_s)$ (resp., $\exp(t V_r)$) between morphisms $f,g$ (resp., $g,h$) is given by

$$c(t)=(\exp(t V_r)\circ \exp(t V_s))(f) = \exp (t V_{r+s + 1/2\; t [r,s]+ ...})(f)\;.$$

Note that this vertical composition is not associative. Moreover, the preceding approach does not go through in the infinity situation (note e.g. that in this case $L_0$ is no longer a Lie algebra). This again points out that homotopy algebras form infinity categories.

\subsubsection{Shortcut to infinity categories}\label{InftyCatIntro}

This subsection is a short digression that should allow to grasp the spirit of infinity categories. For additional information, we refer the reader to \cite{Gro10},\cite{Fin11} and \cite{Nog12}.\medskip


{\it Strict $n$-categories} or {\it strict $\omega$-categories} (in the sense of strict infinity categories) are well understood, see e.g. \cite{KMP11}. Roughly, they are made up by 0-morphisms (objects), 1-morphisms (morphisms between objects), 2-morphisms (homotopies between morphisms)..., up to $n$-morphisms, except in the $\omega$-case, where this upper bound does not exist. All these morphisms can be composed in various ways, the compositions being associative, admitting identities, etc. However, in most cases of higher categories these defining relations do not hold strictly. A number of concepts of weak infinity category, e.g. infinity categories in which the structural relations hold up to coherent higher homotopy, are developed in literature. Moreover, an $(\infty,r)$-category is an infinity category, with the additional requirement that all $j$-morphisms, $j>r$, be invertible. In this subsection, we actually confine ourselves to {\it $(\infty,1)$-categories, which we simply call $\infty$-categories}.

\paragraph{First examples}

Of course,

\begin{ex} {\it $\infty$-categories should include ordinary categories}.\end{ex}

There is another natural example of infinity category. When considering all the paths in $T\in{\tt Top}$, up to homotopy (for fixed initial and final points), we obtain the {\it fundamental groupoid} $\Pi_1(T)$ of $T$. Remember that the usual `half-time' composition of paths is not associative, whereas the induced composition of homotopy classes is. Hence, $\Pi_1(T)$, with the points of $T$ as objects and the homotopy classes of paths as morphisms, is (really) a category in which all morphisms are invertible. To encode more information about $T$, we can use a 2-category, the {fundamental 2-groupoid} $\Pi_2(T)$, whose 0-morphisms (resp., 1-morphisms, 2-morphisms) are the points of $T$ (resp., the paths between points, the homotopy classes of homotopies between paths), the composition of 1-morphisms being associative only up to a 2-isomorphism. More generally, we define the {\it fundamental $k$-groupoid} $\Pi_k(T)$, in which associativity of order $j\le k-1$ holds up to a $(j+1)$-isomorphism. Of course, if we increase $k$, we grasp more and more information about $T$. The {\it homotopy principle} says that the weak fundamental infinity groupoid $\Pi_{\infty}(T)$ recognizes $T$, or, more precisely, that {\it $(\infty,0)$-categories are the same as topological spaces}. Hence,

\begin{ex} $\infty$-categories, i.e. $(\infty,1)$-categories, should contain topological spaces.\end{ex}

\paragraph{Kan complexes, quasi-categories, nerves of groupoids and of categories}

Let us recall that the {\it nerve functor} $N:{\tt Cat}\rightarrow {\tt SSet}$, provides a fully faithful embedding of the category ${\tt Cat}$ of all (small) categories into ${\tt SSet}$ and remembers not only the objects and morphisms, but also the compositions. It associates to any ${\tt C}\in{\tt Cat}$ the simplicial set $$(N{\tt C})_n=\{C_0\to C_1\to\ldots\to C_n\}\;,$$ where the sequence in the {\small RHS} is a sequence of composable {\tt C}-morphisms between objects $C_i\in{\tt C}$; the face (resp., the degeneracy) maps are the compositions and insertions of identities. Let us also recall that the {\it $r$-horn} $\Lambda^r[n]$, $0\le r\le n$, of $\Delta[n]$ is `the part of the boundary of $\Delta[n]$ that is obtained by suppressing the interior of the $(n-1)$-face opposite to $r$'. More precisely, the $r$-horn $\Lambda^r[n]$ is the simplicial set, whose nondegenerate $k$-simplices are the injective order-respecting maps $[k]\to [n]$, except the identity and the map $\delta^r:[n-1]\rightarrowtail [n]$ whose image does not contain $r$.\medskip

We now detail four different situations based on the properties `Any (inner) horn admits a (unique) filler'.

\begin{defi} A simplicial set $S\in{\tt SSet}$ is \emph{fibrant} and called a \emph{Kan complex}, if the map $S\to \star$, where $\star$ denotes the terminal object, is a Kan fibration, i.e. has the right lifting property with respect to all canonical inclusions $\Lambda^r[n]\subset \Delta[n]$, $0\le r\le n$, $n>0$. In other words, $S$ is a Kan complex, if {any horn} $\Lambda^r[n]\to S$ can be extended to an $n$-simplex $\Delta[n]\to S$, i.e. if {\sf any horn in $S$ admits a filler}.\end{defi}

The following result is well-known and explains that a simplicial set is a nerve under a quite similar extension condition.

\begin{prop} A simplicial set $S$ is the nerve $S\simeq N{\tt C}$ of some category ${\tt C}$, if and only if {\sf any inner horn $\Lambda^r[n]\to S$, $0<r<n$, has a unique filler $\Delta[n]\to S$}. \end{prop}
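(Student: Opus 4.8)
The statement to prove is the standard characterization of nerves of categories (the "Grothendieck--Segal" or "unique inner horn filling" characterization). The plan is to establish both directions. For the "only if" direction, I would start from a category ${\tt C}$ and show that $N{\tt C}$ satisfies the unique inner horn filling condition. The key observation is that an $n$-simplex of $N{\tt C}$ is precisely a chain $C_0\xrightarrow{f_1}C_1\xrightarrow{f_2}\cdots\xrightarrow{f_n}C_n$ of composable morphisms, hence is determined by the $n$ "edges" $f_1,\ldots,f_n$ together with the objects; equivalently, it is determined by its restriction to the "spine" (the union of the consecutive $1$-faces $[i-1,i]$). Given an inner horn $\Lambda^r[n]\to N{\tt C}$ with $0<r<n$, I would note that the spine of $\Delta[n]$ is contained in $\Lambda^r[n]$ whenever $n\ge 2$ (this is where $0<r<n$ is used: the edges $[i-1,i]$ are $1$-dimensional faces, each omitted only from the horn $\Lambda^i[n]$ or $\Lambda^{i-1}[n]$, and for $2\le n$ all of $[0,1],\ldots,[n-1,n]$ survive in any inner horn since an inner horn $\Lambda^r[n]$ with $0<r<n$ omits the $(n-1)$-face opposite $r$, not a $1$-face, once $n\ge 3$; the case $n=2$ must be checked directly, where $\Lambda^1[2]$ is exactly the spine and the filler is the composite $f_2\circ f_1$). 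Thus the horn determines all the $f_i$, the filler is forced to be the chain with those edges, and it is unique; one then checks the horn data were automatically compatible, i.e. the face maps of this chain recover the given faces of the horn, which reduces to associativity of composition in ${\tt C}$.

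For the "if" direction, I would start from a simplicial set $S$ with the unique inner horn filling property and construct a category ${\tt C}$ with $N{\tt C}\simeq S$. Set ${\tt C}_0=S_0$ (objects), ${\tt C}_1=S_1$ (morphisms), with source and target given by $d_1,d_0:S_1\to S_0$ and identities given by the degeneracy $s_0:S_0\to S_1$. To define composition: given $f,g\in S_1$ with $d_0 f=d_1 g$ (i.e. "$g$ after $f$"), the pair $(f,g)$ assembles to an inner horn $\Lambda^1[2]\to S$; by hypothesis it has a unique filler $\sigma\in S_2$, and we define $g\circ f:=d_1\sigma$. Then I would verify the category axioms: unitality follows from the degenerate $2$-simplices $s_0 f$ and $s_1 f$ being the (necessarily unique) fillers of the relevant horns; associativity follows from a $\Lambda^1[3]$ or $\Lambda^2[3]$ filling argument — given three composable morphisms one builds an inner horn in $\Delta[3]$, its unique filler $\tau\in S_3$ has faces that exhibit both $(h\circ g)\circ f$ and $h\circ(g\circ f)$ as faces of the same $2$-simplex, forcing them equal. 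Finally I would construct the isomorphism $N{\tt C}\to S$: it is the identity in degrees $0,1$, and in higher degrees one shows by induction that an $n$-simplex of $S$ is uniquely determined by its spine (using repeated unique inner horn filling to "fill in" higher faces), exactly matching the fact that an $n$-simplex of $N{\tt C}$ is a chain of $n$ composable arrows; naturality in the simplicial maps is then routine.

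The main obstacle I anticipate is the careful bookkeeping in the "higher coherence" steps: proving associativity via $3$-simplices, and proving that every $n$-simplex of $S$ is determined by (and can be reconstructed from) its spine. The latter is really an inductive argument showing $S_n\cong S_1\times_{S_0}S_1\times_{S_0}\cdots\times_{S_0}S_1$ ($n$ factors), where at each stage one must identify precisely which horn to fill and verify that uniqueness of fillers propagates. One must be attentive to which horns are \emph{inner}: the argument only has unique fillers available for $\Lambda^r[n]$ with $0<r<n$, so the reconstruction procedure must be organized so that it never requires filling an outer horn $\Lambda^0[n]$ or $\Lambda^n[n]$. This is exactly the point that distinguishes nerves of categories (inner horns, unique fillers) from Kan complexes (all horns, possibly non-unique fillers) and from quasi-categories (inner horns, possibly non-unique fillers), so getting this delineation right is the crux of the proof.

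\textbf{Remark.} This statement, together with the preceding definition of Kan complex, motivates the definition of a \emph{quasi-category} (or $\infty$-category in the sense used here) as a simplicial set in which every inner horn admits a filler, \emph{not necessarily unique} — interpolating between Kan complexes (all horns fill) and nerves of categories (inner horns fill uniquely).
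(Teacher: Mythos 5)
Your plan is correct, and it is considerably more complete than what the paper itself offers: the paper states this proposition as well-known and proves nothing, merely illustrating the easy direction in the lowest dimension by observing that an inner horn $\Lambda^1[2]\to N{\tt C}$, i.e. a composable pair $f,g$, has the unique filler whose long edge is $g\circ f$. Your outline is the standard full argument: for the ``only if'' direction, a nerve simplex is determined by its spine, every inner horn with $n\ge 2$ contains the spine, so the filler is forced and exists because the faces of the candidate chain reproduce the horn (associativity/functoriality); for the ``if'' direction, you rebuild ${\tt C}$ from the $1$-skeleton, define composition by the unique $\Lambda^1[2]$-filler, obtain units from the degenerate $2$-simplices and associativity from a $3$-dimensional inner horn, and then identify $S_n$ with chains of $n$ composable arrows by a spine-reconstruction induction that only ever invokes inner horns. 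Two small caveats. First, your parenthetical bookkeeping of which horns omit which edges is off: for $n=2$ the spine edge $[0,1]$ is absent from $\Lambda^2[2]$ and $[1,2]$ from $\Lambda^0[2]$ (not from $\Lambda^i[2]$ or $\Lambda^{i-1}[2]$), while for $n\ge 3$ no edge is missing from any horn; your conclusion that every inner horn with $n\ge 2$ contains the spine is nevertheless right, and you checked $n=2$ directly. Second, the inductive step asserting that a map defined on the spine extends uniquely to $\Delta[n]$ — equivalently $S_n\simeq S_1\times_{S_0}\cdots\times_{S_0}S_1$ — is where the real combinatorial work lies (one must exhibit the extension as a sequence of fillings of inner horns only, i.e. show the spine inclusion is built from pushouts of inner horn inclusions); you correctly flag this as the crux, but in a full write-up that step must be carried out explicitly rather than asserted.
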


Indeed, it is quite obvious that for $S=N{\tt C}\in{\tt SSet}$, an inner horn $\Lambda^1[2]\to N{\tt C}$, i.e. two {\tt C}-morphisms $f: C_0\to C_1$ and $g:C_1\to C_2$, has a unique filler $\Delta[2]\to N{\tt C}$, given by the edge $h=g\circ f:C_0\to C_2$ and the `homotopy' $\mathrm{id}:h\Rightarrow g\circ f$ (1).\smallskip

As for Kan complexes $S\in{\tt SSet}$, the filler property for an outer horn $\Lambda^0[2]\to S$ (resp., $\Lambda^2[2]\to S$) implies for instance that a horn $f:s_0\to s_1$, $\mathrm{id}:s_0\to s_2=s_0$ (resp., $\mathrm{id}: s'_0\to s'_2=s_0'$, $g: s'_1\to s'_2$) has a filler, so that any map has a `left (resp., right) inverse' (2).\medskip

It is clear that simplicial sets $S_0,S_1,S_2,\,\ldots$ are candidates for $\infty$-categories. In view of the last remark (2), Kan complexes model $\infty$-groupoids. Hence, fillers for outer horns should be omitted in the definition of $\infty$-categories. On the other hand, $\infty$-categories do contain homotopies $\eta: h\Rightarrow g\circ f$, so that, due to (1), uniqueness of fillers is to be omitted as well. Hence, the

\begin{defi} A simplicial set $S\in{\tt SSet}$ is an \emph{$\infty$-category} if and only if {\sf any inner horn $\Lambda^r[n]\to S$, $0<r<n$, admits a filler $\Delta[n]\to S$}. \end{defi}

We now also understand the

\begin{prop} A simplicial set $S$ is the nerve $S\simeq N{\tt G}$ of some groupoid ${\tt G}$, if and only if {\sf any horn $\Lambda^r[n]\to S$, $0\le r\le n$, $n>0$, has a unique filler $\Delta[n]\to S$}.\end{prop}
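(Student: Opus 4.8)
The plan is to establish both implications of the characterization of nerves of groupoids among simplicial sets, building on the already-recalled fact that $S\simeq N{\tt C}$ for some category ${\tt C}$ if and only if every \emph{inner} horn has a \emph{unique} filler. First I would prove the ``only if'' direction: suppose $S=N{\tt G}$ with ${\tt G}$ a groupoid. Since a groupoid is in particular a category, inner horns have unique fillers by the cited proposition, so it remains to treat the outer horns $\Lambda^0[n]$ and $\Lambda^n[n]$, $n>0$. For $n=1$ there is nothing to check (a $\Lambda^0[1]$ or $\Lambda^1[1]$ is a point and extends uniquely to an edge, namely an identity of ${\tt G}$). For $n=2$, an outer horn $\Lambda^0[2]\to S$ is the datum of two morphisms sharing a source, say $f:C_0\to C_1$ and $h:C_0\to C_2$; the unique filler is provided by the triangle with third edge $g:=h\circ f^{-1}:C_1\to C_2$, and uniqueness follows because in $N{\tt G}$ a $2$-simplex is completely determined by any two of its edges together with the commutativity relation $g\circ f = h$, which forces $g$. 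The case $\Lambda^2[2]$ is symmetric, using $f^{-1}$ on the other side. For general $n$, I would observe that the $n$ edges present in an outer horn already include a spanning set of composable-or-co-composable arrows and that invertibility lets one solve for the one missing edge uniquely; then higher-dimensional faces of $N{\tt G}$ are determined by their edges, so the $n$-simplex filler is unique.

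\textbf{The converse.} Assume $S\in{\tt SSet}$ has the property that every horn $\Lambda^r[n]\to S$, $0\le r\le n$, $n>0$, admits a \emph{unique} filler. In particular every inner horn has a unique filler, so by the already-stated proposition there is a category ${\tt C}$ with $S\simeq N{\tt C}$; I would fix such an isomorphism and henceforth identify $S$ with $N{\tt C}$. It then suffices to show that every morphism of ${\tt C}$ is invertible. Given $f:C_0\to C_1$ in ${\tt C}$, consider the outer horn $\Lambda^0[2]\to S$ whose two edges are $f:C_0\to C_1$ and $\mathrm{id}_{C_0}:C_0\to C_0$ (on the faces $\delta^2$ and $\delta^1$, say, so that the missing face is the one opposite the vertex $0$). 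By hypothesis this horn has a (unique) filler, i.e.\ a $2$-simplex of $N{\tt C}$, which is exactly a commuting triangle exhibiting a morphism $g:C_1\to C_0$ with $g\circ f = \mathrm{id}_{C_0}$; thus $f$ has a left inverse. Dually, the outer horn $\Lambda^2[2]\to S$ with edges $f:C_0\to C_1$ and $\mathrm{id}_{C_1}:C_1\to C_1$ has a filler giving a morphism $g':C_1\to C_0$ with $f\circ g'=\mathrm{id}_{C_1}$, a right inverse. The standard argument $g = g\circ\mathrm{id}_{C_1} = g\circ (f\circ g') = (g\circ f)\circ g' = \mathrm{id}_{C_0}\circ g' = g'$ then shows $g=g'$ is a two-sided inverse, so ${\tt C}$ is a groupoid and $S\simeq N{\tt G}$ with ${\tt G}={\tt C}$.

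\textbf{Remarks on uniqueness and the main obstacle.} I would also check that the uniqueness clause is automatic in the converse once we know the inner-horn uniqueness: because $S=N{\tt C}$, an $n$-simplex is a string of composable morphisms and is therefore pinned down by its spine (the chain of edges $0\to 1\to\cdots\to n$), which any horn with $n>1$ already contains for inner $r$ and, after solving for the one missing edge via invertibility, also for outer $r$; hence a filler, if it exists, is unique. The genuinely delicate point — and the step I expect to be the main obstacle — is the bookkeeping for \emph{outer} horns in dimension $n\ge 3$: one must verify that the $n$ faces making up $\Lambda^0[n]$ (resp.\ $\Lambda^n[n]$) determine, via the simplicial identities and the compositions in ${\tt C}$, a consistent system of equations whose unique solution is the single missing $(n-1)$-face, and that invertibility of morphisms is exactly what makes that system solvable. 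In practice this reduces, by restricting to $2$-dimensional sub-horns and using that a $2$-simplex of $N{\tt C}$ is a commutative triangle, to the $n=2$ computations already carried out; I would phrase the higher-dimensional case as a short induction on $n$ rather than grinding through the general simplicial identities explicitly.
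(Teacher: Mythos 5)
Your converse is exactly the argument the paper gestures at (it states the proposition as well known, after remarking that inner horns of a nerve fill uniquely by composition and that filling the outer horns $\Lambda^0[2]$, $\Lambda^2[2]$ at an identity edge produces one-sided inverses), and in that direction your write-up is fine and in fact more explicit than the paper. The genuine problem is in your ``only if'' direction, at $n=1$. A horn $\Lambda^0[1]\to N{\tt G}$ is just a vertex $x$, and \emph{every} morphism of ${\tt G}$ with source $x$ -- not only $\mathrm{id}_x$ -- gives a filler $\Delta[1]\to N{\tt G}$; so uniqueness fails as soon as ${\tt G}$ has a non-identity arrow out of some object, and your claim that the point ``extends uniquely to an edge, namely an identity of ${\tt G}$'' is false. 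This is really a defect of the statement as printed (it allows $n>0$, hence $n=1$): the standard, correct formulation requires $n\ge 2$, and with that reading your proof strategy is the right one; but as written, the $n=1$ case cannot be proved, and your argument papers over this instead of flagging and excluding it.

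The second point needing care is the one you yourself single out, the outer horns with $n\ge 3$ -- but note that there the issue is not ``solving for one missing edge'': for $n\ge 3$ every edge of $\Delta[n]$ already lies in some face of $\Lambda^0[n]$, so the only candidate filler is the unique simplex of $N{\tt G}$ with the given spine, and uniqueness is automatic. For $n\ge 4$ even all triangles lie in the horn, so the filler exists for the nerve of \emph{any} category; the only case with content is $n=3$ (and dually $\Lambda^n[n]$), where the single relation not encoded in the horn is the commutativity of the missing face $d_0$, i.e. $g_{13}=g_{23}\circ g_{12}$. It follows from the three given triangles, which yield $g_{13}\circ g_{01}=g_{03}=g_{23}\circ g_{12}\circ g_{01}$, by cancelling the invertible edge $g_{01}$. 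Your proposed reduction ``to 2-dimensional sub-horns by induction'' is vaguer than this but repairable; spelling out this cancellation is exactly what is needed, and it is also the only place (besides $n=2$) where invertibility enters.
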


Of course, Kan complexes, i.e. $\infty$-groupoids, $(\infty,0)$-categories, or, still, topological spaces, are $\infty$-categories. Moreover, nerves of categories are $\infty$-categories. Hence, the requirement that topological spaces and ordinary categories should be $\infty$-categories are satisfied. Note further that what we just defined is a model for $\infty$-categories called {\it quasi-categories} or {\it weak Kan complexes}.

\paragraph{Link with the intuitive picture of an infinity category}\label{InftyCatComp}

In the following, we explain that the preceding model of an $\infty$-category actually corresponds to the intuitive picture of an $(\infty,1)$-category, i.e. that in an $\infty$-category all types of morphisms do exist, that all $j$-morphisms, $j>1$, are invertible, and that composition of morphisms is defined and is associative only up to homotopy. This will be illustrated by showing that any $\infty$-category has a homotopy category, which is an ordinary category.\medskip

We denote simplicial sets by $S, S',\ldots$, categories by ${\tt C},{\tt D},\dots$, and $\infty$-categories by ${\tt S},{\tt S'},\ldots$\medskip

Let {\tt S} be an $\infty$-category. Its {\it 0-morphisms} are the elements of ${\tt S}_0$ and its {\it 1-morphisms} are the elements of ${\tt S}_1$. The {\it source} and {\it target} maps $\sigma,\tau$ are defined, for any 1-morphism $f\in {\tt S}_1$, by $\sigma f=d_1f\in{\tt S}_0$, $\tau f=d_0f\in{\tt S}_0$, and the {\it identity map} is defined, for any 0-morphism $s\in {\tt S}_0$, by $\mathrm{id}_s=s_0s\in {\tt S}_1$, with self-explaining notation. In the following, we denote a 1-morphism $f$ with source $s$ and target $s'$ by $f:s\to s'$. In view of the simplicial relations, we have $\sigma \mathrm{id}_s=d_1s_0s=s$ and $\tau \mathrm{id}_s=d_0s_0s=s$, so that $\mathrm{id}_s:s\to s$.\medskip

Consider now two morphisms $f:s\to s'$ and $g:s'\to s''$. They define an inner horn $\Lambda^1[2]\to {\tt S}$, which, as {\tt S} is an $\infty$-category, admits a filler $\phi:\Delta[2]\to {\tt S}$, or $\phi\in {\tt S}_2$. The face $d_1\phi\in{\tt S}_1$ is of course a candidate for the composite $g\circ f.$

\begin{rem}\label{CompInftyCat} Since the face $h:=d_1\phi$ of any filler $\phi$ is a (candidate for the) composite $g\circ f$, composites of morphisms are in $\infty$-categories not uniquely defined. We will show that they are determined only up to `homotopy'.\end{rem}

\begin{defi}\label{2MorphInftyCat} Let {\tt S} be an $\infty$-category and let $f,g:s\to s'$ be two morphisms. A \emph{2-morphism} or \emph{homotopy} $\phi:f\Rightarrow g$ between $f$ and $g$ is an element $\phi\in{\tt S}_2$ such that $d_0\phi=g, d_1\phi=f, d_2\phi=\mathrm{id}_{s}$.\end{defi}

Indeed, if there exists such a 2-simplex $\phi$, there are two candidates for the composite $g\circ \mathrm{id}_{s}$, namely $f$ and, of course, $g$. If we wish now that all the candidates be homotopic, the existence of $\phi$ must entail that $f$ and $g$ are homotopic -- which is the case in view of Definition \ref{2MorphInftyCat}. If $f$ is homotopic to $g$, we write $f\simeq g$.

\begin{prop} The homotopy relation $\simeq$ is an equivalence in ${\tt S}_1$.\end{prop}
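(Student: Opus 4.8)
The plan is to show reflexivity, symmetry, and transitivity of the homotopy relation $\simeq$ on ${\tt S}_1$, using the fact that ${\tt S}$ is an $\infty$-category, i.e. that every inner horn $\Lambda^r[n]\to {\tt S}$, $0<r<n$, admits a filler $\Delta[n]\to {\tt S}$. Reflexivity is immediate: given $f:s\to s'$, the degenerate 2-simplex $s_1 f\in {\tt S}_2$ satisfies $d_0(s_1f)=f$, $d_1(s_1f)=f$, $d_2(s_1f)=s_0 d_1 f=s_0 s=\mathrm{id}_s$ by the simplicial identities, so $s_1 f:f\Rightarrow f$ is a homotopy and $f\simeq f$.

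Next I would establish symmetry. Suppose $\phi:f\Rightarrow g$, so $\phi\in{\tt S}_2$ with $d_0\phi=g$, $d_1\phi=f$, $d_2\phi=\mathrm{id}_s$. I want to produce $\psi\in{\tt S}_2$ with $d_0\psi=f$, $d_1\psi=g$, $d_2\psi=\mathrm{id}_s$. The trick is to build an inner horn $\Lambda^1[3]\to {\tt S}$ by prescribing three of its four 2-faces: take the face $d_3$ to be $\phi$, the face $d_0$ to be the degenerate simplex $s_1 g$ (which is a homotopy $g\Rightarrow g$ with $d_2=\mathrm{id}_s$ after suitable bookkeeping, matching along the shared edges of the horn $\Lambda^1[3]$), and the face $d_2$ to be the degenerate simplex $s_0\mathrm{id}_s$ or an appropriate degeneracy of $\mathrm{id}_s$; one checks these three faces agree on overlaps, giving a well-defined inner horn $\Lambda^1[3]\to {\tt S}$. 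A filler $\Delta[3]\to {\tt S}$ then has a remaining face $d_1\in{\tt S}_2$, and reading off its boundary via the simplicial identities shows it is exactly the desired homotopy $\psi:g\Rightarrow f$. The precise choice of which degenerate 2-simplices to plug into which faces of the horn, and the verification that the filler's missing face has the right boundary, is the bookkeeping-heavy part; I expect this to be the main obstacle, since one must keep careful track of the four simplicial identities $d_id_j=d_{j-1}d_i$ for $i<j$.

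For transitivity, suppose $\phi:f\Rightarrow g$ and $\psi:g\Rightarrow h$ with $f,g,h:s\to s'$. Here I would fill the inner horn $\Lambda^1[3]\to {\tt S}$ whose face $d_3$ is $\psi$ (or $\phi$), whose face $d_0$ is a degenerate simplex built from $\mathrm{id}_{s}$, and whose face $d_2$ is $\phi$ (or $\psi$), again after checking edge compatibility. The filler yields a face $d_1\in{\tt S}_2$, and its boundary $d_0(d_1)=h$, $d_1(d_1)=f$, $d_2(d_1)=\mathrm{id}_s$ exhibits $f\simeq h$. This is formally the same kind of argument as symmetry; once the symmetry case is done carefully, transitivity follows by an analogous horn-filling.

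The conceptual content is minimal — all three properties reduce to the single axiom that inner horns have fillers — but the care lies in choosing the right $3$-dimensional inner horns and verifying via the simplicial identities that the new $2$-simplices produced by the fillers have precisely the boundary required by Definition \ref{2MorphInftyCat}. I would organize the write-up by stating the three horns explicitly (listing the prescribed faces for each), then invoking the $\infty$-category axiom to get fillers, and finally computing the boundary of the relevant missing face in each case. No result beyond the definition of $\infty$-category and the simplicial identities is needed.
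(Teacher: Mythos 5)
Your overall strategy is the paper's: reflexivity via a degenerate 2-simplex, symmetry and transitivity via fillers of 3-dimensional inner horns. But each of your three concrete prescriptions fails on the simplicial identities, and the failures are not just bookkeeping slips that any choice would fix. For reflexivity, $s_1f$ is the wrong degeneracy: the identities give $d_0s_1f=s_0d_0f=\mathrm{id}_{s'}$, $d_1s_1f=f$, $d_2s_1f=f$ (there is no identity "$d_2s_1=s_0d_1$"; one has $d_2s_1=\mathrm{id}$), so $s_1f$ has third face $f$, not $\mathrm{id}_s$, and is not a homotopy in the sense of the definition. The correct choice, used in the paper, is $s_0f$, for which $d_0s_0f=d_1s_0f=f$ and $d_2s_0f=s_0d_1f=s_0s=\mathrm{id}_s$.

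For symmetry, your horn $\Lambda^1[2+1]$ with $d_3\Psi=\phi$ and degeneracies in the $d_0$- and $d_2$-slots cannot work, whatever degeneracies you choose: since $d_2d_1=d_1d_3$, the missing face $d_1\Psi$ of any filler satisfies $d_2(d_1\Psi)=d_1\phi=f$, which is not $\mathrm{id}_s$ unless $f=\mathrm{id}_s$; moreover your specific faces are not even edge-compatible, since $d_0d_2\Psi=d_1d_0\Psi$ would force $\mathrm{id}_s=d_1s_1g=g$. The paper instead fills $\Lambda^2[3]$ with $d_0\Psi=\phi$, $d_1\Psi=\mathrm{id}_g:=s_0g$, $d_3\Psi=\mathrm{id}^2_s$; then $d_0d_2\Psi=d_1d_0\Psi=f$, $d_1d_2\Psi=d_1d_1\Psi=g$, $d_2d_2\Psi=d_2d_3\Psi=\mathrm{id}_s$, so the filler's $d_2$-face is the desired homotopy $g\Rightarrow f$. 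Similarly, for transitivity your placement (degeneracy at $d_0$, the two homotopies at $d_2$ and $d_3$) yields a missing face with $d_0=\mathrm{id}_s$ and $d_2\in\{f,g\}$, not the required $h$ and $\mathrm{id}_s$; the working horn is $\Lambda^1[3]$ with $d_0\Psi=\psi$ (the homotopy $g\Rightarrow h$), $d_2\Psi=\phi$, $d_3\Psi=\mathrm{id}^2_s$, whose filler's $d_1$-face has $d_0=d_0\psi=h$, $d_1=d_1\phi=f$, $d_2=d_1\mathrm{id}^2_s=\mathrm{id}_s$. So the idea is right, but as written the symmetry and transitivity steps would fail; the fix is precisely the face placement the paper uses.
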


\begin{proof} Let $f:s\to s'$ be a morphism and consider $\mathrm{id}_f:=s_0f\in{\tt S}_2$. It follows from the simplicial relations that $d_0\mathrm{id}_f=f,d_1\mathrm{id}_f=f,d_2\mathrm{id}_f=s_0s=\mathrm{id}_s$, so that $\mathrm{id}_f$ is a homotopy between $f$ and $f$. To prove that $\simeq$ is symmetric, let $f,g:s\to s'$ and assume that $\phi$ is a homotopy from $f$ to $g$. We then have an inner horn ${\psi}:\Lambda^2[3]\to {\tt S}$ such that $d_0\psi=\phi$, $d_1\psi=\mathrm{id}_g$, and $d_3\psi=\mathrm{id}_{\mathrm{id}_s}=:\mathrm{id}^2_s$. The face $d_2\Psi$ of a filler $\Psi:\Delta[3]\to {\tt S}$ is a homotopy from $g$ to $f$. Transitivity can be obtained similarly. \end{proof}

\begin{defi} The \emph{homotopy category} ${\tt Ho}(\tt S)$ of an $\infty$-category $\,{\tt S}$ is the (ordinary) category with objects the objects $s\in {\tt S}_0$, with morphisms the homotopy classes $[f]$ of morphisms $f\in {\tt S}_1$, with composition $[g]\circ [f]=[g\circ f]$, where $g\circ f$ is any candidate for the composite in ${\tt S}$, and with identities $\mathrm{Id}_s=[\mathrm{id}_s]$.\end{defi}

To check that this definition makes sense, we must in particular show that all composites $g\circ f$, see Remark \ref{CompInftyCat}, are homotopic. Let thus $\phi_1,\phi_2\in{\tt S}_2$ be two 2-simplices such that $(d_0\phi_1,d_1\phi_1,d_2\phi_1)=(g,h_1,f)$ and $(d_0\phi_2,d_1\phi_2,d_2\phi_2)=(g,h_2,f)$, so that $h_1$ and $h_2$ are two candidates. Consider now for instance the inner horn $\psi:\Lambda^2[3]\to {\tt S}$ given by $\psi=(\phi_1,\phi_2,\bullet,\mathrm{id}_f)$. The face $d_2\Psi$ of a filler $\Psi:\Delta[3]\to {\tt S}$ is then a homotopy from $h_2$ to $h_1$. To prove that the composition of morphisms in ${\tt Ho}({\tt S})$ is associative, one shows that candidates for $h\circ (g\circ f)$ and for $(h\circ g)\circ f$ are homotopic (we will prove neither this fact, nor the additional requirements for ${\tt Ho}({\tt S})$ to be a category).

\begin{rem} It follows that in an $\infty$-category composition of morphisms is defined and associative only up to homotopy.\end{rem}

We now comment on higher morphisms in $\infty$-categories, on their composites, as well as on invertibility of $j$-morphisms, $j>1$.

\begin{defi} Let $\phi_1:f\Rightarrow g$ and $\phi_2:f\Rightarrow g$ be 2-morphisms between morphisms $f,g:s\to s'$. A \emph{3-morphism} $\Phi:\phi_1\Rrightarrow \phi_2$ is an element $\Phi\in{\tt S}_3$ such that $d_0\Phi= \mathrm{id}_g$, $d_1\Phi =\phi_2$, $d_2\Phi =\phi_1$, and $d_3\Phi= \mathrm{id}^2_s$.\end{defi} Roughly, a 3-morphism is a 3-simplex with faces given by sources and targets, as well as by identities. Higher morphisms are defined similarly \cite{Gro10}.\medskip

As concerns composition and invertibility, let us come back to transitivity of the homotopy relation. There we are given 2-morphisms $\phi_1:f\Rightarrow g$ and $\phi_2:g\Rightarrow h$, and must consider the inner horn $\psi=(\phi_2,\bullet,\phi_1,\mathrm{id}^2_s)$. The face $d_1\Psi$ of a filler $\Psi$ is a homotopy between $f$ and $h$ and is a candidate for the composite $\phi_2\circ\phi_1$ of the 2-morphisms $\phi_1,\phi_2$. If we now look again at the proof of symmetry of the homotopy relation and denote the homotopy from $g$ to $f$ by $\psi'$, we see that $\psi\circ\psi'\simeq\mathrm{id}_g$. We obtain similarly that $\psi'\circ\psi\simeq\mathrm{id}_f$, so that 2-morphisms are `invertible'.

\begin{rem} Eventually, all the requirements of the intuitive picture of an $\infty$-category are (really) encoded in the existence of fillers of inner horns.\end{rem}

\subsubsection{Infinity groupoid of infinity morphisms between Leibniz infinity algebras}\label{sec_VcoH}

\paragraph{Quasi-category of homotopy Leibniz algebras}\label{InftyCatInftyAlg}

Let $\Omega^\star_\bullet$ be the {\small SDGCA} introduced in Subsection \ref{SimpicialDR}. The `Yoneda embedding' of $\Omega^\star_\bullet$ viewed as object of ${\tt SSet}$ and ${\tt DGCA}$, respectively, gives rise to an adjunction that is well-known in Rational Homotopy Theory: $$\Omega^\star:{\tt SSet}\rightleftarrows {\tt DGCA}^{\mathrm{op}}:\mathrm{Spec}_{\bullet}\;.$$ The functor $\Omega^\star=\mathrm{Hom}_{\tt SSet}(-,\Omega^\star_\bullet)=:\mathrm{SSet}(-,\Omega^\star_\bullet)$ associates to any $S_\bullet\in{\tt SSet}$ its Sullivan {\small DGCA} $\Omega^\star(S_\bullet)$ of piecewise polynomial differential forms, whereas the functor $\mathrm{Spec}_{\bullet}=\mathrm{Hom}_{\tt DGCA}(-,\Omega^\star_\bullet)$ assigns to any $A\in {\tt DGCA}$ its simplicial spectrum $\mathrm{Spec}_{\bullet}(A)$.\medskip

Remember now that an $\infty$-homotopy between $\infty$-morphisms between two Leibniz infinity algebras $V,W$, is an element in $\mathrm{MC}_1({\bar L})=\mathrm{MC}(L\otimes\Omega^\star_1)$, where $L=L(V,W)$.\medskip

The latter set is well-known from integration of $L_{\infty}$-algebras. Indeed, when looking for an integrating topological space or simplicial set of a positively graded $L_{\infty}$-algebra $L$ of finite type (degree-wise of finite dimension), it is natural to consider the simplicial spectrum of the corresponding quasi-free {\small DGCA} $\mathrm{Com}(s^{-1}L^*)$. The dual of Equation (\ref{NatBijCoAlg}) yields $$\mathrm{Spec}_\bullet(\mathrm{Com}(s^{-1}L^*))=\mathrm{Hom}_{\tt DGCA}(\mathrm{Com}(s^{-1}L^*),\Omega^\star_\bullet)\simeq \mathrm{MC}(L\otimes \Omega^\star_\bullet)\;.$$ The integrating simplicial set of a nilpotent $L_{\infty}$-algebra $L$ is actually homotopy equivalent to $\mathrm{MC}_\bullet({\bar L}):=\mathrm{MC}(L\otimes\Omega^\star_\bullet)$ \cite{Get09}. It is clear that the structure maps of $\mathrm{MC}_\bullet({\bar L})\subset L\otimes\Omega^\star_\bullet$ are $\tilde{d}^n_i=\mathrm{id}\otimes d^n_i$ and $\tilde{s}^n_i=\mathrm{id}\otimes s^n_i$, where $d^n_i$ and $s^n_i$ were described in Subsection \ref{SimpicialDR}.\medskip

Higher homotopies ($n$-homotopies) are usually defined along the same lines as standard homotopies (1-homotopies), i.e., e.g., as arrows depending on parameters in $I^{\times n}$ (or $\Delta^n$) instead of $I$ (or $\Delta^1$) \cite{Lei04}. Hence,

\begin{defi} \emph{$\infty$-$n$-homotopies} $($\emph{$\infty$-$(n+1)$-morphisms}$)$ between given Leibniz infinity algebras $V,W$ are Maurer-Cartan elements in $\mathrm{MC}_n(\bar L)=\mathrm{MC}(L\otimes\Omega^\star_n)$, where $L=L(V,W)$ and $n\ge 0$.\label{InftyNHomot}\end{defi}

Indeed, $\infty$-1-morphisms are just elements of $\mathrm{MC}(L)$, i.e. standard $\infty$-morphisms between $V$ and $W$.\medskip

Note that if $\tt S$ is an $\infty$-category, the set of $n$-morphisms, with varying $n\ge 1$, between two fixed objects $s,s'\in{\tt S}_0$ can be shown to be a Kan complex \cite{Gro10}. The simplicial set $\mathrm{MC}_\bullet(\bar L)$, whose $(n-1)$-simplices are the $\infty$-$n$-morphisms between the considered Leibniz infinity algebras $V,W$, $n\ge 1$, is known to be a Kan complex ($(\infty,0)$-category) as well \cite{Get09}.

\begin{rem} We interpret this result by saying that Leibniz infinity algebras and their infinity higher morphisms form an $\infty$-category $($$(\infty,1)$-category$)$. Further, as mentioned above and detailed below, composition of homotopies is encrypted in the Kan property.\end{rem}

Note that $\mathrm{MC}_\bullet(\bar L)$ actually corresponds to the `d\'ecalage', the `down-shifting', of the simplicial set $\tt S$.\smallskip

Let us also \emph{emphasize} that Getzler's results are valid only for nilpotent $L_{\infty}$-algebras, hence in principle not for $L$, which is only complete (an $L_{\infty}$-algebra is \emph{pronilpotent}, if it is complete with respect to its lower central series, i.e. the intersection of all its compatible filtrations, and it is \emph{nilpotent}, if its lower central series eventually vanishes). However, for our remaining concern, namely the explanation of homotopies and their compositions in the 2-term Leibniz infinity algebra case, this difficulty is irrelevant. Indeed, when interpreting the involved series as formal ones and applying the thus obtained results to the 2-term case, where series become finite for degree reasons, we recover the results on homotopies and their compositions conjectured in \cite{BC04}. An entirely rigorous approach to these issues is being examined in a separate paper: it is rather technical and requires applying Henriques' method or working over an arbitrary local Artinian algebra.






\paragraph{Kan property}\label{Kan property}

Considering our next purposes, we now review and specify the proof of the Kan property of $\mathrm{MC}_\bullet(\bar L)$ \cite{Get09}. As announced above, to facilitate comparison, we adopt the conventions of the latter paper and apply the results mutatis mutandis and formally to our situation. In particular, we work in \ref{Kan property} with the cohomological version of Lie infinity algebras ($k$-ary bracket of degree $2-k$), together with Getzler's sign convention for the higher Jacobi conditions, see \ref{Gauge}, and assume that $\mathbb{K}=\mathbb{R}$.\medskip

Let us first recall that the lower central filtration of $(L,{\cal L}_i)$ is given by $F^1L=L$ and
$$F^iL=\sum_{i_1+\cdots+i_k=i}{\cal L}_k(F^{i_1}L,\cdots,F^{i_k}L),~~i>1\;.$$ In particular, $F^2L={\cal L}_2(L,L)$, $F^3L={\cal L}_2(L,{\cal L}_2(L,L))+{\cal L}_3(L,L,L)$, ..., so that $F^kL$ is spanned by all the nested brackets containing $k$ elements of $L$. Due to nilpotency, $F^iL=\{0\}$, for $i\gg$.\medskip

To simplify notation, let $\delta$ be the differential ${\cal L}_1$ of $L$, let $d$ be the de Rham differential of $\Omega^\star_n=\Omega^\star(\Delta^n)$, and let $\bar\delta+\bar d$ be the differential $\bar{\cal L}_1=\delta\otimes \mathrm{id}+\mathrm{id}\otimes d$ of $L\otimes\Omega^\star(\Delta^n)$. Set now, for any $n\ge 0$ and any $0\le i\le n$,
$$\mathrm{mc}_n(\bar L):=\{(\bar\delta+\bar d)\beta:\beta \in (L\otimes\Omega^{\star}(\Delta^n))^0\}\;\text{and}\; \mathrm{mc}_n^i(\bar L):=\{(\bar\delta+\bar d)\beta:\beta \in (L\otimes\Omega^{\star}(\Delta^n))^0, \;\bar\varepsilon^i_n\beta=0\}\;,$$ where $\bar\varepsilon_n^i:=\mathrm{id}\otimes \varepsilon_n^i$ is the canonical extension of the evaluation map $\varepsilon_n^i:\Omega^\star(\Delta^n)\to \mathbb{K}$, see \ref{SimpicialDR}. \begin{rem} In the following, we use the extension symbol `bar' only when needed for clarity.\end{rem}

$\bullet\quad$  There exist fundamental bijections \begin{eqnarray} B^i_n: \mathrm{MC}_n(\bar L)\stackrel{\sim}{\longrightarrow} \mathrm{MC}(L)\times \mathrm{mc}_n^i(\bar L)\subset \mathrm{MC}(L)\times\mathrm{mc}_n(\bar L)\label{important_bijection}\;.\end{eqnarray}

The proof uses the operators $$h_n^i:~\Omega^{\star}(\Delta^n)\to \Omega^{\star-1}(\Delta^n)$$ defined as follows. Let $\vec t=[t_0,\ldots, t_n]$ be the coordinates of $\Delta^n$ (with $\sum_i t_i=1$) and consider the maps $\phi_n^i:~I\times\Delta^n\ni (u,\vec t)\mapsto u\vec t+(1-u)\vec e_i\in \Delta^n$. They allow to pull back a polynomial differential form on $\Delta^n$ to a polynomial differential form on $I\times \Delta^n$. The operators $h_n^i$ are now given by \begin{eqnarray} h_n^i\omega=\int_I~(\phi_n^i)^*\omega\;.\nonumber\end{eqnarray} They satisfy the relations
\begin{eqnarray} \{d,h_n^i\}=\mathrm{id}_n-\varepsilon_n^i,~~~~\{h_n^i,h_n^j\}=0,~~~~~\varepsilon_n^ih_n^i=0\;,\label{ibp_proper}\end{eqnarray} where $\{-,-\}$ is the graded commutator (remember that $\varepsilon_n^i$ vanishes in nonzero cohomological degree). The first relation is a higher dimensional analogue of $$\{d,\int_0^t\}\omega=\{d,\int_0^t\}(f(u)+g(u)du)=d\int_0^tg(u)du+\int_0^td_uf\,du=g(t)dt+f(t)-f(0)=\omega-\varepsilon_1^0\omega\;,$$ where $\omega\in\Omega^\star(I).$\medskip

The natural extensions of $d,$ $h_n^i,$ and $\varepsilon_n^i$ to $L\otimes\Omega^\star(\Delta^n)$ verify the same relations, and, since we obviously have $\delta h_n^i=-\,{h}_n ^i\delta$, the first relation holds in the extended setting also for $d$ replaced by $\delta+d$.\medskip

Define now $B_n^i$ by
\begin{equation} B_n^i:\mathrm{MC}_n(\bar L)\ni\alpha\mapsto B_n^i\alpha:=(\varepsilon_n^i\alpha,(\delta+d)h_n^i\alpha)\in \mathrm{MC}(L)\times \mathrm{mc}_n^i(\bar L)\;.\label{BDir}\end{equation} Observe that $\alpha\in (L\otimes\Omega^\star(\Delta^n))^1$ reads $\alpha=\sum_{k=0}^n\alpha^k$, $\alpha^k\in L_{1-k}\otimes\Omega^k(\Delta^n)$, so that $\varepsilon_n^i\alpha=\varepsilon_n^i\alpha^0\in L_1$. Moreover, it follows from the definition of the extended $L_{\infty}$-maps $\bar{\cal L}_i$ that \begin{equation}\label{EpsilonMC}\sum_{i\ge 1}\frac{1}{i!}{\cal L}_i(\varepsilon_n^i\alpha,\ldots,\varepsilon_n^i\alpha)=\varepsilon_n^i\sum_{i\ge 1}\frac{1}{i!}\bar{\cal L}_i(\alpha,\ldots,\alpha)=0\;.\end{equation} In view of the last equation (\ref{ibp_proper}), the second component of $B_n^i\alpha$ is clearly an element of $\textrm{mc}_n^i(\bar L)$.\medskip

The construction of the inverse map is based upon a method similar to the iterative approximation procedure that allows to prove the fundamental theorem of {\small ODE}-s. More precisely, consider the Cauchy problem $y\,'(t)=F(t,y(t))$, $y(0)=Y$, i.e. the integral equation $$y(s)=Y+\int_0^sF(t,y(t))dt\;.$$ Choose now the `Ansatz' $y_0(s)=Y$ and define inductively $$y_k(s)=Y+\int_0^sF(t,y_{k-1}(t))dt\;,$$ $k\ge 1$. It is well-known that the $y_k$ converge to a function $y$, which is the unique solution and depends continuously on the initial value $Y$.\medskip

Note now that, if we are given $\mu\in\textrm{MC}(L)$ and $\nu=(\delta+d)\beta\in \textrm{mc}^i_n(\bar L)$, a solution $\alpha\in\mathrm{MC}_n(\bar L)$ -- i.e. an element $\alpha\in(L\otimes\Omega^\star(\Delta^n))^1$ that satisfies $$(\delta+d)\alpha+\sum_{i\ge 2}\frac{1}{i!}\bar{\cal L}_i(\alpha,\ldots,\alpha)=:(\delta+d)\alpha+\bar{R}(\alpha)=0\;\;\;\text{--}\;$$ such that $\varepsilon_n^i\alpha=\mu$ and $(\delta+d)h_n^i\alpha=\nu$, also verifies the integral equation \begin{equation}\alpha=\mathrm{id}_n\alpha=\{\delta+d,h_n^i\}\alpha+\varepsilon_n^i\alpha=\mu+\nu+h_n^i(\delta+d)\alpha=\mu+\nu-h_n^i\bar R(\alpha)\;.\label{IntEq}\end{equation} We thus choose the `Ansatz' $\alpha_0=\mu+\nu$ and set $\alpha_{k}=\alpha_0-h_n^i\bar R(\alpha_{k-1}),$ $k\ge 1$. It is easily seen that, in view of nilpotency, this iteration stabilizes, i.e. $\alpha_{k-1}=\alpha_k=\ldots =:\alpha$, for $k\gg$, or, still, \begin{eqnarray} \alpha=\alpha_0-h_n^i\bar R(\alpha)\;.\label{stable}\end{eqnarray} The limit $\alpha$ is actually a solution in $\mathrm{MC}_n(\bar L)$. Indeed, remember first that the generalized curvature
\begin{eqnarray} \bar{\cal F}(\alpha)=(\delta+d)\alpha+\bar{R}(\alpha)=(\delta+d)\alpha+\sum_{i\ge 2}\frac{1}{i!}\bar{\cal L}_i(\alpha,\ldots,\alpha)\;,\nonumber\end{eqnarray}
whose zeros are the {\small MC} elements, satisfies, just like the standard curvature, the Bianchi identity
\begin{eqnarray} (\delta+d)\bar{\cal F}(\alpha)+\sum_{k\ge 1}\frac{1}{k!}\bar{\cal L}_{k+1}(\alpha,\ldots,\alpha,\bar{\cal F}(\alpha))=0\;.\label{biancchi}\end{eqnarray}
It follows from (\ref{stable}) and (\ref{ibp_proper}) that
\begin{eqnarray} \bar{\cal F}(\alpha)=(\delta+d)(\alpha_0-h_n^i\bar R(\alpha))+\bar{R}(\alpha)=(\delta+d)\mu+h_n^i(\delta+d)\bar R(\alpha)+\varepsilon_n^i\bar R(\alpha)\;.\nonumber\end{eqnarray} From Equation (\ref{EpsilonMC}) we know that $\varepsilon_n^i\bar R(\alpha)=R(\varepsilon_n^i\alpha)=R(\mu)$, with self-explaining notation. As for $\varepsilon_n^i\alpha=\mu$, note that $\varepsilon_n^i\mu=\mu$ and that $$\varepsilon_n^i\nu=\varepsilon_n^i(\delta+d)\beta=\varepsilon_n^i(\delta+d)\sum_{k=0}^n\beta^k,\; \beta^k\in L_{-k}\otimes\Omega^k(\Delta^n),\;\text{ so that }\; \varepsilon_n^i\nu=\varepsilon_n^i\delta\beta^0=\delta\varepsilon_n^i\beta^0=\delta\varepsilon_n^i\beta=0\;.$$ Hence, $$\bar{\cal F}(\alpha)=(\delta+d)\mu+h_n^i(\delta+d)(\bar{\cal F}(\alpha)-(\delta+d)\alpha)+R(\mu)$$ $$={\cal F}(\mu)+h_n^i(\delta+d)\bar{\cal F}(\alpha)=-h_n^i\sum_{k\ge 1}\frac{1}{k!}\bar{\cal L}_{k+1}(\alpha,\ldots,\alpha,\bar{\cal F}(\alpha))\;,$$ in view of (\ref{biancchi}). Therefore, $\bar{\cal F}(\alpha)\in F^i\bar L$, for arbitrarily large $i$, and thus $\alpha\in\mathrm{MC}_n(\bar L)$. This completes the construction of maps \begin{equation}\label{BInv}{\cal B}_n^i:\mathrm{MC}(L)\times \mathrm{mc}_n^i(\bar L)\to \mathrm{MC}_n(\bar L)\;.\end{equation}

We already observed that $\varepsilon_n^i{\cal B}_n^i(\mu,\nu)=\varepsilon_n^i\alpha=\mu$. In fact, $B_n^i{\cal B}_n^i=\mathrm{id}$, so that $B_n^i$ is surjective. Indeed, Equations (\ref{stable}) and (\ref{ibp_proper}) imply that $$(\delta+d)h_n^i\alpha=-h_n^i(\delta+d)\alpha_0+\alpha_0-\varepsilon_n^i\alpha_0=-h_n^i\delta\mu+\nu=\nu\;.$$ As for injectivity, if $B_n^i\alpha=B_n^i\alpha'=:(\mu,\nu)$, then both, $\alpha$ and $\alpha'$, satisfy Equation (\ref{IntEq}). It is now quite easily seen that nilpotency entails that $\alpha=\alpha'$.\bigskip

$\bullet\quad$  The bijections $$B_n^i:\mathrm{MC}_n(\bar L)\to \mathrm{MC}(L)\times \mathrm{mc}_n^i(\bar L)$$ allow to prove the Kan property for $\mathrm{MC}_\bullet(\bar L)$. The extension of a horn in $\mathrm{SSet}(\Lambda^i[n],\mathrm{MC}_\bullet(\bar L))$ will be performed as sketched by the following diagram:
\begin{eqnarray}
\xymatrix{
\textrm{SSet}(\Lambda^i[n],\textrm{MC}_{\bullet}(\bar L)) \ar@{.>}^-{ }[r]\ar@{->}_-{ }[d]&\textrm{MC}_n(\bar L)\\
\textrm{SSet}(\Lambda^i[n],\textrm{MC}(L)\times\textrm{mc}_{\bullet}(\bar L))\ar@{->}^-{}[r]&\textrm{MC}(L)\times\textrm{mc}^i_n(\bar L)\ar@{->}_-{}[u]
}\label{commute_square}\end{eqnarray}

Of course, the right arrow is nothing but ${\cal B}_n^i$.\medskip

$\star\quad$  As for the left arrow, imagine, for simplicity, that $i=1$ and $n=2$, and let $$\alpha\in \mathrm{SSet}(\Lambda^1[2], \mathrm{MC}_\bullet(\bar L))\;.$$ The restrictions $\alpha|_{01}$ and $\alpha|_{12}$ to the 1-faces $01$, $12$ (compositions of the natural injections with $\alpha$) are elements in $\mathrm{MC}_1(\bar L)$, so that the map $B_1^1$ sends $\alpha|_{01}$ to $(\mu,\nu)$ in $\mathrm{MC}(L)\times\mathrm{mc}_1(\bar L)$ (and similarly $B_1^0(\alpha|_{12})=(\mu',\nu')\in \mathrm{MC}(L)\times\mathrm{mc}_1(\bar L)$). Of course, $\mu=\varepsilon_1^1(\alpha|_{01})=\varepsilon_1^0(\alpha|_{12})=\mu'$. Since $\nu=(\delta+d)\beta$ and $\beta(1)=\varepsilon_1^1\beta=0$, we find $\nu(1)=\varepsilon_1^1\nu=0$ (and similarly $\nu'=(\delta+d)\beta'$ and $\beta'(1)=\nu'(1)=0$). Thus, $$(\mu;\nu,\nu')\in\mathrm{SSet}(\Lambda^1[2],\mathrm{MC}(L)\times\mathrm{mc}_\bullet(\bar L))\;,$$ which explains the left arrow.\medskip

$\star\quad$ For the bottom arrow, let again $i=1,n=2$. Since $\mu$ is constant, it can be extended to the whole simplex. To extend $(\nu,\nu')$, it actually suffices to extend $(\beta,\beta')$. Indeed, restriction obviously commutes with $\delta$. As for commutation with $d$, remember that $\Omega^\star:\Delta^{\mathrm{op}}\to {\tt DGCA}$ and that the {\tt DGCA}-map $d^2_2=\Omega^\star(\delta^2_2)$ sets the component $t_2$ to 0. Hence, $d^2_2$ coincides with restriction to $01$ and commutes with $d$. Let now $\bar\beta$ be an extension of $(\beta,\beta')$. Since $$(d\bar\beta)|_{01}=d^2_2d\bar\beta=dd^2_2\bar\beta=d\beta$$ and similarly $(d\bar\beta)|_{12}=d\beta'$.\medskip

It now remains to explain that an extension $\bar\beta$ does always exist. Consider the slightly more general extension problem of three polynomial differential forms $\beta_0$, $\beta_1,$ and $\beta_2$ defined on the 1-faces $12$, $02$, and $01$ of the 2-simplex $\Delta^2$, respectively (it is assumed that they coincide at the vertices). Let $\pi_2:\Delta^2\to 01$ be the projection defined, for any $\vec t=[t_0,t_1,t_2]$, as the intersection of the line $u\vec t+(1-u)\vec e_2$ with $01$. This projection is of course ill-defined at $\vec t=\vec e_2$. In coordinates, we get $$\pi_2:[t_0,t_1,t_2]\mapsto [t_0/(1-t_2), t_1/(1-t_2)]\;.$$ It follows that the pullback $\pi_2^*\beta_2$ is a rational differential form with denominator $(1-t_2)^N$, for some integer $N.$ Hence, $$\gamma_2:=(1-t_2)^N\pi_2^*\beta_2$$ is a polynomial differential form on $\Delta^2$ that coincides with $\beta_2$ on $01$. It now suffices to solve the same extension problem as before, but for the forms $\beta_0-\gamma_2|_{12}$, $\beta_1-\gamma_2|_{02}$, and $0$. When iterating the procedure -- due to Renshaw \cite{Sul77} --, the problem reduces to the extension of $0,0,0$ (since the pullback preserves 0). This completes the description of the bottom arrow, as well as the proof of the Kan property of $\mathrm{MC}_\bullet(\bar L)$.

\subsection{2-Category of 2-term Leibniz infinity algebras}\label{KPQSection5}

Categorification replaces sets (resp., maps, equations) by categories (resp., functors, natural isomorphisms). In particular, rather than considering two maps as equal, one details a way of identifying them. Categorification is thus a sharpened viewpoint that turned out to provide deeper insight. This motivates the interest in e.g. categorified algebras (and in truncated infinity algebras -- see below).\medskip

Categorified Lie algebras were introduced under the name of Lie 2-algebras in [BC04] and further studied in \cite{Roy07}, \cite{SL10}, and \cite{KMP11}. The main result of \cite{BC04} states that Lie 2-algebras and 2-term Lie infinity algebras form equivalent 2-categories. However, {\bf infinity homotopies of 2-term Lie infinity algebras} (resp., {\bf compositions of such homotopies}) {are not explained, but appear as some God-given natural transformations read through this} {\textsc{equivalence}} (resp., compositions are addressed only in \cite{SS07Structure} and performed in the {\textsc{algebraic or coalgebraic settings}}).\medskip

This circumstance is not satisfactory, and {\bf the attempt to improve our understanding of infinity homotopies and their compositions is one of the main concerns of the present paper}. Indeed, in \cite{KMP11} (resp., \cite{BP12}), the authors show that the {\textsc{equivalence}} between $n$-term Lie infinity algebras and Lie $n$-algebras is, for $n> 2$, not as straightforward as expected -- which is essentially due to the largely ignored fact that the category {\tt Vect} $n${\tt -Cat} of linear $n$-categories is symmetric monoidal, but that the corresponding map $\boxtimes:L\times L'\to L\boxtimes L'$ is not an $n$-functor (resp., that the understanding of a concept in the {\textsc{algebraic framework}} is far from implying its comprehension in the infinity context -- a reality that is corroborated e.g. by the comparison of concordances and infinity homotopies).\medskip

In this section, we obtain {\bf explicit formulae for infinity homotopies and their compositions}, applying the \textsc{Kan property} of $\mathrm{MC}_\bullet(\bar L)$ to the 2-term case, thus staying inside the \textsc{infinity setting}.

\subsubsection{Category of 2-term Leibniz infinity algebras}

For the sake of completeness, we first describe 2-term Leibniz infinity algebras and their morphisms. Propositions \ref{2term_Loday_in_algebra} and \ref{2term_Loday_morphism} are specializations to the 2-term case of Definitions \ref{LeibInftyAlg} and \ref{LeibInftyAlgMorph}; see also \cite{SL10}. The informed reader may skip the present subsection.

\begin{prop}\label{2term_Loday_in_algebra} A \emph{2-term Leibniz infinity algebra} is a graded vector space $V=V_0\oplus V_1$ concentrated in degrees 0 and 1, together with a linear, a bilinear, and a trilinear map $l_1,$ $l_2,$  and $l_3$ on $V$, of degree $|l_1|=-1,$ $|l_2|=0,$ and $|l_3|=1,$ which verify, for any $w,x,y,z\in V_0$ and $h,k\in V_1$,
\begin{enumerate}[(a)]
\item \label{identity_a} $l_1l_2(x,h)=l_2(x,l_1h)\;,$\\
$l_1l_2(h, x) = l_2(l_1h, x)\;,$
\item \label{identity_b}$l_2(l_1h,k)=l_2(h,l_1k)\;,$
\item \label{identity_c}$l_1l_3(x,y,z)=l_2(x,l_2(y,z))-l_2(y,l_2(x,z))-l_2(l_2(x,y),z)\;,$
\item \label{identity_d}$l_3(x,y,l_1h)=l_2(x,l_2(y,h))-l_2(y,l_2(x,h))-l_2(l_2(x,y),h)\;,$\\
$l_3(x, l_1h, y) = l_2(x, l_2(h, y)) - l_2(h, l_2(x, y)) - l_2(l_2(x,h), y)\;,$\\
$l_3(l_1h, x, y) = l_2(h, l_2(x, y)) - l_2(x, l_2(h, y)) - l_2(l_2(h, x), y)\;,$
\item \label{identity_e}$l_2(l_3(w,x,y),z)+l_2(w,l_3(x,y,z))-l_2(x,l_3(w,y,z))+l_2(y,l_3(w,x,z))-l_3(l_2(w,x),y,z)$\\
$+l_3(w,l_2(x,y),z)-l_3(x,l_2(w,y),z)-l_3(w,x,l_2(y,z))+l_3(w,y,l_2(x,z))-l_3(x,y,l_2(w,z))=0\;.$
\end{enumerate}
\end{prop}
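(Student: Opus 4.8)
The plan is to derive Proposition~\ref{2term_Loday_in_algebra} as a direct specialization of Definition~\ref{LeibInftyAlg} to a graded vector space $V=V_0\oplus V_1$ concentrated in degrees $0$ and $1$. First I would observe that the only possibly nonzero structure maps are $l_1,l_2,l_3$: since $l_i$ has degree $i-2$ and $V$ lives only in degrees $0,1$, any $l_i$ with $i\ge 4$ would have to raise degree by at least $2$, which forces it to vanish on $V_0^{\otimes i}$ (the only source-argument of low enough degree) because $V_j=0$ for $j\ge 2$; and of course $l_i$ applied to any tensor containing a degree-$1$ factor lands in even higher degree, hence vanishes as well. So the infinity structure is entirely captured by $(l_1,l_2,l_3)$ with $|l_1|=-1$, $|l_2|=0$, $|l_3|=1$.

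Next I would unwind the higher Jacobi identity of Definition~\ref{LeibInftyAlg} for each $n\ge 1$ and each admissible tuple of homogeneous arguments, discarding by degree reasons every term that is identically zero. For $n=1$ the identity is $l_1l_1=0$, which is automatic here since $l_1:V_1\to V_0$ and $l_1:V_0\to 0$. For $n=2$ one gets the compatibility of $l_1$ with $l_2$; splitting according to whether the arguments lie in $V_0\times V_1$, $V_1\times V_0$, or $V_1\times V_1$, and noting that in the last case the output of $l_2$ already lives in degree $2$ hence is zero, yields precisely (\ref{identity_a}) and (\ref{identity_b}). For $n=3$ the identity relates $l_1l_3$, $l_2\circ l_2$, and $l_3\circ l_1$; evaluating on $(x,y,z)\in V_0^{\times 3}$ gives (\ref{identity_c}), while evaluating on tuples of total degree $1$ — one argument in $V_1$, two in $V_0$, in each of the three shuffle positions — gives the three equations of (\ref{identity_d}) (all $l_3$-terms with two degree-$1$ inputs or with a degree-$1$ input fed into $l_3$ after $l_3$ vanish by degree). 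Finally, for $n=4$, evaluating on $(w,x,y,z)\in V_0^{\times 4}$ produces (\ref{identity_e}); all tuples of positive total degree make the $n=4$ relation trivial, and $n\ge 5$ is vacuous because the corresponding sums of nested brackets would need at least five $V_0$-arguments passing through two brackets, which by the degree count of (c)/(e) already exits the range $\{0,1\}$.

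The bookkeeping I must be careful about is the sign: Definition~\ref{LeibInftyAlg} carries the factor $(-1)^{(n-k+1)(j-1)}(-1)^{j(v_{\sigma(1)}+\cdots+v_{\sigma(k-j)})}\varepsilon(\sigma)\mathrm{sign}(\sigma)$, and I would substitute the specific $i,j,k$ and shuffle permutations occurring in the $2$-term truncation, check that the Koszul signs $\varepsilon(\sigma)$ for transpositions of low-degree vectors reduce to the expected $\pm 1$, and confirm that the resulting signs match those displayed in (a)--(e). I expect this sign reconciliation — together with making sure no surviving term has been inadvertently dropped — to be the only real obstacle; the structural part (which maps survive, which $n$ are nontrivial) is forced by the degree constraints. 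I would also cross-check the outcome against the $2$-term Lie infinity algebra axioms of \cite{BC04} and the Leibniz version in \cite{SL10} to confirm consistency. Since Proposition~\ref{2term_Loday_in_algebra} is explicitly stated to be a specialization of Definition~\ref{LeibInftyAlg}, no further ingredient is needed beyond this case analysis.
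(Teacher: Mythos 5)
Your proposal is correct and follows exactly the route the paper takes: the paper simply declares Proposition~\ref{2term_Loday_in_algebra} to be the specialization of Definition~\ref{LeibInftyAlg} to $V=V_0\oplus V_1$ (citing \cite{SL10}), and your degree analysis — $l_i=0$ for $i\ge 4$, the higher Jacobi identity nontrivial only for $n\le 4$ and only in total argument degrees $0$ and $1$, yielding (a)--(e) — is precisely the implicit computation, carried out in more detail than the paper itself provides. The remaining sign bookkeeping you flag is routine and is likewise left implicit in the paper.
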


\begin{prop}\label{2term_Loday_morphism} An \emph{infinity morphism between 2-term Leibniz infinity algebras} $(V,l_1,l_2,l_3)$ and $(W,m_1,m_2,m_3)$ is made up by a linear and a bilinear map $f_1,f_2$ from $V$ to $W$, of degree $|f_1|=0, |f_2|=1$, which verify, for any $x,y,z\in V_0$ and $h\in V_1$,
\begin{enumerate}[(a)]\label{fdg1}
\item\label{morphism_a} $m_1f_1h=f_1l_1h\;,$
\item $m_2(f_1x,f_1y)+m_1f_2(x,y)=f_1l_2(x,y)\;,\label{morphism_b}$
\item \label{morphism_c}$m_2(f_1x,f_1h)=f_1l_2(x,h)-f_2(x,l_1h)\;,$\\
$m_2(f_1h, f_1x) = f_1l_2(h, x) - f_2(l_1h, x)\;,$
\item \label{morphism_d}$m_3(f_1x,f_1y,f_1z)-m_2(f_2(x,y),f_1z)+m_2(f_1x,f_2(y,z))-m_2(f_1y,f_2(x,z))=$\\
$f_1l_3(x,y,z)+f_2(l_2(x,y),z)-f_2(x,l_2(y,z))+f_2(y,l_2(x,z))\;.$
\end{enumerate}
\end{prop}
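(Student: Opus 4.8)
\textbf{Proof plan for Proposition \ref{2term_Loday_morphism}.}
The statement to prove is that an infinity morphism between $2$-term Leibniz infinity algebras, in the sense of Definition \ref{LeibInftyAlgMorph}, is exactly a pair of maps $(f_1,f_2)$ of degrees $0$ and $1$ satisfying the listed equations (\ref{morphism_a})--(\ref{morphism_d}). Since Proposition \ref{2term_Loday_morphism} is declared to be a specialization of Definition \ref{LeibInftyAlgMorph}, the plan is purely to unwind the general definition under the degree constraint $V=V_0\oplus V_1$, $W=W_0\oplus W_1$. First I would record that, for degree reasons, $\varphi_i$ has degree $i-1$, so $\varphi_i$ must vanish whenever its output would land outside degrees $0,1$; in particular $\varphi_i=0$ for $i\ge 3$ (indeed $\varphi_3$ has degree $2$ and any input of total degree $\le 3$ forces the target degree to exceed $1$, unless all arguments are of degree $0$ and the output of degree $2$, which does not exist). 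Hence only $f_1:=\varphi_1$ (degree $0$) and $f_2:=\varphi_2$ (degree $1$) survive, and similarly only $l_1,l_2,l_3$ and $m_1,m_2,m_3$ are nonzero on the source and target side, as already noted in Proposition \ref{2term_Loday_in_algebra}.

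Next I would take the defining identity of Definition \ref{LeibInftyAlgMorph} for each $n$ and restrict the degrees of the homogeneous arguments $v_1,\dots,v_n$. For $n=1$ the identity reads $m_1 f_1 = f_1 l_1$, which is (\ref{morphism_a}). For $n=2$, sorting the arguments by degree gives two cases: both in $V_0$, which yields $m_2(f_1x,f_1y)+m_1 f_2(x,y)=f_1 l_2(x,y)$, i.e. (\ref{morphism_b}); one in $V_0$ and one in $V_1$, which yields the two equations of (\ref{morphism_c}) (the two orderings $x,h$ and $h,x$ of the arguments produce the two displayed relations, because $l_2$ and $m_2$ are not assumed symmetric). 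For $n=3$ one again splits according to argument degrees; the only case producing a nontrivial equation is all three arguments in $V_0$ (any argument in $V_1$ would force a target of degree $\ge 2$ on at least one term, hence triviality or an equation already subsumed), and this gives (\ref{morphism_d}). For $n\ge 4$ every term has an output of degree $\ge 2$, so the identity is vacuous. The bookkeeping here is where care is needed: one must track the signs $(-1)^{\sum (i-r)k_r + i(i-1)/2}$, the Koszul signs $\varepsilon(\sigma)$, the signatures $\mathrm{sign}(\sigma)$, and the extra degree-dependent sign $(-1)^{\sum_{r\ge 2}(k_r-1)(\cdots)}$, and check that after specializing degrees and shuffle sets they collapse to the plain signs appearing in (\ref{morphism_a})--(\ref{morphism_d}); since $|v_i|\in\{0,1\}$ and most $k_r=1$, the exponents simplify drastically.

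The main obstacle I anticipate is not conceptual but combinatorial: making sure that the shuffle sums $\mathrm{Sh}(k_1,\dots,k_i)$ on the left and $\mathrm{Sh}(k-j,j-1)$ on the right of Definition \ref{LeibInftyAlgMorph}, once restricted to $n\le 3$, really reproduce exactly the antisymmetrized-looking combinations of $l_2\circ l_3$, $l_3\circ l_2$ etc. in (\ref{identity_e})-style identity (\ref{morphism_d}), with correct signs. The cleanest way around this is to invoke Theorem \ref{LeibInftyAlgMorph1:1}: an infinity morphism is the same as a {\small DGC} morphism $F:\mathrm{Zin}^c(sV)\to\mathrm{Zin}^c(sW)$, and in the $2$-term case $\mathrm{Zin}^c(sV)$ is concentrated in a narrow range of degrees, so $F$ is determined by its two corestrictions $F_1,F_2$ and the condition $FD=\mathfrak{D}F$ splits into finitely many equations; transporting back along the suspension isomorphisms (using $\ell_i=s^{-1}D_i s^{\otimes i}$ and $m_i=s^{-1}\mathfrak D_i s^{\otimes i}$ as in the proof of Proposition on $L_\infty$-algebras, and formula (\ref{identity_formula})) yields (\ref{morphism_a})--(\ref{morphism_d}) directly. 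Either route works; I would present the degree-truncation argument as the proof and remark that it is the $2$-term shadow of Theorem \ref{LeibInftyAlgMorph1:1}. Finally I would note that, since the present subsection is explicitly marked as skippable and routine, a brief indication of the $n=1,2,3$ computations plus the vanishing for $n\ge 4$ suffices.
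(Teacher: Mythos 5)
Your proposal is correct and follows essentially the same route as the paper: the paper states Proposition \ref{2term_Loday_morphism} as a direct specialization of Definition \ref{LeibInftyAlgMorph} to the 2-term case (with no further computation given), and your degree-truncation argument — only $\varphi_1,\varphi_2$ survive, the identities for $n=1,2,3$ yield (a)--(d) according to the argument degrees, and everything is vacuous for $n\ge 4$ or for total input degree too high — is exactly that specialization, with the coalgebraic detour via Theorem \ref{LeibInftyAlgMorph1:1} as a legitimate alternative bookkeeping device.
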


\begin{cor} The category ${\tt 2Lei_{\infty}}$ of 2-term Leibniz infinity algebras and infinity morphisms is a full subcategory of the category ${\tt Lei_{\infty}}$-${\tt Alg}$ of Leibniz infinity algebras and infinity morphisms.\end{cor}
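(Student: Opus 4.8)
The plan is to identify ${\tt 2Lei_\infty}$ with the full subcategory of ${\tt Lei_\infty}$-${\tt Alg}$ whose objects are the Leibniz infinity algebras concentrated in degrees $0$ and $1$. Once this identification is established, being a full subcategory is automatic, so the whole content of the corollary is a degree-counting verification that Propositions \ref{2term_Loday_in_algebra} and \ref{2term_Loday_morphism} really are the restrictions of Definitions \ref{LeibInftyAlg} and \ref{LeibInftyAlgMorph} to $2$-term graded vector spaces, together with the observation that the composition law of \ref{LeibInftyMorphComp} preserves this class.

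First I would handle objects. Let $V=V_0\oplus V_1$ and let $(l_i)_{i\ge 1}$ be a Leibniz infinity structure in the sense of Definition \ref{LeibInftyAlg}. Since $l_i$ has degree $i-2$ and a homogeneous element of $V^{\otimes i}$ has degree $d\in\{0,1,\dots,i\}$, the requirement that $l_i$ land in $V_0\oplus V_1$, i.e. $d+i-2\in\{0,1\}$, forces $l_i=0$ for $i\ge 4$ and cuts $l_1,l_2,l_3$ down to exactly the arrows displayed in Proposition \ref{2term_Loday_in_algebra}: $l_1$ is supported on $V_1$, $l_2$ kills $V_1\times V_1$, and $l_3$ is supported on $V_0^{\times 3}$ with values in $V_1$. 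Conversely any such triple extends by zero to a sequence $(l_i)_{i\ge 1}$. Because only $l_1,l_2,l_3$ are present, every composite $l_i(\dots,l_j(\dots),\dots)$ appearing in the higher Jacobi identity for $n$ has $i+j=n+1$, hence $n\le 5$; and a further degree count — the inner bracket strictly raises the internal degree, so most nestings land in $V_{\ge 2}=\{0\}$ — makes the identities trivial for $n=5$ and reduces those for $1\le n\le 4$ to precisely the five relations (a)--(e) of Proposition \ref{2term_Loday_in_algebra}. Thus a $2$-term Leibniz infinity algebra is exactly a Leibniz infinity algebra concentrated in degrees $0$ and $1$.

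Next I would run the same argument for morphisms and composition. Given $2$-term objects $V,W$ and an infinity morphism $(\varphi_i)_{i\ge 1}$ as in Definition \ref{LeibInftyAlgMorph}, the map $\varphi_i$ has degree $i-1$, so the target constraint forces $\varphi_i=0$ for $i\ge 3$ and restricts $\varphi_1,\varphi_2$ to a degree $0$ map $f_1\colon V\to W$ and a degree $1$ map $f_2\colon V_0^{\times 2}\to W_1$, which is exactly the data of Proposition \ref{2term_Loday_morphism}; conversely such a pair extends by zero. Again the defining relations are trivial for large $n$, and expanding the shuffle-with-sign sums of Definition \ref{LeibInftyAlgMorph} and discarding the summands that vanish for degree reasons turns them into exactly (a)--(d) of Proposition \ref{2term_Loday_morphism}. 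Hence, for $2$-term $V,W$, the morphisms $V\to W$ in ${\tt 2Lei_\infty}$ coincide as a set with the infinity morphisms $V\to W$ in ${\tt Lei_\infty}$-${\tt Alg}$. Moreover, composing two infinity morphisms between $2$-term objects via the formulae of \ref{LeibInftyMorphComp} gives an infinity morphism whose arity $\ge 3$ components again vanish for degree reasons, so the composite is the one prescribed in ${\tt 2Lei_\infty}$, and identities obviously match.

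Putting the two degree-counting steps and the remark on composition together, the assignment sending a $2$-term Leibniz infinity algebra to itself — viewed via the extension-by-zero of its structure maps — and a ${\tt 2Lei_\infty}$-morphism to itself, is a fully faithful functor ${\tt 2Lei_\infty}\to{\tt Lei_\infty}$-${\tt Alg}$ whose essential image is the full subcategory on the objects concentrated in degrees $0$ and $1$; this is precisely the assertion of the corollary. The only genuine work is the bookkeeping inside the two steps: one must match, term by term, the shuffle-and-sign expansion of the higher Jacobi identities and of the morphism identities against the concrete lists (a)--(e) and (a)--(d), which is routine but lengthy and is already carried out in \cite{SL10}.
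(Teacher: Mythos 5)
Your proposal is correct and follows essentially the same route as the paper: the paper obtains the corollary immediately from the observation that Propositions \ref{2term_Loday_in_algebra} and \ref{2term_Loday_morphism} are exactly the specializations of Definitions \ref{LeibInftyAlg} and \ref{LeibInftyAlgMorph} to spaces concentrated in degrees $0$ and $1$ (with the term-by-term bookkeeping deferred to \cite{SL10}), which is precisely your degree-counting identification together with the remark that composition preserves the $2$-term class.
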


\subsubsection{From the Kan property to 2-term infinity homotopies and their compositions}\label{KanHomComp}

\begin{defi} A \emph{2-term infinity homotopy} between infinity morphisms $f=(f_1,f_2)$ and $g=(g_1,g_2)$, which act themselves between 2-term Leibniz infinity algebras $(V,l_1,l_2,l_3)$ and $(W,m_1,m_2,m_3)$, is a linear map $\theta_1$ from $V$ to $W$, of degree $|\theta_1|=1$, which verifies, for any $x,y\in V_0$ and $h\in V_1$,
\begin{enumerate}[(a)]\label{fdg2}
\item $g_1x-f_1x=m_1\theta_1x\;,$ \label{homotopy_a}
\item $g_1h-f_1h=\theta_1l_1h\;,$ \label{homotopy_b}
\item \label{homotopy_c} $g_2(x,y)-f_2(x,y)=\theta_1l_2(x,y)-m_2(f_1x,\theta_1y)-m_2(\theta_1x,g_1y)\;.$
\end{enumerate}
\label{HomTheo}\end{defi}

The characterizing relations (a) - (c) of infinity Leibniz homotopies are the correct counterpart of the defining relations of infinity Lie homotopies \cite{BC04}. However, rather than choosing the preceding relations as a mere definition, we deduce them here from the Kan property of $\mathrm{MC}_\bullet(\bar L)$. More precisely,

\begin{thm} There exist surjective maps $S^{\,i}_1$, $i\in\{0,1\},$ from the class $\cal I$ of $\infty$-homotopies for 2-term Leibniz infinity algebras to the class $\cal T$ of $\,2$-term $\infty$-homotopies for 2-term Leibniz infinity algebras.\label{KanHom1}\end{thm}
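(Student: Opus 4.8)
The plan is to deduce the structure of a 2-term $\infty$-homotopy from the general machinery developed in Subsection~\ref{Kan property}, specialized to the case $L=L(V,W)$ where $V,W$ are 2-term Leibniz infinity algebras. The key observation is that, since $V=V_0\oplus V_1$ and $W=W_0\oplus W_1$ are concentrated in degrees $0$ and $1$, the convolution Lie infinity algebra $L=\mathrm{Hom}_{\mathbb K}(\mathrm{Zin}^c(sV),W)$ is concentrated in a finite range of degrees; consequently all the series appearing in $\mathrm{MC}_\bullet(\bar L)$, in the brackets $\bar{\cal L}_i$, and in the bijections $B_n^i$ and their inverses ${\cal B}_n^i$ become finite sums, so that the formal/nilpotent-case results of \cite{Get09} apply rigorously here. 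First I would record the explicit low-degree description of $L$: an element of $L_{-1}$ is a pair $(f_1,f_2)$ with $f_1$ linear of degree $0$ and $f_2$ bilinear of degree $1$, subject (as an {\small MC} element) to the relations of Proposition~\ref{2term_Loday_morphism}, while an element of $L_0$ is a single linear map $\theta_1$ of degree $1$. This is just the specialization, via Proposition~\ref{MorphMC} and Proposition~\ref{proposition_about_lie_inf_on_space_of_homomorphisms}, of the general correspondence to the 2-term situation.

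Next I would set up the maps $S_1^{\,i}$, $i\in\{0,1\}$. An $\infty$-homotopy between two $\infty$-morphisms $f,g:V\to W$ is, by Definition~\ref{InftyNHomot} with $n=1$, a Maurer-Cartan element $\bar\gamma\in\mathrm{MC}_1(\bar L)=\mathrm{MC}(L\otimes\Omega^\star_1)$ with $\bar\varepsilon_1^0\bar\gamma=\alpha(f)$ and $\bar\varepsilon_1^1\bar\gamma=\alpha(g)$. Applying the fundamental bijection $B_1^{\,i}:\mathrm{MC}_1(\bar L)\stackrel{\sim}{\to}\mathrm{MC}(L)\times\mathrm{mc}_1^i(\bar L)$ of Equation~(\ref{important_bijection}), we obtain $B_1^{\,i}\bar\gamma=(\mu_i,\nu_i)$ with $\nu_i=(\delta+d)\beta_i$ and $\bar\varepsilon_1^i\beta_i=0$. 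Decomposing $\bar\gamma=\gamma(t)\otimes 1+\theta(t)\otimes dt$ (as in Equation~(\ref{Getzler-Shoiket_homotopy})) and $\beta_i$ similarly, the component of $\beta_i$ in $L_0\otimes\Omega^0(\Delta^1)$ is a polynomial in $t$ with values in $L_0$, i.e. a time-dependent family $\theta_1(t)$ of degree-$1$ linear maps $V\to W$. I would define $S_1^{\,i}(\bar\gamma)$ to be the value of (an appropriate primitive of) this component at the vertex $1-i$, or more simply, the degree-$1$ linear map $\theta_1:=h_1^i\gamma$ restricted to its $L_0$-component (the operators $h_1^i$ being those of~(\ref{ibp_proper})). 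In low degree this collapses to $\theta_1=\int_0^1\theta(t)\,dt$ up to the usual sign, which is exactly the de Rham-type integration already seen in the discussion of concordances, Proposition~\ref{CharConcord}.

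The substantive computation is then to unravel the Maurer-Cartan equation of $\bar{\cal L}_i$ on $L\otimes\Omega^\star_1$, read off the $dt$-free and $dt$-linear parts as in Equation~(\ref{Getzler-Shoiket_homotopy}), and translate them through the dictionary $\mathcal{L}_1,\mathcal{L}_2,\mathcal{L}_3$ of Proposition~\ref{proposition_about_lie_inf_on_space_of_homomorphisms} into relations among $f_1,f_2,g_1,g_2,\theta_1$. Because $\mathrm{Zin}^c(sV)$ is locally conilpotent and $V$ is 2-term, only $\mathcal{L}_1,\mathcal{L}_2,\mathcal{L}_3$ survive and only finitely many coproduct iterates contribute, so after integrating the flow equation $d\gamma/dt=-\sum_{p\ge 0}\frac{1}{p!}\mathcal{L}_{p+1}(\gamma(t),\ldots,\gamma(t),\theta(t))$ once, one obtains precisely relations \eqref{homotopy_a}--\eqref{homotopy_c} of Definition~\ref{HomTheo}. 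I would verify \eqref{homotopy_a} and \eqref{homotopy_b} from the linear and $l_1$-parts (these come straight from $\mathcal{L}_1$ and the definition of $B_1^i$), and \eqref{homotopy_c} from the $\mathcal{L}_2$-contribution, where the two terms $m_2(f_1x,\theta_1y)$ and $m_2(\theta_1x,g_1y)$ arise from the ordering of $f$ versus $g$ in the antisymmetrized convolution bracket evaluated along the integral curve. Surjectivity of $S_1^{\,i}$ is then immediate from the surjectivity of $B_1^i$ (shown in \ref{Kan property} via $B_n^i{\cal B}_n^i=\mathrm{id}$): given any $\theta_1$ satisfying \eqref{homotopy_a}--\eqref{homotopy_c}, one reconstructs $\nu=(\delta+d)\beta$ with $\beta$ having $\theta_1$ as its $L_0$-component and applies ${\cal B}_1^i$ of Equation~(\ref{BInv}) to $(\alpha(f),\nu)$ (resp. $(\alpha(g),\nu)$) to produce a genuine $\infty$-homotopy mapping to $\theta_1$.

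The main obstacle I anticipate is purely bookkeeping: correctly tracking the Koszul signs, the signature factors $\mathrm{sign}(\sigma)$, and the shift operators $s,s^{-1}$ through the convolution brackets $\mathcal{L}_p$ and through the bijection $B_1^i$, so that the relations emerge with exactly the signs displayed in Definition~\ref{HomTheo} rather than with spurious sign discrepancies. A secondary point requiring care is the justification that, in the 2-term case, Getzler's nilpotency hypothesis may be replaced by the degree-truncation of $L$, i.e. that $F^iL$ vanishes for $i$ large because the brackets raise total degree and $L$ lives in a bounded range; this is the remark already made at the end of Subsection~\ref{InftyCatInftyAlg} and I would invoke it explicitly to legitimize the formal manipulations. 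Apart from these, the argument is a direct specialization and no new conceptual input beyond Subsection~\ref{Kan property} is needed.
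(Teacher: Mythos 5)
Your proposal is correct and follows the paper's strategy in all essentials: specialize the machinery of Subsection \ref{Kan property} to $L=L(V,W)$, use the degree truncation of the 2-term case to make all series finite, define $S_1^i$ by applying $h_1^i$ (equivalently, integrating the $dt$-component) and taking the vertex difference $\theta_1=(\beta(1)-\beta(0))s$, and prove surjectivity by pushing a $\beta$ with prescribed vertex values through ${\cal B}_1^i$ after a Renshaw-type extension. The one point where you genuinely deviate is the derivation of the characterizing relations (a)--(c) of Definition \ref{HomTheo}: you integrate the gauge-flow system (\ref{Getzler-Shoiket_homotopy}) over $[0,1]$, whereas the paper computes the stabilized iteration in closed form, ${\cal B}_1^0(\mu,\nu)=\mu+(\delta+d)\beta+\bar{\mathscr L}_2(\mu,\beta)+\frac12\bar{\mathscr L}_2(\delta\beta,\beta)$ (the total-derivative identities for $\bar{\mathscr L}_2(\mu,d\beta)$ and $\bar{\mathscr L}_2(\delta\beta,d\beta)$ perform the integration once and for all), and then evaluates at the vertex. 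Both routes cost the same bookkeeping: in your version the term $\int_0^1\mathcal{L}_2(\gamma(t),\theta(t))\,dt$ collapses to $m_2(f_1x,\theta_1y)+m_2(\theta_1x,g_1y)$ only after writing the linear part as $\gamma^1(t)=f_1+m_1\!\int_0^t\theta$, recognizing a total $t$-derivative by means of relation (b) of Proposition \ref{2term_Loday_in_algebra}, and substituting $g_1=f_1+m_1\theta_1$ -- so the mixed term does not simply come ``from the ordering of $f$ versus $g$'' as you suggest, though this is fixable bookkeeping rather than a gap. The paper's choice has the mild advantage that the closed form of ${\cal B}_1^i$ is needed anyway for surjectivity (and is reused for the composition Theorem \ref{KomComp2}), so nothing is computed twice; note also that in your surjectivity step you must still verify that the reconstructed $\alpha={\cal B}_1^i(\mu,(\delta+d)\beta)$ has the prescribed endpoints $f$ and $g$ -- this uses (a)--(c) (and, for $i=1$, again relation (b) of Proposition \ref{2term_Loday_in_algebra}) and is exactly the vertex evaluation you already set up in the forward direction, so it is a gloss to be made explicit rather than a missing idea.
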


\begin{rem} The maps $S^{\,i}_1$ preserve the source and the target, i.e. they are surjections from the class ${\cal I}(f,g)$ of $\;\infty$-homotopies from $f$ to $g$, to the class ${\cal T}(f,g)$ of $\;2$-term $\,\infty$-homotopies from $f$ to $g$. In the sequel, we refer to a preimage by $S_1^i$ of an element $\theta_1\in{\cal T}$ as a \emph{lift} of $\theta_1$ by $S_1^i$ .\end{rem}

\begin{proof} Henceforth, we use again the homological version of infinity algebras ($k$-ary bracket of degree $k-2$), as well as the Lada-Stasheff sign convention for the higher Jacobi conditions and the {\small MC} equation. \medskip

Due to the choice of the homological variant of homotopy algebras, $\delta={\cal L}_1$ has degree $-1$. For consistency, differential forms are then viewed as negatively graded; hence, $d:\Omega^{-k}(\Delta^n)\to \Omega^{-k-1}(\Delta^n)$, $k\in\{0,\ldots,n\}$, and $\bar{\cal L}_1=\delta\otimes\mathrm{id}+\mathrm{id}\otimes\, d$ has degree $-1$ as well. Similarly, the degree of the operator $h_n^i$ is now $|h_n^i|=1$. It is moreover easily checked that $L$ cannot contain multilinear maps of nonnegative degree, i.e. that $L=\oplus_{k\ge 0}L_{-k}$. It follows that an element $\bar\alpha\in(L\otimes\Omega^\star(\Delta^n))^{-k}$, $k\ge 0$, reads $$\bar\alpha=\sum\alpha_{-k}\otimes\omega^0+\sum\alpha_{-k+1}\otimes\omega^{-1}+\ldots\;,$$ where the {\small RHS} is a finite sum. For instance, if $n=2$, an element $\bar\alpha$ of degree $-1$ can be decomposed as $$\bar\alpha=\alpha(s,t)\otimes 1+\beta(s,t)\otimes ds+\beta'(s,t)\otimes dt\;,$$ where $(s,t)$ are coordinates of $\Delta^2$ and where $\alpha(s,t)\in L_{-1}[s,t]$ and $\beta(s,t),\beta'(s,t)\in L_0[s,t]$ are polynomial functions in $s,t$ with coefficients in $L_{-1}$ and $L_0$, respectively.\medskip

In the sequel, we evaluate the $L_{\infty}$-structure maps $\bar{\cal L}_i$ of $L\otimes \Omega^\star(\Delta^n)$ mainly on elements of degree $-1$ and $0$, hence we compute the structure maps ${\cal L}_i$ of $L=\mathrm{Hom}_\mathbb{K}(\mathrm{Zin}^c(sV),W)$ on elements $\alpha$ and $\beta$ of degree $-1$ and $0$, respectively. Let
\begin{eqnarray} \alpha=\sum_{p\geq1}\alpha^p\in L\;,~~~|\alpha|=-1\;,\nonumber\\
\beta=\sum_{p\geq1}\beta^p\in L\;,~~~|\beta|=0\;,\nonumber\end{eqnarray}
where $\alpha^p,\beta^p:~(sV)^{\otimes p}\to W$. The point is that the concentration of $V,W$ in degrees $0,1$ entails that almost all components $\alpha^p,\beta^p$ vanish and that all series converge (which explains why the formal application of Getzler's method to the present situation leads to the correct counterpart of the findings of \cite{BC04}). Indeed, the only nonzero components of $\alpha,\beta$ are
\begin{eqnarray} \alpha^1:&& sV_0\to W_0,~~sV_1\to W_1\;,\nonumber\\
\alpha^2:&&(sV_0)^{\otimes 2}\to W_1\;,\nonumber\\
\beta^1:&&sV_0\to W_1\;.\label{CompAlphaBeta}\end{eqnarray}
Similarly, the nonzero components of the nonzero evaluations of the maps ${\cal L}_i$ on $\alpha$-s and $\beta$-s are
\begin{eqnarray}
\mathscr{L}_1(\alpha):&&sV_1\to W_0\;,~~(sV_0)^{\otimes 2} \to W_0\;,~~sV_0\otimes sV_1\to W_1\;,~~(sV_0)^{\otimes 3}\to W_1\;,\nonumber\\
\mathscr{L}_1(\beta):&& sV_0\to W_0\;,~~sV_1\to W_1\;,~~(sV_0)^{\otimes 2}\to W_1\;,\nonumber\\
\mathscr{L}_2(\alpha_1,\alpha_2):&& (sV_0)^{\otimes 2}\to W_0\;,~~sV_0\otimes sV_1\to W_1\;,~~(sV_0)^{\otimes 3}\to W_1\;,\nonumber\\
\mathscr{L}_2(\alpha,\beta):&&(sV_0)^{\otimes 2}\to W_1\;,\nonumber\\
\mathscr{L}_3(\alpha_1,\alpha_2,\alpha_3):&& (sV_0)^{\otimes 3}\to W_1\;,\label{non_zero_maps}\end{eqnarray} see Proposition \ref{proposition_about_lie_inf_on_space_of_homomorphisms}.\medskip

We are now prepared to concretize the iterative construction of ${\cal B}_n^i(\mu,\nu)\in\mathrm{MC}_n(\bar L)$ from $\mu\in\mathrm{MC}(L)$ and $\nu=(\delta+d)\beta$, $\beta\in(L\otimes\Omega^\star(\Delta^n))^0$, $\varepsilon_n^i\beta=0$ (the explicit forms of ${\cal B}_n^i(\mu,\nu)$ for $n=1$ and $n=2$ will be the main ingredients of the proofs of Theorems \ref{KanHom1} and \ref{KomComp2}).\medskip

$\bullet\quad$ Let $\alpha\in{\cal I}(f,g)$, i.e. let $$\alpha\in\mathrm{MC}_1(\bar L)\stackrel{\sim}{\longrightarrow}(\mu,(\delta+d)\beta)\in\mathrm{MC}(L)\times\mathrm{mc}_1^0(\bar L)\;,$$ such that $\varepsilon_1^0\alpha=f$ and $\varepsilon_1^1\alpha=g$. To construct $$\alpha={\cal B}_1^0B_1^0\alpha={\cal B}_1^0(\varepsilon_1^0\alpha,(\delta+d)h_1^0\alpha)=:{\cal B}_1^0(f,(\delta+d)\beta)=:{\cal B}_1^0(\mu,\nu)\;,$$ we start from
\begin{eqnarray} \alpha_0=\mu+(\delta+d)\beta\;.\nonumber\end{eqnarray}
The iteration unfolds as
\begin{eqnarray} \alpha_{k}=\alpha_0-\sum_{j=2}^{\infty}\frac{1}{j!}h_1^0\bar{\mathscr{L}}_j(\alpha_{k-1},\cdots,\alpha_{k-1})\;,\quad k\ge 1\;.\nonumber\end{eqnarray}
Explicitly,
\begin{eqnarray} \alpha_1&=&\mu+(\delta+d)\beta-\frac12h_1^0\bar{\mathscr{L}}_2(\alpha_0,\alpha_0)-\frac1{3!}h_1^0\bar{\mathscr{L}}_3(\alpha_0,\alpha_0,\alpha_0)\nonumber\\
&=&\mu+(\delta+d)\beta-h_1^0\bar{\mathscr{L}}_2(\mu+\delta \beta,d\beta)-\frac1{2}h_1^0\bar{\mathscr{L}}_3(\mu+\delta \beta,\mu+\delta \beta,d\beta)\nonumber\\
&=&\mu+(\delta+d)\beta-h_1^0\bar{\mathscr{L}}_2(\mu+\delta \beta,d\beta)\;.\nonumber\end{eqnarray}
Observe that $\mu+\delta\beta\in L_{-1}[t]$ and $d\beta\in L_0[t]\otimes dt$, that differential forms are concentrated in degrees $0$ and $-1$, that $h_1^0$ annihilates 0-forms, and that the term in $\bar{\cal L}_3$ contains a factor of the type ${\cal L}_3(\alpha_1,\alpha_2,\beta)$ (notation of (\ref{non_zero_maps})), whose components vanish -- see above. 
Analogously,
\begin{eqnarray} \alpha_2&=&\mu+(\delta+d)\beta-h_1^0\bar{\mathscr{L}}_2(\mu+\delta \beta-h_1^0\bar{\mathscr{L}}_2(\mu+\delta \beta,d\beta),d\beta)\nonumber\\
&&-\frac12h_1^0\bar{\mathscr{L}}_3(\mu+\delta \beta-h_1^0\bar{\mathscr{L}}_2(\mu+\delta \beta,d\beta),\mu+\delta \beta-h_1^0\bar{\mathscr{L}}_2(\mu+\delta \beta,d\beta),d\beta)\nonumber\\
&=&\mu+(\delta+d)\beta-h_1^0\bar{\mathscr{L}}_2(\mu+\delta \beta,d\beta)\;.\nonumber\end{eqnarray}
Indeed, the term $h_1^0\bar{\cal L}_2(h_1^0\bar{\cal L}_2(\mu+\delta\beta,d\beta),d\beta)$ contains a factor of the type ${\cal L}_2({\cal L}_2(\alpha,\beta_1),\beta_2)$ (notation of (\ref{non_zero_maps})), and the only nonvanishing component of this factor, as well as of its first internal map ${\cal L}_2(\alpha,\beta_1)$, is the component $(sV_0)^{\otimes 2}\to W_1$ -- which entails, in view of Proposition \ref{proposition_about_lie_inf_on_space_of_homomorphisms}, that the considered term vanishes. Hence, the iteration stabilizes already at its second stage and \begin{equation}\label{HomotDef}\alpha={\cal B}_1^0(\mu,\nu)=\mu+(\delta+d)\beta-h_1^0\bar{\mathscr{L}}_2(\mu+\delta \beta,d\beta)\in\mathrm{MC}_1(\bar L)\;.\end{equation}



Remark first that the integral $h_1^0$ can be evaluated since $\bar{\mathscr{L}}_2(\mu+\delta \beta,d\beta)$ is a total derivative. Indeed, when setting $\beta=\beta_0\otimes P$ (sum understood), $\beta_0\in L_0$ and $P\in\Omega^0(\Delta^1)$, we see that $$\bar{\cal L}_2(\mu,d\beta)={\cal L}_2(\mu,\beta_0)\otimes dP=-d\bar{\cal L}_2(\mu,\beta)\;.$$ As for the term $\bar{\cal L}_2(\delta\beta,d\beta)$, we have \begin{eqnarray} 0 = (\delta+d)\bar{\cal L}_2(\beta,d\beta)=\bar{\mathscr{L}}_2(\delta \beta, d\beta)+\bar{\mathscr{L}}_2(\beta,\delta d\beta)\;,\nonumber\end{eqnarray} since $\bar{\cal L}_1=\delta+d$ is a graded derivation of $\bar{\cal L}_2$ and as $\bar{\cal L}_2(\beta,d\beta)=\bar{\cal L}_2(d\beta,d\beta)=0$. It is now easily checked that \begin{eqnarray} \bar{\mathscr{L}}_2(\delta\beta, d\beta)=-\frac12d\bar{\mathscr{L}}_2(\delta \beta, \beta)\;.\nonumber\end{eqnarray}
Eventually,
\begin{eqnarray} \alpha&=&\mu+(\delta+d)\beta+h_1^0d\bar{\mathscr{L}}_2(\mu,\beta)+\frac12h_1^0d\bar{\mathscr{L}}_2(\delta \beta,\beta)\nonumber\\
&=&\mu+(\delta+d)\beta+\bar{\mathscr{L}}_2(\mu,\beta)+\frac12\bar{\mathscr{L}}_2(\delta \beta,\beta)\;.\nonumber\end{eqnarray} Indeed, it suffices to observe that, for any $\ell_{-1}\otimes P\in L_{-1}\otimes\Omega^0(\Delta^1)$ which vanishes under the action of $\varepsilon_1^0$, we have \begin{eqnarray} h_1^0d(\ell_{-1}\otimes P)=-dh_1^0(\ell_{-1}\otimes P)+\ell_{-1}\otimes P-\varepsilon_1^0(\ell_{-1}\otimes P)=\ell_{-1}\otimes P\;.\nonumber\end{eqnarray}
We are now able to write the components of $g=\varepsilon_1^1\alpha\in L_{-1}$ (see (\ref{CompAlphaBeta})) in terms of $f=\mu$ and $\beta$:
\begin{eqnarray}
&&g^1=\varepsilon_1^1(\mu+(\delta+d)\beta-\bar{\mathscr{L}}_2(\mu,\beta)-\frac12\bar{\mathscr{L}}_2(\delta \beta,\beta))^1=\varepsilon_1^1(f+\delta \beta)^1=f^1+\varepsilon_1^1(\delta \beta)^1\;,\nonumber\\
&&g^2=\varepsilon_1^1(\mu+(\delta+d)\beta-\bar{\mathscr{L}}_2(\mu,\beta)-\frac12\bar{\mathscr{L}}_2(\delta \beta,\beta))^2=
\varepsilon_1^1(f+\delta \beta-\bar{\mathscr{L}}_2(f,\beta)-\frac12\bar{\mathscr{L}}_2(\delta \beta,\beta))^2\;,\nonumber\\
&&g^3=0\;,\label{CharRelHomotPreFin}\end{eqnarray}
where we changed signs according to our sign conventions and remembered that the first component of a morphism of the type ${\cal L}_2(\alpha,\beta)$ (see (\ref{non_zero_maps})) vanishes.\medskip

To obtain a 2-term $\infty$-homotopy $\theta_1\in{\cal T}(f,g)$, it now suffices to further develop the equations (\ref{CharRelHomotPreFin}).

As $$g_1:=g^1s,f_1:=f^1s\in \mathrm{Hom}_\mathbb{K}^{0}(V,W)\;,$$ we evaluate the first equation on $x\in V_0$ and $h\in V_1$. Therefore, we compute $\varepsilon_1^1(\delta\beta)^1s=\delta\beta(1)^1s$ on $x$ and $h$. Since $$\delta\beta(1)={\cal L}_1\beta(1)=m_1\beta(1)+\beta(1) D_V\;,$$ where $D_V\in \mathrm{CoDer}^{-1}(\mathrm{Zin}^c(sV))$, we have $D_V:sV_1\to sV_0\;, sV_0\otimes sV_0\to sV_0\;,\ldots$ Hence, \begin{equation}\label{HomotDefIntermed}\delta\beta(1)^1sx = m_1\,\beta(1)s\,x = m_1\theta_1x\;,\end{equation} where we defined the {\it homotopy parameter} $\theta_1$ by \begin{equation}\label{HomotPara}\theta_1:=\beta(1)s=\beta(1)s-\beta(0)s\;.\end{equation} Similarly, \begin{equation}\label{HomotDefIntermed1}\delta\beta(1)^1sh=\beta(1)D_Vsh=\beta(1)s\,s^{-1}D_Vs\,h=\theta_1l_1h\;.\end{equation} The characterizing equations (a) and (b) follow.

Since $$g_2:=g^2s^2,f_2:=f^2s^2\in\mathrm{Hom}_\mathbb{K}^1(V\otimes V,W)\;,$$ it suffices to evaluate the second equation on $x,y\in V_0$. When computing e.g. $\varepsilon_1^1\bar{\cal L}_2(\delta\beta,\beta)^2s^2(x,y)$, we get $${\cal L}_2(\delta\beta(1),\beta(1))(sx,sy)=m_2(\delta\beta(1)\,sx,\beta(1)\,sy)+m_2(\beta(1)\,sx,\delta\beta(1)\,sy)=$$ \begin{equation}\label{HomotDefIntermed2}m_2(m_1\theta_1x,\theta_1y)+m_2(\theta_1x,m_1\theta_1y)=2m_2(\theta_1x,m_1\theta_1y)\;,\end{equation} in view of Equation (\ref{HomotDefIntermed}) and Relation (b) of Proposition \ref{2term_Loday_in_algebra}. Similarly, \begin{equation}\label{HomotDefIntermed3}\varepsilon_1^1\bar{\mathscr{L}}_2(f,\beta)^2s^2(x,y)=m_2(f_1x,\theta_1y)+m_2(\theta_1x,f_1y)\;.\end{equation} Further, one easily finds \begin{equation}\label{HomotDefIntermed4} \varepsilon_1^1(\delta\beta)^2s^2(x,y)=\theta_1l_2(x,y)\;.\end{equation} When collecting the results (\ref{HomotDefIntermed2}), (\ref{HomotDefIntermed3}), and (\ref{HomotDefIntermed4}), and taking into account Relation (a), we finally obtain the characterizing equation (c).\bigskip

$\;\bullet\quad$ Recall that in the preceding step we started from $\alpha\in {\cal I}(f,g)$, set $\mu=f$, $$\beta=h_1^0\alpha\;,$$ $\nu=(\delta+d)\beta$, defined $$\theta_1=(\beta(1)-\beta(0))s\;,$$ and deduced the characterizing relations $g=f+{\cal E}(f,\beta(1)s)=f+{\cal E}(f,\theta_1)$ of $\theta_1\in{\cal T}(f,g)$ by computing $$\alpha={\cal B}^0_1(\mu,\nu)=\mu+(\delta+d)\beta+\bar{\mathscr{L}}_2(\mu,\beta)+\frac12\bar{\mathscr{L}}_2(\delta \beta,\beta)$$ at $1$. Let us mention that instead of defining the map $S_1^0:{\cal I}\ni\alpha\mapsto \theta_1\in {\cal T}$, we can consider the similarly defined map $S_1^1$.\medskip

To prove surjectivity of $S_1^i$, let $\theta_1\in{\cal T}(f,g)$ and set $\beta(i)=0$, $i\in\{0,1\}$, and $\beta(1-i)=(-1)^{i}\theta_1s^{-1}$. Note that by construction $\theta_1=(\beta(1)-\beta(0))s$. Use now Renshaw's method \cite{Sul77} to extend $\beta(0)$ and $\beta(1)$ to some $\beta\in L_0\otimes\Omega^0(\Delta^1)$, set $$\mu=(1-i)f+ig\quad\text{and}\quad\nu=(\delta+d)\beta\;,$$ and construct \begin{equation}\label{preimage}\alpha={\cal B}_1^i(\mu,\nu)\in\mathrm{MC}_1(\bar L)\;.\end{equation} If $i=0$, then $$\alpha(0)=\varepsilon_1^0\alpha=\mu=f\quad\text{and}\quad\alpha(1)=({\cal B}_1^0(\mu,\nu))(1)=f+{\cal E}(f,\theta_1)=g\;,$$ in view of the characterizing relations (a)-(c) of $\theta_1$. If $i=1$, one has also $\alpha(1)=g$ and $\alpha(0)=g+{\cal E}(g,-\theta_1)=f$, but to obtain the latter result, the characterizing equations (a)-(c), as well as Equation (b) of Proposition \ref{2term_Loday_in_algebra} are needed. To determine the image of $\alpha\in{\cal I}(f,g)$ by $S_1^i$, one first computes $h_1^i\alpha$, which, since $h_1^i$ sends 0-forms to 0, is equal to $$h_1^i(\delta+d)\beta=-(\delta+d)h_1^i\beta+\beta-\varepsilon_1^i\beta=\beta\;,$$ then one gets $$S_1^i\alpha=(\beta(1)-\beta(0))s=\theta_1\;,$$ which completes the proof.\end{proof}

\begin{thm}[Definition] If $\;\theta_1:f\Rightarrow g$, $\tau_1:g\Rightarrow h$ are 2-term $\infty$-homotopies between infinity morphisms $f,g,h:V\to W$, the vertical composite $\tau_1\circ_1 \theta_1$ is given by $\tau_1+\theta_1$.\label{KomComp2}\end{thm}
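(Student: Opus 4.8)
The plan is to derive the composition formula directly from the machinery built in the proof of Theorem~\ref{KanHom1}, rather than by manipulating the characterizing relations (a)--(c) of Definition~\ref{HomTheo} in isolation. Recall that every $2$-term $\infty$-homotopy $\theta_1:f\Rightarrow g$ arises (via the surjection $S_1^0$) from an element $\alpha\in{\cal I}(f,g)$, i.e. from $\alpha={\cal B}_1^0(\mu,\nu)$ with $\mu=f$, $\nu=(\delta+d)\beta$, $\beta\in L_0\otimes\Omega^0(\Delta^1)$, $\varepsilon_1^0\beta=0$, $\theta_1=(\beta(1)-\beta(0))s=\beta(1)s$, and $g=\varepsilon_1^1\alpha$. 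Likewise $\tau_1:g\Rightarrow h$ comes from some $\alpha'\in{\cal I}(g,h)$ built from $\beta'$ with $\varepsilon_1^0\beta'=0$, $\tau_1=\beta'(1)s$, $h=\varepsilon_1^1\alpha'$. The key idea is that vertical composition of $\infty$-homotopies is encoded in the Kan filler of the inner horn $\Lambda^1[2]\to\mathrm{MC}_\bullet(\bar L)$ determined by $\alpha$ (on the face $01$) and $\alpha'$ (on the face $12$), exactly as in the diagram~(\ref{commute_square}) with $i=1$, $n=2$; the composite $\tau_1\circ_1\theta_1$ is then read off from the restriction of the filler to the face $02$.

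First I would form the horn: on the $1$-faces we have $\beta$ on $01$ and $\beta'$ on $12$, with matching value $\varepsilon_1^1\beta=0=\varepsilon_1^0\beta'$ at the common vertex, and common constant {\small MC} element $\mu=\varepsilon_1^1\alpha\big|_{01}$-data; following the "bottom arrow" construction in \ref{Kan property}, it suffices to extend the pair $(\beta,\beta')$ to a polynomial $\bar\beta\in L_0\otimes\Omega^0(\Delta^2)$, which is possible by Renshaw's method \cite{Sul77}. Setting $\bar\mu$ the constant extension of $f$ and $\bar\nu=(\delta+d)\bar\beta$, the filler is $\bar\alpha={\cal B}_2^1(\bar\mu,\bar\nu)\in\mathrm{MC}_2(\bar L)$. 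The explicit stabilized form of ${\cal B}_2^1$ (the two-simplex analogue of~(\ref{HomotDef})) is again quadratic in the data because, by the degree count~(\ref{CompAlphaBeta})--(\ref{non_zero_maps}), all higher iteration terms contain a factor of type ${\cal L}_2({\cal L}_2(\alpha,\beta_1),\beta_2)$ or ${\cal L}_3$ that vanishes componentwise on a $2$-term source. I would then restrict $\bar\alpha$ to the face $02$, obtaining an element of $\mathrm{MC}_1(\bar L)$ whose associated homotopy parameter is, by the very definition of $S_1^0$, the map $\big(\bar\beta\big|_{02}(1)-\bar\beta\big|_{02}(0)\big)s$. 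Since $\bar\beta$ restricts to $\beta$ on $01$ and to $\beta'$ on $12$, and since restriction is a {\small DGCA}-map compatible with evaluation at vertices, one gets $\bar\beta\big|_{02}(0)=\beta(0)=0$ and $\bar\beta\big|_{02}(1)=\beta'(1)$ at the endpoints of $02$; a short additional check using that $\bar\beta$ is $\Omega^0$-valued and that the vertex $1$ of $\Delta^2$ maps to the midpoint-type data on $02$ shows the parameter along $02$ is the sum $\beta(1)s+\beta'(1)s=\theta_1+\tau_1$.

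An alternative, lighter route --- which I would include as the actual written proof since it avoids the two-simplex bookkeeping --- is purely algebraic: verify directly that $\tau_1+\theta_1$ satisfies the three characterizing relations (a)--(c) of Definition~\ref{HomTheo} for the pair $(f,h)$. Relations (a) and (b) are linear in the homotopy parameter, so they follow at once by adding the corresponding relations for $\theta_1$ (pair $(f,g)$) and $\tau_1$ (pair $(g,h)$): $m_1(\tau_1+\theta_1)x=(h_1x-g_1x)+(g_1x-f_1x)=h_1x-f_1x$, and similarly for (b). Relation (c) is the only one requiring care, because its right-hand side $\theta_1l_2(x,y)-m_2(f_1x,\theta_1y)-m_2(\theta_1x,g_1y)$ is not symmetric in the two homotopies: adding the (c)-relation for $\theta_1$ and the one for $\tau_1$ gives $h_2(x,y)-f_2(x,y)=(\theta_1+\tau_1)l_2(x,y)-m_2(f_1x,\theta_1y)-m_2(\theta_1x,g_1y)-m_2(g_1x,\tau_1y)-m_2(\tau_1x,h_1y)$, and one must show the last four terms equal $-m_2(f_1x,(\tau_1+\theta_1)y)-m_2((\tau_1+\theta_1)x,h_1y)$. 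The discrepancy is $m_2(\tau_1x,f_1y)-m_2(\tau_1x,h_1y)+m_2(g_1x,\tau_1y)-m_2(f_1x,\tau_1y)$ up to sign, i.e. $-m_2(\tau_1x,m_1\theta_1y)+m_2(m_1\theta_1x,\tau_1y)$ after using relation (a) for $\theta_1$, and this vanishes by Relation (b) of Proposition~\ref{2term_Loday_in_algebra} (the identity $l_2(l_1h,k)=l_2(h,l_1k)$ applied in $W$). The main obstacle is precisely this last cancellation: keeping track of signs under the homological grading convention and applying the correct instance of the $2$-term Leibniz axioms in $W$; once that is done, associativity and unitality of $\circ_1$ (with the trivial homotopy $\theta_1=0$ as identity) are immediate since $\circ_1$ is just addition.
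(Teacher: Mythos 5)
Your first (Kan-filler) route is the one the paper actually follows, and it is the only route that can prove this statement: since this is a Theorem--Definition, there is no prior notion of $\tau_1\circ_1\theta_1$ to compare with; the content is precisely that composing (choice-dependent) lifts in the $\infty$-groupoid $\mathrm{MC}_\bullet(\bar L)$ and projecting back to ${\cal T}$ yields a well-defined operation, and that this projection is addition. The ``lighter'' algebraic route you declare to be your actual written proof --- checking that $\tau_1+\theta_1$ satisfies relations (a)--(c) of Definition~\ref{HomTheo} for the pair $(f,h)$ --- only shows that addition maps ${\cal T}(f,g)\times{\cal T}(g,h)$ into ${\cal T}(f,h)$; it never identifies addition with the projected $\infty$-groupoid composite, so it does not prove the theorem, it at best checks consistency of a formula one would still have to derive. (In that check there is also a bookkeeping slip: after adding the two (c)-relations and using (a), the leftover is $m_2(m_1\theta_1x,\tau_1y)-m_2(\theta_1x,m_1\tau_1y)$, which indeed vanishes by Relation (b) of Proposition~\ref{2term_Loday_in_algebra} in $W$; the expression you display, $-m_2(\tau_1x,m_1\theta_1y)+m_2(m_1\theta_1x,\tau_1y)$, has its arguments mis-paired and does not vanish by (b) alone, the bracket being non-symmetric.)

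In the Kan-filler part itself there is a concrete gap: your lifts are not adapted to the inner horn $\Lambda^1[2]$ at the vertex $1$. You lift both homotopies with potentials vanishing at their left endpoints ($\beta(0)=0$, $\theta_1=\beta(1)s$ on the face $01$; $\beta'=0$ at the global vertex $1$, $\tau_1=\beta'(1)s$ on the face $12$) and then apply ${\cal B}_2^1(\bar\mu,\bar\nu)$ with $\bar\mu=f$. But ${\cal B}_2^1$ requires $\varepsilon_2^1\bar\beta=0$ and returns a simplex whose value at vertex $1$ is $\bar\mu$, which must be $g$ (the common value of the two faces there), not $f$. Worse, your $\beta$ and $\beta'$ disagree at the shared vertex $1$ (one equals $\theta_1s^{-1}$ there, the other $0$), so no common extension $\bar\beta$ with these boundary values exists; and if one ignores this, the endpoint computation on $02$ gives $(\bar\beta(2)-\bar\beta(0))s=\tau_1$, not $\theta_1+\tau_1$ --- the missing $\theta_1$ is exactly what your unproved ``midpoint-type data'' remark is asked to supply. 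The paper avoids all of this by normalizing both potentials to vanish at the common vertex and basing both lifts at $g$: $\theta_1$ is lifted to ${\cal B}_1^1(g,(\delta+d)\beta')$ with $\beta'(0)=-\theta_1s^{-1}$, $\beta'(1)=0$, and $\tau_1$ to ${\cal B}_1^0(g,(\delta+d)\beta'')$ with $\beta''(0)=0$, $\beta''(1)=\tau_1s^{-1}$; then the data glue, ${\cal B}_2^1(g,(\delta+d)\beta)$ is legitimate, and the jump across the face $02$ is $\beta(2)-\beta(0)=(\tau_1+\theta_1)s^{-1}$, giving $S_1^0\alpha|_{02}=\tau_1+\theta_1$ independently of the choices.
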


We will actually lift $\theta_1,\tau_1\in{\cal T}$ to $\alpha',\alpha''\in\mathrm{MC}_1(\bar L)$ (which involves choices), then compose these lifts in the infinity groupoid $\mathrm{MC}_\bullet(\bar L)$ (which is not a well-defined operation), and finally project the result back to ${\cal T}$ (despite all the intermediate choices, the final result will turn out to be well-defined).

\begin{proof} Let now $n=2$, take $\mu\in\textrm{MC}(L)$ and $\nu=(\delta+d)\beta\in\mathrm{mc}_2^1(\bar L)$, then construct $\alpha={\cal B}_2^1(\mu,\nu)$. The computation is similar to that in the 1-dimensional case and gives the same result:
\begin{eqnarray} \alpha=\mu+(\delta+d)\beta+\bar{\mathscr{L}}_2(\mu,\beta)+\frac12\bar{\mathscr{L}}_2(\delta \beta,\beta)\;.\label{HomotCompIntermed}\end{eqnarray}

To obtain $\tau_1\circ_1\theta_1$, proceed as in (\ref{preimage}) and lift $\theta_1$ (resp., $\tau_1$) to $$\alpha':={\cal B}_1^1(g,(\delta+d)\beta')\in{\cal I}(f,g)\subset\mathrm{MC}_1(\bar L)\quad (\text{resp.,}\;\; \alpha'':={\cal B}_1^0(g,(\delta+d)\beta'')\in{\cal I}(g,h)\subset\mathrm{MC}_1(\bar L))\;,$$ where $$\beta'(0)=-\theta_1s^{-1}\;\text{and}\;\beta'(1)=0\quad (\text{resp.,}\;\; \beta''(0)=0\;\text{and}\;\beta''(1)=\tau_1s^{-1})\;.$$ As mentioned above, we have by construction \begin{equation}\label{Thetas} \theta_1=(\beta'(1)-\beta'(0))s\quad(\text{resp.,}\;\;\tau_1=(\beta''(1)-\beta''(0))s)\;.\end{equation}

If we view $\alpha'$ (resp., $\alpha''$) as defined on the face $01$ (resp., $12$) of $\Delta^2$, the equation $\varepsilon_1^1\alpha'=\varepsilon_1^0\alpha''=g$ reads $\varepsilon_2^1\alpha'=\varepsilon_2^1\alpha''=g=:\mu$. This means that $$(\alpha',\alpha'')\in\mathrm{SSet}(\Lambda^1[2],\mathrm{MC}_\bullet(\bar L))\;.$$ We now follow the extension square (\ref{commute_square}). The left arrow leads to $$(\mu;(\delta+d)\beta',(\delta+d)\beta'')\in\mathrm{SSet}(\Lambda^1[2],\mathrm{MC}(L)\times\mathrm{mc}_\bullet(\bar L))\;,$$ the bottom arrow to $$(\mu,(\delta+d)\beta)\in\mathrm{MC}(L)\times\mathrm{mc}_2^1(\bar L)\;,$$ where $\beta$ is {\it any extension} of $(\beta',\beta'')$ to $\Delta^2$, and the right arrow provides $\alpha\in\mathrm{MC}_2(\bar L)$ given by Equation (\ref{HomotCompIntermed}). From Subsection \ref{InftyCatComp}, we know that all composites of $\alpha',\alpha''$ are $\infty$-2-homotopic and that a possible composite is obtained by restricting $\alpha$ to $02.$ This restriction $(-)|_{02}$ is given by the ${\tt DGCA}$-map $d_1^2$. Hence, we get $$\alpha|_{02}=\mu+(\delta+d)\beta|_{02}+\bar{\mathscr{L}}_2(\mu,\beta|_{02})+\frac12\bar{\mathscr{L}}_2(\delta \beta|_{02},\beta|_{02})\in{\cal I}(f,h)\subset\mathrm{MC}_1(\bar L)\;.$$

We now choose the projection $S_1^0\alpha|_{02}\in{\cal T}(f,h)$ of the composite-candidate of the chosen lifts of $\theta_1,\tau_1$, as composite $\tau_1\circ_1\theta_1$. Since $$h_1^0\alpha|_{02}=-(\delta+d)h_1^0\beta|_{02}+\beta|_{02}-\beta(0)=\beta|_{02}-\beta(0)\;,$$ we get
$$ S_1^0\alpha|_{02}=(\beta|_{02}(2)-\beta(0)-\beta|_{02}(0)+\beta(0))s=(\beta(2)-\beta(0))s=$$ $$(\beta''(2)-\beta''(1))s+(\beta'(1)-\beta'(0))s=\tau_1+\theta_1\;,$$ in view of (\ref{Thetas}). Hence, by definition, the vertical composite of $\theta_1\in{\cal T}(f,g)$ and $\tau_1\in{\cal T}(g,h)$ is given by \begin{equation}\label{VertComp}\tau_1\circ_1\theta_1=\tau_1+\theta_1\in{\cal T}(f,h)\;.\end{equation}
\end{proof}

\begin{rem} The composition of elements of ${\cal I}=\mathrm{MC}_1(\bar L)$ in the infinity groupoid $\mathrm{MC}_\bullet(\bar L)$, which is defined and associative only up to higher morphisms, projects to a well-defined and associative vertical composition in ${\cal T}$.\end{rem}

Just as for concordances, horizontal composition of $\infty$-homotopies is without problems. The horizontal composite of $\theta_1\in{\cal T}(f,g)$ and $\tau_1\in{\cal T}(f',g')$, where $f,g:V\to W$ and $f',g':W\to X$ act between 2-term Leibniz infinity algebras, is defined by \begin{equation}\label{HorzComp} \tau_1\circ_0\theta_1=g'_1\theta_1+\tau_1f_1 = f'_1\theta_1+\tau_1g_1\;.\end{equation} The two definitions coincide, since $\theta_1,\tau_1$ are chain homotopies between the chain maps $f,g$ and $f',g'$, respectively, see Definition \ref{HomTheo}, Relations (\ref{homotopy_a}) and (\ref{homotopy_b}). The identity associated to a 2-term $\infty$-morphism is just the zero-map. As announced in \cite{BC04} (in the Lie case and without information about composition), we have the

\begin{prop} There is a strict 2-category ${\tt 2Lei_{\infty}}$-${\tt Alg}$ of 2-term Leibniz infinity algebras.\end{prop}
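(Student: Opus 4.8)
The plan is to verify the axioms of a strict 2-category directly on the data assembled in this section, using the explicit formulae just established and the fact that all the relevant compositions have now been computed. First I would record the underlying 1-category: by the corollary in Subsection \ref{KanHomComp}, 2-term Leibniz infinity algebras and their infinity morphisms form a (full sub)category ${\tt 2Lei_{\infty}}$-${\tt Alg}$, so the 0-cells, 1-cells, identity 1-cells and horizontal composition of 1-cells (which is the composition of Leibniz infinity morphisms, see Subsection \ref{LeibInftyMorphComp}) are already in place and satisfy associativity and unitality. The 2-cells are the 2-term $\infty$-homotopies of Definition \ref{HomTheo}; vertical composition is $\tau_1\circ_1\theta_1=\tau_1+\theta_1$, given by Theorem \ref{KomComp2}; the identity 2-cell on a 1-morphism $f$ is the zero map $\theta_1=0$ (one checks instantly that $0$ satisfies (\ref{homotopy_a})--(\ref{homotopy_c}) with $g=f$, using $|l_1|=-1$, $|l_2|=0$); and horizontal composition of 2-cells is Equation (\ref{HorzComp}).

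Next I would check the 2-category axioms one by one. Vertical composition is associative and unital because it is just addition of linear maps: $(\theta_1+\tau_1)+\sigma_1=\theta_1+(\tau_1+\sigma_1)$ and $\theta_1+0=0+\theta_1=\theta_1$; one must only confirm that the sum of two composable 2-term $\infty$-homotopies again satisfies (\ref{homotopy_a})--(\ref{homotopy_c}), which is immediate from linearity of $m_1,l_1,l_2$ and of $x\mapsto m_2(f_1x,-)$ and $x\mapsto m_2(-,g_1x)$ in the relevant slots -- this is essentially the content already used in the proof of Theorem \ref{KomComp2}. Horizontal composition of 2-cells is associative and unital by the same linearity argument applied to (\ref{HorzComp}), together with the already-known associativity and unitality of horizontal composition of 1-cells; here one uses that $\theta_1$ is a chain homotopy between the chain maps $f_1,g_1$ (Relations (\ref{homotopy_a}), (\ref{homotopy_b})) so that the two expressions in (\ref{HorzComp}) agree and compose correctly. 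Finally I would verify the interchange law $(\sigma_1\circ_1\tau_1)\circ_0(\sigma_1'\circ_1\theta_1)=(\sigma_1\circ_0\sigma_1')\circ_1(\tau_1\circ_0\theta_1)$: expanding both sides via (\ref{VertComp}) and (\ref{HorzComp}) reduces it to a short identity among sums of terms of the form $g'_1\theta_1$, $\tau_1f_1$, etc., which holds because the relevant source/target 1-morphisms match up (again using Relations (\ref{homotopy_a}), (\ref{homotopy_b}) to rewrite the bracketing-dependent terms).

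The main obstacle I expect is not any single hard computation but rather the bookkeeping needed to confirm \emph{well-definedness} at each stage -- i.e., that every composite 2-cell we write down genuinely satisfies all three characterizing equations (\ref{homotopy_a})--(\ref{homotopy_c}), and for horizontal composition that the composite is a 2-cell between the correct composite 1-morphisms. For vertical composition this was already handled in Theorem \ref{KomComp2}; for horizontal composition one has to substitute the explicit morphism equations of Proposition \ref{2term_Loday_morphism} for $f,g,f',g'$ into the candidate relations for $\tau_1\circ_0\theta_1$ and check them by a direct but routine manipulation, invoking (\ref{identity_b}) of Proposition \ref{2term_Loday_in_algebra} where a symmetrization of a bracket on $l_1$-images appears. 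Once these verifications are in hand, the statement follows, and moreover the construction is manifestly strict (no associators or unitors are needed), which is exactly the assertion of the proposition; this also provides the promised Leibniz counterpart of the strict 2-category of 2-term Lie infinity algebras of \cite{BC04}.
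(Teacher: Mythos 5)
Your proposal is correct and takes essentially the same route as the paper: the proposition is asserted there on the strength of exactly the ingredients you assemble -- vertical composition as addition (Theorem \ref{KomComp2}), horizontal composition given by Equation (\ref{HorzComp}), zero maps as identity 2-cells -- and the remaining axioms (well-definedness, units, associativity, interchange) are the routine verifications you outline using Definition \ref{HomTheo} and Propositions \ref{2term_Loday_in_algebra} and \ref{2term_Loday_morphism}. One small bookkeeping remark: relation (\ref{identity_b}) is what is actually needed for the \emph{vertical} composite to satisfy (\ref{homotopy_c}) (as already exploited in Theorems \ref{KanHom1} and \ref{KomComp2}), whereas well-definedness of the horizontal composite follows from the morphism relations of Proposition \ref{2term_Loday_morphism} alone; this does not affect your argument.
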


\subsection{2-Category of categorified Leibniz algebras}\label{KPQSection6}

\subsubsection{Category of Leibniz 2-algebras}

Leibniz 2-algebras are categorified Leibniz structures on a categorified vector space. More precisely,

\begin{defi}
A \emph{Leibniz 2-algebra} $(L,[-,-],{\mathbf J})$ is a linear category $L$ equipped with
\begin{enumerate}
\item a \emph{bracket} $[-,-]$, i.e. a bilinear functor $[-,-]:L\times L\rightarrow L$, and
\item a \emph{Jacobiator} $\mathbf{J}$, i.e. a trilinear natural transformation
$$
\mathbf{J}_{x,y,z}:[x,[y,z]]\rightarrow[[x,y],z]+[y,[x,z]],\quad x,y,z\in L_0,
$$
\end{enumerate}
which verify, for any $w,x,y,z\in L_0,$ the \emph{Jacobiator identity}
\begin{equation}\label{Jacobiator_diagramm}
\hspace{-5mm}
\xymatrix@C=4pc@R=4pc{
&[w,[x,[y,z]]]\ar@{->}
[dr]_-{\mathbf{1}}\ar@{->}[dl]^-{[\mathbf{1}_w,\mathbf{J}_{x,y,z}]}\\
[w,[[x,y],z]]+[w,[y,[x,z]]]\ar@{->}
[d]^-{\mathbf{J}_{w,[x,y],z}+\mathbf{J}_{w,y,[x,z]}}&&
[w,[x,[y,z]]]\ar@{->}
[d]_-{\mathbf{J}_{w,x,[y,z]}}\\
{\begin{gathered}[t]
[[w,[x,y]],z]+[[x,y],[w,z]]\\
+[[w,y],[x,z]]+[y,[w,[x,z]]]
\end{gathered}}\ar@{->}
[d]^-{\mathbf{1}+[\mathbf{1}_y,\mathbf{J}_{w,x,z}]}
&&[[w,x],[y,z]]+[x,[w,[y,z]]]\ar@{->}
[d]_-{1+[\mathbf{1}_x,\mathbf{J}_{w,y,z}]}\\
{\begin{gathered}[t]
[[w,[x,y]],z]+[[x,y],[w,z]]\\
+[[w,y],[x,z]]+[y,[[w,x],z]]\\
+[y,[x,[w,z]]]
\end{gathered}}\ar@{->}
[dr]^-{[\mathbf{J}_{w,x,y},\mathbf{1}_z]}&&
{\begin{gathered}[t]
[[w,x],[y,z]]+[x,[[w,y],z]]\\
+[x,[y,[w,z]]]
\end{gathered}}\ar@{->}
[ld]^-{\mathbf{J}_{[w,x],y,z}+\mathbf{J}_{x,[w,y],z}+\mathbf{J}_{x,y,[w,z]}}\\
&{\begin{gathered}[t]
[[[w,x],y],z]+[[x,[w,y]],z]\\
+[[x,y],[w,z]]+[[w,y],[x,z]]\\
+[y,[[w,x],z]]+[y,[x,[w,z]]]
\end{gathered}}
}
\end{equation}
\end{defi}

The Jacobiator identity is a coherence law that should be thought of as a higher Jacobi identity for the Jacobiator.\medskip

The preceding hierarchy `category, functor, natural transformation' together with the coherence law is entirely similar to the known hierarchy `linear, bilinear, trilinear maps $l_1,l_2,l_3$' with the $L_{\infty}$-conditions (a)-(e). More precisely,

\begin{prop}
There is a 1-to-1 correspondence between Leibniz 2-algebras and 2-term Leibniz infinity algebras.
\end{prop}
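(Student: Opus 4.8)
The plan is to factor the asserted bijection through the same chain of elementary correspondences that underlies the proof of Theorem~\ref{MainTheo}, but one categorical level lower, so that the truncation $V_2=0$ makes every step finite and free of the cohomological subtleties (valuedness in $2$-cocycles, canonical choices) that arise in the Lie $3$-algebra case. First I would invoke Proposition~\ref{EquivCat} for $n=1$ together with Proposition~\ref{UniqueCatStr}: a linear category $L$ is, up to the canonical identification $L_0\simeq V_0$, $L_1\simeq V_0\oplus V_1$ with $V_i=\ker s_i$, the same datum as a $2$-term chain complex $l_1:=t|_{V_1}:V_1\to V_0$, the source, target and identity being given by Equations~(\ref{source})--(\ref{identity}) and the unique composition by Equation~(\ref{composition}). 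This is the object-level incarnation of the slogan ``globular condition $=$ $L_\infty$-condition $n=1$''.

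Next I would reconstruct the bracket. By Proposition~\ref{LinnFun}, a bilinear functor $[-,-]:L\times L\to L$ is nothing but a pair of bilinear maps on the object and morphism spaces respecting sources, targets and identities; unravelling these requirements over $L_0=V_0$ and $L_1=V_0\oplus V_1$ produces a bilinear $l_2:V_0\times V_0\to V_0$ together with bilinear maps $V_0\times V_1\to V_1$ and $V_1\times V_0\to V_1$ (the degree-$0$ component $V_1\times V_1\to V_2$ being automatically zero since $V_2=0$), and shows that the chain-map condition $d[-,-]=[-,-]\,(d\otimes\mathrm{id}+\mathrm{id}\otimes d)$ is precisely equivalent to relations (a) and (b) of Proposition~\ref{2term_Loday_in_algebra}. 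Conversely, such an $l_2$ yields a bilinear functor through a formula of the type~(\ref{BracketFromEll2}) used in the proof of Lemma~\ref{Lem2}. I would present this as a lemma, paralleling Lemma~\ref{Lem2}, establishing a bijection between brackets on $L$ and data $(l_1,l_2)$ satisfying (a)--(b). The Jacobiator is then handled exactly as in Lemma~\ref{Lem3}: a trilinear natural transformation $\mathbf{J}_{x,y,z}:[x,[y,z]]\to[[x,y],z]+[y,[x,z]]$ assigns to every triple $(x,y,z)\in V_0^{\times3}$ a $1$-cell of $L_1=V_0\oplus V_1$ whose $V_0$-component is forced to be $[x,[y,z]]$, hence is the datum of a degree-$1$ trilinear map $l_3:V_0^{\times3}\to V_1$; the source/target condition on $\mathbf{J}$ is equivalent to relation (c), and naturality of $\mathbf{J}$ in each of its three arguments — tested on identity $1$-cells in two slots and a general $1$-cell in the third — gives the three sub-identities of (d). Conversely $\mathbf{J}_{xyz}:=([x,[y,z]],l_3(x,y,z))$ reconstructs the transformation, so this is again a bijection, for fixed $(l_1,l_2)$, between Jacobiators and maps $l_3$ subject to (c)--(d).

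It remains to match the Jacobiator identity with the last $L_\infty$-relation (e), and this is the step I expect to be the main obstacle: it is the Leibniz, $n=4$ analogue of the ``coherence law $\iff$ $L_\infty$-condition $n=5$'' lemma in the proof of Theorem~\ref{MainTheo}. The diagram~(\ref{Jacobiator_diagramm}) is a diagram of $1$-cells of $L_1=V_0\oplus V_1$ between a fixed source object and a fixed target object; since composition of composable $1$-cells is addition of $V_1$-components (Equation~(\ref{composition}) with $p=0$), and since each edge is an identity, a bracket of $\mathbf{J}$'s with $\mathbf{1}$'s, or a $\mathbf{J}$, commutativity of the diagram is equivalent to the equality of the two sums of $V_1$-components obtained along the left-hand and the right-hand paths. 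Expanding each contribution by means of the bracket formula and the definition of $l_3$, and carefully tracking the resulting ten terms together with their signs, one checks that this equality is exactly relation (e) of Proposition~\ref{2term_Loday_in_algebra}. The computation is finite — all would-be higher contributions vanish for degree reasons — but the sign and term bookkeeping is delicate, and the bulk of the proof lies here; it is nonetheless strictly simpler than the octagon/Identiator computation of Theorem~\ref{MainTheo}, since there is no Identiator to match.

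Composing the four bijections above (linear categories $\leftrightarrow$ $2$-term complexes, brackets $\leftrightarrow$ $l_2$, Jacobiators $\leftrightarrow$ $l_3$, Jacobiator identity $\leftrightarrow$ relation (e)) yields the claimed $1$-to-$1$ correspondence between Leibniz $2$-algebras and $2$-term Leibniz infinity algebras. Finally, as in \cite{BC04}, I would note that this object-level bijection is compatible with the respective notions of morphism — the conditions of Proposition~\ref{2term_Loday_morphism} being the translation of linear functoriality together with the defining identities of a morphism of Leibniz $2$-algebras — so that the correspondence upgrades to an equivalence, indeed a $2$-equivalence, of the corresponding strict $2$-categories; I would state this as a corollary.
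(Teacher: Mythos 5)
Your overall strategy is the right one: the paper itself offers no proof of this proposition (it only cites \cite{BC04}, \cite{SL10}, \cite{KMP11}), and redoing the unravelling of Theorem \ref{MainTheo} one categorical level down is exactly what those references do. Your identification of linear categories with 2-term complexes, your treatment of the Jacobiator (source/target condition giving (c), naturality in each slot giving (d)), and your reading of the Jacobiator identity as an equality of $V_1$-components of two composites of $1$-cells, yielding the ten-term relation (e), are all sound in the truncated setting, where no cohomological caveats or $l_4$ appear.

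There is, however, a genuine gap in your bracket step: you invoke Proposition \ref{LinnFun} to claim that a bilinear functor $[-,-]:L\times L\to L$ is nothing but a pair of bilinear maps respecting sources, targets and identities. Proposition \ref{LinnFun} holds for \emph{linear} maps only, and Section \ref{KMPSection5} of the paper is devoted precisely to the failure of its bilinear analogue: a family of bilinear maps respecting sources, targets and identities need not respect compositions, cf.\ Equation (\ref{NonFunUP}). Here this matters concretely. With only source/target/identity compatibility, the bracket of two $1$-cells $h,k\in V_1\subset L_1$ has an undetermined $V_1$-component $\chi(h,k)$, constrained only by $l_1\chi(h,k)=l_2(l_1h,l_1k)$; such ``brackets'' carry strictly more data than a chain map $l_2$, and relation (b) is not forced, so the asserted bijection would break (take $l_1=l_2=0$ and any $\chi\neq 0$: all your conditions hold, but this is no functor and corresponds to no $2$-term structure). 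The missing ingredient is compatibility with composition, to be handled as in Lemma \ref{Lem2}: it forces $[h,k]=[1_{l_1h},k]=[h,1_{l_1k}]$, which simultaneously determines the bracket on pairs of morphisms and yields (b), $l_2(l_1h,k)=l_2(h,l_1k)$; in the $2$-term case this creates no cohomological complications since the would-be component $V_1\times V_1\to V_2$ vanishes, so the bijection does hold once this step is inserted. (A minor point besides: with your convention $\mathbf{J}_{xyz}=([x,[y,z]],l_3(x,y,z))$, the target condition gives $l_1l_3(x,y,z)=[[x,y],z]+[y,[x,z]]-[x,[y,z]]$, which is the negative of (c); set $\mathbf{J}_{xyz}=([x,[y,z]],-l_3(x,y,z))$ or adjust signs consistently.)
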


This proposition was proved in the Lie case in \cite{BC04} and announced for the Leibniz case in \cite{SL10}. A generalization of the latter correspondence to Lie 3-algebras and 3-term Lie infinity algebras can be found in \cite{KMP11}. This paper allows to understand that the correspondence between higher categorified algebras and truncated infinity algebras is subject to cohomological conditions, and to see how the coherence law corresponds to the last nontrivial $L_{\infty}$-condition.\medskip

The definition of Leibniz 2-algebra morphisms is God-given: such a morphism must be a functor that respects the bracket up to a natural transformation, which in turn respects the Jacobiator. More precisely,

\begin{defi}
Let $(L,[-,-],\mathbf{J})$ and $(L',[-,-]',\mathbf{J}')$ be Leibniz 2-algebras $($in the following, we write $[-,-], \mathbf{J}$ instead of $\;[-,-]',\mathbf{J}'$$)$. A \emph{morphism $(F,\mathbf{F})$ of Leibniz 2-algebras} from $L$ to $L'$ consists of
\begin{enumerate}
\item a linear functor $F:L\rightarrow L'$, and
\item a bilinear natural transformation $$\mathbf{F}_{x,y}:[Fx,Fy]\rightarrow F[x,y],\quad x,y\in L_0\;,$$\end{enumerate}
which make the following diagram commute
\begin{equation}\label{diagram_for_morphism}
\xymatrix@C=10pc@R=3pc{
[Fx,[Fy,Fz]]\ar@{->}[d]^{[\mathbf{1}_x,\mathbf{F}_{y,z}]}\ar@{->}[r]^-{\mathbf{J}_{Fx,Fy,Fz}}&[[Fx,Fy],Fz]+[Fy,[Fx,Fz]]\ar@{->}[d]^{[\mathbf{F}_{x,y},\mathbf{1}_z]+[\mathbf{1}_y,\mathbf{F}_{x,z}]}\\
[Fx,F[y,z]]\ar@{->}[d]^{\mathbf{F}_{x,[y,z]}}&[F[x,y],Fz]+[Fy,F[x,z]]\ar@{->}[d]^{\mathbf{F}_{[x,y],z}+\mathbf{F}_{y,[x,z]}}\\
F[x,[y,z]]\ar@{->}[r]^{F\mathbf{J}_{x,y,z}}&F[[x,y],z]+F[y,[x,z]]
}
\end{equation}
\end{defi}
\begin{prop}
\vspace{2mm}
There is a 1-to-1 correspondence between Leibniz 2-algebra morphisms and 2-term Leibniz infinity algebra morphisms.
\end{prop}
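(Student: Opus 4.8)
The plan is to establish the dictionary morphism-by-morphism, exactly as is done in \cite{BC04} for the Lie case, building on the correspondence between Leibniz $2$-algebras and $2$-term Leibniz infinity algebras established just above, together with the equivalence {\tt Vect} $1$-{\tt Cat} $\simeq$ {\tt C}$^2(${\tt Vect}$)$ of Proposition \ref{EquivCat}. Throughout I identify a linear category $L$ (resp. $L'$) with the $2$-term chain complex $(V,l_1):=\mathfrak{N}(L)$ (resp. $(W,m_1):=\mathfrak{N}(L')$), so that $L_m=\oplus_{i=0}^m V_i$ and the categorical structure maps are given by the explicit formulae of Proposition \ref{UniqueCatStr} with $t$ replaced by $l_1$ (see Remark \ref{BasicIdentification}); under this identification the bracket functor corresponds to $l_2$ and the Jacobiator to $l_3$, as in the $2$-term specialisation of the object-level correspondence. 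The two categories are then described by parallel data -- ``functor, natural transformation, coherence hexagon'' on one side versus ``$f_1,f_2$ subject to (a)--(d)'' on the other -- and it remains to match them.

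First I would send a Leibniz $2$-algebra morphism $(F,\mathbf{F}):L\to L'$ to a pair $(f_1,f_2)$. The linear functor $F$ is, via $\mathfrak{N}$, a linear chain map $\mathfrak{N}(F):(V,l_1)\to(W,m_1)$; writing $f_1:=\mathfrak{N}(F)$ this is a degree $0$ map with $m_1f_1h=f_1l_1h$ for $h\in V_1$, i.e. exactly relation (\ref{morphism_a}) of Proposition \ref{2term_Loday_morphism}. The bilinear natural transformation $\mathbf{F}_{x,y}:[Fx,Fy]\to F[x,y]$ assigns to every $(x,y)\in L_0\times L_0=V_0\times V_0$ an element of $L'_1=W_0\oplus W_1$; as in Lemma \ref{Lem2}, only its $W_1$-component carries new data, and being bilinear in $(x,y)$ it is a bilinear map $f_2:V_0\times V_0\to W_1$ of degree $1$. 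The source/target equations for $\mathbf{F}_{x,y}$ then become relation (\ref{morphism_b}), $m_2(f_1x,f_1y)+m_1f_2(x,y)=f_1l_2(x,y)$, while naturality of $\mathbf{F}$ in each of its two arguments (i.e. with respect to the $1$-cells of $L$, equivalently the $V_1$-direction) yields the two equations of relation (\ref{morphism_c}). Finally, after passing to $W_1$-components and using the above identifications, the hexagon (\ref{diagram_for_morphism}) -- which couples $\mathbf{J}$, $\mathbf{J}'$ and $\mathbf{F}$ -- collapses to relation (\ref{morphism_d}).

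Conversely, given $(f_1,f_2)$ satisfying (\ref{morphism_a})--(\ref{morphism_d}), I would set $F:=\mathfrak{G}(f_1)$ (the linear functor associated to the chain map $f_1$, available by (\ref{morphism_a})) and define $\mathbf{F}_{x,y}\in L'_1=W_0\oplus W_1$ to be the pair $\big([f_1x,f_1y],f_2(x,y)\big)$. Then (\ref{morphism_b}) says $\mathbf{F}_{x,y}$ has the correct target $F[x,y]$, (\ref{morphism_c}) say that $\mathbf{F}$ is natural in both variables -- it is well defined on $1$-cells precisely because $f_2$ is valued in $W_1$ and $f_1$ is a chain map -- and (\ref{morphism_d}) is equivalent to commutativity of (\ref{diagram_for_morphism}). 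The two assignments are mutually inverse, which is immediate from $\mathfrak{N}\mathfrak{G}=\mathrm{id}$ and $\mathfrak{G}\mathfrak{N}\cong\mathrm{id}$ (Proposition \ref{EquivCat}) together with Remark \ref{BasicIdentification}. Naturality and functoriality of the whole construction (compatibility with composition of morphisms) then follows from the corresponding statements for $\mathfrak{N}$ and $\mathfrak{G}$.

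The main obstacle will be the step bridging the hexagon (\ref{diagram_for_morphism}) and equation (\ref{morphism_d}): one must expand both composites of the hexagon through the component descriptions of $\mathbf{J},\mathbf{J}',\mathbf{F}$, simplify the resulting sums of triple brackets using the categorical structure equations of Proposition \ref{2term_Loday_in_algebra}, and verify that precisely relation (\ref{morphism_d}) survives -- a computation of the same flavour as, but longer than, the verification of (\ref{3a})--(\ref{3b}) in Lemma \ref{Lem3}, with signs governed by the suspension conventions fixed in Section~\ref{KMPSection4}. As in that lemma, a further point needing care is that $\mathbf{F}$ only remembers the $W_1$-component of $f_2$; one has to note that $f_2$, having weight $1$, automatically vanishes off $V_0\times V_0$, so that no cohomological ambiguity arises and the map $(f_1,f_2)\mapsto(F,\mathbf{F})$ is injective.
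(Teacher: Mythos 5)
Your proposal is correct and follows essentially the same route as the paper: the paper itself gives no argument here but defers to \cite{BC04} and \cite{SL10}, which establish exactly the componentwise dictionary you describe (functor $\leftrightarrow$ $f_1$, the $W_1$-component of $\mathbf{F}$ $\leftrightarrow$ $f_2$, source/target and naturality $\leftrightarrow$ relations (a)--(c), hexagon $\leftrightarrow$ relation (d)), in the spirit of the lemmas of Section~\ref{KMPSection4}. Your observation that $f_2$ is forced to live on $V_0\times V_0$ for degree reasons, so no cohomological subtleties of the 3-term type arise, is exactly why the correspondence is a clean bijection in this 2-term case.
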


For a proof, see \cite{BC04} and \cite {SL10}.\medskip

Composition of Leibniz 2-algebra morphisms $(F,\mathbf{F})$ is naturally given by composition of functors and whiskering of functors and natural transformations.

\begin{prop} There is a category ${\tt Lei2}$ of Leibniz 2-algebras and morphisms.\end{prop}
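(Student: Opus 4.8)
The plan is to make the composition described in the text precise and then check the axioms of a category, the single nonroutine point being that the composite again satisfies the coherence law \eqref{diagram_for_morphism}. Given morphisms $(F,\mathbf F)\colon L\to L'$ and $(G,\mathbf G)\colon L'\to L''$, I would define their composite to be $(G\circ F,\mathbf{GF})$, where $G\circ F$ is the ordinary (strict) composition of linear functors — again a linear functor — and where the bilinear natural transformation $\mathbf{GF}$ is produced by whiskering: for $x,y\in L_0$,
$$\mathbf{GF}_{x,y}\;:=\;G(\mathbf F_{x,y})\circ\mathbf G_{Fx,Fy}\colon\;[GFx,GFy]\xrightarrow{\mathbf G_{Fx,Fy}}G[Fx,Fy]\xrightarrow{G(\mathbf F_{x,y})}GF[x,y]\,.$$
Naturality of $\mathbf{GF}$ follows from naturality of $\mathbf G_{-,-}$ (whiskered with $F$) together with $G$ applied to the naturality squares of $\mathbf F_{-,-}$; bilinearity is immediate, since $\mathbf F$ and $\mathbf G$ are bilinear and $F$, $G$ are linear. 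The identity morphism on $L$ is taken to be $(\mathrm{Id}_L,\mathbf 1)$ with $\mathbf 1_{x,y}=\mathrm{id}_{[x,y]}$.

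The main step is to verify that $(G\circ F,\mathbf{GF})$ satisfies \eqref{diagram_for_morphism}. I would do this by a pasting argument: write the hexagon \eqref{diagram_for_morphism} for $(F,\mathbf F)$ at $(x,y,z)$, apply the functor $G$ to it — which preserves commutativity of diagrams — and glue the resulting $G$-image hexagon along its edges to the hexagon for $(G,\mathbf G)$ evaluated at $(Fx,Fy,Fz)$, using naturality of $\mathbf G_{-,-}$ to slide the various morphisms $G(\mathbf F_{-,-})$ and $\mathbf G_{F-,F-}$ past the whiskered Jacobiators $\mathbf J'_{G-,G-,G-}$ and the $G$-images $G\mathbf J_{-,-,-}$. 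Every tile of the assembled diagram commutes, hence so does its outer boundary, which is exactly \eqref{diagram_for_morphism} for the composite. As a shortcut I would also note that the whole statement can instead be transported through the two preceding $1$-to-$1$ correspondences: Leibniz $2$-algebras (resp.\ their morphisms) correspond to $2$-term Leibniz infinity algebras (resp.\ infinity morphisms), and the composition ``functors plus whiskering'' matches composition of infinity morphisms under this dictionary, so the category structure on ${\tt Lei2}$ is inherited from that of ${\tt 2Lei_\infty}$-${\tt Alg}$, see Subsection~\ref{LeibInftyMorphComp}.

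Finally I would check associativity and the unit laws. On objects, associativity is the strict associativity of functor composition; on the transformation component, unravelling $\mathbf{((H)(G))F}_{x,y}$ and $\mathbf{(H)((G)F)}_{x,y}$ both gives $HG(\mathbf F_{x,y})\circ H(\mathbf G_{Fx,Fy})\circ\mathbf H_{GFx,GFy}$, the two sides agreeing precisely because $H$ is a functor and so preserves composition. For the units, $F\circ\mathrm{Id}_L=F=\mathrm{Id}_{L'}\circ F$ strictly, while $G(\mathbf 1_{x,y})\circ\mathbf F_{x,y}=\mathbf F_{x,y}$ and $\mathrm{Id}_{L'}(\mathbf F_{x,y})\circ\mathbf 1_{Fx,Fy}=\mathbf F_{x,y}$, and $(\mathrm{Id}_L,\mathbf 1)$ trivially satisfies \eqref{diagram_for_morphism}. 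Thus ${\tt Lei2}$ is a category. The only genuine obstacle is the coherence verification of the second paragraph; the remaining steps are bookkeeping, and even that obstacle dissolves if one invokes the equivalence with ${\tt 2Lei_\infty}$-${\tt Alg}$.
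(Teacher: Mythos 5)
Your proposal is correct and follows the same route the paper (implicitly) takes: composition of the functor parts together with whiskering of the natural transformations, the only substantive check being the coherence hexagon for the composite, which your pasting of the $G$-image of the $(F,\mathbf F)$-hexagon with the $(G,\mathbf G)$-hexagon at $(Fx,Fy,Fz)$, filled in by naturality squares of $\mathbf G$ against the components of $\mathbf F$, settles. The paper states the proposition with only the remark about functor composition and whiskering, so your write-up simply supplies the routine verifications it omits.
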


\subsubsection{2-morphisms and their compositions}

The definition of a 2-morphism is canonical:

\begin{defi}
Let $(F,\mathbf{F}),(G,\mathbf{G})$ be Leibniz 2-algebra morphisms from $L$ to $L'$. A \emph{Leibniz 2-algebra 2-morphism} $\boldsymbol\theta$ from $F$ to $G$ is a linear natural transformation $\boldsymbol{\theta}: F\Rightarrow G$, such that, for any $x,y\in L_0,$ the following diagram commutes
\begin{equation}\label{diagram_for_2_morphism}
\xymatrix@C=2pc@R=2pc{
[Fx,Fy]\ar@{->}[rr]^-{\mathbf{F}_{x,y}}\ar@{->}[dd]^-{[\boldsymbol{\theta}_x,\boldsymbol{\theta}_y]}&&F[x,y]\ar@{->}[dd]^-{\boldsymbol{\theta}_{[x,y]}}\\
\\
[Gx,Gy]\ar@{->}[rr]^-{\mathbf{G}_{x,y}}&&G[x,y]\\
}
\end{equation}
\end{defi}

\begin{thm}
There is a 1:1 correspondence between Leibniz 2-algebra 2-morphisms and 2-term Leibniz $\infty$-homotopies.
\end{thm}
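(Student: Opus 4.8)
The plan is to run the argument through the already-established dictionary between categorified and $2$-term infinity structures: Proposition~\ref{EquivCat} (with $n=1$) identifies a linear category $L$ with a $2$-term chain complex $(V,l_1)$, $V=V_0\oplus V_1$, via $V_0=L_0$, $V_1=\ker s_1\subseteq L_1$, $l_1=t|_{V_1}$, and under this identification every $1$-cell of $L'$ (the target category) is written uniquely as $(w_0,w_1)\in W_0\oplus W_1$, with source $w_0$ and target $w_0+m_1w_1$, see (\ref{source})--(\ref{target}). Given a Leibniz $2$-algebra $2$-morphism $\boldsymbol\theta:F\Rightarrow G$, I would define $\theta_1:V\to W$ by letting $\theta_1|_{V_0}$ send $a\in L_0=V_0$ to the $W_1$-component of the $1$-cell $\boldsymbol\theta_a\in W_0\oplus W_1$, and by setting $\theta_1|_{V_1}=0$ (this is forced: $|\theta_1|=1$ and $W_2=0$). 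Linearity of $\theta_1$ is exactly linearity of the natural transformation $\boldsymbol\theta$ in its label, so $\theta_1$ is a well-defined degree $1$ linear map, matching Definition~\ref{HomTheo}.

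Next I would extract the three characterizing relations. Writing $\boldsymbol\theta_a=(Fa,\theta_1 a)$, the requirement that $\boldsymbol\theta_a$ be a morphism $Fa\to Ga$ reads, via (\ref{source})--(\ref{target}), $s\boldsymbol\theta_a=Fa$ (automatic) and $t\boldsymbol\theta_a=Fa+m_1\theta_1 a=Ga$, which is precisely relation (a) of Definition~\ref{HomTheo}. For relation (b), I would use that, by $\mathbb{K}$-linearity of $F,G,\boldsymbol\theta$ and of the composition $\circ_0$, naturality of $\boldsymbol\theta$ with respect to an arbitrary morphism of $L$ is equivalent to naturality with respect to identities (trivially true) together with naturality with respect to the ``vertical'' morphisms $(0,\mathbf h)$, $\mathbf h\in V_1$; computing the two $\circ_0$-composites through (\ref{composition}), using $\boldsymbol\theta_0=0$ and relation (a) of Proposition~\ref{2term_Loday_morphism}, this collapses to $f_1\mathbf h+\theta_1 l_1\mathbf h=g_1\mathbf h$, i.e. relation (b).

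The heart of the proof is relation (c), obtained from commutativity of the coherence square (\ref{diagram_for_2_morphism}). I would compute the $W_1$-components of the two composites $[Fx,Fy]\to G[x,y]$ around that square. The one nontrivial ingredient is the component formula for the bracket bifunctor evaluated on the $1$-cells $\boldsymbol\theta_x=(Fx,\theta_1 x)$ and $\boldsymbol\theta_y=(Fy,\theta_1 y)$: expanding by bilinearity of $[-,-]$, using the $1$-categorical analogues of (\ref{FunComp1}), the fact that $m_2$ vanishes on $V_1\times V_1$ for degree reasons, and matching source and target, one finds that $[\boldsymbol\theta_x,\boldsymbol\theta_y]:[Fx,Fy]\to[Gx,Gy]$ has $W_1$-component $m_2(\theta_1 x,g_1 y)+m_2(f_1 x,\theta_1 y)$ --- the asymmetric occurrence of $g_1 y$ against $f_1 x$ being exactly what relation (c) needs, and being the delicate point that must be pinned down by a careful source/target bookkeeping (this asymmetry is dictated by the order in which the two cells are whiskered). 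Combining this with the fact that $\mathbf F_{x,y}$ (resp. $\mathbf G_{x,y}$) has $W_1$-component $f_2(x,y)$ (resp. $g_2(x,y)$) --- which follows from its target relation, i.e. relation (b) of Proposition~\ref{2term_Loday_morphism} --- commutativity of (\ref{diagram_for_2_morphism}) becomes $f_2(x,y)+\theta_1 l_2(x,y)=g_2(x,y)+m_2(\theta_1 x,g_1 y)+m_2(f_1 x,\theta_1 y)$, which is relation (c); the $W_0$-components of both composites are identically $[Fx,Fy]$ and $G[x,y]$, so the square carries no further information.

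Finally I would check the converse and assemble the bijection: starting from a $2$-term $\infty$-homotopy $\theta_1:f\Rightarrow g$, set $\boldsymbol\theta_a:=(Fa,\theta_1 a)$; relation (a) makes this a morphism $Fa\to Ga$, linearity of $\theta_1$ makes $\boldsymbol\theta$ linear, relation (b) yields naturality, and relation (c) yields commutativity of (\ref{diagram_for_2_morphism}) by reversing the computation above (here one also uses that $l_1$ is a derivation for $l_2$, i.e. relation (a) of Proposition~\ref{2term_Loday_in_algebra}). The assignments $\boldsymbol\theta\mapsto\theta_1$ and $\theta_1\mapsto\boldsymbol\theta$ are inverse to each other on the nose, since they are mutually inverse bijections between $\{\boldsymbol\theta_a\}_{a\in L_0}$ and $\{\theta_1|_{V_0}\}$ read off the $W_1$-components, and both manifestly preserve source and target (the underlying $\infty$-morphisms $f=(f_1,f_2)$ and $g=(g_1,g_2)$ are left untouched). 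I expect the only real obstacle to be the third paragraph --- getting the bracket-functor component formula with the correct $f_1$-versus-$g_1$ placement; the rest is routine translation through (\ref{source})--(\ref{composition}).
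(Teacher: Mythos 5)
Your proposal is correct, and in fact it supplies an argument that the paper itself omits: the theorem is stated in Section~\ref{KPQSection6} without proof (only the correspondences for objects and morphisms carry references to the literature), so there is no in-text proof to compare against. Your route is the natural one and matches the way the thesis handles the analogous statements for Lie $3$-algebras in Section~\ref{KMPSection4}: identify $L_m$ with $\oplus_i V_i$ via Proposition~\ref{UniqueCatStr}, read relation (a) off the source/target of $\boldsymbol\theta_a=(Fa,\theta_1a)$, relation (b) off naturality (your reduction to the cells $(0,\mathbf h)$ is fine, and the direct computation with a general $1$-cell $(a,h)$ gives (b) just as quickly), and relation (c) off the coherence square. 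The key computation is exactly the one you single out: expanding $[\boldsymbol\theta_x,\boldsymbol\theta_y]=[1_{f_1x}+\theta_1x,\,1_{f_1y}+\theta_1y]$ by bilinearity and using the forced identification $[\theta_1x,\theta_1y]=[\theta_1x,1_{t\theta_1y}]=m_2(\theta_1x,m_1\theta_1y)$ (the $1$-cell analogue of (\ref{FunComp1}), with $m_2$ on $W_1\times W_1$ absent for weight reasons) yields the $W_1$-component $m_2(f_1x,\theta_1y)+m_2(\theta_1x,f_1y+m_1\theta_1y)=m_2(f_1x,\theta_1y)+m_2(\theta_1x,g_1y)$, whence (c) follows from (\ref{composition}); the converse and the bijectivity are as routine as you claim. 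One small imprecision: the $W_1$-component of $\mathbf F_{x,y}$ is not \emph{deduced} from relation (b) of Proposition~\ref{2term_Loday_morphism} (the target relation determines it only up to $\ker m_1$); rather it \emph{is} $f_2(x,y)$ by the very identification $F\leftrightarrow(f_1,f_2)$ of the morphism correspondence, which your statement presupposes anyway -- so this is a matter of phrasing, not a gap.
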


Horizontal and vertical compositions of Leibniz 2-algebra 2-morphisms are those of natural transformations.

\begin{prop} There is a strict 2-category ${\tt Lei2Alg}$ of Leibniz 2-algebras.\end{prop}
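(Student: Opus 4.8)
The plan is to obtain the strict $2$-category ${\tt Lei2Alg}$ by transporting the strict $2$-categorical structure of ${\tt 2Lei_{\infty}}$-${\tt Alg}$, constructed in Subsection~\ref{KanHomComp}, along the three bijective correspondences established just above: between Leibniz $2$-algebras and $2$-term Leibniz infinity algebras, between Leibniz $2$-algebra morphisms and $2$-term Leibniz infinity morphisms, and between Leibniz $2$-algebra $2$-morphisms and $2$-term Leibniz $\infty$-homotopies. The only point to check is that these correspondences are compatible with all the compositions on both sides --- composition of $1$-cells, vertical composition of $2$-cells, whiskering, and identities --- so that the strict $2$-category axioms (associativity and unitality of each composition and the interchange law), which are valid in ${\tt 2Lei_{\infty}}$-${\tt Alg}$, are inherited by ${\tt Lei2Alg}$. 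In parallel I would record the explicit description of the compositions, which makes the statement self-contained.

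First I would spell out the compositions in ${\tt Lei2Alg}$ directly. The composite of $(F,\mathbf{F}):L\to L'$ and $(G,\mathbf{G}):L'\to L''$ is $(G\circ F,\mathbf{H})$, where $\mathbf{H}_{x,y}:=G(\mathbf{F}_{x,y})\circ\mathbf{G}_{Fx,Fy}:[GFx,GFy]\to GF[x,y]$ for $x,y\in L_0$ (whiskering of $G$ with $\mathbf{F}$ followed by $\mathbf{G}$); vertical composition of $2$-cells and horizontal composition/whiskering are those of linear natural transformations. One then verifies that the coherence diagram~(\ref{diagram_for_morphism}) for $(G\circ F,\mathbf{H})$ commutes --- it is the pasting of the pentagonal diagrams for $(F,\mathbf{F})$ and $(G,\mathbf{G})$ glued by the naturality of $\mathbf{G}_{-,-}$ and the bifunctoriality of $[-,-]$ --- and, likewise, that diagram~(\ref{diagram_for_2_morphism}) for a vertical composite of $2$-morphisms, and for a left- or right-whiskered $2$-morphism, commutes by stacking or pasting the corresponding squares. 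Unitality is witnessed by $(\mathrm{id}_L,\mathbf{1})$ and by the identity natural transformation. Since the underlying linear categories, linear functors and linear natural transformations already form a strict $2$-category --- the linear analogue of the fact that ${\tt Cat}$ is a strict $2$-category, cf. the discussion around Proposition~\ref{3-CatStr} --- associativity of all compositions and the interchange law hold strictly at the level of underlying data; and because the coherence conditions defining $1$-cells and $2$-cells are preserved by every composition operation, these axioms pass to ${\tt Lei2Alg}$.

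The main obstacle --- and the only genuinely laborious step --- is the verification that the composite datum $\mathbf{H}$ satisfies~(\ref{diagram_for_morphism}). This is a diagram chase manipulating the three Jacobiators $\mathbf{J},\mathbf{J}',\mathbf{J}''$ together with $\mathbf{F}$ and $\mathbf{G}$, using repeatedly the naturality of $\mathbf{G}_{-,-}$ in both slots, the bilinearity of $[-,-]$ and the functoriality of $G$; it is the $2$-term shadow of the fact that composition of infinity morphisms of Leibniz infinity algebras corresponds to composition of quasi-free {\small DGZC} morphisms, which is manifestly associative and unital. I would short-circuit it: having checked by inspection of the explicit formulae that, under the three correspondences, composition of $1$-cells matches composition of infinity morphisms, vertical composition of $2$-cells matches the addition rule~(\ref{VertComp}) $\tau_1\circ_1\theta_1=\tau_1+\theta_1$, and horizontal composition matches the whiskering formula~(\ref{HorzComp}), strictness of ${\tt Lei2Alg}$ follows at once from that of ${\tt 2Lei_{\infty}}$-${\tt Alg}$. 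The two routes give the same $2$-category, one yielding the explicit composition laws and the other the conceptual reason for strictness.
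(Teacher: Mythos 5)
Your proposal is correct, and your first route is exactly the argument the paper leaves implicit: the paper states the proposition without proof, having just declared that composition of $1$-cells is composition of linear functors together with whiskering, and that horizontal and vertical composition of $2$-cells are those of linear natural transformations, so that strictness is inherited from the strict $2$-category of linear categories, linear functors and linear natural transformations (the linear analogue of ${\tt Cat}$), the only real work being the pasting checks that the coherence diagrams (\ref{diagram_for_morphism}) and (\ref{diagram_for_2_morphism}) are stable under these compositions --- precisely the verifications you describe, including the correct composite datum $\mathbf{H}_{x,y}=G(\mathbf{F}_{x,y})\circ\mathbf{G}_{Fx,Fy}$. Your second route, transporting the structure from ${\tt 2Lei_{\infty}}$-${\tt Alg}$ along the three bijections, is not how the paper proceeds and deserves one caveat: in the paper the compatibility of those bijections with all compositions is exactly the content of the subsequent Corollary ($2$-equivalence of ${\tt Lei2Alg}$ and ${\tt 2Lei_{\infty}}$-${\tt Alg}$), which is stated \emph{after} this proposition, so if you lean on transport you must carry out the formula matching yourself (as you indicate: composition of $1$-cells versus composition of $\infty$-morphisms, vertical composition versus $\tau_1+\theta_1$ of (\ref{VertComp}), whiskering versus (\ref{HorzComp})), on pain of circularity with that corollary. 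Since you do check this by inspection --- e.g. the $V_1''$-component of $\mathbf{H}_{x,y}$ is $g_1f_2(x,y)+g_2(f_1x,f_1y)$ by the addition rule (\ref{composition}) --- both routes are sound; the direct one is self-contained and matches the paper, while the transport one buys you the $2$-equivalence essentially for free.
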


\begin{cor} The 2-categories ${\tt 2Lei_{\infty}}$-${\tt Alg}$ and ${\tt Lei2Alg}$ are 2-equivalent.\end{cor}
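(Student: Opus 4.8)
The plan is to establish the 2-equivalence by assembling the 1-to-1 correspondences already recorded in this section into a pair of (strict) 2-functors and verifying that they are mutually inverse up to 2-natural isomorphism. First I would define a 2-functor $\mathfrak{F}:{\tt 2Lei_{\infty}}\text{-}{\tt Alg}\to{\tt Lei2Alg}$. On objects it sends a 2-term Leibniz infinity algebra $(V=V_0\oplus V_1,l_1,l_2,l_3)$ to the Leibniz 2-algebra $(L,[-,-],{\mathbf J})$ produced by the correspondence of the proposition relating Leibniz 2-algebras and 2-term Leibniz infinity algebras: concretely $L$ is the linear category with $L_0=V_0$, $L_1=V_0\oplus V_1$, source/target built from $l_1$ as in Proposition~\ref{UniqueCatStr}-style formulae, the bracket functor $[-,-]$ assembled from $l_2$ (on objects and on morphisms), and the Jacobiator ${\mathbf J}_{x,y,z}=([x,[y,z]],l_3(x,y,z))$; identities (a)--(e) of Proposition~\ref{2term_Loday_in_algebra} translate precisely into functoriality of $[-,-]$, naturality of ${\mathbf J}$, and the Jacobiator identity~(\ref{Jacobiator_diagramm}). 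On 1-morphisms $\mathfrak{F}$ uses the correspondence between 2-term Leibniz infinity morphisms $(f_1,f_2)$ and Leibniz 2-algebra morphisms $(F,{\mathbf F})$, where $F$ is built from $f_1$ and ${\mathbf F}_{x,y}=(F[x,y],f_2(x,y))$; relations (a)--(d) of Proposition~\ref{2term_Loday_morphism} become exactly commutativity of diagram~(\ref{diagram_for_morphism}). On 2-morphisms $\mathfrak{F}$ uses the theorem identifying 2-term Leibniz $\infty$-homotopies $\theta_1$ with Leibniz 2-algebra 2-morphisms $\boldsymbol\theta$, where $\boldsymbol\theta_x=(Gx,\theta_1 x)$; Definition~\ref{HomTheo}, relations (\ref{homotopy_a})--(\ref{homotopy_c}), become commutativity of~(\ref{diagram_for_2_morphism}).

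Next I would check that $\mathfrak{F}$ is a strict 2-functor: it must respect the two compositions of 1-morphisms and 2-morphisms and the two identities. On the infinity side, composition of $\infty$-morphisms is the explicit composition of the previous section; on the categorical side it is composition of functors with whiskering. One verifies by direct (if tedious) comparison of the component formulae that these match; similarly vertical composition $\tau_1\circ_1\theta_1=\tau_1+\theta_1$ (Theorem~\ref{KomComp2}) corresponds to vertical composition of natural transformations, and horizontal composition~(\ref{HorzComp}) corresponds to horizontal (Godement) composition, and the zero homotopy corresponds to the identity natural transformation. The interchange law and associativity then hold on one side because they hold on the other, so ${\tt 2Lei_{\infty}}\text{-}{\tt Alg}$ and ${\tt Lei2Alg}$ genuinely are strict 2-categories (as already asserted) and $\mathfrak{F}$ is 2-functorial. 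I would then define the candidate inverse 2-functor $\mathfrak{G}:{\tt Lei2Alg}\to{\tt 2Lei_{\infty}}\text{-}{\tt Alg}$ by running every one of the above correspondences backwards, using the identification $L_m\simeq\oplus_{i=0}^mV_i$ with $V_i=\ker s_i$ from Proposition~\ref{UniqueCatStr}: $l_1$ is read off from the target map, $l_2$ from the bracket functor on objects and morphisms, $l_3$ from the $V_1$-component of ${\mathbf J}$, and likewise $f_1,f_2$ from $(F,{\mathbf F})$ and $\theta_1$ from $\boldsymbol\theta$.

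Finally I would exhibit the 2-natural isomorphisms $\mathfrak{G}\mathfrak{F}\Rightarrow \mathrm{id}$ and $\mathfrak{F}\mathfrak{G}\Rightarrow\mathrm{id}$. The composite $\mathfrak{G}\mathfrak{F}$ is literally the identity on ${\tt 2Lei_{\infty}}\text{-}{\tt Alg}$, since decomposing and reassembling the maps $l_i,f_i,\theta_1$ through $\ker s_i$ returns the same data. For $\mathfrak{F}\mathfrak{G}$ one gets back from a Leibniz 2-algebra $L$ the 2-algebra ${\mathfrak{G}}{\mathfrak{N}}$-style model with $L'_m=\oplus_{i=0}^m\ker s_i$; exactly as in the proof of Proposition~\ref{EquivCat}, the linear $n$-functor $\alpha_L:L'_m\ni(v_0,\ldots,v_m)\mapsto 1^m_{v_0}+\ldots+1_{v_{m-1}}+v_m\in L_m$ is an invertible linear functor, and one checks it intertwines the brackets, Jacobiators, ${\mathbf F}$'s and $\boldsymbol\theta$'s, so that $\alpha$ assembles into a 2-natural isomorphism $\mathfrak{F}\mathfrak{G}\Rightarrow\mathrm{id}$. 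The main obstacle is not conceptual but bookkeeping: one must verify that the 2-functors are compatible with \emph{all} compositions (vertical, horizontal, interchange) and with whiskering, i.e. that the explicit $\infty$-composition formulae of Section~\ref{KPQSection5} really coincide term-by-term with Godement calculus of natural transformations once translated through the $\ker s_i$ decomposition; the subtlety that made the 3-term case of \cite{KMP11} delicate (the monoidal product $\boxtimes$ is not a functor) does not bite here because in the 2-term situation all the relevant higher brackets and component maps vanish for degree reasons, so every series is finite and the matching of formulae goes through directly.
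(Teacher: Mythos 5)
Your proposal is correct and follows exactly the route the paper intends: the corollary is stated without proof because it is the immediate consequence of assembling the three 1-to-1 correspondences (objects, morphisms, 2-morphisms) into a pair of 2-functors and observing that the already-established composition laws (vertical composition $\tau_1\circ_1\theta_1=\tau_1+\theta_1$, horizontal composition, identities) match the Godement calculus on the Leibniz 2-algebra side. Your fleshed-out version, including the $\ker s_i$ identification and the remark that the $n=2$ truncation avoids the $\boxtimes$-subtlety of the 3-term case, is consistent with the paper's construction.
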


\newpage
\section{Appendix}
\subsection{Leibniz infinity algebra}\label{LeibnizInftyAlgebra}
\begin{defi}
{\it $\mathrm{Lei}_\infty$ algebra on a graded vector space $V$} is given by the family of multilinear maps
$l_i: V^{\otimes i}\to V$ of degrees $(i-2)$, such that for any $n>0$ the {\it higher Jacobi identities} holds:
\begin{equation}\label{LodayInfinityAlgebraIdenities}
\begin{array}{l}
\displaystyle\sum\limits_{i+j=n+1}\sum\limits_{\substack{j\leqslant k\leqslant i+j-1}}\sum\limits_{\sigma\in Sh(k-j,j-1)}\varepsilon(\sigma)\cdot \mathrm{sign}(\sigma)\cdot(-1)^{(i-k+j)(j-1)}\cdot(-1)^{j(v_{\sigma(1)}+...+v_{\sigma(k-j)})}\times \\[4ex]
\times{l_i(v_{\sigma(1)},...,v_{\sigma(k-j)},l_j(v_{\sigma(k+1-j)},...,v_{\sigma(k-1)},v_k),v_{k+1},...,v_{i+j-1})}=0,\\
\end{array}
\end{equation}
\end{defi}\bigskip

$\mathrm{Lei}_\infty$ algebras are nonsymmetric versions of $L_\infty$ algebras, that is if we additionally impose the graded antisymmetry condition on the higher brackets $\{l_i\}$ we will get the $L_\infty$ algebra.
\begin{thm}
$\mathrm{Lei}_\infty$ algebra over a finite dimensional graded vector space $V$ is given by the differential
$d$ on the quasi-free DGZA  $\mathrm{Zin}(s^{-1}V^*)$ or dually by the codifferential $D=d^*$ on the  coalgebra  $\mathrm{Zin}^c(sV)$ . And the condition
$d^2=0$ (dually $D^2=0$) encodes the bunch of higher Jacobi identities (\ref{LodayInfinityAlgebraIdenities}).
\end{thm}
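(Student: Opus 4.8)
The plan is to mimic, step by step, the proof of the analogous statement for $L_\infty$ algebras given earlier in the excerpt, replacing the (co)commutative free (co)algebra by the free Zinbiel (co)algebra $\mathrm{Zin}$ (resp.\ $\mathrm{Zin}^c$) and the symmetric product $\odot$ by the Zinbiel structure maps. The only conceptual input specific to the present situation is the combinatorics of the free Zinbiel (co)algebra on the reduced tensor module $\overline{T}(V)$, as spelled out in Example~\ref{propFreeZinbielProducFormulaOnTVarticle} and the surrounding propositions: multiplication is the half-shuffle on $\overline{T}(V)$, comultiplication is the corresponding deconcatenation-type coproduct, and a derivation (resp.\ coderivation) of a \emph{free} Zinbiel (co)algebra is determined by its restriction to generators $s^{-1}V^*$ (resp.\ corestrictions $\mathrm{Zin}^c(sV)\to sV$). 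I would state this at the outset, since it is what makes the whole machine run.

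First I would work on the coalgebra side, which requires no finiteness hypothesis. Write $D\in\mathrm{CoDer}^{-1}(\mathrm{Zin}^c(sV))$; by freeness $D$ is uniquely determined by its corestrictions $D_k:(\mathrm{Zin}^c(sV))\to sV$, or, using the weight grading, by maps $(sV)^{\otimes k}\to sV$, $k\ge 1$. Exactly as in Equation~(\ref{actionOfDerivationOnSymmetricAlgebra}) and the computation following it in the $L_\infty$ case, the condition $D^2=0$ is a coderivation, hence again determined by corestrictions, and it splits by weight into a sequence of identities indexed by $n>0$; the $n$-th one is the ``$(D^2)_n=0$'' relation, a sum over $p+k-1=n$ of composites built from $D_p$, $D_k$, the comultiplication $\Delta$ of $\mathrm{Zin}^c$, and shuffles. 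Then I would desuspend: set $l_i:=s^{-1}D_i\,s^{\otimes i}$ (our chosen sign convention, cf.\ the remark preceding Definition~\ref{LeibInftyAlgMorph}) so that $|l_i|=i-2$, insert the identities $(-1)^{i(i-1)/2}s^{\otimes i}(s^{-1})^{\otimes i}=\mathrm{id}^{\otimes i}$ in the appropriate places (as in (\ref{LieInfinityBeforeEvaluation})), track the Koszul signs coming from (\ref{suspension_of_permutation}) and (\ref{identity_formula}), and evaluate on $v_1\otimes\cdots\otimes v_n$. The shuffles appearing are precisely the $(k-j,j-1)$-unshuffles dictated by the Zinbiel coproduct formula (\ref{FreeZinbielCoProducFormulaOnTVarticle}), which forces the last slot of the inner bracket to carry the ``$v_k$'' in the higher Jacobi identity~(\ref{LodayInfinityAlgebraIdenities}); this is exactly the asymmetry that distinguishes $\mathrm{Lei}_\infty$ from $L_\infty$. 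Collecting signs yields~(\ref{LodayInfinityAlgebraIdenities}), and the steps are reversible, giving the claimed $1{:}1$ correspondence.

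Second, for the algebra side I would dualize degree by degree, using finite-dimensionality of $V$ so that $(\mathrm{Zin}^c(sV))^*\cong\mathrm{Zin}(s^{-1}V^*)$ as graded algebras (the free Zinbiel coalgebra on $sV$ is dual to the free Zinbiel algebra on $s^{-1}V^*$, by the same degree-by-degree argument used for symmetric (co)algebras in the excerpt, together with the fact that $\mathrm{Zin}$ is a quadratic operad so its free objects in each weight are finite-dimensional). Then $D=d^*$, a degree $-1$ coderivation corresponds to a degree $+1$ derivation $d$ of $\mathrm{Zin}(s^{-1}V^*)$, and $D^2=0\iff d^2=0$ by transposition; no new combinatorics is needed beyond the transposition rules (\ref{rule_for_transposition_of_comp})--(\ref{rule_for_transposition_of_tens}). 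A derivation of a free Zinbiel algebra being determined by its restriction to generators closes the loop with the corestriction picture above.

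The main obstacle I expect is purely bookkeeping: getting every sign right when desuspending and when moving brackets past the Zinbiel (co)product, i.e.\ reproducing the precise sign list analogous to items (1)--(3) after (\ref{LieInfinityBeforeEvaluation}), now with the half-shuffle coproduct whose combinatorial support is $Sh(k-j,j-1)$ rather than $Sh(i,j-1)$. A secondary, more conceptual point worth a careful line is checking that the Zinbiel coproduct on $\overline{T}(V)$ genuinely is \emph{coassociative} in the coZinbiel sense so that the iterated $\Delta^{p-1}$ used in the weight decomposition is well-defined; this follows from the freeness statement in the proposition after Example~\ref{propFreeZinbielProducFormulaOnTVarticle}, so it can be invoked rather than reproved. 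Everything else is a direct transcription of the $L_\infty$ proof already carried out in the text, and a complete account can be relegated to the Appendix (page~\pageref{LeibnizInftyAlgebra}) and to \cite{AP10}.
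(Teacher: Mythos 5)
Your proposal is correct, and its computational core coincides with the paper's: (co)derivations of the (co)free Zinbiel (co)algebra are determined by their (co)restrictions, the square of an odd (co)derivation is again a (co)derivation so that the square-zero condition reduces to generators (resp.\ corestrictions), the resulting identity splits by weight, and the suspension/Koszul sign bookkeeping turns the weight-$n$ relation into the higher Jacobi identity (\ref{LodayInfinityAlgebraIdenities}), with the $Sh(k-j,j-1)$ unshuffles and the distinguished last slot of the inner bracket coming from the Zinbiel combinatorics. The one genuine difference is the direction of dualization. The paper works algebra-first: it derives the key shuffle formula (\ref{derivation_formula}) from the Leibniz rule, which is elementary there because every word $w_1\dots w_p$ is the left-normed product $(((w_1\cdot w_2)\cdot w_3)\dots)\cdot w_p$ of generators, and only afterwards transposes to obtain the codifferential condition on $\mathrm{Zin}^c(sV)$; you work coalgebra-first and dualize at the end for the algebra statement. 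Your route buys the coalgebraic half of the correspondence without any finiteness hypothesis (finite-dimensionality of $V$ enters only when passing to $\mathrm{Zin}(s^{-1}V^*)$), which is the cleaner operadic formulation; the price is that the explicit expression of the full coderivation $D$ in terms of its corestrictions $D_k$ and the coproduct (\ref{FreeZinbielCoProducFormulaOnTVarticle}) -- the exact dual of (\ref{derivation_formula}) -- must be established directly on the coalgebra side, a step you assert (citing the algebra-side model (\ref{actionOfDerivationOnSymmetricAlgebra})) rather than carry out, and which is precisely the computation the paper's algebra-first approach makes trivial before transposing. Once that formula is in place, your desuspension $l_i=s^{-1}D_i s^{\otimes i}$ and sign tracking reproduce the paper's sign list following (\ref{LodayInfinityBeforeEvaluation}), so the two arguments are equivalent in content.
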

\begin{proof}
Consider an arbitrary finite dimensional graded vector space $W$ and the differential $d$ on the quasi-free DGZA $\mathrm{Zin}(W)$. The differential $d$ is given by its action on the generators, that is by the action on the vector space $W$. By applying the Leibniz rule one can get the action of the differential on the whole space $\mathrm{Zin}(W)$ knowing only its action on $W$. We define the components of the differential:
$$
d_k:W\to W^{\otimes k}\subset\mathrm{Zin}(W),\ d|_W=d_1+d_2+...
$$
Then for any element $w_1...w_p\in W^{\otimes p}\subset\mathrm{Zin}(W)$ we have
\begin{equation}\label{derivation_formula}
\begin{array}{l}
d(w_1...w_p)=d((((w_1\cdot w_2)\cdot w_3)...)\cdot w_p)=\sum\limits_{i=1}^p(-1)^{|w_1|+...+|w_{i-1}|}((((w_1\cdot w_2)...dw_i)\cdot w_{i+1})...)\cdot w_p)=\\
=\sum\limits_{i=1}^p(-1)^{|w_1|+...+|w_{i-1}|}(((((w_1w_2...w_{i-1})\cdot \sum\limits_{k>0}d_kw_i)\cdot w_{i+1})...)\cdot w_p)=\\
=\sum\limits_{i=1}^p\sum\limits_{k>0}\ \sum\limits_{\sigma\in Sh(i-1,k-1)}(\sigma^{-1}\otimes \mathrm{id}^{(p-i+1)})(\mathrm{id}^{\otimes (i-1)}\otimes d_k\otimes \mathrm{id}^{\otimes(p-i)})(w_1w_2...w_p).
\end{array}
\end{equation}

If for an arbitrary -1 degree derivation $d$ on $\mathrm{Zin}(W)$ the action of $d^2$ on generators is zero then it is zero on the whole space $\mathrm{Zin}(W)$, and therefore the derivation becomes a differential. It follows from the following identity:
$$
d^2(w\cdot v)=d^2w\cdot v+w\cdot d^2 v=0.
$$
The condition that $d^2=0$ reads as follows:
$$
\begin{array}{l}\label{}
d(dw)=\sum\limits_{p>0}dd_pw\stackrel{(\ref{derivation_formula})}{=}\sum\limits_{p>0}\hspace{1mm}\sum\limits_{\substack{{0<i\leqslant p}\\{k>0}}}\ \sum\limits_{\sigma\in Sh(i-1,k-1)}(\sigma^{-1}\otimes \mathrm{id}^{(p-i+1)})(\mathrm{id}^{\otimes (i-1)}\otimes d_k\otimes \mathrm{id}^{\otimes(p-i)})d_pw\\
=\sum\limits_{n>0}\hspace{1mm}\sum\limits_{\substack{0<i\leqslant p\\k+p-1=n}}\sum\limits_{\sigma\in Sh(i-1,k-1)}(\sigma^{-1}\otimes \mathrm{id}^{(p-i+1)})(\mathrm{id}^{\otimes (i-1)}\otimes d_k\otimes \mathrm{id}^{\otimes(p-i)})d_pw.
\end{array}
$$
In the dual language, the transposed map $D=d^*$ is a codifferential on the coalgebra ${\mathrm{Zin}}^c(W^*)$. And the transposition of the last identity will encode that $D^2=0$:
\begin{equation}\label{equationForCodiffereintalLoday}
\sum\limits_{n>0}\hspace{1mm}\sum\limits_{\substack{0<i\leqslant p\\k+p-1=n}}\sum\limits_{\sigma\in Sh(i-1,k-1)}D_p(\mathrm{id}^{\otimes (i-1)}\otimes D_k\otimes \mathrm{id}^{\otimes(p-i)})(\sigma\otimes \mathrm{id}^{(p-i+1)})=0,
\end{equation}
where the transposed maps $D_p=d^*_p: W^{*\otimes p}\to W^*$ called the corestrictions of the codifferential $D$.\medskip

In equation (\ref{equationForCodiffereintalLoday}) the weight of the operator inside the sum $\sum\limits_{n>0}$ is equal to $2-p-k=1-n$, so it depends only on $n$, i.e. the last equation splits into the series of equations:
\begin{equation*}\label{operatorForLodayInfinity}
(D^2)_{n}=\sum\limits_{\substack{0<i\leqslant p\\k+p-1=n}}\sum\limits_{\sigma\in Sh(i-1,k-1)}D_p(\mathrm{id}^{\otimes (i-1)}\otimes D_k\otimes \mathrm{id}^{\otimes(p-i)})(\sigma\otimes \mathrm{id}^{(p-i+1)})=0,\quad\mbox{where }n>0.
\end{equation*}
So the condition $D^2=0$ on the coalgebra $\mathrm{Zin}^c(sV)$ reads as follows:
$$
\sum\limits_{\substack{0<i\leqslant p\\k+p-1=n}}\sum\limits_{\sigma\in Sh(i-1,k-1)}D_p(\mathrm{id}^{\otimes (i-1)}\otimes D_k\otimes \mathrm{id}^{\otimes(p-i)})(\sigma\otimes \mathrm{id}^{(p-i+1)})(sv_1...sv_n)=0,\quad n>0.
$$
Now we insert identities of the type $(-1)^{\frac{i(i-1)}{2}}s^{\otimes i}(s^{-1})^{\otimes i}=\mathrm{id}^{\otimes i}$ in two places:
\begin{equation}\label{LodayInfinityBeforeEvaluation}
\begin{array}{l}
\sum\limits_{\substack{0<i\leqslant p\\k+p-1=n}}\overbrace{s^{-1}D_p s^{\otimes p}}^{l_p}(-1)^{\frac{p(p-1)}{2}}\overbrace{(s^{-1})^{\otimes{p}}\left(\mathrm{id}^{\otimes (i-1)}\otimes D_k\otimes \mathrm{id}^{\otimes(p-i)}\right)s^{\otimes n}}^{\pm \mathrm{id}^{\otimes(i-1)}\otimes l_k\otimes \mathrm{id}^{\otimes (p-i)}}(-1)^{\frac{n(n-1)}{2}}\circ\\
\circ\underbrace{(s^{-1})^{\otimes n}\sum\limits_{\sigma\in Sh(i-1,k-1)}\left(\sigma\otimes \mathrm{id}^{(p-i+1)}\right)s^{\otimes n}}_{\Big(\sum\limits_{\sigma\in Sh(i-1,k-1)}\pm\sigma\otimes \mathrm{id}^{(p-i+1)}\Big)}=0.
\end{array}
\end{equation}
The precise signs in the formula above are the following:
\begin{enumerate}
\item $s^{-1}D_p s^{\otimes p}=l_p,$
\item $(s^{-1})^{\otimes{p}}\left(\mathrm{id}^{\otimes (i-1)}\otimes D_k\otimes \mathrm{id}^{\otimes(p-i)}\right)s^{\otimes n}=(-1)^{[(i-1)+\frac{p(p-1)}{2}+(p-i)k]}\cdot \mathrm{id}^{\otimes(i-1)}\otimes l_k\otimes \mathrm{id}^{\otimes (p-i)},$
\item $(s^{-1})^{\otimes n}\sum\limits_{\sigma\in Sh(i-1,k-1)}\left(\sigma\otimes \mathrm{id}^{(p-i+1)}\right)s^{\otimes n}=\\[3mm]=\sum\limits_{\sigma\in Sh(i-1,k-1)}\left((-1)^{\frac{n(n-1)}{2}}\cdot sign(\sigma)\cdot\sigma\otimes \mathrm{id}^{(p-i+1)}\right)$.
\end{enumerate}

Now we change the indices of summation $p\rightarrow i$, $k\rightarrow j$, $i\rightarrow(k-j+1)$, to be better in keeping with the results in the literature,
we get for each $n>0$:
\begin{equation}
\begin{array}{l}
\sum\limits_{i+j=n+1}\sum\limits_{\substack{j\leqslant k\leqslant i+j-1}}\sum\limits_{\sigma\in Sh(k-j,j-1)}(-1)^{(i-k+j)(j-1)}\cdot \mathrm{sign}(\sigma)\\
 \hspace{4cm}l_i\left(\mathrm{id}^{\otimes(k-j)}\otimes l_j\otimes \mathrm{id}^{\otimes(i-k+j-1)}\right)\left(\sigma\otimes \mathrm{id}^{\otimes(i-k+j)}\right)=0
\end{array}
\end{equation}
One can expand out the condensed tensor notation and get (\ref{LodayInfinityAlgebraIdenities}).
\end{proof}
\subsection{Leibniz infinity algebra morphism}\label{LeinbizInftyAlgebraMorph}
\begin{defi}
The morphism between two $\mathrm{Lei}_\infty$ algebras over $V$ and $W$ is given by the family of multilinear maps $\varphi_i:V^{\otimes i}\to W$ of degree $(i-1)$ which satisfy the following identities:\\
\begin{equation}\label{LodayInfinityAlgebraMorphismIdenities}
\begin{array}{l}
\sum\limits_{i=1}^n\hspace{1mm}\sum\limits_{\substack{k_1+...+k_i=n}}\hspace{1mm}\sum\limits_{\sigma\in Hsh(k_1,...,k_i)}(-1)^{\sum\limits_{r=1}^{i-1}(i-r)k_r+\frac{i(i-1)}{2}}\cdot(-1)^{
\sum\limits_{r=2}^i(k_r-1)(v_{\sigma(1)}+...+v_{\sigma(k_1+...+k_{r-1})})}\times\\
\times \varepsilon(\sigma)\cdot \mathrm{sign}(\sigma)\cdot l_i\left(\varphi_{k_1}(v_{\sigma(1)},...,v_{\sigma(k_1)}),...,\varphi_{k_i}(v_{\sigma(k_1+...+k_{i-1}+1)},...,v_{\sigma(k_1+...+k_i)})\right)=\\[0.5cm]
=\sum\limits_{i+j=n+1}\hspace{1mm}\sum\limits_{j\leqslant k \leqslant i+j-1}\sum\limits_{\sigma\in Sh(k-j,j-1)}(-1)^{k+(i-k+j)j}\cdot(-1)^{j(v_{\sigma(1)}+...+v_{\sigma(k-j)})}\cdot\varepsilon(\sigma)\cdot \mathrm{sign}(\sigma)\times\\
\times{\varphi_i(v_{\sigma(1)},...,v_{\sigma(k-j)},l_j(v_{\sigma(k+1-j)},...,v_{\sigma(k-1)},v_k),v_{k+1},...,v_{i+j-1})}.
\end{array}
\end{equation}
\end{defi}
\begin{thm}
$\mathrm{Lei}_\infty$-algebra morphism between $\mathrm{Lei}_\infty$-algebras over $V$ and $W$ is given by the differential graded algebra homomorphism $f: \mathrm{Zin}(s^{-1}W^*)\to \mathrm{Zin}(s^{-1}V^*)$, or dually by the graded differential coalgebra homomorphism $F=f^*:\mathrm{Zin}^c(sV)\to \mathrm{Zin}^c(sW)$. The morphism condition $fd-df=0$ (or dually $FD-DF=0$) encodes the bunch of higher identities~(\ref{LodayInfinityAlgebraMorphismIdenities}).
\end{thm}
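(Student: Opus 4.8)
The plan is to repeat, in the Zinbiel world, the argument already used above for $L_\infty$-algebra morphisms (the proposition around (\ref{actionOfMorphismOnSymmetricAlgebra})) together with the proof of the preceding theorem of this appendix. I work first on the algebraic side with the free graded Zinbiel algebras $\mathrm{Zin}(U')$ and $\mathrm{Zin}(U)$, where $U'=s^{-1}W^*$ and $U=s^{-1}V^*$. Since $\mathrm{Zin}(U')$ is free, a graded Zinbiel algebra homomorphism $f:\mathrm{Zin}(U')\to\mathrm{Zin}(U)$ is determined by its restriction to the generators, i.e. by a sequence of maps $f_k:U'\to U^{\otimes k}\subset\mathrm{Zin}(U)$, $f|_{U'}=\sum_{k\geqslant 1}f_k$; its value on a general monomial is obtained by iterating the Zinbiel relation, $f(u_1\cdots u_p)=\big(((f(u_1)\cdot f(u_2))\cdot f(u_3))\cdots\big)\cdot f(u_p)$. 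Using the product formula for the free Zinbiel algebra (Example~\ref{propFreeZinbielProducFormulaOnTVarticle}), this left-nested product of tensors of weights $k_1,\dots,k_p$ expands as a sum over the half-unshuffled permutations $Hsh(k_1,\dots,k_p)$ of Definition~\ref{unshuffledPermutationDef}; this is exactly the combinatorial origin of the half-shuffle sums in (\ref{LodayInfinityAlgebraMorphismIdenities}), and it is what distinguishes the morphism identities from the structure identities (\ref{LodayInfinityAlgebraIdenities}).

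Next I would show that $fd-df=0$ on all of $\mathrm{Zin}(U')$ is equivalent to its vanishing on the generators $U'$. This follows from the identity $(fd-df)(u\cdot w)=(fd-df)(u)\cdot f(w)+(-1)^{|u|}f(u)\cdot(fd-df)(w)$, valid since $d$ is a derivation and $f$ a degree-$0$ algebra morphism; hence $fd-df$ is an ``$f$-Leibniz'' map and vanishes everywhere once it vanishes on generators. Writing $d|_{U'}=\sum_j d_j$ and $d|_U=\sum_j d_j$ for the components of the quasi-free differentials encoding the $\mathrm{Lei}_\infty$ structures of $W$ and $V$, I expand $(fd-df)(u)=0$ for $u\in U'$: the term $fd(u)=\sum_j f(d_ju)$ produces, via the iterated Zinbiel product above, the $\sum f_{k_1}\odot\cdots\odot f_{k_i}$-type terms, while $df(u)=\sum_p d(f_pu)$ produces, via the derivation expansion (\ref{derivation_formula}), the $\sum(d_k\otimes\mathrm{id}^{\otimes(p-1)})\circ(\text{shuffle})$-type terms. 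Dualizing — finite-dimensionality of $V$ is used here, so that $(U^{\otimes p})^*\simeq(U^*)^{\otimes p}$ — yields the coalgebra homomorphism $F=f^*:\mathrm{Zin}^c(sV)\to\mathrm{Zin}^c(sW)$ with corestrictions $F_k=f_k^*$, and the transposed identity encodes $FD-DF=0$ exactly in the form (\ref{equationForCodiffereintalLoday}) adapted to morphisms.

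Then comes the weight argument: the operator inside $FD-DF$ has weight $1-n$ in weight $n$, so the equation splits into a countable family, one for each $n>0$, just as in (\ref{equationForCodiffereintalLoday}). Evaluating the $n$-th equation on a representative $sv_1\otimes\cdots\otimes sv_n$ of an element of $\mathrm{Zin}^c(sV)^{(n)}$, I insert identities $(-1)^{i(i-1)/2}s^{\otimes i}(s^{-1})^{\otimes i}=\mathrm{id}^{\otimes i}$ (see (\ref{identity_formula})) in the appropriate places, use (\ref{suspension_of_permutation}) for the permutation maps and the Koszul transposition rules (\ref{rule_for_transposition_of_comp})--(\ref{rule_for_transposition_of_tens}) to move suspensions past the $D_k$, $F_k$ and the shuffles, and set $\varphi_i:=F_i^{\mathrm{susp}}=s^{-1}F_is^{\otimes i}$, which has degree $i-1$, while $l_j$ and $m_i$ (notation of Definition~\ref{LeibInftyAlgMorph}) are the suspensions of the components of the two codifferentials. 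After collecting the signs and evaluating on $v_1\otimes\cdots\otimes v_n$, a reindexing $p\mapsto i$, $k\mapsto j$, $i\mapsto k-j+1$ (and the analogous relabelling of the $k_r$) should produce precisely (\ref{LodayInfinityAlgebraMorphismIdenities}). Since the $\varphi_i$ live on the reduced \emph{tensor} coalgebra $\mathrm{Zin}^c(sV)$, no (anti)symmetry has to be imposed on them, in contrast with the $L_\infty$ case.

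The main obstacle, exactly as in the $L_\infty$-morphism proof, is the sign bookkeeping: the precise exponents in (\ref{LodayInfinityAlgebraMorphismIdenities}), namely $\sum_{r=1}^{i-1}(i-r)k_r+\tfrac{i(i-1)}{2}$, $\sum_{r=2}^{i}(k_r-1)(v_{\sigma(1)}+\cdots+v_{\sigma(k_1+\cdots+k_{r-1})})$, and $k+(i-k+j)j$, must be recovered verbatim. They arise from (i) commuting $(s^{-1})^{\otimes p}$ past the $p$-fold tensor product $F_{k_1}\otimes\cdots\otimes F_{k_p}$, which contributes $(-1)^{\sum_r(p-r)k_r}$ together with the degree-dependent Koszul signs from passing each $s^{-1}$ over the preceding blocks; (ii) the factors $(-1)^{p(p-1)/2}$ from the suspension identities; and (iii) the factors $(-1)^{n(n-1)/2}\mathrm{sign}(\sigma)$ from (\ref{suspension_of_permutation}). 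A secondary but genuinely combinatorial point to check carefully is that the iterated left-nested Zinbiel product of $p$ tensors is summed exactly over $Hsh(k_1,\dots,k_p)$ and not over the larger set $Sh(k_1,\dots,k_p)$; this is the only place where the non-symmetric (Leibniz/Zinbiel) nature of the structure materially changes the shape of the identities compared with the symmetric ($L_\infty$/commutative) case treated earlier.
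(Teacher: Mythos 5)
Your proposal follows essentially the same route as the paper's own proof: determine $f$ by its components $f_k$ on generators, expand the left-nested Zinbiel product over the half-unshuffles $Hsh(k_1,\dots,k_p)$ (formula (\ref{morphism_formula})), reduce $fd-df=0$ to generators via the $f$-Leibniz property, dualize to $FD-DF=0$, split by weight, and insert the suspension identities $(-1)^{i(i-1)/2}s^{\otimes i}(s^{-1})^{\otimes i}=\mathrm{id}^{\otimes i}$ to recover the signs in (\ref{LodayInfinityAlgebraMorphismIdenities}) after the reindexing $p\mapsto i$, $k\mapsto j$, $i\mapsto k-j+1$. The only cosmetic slip is the use of the symmetric-product symbol $\odot$ for what is really the half-shuffle expansion, and your closing remark that the sums run over $Hsh$ rather than $Sh$ is exactly the point the paper's proof hinges on.
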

\begin{proof}
Consider an arbitrary graded vector spaces $U$ and $U'$. An arbitrary  algebra homomorphism $f:\mathrm{Zin}(U)\to \mathrm{Zin}(U')$ is given by its action on generators:
\begin{equation}
\begin{array}{l}\label{morphism_formula}
f(u_1...u_p)=f((((u_1\cdot u_2)\cdot u_3)...)\cdot u_p)=((((fu_1\cdot fu_2)\cdot fu_3)...)\cdot fu_p)=\\[2mm]

=\sum\limits_{k_1+...+k_p=p}^{\infty}((((f_{k_1}u_1\cdot f_{k_2}u_2)\cdot f_{k_3}u_3)...)\cdot f_{k_p}u_p)=\\
=\sum\limits_{k_1+...+k_p=p}^{\infty}\left(\sum\limits_{\sigma\in Sh(k_1,k_{2}-1)}\sigma^{-1}\otimes \mathrm{id}^{\otimes(1+k_3+...+k_p)}\right)...\left(\sum\limits_{\sigma\in Sh(k_1+...+k_{p-1},k_p-1)}\sigma^{-1}\otimes \mathrm{id}\right)\circ\\[7mm]
\circ\left(f_{k_1}\otimes...\otimes f_{k_p}\right)(u_1...u_p)=\\[2mm]
=\sum\limits_{k_1+...+k_p=p}^{\infty}\ \sum\limits_{\sigma\in Hsh(k_1,...,k_p)}\sigma^{-1}\left(f_{k_1}\otimes...\otimes f_{k_p}\right)(u_1...u_p),
\end{array}
\end{equation}
where the the maps $f_k:U\to U'^{\otimes k}$ are the components of the restriction of the homomorphism f on the $U$:
$$
f|_U=f_1+f_2+...
$$
If for the algebra morphism $f$ the condition $df-fd=0$ holds on the generators then it also holds on the whole space $\mathrm{Zin}(U)$, it follows from the fact that
$$
(fd-df)(u\cdot w)=(fd-df)u\cdot w +(-1)^{\overline{u}}u\cdot (fd-df)w.
$$
Then the condition that the algebra homomorphism $f$ is a {\it differential} algebra homomorphism reads as follows:
\begin{equation}\label{(Fd-dF)_formula}
\begin{array}{l}
 (fd-df)u=\sum\limits_{p>0}fd_pu-\sum\limits_{p>0}df_pu\stackrel{(\ref{derivation_formula})\&(\ref{morphism_formula})}{=}\\
 =\sum\limits_{n>0}\left[\sum\limits_{\substack{k_1+...+k_p=n\\0<p\leqslant n}}\ \sum\limits_{\sigma\in Hsh(k_1,...,k_p)}\sigma^{-1}\left(f_{k_1}\otimes...\otimes f_{k_p}\right)d_pu-\right.\\
-\left.\sum\limits_{\substack{0<i\leqslant p\\k+p-1=n}}\ \sum\limits_{\sigma\in Sh(i-1,k-1)}(\sigma^{-1}\otimes \mathrm{id}^{(p-i+1)})(\mathrm{id}^{\otimes (i-1)}\otimes d_k\otimes \mathrm{id}^{\otimes(p-i)})f_pu\right]=0.
  \end{array}
\end{equation}
In the dual language, the transposed map $F=f^*$ is a homomorphism of the coalgebras $F:\mathrm{Zin}^{c}(U'^*)\to\mathrm{Zin}^{c}(U^*)$. And the transposition of the last identity  encodes that $FD-DF=0$:
\begin{equation}\label{equationForZinbielCoalgebraMorphism}
\begin{array}{l}
\sum\limits_{n>0}\left[\sum\limits_{\substack{k_1+...+k_p=n\\0<p\leqslant n}}\ \sum\limits_{\sigma\in Hsh(k_1,...,k_p)}D_p\left(F_{k_1}\otimes...\otimes F_{k_p}\right)\sigma-\right.\\
-\left.\sum\limits_{\substack{0<i\leqslant p\\k+p-1=n}}\ \sum\limits_{\sigma\in Sh(i-1,k-1)}F_p(\mathrm{id}^{\otimes (i-1)}\otimes D_k\otimes \mathrm{id}^{\otimes(p-i)})(\sigma\otimes \mathrm{id}^{(p-i+1)})\right]=0,
\end{array}
\end{equation}
where the transposed maps $F_k=f^*_k: S^k(U'^*)\to U^*$ are called the corestrictions of the morphism  $F$.\medskip

In equation (\ref{equationForZinbielCoalgebraMorphism}) the weight of the operator inside the sum $\sum\limits_{n=1}^{\infty}$ is equal to $1-n$, so it depends only on $n$, that is the last equation splits into the series of equations:
$$
\begin{array}{l}
\sum\limits_{\substack{k_1+...+k_p=n\\0<p\leqslant n}}\ \sum\limits_{\sigma\in Hsh(k_1,...,k_p)}D_p\left(F_{k_1}\otimes...\otimes F_{k_p}\right)\sigma-\\
-\sum\limits_{\substack{0<i\leqslant p\\k+p-1=n}}\ \sum\limits_{\sigma\in Sh(i-1,k-1)}F_p(\mathrm{id}^{\otimes (i-1)}\otimes D_k\otimes \mathrm{id}^{\otimes(p-i)})(\sigma\otimes \mathrm{id}^{(p-i+1)})=0.
\end{array}
$$
Now we insert in certain places the identities of the type $(-1)^{\frac{i(i-1)}{2}}s^{\otimes i}(s^{-1})^{\otimes i}=\mathrm{id}^{\otimes i}$ and get:
\begin{equation}\label{LodayInfinityMorphismBeforeEvaluation}
\begin{array}{l}
\sum\limits_{p=1}^n\hspace{1mm}\sum\limits_{k_1+...+k_p=n}\overbrace{s^{-1}D_ps^{\otimes p}}^{l_p}(-1)^{\frac{p(p-1)}{2}}\overbrace{(s^{-1})^{\otimes p}\left(F_{k_1}\otimes...\otimes F_{k_p}\right)s^{\otimes n}}^{\pm \varphi_{k_1}\otimes...\otimes \varphi_{k_p}}\circ\\[8mm]
\circ(-1)^{\frac{n(n-1)}{2}}\overbrace{\sum\limits_{\sigma\in Hsh(k_1,...,k_p)}(s^{-1})^{\otimes n}\sigma s^{\otimes n}}^{\sum\limits_{\sigma\in Hsh(k_1,...,k_p)}\pm\sigma}=\\[8mm]
=\sum\limits_{k+p-1=n}\hspace{1mm}\sum\limits_{0<i\leqslant p}\overbrace{s^{-1}F_ps^{\otimes p}}^{\varphi_p}(-1)^{\frac{p(p-1)}{2}}\overbrace{(s^{-1})^{\otimes p}(\mathrm{id}^{\otimes (i-1)}\otimes D_k\otimes \mathrm{id}^{\otimes(p-i)})s^{\otimes n}}^{\pm(\mathrm{id}^{\otimes (i-1)}\otimes l_k\otimes \mathrm{id}^{\otimes(p-i)})}\circ\\[8mm]
\circ(-1)^{\frac{n(n-1)}{2}}\overbrace{\sum\limits_{\sigma\in Sh(i-1,k-1)}(s^{-1})^{\otimes n}(\sigma\otimes \mathrm{id}^{(p-i+1)})s^{\otimes n}}^{\sum\limits_{\sigma\in Sh(i-1,k-1)}\pm\sigma\otimes \mathrm{id}^{(p-i+1)}}.
\end{array}
\end{equation}
The precise signs in the formula above are the following:
\begin{enumerate}
\item $s^{-1}D_p\left(s\right)^{\otimes p}=l_p$,
\item $(s^{-1})^{\otimes p}\left(F_{k_1}\otimes...\otimes F_{k_p}\right)s^{\otimes n}=(-1)^{\left[\sum\limits_{r=1}^{p-1}(p-r)k_r\right]}\cdot \varphi_{k_1}\otimes...\otimes \varphi_{k_p}$,
\item $\sum\limits_{\sigma\in Hsh(k_1,...,k_p)}(s^{-1})^{\otimes n}\sigma s^{\otimes n}=\sum\limits_{\sigma\in Hsh(k_1,...,k_p)}(-1)^{\frac{n(n-1)}{2}}\cdot \mathrm{sign}(\sigma)\cdot\sigma$,
\item $(s^{-1})F_ps^{\otimes p}=\varphi_p$,
\item $(s^{-1})^{\otimes{p}}\left(\mathrm{id}^{\otimes (i-1)}\otimes D_k\otimes \mathrm{id}^{\otimes(p-i)}\right)s^{\otimes n}=(-1)^{[(i-1)+\frac{p(p-1)}{2}+(p-i)k]}\cdot \mathrm{id}^{\otimes(i-1)}\otimes l_k\otimes \mathrm{id}^{\otimes (p-i)}$,
\item $\sum\limits_{\sigma\in Sh(i-1,k-1)}(s^{-1})^{\otimes n}\left(\sigma\otimes \mathrm{id}^{(p-i+1)}\right)s^{\otimes n}=\sum\limits_{\sigma\in Sh(i-1,k-1)}\left((-1)^{\frac{n(n-1)}{2}}\cdot \mathrm{sign}(\sigma)\cdot\sigma\otimes \mathrm{id}^{(p-i+1)}\right)$.
\end{enumerate}
We evaluate the operators (\ref{LodayInfinityMorphismBeforeEvaluation}) on the elements $v_1\otimes...\otimes v_n$ and get identities (\ref{LodayInfinityAlgebraMorphismIdenities}).
\end{proof}
\newpage

\end{document}